\newcommand{\cf}{\emph{cf. }}
\newcommand{\resp}{\emph{resp. }}
\newcommand{\loccit}{\emph{loc. cit. }}
\newcommand{\ie}{\emph{i.e. }}
\newcommand{\Li}{\operatorname{Li}}
\newcommand{\ord}{\operatorname{ord}}
\newcommand{\Ab}{\operatorname{Ab}}
\newcommand{\Spec}{\operatorname{Spec}}
\newcommand{\Dir}{\operatorname{Dir}}
\newcommand{\Gr}{\operatorname{Gr}}
\newcommand{\Div}{\operatorname{Div}}
\newcommand{\Corr}{\operatorname{Corr}}
\newcommand{\Aut}{\operatorname{Aut}}
\newcommand{\End}{\operatorname{End}}
\newcommand{\Hom}{\operatorname{Hom}}
\newcommand{\uHom}{\operatorname{\underline{Hom}}}
\newcommand{\Fonct}{\operatorname{Fonct}}
\newcommand{\Ind}{\operatorname{Ind}}
\newcommand{\Cl}{\operatorname{Cl}}
\newcommand{\Pic}{\operatorname{Pic}}
\newcommand{\NS}{\operatorname{NS}}
\newcommand{\Alb}{\operatorname{Alb}}
\newcommand{\car}{\operatorname{car}}
\newcommand{\Ker}{\operatorname{Ker}}
\newcommand{\Coker}{\operatorname{Coker}}
\newcommand{\IM}{\operatorname{Im}}
\newcommand{\Sch}{\operatorname{Sch}}
\newcommand{\Tr}{\operatorname{Tr}}
\newcommand{\Trd}{\operatorname{Trd}}
\newcommand{\cl}{\operatorname{cl}}
\newcommand{\eff}{{\operatorname{eff}}}
\newcommand{\op}{{\operatorname{op}}}
\newcommand{\et}{{\operatorname{\acute{e}t}}}
\newcommand{\Zar}{{\operatorname{Zar}}}
\newcommand{\dR}{{\operatorname{dR}}}
\newcommand{\cris}{{\operatorname{cris}}}
\renewcommand{\Vec}{\operatorname{\mathbf{Vec}}}
\newcommand{\Prim}{\operatorname{Prim}}
\newcommand{\rg}{\operatorname{rg}}
\newcommand{\Gal}{\operatorname{Gal}}
\newcommand{\Rep}{\operatorname{Rep}}
\newcommand{\resprod}{\operatornamewithlimits{\coprod\hspace{-0.55cm}\prod}}
\newcommand{\modmult}[1]{\quad (\bmod^*\ #1\, )}
\newcommand{\leg}[2]{\displaystyle\left(\frac{#1}{#2}\right)}
\newcommand{\oo}{\operatornamewithlimits{\otimes}}
\newcommand{\RHom}{\operatorname{R\underline{Hom}}}
\newcommand{\rat}{{\operatorname{rat}}}
\newcommand{\alg}{{\operatorname{alg}}}
\newcommand{\tnil}{{\operatorname{tnil}}}
\newcommand{\num}{{\operatorname{num}}}
\newcommand{\plim}{\varprojlim}
\newcommand{\colim}{\varinjlim}
\newcommand{\tto}{\dashrightarrow}
\newcommand{\by}{\xrightarrow}
\newcommand{\yb}{\xleftarrow}
\newcommand{\iso}{\by{\sim}}
\newcommand{\osi}{\yb{\sim}}
\newcommand{\surj}{\rightarrow\!\!\!\!\!\!\!\rightarrow}
\newcommand{\inj}{\hookrightarrow}
\newcommand{\un}{\mathbf{1}}
\newcommand{\A}{\mathbf{A}}
\newcommand{\C}{\mathbf{C}}
\newcommand{\F}{\mathbf{F}}
\newcommand{\G}{\mathbf{G}}
\renewcommand{\L}{\mathbb{L}}
\newcommand{\N}{\mathbf{N}}
\renewcommand{\P}{\mathbf{P}}
\newcommand{\Q}{\mathbf{Q}}
\newcommand{\R}{\mathbf{R}}
\newcommand{\T}{\mathbb{T}}
\newcommand{\V}{\mathbf{V}}
\newcommand{\Z}{\mathbf{Z}}
\newcommand{\bA}{\mathbb{A}}
\newcommand{\bH}{\mathbb{H}}
\newcommand{\bI}{\mathbb{I}}
\newcommand{\fa}{\mathfrak{a}}
\newcommand{\fA}{\mathfrak{A}}
\newcommand{\ff}{\mathfrak{f}}
\newcommand{\fm}{\mathfrak{m}}
\newcommand{\fP}{\mathfrak{P}}
\newcommand{\fp}{\mathfrak{p}}
\newcommand{\fS}{\mathfrak{S}}
\newcommand{\sA}{\mathcal{A}}
\newcommand{\sB}{\mathcal{B}}
\newcommand{\sC}{\mathcal{C}}
\newcommand{\sD}{\mathcal{D}}
\newcommand{\sE}{\mathcal{E}}
\newcommand{\sL}{\mathcal{L}}
\newcommand{\sM}{\mathcal{M}}
\newcommand{\sO}{\mathcal{O}}
\newcommand{\sP}{\mathcal{P}}
\newcommand{\sS}{\mathcal{S}}
\newcommand{\sT}{\mathcal{T}}
\newcommand{\sU}{\mathcal{U}}
\newcommand{\sX}{\mathcal{X}}
\newcommand{\sZ}{\mathcal{Z}}
\renewcommand{\epsilon}{\varepsilon}
\renewcommand{\phi}{\varphi}
\newenvironment{thlist}{\begin{list}{\rm{(\roman{enumi})}}%
{\usecounter{enumi}}}%
{\end{list}}
\newtheorem{thm}{Th\'eor\`eme}[subsection]
\newtheorem{lemme}[thm]{Lemme}
\newtheorem{prop}[thm]{Proposition}
\newtheorem{conj}[thm]{Conjecture}
\newtheorem{cor}[thm]{Corollaire}
\theoremstyle{definition}
\newtheorem{defn}[thm]{D\'efinition}
\theoremstyle{remark}
\newtheorem{rque}[thm]{Remarque}
\newtheorem{rques}[thm]{Remarques}
\newtheorem{ex}[thm]{Exemple}
\newtheorem{exs}[thm]{Exemples}
\newtheorem{meg}[thm]{Mise en garde}
\newtheorem{exo}[thm]{Exercice}
\numberwithin{equation}{section}
\begin{document}

\title{Fonctions z\^eta et $L$ de vari\'et\'es et de motifs}
\author{Bruno Kahn}
\address{IMJ-PRG\\Case 247\\4 place Jussieu\\75252 Paris Cedex 05}
\email{bruno.kahn@imj-prg.fr}
\date{3 septembre 2016}
\maketitle



\tableofcontents

\section*{Introduction}

Gauss a d\'ecrit la th\'eorie des nombres comme la reine des math\'ematiques. Et en effet, la quantit\'e de math\'ematiques invent\'ees pour des raisons arithm\'etiques est étonnante. Citons:

\begin{itemize}
\item Une bonne partie de l'analyse complexe (Cauchy, Riemann, Weierstra\ss, Hadamard, Hardy-Littlewood\dots)
\item La th\'eorie des diviseurs (Kummer, Dedekind) puis des id\'eaux (E. Noether).
\item La th\'eorie des surfaces de Riemann (Riemann\dots)
\item La version non analytique du th\'eor\`eme de Riemann-Roch\index{Théorème!de Riemann-Roch} pour les courbes (F.K. Schmidt).
\item La refondation de la g\'eom\'etrie alg\'ebrique italienne sur la base de l'alg\`ebre commutative (Weil).\footnote{van der Waerden et Zariski ont aussi participé à ce mouvement, pour des raisons différentes.}
\item La th\'eorie des vari\'et\'es ab\'eliennes (Weil).
\item Une partie de la th\'eorie des repr\'esentations lin\'eaires des groupes finis (E. Artin, Brauer).
\item Une bonne partie de l'alg\`ebre homologique: cohomologie des grou\-pes, th\'eorie des faisceaux sur un site quelconque (Cartan, Eilenberg, Serre, Tate, Grothendieck\dots)
\item Pour une part, la th\'eorie des sch\'emas (Grothendieck).
\item Le d\'eveloppement de la cohomologie \'etale (M. Artin, Grothendieck, Verdier, Deligne\dots), puis cristalline (Grothendieck, Bloch, Berthelot, Ogus, Deligne, Illusie\dots)
\item Pour une part, la notion de cat\'egorie d\'eriv\'ee (Grothendieck, Verdier).
\item Le formalisme des six op\'erations (Grothendieck).
\item La th\'eorie de la monodromie dans le monde des sch\'emas (Grothendieck, Serre, Deligne, Katz\dots)
\end{itemize}

Et bien s\^ur, la th\'eorie des motifs!

Les fonctions zêta et les fonctions $L$ sont le point de rencontre des théories citées ci-dessus: elles couronnent la reine des mathématiques. Il est remarquable qu'une fonction de définition aussi élémentaire que
\[\zeta(s)= \sum_{n=1}^\infty \frac{1}{n^s}\]
ait joué un rôle aussi profond, connaissant de vastes généralisations qui ont modelé l'évolution de la théorie des nombres jusqu'à aujourd'hui, et reste l'objet d'une conjecture dont les analogues pour les variétés sur les corps finis ont été fameusement démontrées par Weil et Deligne,  mais que personne ne sait approcher dans le cas originel de Riemann.

Le texte qui suit est la rédaction d'un cours de M2 donné à Jussieu en mars-avril 2013. J'ai essayé d'y exposer une partie des résultats connus sur les fonctions zêta et $L$, mais aussi la problématique complexe qui les entoure, de manière ontogénétique: l'ontogenèse décrit le développement progressif d'un organisme depuis sa conception jusqu'à sa forme mûre. Dans ce but, j'ai parsemé le texte de citations et de commentaires qui, j'espère, offriront au lecteur ou à la lectrice une petite fenêtre sur l'histoire des idées dans ce domaine.

Après avoir rappelé les résultats (et hypothèse) classiques sur la fonction zêta de Riemann au chapitre \ref{s1}, j'introduis les fonctions zêta des $\Z$-schémas de type fini au chapitre suivant, qui est consacré pour l'essentiel à la démonstration de l'hypothèse de Riemann pour les courbes sur un corps fini. Je me suis efforcé de reproduire fidèlement la démonstration originelle de Weil \cite{weil2}. 

Le chapitre \ref{s3} est consacré aux conjectures de Weil. Elles y sont toutes démontrées sauf la plus difficile: l'``hypothèse de Riemann''. J'y donne aussi un aperçu de la preuve $p$-adique donnée par Dwork de la rationalité des fonctions zêta de variétés sur un corps fini (avant le développement des méthodes cohomologiques de Grothendieck!).

Le chapitre \ref{s4} revient à des mathématiques plus élémentaires, en introduisant les fonctions $L$ de Dirichlet, de Hecke et d'Artin. J'y donne la démonstration du théorème de Dirichlet\index{Théorème!de Dirichlet} sur la progression arithmétique, par la méthode de Landau exposée par Serre dans \cite{serre-arith}; il serait cependant dommage, comme me l'a souligné Pierre Charollois, d'oublier la méthode originelle de Dirichlet qui donnait des renseignements supplémentaires anticipant la formule analytique du nombre de classes (voir remarque \ref{r4.2} 2)). J'introduis ensuite les deux généralisations des fonctions $L$ de Dirichlet: celle de Hecke et celle d'Artin. J'énonce sans démonstration le théorème principal de Hecke: prolongement analytique et équation fonctionnelle (théorème \ref{t:hecke}), puis explique comment Artin et Brauer en déduisent les mêmes résultats pour les fonctions $L$ non abéliennes (théorème \ref{t4.2}).

Le chapitre suivant est la pièce de résistance de ce texte. Il commence par introduire l'idée approximative des fonctions $L$ à la Hasse-Weil, et se termine en décrivant leur définition précise par Serre \cite{serre-dpp}. Entretemps, je décris les apports fondamentaux de Grothendieck et Deligne: rationalité et équation fonctionnelle des fonctions $L$ de faisceaux $l$-adiques en caractéristique $p$ avec des démonstrations essentiellement complètes, théorie des poids et théorèmes de Deligne sur l'hypothèse de Riemann (dernière conjecture de Weil), cette fois sans démonstrations. Notamment, on trouvera au \S \ref{s5.3.4} une exposition de l'équation fonctionnelle des fonctions $L$ développée par Grothendieck dans  \cite[lettre du 30-9-64]{corresp}, et au théorème \ref{t5.5} un énoncé précis, et des éléments assez complets de démonstration, du théorème de Grothendieck et Deligne sur la rationalité et l'équation fonctionnelle des fonctions $L$ de Hasse-Weil en caractéristique $>0$, confirmant une conjecture de Serre (conjecture \ref{c5.3}) dans ce cas.

Le dernier chapitre est consacré aux motifs et à leurs fonctions zêta. Je me suis limité à un  cas élémentaire: celui des motifs purs de Grothendieck associés aux variétés projectives lisses sur un corps fini. On peut aller beaucoup plus loin en utilisant les catégories triangulées de motifs introduites par Voevodsky et développées par Ivorra, Ayoub et Cisinski-Déglise, mais cela dépasserait le cadre de ce cours (voir \cite{zetaL}). J'explique tout de même comment ce point de vue clarifie considérablement l'usage d'une cohomologie de Weil pour démontrer la rationalité et l'équation fonctionnelle, et ne résiste pas au plaisir d'appliquer cette théorie pour démontrer un théorème un peu oublié de Weil: la conjecture d'Artin pour les fonctions $L$ non abéliennes en caractéristique positive (\S \ref{cor-loco}).

Enfin, deux appendices donnent des compléments d'algèbre catégorique. J'ai aussi agrémenté le texte d'exercices, sans rien essayer de systématique.

Mise à part la théorie de Hecke, le versant automorphe de l'histoire n'est essentiellement pas abordé; je me suis contenté de brèves allusions ici et là.

Il reste une question à éclaircir: quelle est la différence entre une fonction zêta et une fonction $L$? Moralement, une fonction $L$ est une fonction zêta ``à coefficients'' (dans un faisceau, dans une représentation\dots) et on retrouve une fonction zêta en prenant des coefficients triviaux (cf. \S \ref{s5.2}). Mais une fonction zêta de Dedekind est aussi une fonction $L$ d'Artin\dots\ Et quelle terminologie et notation adopter dans le cas des motifs? Je suis resté prudemment vague sur cette question; il serait sans doute plus simple d'unifier la terminologie en laissant tomber l'une des deux notations, mais c'est évidemment délicat pour des raisons de tradition.  

J'ai profité des expositions antérieures de la théorie, auxquelles j'ai largement emprunté: elles seraient trop nombreuses pour les citer toutes, je renvoie pour cela à la bibliographie.  Je remercie aussi le public du cours, et tout particuli\`erement Matthieu Rambaud, de m'avoir signal\'e quantit\'e de coquilles. Je remercie enfin Joseph Ayoub,  Pierre Charollois, Luc Illusie, Amnon Neeman, Ram Murty et Jean-Pierre Serre pour des commentaires pertinents sur ce texte.

\section{La fonction z\^eta de Riemann}\label{s1}\index{Fonction zêta!de Riemann}
\enlargethispage*{20pt}

C'est
\[\zeta(s)=\sum_{n=1}^\infty \frac{1}{n^s}.\]

Elle a \'et\'e \'etudi\'ee par Euler, puis par Riemann.

\subsection{Un peu d'histoire}

\subsubsection{Euler} 
\begin{itemize}
\item $\zeta(2) =\displaystyle \frac{\pi^2}{6}$ (probl\`eme de B\^ale, 1735).
\item \emph{Produit eul\'erien}: $\zeta(s) = \displaystyle\prod_{p\text{ premier}} (1-p^{-s})^{-1}$ (th\`ese, 1737).
\end{itemize}

Formule plus g\'en\'erale: 
\[\zeta(2n)=(-1)^{n+1}\frac{B_{2n}(2\pi)^{2n}}{2(2n)!}\]
$B_n$ les nombres de Bernoulli-Seki\footnote{Seki Takakazu ou Seki K\^owa est un math\'ematicien japonais contemporain de Bernoulli, qui a d\'ecouvert les ``nombres de Bernoulli'' \`a la m\^eme p\'eriode, soit vers la fin du 17\`eme si\`ecle.}.

\subsubsection{Riemann (\protect{\cite{riemann}}, 1859)} \label{s:riemann} \'Etudie $\zeta(s)$ pour $s\in\C$.

\begin{itemize}
\item Elle converge absolument pour $\Re(s)>1$.
\item Prolongement m\'eromorphe \`a $\C$, p\^ole simple en $s=1$, r\'esidu $=1$.
\item \'Equation fonctionnelle (conjectur\'ee essentiellement par Euler en 1749): si $\Lambda(s)=\pi^{-s/2}\Gamma(s/2)\zeta(s)$, alors \label{zetacompl}\index{Equation fonctionnelle@\'Equation fonctionnelle}
\[\Lambda(s) = \Lambda(1-s).\]
\item Hypoth\`ese de Riemann \index{Hypothèse de Riemann} sur la distribution des z\'eros de $\zeta(s)$.
\end{itemize}

L'\'equation fonctionnelle donne une formule pour $\zeta(s)$ aux entiers n\'egatifs (devin\'ee par Euler):
\[\zeta(-n)= -\frac{B_{n+1}}{n+1} \quad (n>0),\quad \zeta(0) = -\frac{1}{2}.\]

La motivation de Riemann: le th\'eor\`eme des nombres premiers (distribution des nombres premiers). Anticip\'e par \v Ceby\v sev (1848 -- 1850) et d\'emontr\'e par  Hadamard et de la Vall\'ee Poussin (1896).

Une large part de l'exposition qui suit est empruntée à Serre \cite{serre-arith}.

\subsection{Convergence absolue}

\begin{prop}\phantomsection\label{p5} $\zeta(s)$ converge absolument et uniform\'ement sur tout compact du domaine $\Re(s)>1$ o\`u elle d\'efinit une fonction holomorphe, et  diverge pour $s=1$. 
\end{prop}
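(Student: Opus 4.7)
The plan is to reduce everything to the standard convergence test for the real series $\sum n^{-\sigma}$ with $\sigma>1$, and then invoke classical theorems on series of holomorphic functions.

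First I would write $s=\sigma+it$ and observe that $|n^{-s}|=|e^{-s\log n}|=n^{-\sigma}$, so absolute convergence of $\zeta(s)$ is equivalent to convergence of the real series $\sum_{n\ge 1} n^{-\sigma}$. For $\sigma>1$, the standard comparison with the integral $\int_1^{\infty}x^{-\sigma}dx = 1/(\sigma-1)$ shows convergence, with the explicit bound $\sum_{n\ge 1} n^{-\sigma}\le 1+1/(\sigma-1)$.

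Next I would prove uniform convergence on compact subsets $K\subset\{\Re s>1\}$. Since $K$ is compact and $\Re s$ is continuous, the infimum $\sigma_0:=\inf_{s\in K}\Re(s)$ is attained and satisfies $\sigma_0>1$. Then for every $s\in K$ and every $n\ge 1$, $|n^{-s}|\le n^{-\sigma_0}$, and the numerical series $\sum n^{-\sigma_0}$ converges by the previous step. The Weierstrass M-test then gives normal, hence uniform, convergence on $K$.

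Each term $s\mapsto n^{-s}=e^{-s\log n}$ is entire, so holomorphy of $\zeta$ on $\{\Re s>1\}$ follows from the classical theorem that a uniform-on-compacta limit of holomorphic functions is holomorphic (Weierstrass, or equivalently via Morera by integrating along small triangles and exchanging sum and integral).

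Finally, divergence at $s=1$ is the classical divergence of the harmonic series $\sum 1/n$, which again follows from $\sum_{n=1}^{N}1/n\ge \int_{1}^{N+1} dx/x = \log(N+1)\to\infty$. None of the steps presents a real obstacle; the only minor subtlety is to stress that on a compact $K$ one really has $\inf_K \Re(s)>1$ (not merely $>1$ pointwise), which is exactly what allows a single Weierstrass majorant to be chosen.
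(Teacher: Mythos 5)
Your proof is correct and follows essentially the same route as the paper: reduce to the real series via $|n^{-s}|=n^{-\Re(s)}$, compare with $\int_1^\infty t^{-\sigma}\,dt$ for convergence at $\sigma>1$ and divergence at $s=1$, and invoke the Weierstrass theorem on uniform limits of holomorphic functions. The only cosmetic difference is that the paper gets uniform convergence from an explicit tail bound $\bigl|\sum_{n\ge N} n^{-s}\bigr|\le (N-1)^{1-\Re(s)}/(\Re(s)-1)$ coming out of the integral test, whereas you use the M-test with the majorant $\sum n^{-\sigma_0}$, $\sigma_0=\inf_K\Re(s)>1$; both rest on the same compactness observation.
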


\begin{proof} Pour la convergence et la divergence, si $s=\sigma+it$ avec $\sigma,t\in \R$ on a $|n^{-s}|=n^{-\sigma}$, donc on peut supposer $s\in \R$. On utilise alors le

\begin{lemme}[test int\'egral] \phantomsection Soit $f:\R^+\to \R^+$ une fonction d\'ecroissante. Alors $\sum_{n=1}^\infty f(n)<\infty$ $\iff$ $\int_1^\infty f(t)dt<\infty$.
\end{lemme}

\begin{proof} On a les in\'egalit\'es
\[f(n)=\int_n^{n+1} f(n)dt \ge \int_n^{n+1}f(t)dt \ge \int_n^{n+1} f(n+1) dt=f(n+1)\]
soit
\[\int_{n-1}^{n}f(t)dt\ge f(n) \ge \int_n^{n+1}f(t)dt\]
et on somme.
\end{proof}

Cette d\'emonstration donne aussi l'in\'egalit\'e
\[\left |\sum_{n=N}^\infty n^{-s}\right |\le \int_{N-1}^\infty t^{-\Re(s)}dt= \frac{(N-1)^{1-\Re(s)}}{\Re(s)-1}\]
d'o\`u la convergence uniforme. Pour l'holomorphie, on applique aux som\-mes partielles le

\begin{lemme}\phantomsection\label{l2.7.3} Soit $U$ un ouvert de $\C$, et soit $f_{n}:U\to \C$ une suite de fonctions holomorphes, convergeant
uniform\'ement sur tout compact de $U$ vers $f$. Alors $f$ est holomorphe dans
$U$, et les d\'eriv\'ees $f_n'$ des $f_{n}$ convergent uniform\'ement sur tout
compact vers la d\'eriv\'ee $f'$ de $f$.
\end{lemme}  

\begin{proof} Soient $D$ un disque ferm\'e contenu dans $U$, de bord $\partial D$,
orient\'e trigo\-no\-m\'e\-tri\-quement, et $z_0\in \overset{\circ}{D}$. On
applique aux $f_n$ la formule de Cauchy:  
\[f_n(z_0) =  \frac{1}{ 
2i\pi}\int_{\partial D}\frac{f_n(z)  }{ z-z_0}dz.\]
 
Par passage \`a la limite, on obtient 
\[f(z_0) = \frac{1}{2i\pi}\int_{\partial D}\frac{f(z) }{ z-z_0}dz\]
ce qui montre que $f$ est holomorphe dans $U$, d'o\`u la premi\`ere partie du
lemme. La formule 
\[f'(z_0) = \frac{1  }{ 2i\pi}\int_{\partial D}\frac{f(z) }{ (z-z_0)^2}dz \]
donne la seconde partie de la m\^eme mani\`ere.
\end{proof}
\end{proof}

\subsection{Produit eul\'erien}

\begin{prop}[Euler]\phantomsection\label{p3} On a la factorisation
\[\zeta(s) = \prod_p \frac{1}{1-p^{-s}}\]
o\`u le produit, sur l'ensemble des nombres premiers, converge absolument pour $\Re(s)>1$.
\end{prop}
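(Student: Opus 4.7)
The plan rests on unique factorization of positive integers: every $n \geq 1$ admits a unique expression $n = \prod_p p^{a_p}$ with $a_p \geq 0$ and $a_p = 0$ for all but finitely many primes $p$. The strategy is to compare the finite product $\prod_{p \leq N}(1-p^{-s})^{-1}$ with $\zeta(s)$, let $N \to \infty$, and deduce absolute convergence of the infinite product from the bound $\sum_p |p^{-s}| < \infty$ provided by Proposition \ref{p5}.

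First I would expand each factor as a geometric series. For $\Re(s) > 1$ we have $|p^{-s}| = p^{-\sigma} < 1$ (writing $\sigma = \Re(s)$), hence
\[\frac{1}{1-p^{-s}} = \sum_{k=0}^\infty p^{-ks},\]
with absolute convergence. Fixing $N \geq 1$, I would multiply out the finite collection $\{(1-p^{-s})^{-1}\}_{p \leq N}$; absolute convergence of each factor legitimates the reordering, and unique factorization identifies the resulting sum with
\[\prod_{p \leq N} \frac{1}{1-p^{-s}} = \sum_{n \in A_N} \frac{1}{n^s},\]
where $A_N$ denotes the set of positive integers whose prime divisors are all $\leq N$. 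Since $\{1,2,\dots,N\} \subset A_N$, the difference is controlled by a tail of $\zeta(s)$:
\[\Bigl|\zeta(s) - \prod_{p \leq N} \frac{1}{1-p^{-s}}\Bigr| \leq \sum_{n > N} n^{-\sigma},\]
which tends to $0$ as $N \to \infty$ by Proposition \ref{p5}. This simultaneously yields the convergence of the infinite product and its equality with $\zeta(s)$.

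For absolute convergence of the product itself, I would invoke the standard criterion: $\prod_p (1+u_p)$ converges absolutely if and only if $\sum_p |u_p| < \infty$. Applied with $u_p = p^{-s}/(1-p^{-s})$, one has $|u_p| \leq 2\, p^{-\sigma}$ (say, for $p \geq 2$ and $\sigma > 1$), and $\sum_p p^{-\sigma} \leq \sum_n n^{-\sigma} < \infty$ again by Proposition \ref{p5}. The only genuine subtlety is not analytic but combinatorial: checking that the expansion of the finite product covers each $n \in A_N$ exactly once, which is precisely the content of unique factorization; once that is in place, the estimates are routine.
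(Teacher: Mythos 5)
Your proof is correct and follows essentially the same route as the paper: the paper proves a general lemma (Lemme \ref{t2.8.1}) for multiplicative functions $a$, identifying $\prod_{p<x}\sum_m a(p^m)$ with the sum over integers whose prime factors are $<x$ and bounding the difference from $\sum_n a(n)$ by the tail $\sum_{n\ge x}|a(n)|$, then applies it to $a(n)=n^{-s}$ — exactly your argument, merely stated in the special case. The only difference is one of packaging (the paper's abstraction is reused later for Proposition \ref{p1}), not of substance.
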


Pour la d\'emonstration, disons qu'une fonction $a:\N-\{0\}\to \C$ est \emph{multiplicative} (\emph{resp. } \emph{compl\`etement multiplicative}) si elle v\'erifie l'identit\'e $a(mn)=a(m)a(n)$ pour tous $m,n$ premiers entre eux (\emph{resp. } pour tous $m,n$). La proposition r\'esulte du lemme suivant, appliqu\'e \`a $a(n)=n^{-s}$:

\begin{lemme}\phantomsection\label{t2.8.1} Soit $a$ une fonction
multiplicative. Les conditions suivantes sont \'equivalentes:
\begin{thlist}
\item $\displaystyle\sum_{n=1}^\infty|a(n)|<+\infty$;
\item $\displaystyle\prod_p(1+|a(p)|+\cdots+|a(p^m)|+\cdots)<+\infty$.
\end{thlist}
Si ces conditions sont v\'erifi\'ees, on a
\[\sum_{n=1}^\infty a(n)=\prod_p(1+a(p)+\cdots+a(p^m)+\cdots)\]
et si $a$ est compl\`etement multiplicative:
\[\sum_{n=1}^\infty a(n)=\prod_p\frac{1}{1-a(p)}.\]
\end{lemme}

\begin{proof} Supposons (i) v\'erifi\'e. En particulier, $\sum a(p^m)$ converge
abso\-lument pour tout $p$. Pour $x\in\N$, on a donc 
\[\sum_{n\in
E(x)}a(n)=\prod_{p<x}\sum_m a(p^m)\]
o\`u 
\[E(x)=\{n\in\N-\{0\}\mid
\text{tous les facteurs premiers de $n$ sont $<x$}\}.\] 

D'o\`u
\[\left|\sum_{n= 1}^\infty a(n)-\prod_{p<x}\sum_m
a(p^m)\right|=\left|\sum_{n\notin E(x)}a(n)\right|\leq\sum_{n\geq x}|a(n)|\]
ce qui montre que le produit infini converge vers
$\displaystyle\sum_{n=1}^\infty a(n)$. En appliquant ceci \`a $|a(n)|$, on voit
que la convergence est absolue, donc (ii) est vrai. 

Supposons (ii) v\'erifi\'e. Alors $\displaystyle\sum_{n<x}|a(n)|\leq\sum_{n\in
E(x)}|a(n)|=\prod_{p<x}(1+|a(p)|+\cdots+|a(p^m)|+\cdots)$, donc (i) est vrai.
\end{proof}

\begin{cor}\phantomsection $\zeta(s)$ ne s'annule pas pour $\Re(s)>1$.\qed
\end{cor}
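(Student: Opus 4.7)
The plan is to deduce non-vanishing directly from the Euler product established in Proposition \ref{p3}. By that proposition, for $\Re(s) > 1$ we have
\[\zeta(s) = \prod_p \frac{1}{1-p^{-s}},\]
with the product converging absolutely. Each local factor $(1-p^{-s})^{-1}$ is plainly nonzero (and in fact invertible, since $|p^{-s}| = p^{-\Re(s)} < 1$ implies $1-p^{-s} \neq 0$).

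It remains to record the standard fact that an absolutely convergent infinite product of nonzero complex numbers is nonzero. Concretely, writing $u_p = -p^{-s}$, absolute convergence of $\prod_p(1+u_p)$ is equivalent to absolute convergence of $\sum_p u_p$, which is controlled by $\sum_p p^{-\Re(s)} < \infty$; under this condition the partial products $\prod_{p<x}(1+u_p)$ converge to a nonzero limit because $\sum_p \log(1+u_p)$ converges (using the principal branch, legitimate since $|u_p| < 1$), and $\exp$ never vanishes. This is the only nontrivial point.

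The key step — in fact the only step beyond quoting Proposition \ref{p3} — is the general lemma that a convergent (in the strong sense made precise above) infinite product of nonzero factors is nonzero. There is no real obstacle: once one observes that $|p^{-s}| < 1$, the argument reduces to the elementary analytic fact just described, and the corollary follows in one line.
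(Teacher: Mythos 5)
Your proof is correct and follows exactly the route the paper intends: the corollary is stated with an immediate \qed after Proposition \ref{p3}, precisely because non-vanishing follows from the absolutely convergent Euler product with nonzero factors, which is the argument you spell out. Your elaboration of the standard lemma (convergence of $\sum_p\log(1-p^{-s})^{-1}$, then exponentiation) is the correct way to fill in the one nontrivial detail the paper leaves implicit.
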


\subsection{S\'eries formelles de Dirichlet}\label{section2.3} Cette notion est tr\`es pratique pour faire des calculs formels sans (pour commencer) se soucier de convergence:

\begin{defn}\phantomsection\label{d2.3.1}On appelle \emph{s\'erie formelle de Dirichlet} (\`a
coefficients complexes) une
expression
\[f=\sum_{n\geq 1}\frac{a_n}{ n^s}\]
o\`u $n$ d\'ecrit l'ensemble des entiers positifs, et les $a_n$ sont des nombres
complexes. Si $\displaystyle f=\sum_{n\geq 1}\frac{a_n}{ n^s}$ et
$\displaystyle g=\sum_{n\geq 1}\frac{b_n}{ n^s}$ sont deux s\'eries formelles de
Dirichlet, on d\'efinit leur somme et leur produit par
$$f+g=\sum_{n\geq 1}\frac{a_n+b_n}{ n^s};$$
$$fg=\sum_{n\geq 1}\frac{c_n}{ n^s}$$
avec $c_n=\sum_{pq=n}a_pb_q$.\smallskip

Les s\'eries formelles de Dirichlet forment un anneau commutatif unitaire:
l'alg\`ebre des s\'eries formelles de Dirichlet \`a coefficients complexes,
not\'ee $\Dir(\C)$.
\end{defn} 

\begin{rque}\phantomsection L'alg\`ebre $\Dir(\C)$ n'est autre
que l'\emph{alg\`ebre large} du mono\"{\i}de multiplicatif $\N-\{0\}$
\cite{bourbaki1}. En \'ecrivant tout entier comme produit de
facteurs premiers, on voit donc qu'elle est isomorphe \`a l'alg\`ebre des
s\'eries formelles \`a coefficients complexes en une infinit\'e d\'enombrable de
variables (cf. \cite[ex. 2 de l'appendice]{bbki-lie}).
\end{rque}

\begin{defn}\phantomsection\label{d2.3.2} Soit $\displaystyle f=\sum_{n\geq 1}\frac{a_n}{n^s}$ une
s\'erie formelle de Dirichlet. Si $f\neq 0$, on appelle \emph{ordre} de $f$, et
on note $\omega(f)$, le plus petit entier $n$ tel que $a_n\neq 0$. Si $f=0$, on
pose $\omega(f)=+\infty$.
\end{defn}

\begin{prop}\phantomsection{}\label{p2.3.1} Les sous-ensembles $\{f\mid\omega(f)\geq N\}$ de
$\Dir(\C)$ sont des id\'eaux de $\Dir(\C)$. Ils
d\'efinissent une topologie sur $\Dir(\C)$ pour laquelle
$\Dir(\C)$ est un anneau topologique complet.\qed
\end{prop}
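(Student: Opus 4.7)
The plan is to argue in three short steps. First, I would verify that each set $I_N=\{f\mid\omega(f)\geq N\}$ is an ideal by direct inspection of coefficients. Additive closure is immediate from the elementary observation $\omega(f+g)\geq\min(\omega(f),\omega(g))$. For stability under multiplication, let $f=\sum a_n/n^s$ lie in $I_N$ and $g=\sum b_n/n^s$ be arbitrary; the $n$-th coefficient $c_n=\sum_{pq=n}a_pb_q$ of $fg$ vanishes for $n<N$, since each nonzero summand requires $p\geq N$, forcing $n=pq\geq N$. The same computation in fact gives $I_M\cdot I_N\subseteq I_{MN}$.

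Next, since the decreasing chain $I_1\supseteq I_2\supseteq\cdots$ has trivial intersection (an element of all the $I_N$ has every coefficient equal to $0$), taking $\{I_N\}_{N\geq 1}$ as a fundamental system of neighborhoods of $0$ defines a Hausdorff linear topology. Continuity of addition follows immediately from $\omega(f-g)\geq\min(\omega(f),\omega(g))$, and continuity of multiplication from the ideal property just established: writing $fg-f_0g_0=(f-f_0)g+f_0(g-g_0)$, both terms lie in $I_N$ as soon as $f-f_0$ and $g-g_0$ do.

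The only step with any substance is completeness. Given a Cauchy sequence $(f_k)$ with $f_k=\sum_n a_n^{(k)}/n^s$, for every $N$ there exists $K=K(N)$ with $f_k-f_{k'}\in I_N$ for all $k,k'\geq K$. Unpacking this, for each fixed $n<N$ the complex sequence $(a_n^{(k)})_k$ is eventually constant beyond $k=K$. Set $a_n$ equal to this eventual common value, and put $f=\sum_n a_n/n^s\in\Dir(\C)$. Then by construction $f-f_k\in I_N$ for every $k\geq K(N)$, whence $f_k\to f$.

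I do not expect any real obstacle: the argument is formally parallel to the proof of $(X)$-adic completeness of $\C[[X]]$, and is a particular case of the completeness of the large algebra of the filtered monoid $\N-\{0\}$ mentioned in the remark following Definition \ref{d2.3.1}. The only point to watch is that the coefficients stabilize independently at each index, which is precisely what the filtration by $\omega$ guarantees.
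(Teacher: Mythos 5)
Your proof is correct. The paper leaves this proposition without proof (it is stated with a \qed as an immediate verification), and your argument is exactly the standard one that is intended: the coefficient computation for the ideal property, the linear topology generated by the decreasing chain $I_1\supseteq I_2\supseteq\cdots$, and completeness by stabilization of each coefficient — the same mechanism as for the $(X)$-adic completeness of $\C[[X]]$ and consistent with the identification of $\Dir(\C)$ with the large algebra of the monoid $\N-\{0\}$ in the remark following Definition \ref{d2.3.1}.
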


\begin{cor}\phantomsection\label{c2.3.1} a) Une suite $(f_n)_{n\in \N}$ est sommable dans
$\Dir(\C)$ si et seulement si
$\displaystyle\lim_{n\rightarrow\infty}\omega(f_n)=+\infty$.\\
b) Une suite $(1+f_n)_{n\in \N}$, o\`u $\omega(f_n)>1$ pour tout $n$, est
multipliable dans $\Dir(\C)$ si et seulement si
$\displaystyle\lim_{n\rightarrow\infty}\omega(f_n)=+\infty$.\qed
\end{cor}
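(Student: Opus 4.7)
The plan is to reduce both parts to the Cauchy criterion in the complete topological ring $\Dir(\C)$ (Proposition \ref{p2.3.1}). The two elementary bounds that do all the work are $\omega(f+g) \geq \min(\omega(f), \omega(g))$, immediate from the definition of $\omega$, and the multiplicative analogue $\omega(fg) \geq \omega(f)\omega(g)$, which one reads off the convolution formula $c_n = \sum_{pq=n} a_p b_q$: if $n < \omega(f)\omega(g)$, no factorisation $n = pq$ can satisfy both $p \geq \omega(f)$ and $q \geq \omega(g)$, so $c_n = 0$.

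For (a), I consider the net of partial sums $S_F = \sum_{n \in F} f_n$ indexed by finite $F \subset \N$ ordered by inclusion. It is Cauchy iff for each $N$ there is a finite $F_0$ with $\omega(S_F - S_{F_0}) \geq N$ for every finite $F \supset F_0$. By the additive bound, $\omega(S_F - S_{F_0}) \geq \min_{n \in F \setminus F_0} \omega(f_n)$, so it suffices to take $F_0 = \{n : \omega(f_n) < N\}$, which is finite precisely when $\omega(f_n) \to \infty$. Conversely, testing with $F = F_0 \cup \{n\}$ for a single $n \notin F_0$ shows that the Cauchy condition forces $\omega(f_n) \geq N$ outside a finite set, i.e., $\omega(f_n) \to \infty$. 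Completeness then converts the Cauchy condition into summability.

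For (b), the same strategy applies to the partial products $P_F = \prod_{n \in F}(1+f_n)$. I first note that $\omega(P_F) = 1$ for every finite $F$, since the constant coefficient of $P_F$ equals $1$; hence multiplication by $P_F$ does not decrease orders, and in fact preserves them. Expanding,
\[P_F - P_{F_0} = P_{F_0}\Bigl(\prod_{n \in F \setminus F_0}(1+f_n) - 1\Bigr) = P_{F_0} \sum_{\emptyset \neq L \subset F \setminus F_0} \prod_{n \in L} f_n,\]
and using $\omega(f_n) \geq 2$, the multiplicative bound yields $\omega(\prod_L f_n) \geq \min_{n \in L} \omega(f_n)$, so that $\omega(P_F - P_{F_0}) \geq \min_{n \in F \setminus F_0} \omega(f_n)$. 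The dichotomy of (a) then applies verbatim: for the necessary direction, $F = F_0 \cup \{n\}$ gives $\omega(P_F - P_{F_0}) = \omega(f_n)$ directly. No real obstacle appears beyond these formal manipulations; the only point that requires a moment's attention is the verification of the multiplicative bound on $\omega$, which makes both statements simultaneously transparent.
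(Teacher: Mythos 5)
Votre démonstration est correcte et suit la voie attendue : le texte laisse ce corollaire sans démonstration (il découle du critère de Cauchy dans l'anneau topologique complet $\Dir(\C)$ de la proposition \ref{p2.3.1}), et c'est exactement ce que vous rédigez, les deux inégalités $\omega(f+g)\ge\min(\omega(f),\omega(g))$ et $\omega(fg)\ge\omega(f)\omega(g)$ faisant tout le travail. Le seul point que vous affirmez sans le vérifier explicitement — la multiplication par $P_{F_0}$ \emph{préserve} l'ordre, utilisée pour la réciproque de b) — se justifie en une ligne : le coefficient de $P_{F_0}f_n$ en $m=\omega(f_n)$ vaut $a_1b_m=b_m\ne 0$ puisque la seule factorisation $m=pq$ avec $p\ge 1$ et $q\ge m$ est $p=1$, $q=m$.
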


Le lemme \ref{t2.8.1} donne:

\begin{prop}\phantomsection\label{p1} Soit $a:\N-\{0\}\to \C$ une fonction compl\`etement multiplicative, et soit $\alpha\in \R$. Dans l'identit\'e formelle
\[\sum_{n\ge 1} \frac{a(n)}{n^s} = \prod_p \left(1- \frac{a(p)}{p^{s}}\right)^{-1}\]
le membre de gauche converge absolument pour $\Re(s)>\alpha$ si et seulement si le membre de droite converge absolument pour $\Re(s)>\alpha$.
\end{prop}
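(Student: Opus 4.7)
The plan is to reduce the proposition to Lemma \ref{t2.8.1} applied pointwise in $s$. Fix $s \in \C$ with $\Re(s) > \alpha$ and introduce the auxiliary function $b \colon \N - \{0\} \to \C$ defined by $b(n) = a(n)/n^s$. Since both $a$ and $n \mapsto n^{-s}$ are completely multiplicative, so is their pointwise product $b$. The key observation is that Lemma \ref{t2.8.1} applied to $b$ transcribes precisely into Proposition \ref{p1} at the point $s$; letting $s$ vary in the half-plane $\Re(s) > \alpha$ then gives the full statement.

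More concretely, condition (i) of Lemma \ref{t2.8.1} for $b$ reads $\sum_n |a(n)|/n^{\Re(s)} < \infty$, which by definition is absolute convergence of $\sum_n a(n)/n^s$ at $s$. For condition (ii), the complete multiplicativity of $b$ gives $|b(p^m)| = (|a(p)|/p^{\Re(s)})^m$, so each local Euler factor collapses to a geometric series equal to $(1 - |a(p)|/p^{\Re(s)})^{-1}$ when $|a(p)|/p^{\Re(s)} < 1$, and to $+\infty$ otherwise. Hence condition (ii) amounts to the finiteness of $\prod_p (1 - |a(p)|/p^{\Re(s)})^{-1}$.

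The only real obstacle is to identify this last quantity with what the statement of the proposition calls \emph{absolute convergence of the right-hand side}. I would unwind the standard convention: writing the product as $\prod_p (1 + u_p)$ with $u_p = (1 - a(p)/p^s)^{-1} - 1$, absolute convergence means $\prod_p (1 + |u_p|) < \infty$. Computing $1 + |u_p| \le \sum_{m \geq 0} |a(p)|^m/p^{m\Re(s)} = (1 - |a(p)|/p^{\Re(s)})^{-1}$ (with equality when the latter is finite), one checks that the two notions coincide. Once this dictionary is in place, the equivalence (i) $\iff$ (ii) of Lemma \ref{t2.8.1} delivers both implications simultaneously, and the proposition follows.
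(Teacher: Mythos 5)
Your proposal is correct and follows exactly the route the paper intends: the paper's entire proof of Proposition \ref{p1} is the single phrase ``Le lemme \ref{t2.8.1} donne'', i.e.\ apply Lemma \ref{t2.8.1} to the completely multiplicative function $n\mapsto a(n)n^{-s}$ for each fixed $s$ in the half-plane, which is precisely what you do. Your extra care in matching condition (ii) with the convention for absolute convergence of the infinite product only fills in details the paper leaves implicit.
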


\subsection{Prolongement \`a $\Re(s)>0$; p\^ole et r\'esidu en $s=1$}

\begin{prop}\phantomsection\label{p2} La fonction $\zeta(s)$ se prolonge m\'e\-ro\-mor\-phi\-que\-ment au domaine $\Re(s)>0$, avec un p\^ole simple en $s=1$ de r\'esidu $1$.
\end{prop}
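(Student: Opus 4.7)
Le plan est de comparer $\zeta(s)$ à l'intégrale $\int_1^\infty t^{-s}\,dt = \frac{1}{s-1}$ (convergente pour $\Re(s)>1$), pour exhiber la différence comme une série holomorphe sur un domaine plus vaste. Plus précisément, pour $\Re(s) > 1$, on écrit
\[\zeta(s) - \frac{1}{s-1} = \sum_{n=1}^\infty \phi_n(s), \qquad \phi_n(s) := \frac{1}{n^s} - \int_n^{n+1} \frac{dt}{t^s}.\]
Chaque $\phi_n$ est une fonction entière de $s$. Le but sera de montrer que cette série converge uniformément sur tout compact du demi-plan $\Re(s) > 0$, ce qui, par le lemme \ref{l2.7.3}, définira dans ce demi-plan une fonction holomorphe coïncidant avec $\zeta(s) - 1/(s-1)$ sur $\Re(s)>1$ par prolongement analytique, fournissant le prolongement méromorphe cherché, avec pôle simple en $s=1$ de résidu $1$.

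L'étape technique est la majoration de $|\phi_n(s)|$. En écrivant $\phi_n(s) = \int_n^{n+1}\bigl(n^{-s} - t^{-s}\bigr)\,dt$ et en utilisant
\[n^{-s} - t^{-s} = \int_n^t \frac{-s}{u^{s+1}}\,du,\]
on obtient, pour $t\in[n,n+1]$ :
\[|n^{-s} - t^{-s}| \le |s|\int_n^t \frac{du}{u^{\Re(s)+1}} \le \frac{|s|}{n^{\Re(s)+1}},\]
d'où $|\phi_n(s)| \le |s|\,n^{-\Re(s)-1}$. Sur un compact $K\subset\{\Re(s)>0\}$, on a $\Re(s)\ge \sigma_0>0$ et $|s|\le M$, donc $|\phi_n(s)|\le M\,n^{-\sigma_0-1}$, terme général d'une série convergente indépendante de $s\in K$ : la convergence est normale sur $K$.

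Le point qui mérite attention est précisément cette majoration : il faut gagner un facteur $n^{-1}$ supplémentaire par rapport à la borne naïve $|n^{-s}|=n^{-\Re(s)}$ afin que la série soit sommable dès que $\Re(s)>0$. C'est l'intégration sur $[n,n+1]$, combinée à la régularité de $t\mapsto t^{-s}$, qui fournit ce gain via l'inégalité des accroissements finis. Le reste n'est qu'application du lemme \ref{l2.7.3} et constatation que $1/(s-1)$ a un pôle simple de résidu $1$ en $s=1$, tandis que la partie $\sum \phi_n(s)$ y est holomorphe.
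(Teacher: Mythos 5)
Votre démonstration est correcte, mais elle suit une route réellement différente de celle du texte. Vous comparez $\zeta(s)$ à $\int_1^\infty t^{-s}\,dt=\frac{1}{s-1}$ et montrez que la différence $\sum_n\phi_n(s)$ converge normalement sur tout compact de $\Re(s)>0$, ce qui donne d'un seul coup l'holomorphie de $\zeta(s)-\frac{1}{s-1}$ sur tout ce demi-plan. Le texte procède autrement: il introduit la série alternée $\zeta_2(s)=\sum(-1)^{n+1}n^{-s}$, convergente pour $\Re(s)>0$ par le lemme d'Abel, et utilise l'identité $\zeta_2(s)=(1-2^{1-s})\zeta(s)$; comme le facteur $1-2^{1-s}$ s'annule aux points $s=1+\frac{2i\pi k}{\log 2}$, il faut ensuite un argument supplémentaire (la variante $\zeta_3$ et l'impossibilité de $3^k=2^\ell$) pour exclure ces pôles parasites, puis la règle de l'Hôpital et $\zeta_2(1)=\log 2$ pour calculer le résidu. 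Votre méthode évite entièrement la question des pôles parasites et donne le résidu immédiatement; celle du texte a l'avantage de s'appuyer sur le lemme d'Abel développé juste avant et de préparer le terrain pour les séries de Dirichlet générales du chapitre \ref{s4}. Un détail sans conséquence: vous écrivez $n^{-s}-t^{-s}=\int_n^t\frac{-s}{u^{s+1}}\,du$ alors que cette intégrale vaut $t^{-s}-n^{-s}$; le signe est erroné, mais comme vous passez aussitôt aux valeurs absolues, la majoration $|\phi_n(s)|\le|s|\,n^{-\Re(s)-1}$ et tout ce qui suit restent valides.
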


Pour ceci, on a besoin du

\begin{lemme}[lemme d'Abel]\phantomsection\label{l2.7.1} Soient $(a_{n})$ et $(b_{n})$ deux
suites de nombres complexes. Posons $A_{m,p}  =
\sum^{p}_{n=m}a_{n}$ et  $S_{m,m'} =  \sum^{m'}_{n=m}a_{n}b_{n}$.
Alors on a
\[ S_{m,m'} = \sum^{m'-1}_{n=m}
A_{m,n}(b_{n}-b_{n+1}) + A_{m,m'} b_{ m'}.\]
En particulier, supposons $|A_{m,n}|\leq \varepsilon$ pour tout
$n\leq m'$, les $b_n$ r\'eels et la suite $(b_n)$ d\'ecroissante. Alors:
\[| S_{m,m'}| \leq \varepsilon(b_m-b_{m'}).\qed\]
\end{lemme}

\begin{proof}[D\'emonstration de la proposition \protect{\ref{p2}}] Consid\'erons 
\[ \zeta_{2}(s) =\sum_{n=1}^\infty\frac{(-1)^{n+1} }{
n^{s}}.\] 

On applique le lemme \ref{l2.7.1} avec $a_n=(-1)^{n+1}$, $b_n=n^{-s}$: on conclut que $ \zeta_{2} $ converge pour
$\Re(s)  >  0$. De plus,
$$
\zeta(s) - \zeta_{2}(s)= 2^{1-s}\zeta(s)   \Rightarrow
 \zeta_{2}(s) = (1-2^{1-s})\zeta(s).
$$

Par cons\'equent la fonction $(s-1)\zeta(s)$ se prolonge m\'eromorphiquement dans
le demi plan $\Re(s)  >  0$. En $s = 1$, on a $ \zeta_{2}(s) = \log 2$. La r\`egle de l'H\^opital donne alors
$$\lim_{s\rightarrow1}(s-1)\zeta(s) = 1.$$

Plus g\'en\'eralement, consid\'erons pour $r \in \N- \{0,1\}$ 
$$\zeta_r(s) = \frac{1}{ 1^s}+ \frac{1}{ 2^s}+\cdots+\frac{1}{ (r-1)^s}
 - \frac{r-1}{ r^s}+\frac{1}{ (r+1)^s}+\cdots$$

On voit tout de suite que les sommes partielles des coefficients
de $\zeta_r$ sont born\'ees par $r-1$. Donc $ \zeta_r $ est
analytique pour $\Re(s)>0$. De plus, on a comme ci-dessus:
\begin{equation}\label{eq2.2}
\left(1- \frac{1}{ r^{s-1}}\right)\zeta(s)= \zeta_r(s)
\end{equation}

L'\'equation \eqref{eq2.2} pour $r=2$ montre qu'un p\^ole $s\neq 1$ de $\zeta(s)$
doit v\'erifier $2^{s-1}= 1$,  \emph{i.e.} $\displaystyle s=\frac{2i\pi k }{ \log
2}+ 1$  pour un entier $k\neq 0$. De m\^eme, en utilisant l'\'equation 
\eqref{eq2.2} pour $r=3$, on voit que $\displaystyle s=\frac{2i\pi \ell }{ \log 3}+
1$ pour $\ell$ entier $\neq 0$. Mais alors on a $3^{k}=2^{\ell}$ , ce qui est
impossible.
\end{proof}

\subsection{\'Equation fonctionnelle}\index{Equation fonctionnelle@\'Equation fonctionnelle} Rappelons la fonction gamma (d\'e\-fi\-nie par Euler en 1730):
\[\Gamma(s) = \int_0^\infty t^s e^{-t} \frac{dt}{t}.\]

Cette int\'egrale converge pour $\Re(s)>0$; on a $\Gamma(1)=1$ et l'\'equation fonctionnelle
\begin{equation}
\Gamma(s+1)=s\Gamma(s).
\end{equation}

Il en r\'esulte que  $\Gamma(n)=(n-1)!$ pour $n\in \N-\{0\}$, et que  $\Gamma(s)$ se prolonge m\'eromorphiquement \`a tout le plan complexe avec  un p\^ole simple en $s=-n$ pour tout $n\in \N$, de r\'esidu $(-1)^n/n!$. Enfin, on a la \emph{formule des compl\'ements}:
\begin{equation}\label{eq:compl}
\Gamma(s)\Gamma(1-s) = \frac{\pi}{\sin(\pi s)}
\end{equation}
et la \emph{formule de distribution}:
\begin{equation}\label{eq:distr}
\prod_{k=0}^{m-1} \Gamma(s+\frac{k}{m})= (2\pi)^{\frac{m-1}{2}} m^{\frac{1}{2}-ms}\Gamma(ms).
\end{equation}

Posons
\[\xi(s)=\frac{s(s-1)}{2}\pi^{-s/2}\Gamma(s/2)\zeta(s).\]

\begin{thm}[Riemann]\phantomsection\label{triemann} a) La fonction $\xi(s)$ se prolonge ho\-lo\-mor\-phi\-que\-ment \`a tout le plan complexe, avec l'\'equation fonctionnelle
\[\xi(1-s) = \xi(s).\]
b) La fonction $\zeta(s)$ se prolonge m\'eromorphiquement \`a tout le plan complexe, avec l'\'equation fonctionnelle
\[\zeta(1-s)=\frac{2}{(2\pi)^s}\cos(\frac{\pi s}{2}) \Gamma(s)\zeta(s).\]
Elle a un z\'ero simple en $s=-n$ pour tout entier $n> 0$ pair (ces z\'eros sont appel\'es ``triviaux''), et pas d'autres z\'eros en dehors de la ``bande critique'' $0\le \Re(s)\le 1$.
\end{thm}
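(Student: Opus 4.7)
Le plan consiste à suivre la démonstration classique de Riemann, par représentation intégrale de type Mellin et transformation de Jacobi pour la fonction thêta. D'abord, pour $\Re(s) > 1$, la substitution $u = \pi n^2 t$ dans $\Gamma(s/2) = \int_0^\infty u^{s/2-1}e^{-u}\,du$ livre $\pi^{-s/2}n^{-s}\Gamma(s/2) = \int_0^\infty t^{s/2-1}e^{-\pi n^2 t}\,dt$; en sommant sur $n \geq 1$ (l'échange somme-intégrale étant justifié par la convergence absolue), on obtient l'identité-clé
\[\Lambda(s) := \pi^{-s/2}\Gamma(s/2)\zeta(s) = \int_0^\infty t^{s/2-1}\psi(t)\,dt, \qquad \psi(t) := \sum_{n \geq 1} e^{-\pi n^2 t}.\]

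L'étape cruciale sera d'invoquer la transformation de Jacobi $\theta(1/t) = \sqrt{t}\,\theta(t)$ pour $\theta(t) = \sum_{n \in \Z} e^{-\pi n^2 t} = 1 + 2\psi(t)$, conséquence de la formule sommatoire de Poisson appliquée à la gaussienne auto-duale $e^{-\pi x^2}$; elle se réécrit $\psi(1/t) = \sqrt{t}\,\psi(t) + (\sqrt{t}-1)/2$. On découpera alors $\int_0^\infty = \int_0^1 + \int_1^\infty$, on transformera la première intégrale par $t \mapsto 1/t$ et on utilisera cette relation; après intégration explicite des termes en $t^{\pm 1/2}$ contre $t^{s/2-1}$, il viendra
\[\Lambda(s) = \int_1^\infty \bigl(t^{s/2} + t^{(1-s)/2}\bigr)\psi(t)\,\frac{dt}{t} - \frac{1}{s(1-s)}.\]
L'intégrale est entière en $s$ (la décroissance $\psi(t) = O(e^{-\pi t})$ à l'infini assure la convergence normale sur tout compact de $\C$) et manifestement invariante par $s \mapsto 1-s$; en multipliant par $s(s-1)/2$, le terme polaire devient la constante $1/2$, ce qui établira la partie a).

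Pour b), on déduira l'équation fonctionnelle de $\zeta$ en traduisant $\xi(1-s) = \xi(s)$: le quotient $\Gamma(s/2)/\Gamma((1-s)/2)$ se calcule en combinant la formule des compléments \eqref{eq:compl} appliquée à $s/2$ et la formule de distribution \eqref{eq:distr} pour $m=2$ (qui fournit $\Gamma(s/2)\Gamma((s+1)/2) = 2^{1-s}\sqrt{\pi}\,\Gamma(s)$), et fait apparaître exactement le facteur $2(2\pi)^{-s}\cos(\pi s/2)\Gamma(s)$ voulu. Pour la localisation des zéros: la proposition \ref{p3} donne la non-annulation de $\zeta$ pour $\Re(s) > 1$; en posant $s = 1-s'$ avec $\Re(s') > 1$ dans cette équation, les facteurs $\Gamma(s')$, $\zeta(s')$ et $(2\pi)^{s'}$ sont tous non nuls, de sorte que les zéros de $\zeta$ dans $\Re(s) < 0$ sont exactement les zéros (simples) de $\cos(\pi s'/2)$ aux entiers impairs $s' \geq 3$; ce sont les $s = -2k$, $k \geq 1$.

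Le point délicat est la transformation thêta elle-même: sans la formule sommatoire de Poisson et l'auto-dualité de la gaussienne pour la transformation de Fourier, on ne dispose d'aucun lien naturel entre $\psi(t)$ et $\psi(1/t)$, et toute la symétrie $s \leftrightarrow 1-s$ disparaît.
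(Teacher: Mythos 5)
Votre démonstration suit exactement la même voie que celle du texte (qui reprend Ellison): représentation de Mellin de $\pi^{-s/2}\Gamma(s/2)\zeta(s)$ via la fonction thêta, transformation de Jacobi issue de la formule sommatoire de Poisson, découpage $\int_0^1+\int_1^\infty$, puis réduction de b) à a) par les formules des compléments et de distribution. L'argument est correct; vous explicitez seulement un peu plus que le texte la localisation des zéros triviaux, ce qui n'en fait pas une approche différente.
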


(La fonction $\xi(s)$ diff\`ere de la fonction $\Lambda(s)$ du \S \ref{s:riemann} par un facteur $\frac{s(s-1)}{2}$:  leurs \'equations fonctionnelles sont \'equivalentes et $\xi$ a l'avantage d'\^etre enti\`ere.)

\begin{proof} Les formules \eqref{eq:compl} et \eqref{eq:distr} r\'eduisent b) \`a a). Je suivrai la preuve donn\'ee dans Ellison \cite[preuve du th. 5.2]{ellison}:

Pour tout $s$ v\'erifiant $\Re(s)>0$ et pour tout entier $n>0$, on a
\[\Gamma\left(\frac{s}{2}\right)(\pi n^2)^{-\frac{s}{2}} =\int_0^\infty t^{\frac{s}{2}-1}e^{-\pi n^2 t}dt.  \]

Supposant $\Re(s)>1$, on obtient en sommant sur $n>0$:
\[\Gamma\left(\frac{s}{2}\right)\pi^{-\frac{s}{2}}\zeta(s) =\sum_{n=1}^\infty\int_0^\infty t^{\frac{s}{2}-1}e^{-\pi n^2 t}dt.  \]

Comme la s\'erie de droite est convergente pour $s$ r\'eel $>1$, on peut intervertir somme et int\'egrale, ce qui donne:
\[\Gamma\left(\frac{s}{2}\right)\pi^{-\frac{s}{2}}\zeta(s) =\int_0^\infty \sum_{n=1}^\infty t^{\frac{s}{2}-1}e^{-\pi n^2 t}dt= \int_0^\infty  t^{\frac{s}{2}-1}\omega(t)dt \]
avec $\omega(t)= \sum_{n=1}^\infty e^{-\pi n^2 t}$. En \'ecrivant $\int_0^\infty=\int_0^1+\int_1^\infty$ dans le membre de droite, il reste \`a d\'emontrer la formule
\begin{equation}\label{eq:omega}
1+2\omega(t) =\frac{1}{\sqrt{t}}\left(1+2\omega\left(\frac{1}{t}\right)\right) 
\end{equation}
qui impliquera
\[\xi(s) = \frac{1}{2} +\frac{s(s-1)}{2} \int_1^\infty \left(t^{\frac{s}{2}}+t^{\frac{1-s}{2}}\right)\omega(t)\frac{dt}{t}.\]

(Comme $\omega(t)=O(e^{-\pi t})$, le membre de droite de cette identit\'e s'\'etend \`a tout le plan complexe, et est invariant par $s\mapsto 1-s$.)

L'\'equation fonctionnelle \eqref{eq:omega} est le cas $z=0$ de l'identit\'e
\[\sum_{n=-\infty}^{+\infty} e^{-\pi(n+z)^2t} =\frac{1}{\sqrt{t}}\sum_{n=-\infty}^{+\infty} e^{-\pi n^2/t+2i\pi nz} \]
o\`u le membre de droite est le d\'eveloppement en s\'erie de Fourier de la fonction de gauche, p\'eriodique de p\'eriode $1$ en $z$ (\emph{cf.} \cite[preuve du th. 5.1]{ellison}).
\end{proof}

\begin{rque}\phantomsection Il y a une d\'emonstration plus \'el\'ementaire du prolongement analytique de $\zeta(s)$, dans l'esprit de la proposition \ref{p2}: 
\[(1-2^{1-s})\zeta(s)=\sum_{n=1}^\infty \frac{\Gamma(s+n)}{\Gamma(s)n!} \frac{\zeta(n+s)}{2^{n+s}} \]
\cf \cite[ch. 2, ex. 2.3]{ellison}. Malheureusement, elle ne semble pas s'\'etendre aux fonctions z\^eta de Dedekind\index{Fonction zêta!de Dedekind}\dots
\end{rque}

\subsection{L'hypoth\`ese de Riemann}\index{Hypothèse de Riemann}

Ce qui pr\'ec\`ede permet de formuler:

\begin{conj}[Hypoth\`ese de Riemann]\phantomsection Les z\'eros non triviaux de $\zeta(s)$ sont situ\'es sur la ``droite critique'' $\Re(s)=\frac{1}{2}$.
\end{conj}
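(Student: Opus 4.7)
Il s'agit de l'hypoth\`ese de Riemann elle-m\^eme, ouverte depuis 1859, et je ne pr\'etends \'evidemment pas en proposer une d\'emonstration. Je me contente d'esquisser les grandes strat\'egies qui ont \'et\'e tent\'ees, et d'indiquer l'obstacle principal que chacune rencontre.

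\textbf{Voie analytique directe.} Le point de d\'epart est l'absence de z\'eros sur la droite $\Re(s)=1$ (Hadamard et de la Vall\'ee Poussin, 1896), obtenue en combinant le produit eul\'erien (proposition \ref{p3}) avec l'in\'egalit\'e trigonom\'etrique $3+4\cos\theta+\cos 2\theta=2(1+\cos\theta)^2\ge 0$. On s'efforce alors d'\'elargir la r\'egion sans z\'eros vers l'int\'erieur de la bande critique; les m\'ethodes de Vinogradov-Korobov donnent le meilleur r\'esultat actuel, mais aucune variante connue n'atteint une bande $\Re(s)\ge 1-\delta$ avec $\delta>0$ fixe, et il semble y avoir l\`a une limitation intrins\`eque de l'approche.

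\textbf{Crit\`ere de Weil et programme de Hilbert-P\'olya.} L'hypoth\`ese de Riemann est \'equivalente \`a la positivit\'e d'une certaine forme hermitienne sur les fonctions test (formule explicite de Weil). On chercherait \`a interpr\'eter cette positivit\'e spectralement, en construisant un op\'erateur auto-adjoint dont le spectre fournit, apr\`es renormalisation, les parties imaginaires des z\'eros non triviaux. Les statistiques de Montgomery-Odlyzko, qui co\"incident avec celles de l'ensemble unitaire gaussien, accr\'editent cette vision; Connes en a propos\'e une r\'ealisation en g\'eom\'etrie non commutative via l'action des id\`eles sur les ad\`eles, mais la construction pr\'ecise d'un tel op\'erateur reste \`a faire.

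\textbf{Analogie avec le cas g\'eom\'etrique.} Sur les courbes sur un corps fini, l'analogue est d\'emontr\'e au chapitre \ref{s3}; la cl\'e est la positivit\'e de l'accouplement d'intersection sur la surface $X\times X$ et la formule de Lefschetz. Transposer cette strat\'egie au cas de Riemann exigerait de donner un sens \`a un objet tel que ``$\Spec \Z\times_{\F_1}\Spec \Z$'' muni d'une th\'eorie d'intersection positive, ce qui est \`a mon sens l'obstacle principal et la piste la plus profonde. C'est pr\'ecis\'ement l'absence d'une telle g\'eom\'etrie sous-jacente \`a $\Spec\Z$ qui emp\^eche toute adaptation directe des m\'ethodes cohomologiques de Grothendieck-Deligne au cas arithm\'etique originel.
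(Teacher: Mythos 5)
Vous avez raison de ne pas proposer de d\'emonstration: l'\'enonc\'e est pr\'ecis\'ement l'hypoth\`ese de Riemann, que le texte pr\'esente comme une conjecture ouverte et ne d\'emontre nulle part. Votre tour d'horizon des strat\'egies (r\'egions sans z\'eros, programme de Hilbert--P\'olya, analogie avec les courbes sur un corps fini) co\"incide avec les pistes \'evoqu\'ees dans le texte lui-m\^eme et n'appelle aucune correction.
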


Pourquoi l'hypoth\`ese de Riemann est-elle importante? D'abord \`a cause de la ``formule explicite'' reliant les z\'eros de $\zeta(s)$ \`a la distribution des nombres premiers.

Pour $x\in \R^+$, posons 
\[\pi(x)= |\{p\mid p \text{ premier}, p\le x\}|\] 
et
\[\Pi(x) = \sum_{n=1}^\infty \frac{1}{n} \pi(x^{1/n})\]
de sorte que
\[\pi(x) = \sum_{n=1}^\infty \mu(n)\frac{1}{n} \Pi(x^{1/n})\]
o\`u $\mu$ est la fonction de M\"obius. Posons aussi
\[\Li(z) = \int_0^z \frac{dt}{\log t} \quad \text{(logarithme int\'egral).}\footnote{Il faut un peu de soin pour donner un sens \`a cette int\'egrale divergente: elle d\'efinit une fonction multiforme avec points de branchement en $0$ et $1$.}\]

\begin{thm}[Riemann, 1859]\phantomsection On a
\[\Li(x) -\sum_\rho \Li(x^\rho) - \log(2) + \int_x^\infty \frac{dt}{t(t^2-1)\log t}= \lim_{\epsilon \to 0} \frac{\Pi(x-\epsilon)+\Pi(x+\epsilon)}{2}\]
o\`u la somme de gauche (non absolument convergente) porte sur les z\'eros non triviaux de $\zeta(s)$, ordonn\'es par valeurs absolues croissantes de leurs parties imaginaires.
\end{thm}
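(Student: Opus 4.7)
The plan is to combine Mellin inversion on the Euler product of $\zeta$ with the Hadamard factorization of the entire function $\xi(s)$ of Theorem \ref{triemann}. I organize the argument in three steps, then identify the principal technical difficulty.

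\emph{Step 1: Mellin representation of $\log\zeta$.} Taking the logarithm of the Euler product (Proposition \ref{p3}) and interpreting the resulting double sum as a Stieltjes integral against $d\Pi$, then integrating by parts, one obtains
\[\log \zeta(s) = s \int_1^\infty \Pi(x) x^{-s-1} dx \quad (\Re(s) > 1).\]
This is the Mellin transform of $\Pi$; inversion (Perron's formula) gives, for $a > 1$,
\[\frac{\Pi(x^-)+\Pi(x^+)}{2} = \frac{1}{2i\pi} \int_{a-i\infty}^{a+i\infty} \log \zeta(s) \frac{x^s}{s} ds,\]
matching the symmetric mean that appears on the right-hand side of the stated identity.

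\emph{Step 2: Hadamard factorization of $\xi$.} By Theorem \ref{triemann}, $\xi$ is entire, and classical estimates (Stirling for $\Gamma(s/2)$ combined with convexity bounds for $\zeta$ in vertical strips) show that it has order $1$. Hadamard's product theorem then yields
\[\xi(s) = e^{A+Bs}\prod_\rho \left(1-\frac{s}{\rho}\right) e^{s/\rho},\]
where $\rho$ ranges over the non-trivial zeros, with $\xi(0) = 1/2$ by direct computation. Taking logarithms in the definition $\xi(s) = \frac{s(s-1)}{2}\pi^{-s/2}\Gamma(s/2)\zeta(s)$ produces a decomposition of $\log\zeta(s)$ into a polar term $-\log(s-1)$, a sum $\sum_\rho[\log(1-s/\rho) + s/\rho]$ over the non-trivial zeros, the term $-\log\Gamma(s/2)$ (which encodes the trivial zeros via its poles at $s=-2n$), and explicit affine constants coming from $\log\xi(0) + \frac{s}{2}\log\pi$.

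\emph{Step 3: Term-by-term evaluation.} Substituting this decomposition into the Perron integral and computing each piece by shifting the contour and collecting residues yields the four summands of the formula: the polar term $-\log(s-1)$ contributes $\Li(x)$ via the classical Mellin identity for the logarithmic integral; each non-trivial zero $\rho$ contributes $-\Li(x^\rho)$, via the identity $\frac{1}{2i\pi}\int_{(a)} \log(1-s/\rho)\, x^s\, ds/s = -\Li(x^\rho)$ valid for $\Re\rho < a$; the term $-\log\Gamma(s/2)$, expanded as a sum over its poles at $s=-2n$, produces $\int_x^\infty dt/(t(t^2-1)\log t)$ after a geometric-series manipulation in $n$; and the remaining affine constants collapse to $-\log 2$.

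\emph{Main difficulty.} The central obstacle is the conditional convergence of $\sum_\rho \Li(x^\rho)$: termwise evaluation of the inverse Mellin integral is not automatic. The standard remedy is a careful truncation — one shifts the contour to $\Re(s) = -U$ along horizontal segments $\Im(s) = \pm T$ chosen to lie in the gaps between consecutive zero ordinates, picking up residues at $s=1$, at each non-trivial zero $\rho$ with $|\Im\rho|\le T$, and at the trivial zeros $s=-2n$ with $2n<U$. Controlling the horizontal error requires bounds of the form $|\log\zeta(\sigma\pm iT)| = O((\log T)^2)$ away from the zeros, which ultimately rest on the von Mangoldt estimate $N(T) = \frac{T}{2\pi}\log\frac{T}{2\pi e} + O(\log T)$ for the number of non-trivial zeros up to height $T$. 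Letting $T,U\to\infty$ along admissible sequences recovers the conditionally convergent sum in the order prescribed by the statement.
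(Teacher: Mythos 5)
The paper states this theorem without proof --- it is quoted as Riemann's 1859 result in an expository list --- so there is no argument in the text to compare yours against; I assess your outline on its own terms. Its architecture (the Mellin--Perron representation of $\Pi$ via $\log\zeta(s)=s\int_1^\infty\Pi(x)x^{-s-1}\,dx$, the Hadamard factorization of the entire function $\xi$ of the th\'eor\`eme \ref{triemann}, a term-by-term evaluation, and a truncation controlled by the Riemann--von Mangoldt count $N(T)$) is exactly the classical von Mangoldt rigorization of Riemann's argument, and the four constituents you attribute to the pole at $s=1$, to the non-trivial zeros, to the $\Gamma$-factor and to the constant $\log\xi(0)=-\log 2$ are the correct ones.

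The one point where the proof as written would fail is the repeated appeal to residue calculus. The function $\log\zeta(s)$ has \emph{logarithmic branch points} --- not poles --- at $s=1$, at every non-trivial zero $\rho$, and at the trivial zeros $s=-2n$; the contour therefore cannot be pushed across these points, and there are no residues to collect. (This is precisely why $\Li$, itself multivalued with branch points at $0$ and $1$ as the paper's footnote recalls, appears in the answer.) Two standard repairs exist. Either prove your key identity $\frac{1}{2i\pi}\int_{(a)}\log(1-s/\rho)\,x^s\,ds/s=-\Li(x^\rho)$ directly, by deforming the path of integration around a slit issuing from $\rho$ and evaluating the jump of the logarithm across the cut (Riemann's own route, after an integration by parts in $s$ that makes the relevant integrals converge); or differentiate first, i.e.\ work with $\frac{d}{ds}\bigl(\log\zeta(s)/s\bigr)$ and substitute the logarithmic derivative of the Hadamard product, which replaces the logarithms by genuinely meromorphic terms $\sum_\rho 1/(s-\rho)$ whose termwise integration is then justified by exactly the estimates you invoke ($N(T)$ and $|\log\zeta(\sigma\pm iT)|=O((\log T)^2)$ between zero ordinates). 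With either repair the rest of your outline goes through.
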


On peut en d\'eduire:

\begin{thm}[von Koch \protect{\cite{vonkoch}}, Schoenfeld \protect{\cite{schoenfeld}}]\phantomsection\label{t2} L'hypoth\`ese de Riemann est vraie si et seulement si
\begin{equation}\label{eq:npfort}
\left|\pi(x)-\Li(x)\right| <\frac{1}{8\pi} x^{1/2} \log x \text{ pour } x\ge 2657.
\end{equation}
\end{thm}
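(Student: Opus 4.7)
The plan is to pass through the explicit formula of the preceding theorem and then to show that Riemann's hypothesis controls the size of the error term $\pi(x) - \Li(x)$ quantitatively. The first step is to clean up the formula: starting from
\[\Pi(x) = \Li(x) - \sum_\rho \Li(x^\rho) - \log 2 + \int_x^\infty \frac{dt}{t(t^2-1)\log t}\]
and inverting the relation $\Pi(x)=\sum_{n\ge 1} \pi(x^{1/n})/n$ by M\"obius inversion, one gets an analogous formula for $\pi(x)$ itself, in which the bounded correction terms and the contributions of $\Pi(x^{1/n})$ for $n\ge 2$ are of order $x^{1/2}/\log x$ or smaller (here it is convenient, as is classical, to work simultaneously with $\psi(x)=\sum_{p^k\le x}\log p$, whose explicit formula $\psi(x)=x-\sum_\rho x^\rho/\rho -\log(2\pi) - \tfrac12\log(1-x^{-2})$ is cleaner and directly equivalent to the one for $\pi$ up to $O(x^{1/2})$ terms via partial summation).

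For the direct implication (RH $\Rightarrow$ \eqref{eq:npfort}), I would assume every non-trivial zero is of the form $\rho=\frac{1}{2}+i\gamma$ and estimate each term $\Li(x^\rho)$ by the asymptotic $\Li(x^\rho)= \frac{x^\rho}{\rho\log x}\bigl(1+O(1/\log x)\bigr)$, so that $|\Li(x^\rho)|\le x^{1/2}/(|\gamma|\log x)$ up to lower order. I would then split the sum $\sum_\rho \Li(x^\rho)$ at a height $T$, bound the tail using the Riemann--von Mangoldt density $N(T)=\frac{T}{2\pi}\log\frac{T}{2\pi e}+O(\log T)$, and optimize $T$ as a function of $x$. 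A careful accounting (following Schoenfeld \cite{schoenfeld}) of every constant: the density of zeros, the remainder in the asymptotic expansion of $\Li$, the $\Pi$--$\pi$ inversion, and the bound on $\int_x^\infty dt/(t(t^2-1)\log t)$, produces the explicit constant $1/(8\pi)$ provided $x\ge 2657$, the last being a numerical threshold past which all error estimates simultaneously go through.

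For the converse (\eqref{eq:npfort} $\Rightarrow$ RH), I would argue contrapositively. If $\zeta$ had a non-trivial zero $\rho_0=\beta_0+i\gamma_0$ with $\beta_0>\frac{1}{2}$, then by symmetry $1-\bar\rho_0$ is also a zero, and the associated Mellin transform
\[\int_1^\infty (\psi(x)-x)\, x^{-s-1}\, dx = -\frac{1}{s}\frac{\zeta'(s)}{\zeta(s)} - \frac{1}{s-1}\]
would have a singularity at $s=\rho_0$ to the right of $\Re(s)=\frac12$. Applying a Landau-type oscillation theorem, one deduces that $\psi(x)-x=\Omega_{\pm}(x^{\beta_0-\epsilon})$; by partial summation this forces $|\pi(x)-\Li(x)|$ to exceed $cx^{\beta_0-\epsilon}$ infinitely often, contradicting the stronger bound \eqref{eq:npfort} as soon as $\beta_0-\epsilon>\frac12$.

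The qualitative equivalence RH $\iff |\pi(x)-\Li(x)|=O(x^{1/2}\log x)$ is fairly routine modulo the explicit formula; the main obstacle is extracting the sharp numerical constant $1/(8\pi)$ and the explicit threshold $x\ge 2657$. This demands Schoenfeld's careful bookkeeping of every zero-free region of $\zeta$ below a computationally verified height, combined with explicit truncated versions of the explicit formula, and is where all of the work actually lies.
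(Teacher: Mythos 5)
Le texte ne démontre pas ce théorème : il se contente de le déduire « en principe » de la formule explicite de Riemann qui le précède et renvoie à von Koch et Schoenfeld pour les constantes ; votre esquisse suit exactement cette même route (formule explicite, majoration de $\sum_\rho \Li(x^\rho)$ sous RH, théorème d'oscillation de Landau pour la réciproque) et est correcte dans ses grandes lignes. Comme vous le reconnaissez vous-même, la constante $1/(8\pi)$ et le seuil $2657$ — qui constituent tout le contenu non trivial de l'énoncé — restent délégués aux calculs de Schoenfeld, ce qui est aussi le parti pris du texte.
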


Voici une autre formule explicite un peu plus maniable \cite[ch. 5, th. 5.9]{ellison}: soit
\[Z(s) = -\frac{\zeta'(s)}{\zeta(s)} = \sum_{n=1}^\infty \frac{\lambda(n)}{n^s}\]
o\`u 
\[\lambda(n) = \begin{cases}
\log p &\text{si $n$ est de la forme $p^\nu$}\\
0 &\text{sinon}
\end{cases}
\]
(fonction de von Mangoldt)\footnote{Elle est traditionnellement not\'ee $\Lambda(n)$, mais cette notation entre en conflit avec celle de la fonction z\^eta compl\'et\'ee p. \pageref{zetacompl}.}. Posons
\[\psi(x) = \sum_{n\le x} \lambda(n) = \sum_{p^\nu\le x} \log p\]
et 
\[\psi_0(x) = \lim_{\epsilon\to 0} \frac{1}{2}\left(\psi(x+\epsilon) + \psi(x-\epsilon)\right)=
\begin{cases}
\psi(x) &\text{si $x\ne p^\nu$}\\
\psi(x)-\frac{1}{2}\log p &\text{si $x= p^\nu$.}
\end{cases}\]

\begin{thm}\phantomsection Pour tout $x>1$, on a
\[\psi_0(x) = x+Z(0)-\frac{1}{2} \log(1-x^{-2}) -\lim_{T\to \infty} \sum_{|\Im(\rho)|<T} \frac{x^\rho}{\rho}\]
o\`u $\rho$ d\'ecrit les z\'eros non triviaux de $\zeta(s)$.
\end{thm}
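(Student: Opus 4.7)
The proof follows the classical von Mangoldt strategy: apply Perron's formula to the Dirichlet series $Z(s)=\sum\lambda(n)n^{-s}$ and shift the resulting contour integral to pick up residues at every pole of $Z(s)x^s/s$. More precisely, for any $c>1$, Perron's formula gives
\[\psi_0(x) = \frac{1}{2i\pi}\int_{c-i\infty}^{c+i\infty} Z(s)\,\frac{x^s}{s}\,ds,\]
where the integral is taken as a principal value, and the symmetrization $\psi_0$ (rather than $\psi$) is needed precisely because the truncated integral at prime powers converges only in the symmetric Cesàro sense. I would start by proving this formula through a careful interchange of summation and integration, using the Mellin representation $\frac{1}{2i\pi}\int_{(c)} y^{-s}\frac{ds}{s}=\mathbf{1}_{y<1}$ (with value $1/2$ at $y=1$) valid for $c>0$.

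Next, one shifts the line of integration from $\Re(s)=c$ to a far-left vertical line, using rectangular contours with horizontal sides at heights $\pm T$. Inside each rectangle, the integrand $Z(s)x^s/s$ has the following poles: a simple pole at $s=1$, coming from the simple pole of $\zeta(s)$, with residue $x$; a simple pole at $s=0$, from the factor $1/s$, with residue $Z(0)$; simple poles at each non-trivial zero $\rho$ of $\zeta$, with residue $-x^\rho/\rho$ (counted with multiplicity, because $Z=-\zeta'/\zeta$ has a simple pole of residue $-m$ at a zero of order $m$); and simple poles at the trivial zeros $s=-2n$ with residue $x^{-2n}/(2n)$. Summing the trivial contributions gives $\sum_{n\geq 1}x^{-2n}/(2n)=-\tfrac{1}{2}\log(1-x^{-2})$, which accounts precisely for the elementary correction term in the statement.

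The delicate point is to justify that the contour integral over the shifted line, together with the two horizontal segments, tends to zero as $T\to\infty$ through a suitable sequence and as the real part is pushed to $-\infty$. This requires quantitative estimates on $|\zeta'(s)/\zeta(s)|$. One uses the Hadamard factorization of $\xi(s)$ to write
\[\frac{\zeta'(s)}{\zeta(s)} = b - \frac{1}{s-1} + \sum_{\rho}\Bigl(\frac{1}{s-\rho}+\frac{1}{\rho}\Bigr) - \frac{1}{2}\frac{\Gamma'(s/2+1)}{\Gamma(s/2+1)} + \frac{1}{2}\log\pi,\]
and standard bounds (see e.g.\ Ellison) showing that on horizontal lines $s=\sigma\pm iT$ with $T$ avoiding the ordinates of zeros by a fixed distance, $|\zeta'/\zeta(s)|=O(\log^2 T)$ uniformly for $-1\leq\sigma\leq 2$, together with the functional equation to control $\sigma\to-\infty$. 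This will be the main technical obstacle, as the conditional convergence of $\sum_\rho x^\rho/\rho$ forces us to work with the symmetric limit over $|\Im(\rho)|<T$ rather than an absolutely convergent sum.

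Assembling these ingredients via the residue theorem and letting $T\to\infty$ yields
\[\psi_0(x) = x + Z(0) - \tfrac{1}{2}\log(1-x^{-2}) - \lim_{T\to\infty}\sum_{|\Im(\rho)|<T}\frac{x^\rho}{\rho},\]
the desired explicit formula. I would conclude by remarking that the constant $Z(0)$ can be made explicit as $-\log(2\pi)$ via the functional equation and the known values $\zeta(0)=-1/2$, $\zeta'(0)=-\tfrac{1}{2}\log(2\pi)$, but this identification is not needed for the statement as given.
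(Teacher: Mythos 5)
The paper gives no proof of this theorem — it is stated with a bare reference to Ellison [ch.\ 5, th.\ 5.9] — and your proposal is precisely the standard argument that reference contains: Perron's formula for $\psi_0$, contour shift, residues at $s=1$, $s=0$, the nontrivial zeros and the trivial zeros $-2n$ (whose sum correctly gives $-\tfrac12\log(1-x^{-2})$), with the Hadamard-factorization bounds on $\zeta'/\zeta$ controlling the discarded segments. Your residue computations and sign conventions check out, and you have correctly isolated the genuinely delicate points (choosing $T$ to avoid ordinates of zeros, the merely conditional convergence forcing the symmetric truncation $|\Im(\rho)|<T$), so the outline is sound as it stands.
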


Pour une g\'en\'eralisation consid\'erable aux fonctions $L$ de Hecke, voir Weil \cite{weil-expl}.

Plusieurs \'enonc\'es arithm\'etiques ont \'et\'e d\'emontr\'es par des raisonnements du type: l'\'enonc\'e est vrai si l'hypoth\`ese de Riemann est vraie, et d'autre part il est vrai si l'hypoth\`ese de Riemann est fausse. Le plus c\'el\`ebre est le fait qu'il n'y a qu'un nombre fini de corps quadratiques imaginaires de nombre de classes donn\'e; Siegel en a finalement donn\'e en 1935 une d\'emonstration qui ne fait pas intervenir l'hypoth\`ese de Riemann.

\subsection{R\'esultats et approches}

Tous les z\'eros non triviaux de $\zeta(s)$ qui ont \'et\'e trouv\'es sont sur la droite critique. On a les r\'esultats suivants:

\begin{thm}\phantomsection\
\begin{enumerate}
\item (Hardy, 1914 \cite{hardy}) $\zeta(s)$ a une infinit\'e de z\'eros sur la droite critique.
\item (Selberg, Levinson, Conrey 1989 \cite{conrey}) Au moins $2/5$ des z\'eros non triviaux sont sur la droite critique.
\end{enumerate}
\end{thm}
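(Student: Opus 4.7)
Pour la partie (1), je suivrais l'approche originelle de Hardy. L'\'equation fonctionnelle $\xi(s)=\xi(1-s)$ du th\'eor\`eme \ref{triemann} et la r\'ealit\'e de $\xi$ sur l'axe r\'eel entra\^{\i}nent que $\Xi(t):=\xi(1/2+it)$ est une fonction r\'eelle, paire et continue de $t\in\R$. On raisonne par l'absurde: si $\Xi$ n'avait qu'un nombre fini de z\'eros r\'eels, alors $\Xi(t)$ garderait un signe constant, disons positif, pour $|t|\ge T_0$. On consid\`ere alors la famille d'int\'egrales
\[I_k(\alpha)=\int_0^\infty \Xi(t)\,t^{2k}\,\frac{\cos(\alpha t)}{\cosh(\pi t)}\,dt,\]
qui convergent absolument gr\^ace \`a la d\'ecroissance exponentielle de $\Xi$ (cons\'equence de la formule de Stirling appliqu\'ee \`a $\Gamma$). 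Partant de la repr\'esentation int\'egrale de $\xi$ obtenue dans la preuve du th\'eor\`eme \ref{triemann} et de la formule de transformation \eqref{eq:omega} pour $\omega$, on exprime $I_k(\alpha)$ en forme close \`a l'aide de $\omega$. Un d\'eveloppement asymptotique en $k$ fait alors appara\^{\i}tre des oscillations de signe incompatibles avec la positivit\'e suppos\'ee de l'int\'egrande, d'o\`u la contradiction.

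Pour la partie (2), notons $N(T)$ le nombre total de z\'eros non triviaux $\rho=\beta+i\gamma$ de $\zeta$ avec $0<\gamma\le T$, et $N_0(T)$ le nombre de ceux situ\'es sur la droite critique. La strat\'egie, due \`a Selberg, Levinson puis Conrey, repose sur la \emph{m\'ethode du mollificateur}. Plut\^ot que $\zeta$ elle-m\^eme, Levinson \'etudie la combinaison $B(s)=\zeta(s)+L^{-1}\zeta'(s)$ avec $L=\log T$: ses z\'eros sont essentiellement ceux de $\zeta$, et l'\'equation fonctionnelle permet de montrer que les z\'eros de $\zeta$ sur la droite critique correspondent (\`a un terme d'erreur n\'egligeable pr\`es) aux z\'eros de $B$ strictement \`a gauche de celle-ci. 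On introduit ensuite un polyn\^ome de Dirichlet court
\[M(s)=\sum_{n\le y}\frac{\mu(n)}{n^s}\,P\!\left(\frac{\log(y/n)}{\log y}\right),\qquad y=T^\theta,\]
o\`u $P$ est un polyn\^ome \`a optimiser; $M(s)$ sert d'inverse approch\'e de $\zeta(s)$. Le lemme de Littlewood appliqu\'e \`a $BM$ transforme le d\'enombrement des z\'eros en l'\'evaluation d'une moyenne quadratique
\[J(T)=\int_0^T \bigl|B(1/2+it)M(1/2+it)\bigr|^2\,dt\ ;\]
l'optimisation de $P$ donne, pour $\theta=1/2$, la minoration $N_0(T)\ge(1/3+o(1))N(T)$ (Levinson, 1974). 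Conrey \'etend la m\'ethode \`a $\theta<4/7$ en s'appuyant sur les estim\'ees spectrales des moyennes de sommes de Kloosterman dues \`a Deshouillers et Iwaniec, ce qui am\'eliore la constante \`a $2/5$.

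Le principal obstacle est analytique dans les deux cas. Pour (1), il s'agit d'\'etablir un d\'eveloppement asymptotique de $I_k(\alpha)$ suffisamment pr\'ecis pour qu'une asym\'etrie de signe se manifeste; la d\'ecroissance rapide de $\omega$ rend ce contr\^ole accessible, mais les constantes demandent un traitement soigneux. Pour (2), la v\'eritable difficult\'e est l'\'evaluation asymptotique de $J(T)$ pour un mollificateur long ($y>T^{1/2}$): au-del\`a de cette limite, les m\'ethodes \'el\'ementaires (\'equation approch\'ee de Riemann--Siegel, formule de Vorono\"{\i}) sont insuffisantes, et il faut recourir \`a la formule sommatoire de Kuznetsov reliant sommes de Kloosterman et spectre automorphe de $SL_2(\Z)$.
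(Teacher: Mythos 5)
Le texte ne d\'emontre pas ce th\'eor\`eme: il est \'enonc\'e comme un simple rappel des r\'esultats de Hardy et de Conrey, avec renvoi \`a \cite{hardy} et \cite{conrey}, sans aucune preuve. Il n'y a donc pas d'argument du texte auquel comparer le v\^otre. Votre esquisse est fid\`ele aux d\'emonstrations classiques: pour (1), l'id\'ee de Hardy (r\'ealit\'e et parit\'e de $\Xi(t)=\xi(1/2+it)$, d\'ecroissance exponentielle via Stirling, moments pond\'er\'es dont le comportement asymptotique contredirait la constance du signe de $\Xi$) est correctement d\'ecrite; pour (2), la m\'ethode du mollificateur de Levinson (la fonction auxiliaire $B=\zeta+L^{-1}\zeta'$, le lemme de Littlewood, la constante $1/3$ pour $\theta=1/2$) et son extension par Conrey aux mollificateurs de longueur $T^{\theta}$ avec $\theta<4/7$, via les estim\'ees de Deshouillers--Iwaniec sur les sommes de Kloosterman, sont exactement les ingr\'edients des travaux cit\'es.

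Cela dit, telle quelle, votre r\'edaction n'est pas une d\'emonstration mais un plan: les deux \'etapes d\'ecisives --- l'\'evaluation en forme close puis le d\'eveloppement asymptotique des int\'egrales $I_k(\alpha)$ dans (1), et surtout l'\'evaluation asymptotique de la moyenne quadratique mollifi\'ee $J(T)$ pour $y>T^{1/2}$ dans (2) --- sont annonc\'ees, non effectu\'ees. La seconde constitue l'essentiel de la difficult\'e de \cite{conrey} (plusieurs dizaines de pages reposant sur la formule sommatoire de Kuznetsov et le spectre automorphe). Vous le reconnaissez d'ailleurs dans votre dernier paragraphe; il faut seulement \^etre conscient que ce qui manque n'est pas un d\'etail de r\'edaction mais le c\oe ur analytique des preuves, de sorte que votre texte doit \^etre lu comme un expos\'e de strat\'egie renvoyant aux r\'ef\'erences originales --- ce qui est pr\'ecis\'ement le parti pris du texte lui-m\^eme.
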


\begin{thm}\phantomsection\label{t1}\
\begin{enumerate}
\item (Hadamard \cite{hadamard}, de la Vall\'ee Poussin \cite{poussin}, 1896) $\zeta(s)$ ne s'annule pas pour $\Re(s)=1$.
\item (Ford, 2002 \cite{ford}) Si $s=\sigma +it$ est un z\'ero non trivial de $\zeta(s)$ avec $|t|\ge 3$, alors
\[\sigma \le 1- \frac{1}{57,54(\log|t|)^{2/3}(\log\log|t|)^{1/3}}. \]
\end{enumerate}
\end{thm}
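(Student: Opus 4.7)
Le plan est de traiter s\'epar\'ement les deux assertions.

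Pour (1), je suivrais la strat\'egie classique reposant sur l'identit\'e trigonom\'etrique
\[3 + 4\cos\theta + \cos(2\theta) = 2(1+\cos\theta)^2 \geq 0.\]
En prenant le logarithme du produit eul\'erien de la proposition \ref{p3}, on dispose pour $\Re(s)>1$ du d\'eveloppement
\[\log \zeta(s) = \sum_p \sum_{m\geq 1} \frac{1}{m\,p^{ms}},\]
et en appliquant l'identit\'e ci-dessus \`a $\theta=mt\log p$ puis en sommant sur $p$ et $m$ on obtient
\[|\zeta(\sigma)^3\,\zeta(\sigma+it)^4\,\zeta(\sigma+2it)| \geq 1 \qquad (\sigma>1).\]
Si $\zeta$ s'annulait \`a un ordre $k\geq 1$ en un point $1+it_0$ avec $t_0\neq 0$, le p\^ole simple en $s=1$ fourni par la proposition \ref{p2} combin\'e \`a la finitude de $\zeta(1+2it_0)$ donnerait un \'equivalent du membre de gauche proportionnel \`a $(\sigma-1)^{4k-3}$ lorsque $\sigma\to 1^+$, tendant vers $0$ puisque $4k-3\geq 1$: ce qui contredit l'in\'egalit\'e pr\'ec\'edente.

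Pour (2), il s'agit d'une version quantitative raffin\'ee de la r\'egion sans z\'ero de Vinogradov-Korobov, avec la constante explicite optimis\'ee par Ford. La strat\'egie combine la m\^eme in\'egalit\'e fondamentale $3+4\cos\theta+\cos(2\theta)\geq 0$ avec des majorations tr\`es fines de $|\zeta(\sigma+it)|$ pr\`es de la droite $\Re(s)=1$, typiquement du type
\[\log|\zeta(\sigma+it)| \ll (\log |t|)^{2/3}(\log\log|t|)^{1/3}\]
pour $\sigma$ dans un voisinage appropri\'e de $1$. Ces majorations proviennent du contr\^ole des sommes d'exponentielles $\sum_{N<n\leq 2N} n^{-it}$ par la m\'ethode de Vinogradov, \`a savoir le th\'eor\`eme de la valeur moyenne sur les int\'egrales de puissances $2k$-i\`emes de sommes trigonom\'etriques. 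Combin\'ees avec l'argument logarithmique de (1) et avec un lemme de Borel-Carath\'eodory permettant de repasser d'une majoration de $\zeta$ \`a une majoration de $\zeta'/\zeta$, elles fournissent une minoration explicite de $1-\sigma$ pour tout z\'ero non trivial $\sigma+it$.

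Le principal obstacle, et en r\'ealit\'e l'apport propre de Ford, est l'obtention de la constante explicite $57{,}54$. Elle requiert un contr\^ole tr\`es pr\'ecis de toutes les constantes qui interviennent dans la m\'ethode de Vinogradov, en particulier dans le th\'eor\`eme de la valeur moyenne et dans les it\'erations de l'in\'egalit\'e de Weyl, ainsi qu'une optimisation d\'elicate des nombreux param\`etres auxiliaires. Cette analyse proprement calculatoire d\'epasse largement ce qu'il serait raisonnable de d\'etailler ici, et je renverrais pour cela \`a l'article original \cite{ford}; dans ce texte, seule la partie (1), de nature \'el\'ementaire, serait d\'emontr\'ee compl\`etement.
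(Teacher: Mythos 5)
Votre proposition est correcte, mais il n'y a en fait rien \`a comparer: le texte \'enonce ce th\'eor\`eme sans d\'emonstration, en renvoyant simplement \`a Hadamard, de la Vall\'ee Poussin et Ford (et, pour le point (1), \`a \cite[ch. 2]{ellison} un peu plus loin). Votre argument pour (1) est bien la d\'emonstration classique de Mertens: l'in\'egalit\'e $3+4\cos\theta+\cos 2\theta\ge 0$ appliqu\'ee au d\'eveloppement de $\Re\log\zeta$ donne $\zeta(\sigma)^3|\zeta(\sigma+it)|^4|\zeta(\sigma+2it)|\ge 1$, et un z\'ero d'ordre $k\ge 1$ en $1+it_0$ rendrait ce produit $\asymp(\sigma-1)^{4k-3}\to 0$, la proposition \ref{p2} garantissant que $\zeta(\sigma+2it_0)$ reste born\'e puisque $1+2it_0\ne 1$; c'est complet et correct. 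Pour (2), le renvoi \`a \cite{ford} pour l'optimisation des constantes de la m\'ethode de Vinogradov--Korobov est exactement le niveau de d\'etail adopt\'e par le texte lui-m\^eme.
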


Il y a eu de nombreuses tentatives pour d\'emontrer l'hypoth\`ese de Riemann, qui ont toutes \'et\'e infructueuses jusqu'\`a maintenant. Plusieurs sont une \'elaboration d'une id\'ee de Hilbert et P\'olya: trouver un op\'erateur autoadjoint $u$ sur un espace de Hilbert tel que les z\'eros non triviaux de $\zeta(s)$ correspondent aux valeurs propres de $u$. 

\subsection{Le th\'eor\`eme des nombres premiers} C'est la version faible suivante de \eqref{eq:npfort}:

\begin{thm}[Hadamard \protect{\cite{hadamard}}, de la Vall\'ee Poussin \protect{\cite{poussin}}, 1896]\phantomsection
On a 
\[\pi(x)\sim \frac{x}{\log x}, \quad x\to +\infty.\]
\end{thm}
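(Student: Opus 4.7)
Le sch\'ema classique de Hadamard et de la Vall\'ee Poussin se d\'ecompose en trois \'etapes: r\'eduire l'asymptotique de $\pi(x)$ \`a celle d'une fonction de type Chebyshev, \'etablir les propri\'et\'es analytiques cl\'es de la d\'eriv\'ee logarithmique $Z(s)=-\zeta'(s)/\zeta(s)$, puis appliquer un th\'eor\`eme taub\'erien. Pour la r\'eduction, je poserais $\theta(x)=\sum_{p\le x}\log p$ et reprendrais la fonction $\psi(x)=\sum_{n\le x}\lambda(n)$ d\'ej\`a introduite dans le texte. La majoration triviale $\psi(x)-\theta(x)=\sum_{\nu\ge 2,\ p^\nu\le x}\log p = O(\sqrt{x}\log x)$ montre que $\psi(x)\sim x$ \'equivaut \`a $\theta(x)\sim x$; une sommation d'Abel reliant $\pi$ et $\theta$ montre de m\^eme que $\pi(x)\sim x/\log x$ \'equivaut \`a $\theta(x)\sim x$. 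Il suffit donc de d\'emontrer $\psi(x)\sim x$.

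Pour les propri\'et\'es analytiques de $Z(s)$, la s\'erie $Z(s)=\sum_{n\ge 1}\lambda(n)n^{-s}$ converge absolument pour $\Re(s)>1$ et co\"{\i}ncide alors avec $-\zeta'(s)/\zeta(s)$. D'apr\`es la proposition \ref{p2}, $\zeta$ admet un p\^ole simple de r\'esidu $1$ en $s=1$, donc $Z$ y a \'egalement un p\^ole simple de r\'esidu $1$. Le th\'eor\`eme \ref{t1} 1) (non annulation de $\zeta$ sur la droite $\Re(s)=1$) entra\^{\i}ne que $Z(s)-1/(s-1)$ se prolonge continument au ferm\'e $\Re(s)\ge 1$.

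Pour la conclusion, j'invoquerais le th\'eor\`eme taub\'erien de Wiener-Ikehara sous la forme suivante: si $S(x)=\sum_{n\le x}a_n$ avec $a_n\ge 0$, si $F(s)=\sum_{n\ge 1}a_n/n^s$ converge pour $\Re(s)>1$ et si $F(s)-c/(s-1)$ se prolonge continument au ferm\'e $\Re(s)\ge 1$, alors $S(x)\sim cx$ quand $x\to+\infty$. Appliqu\'e avec $a_n=\lambda(n)$, $S=\psi$, $F=Z$ et $c=1$, ceci donne $\psi(x)\sim x$, d'o\`u le th\'eor\`eme.

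L'obstacle principal r\'eside dans le th\'eor\`eme de Wiener-Ikehara lui-m\^eme: c'est lui qui concentre l'argument analytique de fond, via un d\'ecoupage de Fourier (convolution avec un noyau de Fej\'er) qui exploite pr\'ecis\'ement la continuit\'e jusqu'\`a la droite $\Re(s)=1$. Historiquement, Hadamard et de la Vall\'ee Poussin n'en disposaient pas et proc\'edaient directement, par des estimations quantitatives de $1/\zeta(s)$ pr\`es de $\Re(s)=1$ suivies d'une int\'egration sur un contour bien choisi; la preuve br\`eve de D.~J.~Newman (\cf Zagier, 1997) fournit une alternative plus \'el\'ementaire via une astuce d'int\'egration de contour.
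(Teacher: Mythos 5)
Votre plan est correct et constitue une voie standard vers le th\'eor\`eme des nombres premiers; notez toutefois que le texte ne donne aucune d\'emonstration de cet \'enonc\'e: il se contente de signaler que la preuve repose essentiellement sur le th\'eor\`eme \ref{t1} 1) et renvoie \`a Ellison (ch. 2 pour une preuve analytique, ch. 3 pour une preuve \'el\'ementaire). Vos r\'eductions pr\'eliminaires sont justes: la majoration $\psi(x)-\theta(x)=O(\sqrt{x}\log x)$ et la sommation d'Abel $\pi(x)=\theta(x)/\log x+\int_2^x\theta(t)\,dt/(t\log^2 t)$ ram\`enent bien l'\'enonc\'e \`a $\psi(x)\sim x$, et les deux ingr\'edients analytiques que vous invoquez (p\^ole simple de r\'esidu $1$ en $s=1$, proposition \ref{p2}; non-annulation de $\zeta$ sur $\Re(s)=1$, th\'eor\`eme \ref{t1} 1)) sont pr\'ecis\'ement ceux que le texte met en avant. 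La seule r\'eserve est que tout le poids analytique de l'argument est report\'e sur le th\'eor\`eme de Wiener--Ikehara, que vous admettez sans d\'emonstration; c'est l\'egitime comme plan, mais il faut \^etre conscient que c'est l\`a que se cache l'essentiel du travail (convolution par un noyau de Fej\'er et passage \`a la limite sur la droite $\Re(s)=1$). Par rapport \`a la m\'ethode originelle de Hadamard et de la Vall\'ee Poussin (estimations quantitatives de $1/\zeta$ pr\`es de $\Re(s)=1$ puis int\'egration de contour, qui fournit au passage un terme d'erreur explicite), la voie taub\'erienne que vous choisissez est plus propre conceptuellement mais ne donne que l'\'equivalent asymptotique, sans reste; la variante de Newman que vous mentionnez est effectivement le compromis le plus \'economique si l'on veut une preuve compl\`ete et courte.
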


(Noter que la fonction $\Li(x)$ est \'equivalente \`a $x/\log x$ quand $x$ tend vers l'infini.)

La d\'emonstration repose essentiellement sur le th\'eor\`eme \ref{t1} 1): voir \cite[ch. 2]{ellison} pour une d\'emonstration de cet ordre, et \cite[ch. 3]{ellison} pour une d\'emonstration ``\'el\'ementaire''.  R\'eciproquement, on peut d\'eduire le th\'eor\`eme \ref{t1} 1) du th\'eor\`eme des nombres premiers  \cite[ch. 3, \S 4.4]{ellison}: la situation est parall\`ele \`a celle du th\'eor\`eme \ref{t2}.

\subsection{Les fonctions z\^eta de Dedekind}\index{Fonction zêta!de Dedekind}\label{s:dedekind}

Si $K$ est un corps de nombres, on d\'efinit
\[\zeta_K(s)=\sum_{\fA} \frac{1}{N(\fA)^{s}}\]
o\`u $\fA$ d\'ecrit les id\'eaux de l'anneau des entiers $O_K$ et $N(\fA):=(O_K:\fA)$ est la norme de $\fA$: c'est la \emph{fonction z\^eta de Dedekind} de $K$. Elle a les propri\'et\'es suivantes:

\begin{description}
\item[Produit eul\'erien] 
\[\zeta_K(s) = \prod_\fP\frac{1}{1-N(\fP)^{-s}}\] 
o\`u $\fP$ d\'ecrit les id\'eaux premiers de $O_K$. Il converge absolument pour $\Re(s)>1$.
\item[Convergence absolue] pour $\Re(s)>1$.
\item[P\^ole simple] en $s=1$.
\item[\'Equation fonctionnelle\index{Equation fonctionnelle@\'Equation fonctionnelle} (Hecke, 1920)] on pose
\begin{equation}\label{eq:gammar}
\Gamma_\R(s) = \pi^{-s/2}\Gamma(s/2),\quad \Gamma_\C(s) = 2(2\pi)^{-s} \Gamma(s)
\end{equation}
\[\Lambda_K(s) = \left|d_K\right|^{s/2} \Gamma_\R(s)^{r_1}\Gamma_\C(s)^{r_2} \zeta_K(s)\]
o\`u $d_K$ est le discriminant absolu de $K$ et $(r_1,r_2)$ est la signature de $K$ (nombre de plongements r\'eels et complexes). Alors
\[\Lambda_K(s)=\Lambda_K(1-s). \]
\item[R\'esidu en $s=1$] par l'\'equation fonctionnelle, il est \'equivalent de donner la partie principale de $\zeta_K(s)$ en $s=0$. On a
\[\ord_{s=0}\zeta_K(s)=r_1+r_2-1=:r,\quad \lim_{s\to 0} s^{-r}\zeta_K(s) = -\frac{h_KR_K}{w_K}\]
o\`u $h_K=|Cl(O_K)|$ est le nombre de classes de $K$, $w_K$ est le nombre de racines de l'unit\'e de $K$, et $R_K$ est le \emph{r\'egulateur}.
\end{description}

La d\'emonstration du produit eul\'erien est la m\^eme que pour la fonction z\^eta de Riemann.\index{Fonction zêta!de Riemann} Montrons la convergence absolue pour $\Re(s)>1$: on peut supposer $s$ r\'eel et, d'apr\`es la proposition \ref{p1},  travailler avec le produit eul\'erien. Soit $p$ un nombre premier: si $\fP\mid p$, on a
\[\frac{1}{1-N(\fP)^{-s}}\le \frac{1}{1-p^{-s}}\]
puisque $N(\fP)$ est une puissance de $p$. Si $n=[K:\Q]$, il y a au plus $n$ id\'eaux premiers $\fP$ divisant $p$, d'o\`u
\[\prod_{\fP\mid p} \frac{1}{1-N(\fP)^{-s}}\le \frac{1}{(1-p^{-s})^n}\]
et
\[\zeta_K(s)\le \zeta(s)^n\]
d'o\`u l'\'enonc\'e.  Le fait que $\zeta_K(s)$ ait un pôle simple en $s=1$ est relativement facile à voir, cf. \cite[ch. 7]{marcus}. Les autres points sont beaucoup plus difficiles (Dedekind, Hecke, Tate: \cite{lang}, \cite{tate}).

\begin{exo} En utilisant la formule \eqref{eq:distr}, vérifier l'identité
\[\Gamma_\C(s) = \Gamma_\R(s) \Gamma_\R(s+1).\]
\end{exo}

\section{La fonction z\^eta d'un $\Z$-sch\'ema de type fini}\index{Fonction zêta!d'un schéma de type fini sur $\Z$}

\subsection{Un peu d'histoire}\

R\'ef\'erences: Roquette \cite{roquette}, Serre \cite{serre-weil}, Osserman \cite{osserman}, Audin \cite{audin}.

\subsubsection{Th\`ese d'Emil Artin (1921)} Sur une suggestion de son directeur de th\`ese Herglotz, il d\'eveloppe la th\'eorie de la fonction z\^eta des corps de fonctions quadratiques (sur $\F_p(t)$) en parall\`ele \`a celle de $\zeta_K(s)$ pour $K$ corps de nombres quadratique. Il d\'emontre sa rationalit\'e (en $p^{-s}$) et formule l'``hypoth\`ese de Riemann''\index{Hypothèse de Riemann!pour les courbes} dans ce contexte. Il la v\'erifie num\'eriquement sur quelques exemples.

\subsubsection{Th\`eses de Sengenhorst, F.K. Schmidt et Rauter} Ils \'etudient les corps de fonctions [d'une variable sur un corps fini] et y transf\`erent la plupart des th\'eor\`emes connus alors sur les corps de nombres.

\subsubsection{Friedrich Karl Schmidt et les fonctions z\^eta \protect{\cite{schmidt}}} Il d\'emontre le th\'eor\`eme de Riemann-Roch\index{Théorème!de Riemann-Roch} pour un corps de fonctions d'une variable $K$ sur un corps parfait $k$ quelconque (apr\`es Dedekind-Weber sur $\C$). Quand $k$ est fini, il g\'en\'eralise la th\'eorie d'Artin des fonctions z\^eta \`a $K$ d'un point de vue ``birationnel'' (sans choisir d'anneau d'entiers dans $K$), et utilise Riemann-Roch \`a la fois pour d\'emontrer l'\'equation fonctionnelle \index{Equation fonctionnelle@\'Equation fonctionnelle} de $\zeta_K(s)$ et sa rationalit\'e. Pour cela, il prouve que $K$ admet toujours un diviseur de degr\'e $1$.

\subsubsection{Helmut Hasse et Max Deuring} Hasse donne \emph{deux} preuves de l'hypoth\`ese de Riemann\index{Hypothèse de Riemann!pour les courbes} pour le corps de fonctions d'une courbe elliptique $E$. La premi\`ere pour $p>3$, non publi\'ee, utilise une m\'ethode d'uniformisation (rel\`evement en caract\'eristique z\'ero), puis la th\'eorie de la multiplication complexe.\footnote{Voir \protect{\cite[II, 5.3]{roquette}} pour une description de cette preuve: Roquette y explique qu'elle g\'en\'eralise (ind\'ependamment) une preuve ant\'erieure d'Herglotz pour le corps des fonctions de la lemniscate.} La seconde \cite{hasse} utilise des propri\'et\'es de l'anneau des endomorphismes de $E$, et en particulier  l'endomorphisme de Frobenius. Deuring simplifie ses d\'emonstrations \`a l'aide des correspondances alg\'ebriques \cite{deuring}.

\subsubsection{Andr\'e Weil} Il d\'emontre l'hypoth\`ese de Riemann pour toute courbe $C$ sur un corps fini, introduisant au passage le langage g\'eom\'etrique. La note \cite{weil1} est une annonce incompl\`ete; il la compl\`ete dans \cite{weil2} et \cite{weil2bis}, apr\`es avoir entretemps refond\'e la g\'eom\'etrie alg\'ebrique dans ce but \cite{weil3}. Il donne deux preuves: la premi\`ere, dans \cite{weil2}, repose sur la th\'eorie des correspondances sur $C$ et est reproduite ci-dessous; la deuxi\`eme, dans \cite[no 48]{weil2bis}, utilise essentiellement le module de Tate de la jacobienne de $C$ et s'applique plus g\'en\'eralement \`a une vari\'et\'e ab\'elienne: j'en expose une version au \S \ref{s:hrab}. 

Pour les vari\'et\'es quelconques sur un corps fini, il formule les \emph{conjectures de Weil} \cite{weil4}.\index{Conjectures!de Weil}

\subsection{Propri\'et\'es \'el\'ementaires de $\zeta(X,s)$}\

R\'ef\'erence: Serre \cite{serre-zeta}.

\begin{defn}\phantomsection Soit $X$ un sch\'ema de type fini sur $\Z$. On pose
 \[\zeta(X,s) = \prod_{x\in X_{(0)}} \frac{1}{1-N(x)^{-s}}\]
o\`u $X_{(0)}$ d\'esigne l'ensemble des points ferm\'es de $X$ et $N(x)$ est le cardinal du corps r\'esiduel $\kappa(x)$ (qui est fini par hypoth\`ese sur $X$).
\end{defn}

(Cette d\'efinition ne d\'epend que de la structure r\'eduite de $X$, et m\^eme de l'``atomisation'' $X_{(0)}$ de $X$.)

\begin{ex}\phantomsection Pour $X=\Spec O_K$, o\`u $K$ est un corps de nombres, on retrouve la fonction z\^eta de Dedekind $\zeta_K(s)$ du \S \ref{s:dedekind}.\index{Fonction zêta!de Dedekind}
\end{ex}

\begin{prop}\phantomsection\label{p4} a) Le produit d\'efinissant $\zeta(X,s)$ est multipliable dans $\Dir(\C)$.\\
b) Si $X_{(0)}=\coprod_{r=1}^\infty (X_r)_{(0)}$, o\`u les $X_r$ sont des sous-sch\'emas, on a
\[\zeta(X,s)=\prod_{r=1}^\infty\zeta(X_r,s).\]
En particulier,
\[\zeta(X,s)=\prod_p \zeta(X_p,s)\]
o\`u $X_p$ est la fibre de $X\to \Spec \Z$ au nombre premier $p$.\\
c) (F.K. Schmidt pour une courbe) Si $X$ est un $\F_q$-sch\'ema, on a
\[\zeta(X,s) = \exp\left(\sum_{n=1}^\infty\left|X(\F_{q^n})\right| \frac{q^{-ns}}{n}\right).\]
d) Notons $\A^1_X$ (\resp $\P^1_X$) la droite affine (\resp projective) sur $X$. Alors on a
\[\zeta(\A^1_{\F_q},s)= \frac{1}{1-q^{1-s}}, \quad \zeta(\P^1_{\F_q},s)= \frac{1}{(1-q^{-s})(1-q^{1-s})}.\] 
e) On a l'identit\'e
\[\zeta(\A^1_X,s)=\zeta(X,s-1)\]
o\`u $\A^1_X$ est la droite affine sur $X$. 
\end{prop}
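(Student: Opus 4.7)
The plan is to treat the five parts in order, using (a) as the foundation for the infinite-product manipulations underlying (b)--(e). The only technical content is in (a); the rest is either formal manipulation in $\Dir(\C)$ or direct computation from the definitions.

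For (a), I would write each Euler factor as $(1-N(x)^{-s})^{-1}=1+f_x$ with $f_x=\sum_{m\ge 1}N(x)^{-ms}$, a Dirichlet series of ordre $\omega(f_x)=N(x)\ge 2$. By Corollaire \ref{c2.3.1} b) applied to the (countable) family $(1+f_x)_{x\in X_{(0)}}$, multipliabilit\'e reduces to $\omega(f_x)\to\infty$. This follows from the fact that a $\Z$-sch\'ema of finite type has only finitely many closed points with $N(x)\le M$ for any fixed $M$: such a point lies over a prime $p\le M$, and each fibre $X_p$ is of finite type over $\F_p$, hence has only finitely many closed points of bounded residue degree.

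For (b), once multipliabilit\'e is secured, partitioning $X_{(0)}$ into pieces amounts to regrouping factors in the convergent infinite product, which is permitted by the formal associativity properties of the complete topological algebra $\Dir(\C)$. The specialization $X_{(0)}=\coprod_p (X_p)_{(0)}$ holds because each closed point of $X$ has finite residue field, hence lies over a unique prime of $\Z$. For (c), I would take $-\log$ of the Euler product and expand to obtain $\log\zeta(X,s)=\sum_{x\in X_{(0)}}\sum_{m\ge 1}N(x)^{-ms}/m$; setting $d=\deg(x)$, so that $N(x)=q^d$, and collecting terms according to $n=md$, one gets $\sum_{n\ge 1}(q^{-ns}/n)\sum_{x:\,d\mid n}d$, whose inner sum equals $|X(\F_{q^n})|$ via the standard bijection between $\F_{q^n}$-points and pairs $(x,\kappa(x)\hookrightarrow\F_{q^n})$, noting that each closed point $x$ of degree $d\mid n$ contributes exactly $d$ such embeddings.

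Part (d) is then a direct application of (c): from $|\A^1_{\F_q}(\F_{q^n})|=q^n$ one reads off $\zeta(\A^1_{\F_q},s)=(1-q^{1-s})^{-1}$, and $|\P^1_{\F_q}(\F_{q^n})|=q^n+1$ yields the stated formula for $\P^1_{\F_q}$; alternatively, the stratification $\P^1_{\F_q}=\A^1_{\F_q}\sqcup\{\infty\}$ combined with (b) reduces the projective case to the affine one. For (e), I would stratify the closed points of $\A^1_X$ by their image in $X$: those lying over $x\in X_{(0)}$ are precisely the closed points of $\A^1_{\kappa(x)}$. Applying (b) to this stratification and then (d) to each local factor, with $q$ replaced by $N(x)$, gives
$$\zeta(\A^1_X,s)=\prod_{x\in X_{(0)}}\frac{1}{1-N(x)^{1-s}}=\zeta(X,s-1).$$
The only genuine obstacle is (a); everything downstream rests on the finiteness of closed points of bounded norm on a $\Z$-sch\'ema of finite type, and once that is in hand, (b)--(e) are essentially bookkeeping.
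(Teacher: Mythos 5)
Your proof is correct and follows essentially the same route as the paper: multipliability via the finiteness of closed points of bounded norm together with Corollaire \ref{c2.3.1} b), the logarithmic expansion and the count $|X(\F_{q^n})|=\sum_{\deg(x)\mid n}\deg(x)$ for c), and the fibration of $\A^1_X$ over $X_{(0)}$ combined with b) and c) for e). The only difference is cosmetic: you spell out the finiteness argument in a) slightly more explicitly than the paper, which simply asserts it.
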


\begin{proof} a) Comme $X$ est de type fini sur $\Z$, il n'y a qu'un nombre fini de $x\in X_{(0)}$ tels que $N(x)\le N$, o\`u $N$ est un entier donn\'e. Il en r\'esulte que le produit est multipliable (\cf corollaire \ref{c2.3.1} b)).  En d\'eveloppant, on obtient la formule
\begin{equation}\label{eq2.3}
\zeta(X,s)=\sum_{c\in Z_0(X)^+} \frac{1}{N(c)^s}
\end{equation}
comme s\'erie de Dirichlet formelle, o\`u $Z_0(X)$ est le groupe des $0$-cycles de $X$, $Z_0(X)^+$ est le sous-mono\"\i de des cycles effectifs, et 
\[N(c) = \prod N(x)^{n_x}\]
si $c=\sum n_x x$.

b) est \'evident. Pour c), notons $\deg(x)=[\kappa(x):\F_q]$ le degr\'e d'un point ferm\'e $x$. On a
\begin{multline*}
\log \zeta(X,s) = \sum_{x\in X_{(0)}} -\log(1-N(x)^{-s}) =\sum_{x\in X_{(0)}} \sum_{m=1}^\infty \frac{N(x)^{-ms}}{m}\\
= \sum_{m=1}^\infty \sum_{x\in X_{(0)}} \frac{N(x)^{-ms}}{m} = \sum_{m=1}^\infty \sum_{x\in X_{(0)}} \frac{q^{-m\deg(x)s}}{m}=\sum_{n=1}^\infty \sum_{\deg(x)\mid n} \deg(x) \frac{q^{-ns}}{n}
\end{multline*}
et il reste \`a observer que $\left|X(\F_{q^n})\right|=\sum_{\deg(x)\mid n} \deg(x)$.

 Pour d), on applique c) qui donne
\begin{multline*}\zeta(\A^1_{\F_q},s)= \exp\left(\sum_{n=1}^\infty\left|\A^1(\F_{q^n})\right| \frac{q^{-ns}}{n}\right)\\
= \exp\left(\sum_{n=1}^\infty q^n \frac{q^{-ns}}{n}\right)= \frac{1}{1-q^{1-s}}.
\end{multline*}
puis
\[\zeta(\P^1_{\F_q},s)=\zeta(\F_q,s)\zeta(\A^1_{\F_q},s)= \frac{1}{(1-q^{-s})(1-q^{1-s})}.\] 

Enfin, pour e) on applique b) qui donne
\[\zeta(\A^1_X,s)=\prod_{x\in X_{(0)}} \zeta(\A^1_x,s)\]
d'o\`u la formule en utilisant c).
\end{proof}

\begin{rque} Le point c) de la proposition \ref{p4} montre l'int\'er\^et des fonctions z\^eta: elles codent le nombre de points rationnels sur les corps finis. Il montre aussi leur limite: deux vari\'et\'es ayant le m\^eme nombre de points en r\'eduction modulo $\fp$ pour tout $\fp$ ont la m\^eme fonction z\^eta. Ainsi, $\zeta(\P^1_{\Z[1/n]},s)=\zeta(C_{\Z[1/n]},s)$ si $C$ est une conique anisotrope sur $\Q$ d'\'equation $x_0^2=ax_1^2+bx_2^2$, o\`u $a,b$ sont des entiers premiers \`a $n$ et $C_{\Z[1/n]}$ est le $\Z[1/n]$-sch\'ema projectif lisse de m\^eme \'equation. (En effet, toute conique sur un corps fini a un point rationnel par le th\'eor\`eme de Chevalley-Warning.) Voir aussi proposition \ref{p5.2}. Pour des exemples [d'anneaux d'entiers] de corps de nombres non isomorphes ayant même fonction zêta, voir par exemple \cite{perlis1,perlis2}.
\end{rque}

\begin{thm}\phantomsection\label{t:abs} $\zeta(X,s)$ converge absolument (comme s\'erie ou com\-me produit infini) pour $\Re(s)>\dim X$.
\end{thm}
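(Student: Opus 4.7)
D'apr\`es la formule \eqref{eq2.3} et le lemme \ref{t2.8.1}, la convergence absolue du produit d\'efinissant $\zeta(X,s)$ \'equivaut \`a celle de la s\'erie simple
\[\sum_{x\in X_{(0)}} N(x)^{-\sigma},\quad \sigma=\Re(s);\]
c'est cette convergence que je vais \'etablir pour $\sigma>d:=\dim X$. Je proc\`ede par r\'ecurrence sur $d$, le cas $d=0$ \'etant imm\'ediat puisque $X$ est alors fini. Pour $d>0$, la proposition \ref{p4}(b), appliqu\'ee \`a une d\'ecomposition $X_{\rm red}=U\sqcup F$ avec $U$ ouvert affine int\`egre dense dans une composante de dimension $d$ et $\dim F<d$, puis l'hypoth\`ese de r\'ecurrence sur $F$, ram\`enent au cas o\`u $X=\Spec A$ est int\`egre affine.

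Le coeur de la preuve repose sur la normalisation de Noether, dans l'une ou l'autre des deux variantes selon la caract\'eristique: il existe un morphisme fini surjectif $\pi:X'\to Y$ d'un ouvert dense $X'\subseteq X$ vers un sch\'ema $Y$ de dimension $d$, avec $Y=\A^d_{\F_p}$ (si $X$ est de caract\'eristique $p$) ou $Y=\A^{d-1}_{\Z[1/N]}$ pour un certain entier $N\neq 0$ (si $X$ domine $\Spec \Z$); le ferm\'e $X\setminus X'$, de dimension $<d$, se traite par r\'ecurrence via \ref{p4}(b). Quitte \`a restreindre $X'$ (en \'evacuant encore un ferm\'e de dimension $<d$), on peut supposer $\pi$ plat, de degr\'e $r=[\kappa(X):\kappa(Y)]$. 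Alors, pour tout $y\in Y_{(0)}$, la fibre $\pi^{-1}(y)$ contient au plus $r$ points ferm\'es, chacun satisfaisant $N(x)=N(y)^{[\kappa(x):\kappa(y)]}\ge N(y)$. D'o\`u la majoration
\[\sum_{x\in X'_{(0)}} N(x)^{-\sigma}\ \le\ r\sum_{y\in Y_{(0)}} N(y)^{-\sigma}.\]

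Il ne reste qu'\`a invoquer la proposition \ref{p4}(d),(e): dans le premier cas, $\zeta(\A^d_{\F_p},s)=(1-p^{d-s})^{-1}$ converge absolument pour $\Re(s)>d$; dans le second, l'it\'eration de (e) donne $\zeta(\A^{d-1}_\Z,s)=\zeta(s-d+1)$, qui converge absolument pour $\Re(s)>d$, et $\zeta(\A^{d-1}_{\Z[1/N]},s)$ n'en diff\`ere que par la suppression du nombre fini de facteurs eul\'eriens aux diviseurs premiers de $N$.

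Le principal obstacle me semble \^etre la disponibilit\'e d'une version appropri\'ee du lemme de normalisation de Noether sur $\Z$: sa l\'eg\`ere subtilit\'e (la n\'ecessit\'e �ventuelle d'inverser un entier $N$) est absorb\'ee sans peine par la r\'ecurrence sur la dimension combin\'ee \`a la multiplicativit\'e \ref{p4}(b); la v\'erification que $\pi$ peut \^etre rendu plat sur un ouvert convenable repose sur la platitude g\'en\'erique (Grothendieck).
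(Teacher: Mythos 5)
Votre preuve est correcte et suit essentiellement le m\^eme chemin que celle du texte: d\'evissage par multiplicativit\'e et r\'ecurrence sur la dimension pour se ramener au cas int\`egre affine, normalisation de Noether pour obtenir un morphisme fini vers un espace affine (sur $\F_p$ ou sur un ouvert de $\Spec\Z$), comparaison le long de ce morphisme, puis calcul explicite pour l'espace affine via la proposition \ref{p4} d), e). La seule diff\'erence est cosm\'etique: vous majorez directement la s\'erie des points ($\le r$ points par fibre, chacun de norme $\ge N(y)$) l\`a o\`u le texte majore multiplicativement $\zeta(X,s)\le\zeta(Y,s)^d$ comme au \S\ref{s:dedekind}; les deux arguments sont valables.
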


\begin{proof} On proc\`ede ainsi (\cf \cite[d\'em. du th. 1]{serre-zeta}):
\begin{enumerate}
\item Si $X=X_1\cup X_2$ o\`u $X_1$ et $X_2$ sont ferm\'es, la proposition \ref{p4} a) donne la formule 
\[\zeta(X,s)=\frac{\zeta(X_1,s)\zeta(X_2,s)}{\zeta(X_1\cap X_2,s)}\]
donc, par r\'ecurrence sur $\dim X$, l'\'enonc\'e pour $X_1$ et $X_2$ l'implique pour $X$. On se ram\`ene ainsi \`a $X$ irr\'eductible.
\item Si $U$ est un ouvert de $X$, de compl\'ementaire $Z$, on a de m\^eme
\[\zeta(X,s)=\zeta(U,s)\zeta(Z,s)\]
donc, par r\'ecurrence sur $\dim X$, l'\'enonc\'e est \'equivalent pour $X$ et $U$. Ceci nous ram\`ene \`a $X$ affine (et int\`egre).
\item Si $f:X\to Y$ est un morphisme fini et plat et si l'\'enonc\'e est vrai pour $Y$, il est vrai pour $X$. En effet, si $d$ est le degr\'e de $f$, le m\^eme raisonnement qu'au \S \ref{s:dedekind} donne $\zeta(X,s)\le \zeta(Y,s)^d$ pour $s\in ]n,+\infty[$ o\`u $n=\dim Y$.
\item Par le lemme de normalisation de Noether, on est ainsi ramen\'e \`a $X=\A^n_\Z$ ou $X=\Spec \A^n_{\F_q}$ selon que $X$ est plat ou non sur $\Z$\footnote{Plus pr\'ecis\'ement, dans le premier cas un ouvert affine de $X$ est fini sur $\A^n_U$ o\`u $U$ est un ouvert de $\Spec \Z$; on se ram\`ene de l\`a au cas de $\A^n_\Z$ comme en 2.}: dans le second cas, la proposition \ref{p4} e) donne
\[\zeta(X,s)=\frac{1}{1-q^{n-s}}\]
qui converge absolument pour $\Re(s)>n=\dim X$; dans le premier cas, la m\^eme r\'ef\'erence donne
\[\zeta(X,s)=\zeta(s-n)\]
qui converge absolument pour $s>n+1=\dim X$ d'apr\`es la proposition \ref{p5}.
\end{enumerate}
\end{proof}

\subsection{Cas d'une courbe sur un corps fini: \'enonc\'e}

Si $X$ est un $\F_q$-sch\'ema de type fini, on peut \'ecrire d'apr\`es la proposition \ref{p4} c):
\[\zeta(X,s) = Z(X,q^{-s})\]
avec $Z(X,t)\in \Q[[t]]$. La m\^eme r\'ef\'erence donne
\[Z(X,t) = \exp\left(\sum_{n=1}^\infty \nu_n \frac{t^{n}}{n}\right),\quad \nu_n = \left|X(\F_{q^n})\right|\]
et la preuve de la proposition \ref{p4} a) donne une autre expression:
\begin{equation}\label{eq:Z2}
Z(X,t) = \sum_{c\in Z_0(X)^+} t^{\deg(c)}=\sum_{n\ge 0} b_nt^n
\end{equation}
o\`u $b_n = \left|\{c\in Z_0(X)^+\mid \deg(c)=n \} \right|$.

\begin{meg} La fonction $\zeta(X,s)$ est ``absolue'' alors que $Z(X,t)$ est relative au choix du corps de base $\F_q$: si $X$ est un $\F_{q'}$-sch\'ema de type fini avec $q'=q^r$, alors $Z(X/\F_q,t) = Z(X/\F_{q'},t^r)$.
\end{meg}

\begin{thm}\phantomsection\label{t:courbe} Soit $C$ une courbe projective, lisse et g\'eom\'etriquement connexe, de genre $g$ sur $\F_q$. Alors
\[Z(C,t) =\frac{P(t)}{(1-t)(1-qt)}\]
o\`u $P\in \Z[t]$ est un polyn\^ome de degr\'e $2g$ et de terme constant $1$, dont les racines inverses sont de valeur absolue $\sqrt{q}$ pour tout plongement de $\bar \Q$ dans $\C$. On a l'\'equation fonctionnelle\index{Equation fonctionnelle@\'Equation fonctionnelle}
\[Z(C,1/qt) = q^{1-g} t^{2-2g} Z(C,t). \]
\end{thm}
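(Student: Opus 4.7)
My plan separates the statement into two logically distinct layers: rationality together with the functional equation (due to F.K. Schmidt, via Riemann-Roch) and the Riemann hypothesis (due to Weil, via correspondences on $C\times C$).

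For rationality I would start from the expression $Z(C,t) = \sum_{c \in Z_0(C)^+} t^{\deg c} = \sum_{n\ge 0} b_n t^n$ given by \eqref{eq:Z2}, and regroup effective $0$-cycles (i.e.\ effective divisors, since $C$ is a curve) according to their linear equivalence class in $\Pic(C)$. The effective divisors in a class $\delta$ of degree $n$ correspond to nonzero elements of $H^0(C,\sO_C(\delta))$ modulo $\F_q^*$, so there are $(q^{\ell(\delta)}-1)/(q-1)$ of them. F.K. Schmidt's theorem produces a divisor of degree $1$, so $|\Pic^n(C)| = h := |\Pic^0(C)|$ for every $n \in \Z$. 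By Riemann-Roch, $\ell(\delta) = n-g+1$ as soon as $n > 2g-2$, giving a geometric tail whose generating series has denominator exactly $(1-t)(1-qt)$; the finitely many terms with $n\le 2g-2$ contribute a polynomial. Hence $(1-t)(1-qt) Z(C,t) = P(t) \in \Z[t]$, with $P(0)=b_0=1$.

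For the functional equation I would invoke Serre duality on $C$: the involution $D \mapsto K-D$ (where $K$ is a canonical divisor of degree $2g-2$) is a bijection between divisor classes of degrees $n$ and $2g-2-n$, and Riemann-Roch interchanges $\ell(D)$ with $\ell(K-D)$. Pairing classes in this way inside the sum defining $Z(C,t)$ translates directly into $Z(C,1/(qt)) = q^{1-g} t^{2-2g} Z(C,t)$. Combining this symmetry with the denominator $(1-t)(1-qt)$ forces $\deg P = 2g$, as claimed.

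The Riemann hypothesis is the hard part. Write $P(t) = \prod_{i=1}^{2g}(1-\alpha_i t)$; logarithmic differentiation of $Z(C,t)$ and comparison with its exponential expression give
\[\nu_n = 1 + q^n - \sum_{i=1}^{2g} \alpha_i^n,\]
so the claim $|\alpha_i|=\sqrt{q}$ is equivalent to the Hasse-Weil estimate $|\nu_n - q^n - 1| \le 2g\sqrt{q^n}$ for every $n\ge 1$. Following Weil \cite{weil2}, I would work on the surface $C \times C$: $\nu_n$ is the intersection number $(\Gamma_{F^n}\cdot \Delta)$ of the graph of the $n$-th iterate of the $q$-Frobenius with the diagonal. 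The key input is the positivity inequality $\Tr(X\cdot X') \ge 0$ for any correspondence $X$ on $C\times C$ (Castelnuovo-Severi, equivalently the Hodge index theorem on the surface $C\times C$); applied to well-chosen combinations of $\Gamma_{F^n}$, $\Delta$ and horizontal/vertical classes, it yields after an elementary extremization the desired bound. This positivity step — importing the Hodge-index/Castelnuovo-Severi inequality into a combinatorial ring of correspondences — is where the real difficulty of the theorem lies; rationality and functional equation are, by comparison, formal consequences of Riemann-Roch.
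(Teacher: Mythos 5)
Your proposal is correct and follows the same two-layer strategy as the paper: Riemann--Roch for rationality and the functional equation, then Castelnuovo--Severi positivity (the Hodge index theorem on $C\times C$) applied to powers of the Frobenius correspondence for the Riemann hypothesis. One structural difference in the rationality step is worth noting. You invoke F.K. Schmidt's theorem that $C$ carries a divisor of degree $1$ in order to write $|\Pic^n(C)|=h$ for all $n$. The paper deliberately avoids this input: it works with the gcd $\delta$ of the degrees of divisors on $C$, chooses a divisor $D_0$ of degree $\delta$ and representatives $D_1,\dots,D_h$ of the classes in degree $\nu\delta$, and obtains rationality and the functional equation with $\delta$ left undetermined; the equality $\delta=1$ then falls out of the analysis of zeros and poles carried out in the Riemann-hypothesis step (\S\ref{s2.7}). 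So the paper's route \emph{proves} Schmidt's existence theorem rather than assuming it, which is more economical if you are not allowed to cite it. Two smaller points your sketch glosses over: the involution $D\mapsto K-D$ only pairs classes of degrees between $0$ and $2g-2$, so the geometric tail of the series must be checked separately to satisfy the same functional equation by a direct computation (as in \eqref{eq:rat}); and the identification $\nu_n=I(\Gamma_{F^n},\Delta_C)$ requires knowing that the graph of $F^n$ meets the diagonal transversally, which holds because Frobenius is radicial (its differential vanishes). Neither is a gap, but both need a line in a complete write-up.
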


(Les racines inverses de $P$ sont les nombres $\alpha_1,\dots,\alpha_{2g}$ tels que 
\[P(t)=\prod_{i=1}^{2g}(1-\alpha_i t).)\]

\begin{cor}[Hypoth\`ese de Riemann]\index{Hypothèse de Riemann!pour les courbes}\phantomsection Les z\'eros de $\zeta(C,s)$ sont tous situ\'es sur la droite $\Re(s)=1/2$.
\end{cor}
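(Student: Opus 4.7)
Le plan est de d\'eduire directement le corollaire du th\'eor\`eme \ref{t:courbe} par un simple changement de variable. Je substituerais d'abord $t=q^{-s}$ dans l'identit\'e $Z(C,t)=P(t)/((1-t)(1-qt))$, ce qui donne
\[\zeta(C,s)=\frac{P(q^{-s})}{(1-q^{-s})(1-q^{1-s})}.\]
Le d\'enominateur a pour z\'eros les $s$ tels que $q^{-s}=1$ ou $q^{1-s}=1$, situ\'es respectivement sur les droites $\Re(s)=0$ et $\Re(s)=1$; ces points sont des p\^oles de $\zeta(C,s)$, non des z\'eros.

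Les z\'eros de $\zeta(C,s)$ sont donc ceux de la fonction enti\`ere $s\mapsto P(q^{-s})$. J'\'ecrirais ensuite $P(t)=\prod_{i=1}^{2g}(1-\alpha_i t)$ avec $|\alpha_i|=\sqrt{q}$ pour tout plongement de $\bar\Q$ dans $\C$, comme le garantit le th\'eor\`eme \ref{t:courbe}. L'\'equation $P(q^{-s})=0$ \'equivaut alors \`a l'existence d'un indice $i$ tel que $q^{-s}=\alpha_i^{-1}$.

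En prenant le module, on obtient $q^{-\Re(s)}=|\alpha_i|^{-1}=q^{-1/2}$, d'o\`u imm\'ediatement $\Re(s)=1/2$. Aucune compensation entre num\'erateur et d\'enominateur ne peut faire dispara\^\i tre un tel z\'ero, puisque les p\^oles \'eventuels du quotient se trouvent sur les droites $\Re(s)=0$ ou $\Re(s)=1$, disjointes de la droite critique. La seule v\'eritable difficult\'e est \'evidemment le th\'eor\`eme \ref{t:courbe} lui-m\^eme -- en particulier l'\'enonc\'e sur les modules des racines inverses de $P(t)$, qui constitue l'hypoth\`ese de Riemann pour les courbes proprement dite; le passage \`a la fonction $\zeta(C,s)$ n'est qu'une reformulation par le changement de variable $t=q^{-s}$.
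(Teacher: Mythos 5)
Votre démonstration est correcte et suit exactement la voie que le texte sous-entend (le corollaire y est laissé sans preuve car il résulte immédiatement du théorème \ref{t:courbe} par le changement de variable $t=q^{-s}$). Le point que vous soulignez — l'absence de compensation entre numérateur et dénominateur, garantie par $|\alpha_i|=\sqrt{q}\ne 1,q$ — est bien le seul détail à vérifier, et vous le traitez correctement.
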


\subsection{Strat\'egie de la preuve}

\begin{enumerate}
\item Rationalit\'e et \'equation fonctionnelle: r\'esultent de Riemann-Roch.
\item Hypoth\`ese de Riemann: borne sur les $a_n:= 1+q^n-\nu_n$, plus pr\'ecis\'ement:
\begin{equation}\label{eq:hr}
\left|a_n\right|\le 2g\sqrt{q^n}.
\end{equation}
\end{enumerate}

Dans tout ce qui suit, ``courbe sur $k$'' signifie ``courbe projective, lisse et g\'eom\'etriquement connexe sur (le corps) $k$''.

\subsection{Le th\'eor\`eme de Riemann-Roch}\index{Théorème!de Riemann-Roch} Dans le cas d'une courbe, les z\'ero-cycles co\"\i ncident avec les diviseurs. Rappelons l'\'enonc\'e du th\'eor\`eme de Riemann-Roch (\cf \cite[ch. IV, \S 1]{hartshorne}) pour une courbe $C$ sur un corps $k$:

Notons $F$ le corps des fonctions de $C$. Si $D$ est un diviseur sur $C$, on note $\sL(D)$ le faisceau inversible correspondant et $L(D)= H^0(C,\sL(D))$: c'est un $k$-espace vectoriel de dimension finie $l(D)$. Par d\'efinition, $\sL(D)$ est un sous-faisceau du faisceau constant $F$ et on a
\[L(D)=  \{f\in F^* \mid D+(f)\in Z_0(C)^+\}\cup\{0\}.\]

Notons $\Omega^1$ le faisceau des formes diff\'erentielles de K\"ahler, localement libre de rang $1$, et $K$ un diviseur correspondant \`a $\Omega^1$ (diviseur canonique). Le th\'eor\`eme de Riemann-Roch est la formule
\begin{equation}\label{eq:rr}
l(D) -l(K-D) = \deg(D) +1-g.
\end{equation}

On en d\'eduit notamment:
\begin{equation}\label{eq:rr2}
l(K)=g,\quad \deg(K)=2g-2
\end{equation}
\begin{equation}\label{eq:rr3}
l(D)=\deg(D)+1-g \text{ si } \deg(D)>2g-2
\end{equation}
\cf \cite[ch. IV, ex. 1.3.3 et 1.3.4]{hartshorne}.

(Chez Hartshorne, $k$ est suppos\'e alg\'ebriquement clos, mais on se ram\`ene imm\'ediatement \`a ce cas puisque les $l(D)$ sont invariants par extension du corps de base.)

\subsection{Rationalit\'e et \'equation fonctionnelle  (F.K. Schmidt)}\label{s:req} Revenons au cas qui nous int\'eresse: $k=\F_q$. Ce qui suit est une retranscription de \cite[nos 18--19]{weil2}. \index{Equation fonctionnelle@\'Equation fonctionnelle}

\begin{lemme}\phantomsection\label{l2.2} Soit $D\in Z_0(C)$. Alors le nombre de diviseurs effectifs rationnellement \'equivalents \`a $D$ est \'egal \`a 
\[\frac{q^{l(D)}-1}{q-1}.\]
\end{lemme}

\begin{proof} En effet, ce nombre est par d\'efinition le cardinal du syst\`eme lin\'eaire $|D|= \P(L(D))$ attach\'e \`a $D$.
\end{proof}

Soit $\delta>0$ le pgcd des degr\'es des \'el\'ements non nuls de $Z_0(C)$\footnote{On verra plus bas que $\delta=1$, mais il n'est pas n\'ecessaire de le savoir pour le moment.}; choisissons un diviseur  $D_0$ de degr\'e $\delta$. Choisissons aussi un entier $\nu$ tel que $\nu\delta\ge g$, et un ensemble maximal $D_1,\dots, D_h$ de diviseurs effectifs de degr\'e $\nu\delta$, deux \`a deux non rationnellement \'equivalents.

\begin{lemme}\phantomsection \label{l2.1}Tout diviseur $D$ de degr\'e $\nu\delta$ est rationnellement \'equivalent \`a un et un seul $D_i$, $1\le i\le h$.
\end{lemme}

\begin{proof} D'apr\`es \eqref{eq:rr}, $l(D)>0$ donc $D$ est rationnellement \'equivalent \`a un diviseur effectif.
\end{proof}

Soit $D$ un diviseur effectif non nul: on a $\deg(D)=n\delta$, $n>0$. Alors $D-(n-\nu)D_0$ est de degr\'e $\nu\delta$, donc on peut lui appliquer le lemme \ref{l2.1}. En tenant compte du lemme \ref{l2.2}, ceci donne
\[b_{n\delta} = \sum_{i=1}^h\frac{q^{l(D_i+(n-\nu)D_0)}-1}{q-1}.\]

Mais, pour $n\delta>2g-2$, \eqref{eq:rr} donne $l(D_i+(n-\nu)D_0)= n\delta+1-g$, d'o\`u 
\[b_{n\delta} = h\frac{q^{n\delta+1-g}-1}{q-1}.\]

Donc \eqref{eq:Z2} donne que $(q-1)Z(C,t)$ est somme d'un polyn\^ome
 et de la s\'erie
\[h \sum_{n\delta>2g-2} (q^{n\delta+1-g}-1)t^{n\delta}=h\left(q^{1-g} \frac{(qt)^{(\rho+1)\delta}}{1-(qt)^\delta}- \frac{t^{(\rho+1)\delta}}{1-t^\delta}\right)\]
o\`u $\rho=\inf\{n\mid n\delta\ge 2g-2\}$. Donc $Z(C,t)\in \Q(t)$.

On remarque ensuite que $\delta\mid 2g-2$ (gr\^ace \`a \eqref{eq:rr2}), donc que 
\[\rho=\frac{2g-2}{\delta}.\]

On peut alors \'ecrire plus pr\'ecis\'ement $(q-1)Z(C,t)= F(t)+hR(t)$, avec
\begin{equation}\label{eq:rat}
F(t)=\sum_{n=0}^\rho \sum_{i=1}^h q^{l(D_i+(n-\nu)D_0)}t^{n\delta},\quad R(t) = \frac{1}{t^\delta-1}+q^{1-g} \frac{(qt)^{(\rho+1)\delta}}{1-(qt)^\delta}.
\end{equation}

Un calcul direct donne
\[R(1/qt) = q^{1-g} t^{2-2g} R(t).\]

Pour traiter $F(t)$, on r\'eutilise la formule de Riemann-Roch\index{Théorème!de Riemann-Roch}: jointe au lemme \ref{l2.1}, elle implique que les familles  $(l(K-D_i-(n-\nu)D_0))_{1\le i\le h}$ et  $(l(D_i+(\rho-n-\nu)D_0))_{1\le i\le h}$ co\"\i ncident \`a permutation pr\`es. On en d\'eduit que $F(t)$, et donc $Z(C,t)$, v\'erifie la m\^eme \'equation fonctionnelle que $R(t)$.

\subsection{Hypothèse de Riemann: réduction à 
(2.3) (Hasse, Schmidt, Weil)}\index{Hypothèse de Riemann!pour les courbes}
\label{s2.7} Supposons  \eqref{eq:hr} connu; posons $P(t)=(1-t)(1-qt) Z(C,t)$, qui v\'erifie l'\'equation fonctionnelle
\begin{equation}\label{eq:ef}h
P(1/qt) = q^{-g} t^{-2g} P(t).
\end{equation}

D'apr\`es le \S \ref{s:req}, $P$ est une fraction rationnelle  donc est m\'eromorphe dans tout le plan complexe. Par construction, on a
\[\frac{d\log{P(t)}}{dt} = -\sum_{n=1}^\infty a_nt^{n-1}.\]

D'apr\`es \eqref{eq:hr}, cette s\'erie converge pour $|t|<q^{-1/2}$; $P$ n'a donc ni z\'ero ni p\^ole dans ce domaine. Mais l'\'equation fonctionnelle montre que $P$ n'a pas non plus de z\'eros ou de p\^oles dans le domaine $|t|>q^{-1/2}$; ses z\'eros et p\^oles sont donc concentr\'es sur le cercle $|t|=q^{-1/2}$ et $Z(C,t)$ a en plus deux p\^oles simples, en $t=1$ et en $t=1/q$. En examinant \eqref{eq:rat}, on en déduit:
\[\delta=1;\quad P \text{ est un polyn\^ome.}\]

Enfin, ce polyn\^ome est \`a coefficients entiers par construction, et il est de degr\'e $2g$ d'apr\`es \eqref{eq:ef}.

\subsection{Hypoth\`ese de Riemann: la premi\`ere d\'emonstration de Weil}\label{s:hr1} Elle repose sur la \emph{th\'eorie des correspondances} et plus pr\'ecis\'ement sur l'\emph{in\'egalit\'e de Castelnuovo-Severi}.

\subsubsection{Th\'eorie \'el\'ementaire des correspondances} (Voir aussi \cite[I]{weil-criteres}.)

\begin{defn}\phantomsection Soient $C,C'$ deux courbes (projectives, lisses, g\'eom\'etriquement connexes) sur un corps $k$. Une \emph{correspondance alg\'ebrique} de $C$ vers $C'$ est un diviseur sur $C\times C'$. Notation: $\Corr(C,C')$.
\end{defn}

\begin{ex}\phantomsection Si $f:C\to C'$ est un $k$-morphisme, son graphe $\Gamma_f$, image de l'immersion ferm\'ee $C\by{\left(\begin{smallmatrix}1_G\\ f \end{smallmatrix}\right)} C\times C'$, est une correspondance not\'ee $f_*$.
\end{ex}

\begin{ex}\phantomsection Si $\gamma\in \Corr(C,C')$, le diviseur sous-jacent \`a $\gamma$ d\'efinit une correspondance de $C'$ vers $C$, not\'ee ${}^t\gamma\in \Corr(C',C)$ (correspondance transpos\'ee). Si $\gamma$ est de la forme $\Gamma_f$, on note ${}^t \gamma=f^*$.
\end{ex}

\begin{defn}\phantomsection\label{d2.1} Soient $C,C',C''$ trois courbes sur le corps $k$, et soient $\gamma\in \Corr(C,C')$ et $\gamma'\in \Corr(C',C'')$.\\
a) On dit que $\gamma$ et $\gamma'$ sont \emph{composables} si les composantes irréductibles des diviseurs $\gamma\times C''$ et $C\times \gamma'$ de $C\times C'\times C''$ sont deux \`a deux distinctes.\\
b) Si $\gamma$ et $\gamma'$ sont composables, on d\'efinit leur compos\'ee $\gamma'\circ \gamma$ comme le diviseur
\[(p_{C\times C''}^{C\times C'\times C''})_*(C\times \gamma'\cap \gamma\times C'')\]
o\`u $p_*$ est l'image directe des cycles \cite[\S 1.4]{fulton} et $\cap$ d\'esigne le produit d'intersection (d\'efini \`a l'aide des multiplicit\'es d'intersection)\footnote{Sauf lecture trop rapide de \protect{\cite{weil3}}, Weil d\'efinit celles-ci ``\`a l'italienne'': sur une vari\'et\'e que nous appellerions maintenant quasi-projective, par r\'eduction \`a la diagonale et intersection avec un espace lin\'eaire g\'en\'eral de dimension compl\'ementaire; puis sur une vari\'et\'e quelconque, par recollement \`a partir du cas quasi-projectif.}.
\end{defn}

Si $\gamma$ et $\gamma'$ sont irr\'eductibles, les diviseurs $\gamma\times C''$ et $C\times \gamma'$ sont \'egaux si et seulement si $\gamma$ est de la forme $C\times P$ et $\gamma'$ de la forme $P\times C''$, o\`u $P$ est un point de $C'$. Ceci donne:

\begin{prop}[Weil, \protect{\cite[th. 6]{weil2}}]\phantomsection\label{p:comp} Il existe une et une seule loi bilin\'eaire (composition)
\[\circ:\Corr(C',C'')\times \Corr(C,C')\to \Corr(C,C'')\]
telle que
\begin{thlist}
\item Si $\gamma\in \Corr(C,C')$ et $\gamma'\in \Corr(C',C'')$ sont composables, leur composition est donn\'ee par la r\`egle de la d\'efinition \ref{d2.1} b).
\item $(P\times C'')\circ (C\times P) = 0$ pour tout $P\in C'_{(0)}$.
\end{thlist}
Cette loi est associative (quand on fait intervenir quatre courbes) et la correspondance diagonale $\Delta_C\subset C\times C$ est \'el\'ement neutre \`a gauche et \`a droite. Si $f:C\to C'$ et $f':C'\to C''$ sont deux morphismes, on a
\[(f'\circ f)_*=f'_*\circ f_*.\]
Enfin, pour toutes correspondances $\gamma\in \Corr(C,C')$ et $\gamma'\in \Corr(C',C'')$, on a 
\[{}^t(\gamma'\circ \gamma)={}^t\gamma\circ {}^t\gamma'.\]
\end{prop}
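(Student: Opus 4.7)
L'unicit\'e est imm\'ediate: par bilin\'earit\'e, la composition est enti\`erement d\'etermin\'ee sur les couples de correspondances irr\'eductibles. D'apr\`es l'observation pr\'ec\'edant l'\'enonc\'e, deux telles correspondances $\gamma,\gamma'$ sont composables sauf lorsque $\gamma=C\times P$ et $\gamma'=P\times C''$ pour un m\^eme point $P\in C'_{(0)}$; la r\`egle (i) fixe alors la valeur dans le premier cas et la r\`egle (ii) dans le second.

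Pour l'existence, je d\'ecomposerais de mani\`ere unique toute correspondance $\gamma\in \Corr(C,C')$ sous la forme
\[\gamma = \gamma^\flat + \sum_i n_i\, (C\times P_i),\]
o\`u aucune composante irr\'eductible de $\gamma^\flat$ n'est de la forme $C\times P$; et de m\^eme $\gamma'=\gamma'^\flat+\sum_j m_j\, (Q_j\times C'')$ avec $\gamma'^\flat$ sans composante de la forme $Q\times C''$. On d\'efinit alors $\gamma'\circ \gamma$ par bilin\'earit\'e, les couples crois\'es de la forme $(Q_j\times C'', C\times P_i)$ donnant $0$ par (ii) et tous les autres \'etant automatiquement composables par l'observation pr\'ec\'edente, ce qui permet d'appliquer (i). Cette construction est manifestement bilin\'eaire et ind\'ependante de tout choix.

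La formule $(f'\circ f)_*=f'_*\circ f_*$ se d\'eduirait d'un calcul direct: les graphes $\Gamma_f,\Gamma_{f'}$, isomorphes \`a $C,C'$ par les premi\`eres projections, ne contiennent aucune composante fibre, donc sont composables, et l'intersection dans $C\times C'\times C''$ est transverse le long du graphe de $(\mathrm{id},f,f'\circ f)$, qui se projette isomorphiquement sur $\Gamma_{f'\circ f}$. En particulier, $\Delta_C=\Gamma_{\mathrm{id}_C}$ est neutre. La compatibilit\'e avec la transposition r\'esulte de ce que l'involution $(c,c',c'')\mapsto (c'',c',c)$ de $C\times C'\times C''$ \'echange $\gamma\times C''$ avec $C\times {}^t\gamma$ et $C\times \gamma'$ avec ${}^t\gamma'\times C''$, l'image directe commutant avec cette sym\'etrie.

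Le point d\'elicat est l'associativit\'e. Par trilin\'earit\'e on se ram\`ene \`a $\gamma,\gamma',\gamma''$ irr\'eductibles. Dans le cas g\'en\'erique (aucune composante du type d\'eg\'en\'er\'e n'intervient), les deux membres $(\gamma''\circ \gamma')\circ \gamma$ et $\gamma''\circ (\gamma'\circ \gamma)$ s'expriment tous deux comme l'image directe sur $C\times C'''$ de la triple intersection
\[(\gamma\times C''\times C''')\cap (C\times \gamma'\times C''')\cap (C\times C'\times \gamma'')\]
dans $C\times C'\times C''\times C'''$: l'\'egalit\'e r\'esulte alors de l'associativit\'e du produit d'intersection, combin\'ee \`a la formule de projection pour les projections successives (\cite{weil3}, ou \cite[\S 1]{fulton}). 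L'obstacle principal sera de traiter soigneusement les cas d\'eg\'en\'er\'es, par exemple lorsqu'une composante $C\times P$ ou $P\times C'''$ appara\^\i t dans l'un des trois facteurs: il faudra alors v\'erifier, gr\^ace \`a la d\'ecomposition en parties ``plates'' $\gamma^\flat$ et fibres, que les deux ordres de composition produisent $0$ (via (ii)) sur les m\^emes paires d\'eg\'en\'er\'ees et co\"\i ncident sur les paires restantes.
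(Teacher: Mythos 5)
Votre esquisse est correcte et suit essentiellement la m\^eme voie que le texte, dont la d\'emonstration se borne \`a d\'eclarer l'existence et l'unicit\'e \'evidentes et \`a renvoyer la v\'erification des propri\'et\'es formelles \`a celle des produits d'intersection (\cite[prop. 16.1.1]{fulton}); vous ne faites qu'expliciter cette \'evidence (d\'ecomposition en composantes irr\'eductibles, dichotomie composable / non composable) et le calcul dans le produit quadruple pour l'associativit\'e. Rien d'essentiel \`a redire.
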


\begin{proof} L'existence et l'unicit\'e sont \'evidentes; la v\'erification des propri\'et\'es formelles suit de celle des produits d'intersection (\cf \cite[prop. 16.1.1]{fulton}).
\end{proof}

\subsubsection{Indices}

\begin{defn}[\cf \protect{\cite[ex. 16.1.4]{fulton}}]\phantomsection Soit $\gamma\in \Corr(C,C')$ une correspondance. On lui associe ses \emph{indices} (ou degr\'es) $d(\gamma),d'(\gamma)$: ce sont les entiers tels que $(p^{C\times C'}_C)_*\gamma=d(\gamma) [C]$ et $(p^{C\times C'}_{C'})_*\gamma=d'(\gamma) [C']$, respectivement. 
\end{defn}

Si $\gamma$ est irr\'eductible, on a donc
\[d(\gamma)=
\begin{cases}
[k(\gamma):k(C)] &\text{si $\gamma$ est dominant sur $C$}\\
0&\text{sinon}
\end{cases}
\]
et de m\^eme pour $d'(\gamma)$.

\begin{lemme}\phantomsection \label{l:deg} On a les identit\'es
\[d({}^t\gamma)=d'(\gamma), \quad d(\gamma'\circ \gamma) = d(\gamma')d(\gamma).\qed\]
\end{lemme}

\begin{prop}[Weil, \protect{\cite[th. 6]{weil2}}]\phantomsection\label{p:proj} Soit $\gamma\in \Corr(C,C')$ une correspondance effective, sans composantes de la forme $P\times C'$ o\`u $P$ est un point ferm\'e de $C$, et telle qu'on ait $d(\gamma)=1$. Alors
\[\gamma\circ {}^t\gamma = d'(\gamma)\Delta_C.\]
\end{prop}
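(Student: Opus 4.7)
The plan is to observe that the stated hypotheses characterise $\gamma$ as the graph of a morphism, and then to compute the composition directly from first principles. Write $\gamma = \sum n_i \gamma_i$ as its sum of distinct irreducible components. The assumption that no $\gamma_i$ is of the form $P \times C'$ forces every $\gamma_i$ to dominate $C$, so each summand $n_i\, [k(\gamma_i):k(C)]$ contributing to $d(\gamma)=1$ is at least $1$. Equality forces a single component appearing with multiplicity one and of generic degree one over $C$. Thus $\gamma$ is reduced, irreducible, and its first projection $\gamma \to C$ is a finite birational morphism onto the smooth projective curve $C$; any such morphism onto a normal scheme is an isomorphism. Composing its inverse with the second projection yields a unique morphism $f:C \to C'$ for which $\gamma = \Gamma_f$ as cycles, and obviously $d'(\gamma) = \deg f$.

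With this reduction in hand, the identity to prove becomes $f_* \circ f^* = \deg(f)\cdot \Delta$. I would apply Definition \ref{d2.1}(b) inside the threefold $C' \times C \times C'$: the divisors $^t\Gamma_f \times C'$ and $C' \times \Gamma_f$ are irreducible, and an inspection of their equations shows that they are distinct, so $^t\gamma$ and $\gamma$ are composable. To pin down the intersection cycle without grinding through multiplicity computations, I would exploit the fact that $^t\Gamma_f \times C'$ is smooth, being a product of $\Gamma_f \cong C$ with $C'$; under the identification $^t\Gamma_f \times C' \cong C \times C'$ given by $(f(x), x, z) \mapsto (x, z)$, the restriction of the divisor $C' \times \Gamma_f$ is simply the reduced irreducible divisor $\Gamma_f \subset C \times C'$. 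Consequently the cycle-theoretic intersection in the threefold equals the curve $\{(f(x), x, f(x)) : x \in C\} \cong C$ taken with multiplicity one, which is to say that the two divisors meet transversally.

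Pushing this one-dimensional cycle forward along the projection $C' \times C \times C' \to C' \times C'$ sends it to the diagonal via $x \mapsto (f(x), f(x))$. This morphism is just $f$ composed with the identification of $C'$ with the diagonal, so it has degree $\deg f = d'(\gamma)$; the push-forward cycle is therefore $d'(\gamma)\cdot\Delta$, as claimed. The main technical obstacle is the multiplicity-one (transversality) statement in the intersection, and the cleanest route, as sketched, is to restrict one divisor to the other, which reduces the verification to the already-known fact that $\Gamma_f$ is a reduced divisor on $C \times C'$. Smoothness of $\Gamma_f$, and hence of both divisors in the threefold, is used crucially here and rests entirely on the first-step reduction identifying $\gamma$ with the graph of $f$.
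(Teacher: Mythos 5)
Your argument is correct. Note that the paper does not actually prove this proposition: it is quoted from Weil (\cite[th.~6]{weil2}) and only Corollaire \ref{c:proj} is deduced from it, so there is no internal proof to compare against — you are in effect supplying the missing argument, and your route is essentially the converse of the corollary's deduction: you first show the hypotheses force $\gamma=\Gamma_f$. Both steps hold up. The exclusion of components $P\times C'$ makes every component of the effective cycle $\gamma$ dominate $C$, so $d(\gamma)=1$ leaves a single reduced component, finite and birational over the normal curve $C$, hence isomorphic to it; and the multiplicity-one claim is correctly obtained by restricting the Cartier divisor $C'\times\Gamma_f$ to the smooth divisor ${}^t\Gamma_f\times C'\cong C\times C'$ (via $p_{23}$), where it becomes the reduced divisor $\Gamma_f$, so that $(p_{13})_*$ of the intersection curve is $\deg(f)\,\Delta_{C'}$. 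Two small points. First, with the composition convention of la proposition \ref{p:comp}, $\gamma\circ{}^t\gamma$ lands in $\Corr(C',C')$, so the $\Delta_C$ in the statement should be read as $\Delta_{C'}$ (as in the corollary); your computation produces the correct diagonal. Second, the hypotheses do not exclude $\gamma=C\times Q$, i.e.\ $f$ constant; your argument still closes that case because $p_{13}$ then contracts the intersection curve to a point and the push-forward vanishes, matching $d'(\gamma)=0$, but the sentence ``it has degree $\deg f$'' should be read with the paper's convention that this degree is $0$ for non-dominant components.
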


\begin{cor}\phantomsection\label{c:proj} Si $f:C\to C'$ est un morphisme non constant, on a $f_*\circ f^*=\deg(f)\Delta_{C'}$.
\end{cor}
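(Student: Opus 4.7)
The plan is to deduce the corollary as an immediate application of Proposition~\ref{p:proj} to the correspondence $\gamma = f_* = \Gamma_f \in \Corr(C,C')$. By definition ${}^t\gamma = f^*$, so $\gamma \circ {}^t\gamma$ is precisely $f_* \circ f^*$. It therefore suffices to verify the three hypotheses of Proposition~\ref{p:proj} for $\gamma = \Gamma_f$ and to compute the index $d'(\gamma)$.

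First I would check the hypotheses. The graph $\Gamma_f$ is the scheme-theoretic image of the closed immersion $C \by{(1_C, f)} C \times C'$, hence an integral closed subscheme of the smooth surface $C \times C'$, isomorphic to $C$ via the first projection $p_C$; in particular $\Gamma_f$ is an effective prime divisor. Its irreducibility together with the dominance (in fact bijectivity) of $p_C|_{\Gamma_f}$ prevents $\Gamma_f$ from having a component of the form $P \times C'$, which would instead project to a single closed point of $C$. The same isomorphism $p_C : \Gamma_f \iso C$ yields $d(\gamma) = 1$.

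Next I would compute $d'(\gamma)$. Under $p_C : \Gamma_f \iso C$, the restriction of the second projection $p_{C'}|_{\Gamma_f}$ is identified with $f : C \to C'$. Since $f$ is non-constant and $C, C'$ are integral curves over $k$, $f$ is finite of degree $\deg(f) = [\kappa(C) : \kappa(C')]$, so $d'(\gamma) = \deg(f)$. Proposition~\ref{p:proj} then gives $f_* \circ f^* = \gamma \circ {}^t\gamma = d'(\gamma)\Delta_{C'} = \deg(f)\Delta_{C'}$, which is the desired identity.

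There is really no obstacle: the entire argument is bookkeeping around Proposition~\ref{p:proj} applied to the graph $\Gamma_f$, with the factor $\deg(f)$ emerging naturally as the index of $\Gamma_f$ over its second factor $C'$. The only place where one might pause is the verification that $p_C : \Gamma_f \iso C$, but this is immediate from the standard presentation of the graph as the image of $(1_C, f)$.
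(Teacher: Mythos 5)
Your argument is exactly the one the paper gives: apply Proposition~\ref{p:proj} to $\gamma=\Gamma_f$, noting that $\Gamma_f$ is irreducible (hence effective and not of the form $P\times C'$), that $d(\Gamma_f)=1$ via $p_C:\Gamma_f\iso C$, and that $d'(\Gamma_f)=\deg(f)$. The proposal just spells out in more detail the verifications the paper dispatches in one sentence; it is correct.
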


\begin{proof} En effet, on a $d(\Gamma_f)=1$ et $d'(\Gamma_f)=\deg(f)$, et $\Gamma_f$, irr\'eductible, n'est clairement pas de la forme $P\times C'$. 
\end{proof}

\subsubsection{La trace}

\begin{defn}\phantomsection Soient $\gamma_1,\gamma_2\in \Corr(C,C')$, consid\'er\'es comme diviseurs. On note $I(\gamma_1,\gamma_2)\in\Z$ leur \emph{produit d'intersection}.
\end{defn}

Chez Weil, ce produit est d\'efini ainsi \cite[p. 30]{weil2}: si $\gamma,\gamma'$ sont deux courbes r\'eduites et irr\'eductibles distinctes, leur intersection $\gamma\cap \gamma'$ est un z\'ero-cycle et $I(\gamma,\gamma')=\deg(\gamma\cap \gamma')$: ceci s'\'etend aux diviseurs sans composante commune par lin\'earité. Si $\gamma=\gamma'$, alors il existe une fonction rationnelle $f$ telle que $v_\gamma(f)=1$, et on d\'efinit $I(\gamma,\gamma)$ comme $I(\gamma,\gamma-(f))$. Cette d\'efinition marche parce que, si $\gamma$ et $(f)$ n'ont pas de composante commune, alors $I(\gamma,(f))=0$.

\begin{prop}[Weil, \protect{\cite[prop. 2]{weil2}}]\phantomsection\label{p:int} On a l'identit\'e
\[I(\gamma_1,\gamma_2)=I({}^t\gamma_2\circ \gamma_1,\Delta_C).\]
\end{prop}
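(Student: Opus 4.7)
The plan is to realize both sides of the identity as the degree of the same $0$-cycle on the triple product $C\times C'\times C$, related by the projection formula. By bilinearity of $I$ and of $\circ$ (cf.\ Proposition~\ref{p:comp}), I first reduce to the case where $\gamma_1$ and $\gamma_2$ are prime divisors on $C\times C'$.

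Assume to begin with that $\gamma_1\neq\gamma_2$, so that the two prime divisors share no component. By Definition~\ref{d2.1}~b), the composition ${}^t\gamma_2\circ\gamma_1$ is the pushforward $(p_{13})_*\alpha$ from $C\times C'\times C$ to $C\times C$, where
\[\alpha = (\gamma_1\times C)\cdot (C\times {}^t\gamma_2)\]
and $p_{13}$ is projection onto the first and third factors. Since $p_{13}$ is proper ($C'$ is projective), the projection formula gives
\[I({}^t\gamma_2\circ\gamma_1,\Delta_C)=\deg\bigl(p_{13}^*\Delta_C\cdot \alpha\bigr).\]
Now $p_{13}^*\Delta_C=\{(x,y,x):x\in C,\,y\in C'\}$ is isomorphic to $C\times C'$ via $(x,y)\mapsto(x,y,x)$. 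Under this identification the restriction of $\gamma_1\times C$ is $\gamma_1$, and the restriction of $C\times {}^t\gamma_2$ is $\gamma_2$ (the transposition in the middle factor is undone when the first and third factors are forced equal). Hence the triple product restricted to $p_{13}^*\Delta_C$ equals $\gamma_1\cdot\gamma_2$ on $C\times C'$, whose degree is $I(\gamma_1,\gamma_2)$ by definition.

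For the self-intersection case $\gamma_1=\gamma_2=\gamma$, I invoke Weil's moving convention: choose a rational function $f$ on $C\times C'$ with $v_\gamma(f)=1$, so that $\gamma-(f)$ contains no copy of $\gamma$, and set $I(\gamma,\gamma):=I(\gamma,\gamma-(f))$; a parallel convention handles any occurrence of $\Delta_C$ as a component of ${}^t\gamma\circ\gamma$ on $C\times C$. Expanding $\gamma-(f)$ as an integral combination of primes distinct from $\gamma$ and appealing to bilinearity reduces everything to the previous case, provided one checks that the contribution from the principal part $(f)$ is consistent on both sides; this amounts to the fact that the image of a principal divisor remains principal under pushforward by the projection, so contributes zero intersection with $\Delta_C$.

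The main obstacle is this last compatibility in the diagonal case: one must verify that the two moving procedures (for $I$ on $C\times C'$ and for $I$ on $C\times C$) are mutually coherent. In Weil's framework this is ultimately a local statement about intersection multiplicities, handled via reduction to the diagonal in a suitable projective embedding; in the modern language of Fulton it is automatic from the projection formula in the Chow ring. Once the identity is established for prime $\gamma_1,\gamma_2$ in general position, bilinearity extends it to arbitrary correspondences.
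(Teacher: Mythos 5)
Your argument is correct and is exactly the one the paper delegates: the text's entire proof of this proposition is a one-line pointer to Fulton, ex.\ 16.1.3 (``c'est un exercice combinatoire''), and the projection-formula computation you spell out on $C\times C'\times C$ --- restricting $(\gamma_1\times C)\cdot(C\times{}^t\gamma_2)$ to $p_{13}^*\Delta_C\cong C\times C'$, where the two factors become $\gamma_1$ and $\gamma_2$ --- is precisely that exercise. The general-position caveats you flag at the end are real but standard: both $I(\cdot,\cdot)$ and the composition $\circ$ descend to the relevant equivalence classes (that is the content of the remark that $I(\gamma,(f))=0$ when $\gamma$ and $(f)$ share no component, and of Proposition~\ref{p:sigma}~a)), so the identity need only be verified on moved, transverse representatives, which is exactly the case you treat.
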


\begin{proof} C'est un exercice combinatoire, \cf \cite[ex. 16.1.3]{fulton} pour un \'enonc\'e un peu plus pr\'ecis (et une d\'emonstration).
\end{proof}

\begin{defn}\phantomsection\label{d:trace} Soit $C$ une courbe et soit $\gamma\in \Corr(C,C)$. On d\'efinit 
\[\sigma(\gamma) = d(\gamma)+d'(\gamma)-I(\gamma,\Delta_C).\]
C'est la \emph{trace} de $\gamma$.
\end{defn}

\begin{defn}\phantomsection Deux correspondances $\gamma_1,\gamma_2\in \Corr(C,C')$ sont \emph{\'equivalentes} (notation: $\gamma\equiv \gamma'$) si il existe une fonction rationnelle $f$ et des diviseurs $D$ sur $C$ et $D'$ sur $C'$ tels que
\[\gamma-\gamma'=(f)+D\times C'+C\times D'.\]
C'est la \emph{three-line equivalence} de Weil (terminologie de J.P. Murre).
\end{defn}

\begin{prop}\phantomsection\label{p:sigma} a) (\cite[p. 38]{weil2}). Dans $\Corr(C,C)$, l'ensemble des $\gamma\equiv 0$ est un id\'eal bilat\`ere. 

Plus g\'en\'eralement, si on note $\Corr_\equiv(C,C')=\Corr(C,C')/\equiv$, la composition de la proposition \ref{p:comp} induit une composition
\[\circ:\Corr_\equiv(C',C'')\times \Corr_\equiv(C,C')\to \Corr_\equiv(C,C'').\]

b) (\cite[th. 7]{weil2}) La trace $\sigma$ induit une fonction
\[\sigma:\Corr_\equiv(C,C)\to \Z.\]
qui v\'erifie l'identit\'e $\sigma({}^t\gamma)=\sigma(\gamma)$.\\
c) (ibid.) Si $\gamma\in \Corr_\equiv(C,C')$ et $\gamma'\in \Corr_\equiv(C',C)$, on a la relation
\[\sigma(\gamma'\circ \gamma)=\sigma(\gamma\circ \gamma').\]
\end{prop}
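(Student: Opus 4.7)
Le plan est de vérifier les trois points sur les générateurs de l'équivalence des trois lignes: diviseurs principaux $(f)$, correspondances $D\times C'$ et $C\times D'$. Pour (a), je montrerais la stabilité par composition à gauche avec un $\delta\in\Corr(C',C'')$ quelconque (le cas droit est symétrique). Les formules de composition donnent immédiatement
\[\delta\circ(D\times C')=d'(\delta)(D\times C''),\qquad \delta\circ(C\times D')=C\times \delta_*(D'),\]
où $\delta_*(D')$ est l'image directe sur $C''$ du $0$-cycle $\delta\cap(\{D'\}\times C'')$ sur $C'\times C''$; les deux résultats sont de type ``trois lignes''. Pour un diviseur principal $(f)$, $p_{12}^*(f)$ reste principal sur $C\times C'\times C''$, donc son intersection avec $p_{23}^*\delta$ est rationnellement équivalente à zéro, et son image directe par $p_{13}$ est un $1$-cycle sur la surface $C\times C''$ rationnellement trivial, i.e.\ un diviseur principal; donc $\delta\circ(f)\equiv 0$. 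Ce dernier point est l'obstacle principal, reposant sur la compatibilité du produit d'intersection avec l'équivalence rationnelle.

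Pour (b), je calculerais $\sigma$ sur chaque générateur. Pour $\gamma=D\times C$: $d(\gamma)=0$, $d'(\gamma)=\deg D$, et $\{P\}\times C$ coupe $\Delta_C$ transversalement en $(P,P)$, d'où $I(\gamma,\Delta_C)=\deg D$ et $\sigma(\gamma)=0$; le cas $\gamma=C\times D'$ est symétrique. Pour $\gamma=(f)$: l'image directe de $(f)$ par chaque projection est un $1$-cycle sur $C$ rationnellement trivial, donc nul dans $Z_1(C)=\Z\cdot[C]$; ainsi $d((f))=d'((f))=0$, et $I((f),\Delta_C)=\deg(f\vert_{\Delta_C})=0$ puisque la restriction de $f$ à $\Delta_C\cong C$ est une fonction rationnelle, de diviseur de degré nul (quitte à déplacer $(f)$ par un principal ad hoc si $\Delta_C$ est contenue dans son support). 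Donc $\sigma((f))=0$, et $\sigma$ passe au quotient. La symétrie $\sigma({}^t\gamma)=\sigma(\gamma)$ suit de ce que la transposition échange $d$ et $d'$ tout en laissant $\Delta_C$ (donc $I(\gamma,\Delta_C)$) invariant.

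Pour (c), je combine le lemme \ref{l:deg}, qui donne $d(\gamma'\circ\gamma)=d(\gamma')d(\gamma)=d(\gamma\circ\gamma')$ (symétrique en $(\gamma,\gamma')$), de même pour $d'$, avec un double usage de la proposition \ref{p:int}. Cette dernière, appliquée successivement aux couples $(\gamma,{}^t\gamma')\in\Corr(C,C')$ et $(\gamma',{}^t\gamma)\in\Corr(C',C)$, fournit
\[I(\gamma'\circ\gamma,\Delta_C)=I(\gamma,{}^t\gamma'),\qquad I(\gamma\circ\gamma',\Delta_{C'})=I(\gamma',{}^t\gamma).\]
La symétrie du produit d'intersection et son invariance par transposition entraînent alors $I(\gamma,{}^t\gamma')=I({}^t\gamma',\gamma)=I(\gamma',{}^t\gamma)$, d'où $\sigma(\gamma'\circ\gamma)=\sigma(\gamma\circ\gamma')$.
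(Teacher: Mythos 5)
Le texte ne démontre pas cette proposition: il renvoie simplement à \cite[p. 38 et th. 7]{weil2}. Votre argument est donc une reconstruction, et elle est correcte; c'est d'ailleurs essentiellement la démarche de Weil, qui consiste à tout vérifier sur les trois types de générateurs de l'équivalence ``trois lignes''. Les calculs de (b) sont exacts ($d$, $d'$ et $I(-,\Delta_C)$ sont additifs, et chacun des trois générateurs est de trace nulle: pour $D\times C$ on a bien $d=0$, $d'=\deg D=I(D\times C,\Delta_C)$; pour $(f)$, les deux indices s'annulent parce que $Z_1(C)=\Z\cdot[C]$ n'a pas d'équivalence rationnelle non triviale, et $I((f),\Delta_C)=0$ est précisément le fait que le texte invoque pour justifier la définition de l'auto-intersection). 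Le point (c) est un assemblage correct du lemme \ref{l:deg}, de la proposition \ref{p:int} appliquée deux fois, et de l'invariance de $I$ par l'échange des facteurs.

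Deux endroits où votre esquisse s'appuie sur des faits standard qu'il vaudrait la peine d'expliciter: dans (a), le cas $\delta\circ(f)$ repose sur la compatibilité du produit d'intersection et de l'image directe propre avec l'équivalence rationnelle (Fulton), et les compositions exceptionnelles (composantes communes de $\gamma\times C''$ et $C\times\gamma'$) sont couvertes par la convention (ii) de la proposition \ref{p:comp}, dont la valeur $0$ est elle-même trivialement $\equiv 0$; dans (b), lorsque $\Delta_C$ est composante de $(f)$, le plus propre est d'invoquer que $I$ ne dépend que des classes d'équivalence linéaire (ce qui est le contenu de la remarque du texte sur la bonne définition de $I(\gamma,\gamma)$), plutôt qu'un déplacement ad hoc. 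Ce sont des précisions de rédaction, pas des lacunes de fond.
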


\subsubsection{La trace de l'identit\'e}

\begin{thm}[Weil,  \protect{\cite[th. 8]{weil2}}]\phantomsection\label{t:trid} Si $C$ est de genre $g$, on a $\sigma(\Delta_C)=2g$.
\end{thm}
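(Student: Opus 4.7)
The plan is to apply the definition of the trace directly: $\sigma(\Delta_C) = d(\Delta_C) + d'(\Delta_C) - I(\Delta_C,\Delta_C)$. The two indices are immediate: both projections $p_1, p_2 : C \times C \to C$ restrict to isomorphisms on $\Delta_C$, so $d(\Delta_C) = d'(\Delta_C) = 1$. The whole content of the theorem is therefore to establish the self-intersection formula
\[I(\Delta_C, \Delta_C) = 2 - 2g,\]
after which $\sigma(\Delta_C) = 2 - (2-2g) = 2g$ follows.

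To compute $I(\Delta_C, \Delta_C)$, the cleanest geometric interpretation is that it equals the degree of the normal bundle of the diagonal embedding $\Delta_C \hookrightarrow C \times C$. Using the short exact sequence $0 \to T_{\Delta_C} \to T_{C\times C}|_{\Delta_C} \to N_{\Delta_C/C\times C} \to 0$ together with $T_{C\times C}|_{\Delta_C} \cong T_C \oplus T_C$ and $T_{\Delta_C} \cong T_C$, one identifies $N_{\Delta_C/C \times C} \cong T_C$. Hence $I(\Delta_C,\Delta_C) = \deg(T_C) = -\deg(\Omega^1_C) = -(2g-2) = 2-2g$, the last equality coming from \eqref{eq:rr2}.

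In the more elementary language used in the excerpt (where $I(\gamma,\gamma)$ is defined via a rational function $f$ with $v_\gamma(f)=1$ and the formula $I(\gamma,\gamma) = I(\gamma, \gamma - (f))$), the same answer is obtained as follows: pick a non-constant $\phi \in k(C)$ and set $f = \phi \otimes 1 - 1 \otimes \phi \in k(C \times C)$. One checks that $f$ vanishes to order exactly one along $\Delta_C$, then computes $I(\Delta_C, (f) - \Delta_C)$ as a proper intersection on $C \times C$. A residue/divisor bookkeeping of the zeros and poles of $f$ on the diagonal (relating them to the ramification divisor of $\phi: C \to \mathbf{P}^1$) reduces the count to $\deg(K_C) = 2g-2$ via Riemann--Roch, with the expected sign flip giving $2-2g$.

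The main obstacle is precisely this self-intersection computation: naively $I(\Delta_C,\Delta_C)$ is not given by a count of transverse intersection points, and one must legitimately displace the diagonal (via three-line equivalence, a suitable rational function, or the normal bundle interpretation) before applying Riemann--Roch. Once that displacement is performed, the conclusion is a one-line consequence of $\deg(K_C) = 2g-2$.
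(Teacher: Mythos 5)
Your proposal is correct and follows essentially the same route as the paper, which also reduces immediately to $I(\Delta_C,\Delta_C)=2-2g$ via $d(\Delta_C)=d'(\Delta_C)=1$ and then invokes Riemann--Roch through the observation that for a rational function $f$ on $C\times C$ with $v_{\Delta_C}(f)=1$, the first projection of $((f)-\Delta_C)\cap\Delta_C$ is a canonical divisor. Your choice $f=\phi\otimes 1-1\otimes\phi$ and the ramification bookkeeping make that observation explicit, and the normal-bundle reformulation is just the modern packaging of the same computation.
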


\begin{proof}[Esquisse] Comme $d(\Delta_C)=d'(\Delta_C)=1$, il faut prouver que $I(\Delta_C,\Delta_C)\allowbreak=2-2g$. Cela r\'esulte du th\'eor\`eme de Riemann-Roch\index{Théorème!de Riemann-Roch} \eqref{eq:rr} car, si $f$ est une fonction rationnelle sur $C\times C$ telle que $v_{\Delta_C}(f)=1$, $(p^{C\times C}_C)_*\big(((f)-\Delta_C)\cap \Delta_C)\big)$ (premi\`ere projection) est un diviseur canonique \cite[p. 15]{weil2}.
\end{proof}

\subsubsection{L'in\'egalit\'e de Castelnuovo-Severi}

\begin{thm}[th\'eor\`eme de l'indice de Hodge pour une surface]\phantomsection\label{t:hodge} Soit $S/k$ une surface projective lisse, et soit $H$ un diviseur ample sur $S$. Notons $<,>$ le produit d'intersection sur les diviseurs. Soit $D$ un diviseur sur $S$ tel que $<D, H>=0$. Alors $<D,D>\le 0$, avec \'egalit\'e si et seulement si  $D$ est num\'eriquement \'equivalent \`a z\'ero (c'est-\`a-dire $<D,E>=0$ pour tout diviseur $E$).
\end{thm}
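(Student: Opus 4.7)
Le plan est de raisonner par l'absurde: supposons $D^2>0$ avec $D\cdot H=0$. D'abord, la formule de Riemann-Roch pour les surfaces donne
\[\chi(\sO(nD))=\chi(\sO_S)+\frac{n^2D^2-nD\cdot K}{2},\]
qui tend vers $+\infty$ quand $n\to+\infty$. Par dualit\'e de Serre, $h^2(nD)=h^0(K-nD)$, donc $h^0(nD)+h^0(K-nD)\to+\infty$. Or pour tout $n>0$, $h^0(nD)=0$: sinon $nD\sim E$ pour un diviseur $E\geq 0$; comme $D^2>0$ exclut $nD\sim 0$, on a $E\neq 0$, donc $E\cdot H>0$ puisque $H$ est ample, contredisant $E\cdot H=nD\cdot H=0$. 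Donc $h^0(K-nD)\to+\infty$, et pour $n$ assez grand, $K-nD\sim F_n\geq 0$.

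Mais $F_n\cdot H=K\cdot H$ est constant, alors que $F_n^2=(K-nD)^2=n^2D^2+O(n)\to+\infty$. Je conclurai par la bornitude classique: sur une surface projective lisse, les diviseurs effectifs de $H$-degr\'e born\'e ne parcourent qu'un nombre fini de classes num\'eriques, car tout tel diviseur $F=\sum a_iC_i$ a ses multiplicit\'es et son nombre de composantes born\'es, chaque composante irr\'eductible $C_i$ \'etant de $H$-degr\'e born\'e, donc parcourant elle-m\^eme un nombre fini de classes (par la finitude des composantes du sch\'ema de Hilbert param\'etrant les courbes de degr\'e born\'e). Ainsi $F_n^2$ est born\'e, contradiction.

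Pour le cas d'\'egalit\'e, si $D^2=0$ et $D\not\equiv 0$, on choisit $E$ avec $D\cdot E\neq 0$, puis on remplace $E$ par le $\Q$-diviseur $E-(E\cdot H/H^2)H$ pour avoir $E\cdot H=0$, d'o\`u $E^2\leq 0$ par la premi\`ere partie (\'etendue aux $\Q$-diviseurs par lin\'earit\'e). Alors $(D+tE)\cdot H=0$ et $(D+tE)^2=2tD\cdot E+t^2E^2$: pour $t\in\Q$ de m\^eme signe que $D\cdot E$ et de valeur absolue suffisamment petite, cette quantit\'e est strictement positive, contredisant l'in\'egalit\'e d\'ej\`a \'etablie.

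L'\'etape la plus d\'elicate est l'argument de bornitude du deuxi\`eme paragraphe; c'est un fait classique mais non trivial, le reste \'etant un jeu d'\'ecriture \`a partir de Riemann-Roch et de la dualit\'e de Serre.
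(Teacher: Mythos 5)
Votre d\'emonstration est correcte et suit, comme la r\'ef\'erence donn\'ee dans le texte (\cite[ch. V, th. 1.9]{hartshorne}), la voie de Riemann--Roch; mais elle s'en \'ecarte sur le point clef. Hartshorne, supposant $D^2>0$, remplace $H$ par le diviseur ample $H'=D+nH$ ($n\gg 0$), de sorte que $D\cdot H'=D^2>0$ et que $(K-mD)\cdot H'\to-\infty$: le terme $h^0(K-mD)$ s'annule donc pour $m$ grand, $mD$ devient effectif, et la contradiction avec $D\cdot H=0$ est imm\'ediate. Vous gardez $H$, tuez $h^0(nD)$ (ce qui est correct), et en d\'eduisez que $K-nD$ est effectif pour tout $n$ grand; votre contradiction repose alors sur la finitude des classes num\'eriques de diviseurs effectifs de $H$-degr\'e born\'e. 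Ce fait est vrai, mais c'est un ingr\'edient beaucoup plus lourd que Riemann--Roch, et sa justification demande un peu de soin: par le sch\'ema de Hilbert, il faut borner le polyn\^ome de Hilbert tout entier, donc le genre arithm\'etique des composantes --- or la formule d'adjonction ram\`enerait \`a borner $C^2$, qui est pr\'ecis\'ement ce qu'on cherche \`a \'etablir; il faut donc invoquer la borne de Castelnuovo pour les courbes irr\'eductibles de degr\'e born\'e, ou bien passer par les vari\'et\'es de Chow (o\`u seule la borne sur le degr\'e intervient). Votre argument n'est donc pas circulaire, mais le d\'etour est \'evitable: le remplacement de $H$ par $D+nH$ r\`egle le cas $D^2>0$ sans aucun \'enonc\'e de finitude. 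Le traitement du cas d'\'egalit\'e par perturbation $D+tE$ est, lui, exactement l'argument standard et ne pose aucun probl\`eme.
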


\begin{proof} Voir \cite[ch. V, th. 1.9]{hartshorne} ou \cite[ex. 15.2.4]{fulton}.  Ce th\'eor\`eme est d\^u \`a Beniamino Segre \cite{segre} et Jacob Bronowski \cite{bronowski}, red\'ecouvert par Alexander Grothendieck \cite{groth-mt}. La preuve utilise le th\'eor\`eme de Riemann-Roch pour $S$.
\end{proof}

\begin{thm}[Weil,  \protect{\cite[th. 10]{weil2}}]\phantomsection\label{t:lemmeimportant} Soit $\gamma\in \Corr_\equiv(C,C')-\{0\}$. Alors $\sigma({}^t\gamma\circ \gamma)>0$.
\end{thm}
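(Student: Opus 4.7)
The plan is to express $\sigma({}^t\gamma\circ\gamma)$ as an intersection-theoretic quantity on the surface $S=C\times C'$ and then invoke the théorème de l'indice de Hodge (Théorème \ref{t:hodge}). First I combine Définition \ref{d:trace} with Proposition \ref{p:int} (which gives $I(\gamma,\gamma)=I({}^t\gamma\circ\gamma,\Delta_C)$) and with Lemme \ref{l:deg} (which gives $d({}^t\gamma\circ\gamma)=d'({}^t\gamma\circ\gamma)=d(\gamma)d'(\gamma)$, using also that ${}^t\gamma\circ\gamma$ is symmetric by Proposition \ref{p:comp}) to obtain
\[\sigma({}^t\gamma\circ\gamma)=2d(\gamma)d'(\gamma)-I(\gamma,\gamma).\]

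Next I introduce the two rulings $F_1=\{P\}\times C'$ and $F_2=C\times\{P'\}$ on $S$. Moving $P,P'$ yields the numerical identities $F_i\cdot F_i=0$, $F_1\cdot F_2=1$, $\gamma\cdot F_1=d(\gamma)$, $\gamma\cdot F_2=d'(\gamma)$. I then decompose $\gamma=d'(\gamma)F_1+d(\gamma)F_2+\gamma_0$ in $\NS(S)_\Q$, which forces $\gamma_0\cdot F_1=\gamma_0\cdot F_2=0$. Expanding the self-intersection gives $I(\gamma,\gamma)=2d(\gamma)d'(\gamma)+I(\gamma_0,\gamma_0)$, hence
\[\sigma({}^t\gamma\circ\gamma)=-I(\gamma_0,\gamma_0).\]
Since $H:=F_1+F_2$ is ample on $S$ with $H^2=2>0$, and $\gamma_0\cdot H=0$, Théorème \ref{t:hodge} applied to $\gamma_0$ gives $I(\gamma_0,\gamma_0)\le 0$, with equality if and only if $\gamma_0\equiv_{\num}0$. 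We already obtain $\sigma({}^t\gamma\circ\gamma)\ge 0$.

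The main obstacle is the strict inequality: one must show that $\gamma_0\equiv_{\num}0$ forces $\gamma\equiv 0$ in the sense of three-line equivalence. Since numerical and algebraic equivalence coincide for divisors on a smooth projective surface, the class $\gamma-d'(\gamma)F_1-d(\gamma)F_2$ lies in $\Pic^0(S)$; by the Künneth decomposition $\Pic^0(C\times C')\cong\Pic^0(C)\times\Pic^0(C')$, this class is rationally equivalent to a sum $D\times C'+C\times D'$ with $D,D'$ of degree zero. Reassembling,
\[\gamma=(f)+\bigl(d'(\gamma)\{P\}+D\bigr)\times C'+C\times\bigl(d(\gamma)\{P'\}+D'\bigr)\]
for some rational function $f$ on $S$, i.e.\ $\gamma\equiv 0$ in three-line equivalence, contradicting the hypothesis. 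Note that the whole argument is intrinsic on $\Corr_\equiv(C,C')$ because $\sigma({}^t\gamma\circ\gamma)$ depends only on the three-line class of $\gamma$, by Proposition \ref{p:sigma}.
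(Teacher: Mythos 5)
Your proof is correct and follows essentially the same route as the paper's ``d\'emonstration moderne'': reduce via la proposition \ref{p:int} and le lemme \ref{l:deg} to the Castelnuovo--Severi inequality $I(\gamma,\gamma)<2d(\gamma)d'(\gamma)$, then apply le th\'eor\`eme \ref{t:hodge} to $\gamma$ minus its trivial part against the ample divisor $F_1+F_2$ (your coefficients $d'(\gamma)F_1+d(\gamma)F_2$ are the ones that make $\gamma_0\cdot F_i=0$; the paper writes them the other way round, but only $D\cdot H=0$ is needed for the Hodge index theorem, and with your choice one gets the sharper, correct bound). The added value of your write-up is that you treat the equality case, which the paper leaves implicit: one must indeed check that numerical triviality of $\gamma_0$ forces $\gamma\equiv 0$ for the three-line equivalence. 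One caution there: ``numerical $=$ algebraic equivalence for divisors on a smooth projective surface'' is false in general, since $\NS$ may have torsion (e.g.\ an Enriques surface); what saves your step is that $\NS(C\times C')\simeq \Z^2\oplus\Hom(J(C),J(C'))$ is torsion-free, after which the K\"unneth description of $\Pic^0(C\times C')$ gives exactly the three-line relation as you state. Finally, as the paper indicates, the Hodge index theorem should be applied after base change to $\bar k$, the conclusion descending since the intersection numbers are unchanged.
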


\begin{proof}[D\'emonstration moderne]  D'apr\`es la proposition \ref{p:int}, il s'agit de montrer:
\[I(\gamma,\gamma)< d({}^t\gamma\circ\gamma) +d'({}^t\gamma\circ\gamma) \]
si $\gamma\in \Corr(C,C)$ n'est pas $\equiv 0$. \'Etant donn\'e le lemme \ref{l:deg}, cette in\'egalit\'e est \'equivalente \`a
\[I(\gamma,\gamma)< 2d(\gamma) d'(\gamma) \]
(\emph{in\'egalit\'e de Castelnuovo-Severi}). Cela r\'esulte du th\'eor\`eme \ref{t:hodge} pour la surface $S=C\times C'$, appliqu\'e (sur $\bar k$) au diviseur $D=\gamma-d(\gamma)(P\times C')-d'(\gamma)(C\times P')$ et au diviseur ample $H=C\times P'+P\times C'$, o\`u $P$ (\resp $P'$) est un point de $C$ (\resp de $C'$).
\end{proof}

\begin{rque}\phantomsection Mattuck et Tate d\'eduisent dans \cite{mattuck-tate} le th\'eor\`eme \ref{t:lemmeimportant} du th\'eor\`eme de Riemann-Roch\index{Théorème!de Riemann-Roch} pour $C\times C'$; Grothendieck remarque dans \cite{groth-mt}  que cela r\'esulte directement du th\'eor\`eme de l'indice de Hodge, comme ci-dessus: j'ai reproduit la version de son argument donn\'ee  par Fulton dans \cite[ex. 16.1.10 (a)]{fulton}.
\end{rque}

\begin{cor}\phantomsection\label{cauchy-schwarz} a) L'application
\[(\gamma_1,\gamma_2)\mapsto \sigma({}^t\gamma_2\circ \gamma_1)\]
de $\Corr_\equiv(C,C')\times \Corr_\equiv(C,C')$ vers $\Z$ est une forme bilin\'eaire sym\'etrique d\'efinie positive.\\
b) Pour toute classe de correspondance $\gamma\in \Corr_\equiv(C,C)$, on a l'in\'egalit\'e
\[\sigma(\gamma)^2\le 2g\sigma({}^t\gamma \circ\gamma) \]
avec \'egalit\'e si et seulement si $\gamma$ est lin\'eairement d\'ependante de $\Delta_C$.
\end{cor}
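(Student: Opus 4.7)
Mon plan est de reconna\^{\i}tre dans l'\'enonc\'e a) la structure d'une forme bilin\'eaire sym\'etrique d\'efinie positive sur un $\Z$-module (\'eventuellement tensoris\'e par $\Q$), puis de d\'eduire b) d'une in\'egalit\'e de Cauchy--Schwarz appliqu\'ee au vecteur particulier $\Delta_C$.

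Pour a), la bilin\'earit\'e r\'esulte imm\'ediatement de la bilin\'earit\'e de la composition (proposition \ref{p:sigma} a)) et de l'additivit\'e de $\sigma$, qu'on v\'erifie \`a partir de la d\'efinition \ref{d:trace} sachant que les indices $d,d'$ et le produit d'intersection $I(\cdot,\Delta_C)$ sont additifs. Pour la sym\'etrie, on combine la proposition \ref{p:sigma} c) et l'identit\'e $\sigma({}^t\alpha)=\sigma(\alpha)$ de la proposition \ref{p:sigma} b): on a
\[\sigma({}^t\gamma_2\circ\gamma_1)=\sigma(\gamma_1\circ{}^t\gamma_2)=\sigma({}^t(\gamma_1\circ{}^t\gamma_2))=\sigma(\gamma_2\circ{}^t\gamma_1)=\sigma({}^t\gamma_1\circ\gamma_2).\]
Enfin, la positivit\'e \emph{d\'efinie} est pr\'ecis\'ement le contenu du th\'eor\`eme \ref{t:lemmeimportant}: $\sigma({}^t\gamma\circ\gamma)>0$ pour $\gamma\ne 0$ dans $\Corr_\equiv(C,C')$. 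C'est bien la seule partie profonde de a); tout le reste est formel.

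Pour b), on pose $\langle\gamma_1,\gamma_2\rangle:=\sigma({}^t\gamma_2\circ\gamma_1)$ sur $\Corr_\equiv(C,C)$. Deux identit\'es cl\'es se v\'erifient directement:
\[\langle\gamma,\Delta_C\rangle=\sigma({}^t\Delta_C\circ\gamma)=\sigma(\Delta_C\circ\gamma)=\sigma(\gamma)\]
(puisque ${}^t\Delta_C=\Delta_C$ est neutre pour $\circ$), et
\[\langle\Delta_C,\Delta_C\rangle=\sigma(\Delta_C)=2g\]
par le th\'eor\`eme \ref{t:trid}. L'in\'egalit\'e \`a prouver s'\'ecrit alors
\[\langle\gamma,\Delta_C\rangle^2\le\langle\Delta_C,\Delta_C\rangle\cdot\langle\gamma,\gamma\rangle,\]
qui est exactement l'in\'egalit\'e de Cauchy--Schwarz pour la forme $\langle\cdot,\cdot\rangle$ \'etendue par scalaires \`a $\Corr_\equiv(C,C)\otimes\Q$, forme qui reste d\'efinie positive d'apr\`es a). On l'obtient en d\'eveloppant $\langle 2g\cdot\gamma-\sigma(\gamma)\Delta_C,2g\cdot\gamma-\sigma(\gamma)\Delta_C\rangle\ge 0$. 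Le cas d'\'egalit\'e est le cas d'\'egalit\'e usuel de Cauchy--Schwarz: il a lieu si et seulement si $2g\cdot\gamma-\sigma(\gamma)\Delta_C$ est de norme nulle, donc nul dans $\Corr_\equiv(C,C)\otimes\Q$ par positivit\'e d\'efinie, c'est-\`a-dire si $\gamma$ est $\Q$-lin\'eairement d\'ependant de $\Delta_C$.

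L'obstacle principal de la preuve a d\'ej\`a \'et\'e surmont\'e, puisqu'il r\'eside enti\`erement dans le th\'eor\`eme \ref{t:lemmeimportant} (in\'egalit\'e de Castelnuovo--Severi, via l'indice de Hodge): une fois celui-ci acquis, le corollaire est un exercice de manipulation formelle. La seule petite pr\'ecaution tient \`a ce que $\Corr_\equiv(C,C)$ est un $\Z$-module et non un espace vectoriel: on applique Cauchy--Schwarz apr\`es tensorisation par $\Q$, ce qui ne pose pas de probl\`eme car une forme \`a valeurs enti\`eres, d\'efinie positive sur un $\Z$-module, le demeure sur le $\Q$-espace associ\'e.
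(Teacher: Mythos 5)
Votre preuve est correcte et suit essentiellement la même voie que celle du texte : la positivité définie provient du théorème \ref{t:lemmeimportant}, la symétrie des propriétés formelles de $\sigma$ (proposition \ref{p:sigma}), et l'inégalité de b) est l'inégalité de Cauchy--Schwarz pour la forme restreinte au plan engendré par $\Delta_C$ et $\gamma$, avec $\langle\Delta_C,\Delta_C\rangle=2g$ par le théorème \ref{t:trid}. Le texte l'obtient via le discriminant de la forme quadratique $(a,b)\mapsto\sigma\bigl({}^t(a\Delta_C+b\gamma)\circ(a\Delta_C+b\gamma)\bigr)$ et vous par complétion du carré : c'est le même argument sous deux habillages standard.
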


\begin{proof} a) La sym\'etrie r\'esulte de la proposition \ref{p:sigma} et la positivit\'e du th\'eor\`eme \ref{t:lemmeimportant}.

b) La forme quadratique
\[(a,b)\mapsto \sigma\left({}^t(a\Delta_C+b\gamma)(a\Delta_C+b\gamma)\right)=2ga^2+2\sigma(\gamma)ab+\sigma({}^t\gamma \circ\gamma)b^2\]
(\cf th\'eor\`eme \ref{t:trid} et proposition \ref{p:sigma}) est positive, son discriminant est donc n\'egatif ou nul; il est nul si et seulement si la forme n'est pas d\'efinie, ce qui arrive exactement quand $\gamma$ et $\Delta_C$ sont lin\'eairement d\'ependants (dans $\Corr_\equiv(C,C)$).
\end{proof}

\subsubsection{Cas du graphe d'un endomorphisme} Soit $f:C\to C$ un endomorphisme, de graphe $\Gamma_f$. En appliquant le corollaire \ref{cauchy-schwarz} \`a $\Gamma_f$ en tenant compte du corollaire \ref{c:proj} et en r\'eappliquant le th\'eor\`eme \ref{t:trid}, on trouve:

\begin{thm}\phantomsection\label{ineg} On a l'in\'egalit\'e
\[|\sigma(f_*)|\le 2g\sqrt{\deg(f)}. \]
Si $\Gamma_f$ est transverse \`a $\Delta_C$, cette in\'egalit\'e devient
\[|1+\deg(f)-N(f)|\le 2g\sqrt{\deg(f)} \]
o\`u $N(f)$ est le nombre de points fixes de $f$.
\end{thm}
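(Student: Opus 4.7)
La strat\'egie consiste \`a appliquer le corollaire \ref{cauchy-schwarz} b) \`a la classe $\gamma=f_*\in \Corr_\equiv(C,C)$, puis \`a calculer le membre de droite gr\^ace \`a l'identit\'e de projection (corollaire \ref{c:proj}) et \`a la valeur $\sigma(\Delta_C)=2g$ (th\'eor\`eme \ref{t:trid}).

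Concr\`etement, le corollaire \ref{cauchy-schwarz} b) donne
\[\sigma(f_*)^2\le 2g\,\sigma({}^tf_*\circ f_*).\]
Or ${}^tf_*=f^*$ par d\'efinition, donc ${}^tf_*\circ f_*=f^*\circ f_*$. Pour calculer la trace de cette derni\`ere composition, j'utiliserais la propri\'et\'e de trace $\sigma(\alpha\circ\beta)=\sigma(\beta\circ\alpha)$ (proposition \ref{p:sigma} c)), qui donne
\[\sigma(f^*\circ f_*)=\sigma(f_*\circ f^*).\]
Le corollaire \ref{c:proj} identifie $f_*\circ f^*=\deg(f)\,\Delta_C$, et par bilin\'earit\'e de $\sigma$ combin\'ee avec le th\'eor\`eme \ref{t:trid}, on obtient $\sigma(f_*\circ f^*)=2g\deg(f)$. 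En rempla\c cant dans l'in\'egalit\'e de d\'epart, on trouve $\sigma(f_*)^2\le 4g^2\deg(f)$, d'o\`u la premi\`ere in\'egalit\'e annonc\'ee.

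Pour la deuxi\`eme partie, j'expliciterais $\sigma(f_*)$ \`a partir de la d\'efinition \ref{d:trace}: $\sigma(f_*)=d(\Gamma_f)+d'(\Gamma_f)-I(\Gamma_f,\Delta_C)$. Les indices du graphe sont classiques: $d(\Gamma_f)=1$ puisque $\Gamma_f$ se projette bijectivement sur la premi\`ere copie de $C$, et $d'(\Gamma_f)=\deg(f)$ puisque la seconde projection restreinte au graphe est isomorphe \`a $f$. Sous l'hypoth\`ese de transversalit\'e, chaque point d'intersection de $\Gamma_f$ et $\Delta_C$ est de multiplicit\'e $1$ et correspond \`a un point fixe g\'eom\'etrique de $f$, donc $I(\Gamma_f,\Delta_C)=N(f)$, d'o\`u $\sigma(f_*)=1+\deg(f)-N(f)$, et la seconde in\'egalit\'e s'en d\'eduit imm\'ediatement.

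Le principal obstacle conceptuel est d\'ej\`a absorb\'e par les \'enonc\'es ant\'erieurs (la positivit\'e de la forme trace, issue de l'in\'egalit\'e de Castelnuovo-Severi, elle-m\^eme cons\'equence du th\'eor\`eme de l'indice de Hodge). Les seuls points de vigilance sont techniques: s'assurer que $f_*$ se repr\'esente bien par une classe non nulle (sinon l'in\'egalit\'e est triviale puisque $\sigma(f_*)=0=2g\sqrt{\deg(f)}\cdot 0$), et v\'erifier que la formule $\gamma\circ{}^t\gamma=d'(\gamma)\Delta_C$ (proposition \ref{p:proj}) s'applique sans composantes verticales parasites \`a $\gamma=\Gamma_f$, ce qui est imm\'ediat car $\Gamma_f$ est irr\'eductible et non contenu dans une fibre verticale $P\times C$.
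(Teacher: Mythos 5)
Votre démonstration est correcte et suit exactement la même démarche que celle du texte : application du corollaire \ref{cauchy-schwarz} b) à $\Gamma_f$, calcul de $\sigma({}^tf_*\circ f_*)=2g\deg(f)$ via le corollaire \ref{c:proj} et le théorème \ref{t:trid}, puis explicitation de $\sigma(f_*)=1+\deg(f)-N(f)$ sous l'hypothèse de transversalité. Le seul détail que vous rendez explicite (et que le texte passe sous silence) est le recours à $\sigma(\alpha\circ\beta)=\sigma(\beta\circ\alpha)$ pour passer de $f^*\circ f_*$ à $f_*\circ f^*$, ce qui est un ajout bienvenu.
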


\begin{proof} Comme $d(f_*)=1$ et $d'(f_*)=\deg(f)$ (voir preuve du corollaire \ref{c:proj}), pour d\'eduire la seconde in\'egalit\'e de la premi\`ere il reste \`a remarquer que $I(f_*,\Delta_C)=N(f)$ par l'hypoth\`ese de transversalit\'e (\cf proposition \ref{p:int}).

\end{proof}

\subsubsection{Fin de la d\'emonstration}\phantomsection\label{s2.8.7} Pour d\'emontrer \eqref{eq:hr}, Weil applique le th\'eor\`eme \ref{ineg} aux puissances de la \emph{correspondance de Frobenius} $\pi_C$, graphe du morphisme de Frobenius
\[F_C:C\to C\]
d\'efini (en termes grothendieckiens) par l'identit\'e sur les points de $C$ et l'application $f\mapsto f^q$ sur le faisceau d'anneaux $\sO_C$. Comme $F_C$ est radiciel, son graphe est bien transverse \`a $\Delta_C$ ainsi que celui de $F_C^n$ pour tout $n>0$: voir \cite[preuve du th. 13]{weil2}. Il reste \`a observer que l'ensemble des points fixes de $F_C^n$ est $C(\F_{q^n})$.

\subsection{Premières applications}

\begin{cor}\phantomsection a) Soit $C$ une courbe projective lisse de genre $g$, g\'eom\'etriquement connexe sur $\F_q$. Alors
\[1+q-2g\sqrt{q}\le |C(\F_q)|\le 1+q+2g\sqrt{q}.\] 
b) Soit $C$ une courbe projective lisse de genre $g$, g\'eom\'etriquement irr\'eductible  sur un corps de nombres $k$. Si $v$ est une place finie de $k$, de bonne r\'eduction pour $C$, $C(k_v)$ a un point rationnel d\`es que $N(v)\ge 4g^2$.
\end{cor}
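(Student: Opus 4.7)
Pour a), il s'agit d'une simple sp\'ecialisation de l'in\'egalit\'e fondamentale \eqref{eq:hr} \'etablie par le th\'eor\`eme \ref{ineg} au \S \ref{s2.8.7}. Plus pr\'ecis\'ement, j'appliquerais \eqref{eq:hr} avec $n=1$: par d\'efinition $a_1=1+q-\nu_1=1+q-|C(\F_q)|$, donc l'in\'egalit\'e $|a_1|\le 2g\sqrt{q}$ se r\'ecrit exactement sous la forme demand\'ee
\[1+q-2g\sqrt{q}\le |C(\F_q)|\le 1+q+2g\sqrt{q}.\]
Aucune autre \'etape n'est n\'ecessaire.

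Pour b), la strat\'egie se d\'ecompose en deux \'etapes: d'abord obtenir un point rationnel sur la fibre sp\'eciale via le b) du cas sur un corps fini, puis relever ce point en un point de $C(k_v)$ par lissit\'e. Soit $\sC\to \Spec O_v$ un mod\`ele projectif lisse de $C$ au-dessus du localis\'e en $v$ (qui existe par hypoth\`ese de bonne r\'eduction), et soit $\bar C=\sC\times_{O_v}\kappa(v)$ la fibre sp\'eciale: c'est une courbe projective lisse g\'eom\'etriquement connexe de genre $g$ sur le corps r\'esiduel $\kappa(v)$, de cardinal $N(v)$. Par a), on a
\[|\bar C(\kappa(v))|\ge 1+N(v)-2g\sqrt{N(v)}=1+\sqrt{N(v)}\bigl(\sqrt{N(v)}-2g\bigr),\]
et l'hypoth\`ese $N(v)\ge 4g^2$ donne $\sqrt{N(v)}\ge 2g$, donc $|\bar C(\kappa(v))|\ge 1$: il existe un point $\bar P\in \bar C(\kappa(v))$.

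Il reste \`a remonter $\bar P$ en un point de $\sC(O_v)$: c'est ici qu'intervient la lissit\'e du mod\`ele $\sC$. Le compl\'et\'e $O_v$ \'etant hens\'elien et $\sC$ lisse sur $O_v$, la propri\'et\'e de rel\`evement pour les morphismes lisses (Hensel) fournit un point $P\in \sC(O_v)$ relevant $\bar P$; sa fibre g\'en\'erique d\'efinit le point cherch\'e de $C(k_v)$. Le seul point qui demande une justification s\'erieuse est cette existence d'un mod\`ele projectif lisse: c'est pr\'ecis\'ement le contenu de l'hypoth\`ese de bonne r\'eduction, toute la difficult\'e du raisonnement \'etant d\'ej\`a concentr\'ee dans la borne de Weil a).
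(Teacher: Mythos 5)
Votre démonstration est correcte et suit essentiellement la même voie que celle du texte: a) est la spécialisation immédiate de \eqref{eq:hr} pour $n=1$, et b) combine la minoration de a) appliquée à la fibre spéciale avec le relèvement par lissité (lemme de Hensel), que le texte laisse implicite. Votre réécriture $1+q-2g\sqrt{q}=1+\sqrt{q}(\sqrt{q}-2g)$ évite même la résolution de l'équation du second degré utilisée dans le texte et donne directement la borne $N(v)\ge 4g^2$.
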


\begin{proof} a) est imm\'ediat \`a partir de \eqref{eq:hr}. Cela implique que $X(\F_q)\neq \emptyset$ d\`es que $1+q-2g\sqrt{q}>0$. Ceci se produit d\`es que
\[\sqrt{q}> g+\sqrt{g^2-1}\]
ce qui est vrai d\`es que $\sqrt{q}\ge 2g$, soit $q\ge 4g^2$. Dans  b), on en d\'eduit que la fibre sp\'eciale $C(v)$ a un point rationnel d\`es que $N(v)>4g^2$, et alors $C(k_v)$ a \'egalement un point rationnel.
\end{proof}

\begin{ex}\phantomsection Si $C$ est une courbe de genre $1$ sur $\Q$ qui a bonne r\'eduction en $p$, alors $C(\Q_p)$ a un point rationnel d\`es que $p\ge 5$.
\end{ex}

\begin{cor}\phantomsection\label{c:courbesg} Soit $C$ une courbe sur $\F_q$, \'eventuellement ouverte et singuli\`ere, mais g\'eom\'etriquement int\`egre. Alors $\zeta(C,s)$ est une fraction rationnelle en $q^{-s}$; ses p\^oles sont simples et situ\'es aux points $s=1+\frac{2i\pi r}{\log q}$, $r\in\Z$, et elle ne s'annule pas pour $\Re(s)>\frac{1}{2}$.
\end{cor}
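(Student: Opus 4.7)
L'id\'ee est de ramener l'\'enonc\'e au th\'eor\`eme \ref{t:courbe} via la normalisation et la compactification projective. Soit $\nu:\tilde C\to C$ la normalisation (bien d\'efinie car $C$ est int\`egre) et soit $\bar C$ l'unique compactification projective lisse de $\tilde C$. Puisque $C$ est g\'eom\'etriquement int\`egre, il en est de m\^eme de $\tilde C$ puis de $\bar C$, qui est donc une courbe (projective, lisse, g\'eom\'etriquement connexe) au sens du th\'eor\`eme \ref{t:courbe}; celui-ci donne
\[\zeta(\bar C,s)=\frac{P(q^{-s})}{(1-q^{-s})(1-q^{1-s})},\]
o\`u $P\in\Z[t]$ a ses racines inverses de module $\sqrt{q}$.

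Je poserais ensuite $U\subset C$ le lieu lisse, $V=\nu^{-1}(U)\subset\bar C$ (isomorphe \`a $U$ via $\nu$), $Z=C\setminus U$ (ensemble fini de points ferm\'es singuliers) et $W=\bar C\setminus V$ (ensemble fini). La multiplicativit\'e de $\zeta$ (proposition \ref{p4} b)) appliqu\'ee aux d\'ecompositions $C=U\sqcup Z$ et $\bar C=V\sqcup W$, combin\'ee \`a $\zeta(U,s)=\zeta(V,s)$, fournit la formule cl\'e
\[\zeta(C,s)=\zeta(\bar C,s)\cdot\frac{\zeta(Z,s)}{\zeta(W,s)}=\frac{P(q^{-s})\prod_{w\in W}(1-q^{-\deg(w)s})}{(1-q^{-s})(1-q^{1-s})\prod_{x\in Z}(1-q^{-\deg(x)s})},\]
qui est manifestement une fraction rationnelle en $q^{-s}$.

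Pour localiser z\'eros et p\^oles, j'utiliserais ensuite les faits suivants: les facteurs $(1-q^{-ds})^{\pm 1}$ ont leurs z\'eros et p\^oles sur la droite $\Re(s)=0$, les racines de $P(q^{-s})$ se situent sur $\Re(s)=1/2$ (th\'eor\`eme \ref{t:courbe}), et le facteur $(1-q^{1-s})^{-1}$ contribue les p\^oles simples aux points $s=1+\frac{2i\pi r}{\log q}$, $r\in\Z$. Il s'ensuit que dans le demi-plan $\Re(s)>1/2$, les seuls p\^oles de $\zeta(C,s)$ sont ces p\^oles simples, et $\zeta(C,s)$ n'y poss\`ede aucun z\'ero. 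La simplicit\'e r\'esulte de ce que $P(q^{-1})\ne 0$ (puisque $|\alpha/q|=q^{-1/2}<1$ pour toute racine inverse $\alpha$ de $P$) et de ce que les facteurs correctifs sont finis et non nuls en $s=1$.

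Le point le plus d\'elicat est probablement l'\'etape de r\'eduction elle-m\^eme: il faut v\'erifier que $\bar C$ est bien g\'eom\'etriquement connexe (ce qui d\'ecoule de la commutation de la normalisation au changement de base s\'eparable pour pr\'eserver l'int\'egrit\'e g\'eom\'etrique de $\tilde C$), et que les corrections $\zeta(Z,s)/\zeta(W,s)$ n'affectent pas la structure dans le demi-plan $\Re(s)>1/2$. Une fois cela en main, l'\'enonc\'e suit directement de la localisation pr\'ecise fournie par le th\'eor\`eme \ref{t:courbe}, qui s\'epare les contributions aux z\'eros (sur $\Re(s)=1/2$) de celles aux p\^oles (sur $\Re(s)=1$) et \'elimine toute interaction non triviale pour $\Re(s)>1/2$.
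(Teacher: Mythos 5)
Your proof is correct and follows essentially the same route as the paper: normalize, pass to the smooth projective compactification, compare the zeta functions by multiplicativity over the finite exceptional sets, and invoke Theorem \ref{t:courbe}. The only (harmless) divergence is that the paper further notes the divisibility $\deg(c_i)\mid\deg(c)$ for the points $c$ lying above a singular point $c_i$, so that the correction factor is a \emph{polynomial} with roots of unity as zeros, whereas you keep it as a ratio whose zeros and poles all lie on $\Re(s)=0$ --- which is all that is needed for the conclusion in the half-plane $\Re(s)>\frac{1}{2}$.
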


\begin{proof} Supposons d'abord $C$ propre, et soit $\tilde C$ sa normalisation. La projection $f:\tilde C\to C$ est un isomorphisme en dehors d'un nombre fini de points $c_1,\dots,c_n\in C$. On a
\[Z(c_i,t) = (1-t^{d_i})^{-1}\]
o\`u $d_i=\deg(c_i)$ et
\[Z(f^{-1}(c_i),t)= \prod_{c\in f^{-1}(c_i)} (1-t^{d_c})^{-1}\]
o\`u $d_c=\deg(c)$. comme $d_i\mid d_c$ pour tout $c$, $(1-t^{d_i})\mid \prod (1-t^{d_c})$. On en conclut que 
\begin{equation}\label{eq:courbesg}
Z(C,t)=Z(\tilde C,t)Q(t)
\end{equation}
o\`u $Q$ est un polyn\^ome dont les zéros sont des racines de l'unit\'e.

Si $C$ est quelconque, on consid\`ere sa compl\'et\'ee $\bar C$: si $\tilde C$ est la normalis\'ee de $\bar C$, on voit que \eqref{eq:courbesg} reste vrai, avec la m\^eme propri\'et\'e pour $Q$. La conclusion r\'esulte alors du th\'eor\`eme \ref{t:courbe}.
\end{proof}

\subsection{Les théorèmes de Lang-Weil}

Le corollaire suivant est d\^u ind\'ependamment \`a Lang-Weil \cite{lang-weil} et Nisnevi\v c \cite{nisnevic} (ce dernier a une borne un peu moins fine).

\begin{cor}\phantomsection\label{c:lw} Soit $X$ une sous-vari\'et\'e de $\P^N_{\F_q}$ de dimension $n$, de degr\'e $d$ et g\'eom\'etriquement irr\'eductible. Alors
\[\big| |X(\F_q)|-q^n\big |\le (d-1)(d-2)q^{n-1/2}+Aq^{n-1}\]
o\`u $A$ est une constante ne d\'ependant que de $n,d,N$.\\
En particulier, $X$ admet un z\'ero-cycle de degr\'e $1$.
\end{cor}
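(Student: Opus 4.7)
The plan is to prove the bound by induction on $n = \dim X$, using Theorem \ref{t:courbe} and Corollary \ref{c:courbesg} as the base case and reducing the general case to that of curves through iterated hyperplane sections.

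For the base case $n = 1$: let $\tilde X \to X$ be the normalization; since $X$ is geometrically integral, $\tilde X$ is a smooth projective geometrically connected curve over $\F_q$. A generic linear projection into $\P^2_{\F_q}$ yields a birational map to a plane curve of degree $\le d$; as geometric genus is a birational invariant, the genus of $\tilde X$ satisfies $g \le (d-1)(d-2)/2$. Theorem \ref{t:courbe} then gives $\bigl||\tilde X(\F_q)| - (q+1)\bigr| \le (d-1)(d-2)\sqrt{q}$, and the finite locus where $\tilde X \to X$ fails to be an isomorphism contributes an $O(1)$ discrepancy (bounded in terms of $d, N$) to $|X(\F_q)|$. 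Absorbing the $q+1$ and the $O(1)$ into the $Aq^{n-1}$ term yields the claim for $n=1$.

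For the inductive step ($n \ge 2$), I double-count pairs $(x, H)$ with $x \in X(\F_q)$ and $H$ a hyperplane of $\P^N_{\F_q}$ through $x$. Since each such $x$ lies on exactly $(q^N-1)/(q-1)$ hyperplanes,
$$|X(\F_q)| \cdot \frac{q^N-1}{q-1} = \sum_{H \in (\P^N)^\vee(\F_q)} |X_H(\F_q)|, \qquad X_H := X \cap H.$$
By Bertini's theorem for geometric irreducibility (valid in arbitrary characteristic), there is a proper Zariski-closed subset $Z \subset (\P^N)^\vee$, defined over $\F_q$, such that for $H \notin Z$ the section $X_H$ is geometrically irreducible of dimension $n-1$ and degree $d$. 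The inductive hypothesis applies to these ``good'' $H$; the ``bad'' hyperplanes form a set of cardinality $O(q^{N-1})$, and for each of them $|X_H(\F_q)|$ is controlled by the elementary estimate $|Y(\F_q)| \le \deg(Y)\bigl(q^{\dim Y} + q^{\dim Y - 1} + \cdots + 1\bigr)$ valid for any subvariety $Y$ of projective space. Summing and dividing by $(q^N-1)/(q-1)$ produces the stated bound; crucially, the leading constant $(d-1)(d-2)$ is preserved along the induction because a hyperplane section preserves the degree.

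The delicate ingredient will be the quantitative Bertini step: one must verify that the bad locus $Z$ is a proper closed subset of $(\P^N)^\vee$ defined over $\F_q$ and bound the contribution of its $\F_q$-points, for which Jouanolou's version of Bertini for irreducibility is the natural tool. For the final assertion, I apply the main estimate over $\F_{q^m}$ (under which $X$ remains geometrically irreducible of the same dimension and degree) to obtain $|X(\F_{q^m})| > 0$ for all sufficiently large $m$; the GCD of the degrees of closed points of $X$ then equals $1$ by an elementary argument using this asymptotic, and choosing closed points of coprime degrees produces a zero-cycle of degree $1$.
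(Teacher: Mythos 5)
Votre démonstration est correcte et suit essentiellement la même stratégie que celle (esquissée) du texte : récurrence sur $n$, cas des courbes traité via la borne $g\le (d-1)(d-2)/2$ et le théorème \ref{t:courbe}, puis sections hyperplanes avec le lieu des mauvais hyperplans contenu dans une hypersurface du dual contribuant pour $O(q^{N-1})$. Votre double comptage des couples $(x,H)$ explicite simplement l'« argument de comptage élémentaire » invoqué dans le texte, et l'argument de pgcd pour le zéro-cycle de degré $1$ est également standard et correct.
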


\begin{proof}[Esquisse de démonstration] Lang et Weil procèdent par récurrence sur $n$. Le cas $n=1$ est une variante de la démonstration du corollaire \ref{c:courbesg}, utilisant l'identité
\[\frac{(d-1)(d-2)}{2}=g+\sum_P \delta_P\]
où $g$ est le genre de la désingularisée $\tilde X$ de $X$ et, pour tout $P\in X$, $\delta_P$ est la longueur du $\sO_{X,P}$-module $\sO_{\tilde X,P}/\sO_{X,P}$ \cite[I, ex. 7.2; IV, ex. 1.8]{hartshorne}.

Pour $n>1$, on peut supposer que sans perte de généralité que $X$ n'est contenu dans aucun hyperplan $H$ de $\P^N$.  Le lieu des $H$ tels que $H\cap X$ soit réductible est alors un fermé algébrique non vide de l'espace projectif dual de $\P^N$.\footnote{Dans  \cite[exp. XVII, \S 3]{SGA7}, une variante de ce lieu est étudiée, quand $X$ est lisse, sous le nom de \emph{variété duale}: la condition ``intersection non irréductible'' est remplacée par ``intersection non transverse''.} On voit facilement qu'il est contenu dans une hypersurface, ce qui borne son nombre de $\F_q$-points par $B q^{N-1}$ où $B$ est une constante dépendant de $(n,d,N)$. Un argument de comptage élémentaire permet alors de conclure.
\end{proof}

\begin{rque} Pour obtenir l'existence d'un zéro-cycle de degré $1$ sur $X$, il n'est pas nécessaire d'appliquer l'estimation du corollaire \ref{c:lw}: il suffit d'appliquer le résultat du  \S \ref{s2.7} à la normalisée d'une courbe tracée sur $X$.
\end{rque}

\begin{cor}\phantomsection\label{c:lw2} Soit $K$ un corps de fonctions de $n$ variables sur $\F_q$, $\F_q$ étant algébriquement fermé dans $K$. Alors il existe une constante $\gamma$ telle que, pour tout $\F_q$-modèle $V$ de $K$ et tout $r\ge 1$, on ait l'inégalité
\[\big | |V(\F_{q^r})|-q^{rn}\big|\le \gamma q^{r(n-1/2)} + Bq^{r(n-1)}\]
où la constante $B$ dépend de $V$. Si $K$ admet un modèle projectif de degré $d$, on peut prendre $\gamma\le (d-1)(d-2)$.
\end{cor}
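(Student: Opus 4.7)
Le plan est de ramener le cas g\'en\'eral au cas d'un mod\`ele projectif, auquel s'applique directement le corollaire \ref{c:lw}, puis d'absorber la diff\'erence entre deux mod\`eles quelconques dans le terme en $q^{r(n-1)}$.

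Je commencerai par fixer un mod\`ele projectif $V_0\subset\P^N_{\F_q}$ de $K$, de degr\'e $d$ (obtenu par exemple en compl\'etant projectivement un mod\`ele affine). L'hypoth\`ese que $\F_q$ est alg\'ebriquement ferm\'e dans $K$ assure que $V_0$ est g\'eom\'etriquement int\`egre. En posant $\gamma=(d-1)(d-2)$, l'application du corollaire \ref{c:lw} \`a $V_0$ vu comme $\F_{q^r}$-vari\'et\'e fournira directement
\[\big||V_0(\F_{q^r})|-q^{rn}\big|\le \gamma q^{r(n-1/2)}+A q^{r(n-1)}\]
o\`u la constante $A$ ne d\'epend que de $V_0$, donc de $K$.

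Pour un mod\`ele arbitraire $V$ de $K$, $V$ et $V_0$ sont birationnels: il existe des ouverts denses isomorphes $U\subset V$ et $U_0\subset V_0$, de compl\'ementaires $Z$ et $Z_0$ qui sont de dimension $\le n-1$. Il restera alors \`a montrer que, pour tout $\F_q$-sch\'ema $W$ de type fini de dimension $m$, il existe $M_W>0$ tel que $|W(\F_{q^r})|\le M_W q^{rm}$ pour tout $r\ge 1$. Ceci se fera par r\'ecurrence sur $m$, en d\'ecomposant $W$ en composantes irr\'eductibles: sur chaque composante g\'eom\'etriquement int\`egre on applique le corollaire \ref{c:lw} apr\`es plongement projectif, et les composantes non g\'eom\'etriquement irr\'eductibles ont tous leurs $\F_{q^r}$-points dans un ferm\'e strictement plus petit, auquel on applique l'hypoth\`ese de r\'ecurrence. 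L'identit\'e $|V(\F_{q^r})|-|V_0(\F_{q^r})|=|Z(\F_{q^r})|-|Z_0(\F_{q^r})|$ fournira alors la majoration voulue, avec une constante $B$ d\'ependant de $V$, et l'in\'egalit\'e triangulaire avec l'estimation sur $V_0$ conclura.

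L'obstacle principal est l'\'etape de r\'ecurrence ci-dessus: s'assurer que la constante $M_W$ ne d\'epend que de $W$ (et non de $r$) demande un argument soign\'e pour les composantes qui ne sont pas g\'eom\'etriquement irr\'eductibles, o\`u le Frobenius relatif \`a $\F_{q^r}$ permute les composantes conjugu\'ees et force les points rationnels \`a se trouver dans un ferm\'e de dimension strictement inf\'erieure. Une fois cette \'etape \'etablie, le reste de la d\'emonstration se r\'eduit \`a une combinaison d'in\'egalit\'es.
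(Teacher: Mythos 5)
Votre démonstration est correcte et suit pour l'essentiel la même stratégie que celle du texte: on se ramène à un modèle projectif auquel s'applique le corollaire \ref{c:lw}, et on absorbe la contribution des lieux de dimension $\le n-1$ dans le terme $Bq^{r(n-1)}$ (le texte formule cela comme une récurrence sur $n$ où l'énoncé lui-même sert d'hypothèse de récurrence pour traiter le complémentaire d'un ouvert, tandis que vous isolez un lemme de comptage grossier $|W(\F_{q^r})|\le M_W q^{rm}$ démontré par sa propre récurrence -- différence purement rédactionnelle). Un seul point de votre esquisse est inexact tel quel: pour une composante intègre $W_i$ de corps des constantes $\F_{q^e}$ avec $e>1$, l'argument « Frobenius permute les composantes conjuguées, donc les points rationnels sont dans un fermé plus petit » ne vaut que si $e\nmid r$; lorsque $e\mid r$, la composante $W_i\otimes\F_{q^r}$ se décompose en $e$ variétés géométriquement intègres de même dimension et de degré borné, et il faut alors appliquer le corollaire \ref{c:lw} (sur $\F_{q^r}$) à chacune d'elles pour obtenir la majoration $O(q^{rm_i})$ avec une constante indépendante de $r$. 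Avec cette correction mineure, le lemme auxiliaire est acquis et le reste de votre argument conclut.
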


\begin{proof} On raisonne par récurrence sur $n$, le cas $n=0$ étant trivial (avec $\gamma=B=0$). Pour $n>0$, cela montre que l'énoncé est invariant par passage à un ouvert non vide de $V$; on peut donc supposer $V$ affine, puis projectif, et on est ramené au corollaire \ref{c:lw}.
\end{proof}

\begin{cor}\phantomsection\label{c:tw3} Si $V'$ est un autre modèle de $K$, la fonction $\zeta(V,s)/\zeta(V',s)$ converge absolument pour $\Re(s)> n-1$; donc l'ordre des zéros et pôles de $\zeta(V,s)$ dans cette région sont des invariants birationnels de $K$.\qed
\end{cor}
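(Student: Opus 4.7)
Je d�montrerais ce corollaire par un argument �l�mentaire de recollement birationnel, sans avoir � invoquer les estimations de Lang-Weil du corollaire pr�c�dent. Puisque $V$ et $V'$ sont deux mod�les int�gres d'un m�me corps de fonctions $K$ sur $\F_q$, ils sont birationnellement �quivalents: il existe des ouverts denses $U\subset V$, $U'\subset V'$ et un $\F_q$-isomorphisme $U\iso U'$. Notons $Z=V\setminus U$ et $Z'=V'\setminus U'$ (munis de leur structure r�duite); ce sont des ferm�s propres de $V$ et $V'$, donc de dimension au plus $n-1$.

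La proposition \ref{p4} b), appliqu�e aux d�compositions $V_{(0)}=U_{(0)}\sqcup Z_{(0)}$ et $V'_{(0)}=U'_{(0)}\sqcup Z'_{(0)}$, donne
\[\zeta(V,s)=\zeta(U,s)\zeta(Z,s),\qquad \zeta(V',s)=\zeta(U',s)\zeta(Z',s).\]
L'invariance de $\zeta(\cdot,s)$ par isomorphisme fournit $\zeta(U,s)=\zeta(U',s)$; en divisant, on obtient
\[\frac{\zeta(V,s)}{\zeta(V',s)}=\frac{\zeta(Z,s)}{\zeta(Z',s)}.\]
Le th�or�me \ref{t:abs} appliqu� � $Z$ et $Z'$ donne la convergence absolue de $\zeta(Z,s)$ et $\zeta(Z',s)$ pour $\Re(s)>n-1$. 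Dans ce demi-plan, chaque facteur est un produit eul�rien absolument convergent $\prod_x(1-N(x)^{-s})^{-1}$, donc holomorphe et sans z�ro; le quotient poss�de alors les m�mes propri�t�s, ce qui �tablit la premi�re assertion.

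La seconde en d�coule aussit�t: en tout $s_0$ avec $\Re(s_0)>n-1$, le quotient $\zeta(V,s)/\zeta(V',s)$ �tant holomorphe et non nul en $s_0$, les ordres de $\zeta(V,s)$ et $\zeta(V',s)$ en $s_0$ co�ncident, et ne d�pendent donc que de $K$. Cette d�monstration ne pr�sente pas d'obstacle substantiel; le seul point non imm�diat, l'existence d'un $\F_q$-isomorphisme entre des ouverts denses de deux mod�les birationnels, est standard, et la partie analytique se r�duit directement au th�or�me \ref{t:abs}. On notera qu'une approche alternative consistant � exploiter le corollaire \ref{c:lw2} pour contr�ler directement $\log\zeta(V,s)-\log\zeta(V',s)$ ne permettrait d'obtenir que la convergence pour $\Re(s)>n-1/2$, insuffisante pour l'�nonc� requis.
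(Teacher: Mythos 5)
Votre d�monstration est correcte et correspond � l'argument que le corollaire, d�pourvu de preuve explicite, laisse au lecteur: d�composer $\zeta(V,s)$ et $\zeta(V',s)$ au moyen d'un ouvert dense commun (proposition \ref{p4} b)) et appliquer le th�or�me \ref{t:abs} aux ferm�s compl�mentaires, de dimension $\le n-1$, le produit eul�rien absolument convergent ne s'annulant pas dans ce demi-plan. Votre remarque finale est �galement pertinente: le corollaire \ref{c:lw2} seul ne fournirait que l'abscisse $n-\frac{1}{2}$, et c'est bien le passage � l'ouvert commun qui donne $n-1$.
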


Le corollaire suivant est d\^u \`a Lang-Weil \cite{lang-weil} quand $V$ est comme ci-dessus, et g\'en\'eralis\'e par Serre dans \cite{serre-zeta}.

\begin{cor}\phantomsection\label{c:tw4} a) Soit $V$ un $\Z$-sch\'ema de type fini de dimension $n$. Alors $\zeta(V,s)$ se prolonge analytiquement au demi-plan $\Re(s)>n-\frac{1}{2}$.\\
b) Supposons $V$ int\`egre, de corps des fonctions $E$.
\begin{thlist}
\item Si $\car E=0$,  $\zeta(V,s)$ a dans ce domaine un unique p\^ole simple en $s=n$.
\item Si $\car E=p$, soit $\F_q$ son corps des constantes (fermeture alg\'ebrique de $\F_p$ dans $E$). Alors les seuls p\^oles de $\zeta(V,s)$ dans ce domaine sont simples et situ\'es aux points 
\[s =n+\frac{2i\pi r}{\log q},\quad r\in\Z.\]
\end{thlist}
\end{cor}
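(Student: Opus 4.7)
I proceed by induction on $n = \dim V$. For $n = 0$, $V$ is a finite disjoint union of spectra of finite fields, so $\zeta(V,s)$ is entire. For $n \geq 1$, the identities $\zeta(V,s) = \zeta(U,s)\zeta(Z,s)$ (for $U \subset V$ open with closed complement $Z$) and $\zeta(V_1 \cup V_2, s) = \zeta(V_1,s)\zeta(V_2,s)/\zeta(V_1 \cap V_2, s)$, combined with the induction hypothesis applied to the strictly lower-dimensional subschemes thereby produced, reduce the statement to the case where $V$ is integral of dimension $n$; part (a) then follows at once from part (b).

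For (b)(ii), let $\F_q$ be the field of constants of $E$. Corollary \ref{c:lw2} yields $\bigl||V(\F_{q^r})| - q^{rn}\bigr| \leq C q^{r(n-1/2)}$ for some constant $C$ (absorbing the $B q^{r(n-1)}$ term, since $q^{r(n-1)} \leq q^{r(n-1/2)}$). Writing
\[\log Z(V,t) = -\log(1 - q^n t) + h(t), \qquad h(t) = \sum_{r \geq 1} \bigl(|V(\F_{q^r})| - q^{rn}\bigr) \frac{t^r}{r},\]
one sees that $h(t)$ converges absolutely on the disk $|t| < q^{-(n-1/2)}$, so $\zeta(V,s) = e^{h(q^{-s})}/(1 - q^{n-s})$ defines a meromorphic function on $\Re(s) > n - 1/2$ whose poles are simple and located precisely at $s = n + 2i\pi r/\log q$, $r \in \Z$.

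For (b)(i), $V$ is flat over $\Z$ and $\zeta(V,s) = \prod_p \zeta(V_p, s)$ by proposition \ref{p4}. Choose a projective closure $\bar V$ of $V$ in some $\P^N_\Z$ of degree $d$; the induction on $n$ absorbs the contribution of $\bar V \setminus V$, so I may replace $V$ by $\bar V$. For all but finitely many primes $p$, the fiber $\bar V_p$ is geometrically integral of dimension $n-1$ and degree $d$, and corollary \ref{c:lw} gives the \emph{uniform} bound
\[\bigl||\bar V_p(\F_{p^r})| - p^{r(n-1)}\bigr| \leq (d-1)(d-2)\, p^{r(n-3/2)} + A p^{r(n-2)},\]
with $A$ depending only on $(n,d,N)$. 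Expanding $\log \zeta(\bar V_p, s) = -\log(1 - p^{(n-1)-s}) + h_p(s)$ and summing over the good primes gives
\[\log \zeta(\bar V,s) = \log \zeta(s-n+1) + \sum_p h_p(s) + R(s),\]
where $R(s)$ collects the finitely many bad-prime contributions, each meromorphic on $\C$. The uniform error forces $|h_p(s)| \ll p^{(n-3/2) - \Re(s)}$, so the series $\sum_p h_p(s)$ converges absolutely on $\Re(s) > n - 1/2$; combined with the pole structure of $\zeta(s-n+1)$ from proposition \ref{p2}, this delivers the claimed meromorphic continuation of $\zeta(V,s)$ with a unique simple pole at $s = n$. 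The delicate point throughout is the uniformity \emph{in $p$} of the Lang-Weil constant, which is precisely what the explicit form of corollary \ref{c:lw} secures, once a fixed projective model is chosen.
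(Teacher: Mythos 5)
Your reduction of a) to b) and your treatment of case b)(ii) follow the paper's route exactly: corollaire \ref{c:lw2} combined with la proposition \ref{p4} c) shows that $Z(V,t)(1-q^nt)$ converges absolutely for $|t|<q^{-(n-\frac{1}{2})}$, whence the stated poles. The problem is in case b)(i), at the assertion that for all but finitely many primes $p$ the fibre $\bar V_p$ is geometrically integral of dimension $n-1$ and degree $d$. This holds only when the generic fibre $V_\Q$ is geometrically irreducible, i.e.\ when $\Q$ is algebraically closed in $E$. If $K:=\bar\Q\cap E\ne\Q$, infinitely many fibres of $V\to\Spec\Z$ split into several components. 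Already for $V=\Spec\Z[i]$ (so $n=1$, $d=2$) one has $|V_p(\F_p)|=2$ for every $p\equiv 1\pmod 4$, so $\bigl||V_p(\F_p)|-p^{n-1}\bigr|=1$, whereas your uniform bound would force it to be $\le (d-1)(d-2)p^{-1/2}+Ap^{-1}=Ap^{-1}$. In that example $h_p(s)=-\log(1-p^{-s})$ for each split $p$, so $\sum_p h_p(s)$ contains $\sum_{p\equiv 1\,(4)}p^{-s}$ and diverges for $\Re(s)\le 1=n$: the estimate $|h_p(s)|\ll p^{(n-3/2)-\Re(s)}$ and the identification of the main factor as $\zeta(s-n+1)$ both fail.

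The paper's proof repairs exactly this point: one factors the structural morphism through $\Spec O_K$ with $K=\bar\Q\cap E$; by \cite[Prop.~9.7.8]{EGA4} all but finitely many fibres of $V\to\Spec O_K$ are geometrically irreducible of dimension $n-1$, so corollaire \ref{c:lw2} applies fibre by fibre over the closed points of $\Spec O_K$, and the main factor is the Dedekind zeta $\zeta_K(s-n+1)$ rather than $\zeta(s-n+1)$. The conclusion survives because $\zeta_K$ converges absolutely for $\Re(s)>1$ and has a unique simple pole at $s=1$ in the whole plane (Hecke, \S\ref{s:dedekind}). Your argument is correct as written only under the extra hypothesis $\bar\Q\cap E=\Q$; in general you must insert this intermediate base change (or, equivalently, replace the Riemann zeta by the Dedekind zeta of the field of constants of the generic fibre).
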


\begin{proof} Serre suggère une démonstration utilisant des fibrations en courbes; procédons différemment, en appliquant le corollaire \ref{c:lw2}. On ramène d'abord a) à b) en raisonnant par récurrence sur $n$ comme dans le point 1 de la preuve du théorème \ref{t:abs}. Dans le cas (ii) de b), l'énoncé résulte immédiatement du corollaire \ref{c:lw2} en utilisant la formule de la proposition \ref{p4} c): plus précisément, celle-ci montre comme dans \cite[p. 826]{lang-weil} que la série $Z(V,t)(1-q^nt)$ converge absolument pour $|t|< q^{-(n-\frac{1}{2})}$. 

Dans le cas (i), soit $K$ la fermeture algébrique de $\Q$ dans $E$: d'après \cite[Prop. 9.7.8]{EGA4}\footnote{Je remercie Colliot-Thélène pour cette référence.}, les fibres du morphisme $V\to \Spec O_K$ sont géométriquement irréductibles de dimension $n-1$,
 à l'exception d'un nombre fini d'entre elles. On déduit alors du corollaire \ref{c:lw2} et de l'abscisse de convergence absolue de $\zeta(O_K,s)$ (\S \ref{s:dedekind}) que $\zeta(V,s)\zeta(O_K,s-n+1)$ converge absolument pour $\Re(s)> n-\frac{1}{2}$, ce qui donne la conclusion de nouveau grâce à \ref{s:dedekind}.
\end{proof}

\section{Les conjectures de Weil}\label{s3}\index{Conjectures!de Weil}

\subsection{Des courbes aux vari\'et\'es ab\'eliennes}

Rappelons bri\`evement les points les plus importants de la th\'eorie des vari\'et\'es ab\'eliennes (\cite{weil2bis}, \cite{lang-ab}, \cite{mumford}, \cite{milne-ab1,milne-ab2}): pour un aper\c cu historique, voir les notes bibliographiques de Milne \`a la fin de \cite{milne-ab2} et les commentaires de Weil dans ses \oe uvres scientifiques.

\subsubsection{G\'en\'eralit\'es}

\begin{defn}\phantomsection Soit $k$ un corps. Une \emph{vari\'et\'e ab\'elienne} sur $k$ est un $k$-groupe alg\'ebrique, propre et int\`egre.
\end{defn}

Notons que int\`egre $\Rightarrow$ g\'eom\'etriquement int\`egre (car $A$ admet un point rationnel, l'\'el\'ement neutre) $\Rightarrow$ lisse (car l'ouvert de lissit\'e de $A$ est non vide, et on utilise des translations pour recouvrir $A$.)

\begin{thm}\phantomsection a) Une vari\'et\'e ab\'elienne est projective et sa loi de groupe est commutative.\\
b) Soit $f:A\to B$ un $k$-morphisme entre vari\'et\'es ab\'eliennes. Pour $b\in B(k)$, notons $t_b$ la translation (\`a gauche ou \`a droite) par $b$. Alors $t_{-f(0)}\circ f:a\mapsto f(a)-f(0)$ est un homomorphisme.
\end{thm}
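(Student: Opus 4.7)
Le plan est d'\'etablir d'abord le point b), d'en d\'eduire la commutativit\'e figurant dans a), et de traiter la projectivit\'e par une \'etude plus substantielle des fibr\'es inversibles sur $A$.

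Pour b), l'outil d\'ecisif est le \emph{lemme de rigidit\'e}: si $X$ est propre et $\phi:X\times Y\to Z$ est un morphisme envoyant $X\times \{y_0\}$ sur un unique point pour un certain $y_0\in Y$, alors $\phi$ se factorise par la seconde projection $X\times Y\to Y$. La d\'emonstration repose sur le fait que l'image d'un ferm\'e par un morphisme propre est ferm\'ee. On applique ce lemme au morphisme auxiliaire
\[\phi:A\times A\to B,\quad \phi(a,a') = f(a+a')-f(a)-f(a')+f(0),\]
o\`u les op\'erations sont prises dans le groupe $B$. Puisque $\phi(0,a')=0$ pour tout $a'$, le lemme de rigidit\'e appliqu\'e \`a $A$ propre fournit une factorisation $\phi=g\circ \mathrm{pr}_2$; l'identit\'e $\phi(a,0)=0$ force alors $g\equiv 0$, donc $\phi\equiv 0$. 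Ceci se r\'e\'ecrit exactement $f(a+a')-f(0) = (f(a)-f(0)) + (f(a')-f(0))$, c'est-\`a-dire que $t_{-f(0)}\circ f$ est un homomorphisme.

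La commutativit\'e de la loi de $A$ en d\'ecoule en prenant pour $f$ l'inversion $\iota:A\to A$, qui envoie $0$ sur $0$ et donc est un homomorphisme par b). L'identit\'e $(ab)^{-1}=a^{-1}b^{-1}$ qui en r\'esulte, combin\'ee \`a la relation toujours valable $(ab)^{-1}=b^{-1}a^{-1}$, donne $a^{-1}b^{-1}=b^{-1}a^{-1}$ pour tous $a,b$, soit la commutativit\'e.

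Reste la projectivit\'e, qui constitue l'obstacle principal et requiert la construction effective d'un fibr\'e inversible ample sur $A$. On choisit un voisinage ouvert affine $U$ de $0$ et on pose $D=A\setminus U$, vu comme diviseur effectif r\'eduit. L'ingr\'edient central est le \emph{th\'eor\`eme du cube}: un fibr\'e inversible sur $A\times A\times X$ dont les restrictions aux trois ``tranches axiales'' sont triviales est globalement trivial, r\'esultat dont la preuve utilise le changement de base cohomologique pour les morphismes propres. Il entra\^\i ne le th\'eor\`eme du carr\'e
\[t_{a+b}^*\sL\otimes \sL\simeq t_a^*\sL\otimes t_b^*\sL\]
pour tout $\sL\in \Pic(A)$ et $a,b\in A(k)$; par translations du diviseur $D$, on en d\'eduit que $3D$ s\'epare les points et les vecteurs tangents, et est donc ample. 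Les difficult\'es techniques (th\'eor\`eme du cube, contr\^ole cohomologique, passage effectif au plongement projectif) se concentrent \`a ce dernier stade.
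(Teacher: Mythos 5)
Le texte n'offre aucune d\'emonstration de ce th\'eor\`eme: il figure parmi les rappels sur les vari\'et\'es ab\'eliennes, \'enonc\'es sans preuve avec renvoi \`a Weil, Lang, Mumford et Milne. Votre argument est pr\'ecis\'ement la d\'emonstration classique de ces r\'ef\'erences (lemme de rigidit\'e pour b) puis pour la commutativit\'e via l'inversion; th\'eor\`eme du cube et du carr\'e pour la projectivit\'e), et il est correct en tant que plan de preuve. Deux points m\'eritent toutefois d'\^etre pr\'ecis\'es. D'abord, l'\'ecriture additive de $\phi(a,a')=f(a+a')-f(a)-f(a')+f(0)$ pr\'esuppose la commutativit\'e de $B$, que vous n'\'etablissez qu'ensuite; pour \'eviter toute circularit\'e, il suffit de fixer un ordre des facteurs, par exemple $\phi(a,a')=f(aa')f(a')^{-1}f(0)f(a)^{-1}$, de sorte que $\phi(0,a')=\phi(a,0)=e$ sans aucune hypoth\`ese de commutativit\'e, et la conclusion $f(aa')=f(a)f(0)^{-1}f(a')$ dit exactement que $a\mapsto f(0)^{-1}f(a)$ est un homomorphisme. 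Ensuite, la partie projectivit\'e reste une esquisse: il faut justifier que $D=A\setminus U$ est bien un diviseur (puret\'e en codimension $1$ du compl\'ementaire d'un ouvert affine dans une vari\'et\'e normale), et le passage du th\'eor\`eme du carr\'e \`a l'amplitude demande les \'etapes interm\'ediaires usuelles: les translat\'es de $U$ recouvrent $A$, donc les translat\'es de $D$ s\'eparent d\'ej\`a les points; la relation $t_a^*D+t_{-a}^*D\in|2D|$ fournit un syst\`eme lin\'eaire sans point base, puis on applique le crit\`ere de s\'eparation des points et des vecteurs tangents, et enfin le fait que $3D$ est tr\`es ample. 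Rien de tout cela n'est un obstacle: ce sont les lemmes standard de Mumford \cite[\S 6]{mumford} ou Milne \cite[\S 7]{milne-ab1}, et vous identifiez correctement ce dernier stade comme le c\oe ur technique de l'\'enonc\'e.
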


\begin{defn}\phantomsection Soient $A,B$ deux vari\'et\'es ab\'eliennes sur $k$. Une \emph{isog\'enie} de $A$ vers $B$ est un homomorphisme $f:A\to B$, surjectif [fid\`element plat] et de noyau fini.
\end{defn}

\begin{thm}\phantomsection Soit $A$ une vari\'et\'e ab\'elienne de dimension $g$. Soit $l$ un nombre premier diff\'erent de $\car k$, et soit $\bar k$ une cl\^oture alg\'ebrique de $k$. Alors $A(\bar k)$ est divisible et, pour tout $n\ge 1$,  le groupe ${}_{l^n}A(\bar k)$ est (non canoniquement) isomorphe \`a $(\Z/l^n)^{2g}$. On note 
\[T_l(A) = \plim {}_{l^n} A(\bar k)\]
son \emph{module de Tate}: c'est un $\Z_l$-module libre de rang $2g$.\footnote{Si $l=p=\car k$, ${}_{p^n}A(\bar k)\simeq (\Z/p^n)^r$ avec $0\le r\le g$. De plus, ${}_{p^n}A$ a une structure de \emph{$k$-sch\'ema en groupe fini} de $\Z/p^n$-rang $2g$: voir \cite[fin \S 6 et \S 15, ``The $p$-rank'']{mumford}.\label{fn:8}}
\end{thm}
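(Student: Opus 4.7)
Le plan se ramène à l'étude de l'endomorphisme $[N]\colon A\to A$, $a\mapsto Na$, et de ses noyaux. Le point crucial — et le seul qui fasse réellement intervenir la géométrie des variétés abéliennes — est le calcul $\deg([N])=N^{2g}$. Pour l'établir je choisirais un faisceau inversible ample et symétrique $L$ sur $A$, dont l'existence est garantie par la projectivité. Le théorème du carré, combiné à la symétrie de $L$, donne la formule $[N]^*L\cong L^{\otimes N^2}$, d'où
\[\deg([N])\cdot (L^g)=([N]^*L)^g=N^{2g}(L^g)\]
et donc $\deg([N])=N^{2g}$, puisque $(L^g)>0$ par amplitude. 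En particulier $[N]$ est une isogénie, donc un morphisme fini surjectif entre $\bar k$-variétés; il en résulte d'emblée que $A(\bar k)$ est divisible.

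La suite est plus formelle. La différentielle de $[N]$ en l'origine s'identifie à la multiplication par $N$ sur l'espace tangent $T_0A$, qui est de dimension $g$. Pour $N=l^n$ avec $l\ne \car k$, cet endomorphisme est inversible, donc $[N]$ est étale en $0$, et partout par translation. Le schéma en groupes $\Ker([N])$ est alors étale fini de rang $N^{2g}$, d'où $|{}_{l^n}A(\bar k)|=l^{2gn}$.

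Le groupe ${}_{l^n}A(\bar k)$ est donc abélien fini d'ordre $l^{2gn}$ et annulé par $l^n$, donc isomorphe à $\bigoplus_{i=1}^r\Z/l^{a_i}\Z$ avec $a_i\le n$ et $\sum a_i=2gn$. Pour $n=1$ cela force $r=2g$, i.e. ${}_lA(\bar k)\cong (\F_l)^{2g}$. La suite exacte
\[0\to {}_lA(\bar k)\to {}_{l^n}A(\bar k)\by{l}{}_{l^{n-1}}A(\bar k)\to 0\]
(surjectivité à droite par divisibilité de $A(\bar k)$), jointe à une récurrence immédiate sur $n$, fournit ${}_{l^n}A(\bar k)\cong (\Z/l^n)^{2g}$, et le passage à la limite inverse donne $T_l(A)\cong \Z_l^{2g}$. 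La difficulté principale est concentrée au tout début, dans l'emploi du théorème du carré et de la positivité $(L^g)>0$; le reste est formel.
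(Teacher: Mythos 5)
Votre démonstration est correcte et c'est l'argument standard. Notez toutefois que le texte ne démontre pas ce théorème : il le rappelle sans preuve, en renvoyant à Weil, Mumford, Lang et Milne, et en signalant seulement que sur $\C$ tout se lit sur la description transcendante $\C^g/\Lambda$. Votre rédaction reconstitue donc essentiellement la preuve de \cite[\S 6]{mumford}. Deux précisions mineures. D'abord, la formule $[N]^*L\cong L^{\otimes N^2}$ pour $L$ symétrique ne découle pas du seul théorème du carré mais du théorème du cube (le théorème du carré en est lui-même un corollaire) ; c'est le « Corollary 3 » de Mumford, \S 6. Ensuite, pour écrire $([N]^*L)^g=\deg([N])\,(L^g)$ il faut soit adopter la convention $\deg=0$ pour un morphisme non surjectif (auquel cas l'égalité $N^{2g}(L^g)=\deg([N])(L^g)$ avec $(L^g)>0$ force bien $\deg([N])=N^{2g}\ne 0$, donc la surjectivité et la finitude des fibres), soit établir d'abord que $\Ker([N])$ est fini, ce qui se voit en remarquant que $[N]^*L$, ample, se trivialise sur $\Ker([N])$. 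Le reste — étalitude de $[l^n]$ par calcul de la différentielle en $0$ et homogénéité, décomposition du groupe fini d'ordre $l^{2gn}$ tué par $l^n$ dont la $l$-torsion est $(\Z/l)^{2g}$, passage à la limite projective le long des flèches surjectives de multiplication par $l$ — est formel et correctement mené.
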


\begin{thm}\phantomsection Soient $A,B$ deux vari\'et\'es ab\'eliennes sur $k$. Alors $\Hom_k(A,B)$ est un $\Z$-module libre de type fini, et l'homomorphisme
\[\Hom_k(A,B)\otimes \Z_l\to \Hom_{\Z_l}(T_l(A),T_l(B))\]
est injectif pour tout $l\ne \car k$. De plus, l'alg\`ebre
\[\End^0(A)=\End(A)\otimes \Q\]
est semi-simple.
\end{thm}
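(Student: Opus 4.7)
Le plan se d\'ecompose en trois \'etapes: injectivit\'e du morphisme $\Hom_k(A,B)\to \Hom_{\Z_l}(T_l A, T_l B)$, le fait que $\Hom_k(A,B)$ soit de type fini sur $\Z$, puis semi-simplicit\'e de $\End^0(A)$.

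D'abord, $\Hom_k(A,B)$ est sans torsion: si $nf=0$ avec $n\ne 0$, l'image de $f$ est un sous-groupe alg\'ebrique connexe contenu dans le sch\'ema fini $B[n]$, donc triviale. Pour voir que $f\mapsto T_l(f)$ est injective, il suffit d'\'etablir la densit\'e Zariski de $\bigcup_n {}_{l^n}A(\bar k)$ dans $A_{\bar k}$: alors tout $f$ dont le noyau contient cette partie dense v\'erifie $\ker f=A$, donc $f=0$. Or, si $H\subsetneq A_{\bar k}$ est un sous-groupe ferm\'e, la sous-vari\'et\'e ab\'elienne sous-jacente \`a sa composante neutre r\'eduite est de dimension $h<g:=\dim A$, et le th\'eor\`eme pr\'ec\'edent donne $|{}_{l^n}H(\bar k)|=O(l^{2nh})$, strictement inf\'erieur \`a $|{}_{l^n}A(\bar k)|=l^{2ng}$ pour $n$ assez grand. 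En tensorisant l'injection $\Hom_k(A,B)\hookrightarrow \Hom(T_l A, T_l B)$ par $\Q$, on obtient $\Hom_k(A,B)\otimes\Q\hookrightarrow \Hom(T_l A, T_l B)\otimes_{\Z_l}\Q_l$, d'o\`u une majoration du $\Q$-rang de $\Hom_k(A,B)$ par $4(\dim A)(\dim B)$.

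Cette majoration du rang ne suffit pas \`a garantir que $\Hom_k(A,B)$ soit de type fini sur $\Z$ (contre-exemple: $\Z[1/p]$, sans torsion et de $\Q$-rang $1$, n'est pas de type fini sur $\Z$). L'ingr\'edient crucial --- qui constitue l'obstacle principal --- est le suivant: pour tout sous-groupe de type fini $M\subset \Hom_k(A,B)$, sa saturation $M^{\operatorname{sat}}=\{f\in \Hom_k(A,B)\mid \exists\, n\ge 1,\ nf\in M\}$ est encore de type fini. Je compte l'\'etablir \`a l'aide d'une polarisation: un fibr\'e ample sur $A\times B$ induit une involution de type Rosati sur $\End^0(A\times B)$, dont la forme trace associ\'ee est d\'efinie positive, plongeant $\Hom_k(A,B)\otimes \R$ dans un espace euclidien o\`u $M^{\operatorname{sat}}$ appara\^\i t comme r\'eseau discret, donc de type fini. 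En choisissant $M$ de rang $\Q$-maximal, tout $f\in \Hom_k(A,B)$ devient un multiple rationnel d'un \'el\'ement de $M$ dans $\Hom_k(A,B)\otimes \Q$, donc v\'erifie $nf\in M$ pour un $n\ne 0$; ainsi $\Hom_k(A,B)=M^{\operatorname{sat}}$ est libre de rang fini.

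L'injectivit\'e apr\`es tensorisation par $\Z_l$ en d\'ecoule formellement: $\Hom_k(A,B)\otimes \Z_l$ est libre de type fini sur l'anneau principal $\Z_l$, le morphisme vers le $\Z_l$-module libre $\Hom(T_l A, T_l B)$ devient injectif apr\`es $\otimes \Q_l$ (par platitude et \'etape 1), et un morphisme entre modules libres sur un anneau principal dont l'extension \`a $\Q_l$ est injective l'est lui-m\^eme. Enfin, pour la semi-simplicit\'e de $\End^0(A)$, j'invoquerais le th\'eor\`eme de r\'eductibilit\'e compl\`ete de Poincar\'e (cons\'equence, lui aussi, d'une polarisation): toute sous-vari\'et\'e ab\'elienne $B\subset A$ admet un compl\'ement $C$ avec $B+C=A$ et $B\cap C$ fini. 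Par it\'eration, $A$ est isog\`ene \`a $\prod A_i^{n_i}$ avec les $A_i$ simples et deux \`a deux non isog\`enes; alors $\End^0(A)\simeq \prod M_{n_i}(\End^0(A_i))$, et chaque $\End^0(A_i)$ est une alg\`ebre \`a division, puisque tout endomorphisme non nul d'une vari\'et\'e ab\'elienne simple est une isog\'enie, donc inversible dans $\End^0$. Ainsi $\End^0(A)$ est un produit d'alg\`ebres de matrices sur des alg\`ebres \`a division, donc semi-simple.
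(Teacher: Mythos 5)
Le texte ne d\'emontre pas ce th\'eor\`eme: il est rappel\'e sans preuve, avec renvoi \`a Weil, Mumford et Milne. Votre argument est donc \`a juger pour lui-m\^eme. L'architecture g\'en\'erale est correcte (absence de torsion, densit\'e des points de torsion $l$-primaire, r\'eductibilit\'e compl\`ete de Poincar\'e pour la semi-simplicit\'e), et vous identifiez bien le lemme de saturation comme le point crucial; mais deux \'etapes interm\'ediaires sont fausses telles quelles, et toutes deux se r\'eparent en utilisant la saturation \emph{autrement} que vous ne le faites.

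Premi\`erement, la majoration du $\Q$-rang de $\Hom_k(A,B)$ ne d\'ecoule pas de l'injection $\Hom_k(A,B)\otimes\Q\inj \Hom(T_lA,T_lB)\otimes_{\Z_l}\Q_l$: un $\Q$-sous-espace d'un $\Q_l$-espace vectoriel de dimension finie peut \^etre de $\Q$-dimension infinie ($\Q_l$ lui-m\^eme, vu comme $\Q$-espace vectoriel, en est un exemple). Affirmer que des \'el\'ements $\Q$-ind\'ependants de $\Hom_k(A,B)$ ont des images $\Q_l$-ind\'ependantes, c'est pr\'ecis\'ement l'injectivit\'e apr\`es $\otimes\Z_l$ que l'on cherche \`a \'etablir: votre borne est circulaire, et le choix d'un $M$ \og de rang maximal\fg\ n'est pas justifi\'e. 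Deuxi\`emement, la d\'eduction finale n'est pas formelle: un morphisme injectif d'un $\Z$-module libre de type fini vers un $\Z_l$-module libre ne reste pas injectif apr\`es $\otimes\Z_l$ (exemple: $\Z^2\to\Z_l$, $(a,b)\mapsto a+b\alpha$ avec $\alpha\in\Z_l-\Q$); la platitude donne l'injectivit\'e \`a valeurs dans $\Hom(T_lA,T_lB)\otimes_\Z\Q_l$, qui n'est pas $\Hom(T_lA,T_lB)\otimes_{\Z_l}\Q_l$. La r\'eparation standard (Mumford, \S 19) utilise la saturation ainsi: si $\sum a_iT_l(f_i)=0$ avec $a_i\in\Z_l$, on approche les $a_i$ par des entiers $n_i$ modulo $l^N$; alors $g=\sum n_if_i$ annule ${}_{l^N}A(\bar k)$, donc $g=l^Nh$ avec $h\in\Hom_k(A,B)$, et $h$ appartient au satur\'e (de type fini, donc libre) $P$ de $\langle f_1,\dots,f_n\rangle$; en exprimant les $f_i$ dans une base de $P$ et en faisant $N\to\infty$, on obtient une relation de d\'ependance $\Q$-lin\'eaire entre les $f_i$. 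Le m\^eme argument borne le rang de tout sous-groupe satur\'e de type fini par $4\dim A\cdot\dim B$, ce qui donne la g\'en\'eration finie sans passer par la borne circulaire. Enfin, prudence sur l'ordre logique de votre lemme de saturation: la positivit\'e de l'involution de Rosati est d'ordinaire \'etablie \emph{apr\`es} la g\'en\'eration finie (la trace y est celle de l'alg\`ebre semi-simple $\End^0$); Mumford \'evite ce cercle en prouvant le caract\`ere discret du satur\'e via la fonction degr\'e, polynomiale homog\`ene de degr\'e $2g$ sur $M\otimes\R$ et $\ge 1$ sur tout \'el\'ement non nul, apr\`es r\'eduction au cas o\`u $A$ et $B$ sont simples.
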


\begin{cor}\phantomsection Tout endomorphisme  surjecitf d'une vari\'et\'e ab\'elienne est une isog\'enie.
\end{cor}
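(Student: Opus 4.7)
Le plan est de montrer que si $f:A\to A$ est un endomorphisme surjectif d'une vari\'et\'e ab\'elienne, alors $\ker f$ est fini; la platitude fid\`ele de $f$ s'en d\'eduit automatiquement (soit via la convention not\'ee entre crochets dans la d\'efinition d'isog\'enie, soit par platitude miraculeuse appliqu\'ee au morphisme entre vari\'et\'es lisses \'equidimensionnelles \`a fibres finies).

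La premi\`ere \'etape consiste \`a exploiter le fait que $f$ est un homomorphisme de groupes: apr\`es changement de base \`a $\bar k$, toute fibre g\'eom\'etrique $f^{-1}(b)$ avec $b\in A(\bar k)$ est isomorphe (comme $\bar k$-sch\'ema) au sch\'ema $\ker f$ via la translation $t_a$ o\`u $a$ est un point au-dessus de $b$. En particulier, toutes les fibres g\'eom\'etriques ont la m\^eme dimension, \'egale \`a $\dim \ker f$.

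La deuxi\`eme \'etape invoque le th\'eor\`eme classique sur la dimension des fibres d'un morphisme dominant entre vari\'et\'es int\`egres: comme $f$ est surjectif, donc dominant, et que la source et le but sont tous deux int\`egres de dimension $g=\dim A$, la fibre g\'en\'erique est de dimension $g-g=0$. Combin\'e \`a l'\'etape pr\'ec\'edente, ceci force $\dim \ker f=0$. Enfin, $\ker f$ \'etant un sous-sch\'ema ferm\'e du $k$-sch\'ema propre $A$, il est propre sur $k$; un sch\'ema propre de dimension nulle sur un corps est fini, ce qui conclut.

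Le seul point d\'elicat est de bien travailler avec le noyau \emph{sch\'ematique} (non n\'ecessairement r\'eduit, \cf note \ref{fn:8}) et non seulement le noyau ensembliste: mais la dimension d'un sch\'ema co\"\i ncidant avec celle de son espace topologique sous-jacent, et les translations sur $A$ pr\'eservant les structures sch\'ematiques des fibres, l'argument passe sans difficult\'e. Pour la platitude, on peut invoquer la platitude miraculeuse (EGA IV 6.1.5): tout morphisme \`a fibres finies entre sch\'emas de Cohen-Macaulay \'equidimensionnels sur un corps est plat; combin\'ee \`a la surjectivit\'e, ceci donne la fid\`ele platitude souhait\'ee.
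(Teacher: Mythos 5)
Votre d\'emonstration est correcte, mais elle emprunte une voie diff\'erente de celle que le texte sugg\`ere: le corollaire y est \'enonc\'e sans preuve, imm\'ediatement apr\`es le th\'eor\`eme affirmant que $\Hom_k(A,B)$ est de type fini et que $\End^0(A)$ est une $\Q$-alg\`ebre semi-simple de dimension finie, et l'argument sous-entendu est alg\'ebrique. Si $f$ est surjectif, la relation $g\circ f=0$ entra\^\i ne $g=0$ (car $g$ s'annule sur l'image de $f$, qui est $A$ tout entier); la multiplication \`a droite par $f$ dans l'alg\`ebre de dimension finie $\End^0(A)$ est donc injective, donc surjective, et il existe $g\in\End(A)$ et $n\in\Z-\{0\}$ tels que $g\circ f=n_A$, d'o\`u $\Ker f\subset \Ker(n_A)$, qui est fini. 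Votre argument g\'eom\'etrique (toutes les fibres g\'eom\'etriques sont des translat\'es du noyau, la fibre au-dessus d'un point g\'en\'eral est de dimension $0$ par le th\'eor\`eme de dimension des fibres d'un morphisme dominant, un sch\'ema propre de dimension $0$ est fini, puis platitude miraculeuse) est tout aussi valable; il a l'avantage d'\^etre ind\'ependant du th\'eor\`eme qui pr\'ec\`ede et de justifier explicitement la mention \emph{fid\`element plat} de la d\'efinition d'isog\'enie. Notez seulement que le passage de la fibre g\'en\'erique aux fibres ferm\'ees utilise implicitement le fait standard qu'il existe un ouvert dense du but au-dessus duquel la dimension des fibres vaut $\dim A-\dim A=0$; joint \`a l'homog\'en\'eit\'e des fibres par translation, ceci donne bien $\dim \Ker f=0$.
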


\begin{cor}[Th\'eor\`eme de compl\`ete r\'eductibilit\'e de Poincar\'e]\phantomsection Soit $A$ une $k$-vari\'et\'e ab\'elienne, et soit $B$ une sous-$k$-vari\'et\'e ab\'elienne de $A$. Alors il existe une sous-$k$-vari\'et\'e ab\'elienne $C$ de $A$ telle que $B+C=A$ et que $B\cap C$ soit fini.
\end{cor}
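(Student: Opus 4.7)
L'approche consiste à construire un endomorphisme $u\in \End(A)$ dont l'image est exactement $B$ et dont la restriction à $B$ est la multiplication par un entier $n>0$; on posera alors $C:=(\Ker u)^0$, la composante connexe de l'identité. Cet endomorphisme se construit à l'aide d'une polarisation de $A$, que la projectivité (théorème rappelé ci-dessus) permet d'obtenir.

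Les étapes principales seraient les suivantes. D'abord, choisir un fibré en droites ample $L$ sur $A$; sa restriction $L|_B$ est alors ample sur $B$. Ces données fournissent deux isogénies $\phi_L\colon A\to \hat A$ et $\phi_{L|_B}\colon B\to \hat B$, reliées par la formule $\phi_{L|_B}=\hat i\circ \phi_L\circ i$, où $i\colon B\hookrightarrow A$ est l'inclusion et $\hat i\colon \hat A\to \hat B$ le morphisme dual (surjectif, comme dual d'une injection). Puisque $\phi_{L|_B}$ est une isogénie, son noyau est annulé par un entier $n>0$, ce qui fournit un homomorphisme $\psi\colon \hat B\to B$ vérifiant $\psi\circ \phi_{L|_B}=n\cdot \mathrm{id}_B$. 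On pose alors
\[
u=i\circ \psi\circ \hat i\circ \phi_L\in \End(A).
\]
Un calcul direct donne $u\circ i=n\cdot i$, de sorte que $u|_B$ est la multiplication par $n$; et $u(A)=B$, car $\psi(\hat B)\supset \psi(\phi_{L|_B}(B))=nB=B$ (la multiplication par $n$ est surjective sur une variété abélienne).

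Il ne reste plus qu'à conclure sur $C:=(\Ker u)^0$. Par construction, $\dim C=\dim A-\dim u(A)=\dim A-\dim B$; de plus $B\cap C\subseteq {}_nB$ est fini, puisque $u$ s'annule sur $B\cap C$ tout en y coïncidant avec la multiplication par $n$; enfin, $B+C$ est une sous-variété fermée de $A$ de dimension $\dim B+\dim C-\dim(B\cap C)=\dim A$, donc égale à $A$ par irréductibilité de ce dernier.

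Le principal obstacle conceptuel est que le texte n'a pas encore introduit la variété abélienne duale $\hat A$ ni le morphisme $\phi_L$ associé à un fibré en droites ample: il faudrait soit renvoyer au chapitre 13 de \cite{mumford}, soit insérer un court paragraphe préliminaire. Une alternative ``sans dualité'' consisterait à invoquer la semi-simplicité de $\End^0(A)$ (déjà énoncée) pour décomposer $A$ à isogénie près sous la forme $B\times A'$; mais passer d'une telle décomposition à un complément $C\subset A$ au sens strict (et pas seulement à isogénie près) demande un travail supplémentaire, et la voie par polarisation reste la plus transparente.
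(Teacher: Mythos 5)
Votre argument est correct et c'est la d\'emonstration standard (celle de \cite[\S 19, th. 1]{mumford} ou de \cite[prop. 12.1]{milne-ab1}); le texte, lui, ne d\'emontre pas ce corollaire: il le rappelle simplement, avec renvoi \`a \cite{weil2bis}, \cite{lang-ab}, \cite{mumford}, \cite{milne-ab1}, dans la liste des faits admis sur les vari\'et\'es ab\'eliennes. Il n'y a donc pas de comparaison \`a faire avec une preuve du texte, seulement \`a v\'erifier la v\^otre, qui tient: la formule $\phi_{L|_B}=\hat\imath\circ\phi_L\circ\imath$ est exacte, $u\circ\imath=n\imath$ et $u(A)\supseteq u(B)=nB=B$ donnent bien $u(A)=B$ avec $u|_B=n$, et les trois conclusions (dimension de $C$, finitude de $B\cap C$, surjectivit\'e de $B\times C\to A$) s'ensuivent comme vous le dites. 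Votre scrupule sur l'absence de la dualit\'e dans le texte est d'ailleurs injustifi\'e: $A^*=\Pic^0_{A/k}$ et $\phi_\sL$ sont introduits quelques paragraphes plus loin, et la bonne r\'edaction consisterait simplement \`a d\'eplacer le corollaire apr\`es ces rappels.

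Deux points m\'eritent d'\^etre soign\'es. D'abord, en caract\'eristique $p$ le sch\'ema en groupes $\Ker u$ peut \^etre non r\'eduit; il faut prendre $C=\bigl((\Ker u)^0\bigr)_{\mathrm{red}}$, ce qui est licite sur un corps parfait mais d\'elicat sur un corps quelconque (le r\'eduit d'un sch\'ema en groupes n'est pas toujours un sous-groupe). Une variante plus propre, qui \'evite enti\`erement la question: de $u(A)=B$ et $u|_B=n$ on tire $u^2=nu$ dans $\End(A)$, donc $u\circ(n\cdot 1_A-u)=0$; on pose $C=\IM(n\cdot 1_A-u)$, qui est automatiquement une sous-vari\'et\'e ab\'elienne d\'efinie sur $k$, contenue dans $\Ker u$ (d'o\`u $B\cap C$ fini comme chez vous), et l'identit\'e $na=(n-u)(a)+u(a)$ jointe \`a $nA=A$ donne directement $B+C=A$ sans compter les dimensions. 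Ensuite, votre justification de $u(A)\supseteq B$ via $\psi(\hat B)\supseteq B$ est un peu elliptique (elle ne contr\^ole que $\imath\psi(\hat B)$, pas l'image de la compos\'ee compl\`ete); le plus court est d'\'ecrire $u(A)\supseteq u(B)=nB=B$.
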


Sur $k=\C$, tous ces r\'esultats sont faciles \`a obtenir \`a partir de la description transcendante d'une vari\'et\'e ab\'elienne: quotient de $\C^g$ par un r\'eseau avec formes de Riemann. Sur un corps quelconque (surtout de caract\'eristique $>0$), ils sont nettement plus difficiles, et d\^us \`a Weil \cite{weil2bis} (sauf celui de la note \ref{fn:8}; la projectivit\'e est \'egalement due ind\'ependamment \`a Matsusaka et \`a Barsotti).

Pour \^etre complet, citons:

\begin{thm}[Tate, Zarhin, Mori, Faltings] \phantomsection\label{tt}\index{Conjecture!de Tate} Soit $k$ un corps de type fini (sur son sous-corps premier), et soient $A,B$ deux $k$-vari\'et\'es ab\'eliennes. Notons $k_s$ une cl\^oture s\'eparable de $k$ et $G=\Gal(k_s/k)$ le groupe de Galois absolu de $k$. Enfin, soit $l$ un nombre premier diff\'erent de $\car k$. Alors l'homomorphisme
\[\Hom_k(A,B)\otimes \Z_l\to \Hom_{\Z_l}(T_l(A),T_l(B))^G\]
est un isomorphisme.
\end{thm}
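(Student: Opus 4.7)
Je suivrais la strat\'egie originale de Tate, qui ram\`ene l'\'enonc\'e \`a une assertion de finitude dont la preuve d\'epend radicalement de la nature de $k$. D'abord, en rempla\c cant $(A,B)$ par $A\times B$ et en observant que $\Hom_k(A,B)$ est facteur direct de $\End_k(A\times B)$ via les deux idempotents naturels, et de m\^eme au niveau des modules de Tate, on se ram\`ene au cas $A=B$. L'injectivit\'e \'etant d\'ej\`a acquise par le th\'eor\`eme pr\'ec\'edent, il reste \`a \'etablir la surjectivit\'e de
\[\End_k(A)\otimes\Z_l\longrightarrow\End_{\Z_l}(T_l(A))^G.\]

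Posant $V_l(A)=T_l(A)\otimes_{\Z_l}\Q_l$, la semi-simplicit\'e de $\End^0(A)$ et de son commutant dans $\End_{\Q_l}(V_l(A))$ traduit cette surjectivit\'e sous forme g\'eom\'etrique: tout sous-$\Q_l$-espace $G$-stable $W\subset V_l(A)$ doit \^etre le $V_l$ d'une sous-vari\'et\'e ab\'elienne de $A$, \`a isog\'enie pr\`es. En effet, un tel $W$ correspond, via le th\'eor\`eme de compl\`ete r\'eductibilit\'e de Poincar\'e, \`a un idempotent de $\End^0(A)\otimes\Q_l$ projetant $V_l(A)$ sur $W$.

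Pour produire la sous-vari\'et\'e cherch\'ee, j'approche $W$ par les sous-groupes $G$-stables $H_n\subset {}_{l^n}A(k_s)$ obtenus en r\'eduisant $W\cap T_l(A)$ modulo $l^n$: les quotients $A_n:=A/H_n$ forment une famille de vari\'et\'es ab\'eliennes $k$-isog\`enes \`a $A$ par des isog\'enies de degr\'e une puissance de $l$. L'argument s'ach\`eve gr\^ace \`a l'\'enonc\'e cl\'e \textbf{(F):} \emph{il n'existe qu'un nombre fini de classes de $k$-isomorphisme de vari\'et\'es ab\'eliennes $k$-isog\`enes \`a $A$ par une isog\'enie dont le degr\'e est une puissance de $l$.} Moyennant (F), la famille $(A_n)$ se r\'epartit en un nombre fini de classes d'isomorphisme; en extrayant une sous-famille infinie de $A_n$ tous isomorphes \`a une m\^eme vari\'et\'e $B$ et en passant \`a la limite dans les $\Hom_k(A,B)\otimes \Z_l$ (qui est un $\Z_l$-module de type fini, donc compact pour la topologie $l$-adique), on obtient un endomorphisme $G$-\'equivariant non trivial de $V_l(A)$ dont on d\'eduit par semi-simplicit\'e l'idempotent cherch\'e.

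L'obstacle principal est donc l'\'enonc\'e (F), et c'est l\`a qu'interviennent successivement les divers auteurs. Tate l'a d\'emontr\'e pour $k$ corps fini en utilisant que le polyn\^ome caract\'eristique du Frobenius agissant sur $T_l(A)$ est \`a coefficients entiers et \`a racines de valeur absolue $\sqrt{q}$, combin\'e \`a la finitude classique du nombre de vari\'et\'es ab\'eliennes polaris\'ees de degr\'e born\'e sur un corps fini donn\'e. Zarhin et Mori traitent le cas d'un corps de fonctions en caract\'eristique positive, Zarhin s'appuyant sur son c\'el\`ebre ``truc'' selon lequel $(A\times A^\vee)^4$ est toujours principalement polaris\'ee, ce qui ram\`ene (F) \`a une finitude d'isog\'enies entre vari\'et\'es principalement polaris\'ees. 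Enfin la preuve de Faltings pour les corps de nombres est de loin la plus profonde: elle repose sur la construction et le contr\^ole, sous isog\'enie, de la \emph{hauteur de Faltings} d'une vari\'et\'e ab\'elienne, combin\'es au th\'eor\`eme de finitude de Northcott.
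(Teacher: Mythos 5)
Le texte ne d\'emontre pas ce th\'eor\`eme: il est \'enonc\'e \og pour \^etre complet\fg, sans preuve, avec un simple renvoi \`a \cite{tate-fini} et \`a l'appendice de \cite{mumford} pour le cas d'un corps fini. Il n'y a donc pas de d\'emonstration dans le texte \`a laquelle comparer votre proposition; je la juge donc sur pi\`eces.

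Votre plan reproduit fid\`element l'architecture standard de la preuve (r\'eduction \`a $A=B$ via $A\times B$, injectivit\'e d\'ej\`a acquise, r\'eduction de la surjectivit\'e au \og lemme cl\'e\fg\ selon lequel tout sous-espace $G$-stable de $V_l(A)$ provient d'une sous-vari\'et\'e ab\'elienne \`a isog\'enie pr\`es, construction des quotients $A_n=A/H_n$, et r\'eduction finale \`a l'\'enonc\'e de finitude (F)), et les attributions --- Tate pour les corps finis, Zarhin et Mori en caract\'eristique $p$ via l'astuce sur $(A\times A^\vee)^4$, Faltings pour les corps de nombres --- sont exactes. Mais tel quel, c'est une feuille de route plut\^ot qu'une d\'emonstration. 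D'abord, l'\'enonc\'e (F) est pr\'ecis\'ement le c\oe ur de chacun des travaux cit\'es; pour un corps de nombres c'est en substance le th\'eor\`eme de finitude de Faltings, dont vous ne faites que nommer les ingr\'edients (hauteur de Faltings, Northcott). Ensuite, le passage du lemme cl\'e \`a la surjectivit\'e n'est pas aussi direct que votre r\'edaction le sugg\`ere: il faut \'egalement la semi-simplicit\'e du $G$-module $V_l(A)$ (qui fait partie du m\^eme paquet de cons\'equences de (F)) et un argument de bicommutant --- classiquement, on applique le lemme cl\'e au \emph{graphe} d'un \'el\'ement de $\End_{\Q_l}(V_l(A))^G$ vu dans $V_l(A\times A)$, puis on en extrait l'endomorphisme cherch\'e. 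Enfin, le passage \`a la limite dans $\Hom_k(A,B)\otimes\Z_l$ demande de normaliser les isog\'enies $A\to A_n\simeq B$ pour garantir que l'application limite a exactement $W$ pour image sur $V_l$, point que vous passez sous silence. En r\'esum\'e: le plan est correct et bien attribu\'e, mais l'essentiel du travail (la finitude (F) et les d\'eductions alg\'ebriques interm\'ediaires) reste \`a faire.
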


Lorsque $k$ est fini, ceci donne une classification compl\`ete des vari\'et\'es ab\'eliennes sur $k$, voir \cite{tate-fini} ou \cite[App. 2, th. 2 et 3]{mumford}.

\subsubsection{Le polyn\^ome caract\'eristique d'un endomorphisme}

\begin{defn}\phantomsection Soit $f\in \End(A)$. On note  $\deg(f)$ le degr\'e de $f$ si $f$ est surjectif, et $0$ sinon.
\end{defn}

\begin{ex}\phantomsection $\deg(n 1_A)=n^{2g}$.
\end{ex}

\begin{thm}\phantomsection\label{t3.2} Soit $A$ une vari\'et\'e ab\'elienne de dimension $g$, et soit $f\in \End(A)$. Alors 
\[\det T_l(f) = \deg(f) \]
pour tout $l\ne \car k$. Par cons\'equent,
\[\deg(n1_A-f) = P(n)\quad\text{pour tout } n\in\Z\]
o\`u $P$ est le polyn\^ome caract\'eristique de $T_l(f)$.\\
En particulier, $P$ est \`a coefficients entiers, de degr\'e $2g$, et ind\'ependant de $l$.
\end{thm}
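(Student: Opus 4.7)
The plan is to first prove the key identity $\det T_l(f) = \deg(f)$ in $\Z_l$ for any $f \in \End(A)$, then deduce the polynomial formula by substituting $n \cdot 1_A - f$ for $f$: the left-hand side becomes $\deg(n \cdot 1_A - f)$, while the right-hand side becomes $\det(n \cdot 1 - T_l(f)) = P(n)$, where $P(X) = \det(X \cdot 1 - T_l(f)) \in \Z_l[X]$ is the characteristic polynomial, monic of degree $2g$. Once this identity is established, $P$ takes integer values on $\Z$, so a standard finite-difference argument shows its coefficients are integers; independence of $l$ follows at once since $P$ is determined by its values on $\Z$, which do not depend on $l$.

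For the key identity I distinguish two cases. If $f$ is not surjective, then $f(A) \subsetneq A$ is a proper abelian subvariety, so $\dim f(A) < g$, and the natural inclusion $T_l(f(A)) \hookrightarrow T_l(A)$ has $\Z_l$-rank $2 \dim f(A) < 2g$. As $f$ factors through $f(A)$, the map $T_l(f)$ factors through $T_l(f(A))$ and thus has rank $< 2g$, giving $\det T_l(f) = 0 = \deg(f)$. If $f$ is an isogeny, the short exact sequence $0 \to \ker f \to A \by{f} A \to 0$ restricts on $l$-primary torsion geometric points (using the surjectivity of $f$ on $A(\bar k)$ together with $l \ne \car k$) to an exact sequence $0 \to \ker f[l^\infty](\bar k) \to A[l^\infty](\bar k) \to A[l^\infty](\bar k) \to 0$. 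Applying $\Hom(\Q_l/\Z_l,-)$, and using divisibility of $A[l^\infty](\bar k)$, yields $0 \to T_l(A) \by{T_l(f)} T_l(A) \to \ker f[l^\infty](\bar k) \to 0$. Since $\ker f[l^\infty]$ is \'etale for $l \ne \car k$, its order equals $l^{v_l(\deg f)}$, whence $v_l(\det T_l(f)) = v_l(\deg f)$ by the elementary divisor theorem.

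The main obstacle is promoting this $l$-adic valuation match to the exact identity $\det T_l(f) = \deg(f)$ in $\Z_l$. The cleanest route uses the Weil pairing, which provides a canonical isomorphism $\bigwedge^{2g} T_l(A) \cong \Z_l(g)$ via duality with $T_l(\hat A)$: the induced action of $T_l(f)$ on this top exterior power is $\det T_l(f)$ by definition, and a direct computation via the dual isogeny $\hat f$ (satisfying $\hat f \circ f = \deg(f) \cdot 1_A$) identifies it with $\deg f$. Alternatively, following Weil, one can use the theorem of the cube to show that $f \mapsto \deg(f) = (f^*L)^g/(L^g)$ extends to a polynomial function of degree $2g$ on $\End(A) \otimes \Q$ for a symmetric ample line bundle $L$; since $\det T_l$ is also a polynomial function of degree $2g$ and both agree on multiples of the identity (giving $n^{2g}$), the valuation match at every point forces the two polynomial functions to coincide, yielding the desired identity.
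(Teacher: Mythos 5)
The paper does not prove this theorem: it only cites Mumford [\S 19, th.~4] and Milne [prop.~12.9]. Your opening moves are correct and coincide with those references: the non-surjective case, and the computation $\Coker(T_l(f))\simeq (\Ker f)[l^\infty](\bar k)$ giving $v_l(\det T_l(f))=v_l(\deg f)$ for an isogeny. The whole difficulty of the theorem, however, sits in the step you call ``promoting the valuation match to an exact identity'', and \emph{both} routes you propose for it fail.

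Route (a) rests on the identity $\hat f\circ f=\deg(f)\cdot 1_A$, which is false: the composite is not even an endomorphism of $A$ without a polarization, and after any identification $A\simeq\hat A$ it already fails for $f=n_A$ with $g\ge 2$, since $\widehat{n_A}=n_{\hat A}$ gives $n^2$ rather than $\deg(n_A)=n^{2g}$. (The true statement is that \emph{some} $g$ satisfies $g\circ f=\deg(f)\cdot 1_A$, because $\Ker f\subset {}_{\deg f}A$; but that $g$ is not $\hat f$, and exploiting it requires knowing $\det T_l(g)=\deg(g)$, i.e.\ the theorem.) Also $\bigwedge^{2g}T_l(A)\simeq\Z_l(g)$ is not given by the Weil pairing alone, which only identifies $\bigwedge^{2g}T_l(A)\otimes\bigwedge^{2g}T_l(\hat A)$ with $\Z_l(2g)$. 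Route (b) invokes the principle that two homogeneous polynomial functions of degree $2g$ agreeing on $\Z\cdot 1_A$ and having equal $l$-adic absolute values at every lattice point must coincide. That principle is false: take $l=5$ and the monic polynomials $t^2+2$ and $t^2+3$ — since $-2$ and $-3$ are non-squares mod $5$, they have the same $5$-adic absolute value at every $t\in\Q_5$, yet they differ (and $x^2+2y^2$, $x^2+3y^2$ give the homogeneous two-variable version). So the data you have on the pencil $n\cdot 1_A-f$ genuinely cannot determine the polynomial. The actual proof (Mumford's lemma) uses the multiplicativity of $\deg$ and of $\det$: for \emph{every} $Q\in\Z[t]$ one has $\deg(Q(f))=\pm\prod_i Q(a_i)$ and $\det T_l(Q(f))=\prod_j Q(b_j)$, where $a_i$, $b_j$ are the roots of $\deg(t-f)$ and of $\det(t-T_l(f))$; applying the valuation identity to $Q(f)$ for all $Q$ — in particular to powers of the minimal polynomials of the roots — is what forces the two root multisets, hence the two polynomials, to coincide. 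This is the missing content.

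A secondary slip: a monic polynomial taking integer values on $\Z$ need not have integer coefficients (e.g.\ $t^3+t(t-1)/2$), so the finite-difference argument does not give integrality of $P$. One instead observes $P\in\Q[t]\cap\Z_l[t]=\Z_{(l)}[t]$ for every $l\ne\car k$, and disposes of the residue characteristic by noting that $f$ is integral over $\Z$ (as $\End(A)$ is a finite $\Z$-module), so the eigenvalues of $T_l(f)$ are algebraic integers and Gauss's lemma applies.
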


\begin{proof} Voir \cite[\S 19, th. 4]{mumford} ou \cite[prop. 12.9]{milne-ab1}.
\end{proof}

\subsubsection{Vari\'et\'es de Picard; dualit\'e}

\begin{thm}\phantomsection\label{t3.5} Soit $X$ une $k$-vari\'et\'e projective lisse. Si $k$ est alg\'ebriquement clos, le groupe de Picard $\Pic(X)$ se d\'ecrit comme une extension
\[0\to \Pic^0(X)\to \Pic(X)\to \NS(X)\to 0\]
o\`u $\NS(X)$, le groupe de N\'eron-Severi de $X$, est un groupe ab\'elien de type fini et $\Pic^0(X)=\Pic^0_{X/k}(k)$ est le groupe des $k$-points d'une vari\'et\'e ab\'elienne, la \emph{vari\'et\'e de Picard} de $X$.\\
Si $k$ est quelconque, $\Pic^0_{X_{\bar k}/\bar k}$ est d\'efinie sur $k$ (notation: $\Pic^0_{X/k}$).\\
Si $X$ est une courbe de genre $g$, $\Pic^0_{X/k}$ est de dimension $g$: on l'appelle la \emph{jacobienne} de $X$ et on la note (ici) $J(X)$.
\end{thm}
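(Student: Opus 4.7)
The plan is to construct $\Pic_{X/k}$ as a locally of finite type $k$-group scheme representing a suitable sheafification of the relative Picard functor, and then to take $\Pic^0_{X/k}$ to be its identity component. First I would introduce the functor $T\mapsto \Pic(X_T)/\Pic(T)$ on $k$-schemes and sheafify it in the fppf topology; representability follows from Grothendieck's theorem (FGA, \cite{mumford}, \cite{milne-ab1}), where one exploits the projectivity of $X$ to realize $\mathbf{Pic}_{X/k}$ as a quotient of an open subscheme of the Hilbert scheme, via the map sending a divisor to its class.

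Once $\Pic_{X/k}$ is constructed as a $k$-group scheme locally of finite type, the identity component $\Pic^0_{X/k}$ must be shown to be (i) of finite type, (ii) smooth, and (iii) proper. Finiteness of type amounts to saying that $\Pic^0$ exhausts the algebraically equivalent classes, which is part of the construction via the Hilbert scheme; smoothness in characteristic zero is automatic by Cartier's theorem, and in characteristic $p$ one must either pass to reduced structure (as was classical) or verify it on a case-by-case basis (it holds whenever the obstruction in $H^2(X,\sO_X)$ to lifting first-order deformations vanishes, and in general $\Pic^0_{X/k,\mathrm{red}}$ is used); properness is the substantive geometric input, obtained through the valuative criterion and extension of line bundles across a complete DVR. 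Granted these, $\Pic^0_{X/k}$ is a smooth, proper, geometrically connected $k$-group scheme, hence an abelian variety.

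For the finite generation of $\NS(X)=\Pic(X)/\Pic^0(X)$, which is the \emph{theorem of the base} of Severi-N\'eron, the modern approach is to map $\NS(X)\otimes \Z_l\hookrightarrow H^2_\et(X,\Z_l(1))$ via the first Chern class (for $l\neq \car k$): the target is a finitely generated $\Z_l$-module, and one deduces that $\NS(X)\otimes \Q$ is finite-dimensional; the torsion of $\NS(X)$ being finite requires a separate argument (for instance comparing with the torsion of $H^2_\et(X,\Z_l(1))$ for a few $l$, or using the original algebraic-geometric argument based on the intersection pairing and Hodge index theorem). The $k$-structure on $\Pic^0$ when $k$ is not algebraically closed is obtained by Galois descent of the sheaf $\mathbf{Pic}_{X/k}$, which is intrinsically a $k$-sheaf from the outset. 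The main obstacle is undoubtedly the representability combined with properness of $\Pic^0$; this is precisely where Matsusaka-Chow (classical) or Grothendieck (functorial) methods are indispensable.

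For the jacobian of a curve $X$ of genus $g$, the dimension is computed from the tangent space at the origin: $T_0\Pic_{X/k} = H^1(X,\sO_X)$, which has dimension $g$ by \eqref{eq:rr2} (Serre duality, or directly the definition of the genus). Since $\Pic^0_{X/k}$ is smooth at the origin, this gives $\dim \Pic^0_{X/k}=g$. Alternatively, and more in the spirit of Weil's original construction, one realises $J(X)=\Pic^0$ as a birational group chunk from $\operatorname{Sym}^g X$ via the Abel-Jacobi map, and the dimension $g$ is then transparent.
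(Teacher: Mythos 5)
The paper does not prove this theorem: it belongs to the list of ``points les plus importants de la th\'eorie des vari\'et\'es ab\'eliennes'' recalled without proof at the start of \S 3.1, the arguments being delegated to \cite{weil2bis}, \cite{lang-ab}, \cite{mumford}, \cite{milne-ab1,milne-ab2}. Your outline is precisely the Grothendieck/FGA route followed in the modern references — representability of the fppf-sheafified relative Picard functor via the Hilbert scheme, properness plus smoothness (or passage to the reduced structure in characteristic $p$) of the identity component, the theorem of the base for $\NS$, and the tangent-space identification $T_0\Pic_{X/k}=H^1(X,\sO_X)$ giving $\dim J(X)=g$ — so there is nothing in the text to compare it against, and as a sketch it is sound. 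Two points deserve a little more care than you give them. First, in the $l$-adic argument for the theorem of the base, the injectivity of $\NS(X)\otimes\Z_l\to H^2_\et(X,\Z_l(1))$ is not immediate: what the Kummer sequence hands you is $\Pic(X)/l^n\hookrightarrow H^2(X,\mu_{l^n})$, hence finiteness of $\NS(X)/l^n$ (using divisibility of $\Pic^0$), and promoting this to finite generation without circularity is exactly the content of SGA~6, Exp.~XIII (or N\'eron's original argument); your formulation risks assuming what is to be proved. Second, over a non-perfect field the reduced subscheme of $\Pic^0_{X/k}$ need not be a subgroup scheme, which is why the statement in the text is phrased as descent of $\Pic^0_{X_{\bar k}/\bar k}$ to $k$ (Chow's theorem on the field of definition) rather than as a direct construction of the reduced identity component over $k$; ``Galois descent from $\bar k$'' only handles the separable part of the problem.
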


\begin{defn}\phantomsection Soit $A$ une vari\'et\'e ab\'elienne: on note $A^*$\footnote{ou parfois $A'$, $\hat A$, $A^\vee$\dots} sa vari\'et\'e de Picard: c'est la \emph{vari\'et\'e duale} de $A$.
\end{defn}

Une justification partielle de la terminologie est que tout homomorphisme $f:A\to B$ induit un homomorphisme dual $f^*:B^*\to A^*$.

\begin{thm}[bidualit\'e]\phantomsection On a $A^{**}=A$. Si $X$ est une courbe, $J(X)$ est canoniquement isomorphe \`a sa duale. (Voir exemple \ref{r3.2}.)
\end{thm}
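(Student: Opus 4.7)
La preuve comporte deux parties ind\'ependantes. Pour la bidualit\'e $A^{**}\simeq A$, la strat\'egie est de construire un morphisme canonique $\kappa_A: A\to A^{**}$ \`a l'aide du \emph{fibr\'e de Poincar\'e} $\sP$ sur $A\times A^*$. Ce fibr\'e est caract\'eris\'e par les conditions: $\sP|_{A\times\{\alpha\}}$ repr\'esente la classe $\alpha\in A^*(k)=\Pic^0(A)$, et $\sP|_{\{0\}\times A^*}$ est trivial. En \'echangeant les r\^oles des deux facteurs, le fibr\'e $\sP|_{\{a\}\times A^*}$ est de degr\'e $0$ sur $A^*$ pour chaque $a\in A$, d'o\`u un \'el\'ement $\kappa_A(a)\in A^{**}$, fonctoriel en $a$.

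Pour montrer que $\kappa_A$ est un isomorphisme, il suffit d'\'etablir son injectivit\'e puisque $\dim A^{**}=\dim A^*=\dim A=g$ (la vari\'et\'e de Picard d'une vari\'et\'e ab\'elienne ayant m\^eme dimension qu'elle). L'injectivit\'e d\'ecoule du calcul de l'action de $\kappa_A$ sur les modules de Tate: pour $l\ne \car k$, $T_l(\kappa_A): T_l(A)\to T_l(A^{**})$ est, par construction du fibr\'e de Poincar\'e, dual de l'accouplement de Weil $T_l(A)\times T_l(A^*)\to \Z_l(1)$, qui est non d\'eg\'en\'er\'e. Une variante consiste \`a choisir une polarisation $\phi_\sL:A\to A^*$ (isog\'enie associ\'ee \`a un fibr\'e ample $\sL$), \`a v\'erifier l'identit\'e $\phi_\sL^*\circ \kappa_A = \phi_\sL$ et \`a en d\'eduire, par comparaison de degr\'es, que $\kappa_A$ est un isomorphisme.

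Pour la seconde partie, j'utiliserais l'\'egalit\'e $J(X)=\Pic^0_{X/k}$, d'o\`u $J(X)^*=\Pic^0_{J(X)/k}$. Apr\`es \'eventuelle extension des scalaires pour disposer d'un point $x_0\in X(k)$, le plongement d'Abel--Jacobi $\alpha_{x_0}:X\to J(X)$ d\'efini par $x\mapsto [\sO_X(x-x_0)]$ induit par tir\'e en arri\`ere sur les fibr\'es de degr\'e $0$ un morphisme $\alpha_{x_0}^*: J(X)^*=\Pic^0(J(X))\to \Pic^0(X)=J(X)$. On v\'erifie qu'il est ind\'ependant du choix de $x_0$ (deux plongements diff\'erant d'une translation, qui agit trivialement sur $\Pic^0$), et qu'il est inverse de la polarisation principale $\lambda_\Theta:J(X)\to J(X)^*$ associ\'ee au diviseur th\^eta $\Theta\subset J(X)$, image de $X^{g-1}$ par sommation.

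L'obstacle principal est la construction soigneuse du fibr\'e de Poincar\'e, ou de fa\c{c}on \'equivalente la repr\'esentabilit\'e du foncteur $\Pic^0_{A/k}$ et l'\'etude de ses sections: c'est le c\oe{}ur technique de la th\'eorie, qui rel\`eve des r\'ef\'erences d\'ej\`a mentionn\'ees (\cite{mumford}, \cite{milne-ab1}). Pour l'autodualit\'e de la jacobienne, le point d\'elicat est la v\'erification que le diviseur th\^eta fournit une polarisation principale, ce qui repose \`a son tour sur le th\'eor\`eme de Riemann-Roch sur $X$.
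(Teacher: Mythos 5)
Le texte ne d\'emontre pas ce th\'eor\`eme: il figure parmi les rappels sur les vari\'et\'es ab\'eliennes et renvoie simplement aux r\'ef\'erences (\cite{mumford}, \cite{milne-ab1,milne-ab2}) et \`a l'exemple du diviseur th\^eta. Votre esquisse reproduit l'argument standard de ces r\'ef\'erences (fibr\'e de Poincar\'e pour construire $\kappa_A:A\to A^{**}$, puis autodualit\'e de la jacobienne via Abel--Jacobi et $\Theta$), et elle est correcte dans ses grandes lignes; le niveau de d\'etail est adapt\'e \`a un \'enonc\'e que le texte lui-m\^eme ne fait que rappeler.

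Une r\'eserve tout de m\^eme sur la premi\`ere voie que vous proposez pour montrer que $\kappa_A$ est un isomorphisme: elle ne suffit pas en caract\'eristique $p$. D'une part ``injectif sur les points $+$ m\^eme dimension'' n'entra\^\i ne pas ``isomorphisme'' (le Frobenius est bijectif sur les $\bar k$-points sans \^etre un isomorphisme); d'autre part l'injectivit\'e de $T_l(\kappa_A)$ pour tout $l\ne \car k$ ne contr\^ole ni la partie infinit\'esimale ni la $p$-partie du noyau. C'est pr\'ecis\'ement votre ``variante'' qui constitue la bonne d\'emonstration: l'identit\'e $\phi_\sL^*\circ\kappa_A=\phi_\sL$ pour $\sL$ ample, jointe \`a $\deg(\phi_\sL^*)=\deg(\phi_\sL)$ (corollaire \ref{c3.1}), donne $\deg(\kappa_A)=1$, donc $\kappa_A$ est un isomorphisme. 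Pour la jacobienne, l'\'enonc\'e pr\'ecis est $\alpha_{x_0}^*=-\lambda_\Theta^{-1}$ (au signe pr\`es suivant les conventions), et le point non formel est bien, comme vous le signalez, le fait que $\Theta$ d\'efinisse une polarisation principale, lequel repose sur Riemann--Roch pour la courbe.
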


\begin{cor}\phantomsection\label{c3.1} Si $f:A\to B$ est une isog\'enie, $f^*$ est aussi une isog\'enie, et $\deg(f^*)=\deg(f)$.
\end{cor}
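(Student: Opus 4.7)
The plan is to establish the two assertions in turn: first, that $f^*$ is an isogeny, and then that $\deg(f^*) = \deg(f)$.

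For the first point, I would use the standard ``factorization trick''. Let $n$ be the exponent of $K = \ker f$, so that $K \subset A[n] = \ker [n]_A$. Since $B = A/K$ as a quotient group scheme, by the universal property the map $[n]_A$ factors through $f$: there exists $g: B \to A$ with $g \circ f = [n]_A$. Composing on the right with $f$ and using the surjectivity of $f$ gives also $f \circ g = [n]_B$. Dualizing these relations yields $f^* \circ g^* = [n]_{A^*}$ and $g^* \circ f^* = [n]_{B^*}$, both of which are isogenies (of degree $n^{2g}$ by the example preceding th\'eor\`eme \ref{t3.2}). It follows that $f^*$ is surjective with finite kernel, hence an isogeny.

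For the degree equality, my preferred approach uses polarizations. Given an ample line bundle $L$ on $B$, the morphism $\phi_L: B \to B^*$ sending $b$ to $t_b^* L \otimes L^{-1}$ is an isogeny of degree $\chi(L)^2$, where $\chi$ denotes the Euler characteristic. The pullback $f^* L$ is ample on $A$ and satisfies $\chi(f^* L) = \deg(f) \cdot \chi(L)$, while one has the key identity $\phi_{f^* L} = f^* \circ \phi_L \circ f$ (a consequence of the theorem of the square). Computing $\deg(\phi_{f^* L})$ in two ways gives
\[\deg(f)^2 \chi(L)^2 \;=\; \chi(f^* L)^2 \;=\; \deg(f^*) \cdot \deg(\phi_L) \cdot \deg(f) \;=\; \deg(f^*) \cdot \deg(f) \cdot \chi(L)^2,\]
whence $\deg(f^*) = \deg(f)$.

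The main obstacle lies in assembling the nontrivial input on polarizations --- the formula $\deg(\phi_L) = \chi(L)^2$, the multiplicativity $\chi(f^* L) = \deg(f)\, \chi(L)$, and the identity $\phi_{f^* L} = f^* \phi_L f$ --- all of which rest on the theorem of the cube and Riemann--Roch for abelian varieties. A more conceptual alternative would go through Cartier duality: the short exact sequence $0 \to K \to A \to B \to 0$ of fppf sheaves, combined with the characterization of $A^*$ as the sheaf $\mathrm{Ext}^1_{\mathrm{fppf}}(A, \mathbf{G}_m)$, identifies $\ker f^*$ with the Cartier dual $K^D$ of $K$; since $|K^D| = |K|$ for finite commutative group schemes, this yields $\deg(f^*) = \deg(f)$ at once. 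This second route is cleaner but requires substantially more machinery on finite group schemes, especially in characteristic $p$.
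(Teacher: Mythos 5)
Votre démonstration est correcte, mais votre voie principale diffère de celle que le texte indique. Le texte énonce le corollaire à la suite du théorème de bidualité, puis précise que les schémas en groupes $\Ker f$ et $\Ker f^*$ sont duals l'un de l'autre au sens de Cartier (renvoi à Mumford, \S 15, th.~1, ou Milne, \S 11) : comme la dualité de Cartier préserve l'ordre d'un schéma en groupes fini, l'égalité $\deg(f^*)=\deg(f)$ en découle aussitôt --- c'est exactement la « voie alternative » que vous esquissez en conclusion. Votre argument principal passe au contraire par les polarisations : la formule de Riemann--Roch $\deg(\phi_\sL)=\chi(\sL)^2$, la multiplicativité $\chi(f^*\sL)=\deg(f)\,\chi(\sL)$ et l'identité $\phi_{f^*\sL}=f^*\circ\phi_\sL\circ f$ (qui résulte bien du théorème du carré et de la fonctorialité de $\phi$). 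Les deux routes sont valables : la vôtre ne requiert que des énoncés « numériques » sur les fibrés en droites, tandis que celle du texte identifie exactement le noyau de $f^*$, information plus fine et particulièrement adaptée à la caractéristique $p$, où les noyaux peuvent être des schémas en groupes infinitésimaux. Votre premier point (le fait que $f^*$ soit une isogénie, via la factorisation $g\circ f=[n]_A$, $f\circ g=[n]_B$ et ses duales) est également correct ; notez seulement qu'il faut choisir $n$ de sorte que le schéma en groupes $\Ker f$ soit annulé par $n$ --- par exemple $n=\deg f$, tout schéma en groupes fini commutatif étant tué par son ordre --- la notion naïve d'« exposant » ne suffisant pas lorsque le noyau n'est pas réduit.
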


Plus pr\'ecis\'ement, les sch\'emas en groupes $\Ker f$ et $\Ker f^*$ sont duals l'un de l'autre. La dualit\'e en question est la dualit\'e de Cartier pour les sch\'emas en groupes finis: le dual d'un tel sch\'ema en groupes $G$ est $G^*=\Hom(G,\G_m)$. Voir \cite[\S 15, th. 1]{mumford} ou \cite[\S 11]{milne-ab1} pour la preuve.

La meilleure explication de la dualit\'e des vari\'et\'es ab\'eliennes est \emph{via} la th\'eorie des biextensions (Mumford, Grothendieck \cite[exp. VII-VIII]{SGA7}), qui conduit \`a la notion de $1$-motif (Deligne \cite[\S 10]{hodgeIII}).

\begin{defn}\phantomsection\label{d3.2} Soit $l$ un nombre premier diff\'erent de $\car k$. L'accouplement
\[T_l(A)\times T_l(A^*)\to \Z_l(1):=\plim \mu_{l^n}\]
obtenu en appliquant le corollaire \ref{c3.1} aux groupes ${}_{l^n} A=\Ker(l^n)$ et \`a leurs duaux s'appelle l'\emph{accouplement de Weil}.
\end{defn}

\subsubsection{Morphismes vers une vari\'et\'e ab\'elienne}

\begin{thm}\phantomsection\label{t3.3} Soient $X$ une $k$-vari\'et\'e lisse, $U$ un ouvert dense de $X$ et $f:U\to A$ un morphisme de $U$ vers une vari\'et\'e ab\'elienne. Alors $f$ s'\'etend (de mani\`ere unique) \`a $X$.
\end{thm}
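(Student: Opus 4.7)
The plan is to dispose of uniqueness first, then to reduce existence to a statement about rational maps into group varieties.

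For uniqueness, suppose $g_1, g_2 : X \to A$ both extend $f$. The equalizer $\{x \in X \mid g_1(x) = g_2(x)\}$ is closed, being the preimage of the diagonal under $(g_1, g_2) : X \to A \times A$ since $A$ is separated. It contains the dense open $U$, hence coincides with $X$.

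For existence, I proceed in two stages. First, I extend $f$ across every point $x$ of $X - U$ of codimension one in $X$. Since $X$ is smooth, $\sO_{X,x}$ is a discrete valuation ring with fraction field $k(X)$, and $f$ provides a morphism $\Spec k(X) \to A$. Since $A$ is projective and hence proper over $k$ (cf. th\'eor\`eme a) de la th\'eorie g\'en\'erale), the valuative criterion of properness produces a unique extension $\Spec \sO_{X,x} \to A$, which spreads out to a morphism defined on an open neighbourhood of $x$. Replacing $U$ by the union of $U$ with all such neighbourhoods, I may assume that the closed complement $Z := X - U$ is of codimension $\ge 2$ everywhere in $X$.

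The second stage invokes the group law on $A$, following Weil's original idea. Consider the rational map
\[ \Phi : X \times X \tto A, \qquad \Phi(x, y) = f(x) - f(y), \]
defined at least on $U \times U$. Its indeterminacy locus $S$ is contained in $(Z \times X) \cup (X \times Z)$ and therefore has codimension $\ge 2$ in the smooth variety $X \times X$. The key ingredient is Weil's extension theorem for rational maps into group varieties: such a map from a smooth source has indeterminacy locus that is either empty or pure of codimension one. Applied to $\Phi$, this forces $S = \emptyset$; then fixing any $y_0 \in U(\bar k)$ (with an easy Galois descent to recover a $k$-rational extension) yields $f(x) := \Phi(x, y_0) + f(y_0)$ as the sought extension of $f$ to all of $X$.

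The main obstacle is the extension theorem of Weil on group-variety-valued rational maps, which really uses the group structure: from a hypothetical isolated point of indeterminacy one produces by translation and composition a whole divisorial component, contradicting the codimension $\ge 2$ estimate. Both the smoothness of $X$ and the group law on $A$ are essential—rational maps into general target varieties can have indeterminacy of arbitrary codimension, as the inverse of a blow-up of a point on a surface already shows.
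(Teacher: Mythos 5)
Your argument is correct and is essentially the classical proof given in the reference the paper cites for this statement (Milne, \emph{Abelian varieties}, th.\ 3.1): first reduce, via the valuative criterion of properness at the discrete valuation rings of codimension-one points of the smooth variety $X$, to the case where the indeterminacy locus has codimension $\ge 2$, then invoke Weil's theorem that a rational map from a smooth variety to a group variety has indeterminacy locus empty or purely of codimension one. The only remark worth making is that once you take Weil's theorem as a black box you may apply it directly to $f$ itself; the auxiliary map $\Phi(x,y)=f(x)-f(y)$ is really the engine \emph{inside} the proof of that theorem rather than something you need in addition to it.
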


\begin{proof} Voir \cite[th. 3.1]{milne-ab1}.
\end{proof}

\subsubsection{Vari\'et\'e d'Albanese}

La vari\'et\'e d'Albanese d'une vari\'et\'e projective lisse $X/k$ est une vari\'et\'e ab\'elienne qui est, approximativement, universelle pour les morphismes de $X$ vers une vari\'et\'e ab\'elienne. Malheureusement le probl\`eme universel juste \'enonc\'e est incorrect, et donner le bon \'enonc\'e est un peu plus d\'esagr\'eable.

Dans le cas o\`u $X$ a un point rationnel $x_0$, l'\'enonc\'e correct est simple: les $k$-morphismes $f:X\to A$ tels que $f(x_0)=0$. Quel est le bon \'enonc\'e quand $X$ n'a pas n\'ecessairement de point rationnel?

En voici deux, \'equivalents:

\begin{defn}\phantomsection\label{d3.1}
Soit $X$ une $k$-vari\'et\'e projective lisse, et soit $A$ une $k$-vari\'et\'e ab\'elienne. Un morphisme $f:X\times_kX\to A$ est  \emph{admissible} si $f(\Delta_X)=0$.
\end{defn}

(Vu le th\'eor\`eme \ref{t3.3}, cela revient \`a \'enoncer le probl\`eme universel ci-dessus ``sur $K=k(X)$'' avec le point g\'en\'erique de $X$, devenu point $K$-rationnel.)

\begin{thm}\phantomsection Le probl\`eme universel donn\'e par la d\'efinition \ref{d3.1} a une solution $\Alb(X)$.
\end{thm}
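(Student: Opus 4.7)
La strat\'egie consiste \`a se ramener au cas o\`u $X$ admet un point rationnel, pour lequel on dispose d'une formulation pos\'ee du probl\`eme universel, puis \`a descendre.

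D'abord, supposons $X(k)\neq \emptyset$, et fixons $x_0\in X(k)$. Je montrerais qu'il existe alors une bijection naturelle entre les morphismes admissibles $f:X\times_k X\to A$ et les morphismes point\'es $\tilde f:(X,x_0)\to (A,0)$: dans un sens, on associe \`a $f$ sa restriction $\tilde f(x):=f(x,x_0)$ \`a $X\times\{x_0\}$, qui v\'erifie $\tilde f(x_0)=f(x_0,x_0)=0$ puisque $(x_0,x_0)\in \Delta_X$; dans l'autre, on associe \`a $\tilde f$ le morphisme $f(x,y)=\tilde f(x)-\tilde f(y)$, qui s'annule manifestement sur $\Delta_X$. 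La r\'eciprocit\'e de ces constructions r\'esulte de ce qu'un morphisme admissible $f$, compos\'e par $(x,y)\mapsto (x,x_0,x_0,y)$ et la loi $(a,b,c,d)\mapsto a-b+c-d$ de $A^4\to A$, co\"{\i}ncide avec $(x,y)\mapsto f(x,x_0)-f(y,x_0)$. On est donc ramen\'e \`a la construction classique du couple universel point\'e $(\Alb(X),\alpha_{x_0})$, laquelle existe: on peut par exemple prendre $\Alb(X):=(\Pic^0_{X/k})^*$ (th\'eor\`eme \ref{t3.5}), l'universalit\'e d\'ecoulant, par sections hyperplanes de type Bertini--Lefschetz et le th\'eor\`eme \ref{t3.3}, du cas bien connu des courbes o\`u l'Albanese est la jacobienne $J(C)$.

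Dans le cas g\'en\'eral, on passe \`a la cl\^oture s\'eparable $k_s$. Puisque $X_{k_s}$ est lisse et non vide, $X(k_s)\neq \emptyset$; le cas pr\'ec\'edent fournit donc, pour chaque choix de $x_0\in X(k_s)$, un couple $(\Alb(X_{k_s}),\alpha_{x_0})$. On pose
\[g(x,y)=\alpha_{x_0}(x)-\alpha_{x_0}(y):X_{k_s}\times_{k_s} X_{k_s}\to \Alb(X_{k_s}).\]
L'observation cl\'e est que $g$ ne d\'epend pas du choix de $x_0$: en effet, si $x_0'$ est un autre point rationnel, on a $\alpha_{x_0'}=\alpha_{x_0}-\alpha_{x_0}(x_0')$ (par l'unicit\'e de l'Albanese point\'ee), et la constante s'\'elimine dans la diff\'erence. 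En particulier, pour tout $\sigma\in \Gal(k_s/k)$, on a $g^\sigma=g$, puisque $g^\sigma$ est la m\^eme construction faite \`a partir du point $\sigma(x_0)$.

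Le point d\'elicat sera la \emph{descente effective} du couple $(\Alb(X_{k_s}),g)$ \`a $k$. Je proc\'ederais comme suit: comme $\Pic^0_{X/k}$ est d\'ej\`a d\'efini sur $k$ (th\'eor\`eme \ref{t3.5}), on d\'efinit $\Alb(X):=(\Pic^0_{X/k})^*$, d\'efini sur $k$ par bidualit\'e; l'invariance galoisienne de $g$ assure alors, par la descente galoisienne des morphismes entre sch\'emas quasi-projectifs s\'epar\'es, que $g$ provient d'un unique $k$-morphisme $X\times_k X\to \Alb(X)$. L'universalit\'e sur $k$ se d\'eduit finalement de celle sur $k_s$ (connue par le premier cas, appliqu\'e apr\`es choix d'un point rationnel) par un argument de descente galoisienne sur $\Hom_{k_s}$: un morphisme admissible $k$-rationnel $f:X\times_k X\to A$ fournit sur $k_s$ une unique factorisation \`a travers $g$, et l'unicit\'e implique que cette factorisation est elle-m\^eme $\Gal(k_s/k)$-\'equivariante, donc descend \`a $k$.
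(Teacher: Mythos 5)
Le texte lui-m\^eme ne d\'emontre pas ce th\'eor\`eme (toute cette section renvoie \`a Weil, Lang, Mumford, Milne), il n'y a donc pas de preuve \`a laquelle comparer la v\^otre; votre strat\'egie --- r\'eduction au probl\`eme universel point\'e, puis descente galoisienne --- est bien la strat\'egie standard, et elle est coh\'erente avec la parenth\`ese qui suit la d\'efinition \ref{d3.1} (le texte sugg\`ere toutefois de se ramener au cas point\'e en passant au point g\'en\'erique sur $K=k(X)$ plut\^ot qu'\`a $k_s$; les deux descentes fonctionnent). La partie descente est correcte dans ses grandes lignes, \`a condition d'expliciter que l'ind\'ependance de $g$ par rapport \`a $x_0$ s'entend modulo l'identification canonique des deux couples universels point\'es, et de remarquer qu'on travaille en fait sur une sous-extension galoisienne finie de $k_s/k$ o\`u tout est d\'efini.

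Deux points restent de vrais trous. \emph{Premi\`erement}, la surjectivit\'e de la correspondance $\tilde f\mapsto f$ --- c'est-\`a-dire le fait que \emph{tout} morphisme admissible v\'erifie $f(x,y)=f(x,x_0)-f(y,x_0)$ --- n'est pas une v\'erification formelle, et votre phrase sur la composition par $(x,y)\mapsto(x,x_0,x_0,y)$ et $(a,b,c,d)\mapsto a-b+c-d$ est mal typ\'ee et ne constitue pas une preuve. L'ingr\'edient manquant est le lemme de rigidit\'e: pour tout morphisme $f:X\times X\to A$ avec $X$ propre, la diff\'erence $f(x,y)-f(x,x_0)-f(x_0,y)+f(x_0,x_0)$ s'annule sur $X\times\{x_0\}$ et sur $\{x_0\}\times X$, donc est nulle; l'admissibilit\'e donne alors $f(x_0,x_0)=0$ et, en \'evaluant sur la diagonale, $f(x_0,y)=-f(y,x_0)$, d'o\`u la formule voulue. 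Sans cet argument vous n'avez qu'une injection des morphismes point\'es dans les morphismes admissibles. \emph{Deuxi\`emement}, l'existence du couple universel point\'e $(\Alb(X),\alpha_{x_0})$ est le v\'eritable contenu du th\'eor\`eme et n'est qu'affirm\'ee: poser $\Alb(X)=(\Pic^0_{X/k})^*$ (th\'eor\`eme \ref{t3.5}) ne fournit ni le morphisme $\alpha_{x_0}$ (il faut le fibr\'e de Poincar\'e, ou la construction par quotient de jacobiennes de courbes trac\'ees sur $X$ via le th\'eor\`eme \ref{t3.3}) ni la v\'erification de la propri\'et\'e universelle. Vu le caract\`ere de survol du texte, un renvoi pr\'ecis serait acceptable ici, mais il faut l'assumer explicitement plut\^ot que d'invoquer vaguement des sections hyperplanes \og \`a la Bertini--Lefschetz \fg.
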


Pour \'enoncer le second probl\`eme universel, nous avons besoin de la notion de \emph{torseur} (ou \emph{espace homog\`ene principal}) sous un sch\'ema en groupes:

\begin{defn}\phantomsection Soit $S$ un sch\'ema, et soit $G$ un $S$-sch\'ema en groupes.\\
a) Le \emph{$G$-torseur trivial} sur $S$ est $G$, muni de l'action \`a gauche de $G$.\\
b) Un \emph{$G$-torseur de base $S$} est un $S$-sch\'ema fid\`element plat $E$, muni d'une action de $G$ (au-dessus de $S$)
\[\mu:G\times_S E\to E\]
tel que le diagramme
\[\begin{CD}
G\times_S E@>\mu>> E\\
@V{p_2}VV @VVV\\
E@>>> S
\end{CD}\]
soit cart\'esien.
\end{defn}

(La condition ``cart\'esien'' signifie que le morphisme $\phi:G\times_S E\to E\times_S E$ d\'efini par $p_2$ et  $\mu$ est un isomorphisme: autrement dit, le pull-back du $G$-torseur $E$ de $S$ \`a $E$ est trivial.)

\begin{thm}\phantomsection \label{t3.1} Soit $X$ une $k$-vari\'et\'e projective lisse. Alors le probl\`eme universel d\'efini par les triplets $(A,P,f)$ o\`u $A$ est un $k$-vari\'et\'e ab\'elienne, $P$ est un $A$-torseur de base $k$ et $f:X\to P$ est un $k$-morphisme, a une solution $(\Alb(X),P_X,f_X)$.
\end{thm}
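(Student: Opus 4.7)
Le plan est de proc\'eder par descente galoisienne depuis le cas o\`u $X$ admet un point rationnel. Fixons une cl\^oture alg\'ebrique $\bar k$ de $k$, posons $G=\Gal(\bar k/k)$, et choisissons un point $x_0\in X(\bar k)$. Le probl\`eme universel classique --- trouver un morphisme universel de $X_{\bar k}$ vers une vari\'et\'e ab\'elienne envoyant $x_0$ sur $0$ --- admet une solution $(A_0,\phi_{x_0})$ o\`u $A_0$ est une vari\'et\'e ab\'elienne sur $\bar k$, construite par exemple \`a partir de la jacobienne d'une courbe g\'en\'erale trac\'ee sur $X_{\bar k}$, puis par passage au quotient.

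La premi\`ere \'etape consiste \`a descendre $A_0$ sur $k$. L'observation cl\'e est que $A_0$ ne d\'epend pas canoniquement du choix de $x_0$ (un autre point $x_0'$ fournit la m\^eme vari\'et\'e cible, le morphisme correspondant \'etant $\phi_{x_0}-\phi_{x_0}(x_0')$), et que deux morphismes r\'esolvant le m\^eme probl\`eme universel au sens de la d\'efinition \ref{d3.1} diff\`erent d'une translation unique, par rigidit\'e des morphismes vers une vari\'et\'e ab\'elienne (\cf th\'eor\`eme \ref{t3.3}). L'action de $G$ sur $A_0$ est donc intrins\`eque et fournit une donn\'ee de descente effective d\'efinissant $A=\Alb(X)$ sur $k$.

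Pour d\'eterminer le torseur $P_X$, on observe que $\phi_{x_0}$ n'est pas $G$-\'equivariant: pour $\sigma\in G$, la diff\'erence $\sigma\cdot\phi_{x_0}-\phi_{x_0}:X_{\bar k}\to A_{\bar k}$ v\'erifie la m\^eme condition d'admissibilit\'e au sens de la d\'efinition \ref{d3.1}, donc est constante, \'egale \`a un certain $a_\sigma\in A(\bar k)$. L'application $\sigma\mapsto a_\sigma$ est alors un $1$-cocycle dont la classe dans $H^1(G,A)$ est ind\'ependante du choix de $x_0$; cette classe d\'efinit un $A$-torseur $P_X$ sur $k$ (obtenu comme tordu galoisien de $A$ par ce cocycle), et $\phi_{x_0}$, vu comme morphisme vers $(P_X)_{\bar k}$, devient $G$-\'equivariant, d'o\`u un $k$-morphisme $f_X:X\to P_X$.

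Pour l'universalit\'e, on part d'un triplet $(B,Q,g)$: un choix $q_0\in Q(\bar k)$ trivialise $Q_{\bar k}$ et ram\`ene $g_{\bar k}$ \`a un morphisme $X_{\bar k}\to B_{\bar k}$, puis l'universalit\'e classique fournit un unique homomorphisme $u_0:A_{\bar k}\to B_{\bar k}$ \`a translation pr\`es. Cette ind\'etermination de translation correspond exactement \`a celle des choix de $x_0$ et $q_0$, si bien que $u_0$ descend en $u:A\to B$ et se rel\`eve en un unique morphisme de torseurs $P_X\to Q$ au-dessus de $u$, compatible \`a $f_X$ et $g$. Le principal obstacle sera la gestion technique de la descente pour le torseur: v\'erifier l'effectivit\'e de la donn\'ee de descente (par descente fid\`element plate, $P_X$ \'etant quasi-projectif comme tordu d'une vari\'et\'e ab\'elienne) et traduire proprement les ambigu\"\i t\'es de translation en termes cohomologiques afin d'\'etablir l'unicit\'e du morphisme de torseurs.
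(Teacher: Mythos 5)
Le texte n'offre en fait aucune d\'emonstration de ce th\'eor\`eme, qui y est \'enonc\'e sans preuve; votre argument est donc \`a \'evaluer pour lui-m\^eme. Il est correct dans ses grandes lignes et suit la construction standard (celle de Serre dans \emph{Morphismes universels et vari\'et\'e d'Albanese}): existence du couple $(A_0,\phi_{x_0})$ apr\`es extension des scalaires, descente de la vari\'et\'e ab\'elienne gr\^ace \`a l'unicit\'e \`a translation pr\`es de la solution du probl\`eme universel point\'e, puis torsion de $A$ par le cocycle des translations $a_\sigma$ pour fabriquer le torseur $P_X$. Deux points demandent toutefois \`a \^etre resserr\'es. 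D'abord, la descente galoisienne se fait le long de $k_s/k$ et non de $\bar k/k$ lorsque $k$ n'est pas parfait; c'est sans cons\'equence ici car $X$, \'etant lisse, poss\`ede un point \`a valeurs dans $k_s$, mais il faut aussi v\'erifier la continuit\'e des donn\'ees de descente (elle r\'esulte de ce que $x_0$, donc $\phi_{x_0}$ et les $a_\sigma$, sont d\'efinis sur une extension finie s\'eparable de $k$). Ensuite, votre justification de l'effectivit\'e est circulaire telle quelle ($P_X$ ne peut pas \^etre \og quasi-projectif comme tordu\fg\ avant d'avoir \'et\'e construit): l'argument correct est que la donn\'ee de descente tordue vit sur $A_{k_s}$, qui est quasi-projective, et que toute donn\'ee de descente galoisienne continue sur un sch\'ema quasi-projectif est effective. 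Moyennant ces pr\'ecisions, votre v\'erification de l'universalit\'e (le morphisme $u_0$ ne d\'epend pas des choix de $x_0$ et $q_0$, donc est Galois-\'equivariant et descend; l'ind\'etermination de translation se recolle en l'unique morphisme de torseurs compatible \`a $f_X$ et $g$) est bien la bonne.
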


\begin{ex}\phantomsection Partant d'une vari\'et\'e ab\'elienne $A$ et d'un $A$-torseur $P$ de base $k$, on a $\Alb(P)=A$, $P_P =P$ et $f_P=1_P$.
\end{ex}

\begin{thm}[dualit\'e Picard-Albanese]\phantomsection a) Si $P$ est un torseur sous la vari\'et\'e ab\'elienne $A$, $\Pic^0_{P/k}$ est canoniquement isomorphe \`a $\Pic^0_{A/k}=A^*$.\\
b) Soient $X$, $(\Alb(X),P_X,f_X)$ comme dans le th\'eor\`eme \ref{t3.1}. Alors l'homomorphisme
\[f_P^*:\Pic(P)\to \Pic(X)\]
induit un isomorphisme
\[\Alb(X)^*\simeq \Pic^0_{P_X/k}\iso \Pic^0_{X/k}.\]
\end{thm}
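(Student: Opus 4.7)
Pour \textbf{(a)}, je proc\'ederais par descente de $\bar k$ \`a $k$. Apr\`es passage \`a la cl\^oture alg\'ebrique, le choix d'un point $p_0 \in P(\bar k)$ donne une trivialisation $\phi_{p_0}: A_{\bar k} \iso P_{\bar k}$, $a\mapsto a+p_0$, d'o\`u un isomorphisme $\phi_{p_0}^*: \Pic^0_{P/\bar k} \iso \Pic^0_{A/\bar k} = A^*_{\bar k}$. Deux choix $p_0, p_0'$ diff\`erent par une translation $t_a$ avec $a\in A(\bar k)$, et l'ingr\'edient cl\'e est que $t_a^*$ agit trivialement sur $\Pic^0(A)$: c'est la caract\'erisation classique de $\Pic^0$ comme composante neutre des classes invariantes par translation, cons\'equence du th\'eor\`eme du carr\'e (voir par exemple \cite[\S 8]{mumford} ou \cite[\S 9]{milne-ab1}). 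L'isomorphisme $\phi_{p_0}^*$ est donc ind\'ependant du choix de $p_0$, en particulier Galois-\'equivariant, et descend canoniquement en un isomorphisme $\Pic^0_{P/k} \iso A^*$ de vari\'et\'es ab\'eliennes sur $k$.

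Pour \textbf{(b)}, posons $A = \Alb(X)$, $P = P_X$, $f = f_X$; combin\'e avec (a), $f^*$ devient un homomorphisme $\pi: A^* \to \Pic^0_{X/k}$ de vari\'et\'es ab\'eliennes sur $k$. La strat\'egie est de construire un inverse gr\^ace \`a la propri\'et\'e universelle du th\'eor\`eme \ref{t3.1}. Le fibr\'e de Poincar\'e $\mathcal{P}$ sur $X\times_k \Pic^0_{X/k}$, convenablement normalis\'e, d\'efinit naturellement un morphisme $\psi: X\times_k X \to (\Pic^0_{X/k})^*$ s'annulant sur la diagonale $\Delta_X$, donc admissible au sens de la d\'efinition \ref{d3.1}. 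Par le th\'eor\`eme \ref{t3.1}, $\psi$ se factorise de mani\`ere unique en un morphisme de vari\'et\'es ab\'eliennes $\Alb(X) \to (\Pic^0_{X/k})^*$, qui par bidualit\'e donne $\pi': \Pic^0_{X/k} \to A^*$. Il resterait \`a v\'erifier que $\pi$ et $\pi'$ sont inverses l'un de l'autre, ce qui se ram\`enerait aux propri\'et\'es caract\'eristiques des fibr\'es de Poincar\'e sur $X \times \Pic^0_{X/k}$ et sur $P \times \Pic^0_{P/k}$.

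L'obstacle principal est la construction soigneuse du morphisme $\psi$ et la v\'erification que les deux compositions sont bien l'identit\'e; techniquement, cela repose sur la rigidit\'e du fibr\'e de Poincar\'e et le principe de la balan\c coire (\emph{seesaw principle}). Conceptuellement, le point crucial est que toute la construction, naturellement pens\'ee en termes d'un point rationnel de $X$ (qui identifierait directement la fl\`eche d'Albanese $X\to \Alb(X)$ et donnerait $A^* \simeq \Pic^0_X$ par l'argument standard en pr\'esence d'un point base), descend canoniquement \`a $k$ sans un tel point, ce qui est pr\'ecis\'ement la raison d'\^etre de la formulation par torseurs du th\'eor\`eme \ref{t3.1} et du r\'esultat (a).
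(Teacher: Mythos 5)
Le texte ne d\'emontre pas cet \'enonc\'e: il fait partie des rappels sur la th\'eorie des vari\'et\'es ab\'eliennes du d\'ebut du chapitre 3, renvoy\'es globalement \`a Weil, Lang, Mumford et Milne. Il n'y a donc pas de preuve \og du papier\fg\ \`a laquelle comparer la v\^otre; je l'\'evalue sur ses m\'erites propres. Votre partie (a) est correcte et essentiellement compl\`ete: l'invariance de $\Pic^0$ par translation (cons\'equence du th\'eor\`eme du carr\'e) rend l'isomorphisme $\phi_{p_0}^*$ ind\'ependant du choix de $p_0$, donc Galois-\'equivariant, et la descente galoisienne des morphismes conclut.

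Pour (b), la strat\'egie est la bonne (c'est celle des r\'ef\'erences standard), mais deux points restent \`a traiter. D'abord, le fibr\'e de Poincar\'e sur $X\times_k\Pic^0_{X/k}$ n'existe pas n\'ecessairement sur $k$ lorsque $X(k)=\emptyset$: le foncteur de Picard n'est repr\'esentable qu'en tant que faisceau fppf et peut ne pas admettre d'objet universel. C'est pr\'ecis\'ement la construction \og diff\'erence\fg\ $(x,y)\mapsto \mathcal{P}_x\otimes\mathcal{P}_y^{-1}$, ind\'ependante de toute normalisation, qui descend de $\bar k$ \`a $k$ par le m\^eme argument qu'en (a); il faut le dire, faute de quoi $\psi$ n'est d\'efini que sur $\bar k$. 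Ensuite --- et c'est le point essentiel --- vous construisez deux homomorphismes $\pi$ et $\pi'$ entre $\Alb(X)^*$ et $\Pic^0_{X/k}$, mais ne v\'erifiez pas qu'ils sont inverses l'un de l'autre: tel quel, rien n'exclut \emph{a priori} que $\pi'\circ\pi$ soit une isog\'enie non triviale, voire l'homomorphisme nul. C'est l\`a que se concentre tout le contenu du th\'eor\`eme (rigidit\'e du fibr\'e de Poincar\'e, principe de la balan\c coire, comparaison des fibr\'es de Poincar\'e de $X$ et de $\Alb(X)$), et renvoyer cette v\'erification \`a des \og propri\'et\'es caract\'eristiques\fg\ sans les expliciter laisse la d\'emonstration incompl\`ete. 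Il faudrait enfin justifier que $f_X^*$ envoie bien $\Pic^0_{P_X/k}$ dans $\Pic^0_{X/k}$ (fonctorialit\'e de $\Pic^0$, par connexit\'e de la source), ce qui est facile mais utilis\'e implicitement dans l'\'enonc\'e m\^eme.
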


\begin{thm}\phantomsection Toute vari\'et\'e ab\'elienne $A$ est quotient de la jacobienne d'une courbe.
\end{thm}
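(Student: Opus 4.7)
Le plan consiste à plonger $A$ dans un espace projectif, à y extraire une courbe lisse $C$ par sections hyperplanes génériques passant par $0$, puis à utiliser la propriété universelle de la jacobienne $J(C)$ pour obtenir une surjection $J(C) \surj A$.

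\emph{Première étape.} Par projectivité de $A$, on choisit un plongement $A \hookrightarrow \P^N_k$ envoyant $0 \in A(k)$ sur un point fixé. Par Bertini itéré (valable sur un corps infini de façon classique, et sur un corps fini via la variante de Poonen ou par extension puis descente), on sélectionne $n-1 = \dim A - 1$ hyperplans génériques passant par $0$ dont l'intersection avec $A$ est une courbe $C \subset A$ lisse, géométriquement connexe et contenant $0$.

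\emph{Deuxième étape.} On applique la propriété universelle de $\Alb(C) = J(C)$ (théorème~\ref{t3.1} dans le cas où le torseur est trivialisé par le point rationnel $0 \in C$) à l'inclusion $i : C \hookrightarrow A$ envoyant $0$ sur $0$ : on obtient une factorisation unique $i = j \circ f$, où $f : C \to J(C)$ est le morphisme d'Abel--Jacobi et $j : J(C) \to A$ est un homomorphisme de variétés abéliennes.

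\emph{Troisième étape.} L'image $B := j(J(C))$ est une sous-variété abélienne de $A$ contenant $C$. Pour conclure que $B = A$, il suffit que $C$ engendre $A$ comme groupe algébrique, c'est-à-dire que pour $n = \dim A$, le morphisme somme $\sigma_n : C^n \to A$, $(c_1,\dots,c_n)\mapsto c_1+\dots+c_n$, soit surjectif. Ceci résulte de ce que, pour un choix générique des hyperplans, l'application de Gauss $C \to \P(T_0 A) = \P^{n-1}$ est non dégénérée, de sorte que les translatés dans $T_0 A$ des espaces tangents $T_{c_i} C$ engendrent $T_0 A$ en un point $(c_1,\dots,c_n)$ générique de $C^n$. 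La différentielle de $\sigma_n$ est alors surjective au point générique, donc $\sigma_n$ est dominante, puis surjective par propreté de $C^n$.

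\emph{Obstacle principal.} Le point délicat est la troisième étape, et plus précisément la non-dégénérescence de l'application de Gauss pour une section linéaire générique ; c'est un argument de Bertini appliqué au fibré conormal de $A$ dans $\P^N$. Les deux premières étapes sont essentiellement formelles une fois admises la projectivité de $A$ et la propriété universelle de la jacobienne.
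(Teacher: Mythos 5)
Vos deux premières étapes coïncident avec celles du texte: plongement projectif de $A$, extraction par Bertini d'une courbe $C$ lisse et connexe passant par $0$ (avec, sur un corps fini, le recours à un plongement de Veronese ou à la variante de Poonen), puis propriété universelle de $J(C)=\Alb(C)$ appliquée à l'inclusion $i:C\inj A$ pour obtenir l'homomorphisme $j:J(C)\to A$. La divergence porte sur la surjectivité de $j$: le texte invoque le lemme de connexité de Milne, tandis que vous proposez un argument tangentiel via l'application de Gauss et la différentielle du morphisme somme $\sigma_n:C^n\to A$. La partie formelle de votre troisième étape est correcte: si les droites $T_{c_i}C$, transportées dans $T_0A$, engendrent $T_0A$, alors $d\sigma_n$ est surjective en ce point, $\sigma_n$ est lisse donc dominante en ce point, puis surjective par propreté, et comme $\sigma_n(C^n)$ est contenu dans le sous-groupe engendré par $C$, on conclut.

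La lacune — que vous signalez vous-même — est la non-dégénérescence de l'application de Gauss de $C$, et elle n'est pas un énoncé de Bertini de routine: \og Bertini appliqué au fibré conormal\fg{} ne la fournit pas. En caractéristique zéro, dire que toutes les droites $T_cC$ sont contenues dans un hyperplan fixe $W\subset T_0A$ équivaut à dire qu'une $1$-forme invariante non nulle de $A$ s'annule sur $C$, c'est-à-dire que $H^0(A,\Omega^1_A)\to H^0(C,\Omega^1_C)$ n'est pas injective; or cette injectivité est, par symétrie de Hodge et dualité, essentiellement équivalente à la surjectivité de $J(C)\to A$ que l'on veut démontrer. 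Votre reformulation déplace donc le problème sans le résoudre: il faut encore un théorème de type Lefschetz (injectivité de $H^1(A)\to H^1(C)$ pour une section linéaire ample), ou bien l'argument de connexité de Milne. En caractéristique $p$ la situation est pire: la symétrie de Hodge peut tomber en défaut et les applications de Gauss peuvent être inséparables ou dégénérées de façon inattendue; c'est précisément pour cela que la démonstration standard passe par un énoncé de connexité (un revêtement étale non trivial de $A$ provenant d'un quotient $A\to A''$ tuant $C$ se scinderait au-dessus de $C$, contredisant la connexité du tiré en arrière), et non par un calcul de différentielles. Accessoirement, \og extension puis descente\fg{} ne fournit pas de courbe définie sur $k$ (une courbe construite sur $\bar k$ ne descend pas en général): sur un corps fini, c'est bien le plongement de Veronese ou le Bertini de Poonen qu'il faut utiliser.
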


\begin{proof}[Esquisse] (\cf \cite[\S 10]{milne-ab2}.) On choisit un plongement projectif $A\inj \P^N$, donn\'e par un fibr\'e en droites tr\`es ample sur $A$. Si $k$ est infini, le th\'eor\`eme de Bertini fournit une section hyperplane multiple $C$ de $A$, passant par $0$, qui soit une courbe projective, lisse et connexe. [Si $k$ est fini, on arrive aussi \`a une telle courbe $C$ d\'efinie sur $k$ quitte \`a remplacer $\P^N$ par un plongement de V\'eron\`ese, c'est-\`a-dire \`a couper $A$ par une intersection compl\`ete de multidegr\'e assez grand.] On obtient alors un homomorphisme
\[J(C)=\Alb(C)\to A\]
et un lemme de connexit\'e \cite[lemma 10.3]{milne-ab2} implique qu'il est surjectif.
\end{proof}

\subsubsection{Polarisations; l'involution de Rosati}

Soit $A$ une vari\'et\'e ab\'elienne et soit $\sL$ un fibr\'e en droites sur $A$. On lui associe l'application
\begin{align*}
\phi_\sL:A(k)&\to \Pic(A)\\
x&\mapsto t_x^*\sL \otimes \sL^{-1}.
\end{align*}

Le th\'eor\`eme du carr\'e (\cite[\S 6, cor. 4]{mumford} ou \cite[th. 6.7]{milne-ab1}) implique que $\phi_\sL$ est un homomorphisme, qui tombe dans $\Pic^0(A)$. On montre que $\phi_\sL$ d\'efinit un homomorphisme de vari\'et\'es ab\'eliennes $A\to A^*$, qui est une isog\'enie si (et seulement si) $\sL$ est ample.\footnote{C'est une version quelque peu distordue de la th\'eorie: on utilise $\phi_\sL$ pour $\sL$ ample pour construire la vari\'et\'e ab\'elienne duale $A^*$ comme quotient de $A$ par un sous-sch\'ema en groupes fini, \cf \cite[p. 124]{mumford} ou \cite[\S\S 9, 10]{milne-ab1}.}

En particulier, $A$ et $A^*$ sont toujours isog\`enes.

\begin{defn}\phantomsection Une \emph{polarisation} de $A$ est une isog\'enie $\lambda:A\to A^*$ telle que $\lambda_{\bar k}$ soit de la forme $\phi_\sL$ pour un fibr\'e en droites $\sL$ sur $A_{\bar k}$ (ce fibr\'e n'est pas n\'ecessairement d\'efini sur $k$). On dit qu'une polarisation est \emph{principale} si c'est un isomorphisme.
\end{defn}

\begin{ex}\phantomsection\label{r3.2} Supposons que $A=J(C)$, où $C$ est une courbe projective lisse de genre $g>0$ munie d'un point rationnel $c$; soit $\iota:C\to A$ le plongement associé (en considérant $A$ comme la variété d'Albanese de $C$). On pose 
\[\Theta_C=\iota(C)+\dots+\iota(C) \quad (g-1 \text{ termes})\]
où la somme est relative à l'addition de $A$: c'est le \emph{diviseur theta associé}. 
Alors $\Theta_C$ définit une polarisation principale sur $A$, indépendante du choix de $c$   \cite[\S 6]{milne-ab2}.
\end{ex}

\begin{defn}\phantomsection Soit $\lambda$ une polarisation de $A$. L'\emph{involution de Rosati} associ\'ee \`a $\lambda$ est l'application
\[f\mapsto {}^\rho f=\lambda^{-1}f^*\lambda\]
de $\End^0(A)$ dans lui-m\^eme.
\end{defn}

Il est clair que l'involution de Rosati est un anti-automorphisme de $\End^0(A)$; de plus:

\begin{prop}\phantomsection Soit $\lambda:A\to A^*$ une polarisation. Notons
\[E^\lambda:T_l(A)\times T_l(A)\to \Z_l(1)\]
l'accouplement d\'eduit de l'accouplement de Weil (d\'efinition \ref{d3.2}) via l'homomorphisme $T_l(\lambda):T_l(A)\to T_l(A^*)$. Alors:
\begin{thlist}
\item $E^\lambda$ est antisym\'etrique.
\item Pour tout $f\in \End^0(A)$, $f$ et ${}^\rho f$ sont adjoints pour $E^\lambda\otimes \Q_l$.
\item L'involution de Rosati est une involution.
\end{thlist}
\end{prop}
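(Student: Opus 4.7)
Le plan est d'exploiter trois ingrédients indépendants: (a) la bifonctorialité de l'accouplement de Weil, à savoir que pour tout homomorphisme $g:B\to B'$ de variétés abéliennes, on a $e_l(gx,z)=e_l(x,g^*z)$ pour $x\in T_l(B)$ et $z\in T_l((B')^*)$ (conséquence directe de la fonctorialité de la dualité de Cartier appliquée aux noyaux ${}_{l^n}g$ et à leurs duaux); (b) la compatibilité avec la bidualité: sous l'identification canonique $A^{**}=A$, l'accouplement de Weil de $A^*$ coïncide (au signe près, selon la convention) avec celui de $A$ via l'échange des variables; (c) la symétrie de toute polarisation, $\lambda^*=\lambda$ sous bidualité, qui provient de ce que $\lambda_{\bar k}=\phi_\sL$ pour un fibré en droites $\sL$, et du fait que les applications $\phi_\sL$ sont toujours symétriques (conséquence du théorème du carré déjà invoqué avant la définition de polarisation).

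Pour (i), on part de $E^\lambda(y,x)=e_l(y,\lambda x)$, on applique (b) pour le réécrire comme $e_l(\lambda x,y)^{-1}$ (dans $\Z_l(1)$ écrit multiplicativement), puis (a) appliqué à $\lambda:A\to A^*$ donne $e_l(x,\lambda^*y)^{-1}$, et finalement (c) entraîne $E^\lambda(y,x)=E^\lambda(x,y)^{-1}$, c'est-à-dire l'antisymétrie. Pour (ii), après tensorisation par $\Q_l$ l'isogénie $\lambda$ devient inversible dans $\End^0(A)\otimes\Q_l$, et seule (a) intervient:
\[E^\lambda(fx,y)=e_l(fx,\lambda y)=e_l(x,f^*\lambda y)=e_l(x,\lambda\cdot\lambda^{-1}f^*\lambda\cdot y)=E^\lambda(x,{}^\rho f\cdot y).\]
Pour (iii), le calcul direct
\[{}^\rho({}^\rho f)=\lambda^{-1}({}^\rho f)^*\lambda=\lambda^{-1}(\lambda^{-1}f^*\lambda)^*\lambda=\lambda^{-1}\lambda^*f^{**}(\lambda^{-1})^*\lambda\]
se réduit à $f$ en utilisant la bidualité $f^{**}=f$ et la symétrie $\lambda^*=\lambda$ (donc aussi $(\lambda^{-1})^*=\lambda^{-1}$) fournie par (c).

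Le point vraiment délicat n'est aucune des trois assertions prises isolément, mais bien l'ingrédient (c): la symétrie des morphismes $\phi_\sL$ est un résultat classique mais substantiel de la théorie des variétés abéliennes, reposant ultimement sur le théorème du carré (ou, de manière équivalente, du cube). Les ingrédients (a) et (b) sont en revanche des reformulations du formalisme de Cartier appliqué aux schémas en groupes finis ${}_{l^n}A$, et sont essentiellement tautologiques une fois la dualité admise; de sorte que l'ensemble des trois points se ramène à un énoncé structural unique sur la polarisation.
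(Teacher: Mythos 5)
Votre d\'emonstration est correcte. Le texte ne donne d'ailleurs pas de preuve: il renvoie \`a Mumford (\S 20) et Milne (d\'ebut du \S 17), et votre argument reproduit pour l'essentiel celui de ces r\'ef\'erences --- celui de Milne d\'eduit (i) exactement comme vous de la sym\'etrie $\lambda^*=\lambda$ jointe au signe dans la compatibilit\'e de l'accouplement de Weil \`a la bidualit\'e, tandis que Mumford \'etablit (i) par une voie plus g\'eom\'etrique, en identifiant $E^\lambda$ \`a un accouplement de type commutateur dans le groupe th\^eta de $\sL$, antisym\'etrique par construction; les points (ii) et (iii) sont ensuite formels dans les deux cas. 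Une seule petite imprécision: la sym\'etrie $\phi_\sL^*=\phi_\sL$ ne sort pas directement du th\'eor\`eme du carr\'e, mais de la sym\'etrie sous l'\'echange des facteurs du fibr\'e $m^*\sL\otimes p_1^*\sL^{-1}\otimes p_2^*\sL^{-1}$ sur $A\times A$ qui induit $\phi_\sL$ (le th\'eor\`eme du carr\'e/cube servant, lui, \`a garantir que $\phi_\sL$ est un homomorphisme \`a valeurs dans $\Pic^0$); c'est bien, comme vous le soulignez, l'unique ingr\'edient substantiel.
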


\begin{proof} Voir \cite[\S 20, th. 1 et ff.]{mumford} ou \cite[d\'ebut \S 17]{milne-ab1}.
\end{proof}

Soit $R$ une alg\`ebre semi-simple (de dimension finie) sur un corps $K$. On d\'efinit une trace $\Tr:R\to K$ comme suit:
\begin{enumerate}
\item Si $R$ est simple et de centre $Z$, $\Tr(x)=\Tr_{Z/K}\Trd(x)$, o\`u $\Trd$ est la trace r\'eduite de $R$ (sur $Z$).
\item Si $R=R_1\times\dots \times R_n$, o\`u les $R_i$ sont simples, 
\[\Tr(x_1,\dots,x_n) =\Tr(x_1)+\dots +\Tr(x_n).\]
\end{enumerate}

 Le r\'esultat principal est:

\begin{thm}[Positivit\'e de l'involution de Rosati]\phantomsection \label{t:rosati} Soit $\lambda:A\to A^*$ une polarisation. Alors la forme quadratique 
\[\End^0(A)\ni f\mapsto \Tr({}^\rho f f)\]
est d\'efinie positive. Si $\lambda_{\bar k} = \phi_\sL$ et si $f\in \End(A)$, on a
\[\Tr({}^\rho f f)=\frac{2g}{(\sL)^g}((\sL)^{g-1}\cdot f^*\sL).\]
\end{thm}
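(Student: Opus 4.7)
My plan is to first reduce to $k=\bar k$ — this is harmless since extending scalars only enlarges $\End^0(A)$, the Rosati involution and the reduced trace are compatible with base change, and positivity on the larger algebra implies positivity on the subalgebra. Then I would pick a line bundle $\sL$ on $A_{\bar k}$ with $\phi_\sL = \lambda_{\bar k}$ and attack the two assertions separately: first the intersection-theoretic formula, then positive definiteness of the associated quadratic form.

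For the formula, I would start from the identity $\phi_{f^*\sL} = f^*\circ \phi_\sL\circ f = \lambda\circ ({}^\rho f\circ f)$, which follows directly from the definition of ${}^\rho f$. Using that $\sM\mapsto \phi_\sM$ is additive on $\Pic(A)$, I would rewrite
\[n\lambda - \phi_{f^*\sL} = \phi_{n\sL - f^*\sL} = \lambda\circ (n\cdot 1_A - {}^\rho f\circ f)\]
and compute the degree of this isogeny in two different ways: on the one hand via $\deg(\phi_\sM) = \chi(\sM)^2$ with $\chi(\sM) = (\sM)^g/g!$, and on the other via the multiplicativity $\deg(\lambda\circ\alpha) = \deg(\lambda)\deg(\alpha)$ combined with Theorem \ref{t3.2}, which gives $\deg(n-{}^\rho f f) = P(n)$ for $P$ the characteristic polynomial of $T_l({}^\rho ff)$. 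This should yield the polynomial identity $\chi(\sL)^2 P(n) = \chi(n\sL - f^*\sL)^2$. Extracting the positive square root and comparing the coefficient of $n^{g-1}$ against the binomial expansion of $\chi(n\sL - f^*\sL)$ would then identify the trace with $\frac{2g}{(\sL)^g}((\sL)^{g-1}\cdot f^*\sL)$, up to the normalization between the trace of the action on $T_l(A)$ and the reduced trace of $\End^0(A)$.

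For positive definiteness, by linearity it suffices to show $((\sL)^{g-1}\cdot f^*\sL) > 0$ for every non-zero $f\in \End(A)$. The line bundle $f^*\sL$ is nef as the pullback of the ample $\sL$, and it is not numerically trivial: indeed, if it were, then $\phi_{f^*\sL} = 0$, which would force $\lambda\circ {}^\rho f\circ f = 0$ and hence $f=0$ since $\lambda$ is an isogeny. I would then slice $A$ by $g-1$ general members of $|n\sL|$ for $n\gg 0$ (Bertini over $\bar k$) and reduce to the statement that a nef, numerically non-trivial line bundle on a smooth projective curve has strictly positive degree.

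The hard part will be twofold. First, the \emph{trace normalisation}: bridging the ``Tate module trace'' emerging naturally from the characteristic polynomial $P(n)$ with the intrinsic reduced trace on the semisimple $\Q$-algebra $\End^0(A)$ requires the structure theory of semisimple algebras together with a careful analysis of the $\End^0(A)\otimes\Q_l$-module structure of $T_l(A)\otimes\Q_l$, which produces exactly the factor needed to land on $2g/(\sL)^g$ as stated. Second, the Bertini slicing step must be handled with some care to guarantee that the slice curve actually witnesses the numerical non-triviality of $f^*\sL$; an alternative, cleaner route would be to invoke a higher-dimensional Hodge index inequality (a Khovanskii--Teissier-type estimate) which yields $((\sL)^{g-1}\cdot M) > 0$ for any non-zero nef class $M$ on a variety polarised by $\sL$, bypassing Bertini entirely.
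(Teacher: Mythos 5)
The paper does not actually prove this theorem: its ``proof'' consists of the citation \emph{Voir \cite[\S 21]{mumford} ou \cite[th. 17.3]{milne-ab1}}. What you have sketched is precisely the argument of those references --- reduction to $\bar k$, the identity $\phi_{f^*\sL}=f^*\circ\phi_\sL\circ f=\lambda\circ({}^\rho f\circ f)$, the comparison of $\deg\bigl(\lambda\circ(n-{}^\rho ff)\bigr)=\chi(\sL)^2P(n)$ with $\chi(\sL^{\otimes n}\otimes (f^*\sL)^{-1})^2$ via $\deg\phi_\sM=\chi(\sM)^2$ and Riemann--Roch on $A$, then extraction of the subleading coefficient. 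So your outline is the intended proof and is correct in its main lines. Two remarks.

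First, a genuine soft spot in your positivity step: from $\phi_{f^*\sL}=0$ you pass to $\lambda\circ{}^\rho f\circ f=0$ and conclude ``$f=0$ since $\lambda$ is an isogeny''. Cancelling $\lambda$ only yields ${}^\rho f\circ f=0$, and deducing $f=0$ from \emph{that} is essentially the positivity you are in the middle of proving; as written the step is circular. The standard repair is to factor $f=i\circ\bar f$ through its image $B=f(A)$, of positive dimension when $f\ne 0$: then $\phi_{f^*\sL}=\bar f^*\circ\phi_{i^*\sL}\circ\bar f$, where $\bar f$ is surjective, its dual $\bar f^*$ has finite kernel, and $\phi_{i^*\sL}$ is an isogeny of $B$ because $i^*\sL$ is ample on $B$; hence $\phi_{f^*\sL}\ne0$. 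Even more directly, any curve $C\subset A$ not contracted by $f$ gives $(f^*\sL\cdot C)=(\sL\cdot f_*C)>0$, so $f^*\sL$ is not numerically trivial, and your Khovanskii--Teissier (or Kleiman-type) inequality then closes the argument without any Bertini slicing.

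Second, on the normalisation you rightly flag: the trace your computation produces (minus the coefficient of $n^{2g-1}$ in $P$) is the trace of ${}^\rho ff$ on the $2g$-dimensional space $T_l(A)\otimes\Q_l$, and that is the trace for which the displayed constant $\frac{2g}{(\sL)^g}$ is correct in Mumford and Milne. On each simple factor of $\End^0(A)$ this $2g$-dimensional trace is a \emph{positive integer} multiple of $\Tr_{Z/\Q}\circ\Trd$ (because $T_l(A)\otimes\Q_l$ is a faithful module over the semisimple algebra), so positive definiteness of one form is equivalent to that of the other; the ``careful analysis'' you announce is only needed if one insists on the exact constant for the reduced trace, not for the qualitative statement.
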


\begin{proof} Voir \cite[\S 21]{mumford} ou \cite[th. 17.3]{milne-ab1}.
\end{proof}

\begin{cor}\phantomsection Soit $\lambda:A\to A^*$ une polarisation; soit $f\in \End(A)$ tel que ${}^\rho ff=a\in \Z$. Soient $\alpha_1,\dots,\alpha_{2g}$ les racines (dans $\C$) du polyn\^ome caract\'eristique de $f$ (\cf th\'eor\`eme \ref{t3.2}). Alors la sous-alg\`ebre de $\End^0(A)$ engendr\'ee par $f$ est semi-simple, et
\begin{thlist}
\item $|\alpha_i|^2=a$ pour tout $i$;
\item l'application $\alpha_i\mapsto a/\alpha_i$ est une permutation des $\alpha_i$.
\end{thlist}
\end{cor}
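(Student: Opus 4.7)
Le plan est d'exploiter la positivit\'e de Rosati (th\'eor\`eme \ref{t:rosati}) pour \'etudier la sous-alg\`ebre $R:=\Q[f]$ de $\End^0(A)$. On peut supposer $f\ne 0$ (sinon $a=0$ et l'\'enonc\'e est vide). La positivit\'e appliqu\'ee \`a $f$ donne $\Tr({}^\rho ff)=a\Tr(1)>0$, et comme $\Tr(1)>0$ en d\'ecomposant $\End^0(A)$ en produit d'alg\`ebres simples, on a $a>0$. En particulier $f$ est inversible dans $\End^0(A)$, avec ${}^\rho f=af^{-1}\in \Q[f]=R$; ainsi $R$ est stable par ${}^\rho$, et, \'etant commutative, ${}^\rho$ y op\`ere comme un automorphisme de $\Q$-alg\`ebres.

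Pour (i), j'observerais que le nilradical $N$ de $R$ est ${}^\rho$-stable par unicit\'e, et que pour tout $x\in N$ l'\'el\'ement $x\cdot {}^\rho x\in N$ est nilpotent dans $\End^0(A)$, donc de trace nulle (la trace $\Tr$ s'annule sur les nilpotents d'une alg\`ebre semi-simple). La positivit\'e force alors $x=0$: $R$ est semi-simple. En \'etendant les scalaires, $R_\R:=R\otimes_\Q\R$ se d\'ecompose en produit fini $\prod_j K_j$ de copies de $\R$ et de $\C$. Deux observations d\'eterminent compl\`etement l'involution ${}^\rho$ sur cette d\'ecomposition: d'une part, elle pr\'eserve chaque facteur, car sinon un \'el\'ement concentr\'e dans un seul $K_j$ v\'erifierait $x\cdot {}^\rho x=0$ en contredisant la positivit\'e; d'autre part, sur chaque facteur complexe $K_j=\C$, elle doit \^etre la conjugaison complexe, car la seule alternative (l'identit\'e) donnerait $\Tr((ie_j)\cdot{}^\rho(ie_j))=-\Tr(e_j)<0$.

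Il en r\'esulte que tout plongement $\Q$-lin\'eaire $\phi:R\to\C$ v\'erifie $\phi({}^\rho g)=\overline{\phi(g)}$, d'o\`u $|\phi(f)|^2=\phi({}^\rho ff)=a$. Or le polyn\^ome minimal $m$ de $f$ dans $R$ annule $T_l(f)$ (car $T_l$ est un foncteur), donc chaque racine $\alpha_i$ de $P$ est racine de $m$, autrement dit de la forme $\phi(f)$ pour un plongement convenable: ceci d\'emontre (ii). L'assertion (iii) s'ensuit imm\'ediatement, puisque $a/\alpha_i=\overline{\alpha_i}$ et que la conjugaison complexe permute les racines (avec multiplicit\'es) du polyn\^ome $P\in\Z[t]$ (th\'eor\`eme \ref{t3.2}). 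L'obstacle principal de cette strat\'egie est l'identification de ${}^\rho$ \`a la conjugaison complexe sur les plongements complexes de $R$, qui exploite de fa\c con essentielle la positivit\'e du th\'eor\`eme \ref{t:rosati} et la structure des $\R$-alg\`ebres commutatives semi-simples \`a involution positive; le reste est formel.
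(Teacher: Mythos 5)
Your proof is correct and is essentially the paper's intended argument: the paper's "proof" simply defers to Mumford (\S 21, Application II) and Milne (lemma 19.3), and what you write out — stability of $\Q[f]$ under the Rosati involution via ${}^\rho f = af^{-1}$, semisimplicity forced by positivity of $\Tr({}^\rho x x)$ on the nilradical, and identification of ${}^\rho$ with complex conjugation on each factor of $R\otimes_\Q\R$, so that every root of the characteristic polynomial, being a value $\phi(f)$ of an embedding, satisfies $|\phi(f)|^2=\phi({}^\rho f f)=a$ — is precisely the standard argument of those references. The only slip is notational: your labels (ii) and (iii) correspond to items (i) and (ii) of the statement.
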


\begin{proof} C'est une cons\'equence presque imm\'ediate du th\'eor\`eme de positivit\'e, voir \cite[\S 21, Application II]{mumford} ou \cite[lemma 19.3]{milne-ab1}.
\end{proof}

\subsection{L'hypoth\`ese de Riemann pour une vari\'et\'e ab\'elienne}\label{s:hrab}\index{Hypothèse de Riemann!pour les variétés abéliennes}

\subsubsection{La fonction z\^eta d'une vari\'et\'e ab\'elienne}\index{Fonction zêta!d'un schéma de type fini sur $\Z$}

On suppose maintenant que $k=\F_q$ est fini. Soient $A$ une $k$-vari\'et\'e ab\'elienne et $\pi_A$ son endomorphisme de Frobenius. Alors $\pi_A$ agit par $0$ sur l'espace tangent en $0$, donc $1-\pi_A^n$ est \'etale pour tout $n>0$ et
\[A(\F_{q^n})=\Ker(1-\pi_A^n)\]
donc
\begin{equation}\label{eq3.1}
|A(\F_{q^n})| = P^{(n)}(1) = \prod_{i=1}^{2g} (1-\alpha_i^n)
\end{equation}
o\`u $P^{(n)}$ est le polyn\^ome caract\'eristique de $\pi_A^n$, de racines $\alpha_1^n,\dots,\alpha_{2g}^n$.

\begin{lemme}\phantomsection Pour toute involution de Rosati, on a ${}^\rho\pi_A\pi_A=q$.
\end{lemme}

\begin{proof} Si $\lambda$ est une polarisation, on peut montrer (parce que $k$ est fini) que $\lambda=\phi_\sL$ pour un fibr\'e en droites $\sL$ d\'efini sur $k$. Alors on a $\pi_A^*\sL\simeq \sL^q$ et le lemme r\'esulte d'un petit calcul (\cf \cite[lemme 19.2]{milne-ab1}).
\end{proof}

(On pourra s'amuser \`a comparer les d\'emonstrations donn\'ees dans \cite{lang-ab}, \cite{mumford} et \cite{milne-ab1}, qui sont de plus en plus simples\dots)

\begin{thm}[Weil]\phantomsection\label{t3.4} a) Les $\alpha_i$ v\'erifient $|\alpha_i|=\sqrt{q}$ pour tout $i$, et $\alpha_i\mapsto q/\alpha_i$ est une permutation des racines de $P^{(1)}$.\\
b) Pour $n\in \{0,\dots,2g\}$, soit $P_n$ le polyn\^ome dont les racines inverses sont les produits $\alpha_{i_1}\dots\alpha_{i_n}$ pour $i_1<\dots<i_n$ (donc $P_1=P^{(1)}$). Alors on a la factorisation
\[Z(A,t) = \frac{P_1(t)\dots P_{2g-1}(t)}{P_0(t)\dots P_{2g}(t)}\]
et l'\'equation fonctionnelle\index{Equation fonctionnelle@\'Equation fonctionnelle}
\[Z(A,1/q^gt) = Z(A,t).\]
\end{thm}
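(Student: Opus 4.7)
Pour (a), on applique le corollaire du th\'eor\`eme de positivit\'e de Rosati (th\'eor\`eme \ref{t:rosati}) \`a l'endomorphisme de Frobenius $\pi_A$: gr\^ace au lemme pr\'ec\'edent, on sait que ${}^\rho \pi_A\, \pi_A = q\in\Z$, et le corollaire fournit imm\'ediatement $|\alpha_i|^2=q$ pour tout $i$ ainsi que le fait que $\alpha_i\mapsto q/\alpha_i$ est une permutation de l'ensemble $\{\alpha_1,\dots,\alpha_{2g}\}$.

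Pour (b), je partirais de la proposition \ref{p4} c), qui donne
\[\log Z(A,t) = \sum_{n\ge 1} |A(\F_{q^n})| \frac{t^n}{n},\]
et de l'identit\'e \eqref{eq3.1}. En d\'eveloppant le produit $\prod_{i=1}^{2g}(1-\alpha_i^n)=\sum_{S}(-1)^{|S|}\beta_S^n$ (o\`u $S\subset\{1,\dots,2g\}$ et $\beta_S=\prod_{i\in S}\alpha_i$), puis en intervertissant les sommations et en utilisant $-\log(1-x)=\sum_{n\ge 1}x^n/n$, on obtient
\[\log Z(A,t)=\sum_{r=0}^{2g}(-1)^{r+1}\log P_r(t),\]
d'o\`u la factorisation annonc\'ee. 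Cette \'etape est un calcul formel \'el\'ementaire sur les fonctions sym\'etriques.

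L'\'equation fonctionnelle r\'esulte alors de (a). Puisque $\alpha_i\mapsto q/\alpha_i$ est une permutation, pour tout $S$ de cardinal $r$, le produit compl\'ementaire v\'erifie $\beta_S\beta_{S^c}=\prod_i\alpha_i=q^g$ (en utilisant par exemple que le coefficient dominant de $P^{(1)}$ est $\det \pi_A=\deg\pi_A=q^g$). On en d\'eduit
\[P_r(1/q^g t) = c_r\, t^{-\binom{2g}{r}}\, P_{2g-r}(t),\]
avec $c_r = (-1)^{\binom{2g}{r}} q^{-g\binom{2g-1}{r}}$, en \'ecrivant chaque facteur $1-\beta_S/(q^g t) = -\beta_S^{-1} t^{-1}(1-\beta_{S^c} t)$ et en rassemblant les facteurs constants (chaque $\alpha_i$ appara\^\i t dans $\binom{2g-1}{r-1}$ des produits $\beta_S$ avec $|S|=r$).

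En prenant l'alternance $\sum_r (-1)^{r+1}\log(\cdot)$, trois contributions apparaissent: la partie en $\log P_{2g-r}(t)$ redonne $\log Z(A,t)$ par changement d'indice $r\leftrightarrow 2g-r$, la contribution en $\log t$ est proportionnelle \`a $\sum_r (-1)^{r+1}\binom{2g}{r}=-(1-1)^{2g}=0$, et la contribution en $\log c_r$ fait intervenir $\sum_r (-1)^{r+1}\binom{2g-1}{r}=-(1-1)^{2g-1}=0$. Donc $Z(A,1/q^g t)=Z(A,t)$. Le point potentiellement d\'elicat est la comptabilit\'e soigneuse des signes et des facteurs $q^g$ dans cette derni\`ere \'etape, mais une fois admis le fait que $\alpha_i\mapsto q/\alpha_i$ est une permutation (qui est le vrai contenu de (a)), le reste est purement formel.
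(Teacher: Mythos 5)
Votre démonstration est correcte et suit exactement la voie du texte: a) est l'application directe du lemme ${}^\rho\pi_A\pi_A=q$ au corollaire du théorème de positivité de Rosati, et b) n'est que le développement explicite de ce que le texte résume par \og imm\'ediat \`a partir de \eqref{eq3.1} et de la proposition \ref{p4} c)\fg. (Petit lapsus sans conséquence: dans la réécriture du facteur $1-\beta_S/(q^gt)$, le facteur constant est $-\beta_{S^c}^{-1}t^{-1}$ et non $-\beta_S^{-1}t^{-1}$; votre valeur finale de $c_r$ est néanmoins la bonne.)
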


\begin{proof} C'est imm\'ediat \`a partir de \eqref{eq3.1} et de la formule de la proposition \ref{p4} c).
\end{proof}

\begin{rques}\phantomsection\label{r3.3} a) Avant l'invention des modules de Tate, Hasse, Deuring et Weil raisonnaient en termes de ``matrices $l$-adiques'' d\'eduites de l'action des endomorphismes de $A$ sur ses points de torsion, en utilisant que
\[\End_\Z(\Q_l/\Z_l)= \Z_l.\]
b) Quand on conna\^\i tra l'interpr\'etation de la factorisation du th\'eor\`eme \ref{t3.4} b) en termes d'une cohomologie de Weil $H$ \index{Cohomologie!de Weil}\eqref{eq3.4}, elle s'expliquera de la mani\`ere suivante: les $\alpha_i$ sont les valeurs propres de l'action de Frobenius sur $H^1(A)$, et $H^*(A)$ est une alg\`ebre ext\'erieure sur $H^1(A)$ (ce qui se d\'eduit pour toute cohomologie de Weil de la structure du motif de $A$, \cite{kunnemann}).
\end{rques}

\subsubsection{Raccord avec le cas des courbes} Soient $C,C'$ deux $k$-courbes projectives lisses. Les correspondances op\`erent sur les groupes de Picard par l'op\'eration
\begin{align*}
\Corr(C,C')\times \Pic(C)&\to \Pic(C')\\
(\gamma,D)&\mapsto (p^{C\times C'}_{C'})_*(\gamma\cap D\times C')
\end{align*}
(si le produit d'intersection est d\'efini; dans le cas exceptionnel $\gamma=P\times C'$, le second membre est d\'efini comme $0$). C'est d'ailleurs ainsi que Weil introduit la composition des correspondances.

Cette action envoie $\Pic^0(C)$ dans $\Pic^0(C')$ et induit en fait un homomorphisme
\[\Corr_\equiv(C,C')\to \Hom_k(J(C),J(C')).\]

\begin{thm}[Weil]\phantomsection\label{t3.7} Cet homomorphisme est un i\-so\-mor\-phis\-me.
\end{thm}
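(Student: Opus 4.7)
Le plan est d'identifier $\Corr_\equiv(C,C')$ \`a un quotient naturel de $\Pic(C\times C')$, puis de reconna\^\i tre ce quotient comme $\Hom_k(J(C),J(C'))$ via les propri\'et\'es universelles des vari\'et\'es de Picard et d'Albanese. Sur la surface projective lisse $C\times C'$, les diviseurs modulo \'equivalence rationnelle correspondent \`a $\Pic(C\times C')$, et la \emph{three-line equivalence} revient \`a quotienter par les classes de la forme $p_C^*\alpha+p_{C'}^*\beta$, soit
\[\Corr_\equiv(C,C')=\Pic(C\times C')\bigm/\bigl(p_C^*\Pic(C)+p_{C'}^*\Pic(C')\bigr).\]
Un calcul d'intersection \'el\'ementaire (utilisant que l'auto-intersection sur la surface d'un diviseur provenant de l'une des projections est nulle) montre de plus que les diviseurs du type $p_C^*\alpha$ ou $p_{C'}^*\beta$ agissent trivialement sur $\Pic^0$, de sorte que le morphisme de l'\'enonc\'e est bien d\'efini sur $\Corr_\equiv(C,C')$ et envoie $\Pic^0$ dans $\Pic^0$.

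Pour construire l'inverse, supposons d'abord que $C$ et $C'$ poss\`edent des points rationnels $c_0,c_0'$ (le cas g\'en\'eral se traitera par descente galoisienne, ou en rempla\c cant les immersions d'Abel-Jacobi par les torseurs du th\'eor\`eme \ref{t3.1}). Tout fibr\'e en droites $\sL$ sur $C\times C'$ se normalise de mani\`ere unique modulo $p_C^*\Pic(C)+p_{C'}^*\Pic(C')$ en un fibr\'e $\sL'$ trivialis\'e le long de $\{c_0\}\times C'$ et de $C\times \{c_0'\}$. La famille $(\sL'|_{\{c\}\times C'})_{c\in C}$ est alors une famille de fibr\'es de degr\'e $0$ sur $C'$ param\'etr\'ee par $C$; par la propri\'et\'e universelle de $J(C')=\Pic^0_{C'/k}$, elle fournit un morphisme $\psi_{\sL}:C\to J(C')$ envoyant $c_0$ sur $0$. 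Le th\'eor\`eme \ref{t3.3} et la propri\'et\'e universelle de $\Alb(C)=J(C)$ prolongent $\psi_\sL$ en un unique homomorphisme $h_\sL:J(C)\to J(C')$, qui ne d\'epend que de la classe de $\sL$ dans $\Corr_\equiv(C,C')$, d'o\`u l'application cherch\'ee.

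Il reste \`a v\'erifier que les deux applications sont bien inverses l'une de l'autre, ce qui d\'ecoule des m\^emes propri\'et\'es universelles: l'action de la classe de $\sL$ sur le cycle $[c]-[c_0]\in J(C)$ vaut pr\'ecis\'ement $\psi_\sL(c)=h_\sL([c]-[c_0])$ par construction, et r\'eciproquement tout $h\in \Hom_k(J(C),J(C'))$ est obtenu en tirant en arri\`ere par $\iota\times 1_{C'}$, o\`u $\iota:C\to J(C)$ est l'Abel-Jacobi, le fibr\'e $(h\times 1_{C'})^*\sP_{C'}$ sur $J(C)\times C'$, $\sP_{C'}$ d\'esignant le fibr\'e de Poincar\'e universel sur $J(C')\times C'$. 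L'obstacle principal est la mise en \oe uvre correcte de la propri\'et\'e universelle de $\Pic^0_{C'/k}$ en tant que sch\'ema en groupes repr\'esentant le foncteur des familles rigidifi\'ees de degr\'e $0$ (ce qui r\'eclame soit la th\'eorie grothendieckienne des sch\'emas de Picard, soit l'approche originelle de Weil via les syst\`emes lin\'eaires), ainsi que la descente galoisienne au cas o\`u $C(k)=\emptyset$ ou $C'(k)=\emptyset$; c'est pr\'ecis\'ement pour disposer de ces outils que Weil a d\^u refonder la g\'eom\'etrie alg\'ebrique.
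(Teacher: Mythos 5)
Le texte ne d\'emontre pas ce th\'eor\`eme: il renvoie simplement \`a \cite[ch. VI, th. 22]{weil2bis} et \`a \cite[cor. 6.3]{milne-ab2}. Votre esquisse reconstruit pr\'ecis\'ement l'argument de la r\'ef\'erence de Milne --- identification de $\Corr_\equiv(C,C')$ avec $\Pic(C\times C')/\bigl(p_C^*\Pic(C)+p_{C'}^*\Pic(C')\bigr)$, normalisation le long de $\{c_0\}\times C'$ et $C\times\{c_0'\}$, puis propri\'et\'es universelles de $\Pic^0_{C'/k}$ et de l'Albanese --- et elle est correcte, les points que vous signalez vous-m\^eme (repr\'esentabilit\'e du foncteur de Picard avec son fibr\'e de Poincar\'e, descente au cas sans point rationnel) \'etant exactement le contenu technique d\'el\'egu\'e aux r\'ef\'erences.
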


\begin{proof} Voir \cite[ch. VI, th. 22]{weil2bis} ou \cite[cor. 6.3]{milne-ab2}.
\end{proof}

Lorsque $C'=C$, notons $J=J(C)$ et $\lambda=J\iso J^*$ la polarisation principale canonique de l'exemple \ref{r3.2}. On peut montrer que l'isomorphisme
\[\Corr_\equiv(C,C)\iso \End(J)\]
transforme l'involution canonique du membre de gauche (transposition) en l'involution de Rosati associ\'ee \`a $\lambda$, et que la trace $\sigma$ de la d\'efinition \ref{d:trace} correspond \`a la trace $\Tr$ de l'alg\`ebre semi-simple $\End^0(J)$ \cite[ch. VI, \S 3, th. 6]{lang-ab}. Ainsi, on retrouve le th\'eor\`eme \ref{t:lemmeimportant} \`a partir du th\'eor\`eme \ref{t:rosati}. On voit aussi que le num\'erateur de $Z(C,t)$ est le polyn\^ome caract\'eristique inverse de $\pi_{J}$.


\subsection{Les conjectures de Weil}\label{s3.3}\index{Conjectures!de Weil}

Elles sont \'enonc\'ees \`a la fin de \cite{weil4}: 

\begin{quote} \it
This, and other examples which we cannot discuss here, seem to lend some support to the following conjectural statements, which are known to be true for curves, but which I have not so far been able to prove for varieties of higher dimension.

Let $V$ be a variety\footnote{Curieusement, le mot ``projective'' est omis.} without singular points, of dimension $n$, defined over a finite field $k$ with $q$ elements. Let $N_\nu$ be the number of rational points on $V$ over the extension $k_\nu$ of $k$ of degree $\nu$. Then we have
\[\sum_1^\infty N_\nu U^{\nu-1} = \frac{d}{dU} \log Z(U)\]
where $Z(U)$ is a rational function in $U$, satisfying a functional equation
\[Z(\frac{1}{q^nU}) = \pm q^{n\chi/2} U^\chi Z(U)\]
with $\chi$ equal to the Euler-Poincar\'e characteristic of $V$ (intersection number of the diagonal with itself on the product $V\times V$).

Furthermore, we have:
\[Z(U) = \frac{P_1(U)P_3(U)\dots P_{2n-1}(U)}{P_0(U)P_2(U)\dots P_{2n}(U)}\]
with $P_0(U)= 1-U$, $P_{2n}(U)=1-q^nU$, and, for $1\le h\le 2n-1$:
\[P_h(U)= \prod_{i=1}^{B_h} (1-\alpha_{hi} U)\]
where the $\alpha_{hi}$ are algebraic integers of absolute value $q^{h/2}$.\index{Nombres!de Weil}

Finally, let us call the degrees $B_h$ of the polynomials $P_h(U)$ the \emph{Betti numbers}\index{Nombres!de Betti} of the variety $V$; the Euler-Poincar\'e characteristic $\chi$ is then expressed by the usual formula $\chi=\sum_h (-1)^hB_h$. The evidence at hand seems to suggest that, if $\overline V$ is a variety without singular points, defined over a field $K$ of algebraic numbers, the Betti numbers of the varieties $V_\fp$, derived from $\overline V$ by reduction modulo a prime ideal $\fp$ in $K$, are equal to the Betti numbers of $\overline V$ (considered as a variety over complex numbers) in the sense of combinatorial topology, for all except at most a finite number of prime ideals $\fp$. For instance, consider the Grassmannian variety\dots
\end{quote}

Le cas des vari\'et\'es grassmanniennes para\^\i t maintenant facile dans la mesure o\`u ce sont des vari\'et\'es cellulaires. Curieusement, Weil ne cite pas les variétés abéliennes. L'exemple dont il part au d\'ebut de son article est celui des hypersurfaces de Fermat g\'en\'eralis\'ees:
\begin{equation}\label{eq:fermat}
H: \quad a_0x_0^n+\dots+a_rx_r^n=0
\end{equation}
pour lesquelles il prouve la conjecture en utilisant les sommes de Jacobi. Il apprend de Pierre Dolbeault la valeur des nombres de Betti de $H$ vue comme vari\'et\'e complexe, ce qui l'encourage \`a formuler sa conjecture. 

\begin{exo}[\cf \protect{\cite[pp. 826--827]{lang-weil}}]\phantomsection Montrer que les con\-jec\-tu\-res de Weil impliquent que la plus petite constante $\gamma$ apparaissant dans le corollaire \ref{c:lw2} est égale au degré de $P_{2n-1}(U)$. (Ce degré est égal à $2g$, où $g$ est la dimension de la variété de Picard de $V$, \cf remarques \ref{r3.3} b) et \ref{r3.4}.)
\end{exo}

Dans le commentaire de ses \oe uvres scientifiques sur cet article, Weil \'ecrit:\index{Conjectures!de Weil}

\begin{quote} \it Je crois que je fis un raisonnement heuristique bas\'e sur la formule de Lefschetz. En g\'eom\'etrie alg\'ebrique classique, soit $\phi$ une application g\'en\'eriquement surjective de degr\'e $d$ d'une vari\'et\'e $V$ de dimension $n$, lisse et compl\`ete, dans elle-m\^eme. Pour chaque $\nu$, soit $N_\nu$ le nombre de solutions, suppos\'e fini, de $P=\phi^\nu(P)$, ou pour mieux dire, le nombre d'intersection avec la diagonale, sur $V\times V$, du graphe de la $n$-i\`eme it\'er\'ee de $\phi$. Des raisonnements de topologie combinatoire classique font voir alors que la fonction $Z(U)$ d\'efinie par (I)\footnote{Il s'agit de la formule de la proposition  \ref{p4} c).} satisfait \`a une \'equation fonctionnelle\index{Equation fonctionnelle@\'Equation fonctionnelle}
\[Z(d^{-1}U^{-1})= \pm (\sqrt{d}U)^\chi Z(U)\]
o\`u $\chi$ est la caract\'eristique d'Euler-Poincar\'e de $V$, ou autrement dit (comme je le rappelais dans \cite[p. 507]{weil4}) le nombre d'intersection de la diagonale avec elle-m\^eme sur $V\times V$. De l\`a \`a esp\'erer que la m\^eme formule pouvait s'appliquer, sur le corps de base fini $\F_q$, \`a l'application de Frobenius (pour laquelle on a $d=q^n$), et que mes calculs donnaient donc la valeur de $\chi$ pour l'hypersurface \eqref{eq:fermat}, il n'y avait qu'un pas \`a faire. Quand le r\'esultat de Dolbeault m'eut permis de v\'erifier ce point, l'inspection de mes formules sugg\'era de lui-m\^eme les conjectures sur les nombres de Betti\index{Nombres!de Betti} et l'``hypoth\`ese de Riemann''\index{Hypothèse de Riemann!pour les variétés projectives lisses}; la comparaison avec les r\'esultats connus sur les courbes, ainsi que l'examen des Grassmanniennes et de quelques autres exemples simples, ne tarda pas \`a les confirmer.
\end{quote}

Weil ne le savait pas \`a l'\'epoque, mais les hypersurfaces de Fermat sont ``de type ab\'elien'': leur motif s'exprime en termes de motifs de vari\'et\'es ab\'eliennes (Katsura-Shioda \cite{ks}).

\subsection{Cohomologies de Weil} Commen\c cons par deux citations de Serre:\index{Conjectures!de Weil}

\begin{quote} {\it \dots the idea of counting points over $\F_q$ by a Lefschetz formula is entirely \emph{an idea of Weil}. I remember how enthousiastic I was when he explained it to me, and a few years later I managed to convey my enthousiasm to Grothendeick (whose taste was not a priori directed towards finite fields).} (Lettre \`a S. Kleiman, 25 mars 1992, cit\'ee dans \cite[p. 8, note 3]{kleiman-standard}.)
\end{quote}

\begin{quote} \it \dots Weil formule ce qu'on a tout de suite appel\'e les \emph{conjectures de Weil}. Ces conjectures portent sur les vari\'et\'es (projectives, non singuli\`eres) sur un corps fini. Elles reviennent \`a supposer que les m\'ethodes topologiques de Riemann, Lefschetz, Hodge\dots\ s'appliquent en caract\'eristique $p>0$; dans cette optique, le nombre de solutions d'une \'equation $\pmod{p}$ appara\^\i t comme un nombre de points fixes, et doit donc pouvoir \^etre calcul\'e par la formule des traces de Lefschetz. Cette id\'ee, vraiment r\'evolutionnaire, a enthousiasm\'e les math\'ematiciens de l'\'epoque (je peux en t\'emoigner de premi\`ere main); elle a \'et\'e \`a l'origine d'une bonne partie des progr\`es de la g\'eom\'etrie alg\'ebrique qui ont suivi. \cite[no 5]{serre-weil}.
\end{quote}

Avec le recul, on pourrait consid\'erer cette id\'ee de Weil comme l'anc\^etre le plus direct de la philosophie des motifs!\footnote{On notera que dans \cite{weil4} et dans ses commentaires sur cet article, Weil ne mentionne pas explicitement la formule des traces de Lefschetz, ni même la (co)homologie singulière: il parle seulement de ``raisonnements de topologie combinatoire classique''. La topologie combinatoire est l'ancien nom de la topologie algébrique, terminologie que Bourbaki a adoptée vers 1944.}
\bigskip

Serre et Grothendieck y mettent plus de substance, comme suit:

\begin{defn}\phantomsection\label{d3.4} Soit $k$ un corps; consid\'erons la cat\'egorie $\V(k)$ des vari\'et\'es projectives lisses (morphismes: les $k$-morphismes). Une \emph{cohomologie de Weil} \index{Cohomologie!de Weil}sur $\V(k)$ est la donn\'ee d'un corps $K$ de caract\'eristique z\'ero et d'un foncteur [contravariant]
\[H^*:\V(k)^\op\to \Vec_K^*\]
de $\V(k)$ vers la cat\'egorie des $K$-espaces vectoriels gradu\'es, v\'erifiant les axiomes suivants:
\begin{thlist}
\item Pour tout $X\in \V(k)$ de dimension $n$, les $H^i(X)$ sont de dimension finie, nuls pour $i\notin [0,2n]$.
\item $H^0(\Spec k)=K$.
\item $\dim H^2(\P^1)=1$; on note cet espace vectoriel $K(-1)$.
\item \emph{Additivit\'e}: si $X,Y\in\V(k)$, le morphisme canonique
\[H^*(X\coprod Y)\to H^*(X)\oplus H^*(Y)\]
est un isomorphisme.
\item \emph{Normalisation}: si $X$ est connexe, de corps des constantes $E$, le morphisme canonique
\[H^0(E)\to H^0(X)\]
est un isomorphisme.
\item \emph{Formule de K\"unneth}: pour $X,Y\in \V(k)$, on a un isomorphisme 
\[\kappa_{X,Y}:H^*(X)\otimes H^*(Y)\iso H^*(X\times Y)\]
naturel en $(X,Y)$, qui v\'erifie des conditions \'evidentes d'associativit\'e, d'unit\'e et de commutativit\'e, cette derni\`ere relativement \`a la r\`egle de Koszul.
\item \emph{Trace et dualit\'e de Poincar\'e}: Pour tout $X\in \V(k)$,  purement de dimension $n$, on a un morphisme canonique
\[\Tr_X:H^{2n}(X)\to K(-n):=K(-1)^{\otimes n}\]
qui est un isomorphisme si $X$ est g\'eom\'etriquement connexe, et tel que $\Tr_{X\times Y} = \Tr_X\otimes \Tr_Y$ modulo la formule de K\"unneth; l'accouplement ``de Poincar\'e''
\begin{equation}\label{eq:dP}
H^i(X)\otimes H^{2n-i}(X)\to H^{2n}(X\times X)\by{\Delta_X^*} H^{2n}(X)\by{\Tr_X} K(-n)
\end{equation}
est non d\'eg\'en\'er\'e.
\item \emph{Classe de cycle}: Pour tout $X\in \V(k)$ et tout $i\ge 0$, on a un homomorphisme
\[\cl_X^i:CH^i(X)\to H^{2i}(X)(i):= \Hom_K(K(-i),H^{2i}(X))\]
o\`u $CH^i(X)$ est le groupe des cycles de codimension $i$ sur $X$, modulo l'\'equivalence rationnelle ($i$-\`eme groupe de Chow de $X$). \index{Groupe de Chow} Ces homomorphismes sont contravariants en $X$ et compatibles \`a la formule de K\"unneth ($\kappa_{X,Y}\left(\cl^i_X(\alpha)\otimes \cl^j_Y(\beta)\right)\allowbreak=\cl^{i+j}_{X\times Y}(\alpha\times \beta)$); de plus, si $\dim X=n$, le diagramme
\[\begin{CD}
CH^n(X)@>\cl^n_X>> H^{2n}(X)(n)\\
@V{\deg}VV @V{\Tr_X}VV\\
\Z@>>> K
\end{CD}\]
est commutatif.
\end{thlist}
\end{defn}

\`A l'aide de la formule de K\"unneth et de la diagonale, on d\'efinit le \emph{cup-produit} sur la cohomologie de $X\in \V(k)$:
\begin{align}\label{eq:cupproduit}
\cdot:H^i(X)\times H^j(X)&\to H^{i+j}(X)\\
x\cdot y &= \delta_X^*(\kappa_{X,X}(x\times y))\notag
\end{align}
o\`u $\delta_X:X\to X\times_k X$ est le morphisme diagonal.

\begin{lemme}\phantomsection\label{l3.5} Si $E$ est une extension finie s\'eparable de $k$ de degr\'e $d$, alors $\dim H^0(E)=d$.
\end{lemme}

\begin{proof} a) Soit $\tilde E$ une cl\^oture galoisienne de $E$. En appliquant (iv) et (vi) \`a $E\otimes_k \tilde E\iso \prod_{E\inj_k \tilde E} \tilde E$, on trouve
\[H^0(E)\otimes H^0(\tilde E)\simeq H^0(\tilde E)^d.\]
\end{proof}

\begin{rque}\phantomsection\label{r3.4} Les axiomes varient suivant les auteurs. J'ai essay\'e d'en prendre un syst\`eme minimal, qui soit suffisant pour les besoins qui suivent. En particulier:
\begin{itemize}
\item J'ai ins\'er\'e les axiomes (iv) et (v), que je ne sais pas d\'eduire des autres.\footnote{L'axiome (v) se d\'eduit de (vii) si $H$ se factorise \`a travers le foncteur $\V(k)\to \V(\bar k)$, o\`u $\bar k$ est une cl\^oture alg\'ebrique de $k$; mais la cohomologie de de Rham\index{Cohomologie!de de Rham}, qui v\'erifie (v), n'a pas cette propri\'et\'e.}
\item Je n'ai pas inclus des axiomes ``Lefschetz faible'' et ``Lefschetz fort'' de \cite{kleiman-dix}, qui ne sont pas n\'ecessaires pour d\'emontrer les conjectures de Weil.
\end{itemize}
Un axiome supplémentaire important est le suivant: si $X$ est projective, lisse et géométriquement connexe, ayant un point rationnel $x_0$, le morphisme d'Albanese $X\to \Alb(X)$ associé à $x_0$ induit un isomorphisme
\[H^1(\Alb(X))\iso H^1(X).\]
Cet axiome est vérifié par toutes les cohomologies de Weil classiques, \ie celles qui apparaissent au \S \ref{3.5.9}.
\end{rque}



\subsection{Propri\'et\'es formelles d'une cohomologie de Weil}\index{Cohomologie!de Weil} R\'ef\'erence: Kleiman \cite{kleiman-dix}.

\subsubsection{Image directe} 
Soit $f:X\to Y$ un morphisme de $\V(k)$, avec $X,Y$ irr\'eductibles,  $\dim X=m$, $\dim Y=n$. On d\'efinit
\[f_*:H^i(X)\to H^{i+2n-2m}(Y)(n-m)\]
comme le dual de
\[f^*:H^{2m-i}(Y)(m)\to H^{2m-i}(X)(m)\]
via la dualit\'e de Poincar\'e. On a $\Tr_X = (p_X)_*$ o\`u $p_X$ est le morphisme structural de $X$, et la \emph{formule de projection}:
\[f_*(x\cdot f^*y)=f_* x\cdot y.\]

D\'emontrons-la: par définition, $f_*$ est adjoint de $f^*$ pour l'accouplement de Poincar\'e. Si $z$ est un \'el\'ement de $H^*(Y)(*)$ de dimension et poids compl\'ementaire, on a:
\begin{multline*}
<f_*(x\cdot f^*y),z>_Y = <x\cdot f^* y,f^*z>_X=<x,f^*( y\cdot z)>_X\\
=<f_* x, y\cdot z>_Y = <f_* x\cdot y, z>_Y.
\end{multline*}

\subsubsection{Correspondances cohomologiques}

\begin{prop}\phantomsection\label{p3.1} Soient $X,Y\in\V(k)$, de dimensions $m$ et $n$, et soit $r\in\Z$. On a un isomorphisme canonique
\[\Hom^r(H^*(X),H^*(Y))\simeq H^{2m+r}(X\times Y)(m)\]
o\`u $\Hom^r$ signifie ``homomorphismes homog\`enes de degr\'e $r$''.
\end{prop}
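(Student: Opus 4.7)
The plan is to produce the isomorphism by composing the two fundamental axioms at our disposal, namely the Künneth formula~(vi) and Poincaré duality~(vii), and then to describe it explicitly via the standard ``correspondence'' formula
\[\alpha \mapsto \bigl[x \mapsto (p_Y)_*(\alpha \cdot p_X^* x)\bigr]\]
where $p_X$, $p_Y$ are the projections from $X\times Y$.

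\medskip

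\noindent\textbf{Step 1 (check of degrees and twists).} First I would verify that the above formula lands in the correct piece. If $\alpha\in H^{2m+r}(X\times Y)(m)$ and $x\in H^k(X)$, then $p_X^* x\in H^k(X\times Y)$, so $\alpha\cdot p_X^* x\in H^{2m+r+k}(X\times Y)(m)$; applying $(p_Y)_*:H^{2m+r+k}(X\times Y)(m)\to H^{k+r}(Y)(m-m)=H^{k+r}(Y)$ yields an element of the right space. Thus the formula does define a map
\[\Phi:H^{2m+r}(X\times Y)(m)\longrightarrow \Hom^r(H^*(X),H^*(Y)).\]

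\medskip

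\noindent\textbf{Step 2 (reduction to a tensor product identification).} I would then use the Künneth isomorphism $\kappa_{X,Y}$ to rewrite
\[H^{2m+r}(X\times Y)(m)=\bigoplus_{i+j=2m+r} H^i(X)\otimes H^j(Y)\,(m).\]
Re-indexing by $k=2m-i$ (so $j=k+r$) this becomes $\bigoplus_k H^{2m-k}(X)(m)\otimes H^{k+r}(Y)$. The Poincaré duality pairing~\eqref{eq:dP} provides, for each $k$, a canonical isomorphism $H^{2m-k}(X)(m)\iso H^k(X)^\vee$, so after taking the direct sum over $k$ one obtains
\[\bigoplus_k \Hom_K\!\bigl(H^k(X),H^{k+r}(Y)\bigr)=\Hom^r(H^*(X),H^*(Y)).\]
(The finite-dimensionality axiom~(i) is used here to identify the tensor product $V^\vee\otimes W$ with $\Hom(V,W)$.) This composite is manifestly an isomorphism.

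\medskip

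\noindent\textbf{Step 3 (the two constructions agree).} It remains to check that the composite just described coincides with $\Phi$. Taking a pure tensor $\alpha=a\otimes b\in H^{2m-k}(X)(m)\otimes H^{k+r}(Y)$, one has $p_X^* x=x\otimes 1$ and $p_Y^* b=1\otimes b$, so $\alpha\cdot p_X^* x=(x\cdot a)\otimes b$; applying $(p_Y)_*$, which by the compatibility of the trace with the Künneth formula~(vii) is $\Tr_X\otimes\id$ on a tensor product, yields $\Tr_X(x\cdot a)\cdot b$. This is precisely the element corresponding to $a\otimes b$ under the identification of Step~2, via the Poincaré pairing $(x,a)\mapsto \Tr_X(x\cdot a)$.

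\medskip

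\noindent\textbf{Main obstacle.} The only genuinely delicate point is the bookkeeping of the Tate twists: one must be careful that the twist $(m)$ is attributed to the correct factor so that the Poincaré pairing converts $H^{2m-k}(X)(m)$ to $H^k(X)^\vee$ on the nose, and that the trace morphism $\Tr_X:H^{2m}(X)\to K(-m)$, composed with the twist, becomes the ``identityless'' evaluation. Once this is untangled, the proof is formal, and the functoriality in $(X,Y)$ follows from the functoriality built into axioms~(vi)--(vii).
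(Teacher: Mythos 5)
Your Step 2 is exactly the paper's proof: the isomorphism is obtained by composing linear duality of finite-dimensional $K$-vector spaces, Poincaré duality (axiome (vii)) and the formule de Künneth (axiome (vi)), with the same care about which Poincaré isomorphism $H^{2m-k}(X)(m)\iso H^k(X)^*$ one fixes. Your Steps 1 and 3, identifying the abstract isomorphism with the explicit formula $\alpha(x)=(p_Y)_*(p_X^*x\cdot\alpha)$, are correct but not part of the paper's proof of the proposition itself — they correspond to the material the paper develops immediately afterwards in \eqref{eq:act-corr} and \eqref{eq:expl}.
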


\begin{proof} On a:
\begin{multline*}
\Hom^r(H^*(X),H^*(Y))=\prod_{i\ge 0} \Hom(H^i(X),H^{i+r}(Y))\\
\simeq \prod_{i\ge 0} H^i(X)^*\otimes H^{i+r}(Y)\overset{(*)}{\simeq} \prod_{i\ge 0} H^{2m-i}(X)(m)\otimes H^{i+r}(Y)\\
\simeq H^{2m+r}(X\times Y)(m)
\end{multline*}
o\`u on a utilis\'e successivement la dualit\'e des $K$-espaces vectoriels, la dualit\'e de Poincar\'e (axiome (vii)) et la formule de K\"unneth (axiome (vi)).\footnote{Il y a une ambigu\"\i t\'e de signe dans l'isomorphisme $(*)$: on choisit l'isomorphisme $H^i(X)^*\iso H^{2m-i}(X)(m)$ donn\'e par \eqref{eq:dP}.}
\end{proof}

\begin{defn}\phantomsection Une \emph{correspondance cohomologique de degr\'e $r$} de $X^m$ vers $Y$ est un \'el\'ement de 
$H^{2m+r}(X\times Y)(m)$. Notation: $\Corr^r_H(X,Y)$.
Si $X$ n'est pas \'equidimensionnel, on \'etend cette d\'efinition par additivit\'e.
\end{defn}

La proposition \ref{p3.1} d\'efinit, par transport de structure, la \emph{composition des correspondances cohomologiques}. Elle s'explicite ainsi (m\^eme strat\'egie que pour prouver la formule de projection):
\begin{equation}\label{eq:comp-corr}
\beta\circ \alpha = (p_{13})_*(p_{12}^*\alpha \cdot p_{23}^*\beta)
\end{equation}
pour $\alpha\in \Corr^*_H(X_1,X_2)$ et $\beta\in \Corr^*_H(X_2,X_3)$, o\`u $p_{ij}$ d\'esigne la projection de $X_1\times X_2\times X_3$ sur le facteur $X_i\times X_j$. Elles op\`erent sur la cohomologie par la formule
\begin{equation}\label{eq:act-corr}
\alpha(x) = (p_Y)_*(p_X^*x\cdot \alpha)
\end{equation}
pour $\alpha\in \Corr^r_H(X,Y)$, $x\in H^i(X)$, $\alpha(x)\in H^{i+r}(Y)$.

Ainsi, les correspondances cohomologiques agissent sur la cohomologie de mani\`ere \emph{covariante}. Pour la suite, explicitons \eqref{eq:act-corr} dans le cas o\`u $\alpha$ est donn\'e par un tenseur simple $v\otimes w$, $v\in H^{2m-i}(X)(m)$, $w\in H^{i+r}(Y)$ et o\`u $x\in H^i(X)$:
\begin{equation}\label{eq:expl}
\alpha(x) = <x,v>w
\end{equation}
(\cf \cite[1.3]{kleiman-dix}).

\subsubsection{Transposition} La dualit\'e de Poincar\'e donne un isomorphisme
\[\Hom^r(H^*(X),H^*(Y)) \iso \Hom^{2m-2n+r}(H^*(Y),H^*(X))(m-n)\]
($m=\dim X$, $n=\dim Y$), soit un isomorphisme
\begin{multline*}
H^{2m+r}(X\times Y)(m)=\Corr^r_H(X,Y)\\
\iso \Corr^{2m-2n+r}_H(Y,X)(m-n)=H^{2m+r}(Y\times X)(m)
\end{multline*}
appel\'e \emph{transposition des correspondances}. On note $\phi\mapsto {}^t \phi$ cet isomorphisme. Pour la suite, explicitons le transpos\'e d'un tenseur simple $v\otimes w\in H^{2m-i}(X)(m)\otimes H^{i+r}(Y)$ (r\`egle de Koszul):
\begin{equation}\label{eq:expl2}
(v\otimes w)' = (-1)^{(2m-i)(i+r)}w\otimes v = (-1)^{i(i+r)}w\otimes v.
\end{equation}

\begin{ex}\phantomsection Si $f:Y\to X$ est un morphisme, $f^*:H^*(X)\to H^*(Y)$ d\'efinit une correspondance cohomologique, not\'ee $f^*\in \Corr^0_H(X,Y)$. Sa transpos\'ee est (par d\'efinition) $f_*$. On a  les formules \'evidentes $(g\circ f)^*=f^*\circ g^*$ et $(g\circ f)_*=g_*\circ f_*$. En particulier,
\begin{equation}\label{eq3.2}
\Tr_X = \Tr_Y\circ f_*.
\end{equation}
\end{ex}

\begin{lemme}\phantomsection\label{l:tr} Soit $\sigma_{X,Y}:Y\times X\iso X\times Y$ l'\'echange des facteurs. Alors pour tout $\phi\in \Corr^r_H(X,Y)$, on a ${}^t \phi=\sigma_{X,Y}^*\circ \phi$.
\end{lemme}

\begin{proof} M\^eme esprit que celle de la formule de projection (de nouveau, ${}^t\phi$ est par d\'efinition l'adjoint de $\phi$ pour l'accouplement de Poincar\'e).
\end{proof}

\subsubsection{La formule des traces de Lefschetz} C'est la suivante:

\begin{prop}\phantomsection\label{p3.4} Soient $\alpha\in\Corr^r_H(X,Y)$ et $\beta\in \Corr^{-r}_H(Y,X)$. On a:
\[<\alpha,{}^t\beta>_{X\times Y} = \sum_{i\ge 0} (-1)^i \Tr(\beta\circ \alpha\mid H^i(X))\]
o\`u, dans le membre de droite, $\Tr(\beta\circ \alpha\mid H^i(X))$ d\'esigne la trace de $\beta\circ \alpha$ vu comme  endomorphisme de l'espace vectoriel $H^i(X)$.
\end{prop}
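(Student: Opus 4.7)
Mon plan consiste \`a me ramener au cas de tenseurs simples via la formule de K\"unneth, puis \`a calculer directement les deux membres. Par bilin\'earit\'e, il suffit de traiter le cas o\`u $\alpha=v\otimes w$ et $\beta=v'\otimes w'$, avec $v\in H^{2m-i}(X)(m)$, $w\in H^{i+r}(Y)$, $v'\in H^{2n-j}(Y)(n)$ et $w'\in H^{j-r}(X)$.

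D'apr\`es la formule \eqref{eq:expl}, $\alpha$ (\resp $\beta$) agit comme une application de rang un $x\mapsto\langle x,v\rangle w$ de $H^i(X)$ vers $H^{i+r}(Y)$ (\resp $y\mapsto\langle y,v'\rangle w'$ de $H^j(Y)$ vers $H^{j-r}(X)$), et est nulle en dehors de ces composantes. La compos\'ee $\beta\circ\alpha$ est donc nulle sauf lorsque $j=i+r$, auquel cas elle agit sur $H^i(X)$ par $x\mapsto\langle w,v'\rangle\langle x,v\rangle w'$. Sa trace, en tant qu'application de rang un, vaut $\langle w,v'\rangle\langle w',v\rangle$. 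Le membre de droite de l'identit\'e \`a prouver est donc \'egal \`a $(-1)^i\langle w',v\rangle\langle w,v'\rangle$ quand $j=i+r$, et \`a z\'ero sinon.

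Pour le membre de gauche, j'utiliserais le lemme \ref{l:tr} pour \'ecrire ${}^t\beta=\sigma_{Y,X}^*\beta$, ce qui s'exprime via K\"unneth sous la forme $(-1)^{(2n-j)(j-r)}w'\otimes v'$ dans $H^{j-r}(X)\otimes H^{2n-j}(Y)(n)$. Le cup-produit $\alpha\cdot{}^t\beta$ se d\'eveloppe alors par la multiplicativit\'e de K\"unneth, et l'axiome $\Tr_{X\times Y}=\Tr_X\otimes\Tr_Y$ r\'eduit le calcul \`a une expression de la forme
\[\langle\alpha,{}^t\beta\rangle_{X\times Y}=\epsilon\cdot\Tr_X(v\cdot w')\cdot\Tr_Y(w\cdot v'),\]
qui s'annule par raison de degr\'e sauf si $j=i+r$. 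Dans ce cas, en commutant $v$ et $w'$ dans $H^*(X)$ (commutativit\'e gradu\'ee), on fait appara\^\i tre les accouplements de Poincar\'e $\langle w',v\rangle_X$ et $\langle w,v'\rangle_Y$.

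Le point d\'elicat est la v\'erification que le signe global se r\'eduit bien \`a $(-1)^i$. Trois contributions de signe interviennent: la transposition de $\beta$ ($(-1)^{(2n-j)(j-r)}$), la multiplicativit\'e du cup-produit par rapport \`a K\"unneth ($(-1)^{(i+r)(j-r)}$), et la commutation gradu\'ee de $v$ et $w'$ dans $\Tr_X(v\cdot w')=(-1)^i\langle w',v\rangle$. Sous la contrainte $j=i+r$, un calcul modulo 2 montre que les deux premi\`eres contributions totalisent $i\cdot(2n-i-r+i+r)=2ni\equiv 0$, ne laissant subsister que le $(-1)^i$ issu de la troisi\`eme. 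Ceci donne l'\'egalit\'e des deux membres et conclut la d\'emonstration.
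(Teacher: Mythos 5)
Votre démonstration est correcte et suit essentiellement la même voie que celle du texte: réduction aux tenseurs simples par Künneth et bilinéarité, calcul de $\beta\circ\alpha$ comme application de rang un via \eqref{eq:expl}, puis calcul du membre de gauche par la multiplicativité de la trace et le suivi des signes de Koszul (votre vérification $2ni\equiv 0$ correspond au «~deux signes se compensent~» du texte). La seule différence est de présentation: vous gardez un degré $j$ général pour $\beta$ et constatez l'annulation hors de $j=i+r$, là où le texte se place d'emblée dans les degrés pertinents.
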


Dans le cas $\dim X=\dim Y=1$, cette formule remonte \`a Weil \cite[no 43, 66 et 68]{weil2bis}.

\begin{proof} C'est une trivialit\'e: par la formule de K\"unneth et par bilin\'earit\'e, on peut supposer $\alpha=v\otimes w$, $\beta=w'\otimes v'$ avec  $v\in H^{2m-i}(X)(m)$, $w\in H^{i+r}(Y)$, $w'\in H^{2n-i-r}(Y)(n)$, $v'\in H^i(X)$. Alors $\alpha$ (\resp $\beta$) s'annule en dehors de $H^i(X)$ (\resp de $H^{i+r}(Y)$)  et, pour $x\in H^i(X)$, $y\in H^{i+r}(Y)$, on a par \eqref{eq:expl} 
\[\alpha(x) = <x,v>w, \quad \beta(y) = <y,w'> v'\]
donc
\[\beta\circ \alpha(x) = <x,v><w,w'>v'\]
et 
\[\Tr(\beta\circ \alpha)= <v',v><w,w'>.\]

D'autre part, en utilisant \eqref{eq:expl2}:
\begin{multline*}<\alpha,{}^t\beta>_{X\times Y} = (-1)^{i(i+r)}<v\otimes w,v'\otimes w'>_{X\times Y} \\
=(-1)^{i(i+r)}\Tr_{X\times Y}( v\otimes w\cdot v'\otimes w')\\
=\Tr_{X\times Y}(v\cdot v'\otimes w\cdot w') \quad \text{(deux signes se compensent)}\\
=<v,v'><w,w'> = (-1)^{i(2m-i)} <v',v><w,w'>\\
 = (-1)^i \Tr(\beta\circ \alpha).
\end{multline*}
\end{proof}

\begin{rque}\phantomsection Nous verrons plus tard une autre d\'emonstration infiniment plus conceptuelle de la formule des traces! (théorème  \ref{t6.5} et lemme \ref{lA.6}).
\end{rque}

\begin{cor}\phantomsection\label{c3.2}
Si $E/k$ est finie, galoisienne de groupe $G$, la repr\'esentation
\[G\to \Aut_K H^0(E)\]
donn\'ee par l'action de Galois est la repr\'esentation r\'eguli\`ere de $G$.
\end{cor}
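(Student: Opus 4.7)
Je commencerais par observer que, par le lemme~\ref{l3.5}, $\dim_K H^0(E) = [E:k] = |G|$: la dimension est d\'ej\`a la bonne. Comme $K$ est de caract\'eristique z\'ero, les repr\'esentations d'un groupe fini sur $K$ sont semi-simples et d\'etermin\'ees \`a isomorphisme pr\`es par leur caract\`ere. Il suffit donc de v\'erifier que la trace de $g$ agissant sur $H^0(E)$ vaut $|G|$ pour $g=1$ et $0$ pour $g\ne 1$, ce qui est le caract\`ere de la repr\'esentation r\'eguli\`ere.

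Pour calculer cette trace, j'appliquerais la formule des traces de Lefschetz (proposition~\ref{p3.4}) avec $X=Y=\Spec E$, $\alpha = g^* \in \Corr^0_H(\Spec E,\Spec E)$ et $\beta$ \'egale \`a la correspondance identit\'e $\Delta_{\Spec E}$. Comme $\Spec E$ est de dimension $0$, seul $H^0$ contribue; de plus $\Delta_{\Spec E}$ est sa propre transpos\'ee, d'o\`u
\[\Tr(g^* \mid H^0(E)) = \langle g^*, \Delta_E\rangle_{E\times E} = \Tr_{E\times E}(g^* \cdot \Delta_E).\]

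Je ferais alors appel \`a la d\'ecomposition fournie par la th\'eorie de Galois, $E \otimes_k E \iso \prod_{h \in G} E$ via $a \otimes b \mapsto (a \cdot h(b))_{h \in G}$, d'o\`u $\Spec(E \otimes_k E) = \coprod_{h \in G} (\Spec E)_h$. Sous cette d\'ecomposition, la diagonale $\Delta_E$ s'identifie \`a la composante $h=1$ et la classe $g^*$ (celle du graphe transpos\'e de $g$) \`a la composante $h=g^{-1}$. Pour $g\ne 1$, ces deux composantes connexes sont disjointes; par l'axiome d'additivit\'e (iv) de la d\'efinition~\ref{d3.4} et la multiplicativit\'e du cup-produit, le produit $g^* \cdot \Delta_E$ s'annule, donc $\Tr(g^* \mid H^0(E)) = 0$. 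Pour $g=1$, on a $g^* \cdot \Delta_E = \Delta_E$ et l'axiome (viii) de la classe de cycle donne $\Tr_{E \times E}(\Delta_E) = \deg[\Delta_E] = [E:k] = |G|$.

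Le point d\'elicat sera d'identifier pr\'ecis\'ement la classe de cohomologie $g^* \in H^0(\Spec(E\otimes_k E))$ avec celle associ\'ee \`a la composante $(\Spec E)_{g^{-1}}$: il faut pour cela d\'erouler la d\'efinition du graphe comme image d'une immersion ferm\'ee et invoquer la fonctorialit\'e de la classe de cycle. Une fois cela \'etabli, la conclusion r\'esulte de la rigidit\'e des repr\'esentations de groupes finis en caract\'eristique z\'ero.
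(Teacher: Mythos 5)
Votre démonstration est correcte et suit essentiellement la même voie que celle du texte: la formule des traces de Lefschetz appliquée à $\Gamma_g$ et $\Delta_{\Spec E}$, la nullité de l'intersection pour $g\ne 1$, et la caractérisation de la représentation régulière par son caractère. Votre décomposition explicite $E\otimes_k E\iso \prod_{h\in G}E$ ne fait que détailler proprement l'affirmation de transversalité (en fait de disjonction) que le texte laisse implicite.
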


\begin{proof} En effet, soit $g\in G-\{1\}$: alors le graphe de $g:\Spec E\to \Spec E$ est transverse \`a celui de l'identit\'e, donc $\Tr(g)=\break <\Gamma_g,\Delta_{\Spec E}>=0$. Ceci caract\'erise la repr\'esentation r\'eguli\`ere.
\end{proof}

\subsubsection{Groupes de Chow: codimension et dimension} \index{Groupe de Chow} Pour la notion de groupe de Chow, voir Fulton \cite{fulton}. Si $X\in \V(k)$ et $i\ge 0$, on note $CH^i(X)$ (\resp $CH_i(X)$ le groupe des cycles de codimension (\resp de dimension) $i$ sur $X$, modulo l'\'equivalence rationnelle. Ils ont les propri\'et\'es suivantes:

\begin{enumerate}
\item Les $CH^*$ (\resp les $CH_*$) sont des foncteurs contravariants (\resp covariants) sur $\V(k)$.
\item On a des ``produits d'intersection''
\[CH^i(X)\times CH^j(X)\to CH^{i+j}(X).\]
\item On a un homomorphisme degr\'e
\[\deg:CH_0(X)\to \Z.\]
\item Si $X$ est purement de dimension $n$, on a
\[CH^i(X)=CH_{n-i}(X)\]
pour tout $i\in [0,n]$.
\end{enumerate}

La fonctorialit\'e covariante est facile, ainsi que la fonctorialit\'e contravariante pour les morphismes plats; pour les morphismes quelconques, elle est difficile, ainsi que l'existence des produits d'intersection.

\subsubsection{Correspondances alg\'ebriques} 

Au num\'ero \ref{s:hr1}, nous avons introduit les correspondances alg\'ebriques entre courbes. Cette th\'eorie se g\'en\'eralise aux vari\'et\'es projectives lisses quelconques.

\begin{defn}\phantomsection\label{d3.3} Une \emph{correspondance de Chow de degr\'e $r$} de $X^m$ vers $Y^n$ est un \'el\'ement de $CH^{m+r}(X\times Y)$. Notation: $\Corr^r(X,Y)$. Si $X$ n'est pas \'equidimensionnel, on \'etend par additivit\'e.
\end{defn}

Les correspondances se composent selon la formule sempiternelle \eqref{eq:comp-corr}. Elles op\`erent sur les groupes de Chow par la formule non moins sempiternelle \eqref{eq:act-corr}.

\begin{rque}\phantomsection\label{r3.1} \eqref{eq:act-corr} est le cas particulier de \eqref{eq:comp-corr} o\`u $X_1=\Spec k$.
\end{rque}

\begin{ex}\phantomsection  Si $f:X\to Y$ est un morphisme, son graphe $\Gamma_f$  d\'efinit une correspondance $f^*\in \Corr^0(Y,X)$.
\end{ex}

\begin{prop}\phantomsection\label{p3.2} a) La composition des correspondances est associative, et la classe de la diagonale est un \'el\'ement neutre \`a gauche et \`a droite.\\
b) si $X\by{f}Y\by{g} Z$ sont deux morphismes, on a $(g\circ f)^*=f^*\circ g^*$.\\
c) L'action de la correspondance $f^*$ sur les groupes de Chow \index{Groupe de Chow} est l'image r\'eciproque des cycles.
\end{prop}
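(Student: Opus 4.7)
La strat\'egie est de tout ramener aux propri\'et\'es standard du calcul intersectionnel sur les groupes de Chow: fonctorialit\'es plate et propre, formule de projection, et compatibilit\'e aux changements de base plats \cite{fulton}.

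Traitons d'abord c), qui servira de prototype. Pour $f:X\to Y$, le graphe $\Gamma_f\subset Y\times X$ est l'image de l'immersion ferm\'ee $i_f:X\inj Y\times X$, $x\mapsto (f(x),x)$, qui v\'erifie $p_X\circ i_f=1_X$ et $p_Y\circ i_f=f$. Partant de $y\in CH^*(Y)$, la formule \eqref{eq:act-corr} donne $f^*(y)=(p_X)_*(p_Y^*y\cdot [\Gamma_f])$. Comme $[\Gamma_f]=i_{f*}[X]$, la formule de projection fournit $p_Y^*y\cdot i_{f*}[X]=i_{f*}(i_f^*p_Y^*y)=i_{f*}(f^*y)$, et la fonctorialit\'e covariante donne $(p_X)_*i_{f*}(f^*y)=f^*y$.

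Pour b), il s'agit de v\'erifier $[\Gamma_{g\circ f}]=[\Gamma_f]\circ [\Gamma_g]$ dans $CH^*(Z\times X)$, pour $f:X\to Y$ et $g:Y\to Z$. Sur $W:=Z\times Y\times X$ muni des projections $p_{ij}$, on a $p_{12}^*[\Gamma_g]=[\Gamma_g\times X]$ et $p_{23}^*[\Gamma_f]=[Z\times \Gamma_f]$; ces deux sous-vari\'et\'es lisses s'intersectent transversalement le long de l'image de l'immersion ferm\'ee $j:X\inj W$, $x\mapsto (g(f(x)),f(x),x)$ (les dimensions se somment correctement, et la transversalit\'e se v\'erifie sur les espaces tangents). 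Donc $p_{12}^*[\Gamma_g]\cdot p_{23}^*[\Gamma_f]=j_*[X]$, puis $(p_{13})_*j_*[X]=(p_{13}\circ j)_*[X]=[\Gamma_{g\circ f}]$ puisque $p_{13}\circ j=i_{g\circ f}$.

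Pour a), l'associativit\'e r\'esulte du principe g\'en\'eral suivant: sur le produit quadruple $X_1\times X_2\times X_3\times X_4$ muni des projections $q_{ij}$, les deux compos\'ees $(\gamma_{34}\circ \gamma_{23})\circ \gamma_{12}$ et $\gamma_{34}\circ(\gamma_{23}\circ \gamma_{12})$ (avec $\gamma_{ij}\in \Corr^*(X_i,X_j)$) co\"\i ncident avec l'expression sym\'etrique
\[(q_{14})_*(q_{12}^*\gamma_{12}\cdot q_{23}^*\gamma_{23}\cdot q_{34}^*\gamma_{34}),\]
ce qui se v\'erifie par application r\'ep\'et\'ee du changement de base plat (les carr\'es de projections en jeu sont cart\'esiens) et de la formule de projection. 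Le fait que $\Delta_X$ soit \'el\'ement neutre s'obtient par la m\^eme m\'ethode que b): pour $\alpha\in \Corr^*(Y,X)$, on a $p_{23}^*[\Delta_X]=(1_Y\times \delta_X)_*[Y\times X]$ dans $CH^*(Y\times X\times X)$, et la formule de projection fournit
\[\Delta_X\circ \alpha = (p_{13})_*(1_Y\times\delta_X)_*(1_Y\times\delta_X)^*p_{12}^*\alpha=\alpha,\]
puisque $p_{13}\circ(1_Y\times\delta_X)$ et $p_{12}\circ(1_Y\times\delta_X)$ co\"\i ncident avec $1_{Y\times X}$. Le cas $\alpha\circ \Delta_X$ est sym\'etrique. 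L'obstacle principal est technique: hors du cadre transverse, les produits d'intersection requi\`erent la construction raffin\'ee de Fulton, mais les identit\'es souhait\'ees d\'ecoulent toujours des m\^emes manipulations formelles gr\^ace \`a la compatibilit\'e du produit d'intersection aux morphismes plats.
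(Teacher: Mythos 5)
Votre d\'emonstration est correcte et suit pour l'essentiel la voie que le texte indique sans la d\'etailler: la proposition y est simplement renvoy\'ee \`a \cite[\S 16.1]{fulton}, dont vous reproduisez les calculs (transversalit\'e des graphes, formule de projection, changement de base plat) directement dans la convention contravariante, au lieu de d\'eduire l'\'enonc\'e de la version covariante par transposition via le lemme \ref{l3.1} a). Les v\'erifications de transversalit\'e en b), l'identification des images r\'eciproques $p_{12}^*$ et $p_{23}^*$ des graphes, et le passage par l'expression sym\'etrique sur le produit quadruple en a) sont exacts.
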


\begin{meg}\phantomsection
Pour des raisons de compatibilit\'e avec la litt\'erature, on a chang\'e le sens de la composition des correspondances par rapport \`a \ref{s:hr1}! Plus pr\'ecis\'ement, il y a deux conventions pour la composition des correspondances alg\'ebriques. La convention covariante est celle o\`u le graphe d'un morphisme op\`ere de mani\`ere covariante: c'est celle de Weil, de Fulton \cite[ch. 16]{fulton} et de Voevodsky \cite[\S 2.1]{voetri}. \`A l'inverse, Grothendieck adopte la convention contravariante pour avoir la bonne compatibilit\'e avec la contravariance de la cohomologie. C'est aussi la convention de Kleiman \cite{kleiman-standard} ou Scholl \cite{scholl}, et celle que nous prenons ici. On passe d'une convention \`a l'autre de mani\`ere ``triviale'' mais parfois propice \`a s'embrouiller, \cf \cite[7.1]{kmp} et ce qui suit.
\end{meg}

La fa\c con la plus claire de comprendre la situation est de consid\'erer l'action sur les groupes de Chow: les correspondances ci-dessus sont adapt\'ees \`a la th\'eorie $CH^*$, tandis que les correspondances covariantes sont adapt\'ees \`a la th\'eorie $CH_*$.
Ainsi, on d\'efinit les \emph{correspondances covariantes de degr\'e $r$}
\[\Corr_r(X,Y)=CH_{m+r}(X\times Y)\]
(si $X$ est purement de dimension $m$; on passe de l\`a au cas g\'en\'eral par additivit\'e.)

Leur composition se d\'efinit encore par \eqref{eq:comp-corr}, et elles op\`erent sur $CH_*$ par \eqref{eq:act-corr}, la  remarque \ref{r3.1} restant valable. 

Voici deux mani\`eres de passer des correspondances contravariantes aux correspondances covariantes, et inversement:

\begin{lemme}\phantomsection\label{l3.1} a) L'\'echange des facteurs $\sigma_{X,Y}$ du lemme \ref{l:tr} induit des isomorphismes involutifs 
\[\Corr^r(X,Y)\iso \Corr_{-r}(Y,X)\iso \Corr^r(X,Y)\] 
appel\'es \emph{transposition} (notation: $\alpha\mapsto {}^t \alpha$). On a ${}^t(\beta\circ \alpha)={}^t \alpha \circ {}^t \beta$.\\
b) Si $X$ (\resp $Y$) est purement de dimension $m$ (\resp $n$), on a l'\'egalit\'e
\[\Corr^r(X,Y)=\Corr_{n-m-r}(X,Y).\]
\end{lemme}

\begin{ex}\phantomsection Si $f:X\to Y$ est un morphisme, son graphe $\Gamma_f$  d\'efinit  une correspondance covariante $f_*\in \Corr_0(X,Y)$, qui op\`ere sur les groupes de Chow\index{Groupe de Chow} $CH_*$ par l'image directe. On a ${}^t f^*=f_*$ et ${}^t f_* =f^*$.
\end{ex}

Dans \cite[\S 16.1]{fulton},  Fulton  d\'emontre la version covariante de la proposition \ref{p3.2}. On peut en d\'eduire cette derni\`ere via le lemme \ref{l3.1} a) (qui est facile), ou simplement en reproduisant les calculs de \emph{loc. cit.}

\subsubsection{Correspondances alg\'ebriques et cohomologiques}

La classe de cycle d\'efinit des homomorphismes
\[\cl:\Corr^r(X,Y)\to \Corr^{2r}_H(X,Y)(r)\]

Attention au changement de degré et au twist!

\begin{lemme}\phantomsection a) $\cl$ respecte la composition des correspondances.\\
b) Pour toute correspondance alg\'ebrique $\alpha$, on a
\[\cl({}^t \alpha) = {}^t\cl(\alpha).\]
En particulier, pour tout morphisme $f:X\to Y$, on a $\cl\circ f_* = f_*\circ \cl$ (image directe en cohomologie).
\end{lemme}

\begin{proof} a) d\'ecoule directement de la formule \eqref{eq:comp-corr} et des axiomes d'une cohomologie de Weil, tandis que b) d\'ecoule du lemme \ref{l:tr}.\index{Cohomologie!de Weil}
\end{proof}

\begin{cor}\phantomsection Supposons $X$ et $Y$ \'equidimensionnels, de dimensions respectives $m$ et $n$. Alors pour tout $i\ge 0$, le diagramme
\[\begin{CD}
CH^i(X)@>\cl^i_X>> H^{2i}(X)(i)\\
@V{f_*}VV @V{f_*}VV\\
CH^{i+n-m}(Y)@>\cl^{i+n-m}_Y>> H^{2i+2n-2m}(Y)(i+n-m)
\end{CD}\]
est commutatif.
\end{cor}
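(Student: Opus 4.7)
The plan is to deduce the corollary directly from the preceding lemma by recasting both vertical arrows as composition on the left with a single correspondence $f_*$, and then invoking compatibility of $\cl$ with composition. Concretely, I identify an element $\alpha\in CH^i(X)$ with a correspondence in $\Corr^i(\Spec k,X)$ (i.e. a map $\Spec k\to X$ in the category of Chow correspondences), and its cycle class $\cl^i_X(\alpha)\in H^{2i}(X)(i)$ with the corresponding element of $\Corr^{2i}_H(\Spec k,X)(i)$, following Remark \ref{r3.1}.

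First I would check that the algebraic pushforward $f_*:CH^i(X)\to CH^{i+n-m}(Y)$ is given by composition on the left with the correspondence $f_*={}^t\Gamma_f\in\Corr_0(X,Y)=\Corr^{n-m}(X,Y)$ acting via formula \eqref{eq:act-corr}; this is exactly the content of the example following Lemma \ref{l3.1}. In parallel, I would verify that the cohomological $f_*:H^{2i}(X)(i)\to H^{2i+2n-2m}(Y)(i+n-m)$ (defined earlier via Poincaré duality as the adjoint of $f^*$) coincides with composition on the left with the cohomological correspondence $f_*={}^t(f^*)\in\Corr^{2n-2m}_H(X,Y)(n-m)$; this follows from Lemma \ref{l:tr} combined with the definition of transposition of cohomological correspondences.

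With these two identifications in place, the commutativity of the square reduces to the equality
\[\cl\bigl(f_*\circ\alpha\bigr)=\cl(f_*)\circ\cl(\alpha)\]
for $\alpha\in\Corr^i(\Spec k,X)$. Part (a) of the preceding lemma supplies the compatibility of $\cl$ with composition, while part (b) of the same lemma yields $\cl(f_*)=f_*$ on the right-hand side, where the second $f_*$ is the cohomological correspondence. The two statements combined give exactly the desired commutativity.

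The only delicate point will be the combinatorial bookkeeping: the codimension shift from $i$ to $i+n-m$, the Tate twist by $n-m$, the identification $\Corr_0(X,Y)=\Corr^{n-m}(X,Y)$ of Lemma \ref{l3.1} b), and the contravariant-versus-covariant sign and twist conventions recorded in the ``\emph{mise en garde}'' preceding Lemma \ref{l3.1}. Once these are tracked carefully, no further geometric input (and in particular no reduction to special classes of morphisms such as closed immersions or projections) is needed.
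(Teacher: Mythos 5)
Your argument is correct and is exactly the route the paper intends: the corollary is stated without proof as an immediate consequence of the preceding lemma, and your unwinding — viewing $\alpha\in CH^i(X)$ as a correspondence from $\Spec k$ via the remark that \eqref{eq:act-corr} is the case $X_1=\Spec k$ of \eqref{eq:comp-corr}, then applying parts a) and b) of the lemma together with the identification $\cl(f^*)=f^*$ from the cycle-class axioms — is the intended argument. The degree and twist bookkeeping you flag is handled correctly.
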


\subsubsection{Premi\`eres applications de la formule des traces} En l'appliquant \`a la correspondance identique, on a d'abord:

\begin{prop}\phantomsection\label{p3.3} Pour tout $X\in \V(k)$, on a 
\[\chi(X)=\sum_{i\ge 0} (-1)^i \dim H^i(X)\] 
o\`u le membre de gauche est la caract\'eristique d'Euler-Poincar\'e de $X$, d\'efinie comme le nombre d'auto-intersection de la diagonale.\qed
\end{prop}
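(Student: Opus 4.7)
Le plan est d'appliquer la formule des traces de Lefschetz (proposition \ref{p3.4}) \`a la correspondance identit\'e, \ie\ avec $\alpha=\beta=\cl(\Delta_X)\in\Corr^0_H(X,X)$, o\`u $m=\dim X$. L'\'egalit\'e voulue appara\^\i tra en identifiant s\'epar\'ement les deux membres de cette formule aux deux expressions de $\chi(X)$.

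Pour le membre de droite, j'observe d'abord que $\cl(\Delta_X)$ agit comme l'identit\'e sur $H^*(X)$: en effet, $\Delta_X$ est l'\'el\'ement neutre pour la composition des correspondances alg\'ebriques (proposition \ref{p:comp}); puisque $\cl$ respecte la composition, $\cl(\Delta_X)$ est l'\'el\'ement neutre de $\Corr^0_H(X,X)$, qui via l'isomorphisme de la proposition \ref{p3.1} correspond \`a l'endomorphisme identit\'e de $H^*(X)$. Ainsi $\beta\circ\alpha=\cl(\Delta_X)$ agit par l'identit\'e sur chaque $H^i(X)$, d'o\`u $\Tr(\beta\circ\alpha\mid H^i(X))=\dim H^i(X)$ et le membre de droite vaut $\sum_i(-1)^i\dim H^i(X)$.

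Pour le membre de gauche, j'utilise que $\Delta_X$ est invariante par l'\'echange des facteurs $\sigma_{X,X}$, donc ${}^t\cl(\Delta_X)=\cl({}^t\Delta_X)=\cl(\Delta_X)$ par compatibilit\'e de $\cl$ avec la transposition. Le membre de gauche se r\'eduit alors \`a $<\cl(\Delta_X),\cl(\Delta_X)>_{X\times X}=\Tr_{X\times X}(\cl(\Delta_X)\cdot\cl(\Delta_X))$, o\`u $\cdot$ d\'esigne le cup-produit sur $H^*(X\times X)$. La compatibilit\'e de $\cl$ avec les produits, qui d\'ecoule via \eqref{eq:cupproduit} de la compatibilit\'e avec K\"unneth et de la contravariance par rapport \`a la diagonale, donne $\cl(\Delta_X)\cdot\cl(\Delta_X)=\cl(\Delta_X\cdot\Delta_X)$; enfin, l'axiome (viii) de la d\'efinition d'une cohomologie de Weil (compatibilit\'e de $\Tr_{X\times X}$ avec le degr\'e sur $CH_0(X\times X)$) fournit $\Tr_{X\times X}(\cl(\Delta_X\cdot\Delta_X))=\deg(\Delta_X\cdot\Delta_X)=\chi(X)$ par d\'efinition.

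Aucune difficult\'e s\'erieuse n'est \`a pr\'evoir: la d\'emonstration est purement formelle, r\'eduite \`a un assemblage des propri\'et\'es d\'ej\`a \'etablies des correspondances et de la classe de cycle. Le point le plus d\'elicat est peut-\^etre la v\'erification que $\cl(\Delta_X)$ s'identifie \`a l'endomorphisme identit\'e via la proposition \ref{p3.1}, mais elle d\'ecoule imm\'ediatement du fait que $\Delta_X$ est neutre pour la composition et que $\cl$ est fonctoriel pour celle-ci.
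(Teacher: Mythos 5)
Votre preuve est correcte et suit exactement la voie que le texte indique (et laisse au lecteur) : appliquer la formule des traces de Lefschetz \`a la correspondance identique $\cl(\Delta_X)$, puis identifier le membre de droite \`a $\sum(-1)^i\dim H^i(X)$ et le membre de gauche \`a $\deg(\Delta_X\cdot\Delta_X)$ via la compatibilit\'e de la classe de cycle avec la composition, le produit d'intersection et le degr\'e. Les d\'etails que vous explicitez (neutralit\'e de $\Delta_X$, sym\'etrie ${}^t\Delta_X=\Delta_X$, axiome (viii)) sont pr\'ecis\'ement ceux qui justifient le \og\qed\fg\ du texte.
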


\begin{cor}\phantomsection\label{l3.3} Soit $H^*$ une cohomologie de Weil. Alors, pour toute $k$-courbe projective lisse $C$, g\'eom\'etriquement connexe de genre $g$, on a
\[\dim H^1(C)=2g.\]
\end{cor}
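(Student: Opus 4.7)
Le plan est d'appliquer la proposition \ref{p3.3} en identifiant chacun des termes de la somme alternée, puis de calculer $\chi(C)$ à l'aide du théorème \ref{t:trid}.

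Première étape: déterminer $\dim H^i(C)$ pour $i\in\{0,2\}$. L'axiome (i) de la définition \ref{d3.4} garantit que $H^i(C)=0$ pour $i\notin[0,2]$. Comme $C$ est géométriquement connexe, son corps des constantes est $k$, donc l'axiome (v) combiné à l'axiome (ii) donne un isomorphisme $H^0(C)\simeq H^0(k)=K$, d'où $\dim H^0(C)=1$. De plus, $C$ étant géométriquement connexe, l'axiome (vii) affirme que $\Tr_C:H^2(C)\to K(-1)$ est un isomorphisme, ce qui donne $\dim H^2(C)=1$ (on pourrait aussi invoquer la non-dégénérescence de l'accouplement de Poincaré entre $H^0(C)$ et $H^2(C)$).

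Deuxième étape: calculer $\chi(C)$. Par la proposition \ref{p3.3}, $\chi(C)$ est par définition le nombre d'auto-intersection $I(\Delta_C,\Delta_C)$ de la diagonale sur $C\times C$. Comme $\Delta_C$ est le graphe de l'identité, on a $d(\Delta_C)=d'(\Delta_C)=1$. Le théorème \ref{t:trid} (qui repose lui-même sur Riemann-Roch pour les courbes) affirme que $\sigma(\Delta_C)=2g$, et la définition \ref{d:trace} donne alors
\[I(\Delta_C,\Delta_C)=d(\Delta_C)+d'(\Delta_C)-\sigma(\Delta_C)=2-2g.\]

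Troisième étape: conclusion. En combinant les deux étapes précédentes avec la proposition \ref{p3.3}, on obtient
\[2-2g=\chi(C)=\dim H^0(C)-\dim H^1(C)+\dim H^2(C)=2-\dim H^1(C),\]
d'où $\dim H^1(C)=2g$.

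L'étape la plus délicate conceptuellement est l'identification $\chi(C)=I(\Delta_C,\Delta_C)=2-2g$: elle nécessite à la fois la définition de $\chi$ comme nombre d'auto-intersection de la diagonale (implicite dans la proposition \ref{p3.3}) et le calcul géométrique de Weil via le théorème \ref{t:trid}. Le reste de la preuve n'est qu'une application directe des axiomes d'une cohomologie de Weil; remarquons que cette démonstration ne fait aucune hypothèse sur la caractéristique de $k$ ni sur la cohomologie de Weil choisie.
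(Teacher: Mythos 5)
Votre démonstration est correcte et suit essentiellement la même voie que celle du texte : on tire $\chi(C)=I(\Delta_C,\Delta_C)=2-2g$ du théorème \ref{t:trid} (via la définition \ref{d:trace}), on applique la proposition \ref{p3.3}, et on identifie $\dim H^0(C)=\dim H^2(C)=1$ grâce aux axiomes d'une cohomologie de Weil. Vous explicitez simplement davantage les étapes (notamment le rôle de l'axiome (vii) pour $H^2$) que ne le fait le texte, qui se contente de renvoyer à l'axiome (v).
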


\begin{proof} D'apr\`es (la d\'emonstration du) th\'eor\`eme \ref{t:trid}, on a $\chi(C)=2-2g$.
En appliquant la proposition \ref{p3.3}, on trouve donc
\[2-2g=\dim H^0(C) -\dim H^1(C) +\dim H^2(C)\]
d'o\`u le corollaire en utilisant l'axiome (v).
\end{proof}

\subsubsection{Cohomologies de Weil classiques}\label{3.5.9} L'exemple primordial de cohomologie de Weil\index{Cohomologie!de Weil!classique} est la \emph{cohomologie de Betti}\index{Cohomologie!de Betti}, quand $k=\C$:
\[H^i_B(X)=H^i(X(\C),\Q)\]
o\`u le membre de droite est la cohomologie singuli\`ere. \`A l'\'epoque o\`u Weil a fait ses conjectures, c'\'etait la seule connue, en caract\'eristique z\'ero.

En caract\'eristique $p$, on ne peut pas esp\'erer de cohomologie de Weil \`a coefficients dans $\Q$, ni m\^eme dans $\R$ (observation de Serre). L'obstruction est le corollaire \ref{l3.3} appliqu\'e \`a une courbe elliptique \emph{supersinguli\`ere} $E$. 

Par d\'efinition, une courbe elliptique sur $k$ est supersinguli\`ere si $E(\bar k)$ n'a pas de $p$-torsion, o\`u $p=\car k$: d'apr\`es  Deuring \cite[p. 198]{deuring2}, ces courbes existent toujours sur les corps finis, et l'anneau $\End^0(E_{\bar k})$ est un corps de quaternions de centre $\Q$, ramifi\'e exactement en $p$ et $\infty$. Or $\End^0(E)\otimes K$ op\`ere sur $H^1(E)$ via l'action des correspondances. Une alg\`ebre de quaternions de centre $K$ ne peut op\'erer sur un $K$-espace vectoriel de dimension $2$ que si elle est d\'eploy\'ee\dots

La cohomologie \'etale a donn\'e naissance \`a la \emph{cohomologie $l$-adique}\index{Cohomologie!$l$-adique}
\[H^i_l(X)=H^i(X_{\bar k},\Q_l):= \plim H^i_\et(X_{\bar k},\Z/l^n)\otimes_{\Z_l} \Q_l\]
pour tout nombre premier $l\ne \car k$. Ses coefficients sont $\Q_l$. En caract\'eristique z\'ero, on a aussi la \emph{cohomologie de de Rham alg\'ebrique}\index{Cohomologie!de de Rham}
\[H^i_\dR(X)=\bH^i_\Zar(X,\Omega^*_{X/k})\]
hypercohomologie de Zariski du complexe des formes diff\'erentielles de K\"ahler. Ses coefficients sont $k$. Sur un corps parfait $k$ de caract\'eristique $p$, on a la \emph{cohomologie cristalline}\index{Cohomologie!cristalline}
\[H^i_\cris(X)\]
dont les coefficients sont le corps des fractions de $W(k)$, l'anneau des vecteurs de Witt de $k$. Si $k=\F_q$ et si $E$ est une courbe elliptique supersinguli\`ere, alors $\End^0(E)=\End^0(E_{\bar k})$ seulement si $q=p^n$ avec $n$ pair; dans ce cas, le corps des fractions de $W(k)$ est une extension de degr\'e pair de $\Q_p$, qui d\'eploie bien $\End^0(E)$!

Ces cohomologies sont parfois appel\'ees \emph{cohomologies de Weil classiques}.

\subsection{Preuve de certaines conjectures de Weil}\index{Conjectures!de Weil}

 L'application la plus importante de la formule des traces est:

\begin{thm}\phantomsection\label{t3.6} S'il existe une cohomologie de Weil\index{Cohomologie!de Weil} sur $\V(\F_q)$ \`a coefficients dans $K$, alors,  parmi les conjectures de Weil,  sont vraies la rationalit\'e, l'\'equation fonctionnelle et la factorisation (en polyn\^omes \`a coefficients dans $K$).\index{Equation fonctionnelle@\'Equation fonctionnelle}
\end{thm}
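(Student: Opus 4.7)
The plan is to apply the Lefschetz trace formula (Proposition \ref{p3.4}) to powers of the Frobenius endomorphism. Fix $X \in \V(\F_q)$ of dimension $n$, assumed geometrically connected (the general case reduces to this by axiom (iv)), and let $F = F_X : X \to X$. Exactly as in \S\ref{s2.8.7}, $F^m$ is radiciel for every $m \geq 1$, so its graph $\Gamma_{F^m}$ meets $\Delta_X$ transversally in $|X(\F_{q^m})|$ points. Applying Proposition \ref{p3.4} with $\alpha = \cl(\Delta_X)$ and $\beta = \cl(\Gamma_{F^m}) = F^{*m}$, and using the compatibility of $\cl$ with intersection (axiom (viii)) and with composition of correspondences, one obtains the key identity
\[|X(\F_{q^m})| = \sum_{i=0}^{2n} (-1)^i \Tr\bigl(F^{*m}\mid H^i(X)\bigr).\]

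Substituting this into $\log Z(X,t) = \sum_{m\ge 1} |X(\F_{q^m})|\,t^m/m$ (Proposition \ref{p4}\,c)) and using the formal identity $-\log \det(1 - tu) = \sum_{m\ge 1} \Tr(u^m)\,t^m/m$ on each $H^i$, I get at once the factorization
\[Z(X,t) = \prod_{i=0}^{2n} P_i(t)^{(-1)^{i+1}}, \qquad P_i(t) := \det\bigl(1 - tF^* \mid H^i(X)\bigr) \in K[t],\]
which is precisely the rationality and factorization claim (with coefficients in $K$). To identify the boundary factors: by axioms (iv) and (v), $H^0(X) = K$ with $F^*$ acting as the identity, so $P_0(t) = 1-t$; and since $F$ is purely inseparable of degree $q^n$, the relation $\cl \circ F^* = F^* \circ \cl$ applied to a closed point shows that $F^*$ acts on the one-dimensional space $H^{2n}(X) = K(-n)$ as multiplication by $q^n$, whence $P_{2n}(t) = 1-q^n t$.

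For the functional equation, I invoke Poincaré duality (axiom (vii)). Functoriality of the cup product, together with the fact that $F^*$ acts as $q^n$ on $H^{2n}$, gives $\langle F^*x,\, F^*y\rangle = q^n \langle x,y\rangle$ on the Poincaré pairing, so the eigenvalues of $F^*$ on $H^{2n-i}(X)$ are obtained from those on $H^i(X)$ by $\alpha \mapsto q^n/\alpha$. This translates into the polynomial identity
\[P_{2n-i}(t) = \epsilon_i^{-1} (-q^n t)^{b_i}\, P_i\bigl(1/q^n t\bigr), \qquad b_i := \dim H^i(X),\ \epsilon_i := \det\bigl(F^*|H^i\bigr).\]
Substituting into the factorization, using Poincaré symmetry $b_{2n-i} = b_i$ and the relation $\epsilon_i\,\epsilon_{2n-i} = q^{n b_i}$ (paired with $\epsilon_n^2 = \pm q^{nb_n}$ from Poincaré on the middle $H^n$) to collapse the product $\prod_i \epsilon_i^{(-1)^{i+1}}$ to $\pm q^{-n\chi/2}$, I arrive at
\[Z(X, 1/q^n t) = \pm q^{n\chi/2}\, t^\chi\, Z(X,t), \qquad \chi = \sum_i (-1)^i b_i.\]
The only non-routine step is the sign and $q$-power bookkeeping in this last paragraph; the genuinely hard part of the Weil conjectures---the Riemann hypothesis $|\alpha| = q^{i/2}$ for the eigenvalues of $F^*$ on $H^i(X)$---is not accessible by this purely formal argument and is correctly omitted from the present statement.
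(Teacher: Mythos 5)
Your proof is correct and follows essentially the same route as the paper: the Lefschetz trace formula applied to powers of Frobenius yields the factorization $Z(X,t)=\prod_i \det(1-tF^*\mid H^i(X))^{(-1)^{i+1}}$, and Poincar\'e duality together with the fact that $F^*$ acts by $q^n$ on $H^{2n}$ gives the functional equation. The only (harmless) divergence is your justification of that last fact via the cycle class of a point and the degree axiom, where the paper instead passes through a dominant rational map to $(\P^1)^n$; the paper also adds a remark, not needed for the statement as phrased, that a Hankel-determinant criterion upgrades $K(t)$-rationality of $Z(X,t)$ to $\Q(t)$-rationality.
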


\begin{proof} Soit $X\in \V(k)$, g\'eom\'etriquement connexe de dimension $n$. Appliquons la formule des traces aux puissances de l'endomorphisme de Frobenius $\pi_X$, vu comme correspondance cohomologique:
\[<\Delta_X,(\pi_X^r)^*>_{X\times X} = \sum_{i=0}^{2n} (-1)^i\Tr((\pi_X^r)^*\mid H^i(X)). \]
Comme au \S \ref{s2.8.7}, le membre de gauche est \'egal \`a $|X(\F_{q^r})|$. La rationalit\'e et la factorisation de $Z(X,t)$ r\'esultent alors de l'identit\'e suivante:
\begin{equation}\label{eq3.3} 
\exp\left(\sum_{r=0}^\infty \frac{\Tr(f^r)}{r}t^r \right) = \frac{1}{\det(1-ft\mid V)}
\end{equation}
o\`u $f$ est un endomorphisme d'un $K$-espace vectoriel $V$ de dimension finie,
\footnote{Pour d\'emontrer cette identit\'e, on peut se ramener au cas o\`u $K$ est alg\'ebriquement clos, puis mettre $f$ sous forme triangulaire, et enfin se ramener au cas o\`u $\dim V=1$ et $f$ est un scalaire.} d'o\`u
\begin{equation} \label{eq3.4} 
Z(X,t) = \prod_{i=0}^{2n} P_i(t)^{(-1)^{i+1}},\quad P_i(t)=\det(1-\pi_X^*t\mid H^i(X)).
\end{equation}

 On voit ainsi que les ``nombres de Betti''\index{Nombres!de Betti} de Weil sont bien donn\'es par
\[B_i=\dim H^i(X).\]

Ceci donne que $Z(X,t)$ est une fraction rationnelle \emph{a priori} \`a coefficients dans $K$; mais un crit\`ere de rationalit\'e pour les s\'eries formelles, utilisant les d\'eterminants de Hankel \cite[Prop. 5.2.1]{amice}, 
permet de voir que $\Q[[t]]\cap K(t)=\Q(t)$. Par contre il ne permet pas de montrer que les $P_i(t)$ sont individuellement dans $\Q[t]$, faute de savoir s\'eparer leurs racines.

Pour obtenir l'\'equation fonctionnelle, on utilise la dualit\'e de Poincar\'e: l'op\'erateur $\pi_X^*$ est presque unitaire relativement \`a l'accouplement de Poincar\'e. Plus pr\'ecis\'ement, en comparant \eqref{eq3.4} pour $X=\P^1$ avec la formule de la proposition \ref{p4} d), on trouve que $\pi_{\P^1}^*$ op\`ere sur l'espace vectoriel de dimension $1$ $H^2(\P^1)$ par multiplication par $q$. Par l'axiome (vii) des cohomologies de Weil, $\pi_X^*$ op\`ere donc sur $H^{2n}(X)$ par multiplication par $q^n$.\footnote{Pour justifier ceci, choisissons une application rationnelle dominante $f:X\tto (\P^1)^n$. Le graphe de $f$ induit un isomorphisme $f_*:H^{2n}(X)\iso H^{2n}((\P^1)^n)\simeq H^2(\P^1)^{\otimes n}$, et $\pi_X^*,\pi_{(\P^1)^n}^*$ commutent \`a $f_*$ tandis que le dernier isomorphisme transforme $\pi_{(\P^1)^n}^*$ en $(\pi_{\P^1}^*)^{\otimes n}$.  On donnera plus tard (proposition \ref{l6.3}) une d\'emonstration plus raisonnable de ce fait.} On a donc, pour $x\in H^i(X)$ et $y\in H^{2n-i}(X)$:
\begin{equation}\label{eq3.5}
<\pi_X^*x,\pi_X^*y>=\Tr_X(\pi_X^*(x\cdot y)) = q^n\Tr_X(x\cdot y)=q^n<x,y>.
\end{equation}

On d\'eduit de ceci:
\begin{multline}
P_{2n-i}(1/q^nt)= \det(1-\pi_X^*/q^nt\mid H^{2n-i}(X))\\= \det(1-(\pi_X^*)^{-1}t^{-1}\mid H^{i}(X))
=\det(\pi_X^*\mid H^{i}(X))^{-1}(-t)^{-B_{i}} P_{i}(t).\label{eq3.6}
\end{multline}

Mais en r\'eappliquant \eqref{eq3.5}, on trouve
\[\det(\pi_X^*\mid H^{i}(X))\det(\pi_X^*\mid H^{2n-i}(X))=q^{nB_i}.\]

En collectant \eqref{eq3.6} pour $i=0,\dots,2n$, en notant que $B_i=B_{2n-i}$ par dualit\'e de Poincar\'e et en tenant compte de la proposition \ref{p3.3}, on obtient l'\'equation fonctionnelle d\'esir\'ee:
\begin{equation}\label{eq3.7}
Z(X,1/q^nt)=\epsilon q^{n\chi/2} (-t)^\chi Z(X,t)
\end{equation}
 avec un signe $\epsilon$ qui provient du groupe de cohomologie central $H^n(X)$.
\end{proof}

\begin{rque}\phantomsection\label{r3.6}  Ce signe peut \^etre \'egal \`a $-1$! Un exemple simple est l'\'eclat\'e de $\P^2_{\F_q}$ en le point ferm\'e quadratique $\{[0:1:\sqrt{d}],[0:1:-\sqrt{d}]\}$ o\`u $d$ n'est pas un carr\'e dans $\F_q$. \cf \cite[ex. 14.6 c)]{kahn-zeta}. (Comme Serre me l'a fait remarquer, un exemple encore plus simple est celui de la quadrique lisse de dimension $2$ et de discriminant $d$ sur $\F_q$.)
\end{rque}

\begin{exo}\phantomsection\label{exo3.2} Montrer que l'entier $n\chi$ de l'\'equation fonctionnelle\index{Equation fonctionnelle@\'Equation fonctionnelle} \eqref{eq3.7} est toujours pair.
\end{exo}

\subsubsection{Les autres conjectures de Weil}\label{3.6.1}

Deux de ces conjectures ne sont pas touch\'ees par le formalisme des cohomologies de Weil\index{Cohomologie!de Weil}: la valeur des nombres de Betti\index{Nombres!de Betti} quand $X$ provient de caract\'eristique z\'ero, et l'hypoth\`ese de Riemann.\index{Hypothèse de Riemann!pour les variétés projectives lisses}

La premi\`ere a \'et\'e d\'emontr\'ee gr\^ace aux propri\'et\'es de la cohomologie $l$-adique\index{Cohomologie!$l$-adique}:

\begin{description}
\item[Th\'eor\`eme de comparaison Betti-$l$-adique (M. Artin)]\index{Cohomologie!de Betti} Si $X\in \V(\C)$, on a des isomorphismes canoniques et fonctoriels en $X$
\[H^i_B(X)\otimes_\Q \Q_l\simeq H^i_l(X).\]
\item[Invariance par extension s\'eparablement close] Si $K/k$ est une extension de corps s\'eparablement clos de caract\'eristique $\ne l$, alors pour toute $X\in\V(k)$,
\[H^i_l(X)\iso H^i_l(X_K).\]
\item[Changement de base propre et lisse] \label{pel} Soit $A$ un anneau de valuation discr\`ete, de corps des fractions $K$ et de corps r\'esiduel $k$. Soit $\sX$ un $A$-sch\'ema projectif lisse, de fibre g\'en\'erique $X$ et de fibre spéciale $X_0$. Alors on a un isomorphisme canonique et fonctoriel en $\sX$:
\[H^i_l(X)\simeq H^i_l(X_0).\]
\end{description}

Pour la seconde conjecture, la strat\'egie naturelle serait de prouver une g\'en\'eralisation du ``lemme important'' de Weil:
\[\Tr({}^t \pi_X\circ \pi_X)>0.\]

Cela a un sens sur $H^n(X)$ si $\dim X=n$, mais sur les autres $H^i$? Serre donne une r\'eponse \`a l'aide d'une polarisation dans le cadre des vari\'et\'es k\"ahl\'eriennes en utilisant le th\'eor\`eme de l'indice de Hodge (en th\'eorie de Hodge) \cite{serre-kahler}. Ces arguments \'evoquent irr\'esistiblement l'id\'ee de Hilbert et P\'olya pour d\'emontrer l'hypoth\`ese de Riemann classique\dots

L'id\'ee de Serre donne naissance aux \emph{conjectures standard} de Grothendieck et Bombieri \index{Conjectures!standard} \cite{grothendieck-standard}, programme pour d\'emontrer l'hypoth\`ese de Riemann sur les corps finis. Malheureusement c'est une impasse: les conjectures standard ne sont toujours pas d\'emontr\'ees \`a l'heure actuelle!

C'est Deligne qui d\'emontre finalement la derni\`ere conjecture de Weil en 1974 \cite{weilI}. Il en donne une seconde d\'emonstration dans \cite{weilII}. Depuis, au moins deux autres d\'emonstrations ont \'et\'e donn\'ees: celles de Laumon \cite{laumon} et de Kedlaya \cite{kedlaya}.

Remarquons seulement ici:

\begin{lemme}\phantomsection\label{l3.4} Si $X$ v\'erifie la derni\`ere conjecture de Weil, les polyn\^omes $P_i$ sont premiers entre eux deux \`a deux, \`a coefficients entiers et ind\'ependants du choix de $l$.
\end{lemme}

\begin{proof} La premi\`ere affirmation est \'evidente, puisque les racines de $P_i$ et de $P_j$ ont des valeurs absolues complexes diff\'erentes si $i\ne j$. Comme $Z(X,t)\in \Q(t)$, le groupe $\Gal(\bar \Q/\Q)$ permute l'ensemble de ses z\'eros et p\^oles; toujours \`a cause de l'hypoth\`ese de Riemann\index{Hypothèse de Riemann!pour les variétés projectives lisses}, il laisse stable les racines de $P_i$ pour tout $i$, donc $P_i\in \Q[t]$. Une telle factorisation de $Z(X,t)$ ne d\'epend pas du choix de $l$.

Enfin, si les racines inverses des $P_i$ sont des entiers alg\'ebriques, les $P_i$ sont \`a coefficients entiers. 
\end{proof}

\subsection{Le th\'eor\`eme de Dwork}

\begin{thm}[Bernard Dwork, \protect{\cite{dwork}}]\phantomsection Soit $X$ un sch\'ema de type fini sur $\F_q$. Alors $Z(X,t)\in \Q(t)$.
\end{thm}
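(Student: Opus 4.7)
Le plan est de reproduire la strat\'egie originelle de Dwork, qui consiste \`a exprimer $Z(X,t)$ comme quotient de deux fonctions $p$-adiquement enti\`eres, puis \`a invoquer un crit\`ere de rationalit\'e pour passer de cette m\'eromorphie $p$-adique (combin\'ee \`a un rayon de convergence archim\'edien non nul) \`a la rationalit\'e sur $\Q$.

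\textbf{\'Etape 1: r\'eductions.} La proposition \ref{p4} b), combin\'ee \`a l'identit\'e $\zeta(X,s)=\zeta(U,s)\zeta(Z,s)$ pour $U$ ouvert de $X$ et $Z$ son ferm\'e compl\'ementaire, permet par r\'ecurrence sur $\dim X$ de se ramener au cas o\`u $X$ est un $\F_q$-sch\'ema affine. Un argument d'inclusion-exclusion standard (analogue au point 1 de la preuve du th\'eor\`eme \ref{t:abs}) r\'eduit de plus au cas o\`u $X$ est  le compl\'ementaire, dans $\A^n_{\F_q}$, d'une r\'eunion finie d'hypersurfaces, ce qui finalement ram\`ene, via la proposition \ref{p4} e) et nouvelle inclusion-exclusion, au cas de la vari\'et\'e $V(f)^c\subset \A^n_{\F_q}$ pour un polyn\^ome $f$.

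\textbf{\'Etape 2: sommes de caract\`eres.} Fixons un caract\`ere additif non trivial $\Psi_0:\F_p\to \overline{\Q}_p^*$, et posons $\Psi_s=\Psi_0\circ \Tr_{\F_{q^s}/\F_p}$. L'orthogonalit\'e des caract\`eres donne pour tout $s\ge 1$
\[|V(f)(\F_{q^s})|=q^{-s}\sum_{y\in \F_{q^s}}\sum_{x\in \F_{q^s}^n}\Psi_s(yf(x)),\]
et il suffit donc de contr\^oler les sommes $S_s=\sum_{y\in \F_{q^s}^*,x\in \F_{q^s}^n}\Psi_s(yf(x))$ uniform\'ement en $s$.

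\textbf{\'Etape 3: op\'erateur de Dwork et fonctions enti\`eres $p$-adiques.} Dwork construit une \emph{fonction de d\'ecoupage} (splitting function) $\theta(z)=\exp(\pi(z-z^p))\in 1+z\Q_p(\pi)[[z]]$, o\`u $\pi$ v\'erifie $\pi^{p-1}=-p$, qui converge sur un disque de rayon strictement $>1$ et telle que $\theta(a)=\Psi_0(\bar a)$ pour $a$ rel\`evement de Teichm\"uller de $\bar a\in\F_p$. Par multiplicativit\'e et passage \`a l'extension $\F_{q^s}/\F_p$, la somme $S_s$ se r\'ecrit comme somme sur les rel\`evements de Teichm\"uller, et la \emph{formule des traces de Dwork} l'exprime sous la forme
\[S_s=(q^s-1)^n\,\Tr(\alpha^s\mid B)\]
o\`u $\alpha$ est un op\'erateur compos\'e ``Frobenius $\circ$ multiplication par $\theta(f)\theta(y f)$'' agissant de mani\`ere \emph{compl\`etement continue} (au sens de Serre) sur un certain espace de Banach $p$-adique $B$ de s\'eries de Laurent convergeant sur un polydisque bien choisi. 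L'identit\'e formelle \eqref{eq3.3}, valide pour les op\'erateurs compacts (les traces et d\'eterminants de Fredholm correspondants \'etant des fonctions enti\`eres sur $\C_p$), combin\'ee \`a un d\'eveloppement binomial de $(q^s-1)^n$, exprime alors $Z(V(f)^c,t)$ comme un produit altern\'e de d\'eterminants caract\'eristiques $\det(1-t\alpha_i\mid B_i)^{\pm 1}$, donc comme fonction $p$-adiquement m\'eromorphe sur tout $\C_p$.

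\textbf{\'Etape 4: crit\`ere de Borel-Dwork.} Par construction, $Z(X,t)\in \Z[[t]]$ et admet un rayon de convergence archim\'edien au moins $q^{-\dim X}>0$ (th\'eor\`eme \ref{t:abs}). D'apr\`es l'\'etape 3, $Z(X,t)$ est $p$-adiquement m\'eromorphe sur tout $\C_p$. Le crit\`ere de Borel-Dwork (\cf \cite[Th. 5.4.6]{amice}) affirme qu'une s\'erie formelle \`a coefficients rationnels, m\'eromorphe dans un disque archim\'edien de rayon $>0$ et $p$-adiquement m\'eromorphe sur un disque de rayon $r$ avec $r\cdot R>1$ ($R$ le rayon archim\'edien), est n\'ecessairement rationnelle; ici $R>0$ et $r=+\infty$, ce qui conclut.

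L'obstacle principal r\'eside clairement dans l'\'etape 3: la mise en place rigoureuse de l'espace de Banach $p$-adique appropri\'e, la v\'erification du caract\`ere compl\`etement continu de l'op\'erateur de Dwork, et l'\'etablissement de la formule des traces reliant les sommes exponentielles finies aux traces de l'op\'erateur $p$-adique. C'est l'apport technique essentiel de \cite{dwork}, rendu possible par l'introduction de la fonction de d\'ecoupage $\theta$ et par l'extension par Serre \cite{serre-dwork} du formalisme des op\'erateurs compacts au cadre non archim\'edien.
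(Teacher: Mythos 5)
Votre proposition est correcte et suit essentiellement la m\^eme strat\'egie que le texte (qui reproduit l'expos\'e Bourbaki de Serre): r\'eduction par d\'evissage au compl\'ementaire d'une hypersurface dans l'espace affine, expression de $Z(X,t)$ comme produit altern\'e de d\'eterminants de Fredholm d'op\'erateurs compl\`etement continus $p$-adiques (d'o\`u la m\'eromorphie sur tout $\C_p$), puis application du crit\`ere de Borel-Dwork avec $r_p=+\infty$ et $r_\infty\ge q^{-\dim X}$. Vous d\'etaillez davantage l'\'etape des sommes de caract\`eres et de la fonction de d\'ecoupage $\theta$, que le texte passe sous silence, mais l'argument est le m\^eme.
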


Une description tr\`es agr\'eable de la d\'emonstration de Dwork est donn\'ee par Serre dans son expos\'e Bourbaki \cite{serre-dwork}. J'en reproduis les points principaux:

a) Par des d\'evissages standard (\cf preuve du th\'eor\`eme  \ref{t:abs}),  Dwork se r\'eduit au cas d'une hypersurface ``torique'' (intersection d'une hypersurface de $\A^n$ et de l'ouvert $\prod t_i\ne 0$). On peut de plus supposer que $q=p$.

b) Il g\'en\'eralise un joli th\'eor\`eme d'\'Emile Borel \cite{eborel}:

\begin{thm}[th\'eor\`eme de Borel-Dwork]\label{tbd} \phantomsection Soit $f(t)=\sum_{n=0}^\infty a_nt^n$ une s\'erie enti\`ere \`a coefficients entiers. Supposons qu'il existe deux nombres r\'eels $r_\infty,r_p>0$ tels que
\begin{thlist}
\item $f$ soit méromorphe dans le disque $|t|<r_\infty$ de $\C$. 
\item $f$ soit méromorphe dans le disque $|t|<r_p$ de $\C_p$.
\item $r_\infty r_p>1$. 
\end{thlist}
Alors $f\in \Q(t)$.
\end{thm}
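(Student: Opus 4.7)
The plan is to apply the classical Hankel-determinant criterion: a power series $f = \sum a_n t^n \in \Z[[t]]$ lies in $\Q(t)$ if and only if there exists $M \ge 0$ such that the Hankel determinants $\Delta_{M,s} := \det(a_{s+i+j})_{0 \le i,j \le M}$ vanish for all $s$ sufficiently large (this is a Kronecker-type refinement of the criterion cited as \cite[Prop.~5.2.1]{amice} in the text). Since each $\Delta_{M,s}$ is an integer, the product formula---equivalently, the fact that any nonzero integer $n$ satisfies $|n|_\infty \cdot |n|_p \ge 1$---reduces the task to the combined analytic bound
\[ |\Delta_{M,s}|_\infty \cdot |\Delta_{M,s}|_p < 1 \quad \text{for all large $s$.}\]

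First, pick auxiliary radii $r'_\infty < r_\infty$ and $r'_p < r_p$ with $r'_\infty r'_p > 1$, which is possible by (iii) and continuity. At each place $v \in \{\infty,p\}$, meromorphy in $|t|_v < r_v$ yields a factorization $f = g_v/h_v$ in which $h_v \in \C_v[t]$ is a polynomial with $h_v(0)=1$ of degree $M_v$ equal to the number of poles of $f$ in the closed disk $|t|_v \le r'_v$ (counted with multiplicity), while $g_v = \sum b^{(v)}_n t^n$ is $v$-analytic on $|t|_v < r'_v$ with Cauchy estimate $|b^{(v)}_n|_v \le K_v (r'_v)^{-n}$. Set $M = \max(M_\infty, M_p)$ and write $h_v(t) = \sum_j c^{(v)}_j t^j$; the identity $h_v f = g_v$ rewrites as the near-recurrence $\sum_j c^{(v)}_j a_{n-j} = b^{(v)}_n$, exhibiting $M_v+1$ consecutive columns of any Hankel matrix as linearly dependent up to an error vector whose $v$-norm decays like $(r'_v)^{-s}$.

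Use this to estimate $|\Delta_{M,s}|_v$ by iterated column operations on the $(M+1)\times (M+1)$ Hankel matrix: each application of the near-recurrence replaces one column by the error vector $\vec b^{(v)}$ without altering the determinant, and $M+1-M_v$ such moves yield a bound of the form $|\Delta_{M,s}|_v \le D_v (r'_v)^{-s}$ (after absorbing polynomial-in-$s$ factors into the constants $D_v$). The constants are controlled by integrality at $p$ (which gives $|a_n|_p \le 1$) and by the complex growth $|a_n|_\infty = O(\rho^{-n})$ for some $\rho > 0$ at $\infty$, from the fact that $f$ is analytic at the origin. Multiplying at the two places,
\[ |\Delta_{M,s}|_\infty \cdot |\Delta_{M,s}|_p \le D_\infty D_p \cdot (r'_\infty r'_p)^{-s} \longrightarrow 0 \quad (s \to \infty),\]
forcing $\Delta_{M,s} = 0$ for $s$ large, hence $f \in \Q(t)$.

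The main obstacle is the bookkeeping for the place-by-place estimate on $|\Delta_{M,s}|_v$: the column operations at $\infty$ and at $p$ act on the \emph{same} integer Hankel matrix with \emph{different} near-dependences, and the gain $(r'_v)^{-s}$ at each place must be protected against growth of the other entries of the matrix. Working at auxiliary radii $r'_v$ strictly less than $r_v$ provides the necessary slack, but the real novelty---Dwork's contribution over the classical theorem of Borel, which corresponds to the trivial $p$-adic side $r_p = 1$---is precisely this simultaneous exploitation of archimedean and $p$-adic meromorphy, linked through the common Hankel determinant in $\Z$.
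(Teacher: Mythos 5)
The paper itself gives no proof of this theorem: it only refers to \cite[th.~5.3.2]{amice}, whose argument is precisely the one you outline (Hankel determinants, the inequality $|n|_\infty\,|n|_p\ge 1$ for $n\in\Z-\{0\}$, and a meromorphy estimate at each place). Your strategy is therefore the right one, but there is a genuine quantitative gap in the way you close the estimate, and it is exactly at the point you flag as ``bookkeeping''. The problem is the choice $M=\max(M_\infty,M_p)$ combined with the claimed bound $|\Delta_{M,s}|_v\le D_v(r'_v)^{-s}$. After the column operations, $M+1-M_v$ columns are indeed $O((r'_v)^{-s})$, but the remaining $M_v$ columns still carry the raw coefficients $a_{s+i+j}$. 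At $v=p$ this costs nothing: $|a_n|_p\le 1$ and the ultrametric inequality give $|\Delta_{M,s}|_p\le D_p(r'_p)^{-(M+1-M_p)s}$. At $v=\infty$, however, these entries are of size $\rho^{-n}$, where $\rho$ is the radius of convergence of $f$; since $|a_n|_\infty\ge 1$ whenever $a_n\ne 0$, one has $\rho\le 1$, and $\rho$ can be arbitrarily small. Hadamard's inequality then yields $|\Delta_{M,s}|_\infty\le D_\infty\,\rho'^{-M_\infty s}(r'_\infty)^{-(M+1-M_\infty)s}$ (for any $\rho'<\rho$), and the factor $\rho'^{-M_\infty s}$ is exponential in $s$, not polynomial: it cannot be absorbed into the constants, and for your fixed $M$ the product of the two bounds need not tend to $0$ (already when $M_\infty=M$ your archimedean bound is really $\rho'^{-Ms}(r'_\infty)^{-s}$).

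The repair is to let $M$ grow, and this is the actual content of the lemma in \emph{loc.\ cit.} Fixing $r'_\infty r'_p>1$ once and for all (so that $M_\infty$ and $M_p$ are fixed), the two correct estimates multiply to give
\[\limsup_{s\to\infty}\bigl(|\Delta_{M,s}|_\infty\,|\Delta_{M,s}|_p\bigr)^{1/s}\le \rho'^{-M_\infty}(r'_\infty)^{M_\infty}(r'_p)^{M_p}\cdot(r'_\infty r'_p)^{-(M+1)},\]
and since $r'_\infty r'_p>1$ the right-hand side is $<1$ once $M$ is large enough (depending on $\rho$, $M_\infty$, $M_p$). For such an $M$ one gets $\Delta_{M,s}=0$ for all large $s$, and the Hankel--Kronecker criterion — which only requires the vanishing for \emph{some} $M$ and all large $s$, so enlarging $M$ is harmless — gives $f\in\Q(t)$. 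With this single correction (replace ``$M=\max(M_\infty,M_p)$'' by ``$M$ sufficiently large, depending also on the archimedean growth of the $a_n$'') your argument is complete; the rest — the near-recurrence, the order of the column operations, the product-formula step — is sound.
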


Ici, $\C_p$ est le compl\'et\'e d'une cl\^oture alg\'ebrique de $\Q_p$ et ``m\'eromorphe'' a essentiellement le m\^eme sens que sur $\C$. La preuve n'est pas tr\`es difficile: \cf \cite[th. 5.3.2]{amice}.

c) On applique le théorème \ref{tbd} à $f(t)=Z(X,t)$. Comme cette fonction est domin\'ee par $Z(\A^n,t)= 1/(1-p^nt)$, on peut prendre $r_\infty = p^{-n}$ (elle est m\^eme holomorphe dans ce domaine), et il faut voir qu'on peut prendre $r_p>p^n$. En fait, on peut prendre $r_p=+\infty$: Dwork établit une factorisation (\cf \cite[(48)]{serre-dwork})
\begin{equation}\label{eq:dwork}
Z(X,pt)= \prod_{i=0}^n(1-p^{n-i}t)^{(-1)^{i+1}\binom{n}{i}}\prod_{i=0}^{n+1} \Delta(p^{n+1-i}t)^{(-1)^{i+1}\binom{n+1}{i}}
\end{equation}
o\`u 
\[\Delta(t)=\det(1-t\Psi)\]
a un rayon de convergence infini dans $\C_p$. Ici, $\Psi$ d\'esigne un endomorphisme d'un $\C_p$-espace vectoriel de dimension infinie (plus pr\'ecis\'ement, la limite d'une suite de tels op\'erateurs), dont la nature est clarifi\'ee plus tard par Serre dans \cite{serre-fredholm} (th\'eorie de Fredholm $p$-adique). La formule \eqref{eq:dwork} y est interpr\'et\'ee comme provenant d'une r\'esolution de Koszul (\emph{loc. cit.}, p. 85).

La preuve de Dwork sera interpr\'et\'ee un peu plus plus tard \`a l'aide d'une th\'eorie cohomologique, la cohomologie de Monsky-Washnitzer, qui donnera naissance (nettement plus tard) \`a une ``cohomologie $p$-adique'' ou ``cohomologie rigide'' (Kashiwara-Mebkhout, Berthelot.)

\begin{exo}\phantomsection\label{exo3.3} Soit $K$ un corps commutatif. On se donne une s\'erie formelle $f=\sum_{n=0}^\infty a_n t^n$ \`a coefficients dans $K$.
\begin{enumerate}[label=(\alph*)]
\item \label{exo3.3a}Montrer que, pour tout $k\ge 0$,  les conditions suivantes sont \'equivalentes:
\begin{enumerate}[label=(\roman*)]
\item Il existe un polyn\^ome $P\in K[t]$ de degr\'e $k$ tel que $Pf\in K[t]$. 
\item la suite $(a_n)$ satisfait une relation de r\'ecurrence lin\'eaire
\[a_{n+k}+b_1a_{n+k-1}+\dots +b_ka_n = 0, \quad b_1,\dots, b_k\in K\]
pour $n\ge n_0$, $n_0$ convenable.
\end{enumerate}
\item Si $k$ est minimal dans \ref{exo3.3a}, montrer que les $b_i$ sont uniques.
\item Soit $L$ un surcorps de $K$. Supposons que $f\in L(t)$. Montrer que $f\in K(t)$. (Choisir une base du $K$-espace vectoriel $L$ contenant $1$. Le lecteur n'aimant pas l'axiome du choix pourra observer qu'on peut supposer $L$ de type fini sur $K$, et faire un dévissage.)
\end{enumerate}
(Cette m\'ethode est moins calculatoire que celle utilisant les d\'eterminants de Hankel.)
\end{exo}

\section{Les fonctions $L$ de la th\'eorie des nombres}\label{s4}

\subsection{Fonctions $L$ de Dirichlet}

\subsubsection{Deux approches des produits eul\'eriens}\label{appr1} Ce sont les suivantes:

\begin{enumerate}
\item On travaille \`a un nombre fini de facteurs pr\`es.
\item On tient \`a avoir des r\'esultats pr\'ecis et (donc) tous les facteurs eul\'eriens en jeu.
\end{enumerate}

La premi\`ere est souvent suffisante en premi\`ere approximation, mais on souhaite ensuite passer \`a la seconde. Illustrons ceci avec les caract\`eres de Dirichlet.

\subsubsection{Caract\`eres de Dirichlet et caract\`eres modulaires}\label{section2.5}  \index{Caractère!de Dirichlet}

\begin{defn}\phantomsection\label{d2.5.1} a) Un \emph{caract\`ere de Dirichlet modulo $m$}\index{Caractère!de Dirichlet} est une
fonction  $\chi:\Z\rightarrow\C$ telle que
\begin{thlist}
\item    $ \chi \neq  0$.
\item   $ \chi(x+m) =  \chi(x)$ $\forall x\in\Z$ ($\chi$ est p\'eriodique de
p\'eriode $m$). 
\item  $ \chi(xy) =  \chi(x)\chi(y)$ $\forall x,y\in\Z$. 
\item   $ \chi(x) = 0$   si  $(x,m)  \ne  1$. 
\end{thlist}
b) Un \emph{caract\`ere modulaire modulo $m$} est un caract\`ere du groupe ab\'elien $(\Z/m)^*$ (homomorphisme de $(\Z/m)^*$ vers $\C^*$).
\end{defn}
 
\begin{rque}\phantomsection Un caract\`ere de Dirichlet modulo $m$ est un caract\`ere de
$(\Z/m)^*$, prolong\'e par $0$ \`a $ \Z/m$, et relev\'e \`a
$\Z$. Si $m'|m$,  tout caract\`ere de $(\Z/m')^*$ d\'efinit un caract\`ere de $ 
(\Z/m)^*$ via $(\Z/m)^*\to (\Z/m')^*$. Mais les caract\`eres de Dirichlet associ\'es sont
diff\'erents en g\'en\'eral. Par exemple, $ \chi  = 1 \pmod m$ correspond \`a 
\[\begin{array}{ccc}
1_{m}:\Z&\longrightarrow& \C\\
x &\longmapsto&\left\{\begin{array}{cl}
1 &\mbox{si }(x, m) = 1;\\
0 &\mbox{sinon.}
\end{array}\right.
\end{array}\]
\end{rque}

On est donc conduit \`a:

\begin{defn}\phantomsection\label{d2.5.2} Soit $ \chi $ un caract\`ere de Dirichlet modulo $m$, vu
comme fonction de $ \Z/m  $ vers $ \C$. Le \emph{conducteur} $f$ de
$\chi$ est le plus petit diviseur de $m$ tel que $ \chi|_{(\Z/m)^*} $ se
factorise par $(\Z/f)^*$. On dit que $\chi$ est \emph{primitif} si son
conducteur est \'egal \`a $m$.
\end{defn}
     
\subsubsection{Les fonctions $L$ de Dirichlet}\label{section2.6}\index{Fonction $L$!de Dirichlet}

\begin{defn}\phantomsection{}\label{d2.6.1} Soient $m\in\N-\{0\}$ et $\chi$ un caract\`ere de
Dirichlet\index{Caractère!de Dirichlet} modulo $m$. On appelle \emph{fonction $L$ de Dirichlet} (attach\'ee \`a
$\chi$) la s\'erie
$$L(\chi,s)=\sum_{n=1}^\infty\frac{\chi(n) }{ n^s}$$
\end{defn}

Comme $\chi$ est multiplicative, on a (\cf proposition \ref{p1}):

\begin{prop}\phantomsection{}\label{p2.6.1} Pour tout $\chi$, on a l'identit\'e (de s\'eries
formelles de Dirichlet):
\[L(\chi,s) =\prod_{p\in\sP}\frac{1}{ 1-\chi(p)p^{-s} }\]
o\`u $\sP$ d\'esigne l'ensemble des nombres premiers.
\end{prop}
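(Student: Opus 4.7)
The plan is to apply Proposition \ref{p1}—equivalently the formal part of Lemma \ref{t2.8.1}—to the arithmetic function $a(n) = \chi(n)$. The only thing that really needs verification is that $\chi$, regarded as a function $\N - \{0\} \to \C$, is completely multiplicative in the sense of the discussion preceding Lemma \ref{t2.8.1}. This is immediate from axiom (iii) of D\'efinition \ref{d2.5.1}, which asserts $\chi(xy) = \chi(x)\chi(y)$ for \emph{all} $x,y\in\Z$ (no coprimality hypothesis); in particular, evaluating at $x=y=1$ and using that $\chi\not\equiv 0$ gives $\chi(1)=1$.

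Next I would check that the Euler product on the right makes sense in $\Dir(\C)$. Writing each local factor as $1 + f_p$ with
\[f_p = \sum_{k\ge 1}\frac{\chi(p)^k}{p^{ks}},\]
one has $\omega(f_p) = p$, hence $\omega(f_p) \to +\infty$ as $p\to\infty$. By Corollaire \ref{c2.3.1} b), the product $\prod_p (1+f_p)$ is multipliable in $\Dir(\C)$.

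It then remains to expand the product and group terms. Unique factorization in $\N-\{0\}$ gives a bijection between integers $n\ge 1$ and finitely-supported sequences $(k_p)_{p\in\sP}$ of nonnegative exponents, via $n = \prod_p p^{k_p}$; by complete multiplicativity, $\chi(n) = \prod_p \chi(p)^{k_p}$. Collecting the corresponding monomials in the expansion of $\prod_p(1+f_p)$ yields exactly $\sum_{n\ge 1}\chi(n)n^{-s} = L(\chi,s)$. This is precisely the content of Proposition \ref{p1} applied to the completely multiplicative function $\chi$, so there is no genuine obstacle—the verification of complete multiplicativity is the only (trivial) input.
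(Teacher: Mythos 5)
Your proposal is correct and follows exactly the route the paper takes: the paper simply remarks that $\chi$ is (completely) multiplicative and invokes Proposition \ref{p1} (i.e.\ the formal content of Lemma \ref{t2.8.1} together with Corollaire \ref{c2.3.1} b) for multipliability in $\Dir(\C)$). Your spelled-out verification of complete multiplicativity and of $\omega(f_p)\to+\infty$ (noting that $\omega(f_p)=p$ when $\chi(p)\neq 0$ and $f_p=0$ otherwise) is just the detail the paper leaves implicit.
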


Comparons la fonction $L$ attach\'ee \`a un caract\`ere de
Dirichlet et celle attach\'ee au caract\`ere primitif associ\'e: 

\begin{prop}\phantomsection\label{p2.6.2} Soient $\chi$ un caract\`ere de Dirichlet modulo $m$, $f$
son conducteur et $\chi'$ le caract\`ere primitif modulo $f$ associ\'e. Alors:
$$L(\chi,s)=L(\chi',s)\prod_{p\in S}(1-\frac{\chi'(p)}{ p^s}),$$
o\`u $S$ d\'esigne l'ensemble des facteurs premiers de $m$ qui ne divisent pas
$f$. En particulier,\\
a) Pour $\chi=1_m$, on a : 
$$L(1_m,s) = F(s)\zeta(s)$$
o\`u $\displaystyle F(s) =\prod_{p|m}(1- \frac{1}{ p^s})$.\\
b) Si $m$ et $f$ ont les m\^emes facteurs premiers,
$L(\chi,s)=L(\chi',s)$. 
\end{prop}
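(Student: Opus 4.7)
Le plan est de comparer terme par terme les produits eul\'eriens donn\'es par la proposition \ref{p2.6.1} pour $\chi$ et pour $\chi'$. L'observation-cl\'e est que si $p\nmid m$, alors $\chi(p)=\chi'(p)$: en effet, $p$ est alors inversible modulo $m$ (donc \emph{a fortiori} modulo $f$), et par d\'efinition du conducteur, $\chi$ et $\chi'$ induisent le m\^eme caract\`ere sur le quotient $(\Z/f)^*$ de $(\Z/m)^*$.

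Je r\'epartirais les nombres premiers en trois classes selon leur relation \`a $m$ et $f$. Pour $p\nmid m$, les facteurs eul\'eriens de $L(\chi,s)$ et $L(\chi',s)$ en $p$ co\"\i ncident d'apr\`es ce qui pr\'ec\`ede. Pour $p\mid f$ (donc $p\mid m$), on a $\chi(p)=\chi'(p)=0$ par la clause (iv) de la d\'efinition \ref{d2.5.1}, et les deux facteurs eul\'eriens valent $1$. Enfin, pour $p\in S$, on a $\chi(p)=0$ tandis que $\chi'(p)\ne 0$: le facteur correspondant de $L(\chi,s)$ vaut $1$, alors que celui de $L(\chi',s)$ vaut $(1-\chi'(p)p^{-s})^{-1}$. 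Il faut donc multiplier $L(\chi',s)$ par $\prod_{p\in S}(1-\chi'(p)p^{-s})$ pour ``tuer'' ces facteurs exc\'edentaires, ce qui donne la formule annonc\'ee. La manipulation est l\'egitime au niveau des s\'eries formelles de Dirichlet gr\^ace au corollaire \ref{c2.3.1} b).

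Pour a), il suffit d'observer que le caract\`ere primitif associ\'e \`a $1_m$ est le caract\`ere trivial modulo $1$ (car $1_m$ est constant sur $(\Z/m)^*$, donc se factorise par le groupe trivial $(\Z/1)^*$), de sorte que $f=1$, $L(\chi',s)=\zeta(s)$, et $S$ est l'ensemble de tous les diviseurs premiers de $m$, avec $\chi'(p)=1$. Pour b), l'hypoth\`ese entra\^\i ne $S=\emptyset$, et le produit correctif vaut~$1$.

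Il n'y a pas d'obstacle technique significatif: l'\'enonc\'e est essentiellement un exercice de comptabilit\'e sur les facteurs eul\'eriens, le seul point d\'elicat \'etant de distinguer soigneusement les trois comportements possibles d'un nombre premier vis-\`a-vis de $m$ et $f$, et de ne pas oublier que l'annulation $\chi(p)=0$ pour $p\mid m$ est une convention incluse dans la d\'efinition du caract\`ere de Dirichlet.
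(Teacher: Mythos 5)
Votre démonstration est correcte et correspond exactement à l'argument (laissé implicite dans le texte, qui énonce la proposition sans preuve) : comparaison facteur par facteur des produits eulériens de la proposition \ref{p2.6.1}, en distinguant les trois cas $p\nmid m$, $p\mid f$ et $p\in S$. Les cas particuliers a) et b) en découlent comme vous l'indiquez.
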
  

\subsubsection{Fonctions $L$ et fonctions z\^eta}\index{Fonction zêta!de Dedekind} Soit $K=\Q(\mu_m)$: c'est une extension galoisienne de $\Q$, de groupe de Galois $G$ canoniquement isomorphe \`a $(\Z/m)^*$. Rappelons comment: le caract\`ere cyclotomique\index{Caractère!cyclotomique}
\[\kappa:G\to (\Z/m)^*,\]
d\'efini par $g\zeta = \zeta^{\kappa(g)}$ pour tout $\zeta\in \mu_m$, est \'evidemment injectif et un th\'eor\`eme de Gauss implique qu'il est surjectif (irr\'eductibilit\'e des polyn\^omes cyclotomiques: voir une jolie preuve dans Cassels-Fr\"ohlich \cite[ch. III, lemma 1]{cf}). Son degr\'e est donc $\phi(m)$, indicateur d'Euler de $m$.

 Les fonctions $L$ de Dirichlet permettent de factoriser la fonction z\^eta de $K$:

\begin{prop}\phantomsection\label{d2.6.2} On a
$$\zeta_K(s) = G(s)\prod_\chi L(\chi,s)$$
o\`u le produit est \'etendu \`a tous les caract\`eres $ \chi $ de
$(\Z/m\Z)^*$ et
\[G(s) = \prod_{\fP\mid m} \left(1-\frac{1}{N(\fP)^s}\right)^{-1}\]
(produit fini de facteurs eul\'eriens).
\end{prop}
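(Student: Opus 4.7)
Le plan est d'identifier les deux membres facteur eul\'erien par facteur eul\'erien. La fonction z\^eta de Dedekind admet le produit eul\'erien $\zeta_K(s)=\prod_\fP (1-N(\fP)^{-s})^{-1}$, que je regrouperai suivant le nombre premier $p$ au-dessous de $\fP$; la strat\'egie consiste alors \`a traiter s\'epar\'ement les $p$ ne divisant pas $m$ (o\`u la contribution provient uniquement de $\prod_\chi L(\chi,s)$) et les $p$ divisant $m$ (o\`u le facteur correctif $G(s)$ intervient).

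Pour $p\nmid m$, j'utiliserai la th\'eorie classique des extensions cyclotomiques via l'isomorphisme $\kappa:\Gal(K/\Q)\iso (\Z/m)^*$ rappel\'e juste avant l'\'enonc\'e: un tel $p$ est non ramifi\'e dans $K$, son Frobenius correspond \`a la classe de $p$ modulo $m$, et si $f$ d\'esigne l'ordre de cette classe dans $(\Z/m)^*$ et $g=\phi(m)/f$, il y a exactement $g$ id\'eaux premiers $\fP$ au-dessus de $p$, chacun de norme $p^{f}$. Le facteur local de $\zeta_K(s)$ en $p$ est ainsi $(1-p^{-fs})^{-g}$. Je le comparerai au facteur local $\prod_\chi (1-\chi(p)p^{-s})^{-1}$ du membre de droite gr\^ace \`a l'identit\'e polynomiale
\[\prod_\chi(1-\chi(p)X)=(1-X^{f})^{g},\]
cons\'equence du fait que les restrictions des caract\`eres de $(\Z/m)^*$ au sous-groupe cyclique $\langle p\rangle$ d'ordre $f$ atteignent chaque caract\`ere de ce dernier exactement $g$ fois, combin\'e \`a $\prod_{\zeta^{f}=1}(1-\zeta X)=1-X^{f}$. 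Cela donnera l'\'egalit\'e voulue des facteurs eul\'eriens en $p\nmid m$.

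Pour $p\mid m$, tout caract\`ere $\chi$ modulo $m$ s'annule en $p$ par la condition (iv) de la d\'efinition \ref{d2.5.1}, de sorte que le facteur eul\'erien en $p$ du produit $\prod_\chi L(\chi,s)$ vaut $1$. Les contributions correspondantes de $\zeta_K(s)$ (celles des $\fP$ divisant $m$) sont pr\'ecis\'ement rassembl\'ees dans le facteur $G(s)$, d'o\`u l'identit\'e globale. L'obstacle principal est la description du comportement des $p$ non ramifi\'es dans l'extension cyclotomique (existence de $f$ et $g$ avec les propri\'et\'es requises); c'est un r\'esultat classique, non d\'emontr\'e dans l'extrait, qui d\'ecoule de l'isomorphisme $\kappa$ mentionn\'e et du fait que le Frobenius en $p$ est l'\'el\'ement de $\Gal(K/\Q)$ agissant sur $\mu_m$ par $\zeta\mapsto \zeta^p$.
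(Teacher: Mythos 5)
Votre démonstration est correcte et suit essentiellement la même voie que celle du texte : regroupement du produit eulérien de $\zeta_K(s)$ par nombres premiers $p$, identification des facteurs locaux en $p\nmid m$ via $f(p)$, $g(p)$ et l'identité $\prod_\chi(1-\chi(p)t)=(1-t^{f(p)})^{g(p)}$, les facteurs en $p\mid m$ étant absorbés par $G(s)$. Vous explicitez en plus la preuve de cette identité (restriction des caractères au sous-groupe $\langle p\rangle$), que le texte qualifie seulement de « facile ».
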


\begin{proof} Si $p\nmid m$, notons simplement $p$ son image dans $(\Z/m\Z)^*$. Notons
\'egalement $f(p)$ l'ordre de $p$ dans ce groupe, et $\displaystyle g(p)
=\frac{\varphi(m)}{ f(p)}$ l'ordre de $(\Z/m\Z)^*/<p>$. Alors $f(p)$ est le degr\'e r\'esiduel de $p$ dans l'extension $K/\Q$, et $g(p)$ est le nombre d'id\'eaux premiers de $K$ au-dessus de $p$. D'o\`u
\[\prod_{\fP\mid p} \left(1-\frac{1}{N(\fP)^s}\right)^{-1}= \frac{1}{ (1-\frac{ 1}{
{\displaystyle p^{f(p)s}}})^{g(p)}}.\]

L'\'enonc\'e r\'esulte alors de l'identit\'e facile
\[\prod_\chi(1-\chi(p)t)=
(1-t^{f(p)})^{g(p)}\]
o\`u $\chi$ d\'ecrit les caract\`eres de $(\Z/m)^*$.
\end{proof}

On voit ainsi qu'en consid\'erant les caract\`eres de Dirichlet\index{Caractère!de Dirichlet} de mani\`ere na\"\i ve, on n'arrive qu'\`a une expression approximative de $\zeta_K(s)$, qui \'evite pr\'ecis\'ement les facteurs locaux correspondant aux id\'eaux premiers ramifi\'es. Pour obtenir une valeur exacte, il faut remplacer chaque caract\`ere par le caract\`ere primitif associ\'e, et on obtient:

\begin{thm}\phantomsection\label{t4.0} Pour tout entier $n>1$, notons $\Prim(n)$ l'ensemble des caract\`eres modulaires primitifs modulo $n$. Alors si $K=\Q(\mu_m)$, on a
\[\zeta_K(s) = \prod_{d\mid m} \prod_{\chi\in \Prim(d)} L(\chi,s).\]
\end{thm}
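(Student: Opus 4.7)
The plan is to reduce the identity to a local comparison at each rational prime. Write $\zeta_K(s) = \prod_p Z_p(s)$ and $\prod_{d\mid m}\prod_{\chi\in\Prim(d)} L(\chi,s) = \prod_p W_p(s)$, where
\[Z_p(s) = \prod_{\fP\mid p}(1-N(\fP)^{-s})^{-1},\qquad W_p(s) = \prod_{d\mid m}\prod_{\chi\in\Prim(d)}(1-\chi(p)p^{-s})^{-1}.\]
Both factorizations are absolutely convergent for $\Re(s)>1$, so it is enough to verify $Z_p(s) = W_p(s)$ for every $p$. The key combinatorial input is the standard bijection between characters of $(\Z/m)^*$ and pairs $(d,\chi')$ with $d\mid m$ and $\chi'\in\Prim(d)$, sending a character to the primitive character through which it factors.

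For $p\nmid m$, the value of a primitive character $\chi'$ at $p$ coincides with the value at $p$ of the induced character of $(\Z/m)^*$; so the bijection allows us to rewrite $W_p(s)$ as $\prod_{\chi}(1-\chi(p)p^{-s})^{-1}$, the product ranging over all characters of $(\Z/m)^*$. The algebraic identity $\prod_\chi(1-\chi(p)t) = (1-t^{f(p)})^{g(p)}$ already used in the proof of proposition \ref{d2.6.2}, with $f(p)$ the order of $p$ in $(\Z/m)^*$ and $g(p)=\phi(m)/f(p)$, yields $W_p(s)=(1-p^{-f(p)s})^{-g(p)}$, and this equals $Z_p(s)$ by the unramified splitting law recalled in that same proof. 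For $p\mid m$, write $m=p^am'$ with $\gcd(p,m')=1$. A primitive character $\chi'\in\Prim(d)$ satisfies $\chi'(p)=0$ whenever $p\mid d$, so only conductors $d\mid m'$ contribute nontrivially to $W_p(s)$; applying the bijection now to $m'$ in place of $m$ identifies these contributions with all characters of $(\Z/m')^*$, and the same algebraic identity gives $W_p(s)=(1-p^{-f's})^{-g'}$ with $f'=\ord_{(\Z/m')^*}(p)$ and $g'=\phi(m')/f'$. On the arithmetic side, the classical decomposition law for cyclotomic fields asserts that $p$ is totally ramified of index $\phi(p^a)$ in $\Q(\mu_{p^a})$, unramified in $\Q(\mu_{m'})$, and splits there according to its order modulo $m'$; consequently $p$ has exactly $g'$ primes above it in $K=\Q(\mu_m)$, all of residue degree $f'$, which gives $Z_p(s)=(1-p^{-f's})^{-g'}=W_p(s)$.

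The main obstacle --- really the only non-formal ingredient --- is thus the cyclotomic ramification law for a prime dividing the conductor. This is a classical result available in any standard reference on algebraic number theory; in fact the theorem may be viewed as a repackaging of this law into Euler-product form, which also explains why it was only loosely formulated with the hypothesis $n>1$ (the case $d=1$ naturally corresponds to the trivial character of conductor $1$, whose $L$-function is $\zeta(s)$). An alternative route would be to first establish the identity when $m$ is a prime power, where $\Prim(p^a)$ admits an explicit parametrization and the ramification is immediate, then bootstrap to general $m$ using $\Q(\mu_{m_1m_2})=\Q(\mu_{m_1})\Q(\mu_{m_2})$ for coprime $m_1,m_2$ together with the multiplicativity of both sides under this decomposition.
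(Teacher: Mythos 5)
Your proof is correct and follows exactly the route the paper indicates: it extends the local Euler-factor comparison of the proof of proposition \ref{d2.6.2} to the ramified primes, using the bijection between characters mod $m$ and primitive characters of conductor $d\mid m$ together with the cyclotomic decomposition law (ramification index $\phi(p^a)$ at $p\mid m$, residue degree the order of $p$ mod $m'$). Nothing to add.
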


(La d\'emonstration est essentiellement une extension de celle de la proposition \ref{d2.6.2} au cas des nombres premiers ramifi\'es: rappelons que $p$ est ramifi\'e dans $K/\Q$ si et seulement s'il divise $m$, et qu'alors son indice de ramification est $\phi(p^r)$ o\`u $p^r$ est la plus grande puissance de $p$ divisant $m$. \cf \cite[ch. III]{cf}.)

\subsection{Les th\'eor\`emes de Dirichlet}\index{Théorème!de Dirichlet}

\subsubsection{Convergence des s\'eries de Dirichlet g\'en\'eralis\'ees}\label{section2.7}

Ce pa\-ra\-gra\-phe est repris de \cite[ch. VI, \S 2.2]{serre-arith}. Son but est de rassembler des r\'esultats g\'en\'eraux sur la convergence
des s\'eries de Dirichlet.

\begin{lemme}\phantomsection\label{l2.7.2} Soient $(\alpha,\beta) \in\R^{2}$ tels que $0 <
\alpha <\beta$. Soit $z\in\C$ avec $\hbox{\rm Re}(z) > 0$. Alors on a
$$\big| e^{-\alpha z}- e^{-\beta z}\big| \leq \big|\frac{z}{x}\big|
 (e^{-\alpha x}- e^{-\beta x})$$
o\`u $x = \hbox{\rm Re}(z)$.
\end{lemme}
 
\begin{proof} On \'ecrit $\displaystyle e^{-\alpha z}- e^{-\beta z}=
z\int^{\beta}_{\alpha} e^{-tz}dt$, d'o\`u en passant aux valeurs absolues:
$$\big| e^{-\alpha z}-e^{-\beta z}\big| \leq
|z|\int^{\beta}_{\alpha}  e^{-tx} dt =\big|\frac{z}{x}\big|(e^{-\alpha
x}-e^{-\beta x})$$
 \end{proof}

\begin{thm}\phantomsection\label{t2.7.1} Soit $(\lambda_n)_{n\geq 1}$ une suite strictement
croissante de nombres r\'eels tendant vers $+\infty$, et $(a_n)_{n\geq 1}$ une
suite de nombres complexes. Si la s\'erie $f(s) = \sum  a_{n}
e^{-\lambda_{n}s}$ converge
 en $s = z_0\in\C$, elle converge uniform\'ement dans les domaines 
$\Re(s-z_0)\geq 0$, $\left|\text{\rm Arg}(s-z_0)\right|  \leq \alpha$ o\`u  $0 < \alpha
<\pi/2$.  On a $f(s)\longrightarrow  f(z_0)$  si  $s\longrightarrow  z_0 $ dans
un tel domaine. En particulier, $f$ converge dans le domaine $\Re(z)>\Re(z_0)$,
o\`u elle d\'efinit une fonction holomorphe.
\end{thm}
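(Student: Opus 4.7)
La strat\'egie est de se ramener au cas $z_0 = 0$ par le changement de variable $s \leadsto s - z_0$ et le changement de coefficients $a_n \leadsto a_n e^{-\lambda_n z_0}$, qui transforme l'hypoth\`ese en la convergence de $\sum a_n$. Le c\oe ur de la preuve consiste alors en une estimation de Cauchy obtenue en combinant le lemme d'Abel (lemme \ref{l2.7.1}) avec la majoration fine du lemme \ref{l2.7.2}.

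Plus pr\'ecis\'ement, j'appliquerais le lemme d'Abel \`a $b_n = e^{-\lambda_n s}$: en posant $A_{m,p} = \sum_{n=m}^p a_n$, on obtient
$$S_{m,m'} = \sum_{n=m}^{m'} a_n e^{-\lambda_n s} = \sum_{n=m}^{m'-1} A_{m,n}\bigl(e^{-\lambda_n s} - e^{-\lambda_{n+1} s}\bigr) + A_{m,m'}\, e^{-\lambda_{m'} s}.$$
\'Etant donn\'e $\varepsilon > 0$, la convergence de $\sum a_n$ fournit $|A_{m,n}| \leq \varepsilon$ pour tous $n \geq m \geq N(\varepsilon)$. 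Le lemme \ref{l2.7.2}, appliqu\'e avec $\alpha = \lambda_n$, $\beta = \lambda_{n+1}$ et $z = s$ (dont la partie r\'eelle $x$ est strictement positive), donne
$$\bigl|e^{-\lambda_n s} - e^{-\lambda_{n+1} s}\bigr| \leq \frac{|s|}{x}\bigl(e^{-\lambda_n x} - e^{-\lambda_{n+1} x}\bigr).$$

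Le point d\'ecisif est que, dans le domaine $|\text{Arg}(s)| \leq \alpha < \pi/2$, on a $|s|/x \leq 1/\cos\alpha$, majoration \emph{uniforme}. En sommant t\'elescopiquement la somme de gauche d'Abel et en utilisant $e^{-\lambda x} \leq 1$ pour $\lambda, x \geq 0$ (quitte \`a supposer $\lambda_n \geq 0$ pour $n$ assez grand), on obtient
$$|S_{m,m'}| \leq \varepsilon \cdot \frac{1}{\cos\alpha}\bigl(e^{-\lambda_m x} - e^{-\lambda_{m'} x}\bigr) + \varepsilon \leq \varepsilon\left(1 + \frac{1}{\cos\alpha}\right),$$
ce qui prouve la convergence uniforme sur le domaine consid\'er\'e. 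Tout point $s$ avec $\Re(s) > \Re(z_0)$ est contenu dans un tel domaine (pour $\alpha$ suffisamment proche de $\pi/2$), donc la s\'erie converge sur le demi-plan $\Re(s) > \Re(z_0)$; l'holomorphie d\'ecoule du lemme \ref{l2.7.3} appliqu\'e aux sommes partielles, qui sont enti\`eres. La continuit\'e de $f$ en $z_0$ le long du domaine se d\'eduit directement de la convergence uniforme par interversion de limites: $\lim_{s \to z_0} f(s) = \lim_N f_N(z_0) = f(z_0)$.

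L'obstacle principal --- s'il en est un --- est la condition $\alpha < \pi/2$: c'est pr\'ecis\'ement elle qui rend la quantit\'e $|s - z_0|/\Re(s - z_0)$ born\'ee, et le th\'eor\`eme tombe en d\'efaut sur le demi-plan ferm\'e $\Re(s) \geq \Re(z_0)$ en g\'en\'eral. Hormis ce point, la d\'emonstration est une combinaison routini\`ere du lemme d'Abel avec l'estimation exponentielle du lemme pr\'ec\'edent: une fois ces deux ingr\'edients en main, il n'y a pas de difficult\'e r\'eelle.
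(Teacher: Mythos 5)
Votre démonstration est correcte et suit essentiellement la même voie que celle du texte: réduction à $z_0=0$, lemme d'Abel avec $b_n=e^{-\lambda_n s}$, majoration du lemme \ref{l2.7.2} combinée à la borne uniforme $|s|/\Re(s)\le 1/\cos\alpha$ sur le secteur, puis télescopage et lemme \ref{l2.7.3} pour l'holomorphie. Votre précaution sur $\lambda_n\ge 0$ pour $n$ assez grand est un petit raffinement que le texte passe sous silence, mais l'argument est identique.
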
    

(Notons tout de suite les deux cas particuliers les plus int\'eressants:

\begin{itemize}
\item $\lambda_n = n$: au changement de variables $z=e^{-s}$ pr\`es, on retrouve les s\'eries enti\`eres.
\item $\lambda_n=\log n$: on retrouve les s\'eries de Dirichlet habituelles.)
\end{itemize}

\begin{proof} Quitte \`a faire une translation sur $s$, on peut supposer $z_0 =
 0$. L'hypo\-th\`ese signifie alors que la s\'erie  $ \sum  a_{n}$ converge. Il
faut d'abord montrer qu'il y a convergence uniforme dans tout domaine de la
forme $\displaystyle\Re(s)  \geq  0,\big|\frac{s}{\Re (s)}\big|\leq  k$.

Soit $\varepsilon
>0$ . Puisque la s\'erie $ \sum  a_{n} $ converge, il existe  $N  \in \N$
tel que, pour tous $m,m'  \geq  N$,  $| A_{m,m'}| \leq \varepsilon
$, avec
$A_{m,m'}=\displaystyle\sum_{n=m}^{m'}a_n$. Si on applique le lemme
\ref{l2.7.1} avec $b_{n} = e^{-\lambda_{n}s}$ , on trouve :  
$$ S_{m,m'}= \sum^{m'-1}_{n=m} A_{m,n}(e^{-\lambda_{n}s}- e^{-\lambda_{n+1}s}) +
A_{m,m'}e^{-\lambda_{m'}s}.$$

Posons $s = x+iy$ et appliquons le lemme \ref{l2.7.2}. Il vient 
$$\big|S_{m,m'}\big| \leq \varepsilon(1 +\frac{\big| s\big|
 }{ x }\sum_{n=m}^{m'-1}(e^{-\lambda_{n}x}- e^{-\lambda_{n+1}x}))$$
d'o\`u $|S_{m,m'} | \leq \varepsilon(1+k(e^{-\lambda_{m}x} -
e^{-\lambda_{m'}x}))\leq \varepsilon(1+k)$, d'o\`u la convergence uniforme. Le
fait que $f(z)\rightarrow f(z_0)$ en r\'esulte. Pour montrer que $f$ est
holomorphe, on applique le lemme \ref{l2.7.3} aux sommes partielles $\displaystyle f_n(s)=\sum_{i\leq
n}  a_{i} e^{-\lambda_{i}s}$.
\end{proof}

\begin{cor}\phantomsection\label{c2.7.1} Avec les notations du th\'eor\`eme \ref{t2.7.1},
l'ensemble de convergence de $f$ contient un demi-plan ouvert
maximal.\qed
\end{cor}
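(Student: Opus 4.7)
Le plan est de d\'efinir l'abscisse
\[\sigma = \inf\{\Re(z_0) \mid f \text{ converge en } z_0\} \in \R\cup\{\pm\infty\}\]
(avec la convention $\sigma=+\infty$ si l'ensemble de convergence est vide), puis de v\'erifier que le demi-plan ouvert $H_\sigma=\{s\in\C\mid \Re(s)>\sigma\}$ est le plus grand demi-plan ouvert contenu dans l'ensemble de convergence de $f$.

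D'abord, je montrerais que $f$ converge sur $H_\sigma$: soit $s\in H_\sigma$. Par d\'efinition de l'infimum, il existe $z_0$ o\`u $f$ converge, avec $\Re(z_0)<\Re(s)$. Le th\'eor\`eme \ref{t2.7.1} assure alors que $f$ converge dans le demi-plan $\Re(w)>\Re(z_0)$, qui contient $s$. Donc $H_\sigma$ est contenu dans l'ensemble de convergence.

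Pour la maximalit\'e, je raisonnerais par contradiction: si un demi-plan ouvert $H'=\{s\mid \Re(s)>\sigma'\}$ avec $\sigma'<\sigma$ \'etait contenu dans l'ensemble de convergence, en choisissant un r\'eel $\tau$ avec $\sigma'<\tau<\sigma$, on aurait un point de convergence $s=\tau$ avec $\Re(s)<\sigma$, contredisant la d\'efinition de $\sigma$ comme infimum. Parmi tous les demi-plans ouverts contenus dans l'ensemble de convergence, $H_\sigma$ est donc bien le maximal (au sens de l'inclusion), ce qui \'etablit le corollaire.

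L'\'enonc\'e ne comporte pas de r\'eelle difficult\'e: le contenu non trivial est enti\`erement dans le th\'eor\`eme \ref{t2.7.1} dont il est un corollaire formel. Le seul point demandant un peu d'attention est de bien g\'erer les cas extr\^emes $\sigma=\pm\infty$ (ensemble de convergence vide, ou convergence dans $\C$ tout entier, o\`u $H_\sigma$ est respectivement vide ou \'egal \`a $\C$).
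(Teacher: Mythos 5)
Votre démonstration est correcte et correspond exactement à l'argument (immédiat) que le papier sous-entend en omettant la preuve : on pose $\sigma=\inf\{\Re(z_0)\mid f \text{ converge en } z_0\}$ et le théorème \ref{t2.7.1} donne aussitôt que le demi-plan $\Re(s)>\sigma$ est contenu dans l'ensemble de convergence et qu'il est maximal parmi les demi-plans ouverts de la forme $\Re(s)>\rho$ (convention cohérente avec la définition \ref{d2.7.1}). Rien à redire.
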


\begin{cor}\phantomsection\label{c2.7.2} Si $f$ est identiquement nulle, on a $a_{n}=0$ pour tout
$n>0$. En particulier, une s\'erie de Dirichlet formelle non nulle donne
naissance \`a une fonction holomorphe non identiquement nulle dans son domaine
de convergence.
\end{cor}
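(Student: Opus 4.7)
The plan is to argue by contradiction, isolating the smallest-index nonzero coefficient. Assume some coefficient is nonzero and let $n_0$ be the smallest index with $a_{n_0}\neq 0$. Pick any real point $z_1$ in the half-plane of convergence (which is nonempty by hypothesis). For every $s$ with $\Re(s)>\Re(z_1)$, the identity $f(s)=0$ and multiplication by $e^{\lambda_{n_0}s}$ yield
\[a_{n_0}=-\sum_{n>n_0} a_n\,e^{-\mu_n s},\qquad \mu_n:=\lambda_n-\lambda_{n_0}>0.\]

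I now let $s\to+\infty$ along $\R$. Termwise each $a_n e^{-\mu_n s}\to 0$ since $\mu_n>0$, so it only remains to justify the interchange of this limit with the infinite summation---this is the single delicate point. The right-hand side is itself a generalized Dirichlet series of the form considered in \S\ref{section2.7}: its exponents $(\mu_n)_{n>n_0}$ form a strictly increasing sequence tending to $+\infty$, and it converges at $s=z_1$ (differing from $f(z_1)-a_{n_0}e^{-\lambda_{n_0}z_1}$ by the multiplicative constant $e^{\lambda_{n_0}z_1}$). Applying Theorem~\ref{t2.7.1} to this new series, I obtain uniform convergence on the real half-line $[z_1,+\infty)$; this allows the exchange of $\lim_{s\to+\infty}$ and $\sum_{n>n_0}$, whence the right-hand side tends to $0$ and $a_{n_0}=0$, contradicting the choice of $n_0$.

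The second assertion of the corollary then follows immediately from the first by contraposition: by Theorem~\ref{t2.7.1} the sum of a convergent generalized Dirichlet series is holomorphic on its domain of convergence, and if this sum vanished identically there, the first part would force every coefficient to be zero. The only real subtlety in the whole argument is the commutation of the limit $s\to+\infty$ with the infinite sum in the display above; everything else is bookkeeping built on the strict monotonicity of $(\lambda_n)$ and the assumption $\lambda_n\to+\infty$, which together guarantee $\mu_n>0$ and allow Theorem~\ref{t2.7.1} to apply to the tail series.
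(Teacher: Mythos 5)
Votre démonstration est correcte et suit essentiellement la même voie que celle du texte : on isole le plus petit indice $n_0$ avec $a_{n_0}\neq 0$, on multiplie par $e^{\lambda_{n_0}s}$, et on fait tendre $s$ réel vers $+\infty$ en invoquant la convergence uniforme du théorème \ref{t2.7.1} pour justifier l'échange de la limite et de la somme. Vous explicitez simplement (via le reste $\sum_{n>n_0}a_ne^{-\mu_n s}$) le point que le texte résume par « la convergence uniforme montre que $e^{\lambda_n s}f(s)$ tend vers $a_n$ ».
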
    

\begin{proof} Supposons le contraire et soit $n$ le plus petit entier tel que $a_n \neq
0$. On multiplie $f$ par $e^{\lambda_n s} $, et on fait tendre $s\in\R$ vers
$+\infty $. La convergence uniforme montre que $e^{\lambda_n s}f(s)$ tend vers
$a_n$, ce qui entra\^\i ne que $a_n=0$, contradiction.\end{proof}

\begin{defn}\phantomsection\label{d2.7.1} On appelle \emph{demi-plan de convergence} de $f$ le
demi-plan du corollaire \ref{c2.7.1} (par abus de langage, $\emptyset$ et
$\C$ sont consid\'er\'es comme des demi-plans ouverts). Si le demi-plan de
convergence est donn\'e par $\Re(s)>\rho$, on dit que $\rho=\rho(f)$ est l'\emph{abscisse de convergence} de $f$. (Les cas $\emptyset$ et $\C$ correspondent
respectivement \`a $\rho=+\infty$ et $\rho=-\infty$.) L'\emph{abscisse de
convergence absolue} de $f$ est l'abscisse de convergence de $\sum
|a_{n}| e^{-\lambda_{n}s}$ , not\'ee $\rho^{+}(f)$.
\end{defn}

Le th\'eor\`eme suivant est une sorte de r\'eciproque du
th\'eor\`eme~\ref{t2.7.1}:

\begin{thm}[Landau]\phantomsection\label{t2.7.2} On suppose que $a_{n} \geq  0$ pour tout $n\geq 1$,
 que $f$ converge pour $\Re(s)  > \rho $ et que $f$ se prolonge analytiquement
 en une fonction holomorphe au voisinage de $s =  \rho$. Alors il existe
 $ \varepsilon >  0$ tel que $f$ converge pour $\Re(s)>\rho-\varepsilon$.
\end{thm}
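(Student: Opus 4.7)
The plan is to reduce the problem to expanding $f$ in a Taylor series at a real point strictly to the right of $\rho$, and to exploit the positivity hypothesis $a_n\ge 0$ in order to convert local holomorphy at $\rho$ into a genuine enlargement of the domain of convergence. After replacing $s$ by $s-\rho$, I may assume $\rho=0$ and pick the real point $s_0=1$, which lies in the half-plane of convergence. By hypothesis, $f$ is holomorphic on the open set $U=\{\Re(s)>0\}\cup V$, where $V$ is an open neighborhood of $0$. A short geometric argument then shows that $U$ contains a closed disk $\overline{D}(s_0,R)$ with $R=1+\delta'>1$: the only points of such a disk with $\Re(s)\le 0$ lie at distance $O(\sqrt{\delta'})$ from $0$, hence can be swallowed by $V$ once $\delta'$ is small enough.

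Next, I expand $f$ in a Taylor series around $s_0$. Term-by-term differentiation of $\sum a_n e^{-\lambda_n s}$, which is legitimate on compacts of $\{\Re(s)>0\}$ by Lemma \ref{l2.7.3}, yields
\[
\frac{f^{(k)}(s_0)}{k!}=\frac{(-1)^k}{k!}\sum_{n=1}^{\infty}a_n\lambda_n^{k}e^{-\lambda_n s_0}.
\]
Since $f$ is holomorphic on $D(s_0,R)$, its Taylor series converges on this disk.

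Now I evaluate this Taylor series at a real point $s$ with $s_0-R<s<s_0$ and set $\sigma=s_0-s>0$:
\[
f(s)=\sum_{k=0}^{\infty}\frac{\sigma^{k}}{k!}\sum_{n=1}^{\infty}a_n\lambda_n^{k}e^{-\lambda_n s_0}.
\]
Because $a_n\ge 0$ and $\lambda_n\ge 0$, every term of this double series is non-negative, so Fubini--Tonelli allows swapping the two summations. The inner sum over $k$ is then $e^{\sigma\lambda_n}$, and we recover
\[
f(s)=\sum_{n=1}^{\infty}a_n e^{-\lambda_n s},
\]
showing convergence of the original Dirichlet-type series at a real point $s<0=\rho$. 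Theorem \ref{t2.7.1} then upgrades this to convergence in the half-plane $\Re(s)>s$, giving the result with $\varepsilon:=-s>0$.

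The main obstacle is the geometric step of guaranteeing $R>1$: one must check that analytic continuation at a \emph{single} boundary point $\rho$ suffices to fit a disk around $s_0$ of radius strictly larger than the distance from $s_0$ to $\rho$. Once this is established, the decisive use of the positivity hypothesis $a_n\ge 0$ in the Fubini swap is what makes the argument work --- without it, the rearrangement would be illegitimate and there would be no reason for the Taylor series to represent the original Dirichlet series beyond $\Re(s)>\rho$.
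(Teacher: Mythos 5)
Your proof is correct and follows essentially the same route as the paper's: translate to $\rho=0$, expand in a Taylor series at $s_0=1$ on a disk of radius $>1$, compute the derivatives by term-by-term differentiation (Lemme \ref{l2.7.3}), use positivity to interchange the two summations, and finish with the th\'eor\`eme \ref{t2.7.1}. The only difference is that you spell out the geometric step (why holomorphy on $\{\Re(s)>0\}$ plus a neighborhood of $0$ yields a disk about $1$ of radius $1+\delta'$), which the paper simply asserts; your $O(\sqrt{\delta'})$ estimate is a correct justification.
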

  
\begin{rque}\phantomsection Ceci signifie que, si $f$ est \`a coefficients $\geq 0$, son domaine de
convergence est limit\'e par une singularit\'e de $f$ situ\'ee sur l'axe
r\'eel.
\end{rque}  

\begin{proof} Quitte \`a remplacer $s$ par $s-\rho$, on peut supposer $ \rho  = 0$.
 Puisque $f$ est holomorphe \`a la fois pour $\Re(s)  >  0$ et dans un
 voisinage de $0$, elle est holomorphe dans un disque $ |s-1|\leq
1+\varepsilon$ o\`u $ \varepsilon >0$. En particulier sa s\'erie de Taylor
converge dans ce disque. \medskip

D'apr\`es le lemme \ref{l2.7.3}, on a:
$$f^{(p)}(s)=\sum_{n\geq1}a_n(-\lambda_n)^p e^{-\lambda_n s}\text{ si }\Re(s)>0,$$
d'o\`u
$$(-1)^p f^{(p)}(1) =\sum_{n\geq 1}\lambda^p_n a_n e^{-\lambda_n}.$$ 

La s\'erie de Taylor en question s'\'ecrit, pour $ |s-1| \leq  1+\varepsilon$:  
$$f(s) = \sum^{+\infty}_{p=0}\frac{1}{ p!}(s-1)^{p}f^{(p)}(1).$$

En particulier en $s = -\varepsilon$, on a : 
$$f(-\varepsilon) =
\sum^{+\infty}_{p=0}\frac{1}{ p!}(1+\varepsilon)^{p}(-1)^{p}f^{(p)}(1),$$
 la s\'erie \'etant convergente. 

Mais $\displaystyle (-1)^{p}f^{(p)} (1) =  \sum_{n\geq
1}\lambda^{p}_{n}a_{n}e^{-\lambda_{n}}$ est une s\'erie convergente \`a termes
$\geq  0$. Par cons\'equent, la s\'erie double \`a termes positifs 
$$f(-\varepsilon) =
\sum_{p,n}a_{n}\frac{1}{ p!}(1+\varepsilon)^p\lambda^p_ne^{-\lambda_{n}}$$
converge. On peut donc regrouper ses termes et l'\'ecrire sous la forme : 
$$\begin{array}{ccc}
f(-\varepsilon)& =& \sum_{n\geq1}a_{n} e^{-\lambda_{n}}
\sum_{p=0}^\infty\frac{1 }{  p!}(1+\varepsilon)^{p}\lambda^{p}_{n}\\
&=&  \sum_{n\geq1}a_{n}e^{-\lambda_{n}}e^{\lambda_{n}(1+\varepsilon)}\\
&= & \sum_{n\geq1}  a_{n}e^{\lambda_{n}\varepsilon}
\end{array}$$  

Cela montre que la s\'erie de Dirichlet donn\'ee converge pour $s =
-\varepsilon$, donc aussi pour $\Re(s)>-\varepsilon$ d'apr\`es le
th\'eor\`eme \ref{t2.7.1}.\end{proof}  

\begin{rques}\phantomsection\
\begin{itemize}
\item Comme vu plus haut, ces r\'esultats s'appliquent aussi bien aux s\'eries de
Dirichlet qu'aux s\'eries enti\`eres.
\item On a $\rho^{+}\geq\rho$, et en g\'en\'eral $\rho^{+}>\rho$ pour les
s\'eries de Dirichlet, contrairement au cas des s\'eries enti\`eres.
\end{itemize}
\end{rques}

\begin{thm}\phantomsection\label{t2.7.3} Soit $\displaystyle f(s)=\sum \frac{a_n}{ n^s}$ une s\'erie
de Dirichlet.\\
a) Supposons $a_n=O((\log n)^\alpha)$ pour $\alpha >0$ convenable. Alors
$\rho^+(f)\leq 1$.\\
b) Supposons qu'il existe $\alpha >0$ et $M>0$ tels que les sommes partielles
$\sum_{m\leq n\leq m'}a_n$ v\'erifient $\displaystyle\big|\sum_{m\leq n\leq
m'}a_n\big|\leq M(\log m)^\alpha$ pour tous $m\leq m'$. Alors $\rho(f)\leq
0$.
\end{thm}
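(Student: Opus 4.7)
Le plan consiste \`a traiter les deux \'enonc\'es s\'epar\'ement, l'un par majoration \'el\'ementaire et l'autre par sommation d'Abel.

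Pour a), je majorerais directement $|a_n/n^s|\le C(\log n)^\alpha/n^\sigma$ en posant $\sigma=\Re(s)$. L'affaire se ram\`ene alors \`a montrer que $\sum(\log n)^\alpha n^{-\sigma}$ converge d\`es que $\sigma>1$. Il suffit pour cela de choisir $\sigma'\in\,]1,\sigma[$ et d'observer que $(\log n)^\alpha/n^{\sigma-\sigma'}$ tend vers $0$, de sorte que le terme g\'en\'eral est major\'e par $1/n^{\sigma'}$ \`a partir d'un certain rang; la convergence de $\sum n^{-\sigma'}$ (proposition \ref{p5}) conclut. On obtient ainsi $\rho^+(f)\le 1$.

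Pour b), je m'appuierais sur le lemme d'Abel \ref{l2.7.1} en prenant $b_n=n^{-s}$ (complexe, donc on utilise l'identit\'e g\'en\'erale et non la version d\'ecroissante r\'eelle). Avec $\sigma=\Re(s)>0$, on obtient
\[|S_{m,m'}|\le M(\log m)^\alpha\Bigl(\sum_{n=m}^{m'-1}|b_n-b_{n+1}|+|b_{m'}|\Bigr).\]
L'estimation cl\'e est $|n^{-s}-(n+1)^{-s}|=\bigl|s\int_n^{n+1}t^{-s-1}dt\bigr|\le |s|n^{-\sigma-1}$, d'o\`u $\sum_{n\ge m}|b_n-b_{n+1}|\le (|s|/\sigma)(m-1)^{-\sigma}$ via la comparaison s\'erie--int\'egrale. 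Il vient
\[|S_{m,m'}|\le M(\log m)^\alpha\Bigl(\tfrac{|s|}{\sigma}(m-1)^{-\sigma}+(m')^{-\sigma}\Bigr),\]
qui tend vers $0$ quand $m,m'\to\infty$, puisque $(\log m)^\alpha/m^{\sigma}\to 0$ pour tout $\sigma>0$. Le crit\`ere de Cauchy donne donc la convergence de $f(s)$ en tout $s$ tel que $\Re(s)>0$, soit $\rho(f)\le 0$.

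La seule petite subtilit\'e est le bon contr\^ole uniforme des restes de $\sum|b_n-b_{n+1}|$ par une int\'egrale (il faut v\'erifier que $\sigma>0$ suffit bien, ce qui est le cas); le reste n'est que de la manipulation. Par rapport \`a la preuve de la proposition \ref{p2}, la nouveaut\'e est uniquement le facteur logarithmique $(\log m)^\alpha$, qui est absorb\'e par la d\'ecroissance polynomiale $m^{-\sigma}$ aussi faible soit-elle.
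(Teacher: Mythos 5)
Votre preuve est correcte et suit pour l'essentiel la m\^eme d\'emarche que le texte: majoration terme \`a terme pour a) (le texte se contente de dire qu'il est \og connu\fg\ que $\sum(\log n)^\alpha n^{-\sigma}$ converge pour $\sigma>1$, ce que vous justifiez par comparaison \`a $\sum n^{-\sigma'}$ avec $1<\sigma'<\sigma$), et sommation d'Abel pour b). La seule diff\'erence r\'eelle est dans b): le texte invoque d'abord le th\'eor\`eme \ref{t2.7.1} pour se ramener \`a $s$ r\'eel $>0$, ce qui permet d'appliquer la forme \og suite d\'ecroissante\fg\ du lemme \ref{l2.7.1} et d'obtenir d'un coup $|S_{m,m'}|\leq M(\log m)^\alpha m^{-s}$; vous traitez $s$ complexe d'embl\'ee via l'identit\'e d'Abel g\'en\'erale et la majoration $|n^{-s}-(n+1)^{-s}|\leq |s|\,n^{-\sigma-1}$, qui est pr\'ecis\'ement l'estimation du lemme \ref{l2.7.2} utilis\'ee dans la preuve du th\'eor\`eme \ref{t2.7.1}. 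Votre variante est donc un peu plus longue mais autonome, tandis que celle du texte est plus courte au prix d'un appel au th\'eor\`eme \ref{t2.7.1}; les deux aboutissent au m\^eme contr\^ole final $(\log m)^\alpha m^{-\sigma}\to 0$ et au crit\`ere de Cauchy. Aucune lacune.
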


\begin{proof} a) Supposons qu'il existe $M>0$ tel que pour tout $n\geq 1$,
$|a_{n}|\leq M(\log n)^\alpha$. Alors, $\displaystyle \sum
\frac{|a_n|}{ n^s} \leq  M  \sum_{n\geq1}\frac{(\log n)^\alpha }{ n^{\sigma}}$ et
il est connu que $\displaystyle\sum\frac{(\log n)^\alpha }{ n^{\sigma}}$ converge
pour $\sigma>1$.\smallskip

b) Soit $s$ tel que $\Re(s)>0$: montrons que $f(s)$ converge. Le th\'eor\`eme
\ref{t2.7.1} nous autorise m\^eme \`a supposer $s \in \R$.  En appliquant
alors le lemme \ref{l2.7.1}, il vient alors:
$$\big|\sum_{m\leq n\leq m'}\frac{a_n}{n^s}\big|\leq \frac{M(\log n)^\alpha 
}{ m^s}$$
qui tend vers $0$ quand $m$ tend vers l'infini. \end{proof}  

\subsubsection{Convergence des fonctions $L$ de Dirichlet}

\begin{lemme}\phantomsection\label{l4.3} Soit $\chi$ un caract\`ere de Dirichlet\index{Caractère!de Dirichlet} non trivial, et soit $f=L(\chi,s)$. Alors $\rho^+(f)= 1$ et  $\rho(f)\le 0$.
\end{lemme}

\begin{proof} Comme on a $|\chi(n)|\le 1$ pour tout $n\ge 1$,  l'in\'egalit\'e $\rho^+(f)\le 1$ suit du th\'eor\`eme \ref{t2.7.3} a). Mais il existe $m>0$ tel que $\chi(n)=1$ pour tout $n\equiv 1\pmod{m}$; comme la s\'erie $\sum\limits_{n\equiv 1\pmod{m}} 1/n$ diverge, on a \'egalit\'e.

 La seconde in\'egalit\'e suit du th\'eor\`eme \ref{t2.7.3} b)  en remarquant que les sommes partielles $\sum_{n=m_0}^{m_1} \chi(n)$ sont born\'ees gr\^ace \`a la relation d'``or\-tho\-go\-na\-li\-t\'e'' $\sum_{n=0}^{m-1} \chi(n)=0$. (Rappelons-en la preuve: soit $x=\sum_{n=0}^{m-1} \chi(n)$. Pour tout $n$ premier \`a $m$, on a $\chi(n)x=x$, donc $x=0$ puisque $\chi\ne 1$.)
\end{proof}

\begin{thm}\phantomsection\label{t4.1} Soit $\chi$ un caract\`ere de Dirichlet non trivial. Alors  $L(\chi,1)\ne 0$.
\end{thm}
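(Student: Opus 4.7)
Le plan est de raisonner par l'absurde en combinant la factorisation de $\zeta_K(s)$ en produit de fonctions $L$ de Dirichlet (proposition \ref{d2.6.2}) avec le th\'eor\`eme de Landau (th\'eor\`eme \ref{t2.7.2}). Supposons qu'il existe un caract\`ere non trivial $\chi_0$ modulo $m$ tel que $L(\chi_0,1)=0$. Je pose $K=\Q(\mu_m)$, de sorte que $[K:\Q]=\phi(m)$, et je consid\`ere le produit $f(s)=\prod_\chi L(\chi,s)$ \'etendu \`a tous les caract\`eres modulaires $\chi$ modulo $m$.

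D'abord, je v\'erifie que $f(s)$ se prolonge en une fonction holomorphe sur le demi-plan $\Re(s)>0$. D'apr\`es le lemme \ref{l4.3}, chaque facteur $L(\chi,s)$ avec $\chi\ne 1_m$ a une abscisse de convergence $\le 0$ et est donc holomorphe sur $\Re(s)>0$. D'apr\`es la proposition \ref{p2.6.2} a), $L(1_m,s)=F(s)\zeta(s)$ avec $F$ produit fini et donc n'a qu'un p\^ole simple en $s=1$ sur ce demi-plan. Le z\'ero suppos\'e de $L(\chi_0,s)$ en $s=1$ compense ce p\^ole, donc $f(s)$ est holomorphe sur tout $\Re(s)>0$. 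Par la proposition \ref{d2.6.2}, on a $\zeta_K(s)=G(s)f(s)$ o\`u $G(s)$ est un produit fini de facteurs eul\'eriens, visiblement holomorphe et non nul sur $\Re(s)>0$. Ainsi $\zeta_K(s)$ elle-m\^eme se prolonge holomorphiquement \`a tout le demi-plan $\Re(s)>0$.

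Or $\zeta_K(s)=\sum_{\fA} N(\fA)^{-s}$ est une s\'erie de Dirichlet \`a coefficients entiers positifs ou nuls, et converge pour $\Re(s)>1$ (\S \ref{s:dedekind}). Le th\'eor\`eme \ref{t2.7.2} de Landau affirme qu'une telle s\'erie ne peut \^etre prolong\'ee analytiquement au-del\`a de son abscisse de convergence sans diverger au point limite r\'eel; par cons\'equent, si $\zeta_K$ est holomorphe sur $\Re(s)>0$, son abscisse de convergence est $\le 0$, et la s\'erie converge absolument pour tout $s$ r\'eel $>0$.

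Il reste \`a produire une contradiction en exhibant une divergence. Pour tout entier $n\ge 1$, l'id\'eal principal $nO_K$ a pour norme $n^{[K:\Q]}=n^{\phi(m)}$. La positivit\'e des coefficients donne donc
\[\zeta_K(s)\ge \sum_{n=1}^\infty n^{-\phi(m)s}\]
pour $s$ r\'eel, et le membre de droite diverge d\`es $s\le 1/\phi(m)$. Cela contredit la convergence pour tout $s>0$ obtenue \`a l'\'etape pr\'ec\'edente, et prouve que $L(\chi,1)\ne 0$. Le point d\'elicat du raisonnement me semble \^etre l'application correcte de Landau: il faut noter que le th\'eor\`eme \ref{t2.7.2} s'it\`ere, puisque toute singularit\'e r\'eelle $\rho_0$ de la prolong\'ee dans le demi-plan suppos\'e de convergence est en fait sur l'axe r\'eel; comme il n'y en a aucune sur $(0,1]$ par construction, l'abscisse de convergence descend effectivement \`a $0$ (ou moins).
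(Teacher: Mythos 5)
Votre démonstration est correcte et suit essentiellement la même voie que celle du texte: hypothèse d'un zéro de $L(\chi_0,s)$ en $s=1$, holomorphie de $\zeta_K(s)$ sur $\Re(s)>0$ via la factorisation de la proposition \ref{d2.6.2}, théorème de Landau (avec l'itération que vous signalez à juste titre), puis contradiction par la minoration $\zeta_K(s)\ge\zeta(\phi(m)s)$. Le texte utilise au choix le théorème \ref{t4.0} ou la proposition \ref{d2.6.2} (c'est la variante retenue dans la remarque \ref{r4.2} 1), celle de Serre), donc votre rédaction coïncide avec l'argument du cours.
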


\begin{cor}\phantomsection\label{c4.1} Pour $K=\Q(\mu_m)$, la fonction $\zeta_K(s)$ a un p\^ole simple en $s=1$.
\end{cor}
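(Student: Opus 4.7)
Le plan est d'exploiter la factorisation de $\zeta_K(s)$ en fonctions $L$ de Dirichlet (proposition \ref{d2.6.2}):
\[\zeta_K(s) = G(s)\prod_\chi L(\chi,s) = G(s)\,L(1_m,s)\prod_{\chi\ne 1_m} L(\chi,s)\]
o\`u $\chi$ parcourt les caract\`eres de $(\Z/m)^*$ et $G(s)$ est le produit fini des facteurs eul\'eriens aux places ramifi\'ees. Il suffit alors d'analyser chaque facteur au voisinage de $s=1$.

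D'abord, le facteur $G(s)=\prod_{\fP\mid m}(1-N(\fP)^{-s})^{-1}$ est un produit fini; chacun de ses facteurs est holomorphe et non nul en $s=1$ puisque $N(\fP)\ge 2$, donc $G(1)$ est un nombre r\'eel strictement positif. Ensuite, d'apr�s la proposition \ref{p2.6.2} a), on a $L(1_m,s)=F(s)\zeta(s)$ avec $F(s)=\prod_{p\mid m}(1-p^{-s})$; comme $F(1)=\prod_{p\mid m}(1-1/p)>0$ et que $\zeta(s)$ a un p\^ole simple en $s=1$ de r\'esidu $1$ (proposition \ref{p2}), la fonction $L(1_m,s)$ a un p\^ole simple en $s=1$.

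Il reste \`a traiter les facteurs $L(\chi,s)$ pour $\chi\ne 1_m$. Par le lemme \ref{l4.3}, l'abscisse de convergence $\rho(L(\chi,s))$ est $\le 0$; donc $L(\chi,s)$ est holomorphe sur le demi-plan $\Re(s)>0$, en particulier en $s=1$. De plus, le th\'eor\`eme \ref{t4.1} affirme que $L(\chi,1)\ne 0$.

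En rassemblant: $G(s)\prod_{\chi\ne 1_m} L(\chi,s)$ est holomorphe et non nulle en $s=1$, et $L(1_m,s)$ y a un p\^ole simple. Donc $\zeta_K(s)$ a un p\^ole simple en $s=1$. Le seul point d\'elicat du raisonnement est la non-annulation $L(\chi,1)\ne 0$ pour $\chi$ non trivial, mais celle-ci est fournie d\'ej\`a par le th\'eor\`eme \ref{t4.1}; tout le reste n'est qu'une v\'erification de routine \`a partir des r\'esultats d\'ej\`a \'etablis.
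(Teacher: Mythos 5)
Votre d\'eduction est correcte en tant qu'implication logique, mais elle est circulaire dans le contexte de ce texte: vous d\'emontrez le corollaire \ref{c4.1} \`a partir du th\'eor\`eme \ref{t4.1}, alors que la seule d\'emonstration du th\'eor\`eme \ref{t4.1} donn\'ee ici consiste pr\'ecis\'ement \`a \'etablir d'abord le corollaire \ref{c4.1} et \`a en d\'eduire le th\'eor\`eme. Le texte commence par observer que, vu le lemme \ref{l4.3}, la proposition \ref{p2}, la proposition \ref{p2.6.2} et le th\'eor\`eme \ref{t4.0}, les deux \'enonc\'es sont \emph{\'equivalents}: votre argument n'est que le sens facile de cette \'equivalence (non-annulation des $L(\chi,1)$ $\Rightarrow$ p\^ole simple de $\zeta_K$), et le << seul point d\'elicat >> que vous renvoyez au th\'eor\`eme \ref{t4.1} est exactement ce qu'il reste \`a d\'emontrer.

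Ce qui manque est donc la d\'emonstration directe du corollaire, qui est le c\oe ur de la m\'ethode de Landau: si $\zeta_K(s)$ n'avait pas de p\^ole en $s=1$, elle serait holomorphe pour $\Re(s)>0$ (par la factorisation et le lemme \ref{l4.3}, les seuls p\^oles possibles dans ce demi-plan proviennent du facteur $L(1_m,s)$ en $s=1$); comme c'est une s\'erie de Dirichlet \`a coefficients positifs, le th\'eor\`eme de Landau (th\'eor\`eme \ref{t2.7.2}) impliquerait qu'elle \emph{converge} dans tout ce demi-plan. Or tout entier de la forme $n=r^{\phi(m)}$ est la norme de l'id\'eal $rO_K$, de sorte que la s\'erie de Dirichlet $\zeta_K(s)$ est minor\'ee terme \`a terme par $\zeta(\phi(m)s)$, qui diverge en $s=1/\phi(m)$ (proposition \ref{p5}): contradiction. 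C'est cette \'etape, absente de votre r\'edaction, qui d\'emontre simultan\'ement le corollaire \ref{c4.1} et le th\'eor\`eme \ref{t4.1}.
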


Le th\'eor\`eme \ref{t4.1} est d\^u \`a Gustav Lejeune-Dirichlet (\cite{dirichlet}, 1837). La d\'emonstration ci-dessous remonte sans doute \`a Landau \cite{landau}: c'est essentiellement celle donn\'ee dans \cite[ch. VI, \S 3.4]{serre-arith}. \index{Théorème!de Dirichlet}

\begin{proof}[D\'emonstration du th\'eor\`eme \ref{t4.1}] Vu le lemme \ref{l4.3}, la proposition \ref{p2}, la proposition \ref{p2.6.2} et le th\'eor\`eme \ref{t4.0},  le th\'eor\`eme  \ref{t4.1} est \'equivalent au corollaire \ref{c4.1}. Si ce dernier est faux, $\zeta_K(s)$ est holomorphe pour $\Re(s)>0$. Comme c'est une s\'erie de Dirichlet \`a coefficients r\'eels positifs, le th\'eor\`eme \ref{t2.7.2} implique qu'elle converge dans ce domaine. Montrons que c'est absurde: si $\zeta_K(s) = \sum a_n n^{-s}$, on a $a_n\ne 0$ s'il existe un id\'eal $\fA\subset O_K$ de norme $n$, ce qui se produit certainement si $n$ est de la forme $r^{[K:\Q]}=r^{\phi(m)}$ ($N_{K/\Q}(rO_K) = r^{[K:\Q]}\Z$). Ceci montre que la s\'erie de Dirichlet formelle $\zeta_K(s)$ est minor\'ee par $\zeta(\phi(m)s)$, qui diverge pour $s=1/\phi(m)$ (proposition \ref{p5}).
\end{proof}

\begin{rques}\phantomsection\label{r4.2} 1) Pour d\'emontrer le th\'eor\`eme \ref{t4.1}, il suffirait d'utiliser la proposition \ref{d2.6.2} qui est un peu plus \'el\'ementaire que le th\'eor\`eme \ref{t4.0}: c'est ainsi que proc\`ede Serre dans \cite{serre-arith}.

2) Ne disposant pas du théorème de Landau, Dirichlet procédait différemment. Il distinguait deux cas: $\chi$ r\'eel (c'est-à-dire $\chi^2=1$) et $\chi$ non réel. Dans le second cas, $L(\chi,1)=0$ $\Rightarrow$ $L(\bar\chi,1)=0$ ce qui impliquerait que $\zeta_K(s)\to 0$ quand $s\to 1$, or on voit facilement que $\zeta_K(s)>1$ pour $s$ réel $>1$. Dans le premier cas, on a $\chi(n) = \leg{d}{n}$ (symbole de Jacobi) pour un certain $d$ sans facteurs carrés. Dirichlet calculait explicitement 
\[L(\chi,1) = 
\begin{cases}
\displaystyle\frac{2\pi h(d)}{w\sqrt{d}} &\text{si $d<0$}\\
\displaystyle\frac{\log(\epsilon) h(d)}{\sqrt{d}} &\text{si $d>0$}
\end{cases}
\]
où $h(d)$ (resp. $w$) est le nombre de classes (resp. de racines de l'unité) de $\Q(\sqrt{d})$ et $\epsilon$ est l'unité fondamentale $>1$ quand $d>0$. C'est un cas particulier de la formule analytique du nombre de classes du \S \ref{s:dedekind}. 

Voir \cite[ch. 6]{davenport} pour des détails sur ce calcul, et des compléments.
\end{rques}

\subsubsection{Application: le th\'eor\`eme de la progression arithm\'etique}\index{Théorème!de Dirichlet}

Introduisons les s\'eries de Dirichlet suivantes:
$$g_{a}(s) = \sum_{\begin{smallmatrix}p\in\sP\\ p\equiv a \pmod m\end{smallmatrix}}\frac{1}{ p^{s}}$$
o\`u $a$ est un entier premier \`a $m$;
$$f_{\chi}(s) =\sum_{p\in \sP} \frac{\chi(p)}{ p^s}$$
o\`u $\chi$ est un caract\`ere de Dirichlet modulo $m$.

Un calcul \'el\'ementaire donne:

\begin{lemme}[``Transformation de Fourier'']\phantomsection\label{l2.6.2} On a 
$$g_{a}(s) = \frac{1}{\varphi(m)}\sum_{\chi}\frac{f_{\chi}(s)}{ \chi(a)},$$
o\`u la somme est \'etendue \`a tous les caract\`eres $\chi$
de $(\Z/m)^*$.\qed
\end{lemme}

\begin{prop}\phantomsection\label{p2.10.1} Pour tout caract\`ere de Dirichlet $\chi$ modulo $m$, la fonction
$|\log L(\chi,s)-f_{\chi}(s)|$ est born\'ee quand $s\rightarrow 1$.
\end{prop}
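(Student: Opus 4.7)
The plan is to expand the Euler product of Proposition \ref{p2.6.1} logarithmically and check that the tail beyond the $k=1$ term stays bounded as $s$ approaches $1$ from the right.

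For $\Re(s) > 1$ the Euler product converges absolutely and $L(\chi,s)\neq 0$, so one can \emph{define}
\[
\log L(\chi,s) := \sum_{p\in\sP} \sum_{k=1}^\infty \frac{\chi(p)^k}{k\,p^{ks}},
\]
using the principal branch of $-\log(1-z) = \sum_{k\ge 1} z^k/k$ for each local factor (which is legitimate because $|\chi(p)p^{-s}|\le p^{-\sigma}<1$ for $\sigma=\Re(s)>1$). Splitting off the $k=1$ term,
\[
\log L(\chi,s) - f_\chi(s) = \sum_{p\in\sP} \sum_{k=2}^\infty \frac{\chi(p)^k}{k\,p^{ks}} =: R(\chi,s).
\]
The claim reduces to showing that $R(\chi,s)$ remains bounded as $s\to 1$ in the half-plane $\Re(s)>1$.

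For the main estimate, I would bound $R$ absolutely: since $|\chi(p)|\le 1$,
\[
|R(\chi,s)| \le \sum_{p\in\sP} \sum_{k=2}^\infty \frac{1}{k\,p^{k\sigma}} \le \sum_{p\in\sP} \frac{p^{-2\sigma}}{1-p^{-\sigma}}.
\]
For $s$ close to $1$, say $\sigma\ge 3/4$, the factor $(1-p^{-\sigma})^{-1}$ is bounded uniformly in $p$ (e.g.\ by $(1-2^{-3/4})^{-1}$), so
\[
|R(\chi,s)| \le C \sum_{p\in\sP} p^{-2\sigma} \le C\,\zeta(2\sigma),
\]
which stays bounded since $\zeta(2\sigma)$ converges for $2\sigma>1$ and $\zeta$ is continuous at $s=3/2$ (using Proposition \ref{p5}).

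The only subtlety is the choice of branch for $\log L(\chi,s)$: the statement of the proposition implicitly requires a well-defined logarithm, and there is no canonical one if one works on an arbitrary connected open set where $L(\chi,\cdot)$ does not vanish. The natural remedy is to adopt the definition above as a series, which agrees with the principal logarithm because both are holomorphic on $\Re(s)>1$, both equal $0$ as $\Re(s)\to +\infty$, and both yield $L(\chi,s)$ upon exponentiation. With that convention in place, the bound on $R(\chi,s)$ concludes the proof.
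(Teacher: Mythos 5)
Your proof is correct and follows essentially the same route as the paper: expand $\log L(\chi,s)$ via the Euler product, split off the $k=1$ term, and bound the tail $\sum_{p}\sum_{k\ge 2}\chi(p)^k/(kp^{ks})$ by comparison with $\zeta(2\sigma)$. The paper simply asserts that this remainder is "facilement" bounded, so your explicit estimate and your remark on the choice of branch of the logarithm only make the argument more complete.
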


\begin{proof} La proposition \ref{p2.6.1} donne
\[\log L(\chi,s)=\sum_p\log\frac{1}{ 1-\chi(p)p^{-s}} = \sum_{n,p}
\chi(p)^n/np^{ns}\]
cette \'egalit\'e \'etant valable pour tout $s\in\C$ tel que
$\Re(s)>1$.

Donc $\log L(\chi,s)=f_{\chi}(s)+F_{\chi}(s)$ o\`u $\displaystyle F_{\chi}(s)
=\sum_{p,n\geq 2}\frac{\chi(p)^{n} }{ np^{ns}}$, et on voit facilement  que $F_{\chi}(s)$
reste born\'e quand $s\rightarrow 1$.\end{proof}

\begin{thm}[Dirichlet \protect{\cite{dirichlet}}]\phantomsection\label{tdirichlet} Pour tout entier $a$ premier \`a $m$, on a
\[g_a(s)\underset{s\rightarrow 1}{\sim}^{}\frac{1}{ \varphi(m)} \log \frac{1}{ s-1}.\]
\end{thm}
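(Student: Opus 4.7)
The plan is to combine three ingredients already at our disposal: the Fourier inversion lemma \ref{l2.6.2}, the approximation $f_\chi(s) = \log L(\chi,s) + O(1)$ of the proposition \ref{p2.10.1}, and the decisive non-vanishing statement $L(\chi,1)\neq 0$ for non-trivial $\chi$ (theorem \ref{t4.1}).

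First I would insert proposition \ref{p2.10.1} into lemma \ref{l2.6.2} to obtain, for $s\to 1^+$,
\[g_a(s) = \frac{1}{\varphi(m)}\sum_\chi \overline{\chi(a)}\,\log L(\chi,s) + O(1),\]
where the sum runs over all characters modulo $m$ (note $1/\chi(a)=\overline{\chi(a)}$ since $|\chi(a)|=1$). The idea is then to isolate the contribution of the trivial character $\chi = 1_m$, which will produce the main term, and to show that all non-trivial characters contribute only $O(1)$.

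For the trivial character, proposition \ref{p2.6.2} a) gives $L(1_m,s) = F(s)\zeta(s)$ with $F(s)$ holomorphic and non-zero at $s=1$, while proposition \ref{p2} says $\zeta(s)$ has a simple pole of residue $1$ at $s=1$. Hence $L(1_m,s) \sim c/(s-1)$ for some $c>0$, and consequently $\log L(1_m,s) = \log\tfrac{1}{s-1} + O(1)$. Since $(a,m)=1$, $\overline{1_m(a)}=1$, so this character contributes $\frac{1}{\varphi(m)}\log\tfrac{1}{s-1} + O(1)$. For a non-trivial character $\chi$, the series $L(\chi,s)$ converges and defines a holomorphic function in a neighbourhood of $s=1$ (by lemma \ref{l4.3}, since the partial sums of $\chi$ are bounded); by theorem \ref{t4.1}, $L(\chi,1)\neq 0$. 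Therefore a branch of $\log L(\chi,s)$ is well-defined and bounded near $s=1$, so this character contributes $O(1)$.

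Summing these two contributions gives exactly
\[g_a(s) = \frac{1}{\varphi(m)}\log\frac{1}{s-1} + O(1), \quad s\to 1^+,\]
whence the stated asymptotic equivalent. The entire argument is essentially a bookkeeping exercise; the genuine obstacle---the non-vanishing $L(\chi,1)\neq 0$ for non-trivial $\chi$---has already been disposed of in theorem \ref{t4.1} via the pole of $\zeta_K(s)$ at $s=1$, and it is that step (and not any analytic estimate) which makes the theorem work: without it the non-trivial characters could a priori produce a logarithmic singularity that would either cancel or reinforce the main term.
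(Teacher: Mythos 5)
Your proof is correct and follows exactly the paper's own route: the Fourier inversion of Lemma \ref{l2.6.2}, the comparison $f_\chi(s)=\log L(\chi,s)+O(1)$ of Proposition \ref{p2.10.1}, the simple pole of $\zeta$ at $s=1$ for the trivial character, and Theorem \ref{t4.1} to bound the non-trivial contributions. The only difference is purely expository (you carry $\log L(\chi,s)$ through the computation where the paper phrases everything in terms of $f_\chi$), so there is nothing to add.
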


\begin{proof} \'Etant donn\'e la proposition \ref{p2.10.1}, la proposition
\ref{p2}  implique que $\displaystyle
f_{1}(s)\underset{s\rightarrow 1}{\sim}^{} \log \frac{1}{ s-1}$ tandis que le th\'eor\`eme \ref{t4.1} implique que
$f_{\chi}(s)$ reste born\'e quand
$s\rightarrow 1$  pour $\chi\neq 1$. On conclut par le lemme \ref{l2.6.2}.
\end{proof}

\begin{cor}\phantomsection\label{c4.2} Pour tout entier $a$ premier \`a $m$, il existe une infinit\'e de nombres premiers $p\equiv a\pmod{m}$.\qed
\end{cor}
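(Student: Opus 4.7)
The plan is to derive the corollary directly from Theorem \ref{tdirichlet}, which has just been established. The theorem gives the asymptotic
\[g_a(s) \sim \frac{1}{\varphi(m)} \log \frac{1}{s-1} \quad (s \to 1^+),\]
and in particular $g_a(s) \to +\infty$ as $s$ decreases to $1$ along the real axis. This divergence is the only information I need.

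I proceed by contradiction. Suppose that for some $a$ coprime to $m$ there are only finitely many primes $p_1, \dots, p_N$ congruent to $a$ modulo $m$. Then by the very definition
\[g_a(s) = \sum_{\substack{p \in \mathcal{P}\\ p \equiv a\pmod{m}}} \frac{1}{p^s} = \sum_{i=1}^N \frac{1}{p_i^s}\]
would be a finite sum of $N$ terms, each a continuous function of $s$ at $s=1$, hence $g_a(s)$ itself would remain bounded as $s \to 1^+$. This contradicts the equivalent $g_a(s) \sim \frac{1}{\varphi(m)} \log \frac{1}{s-1}$, since $\log\frac{1}{s-1} \to +\infty$. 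Therefore the set of primes $p \equiv a \pmod m$ must be infinite.

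There is essentially no obstacle: all the substantial work was already invested in Theorem \ref{t4.1} (non-vanishing of $L(\chi,1)$ for non-trivial $\chi$) and in the Fourier-type decomposition of Lemma \ref{l2.6.2}, which together give Theorem \ref{tdirichlet}. The corollary is a purely qualitative consequence of the quantitative asymptotic, extracting only the divergence of $g_a(s)$ at $s=1$. One might remark that the proof in fact yields a stronger statement—the primes in the progression $a \bmod m$ have ``Dirichlet density'' $1/\varphi(m)$—but this refinement is not needed for the corollary as stated.
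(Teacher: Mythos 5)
Votre démonstration est correcte et suit exactement la voie que le texte sous-entend (le corollaire y est d'ailleurs marqué \qed comme conséquence immédiate du théorème \ref{tdirichlet}) : la divergence de $g_a(s)$ quand $s\to 1^+$ interdit que la somme ne porte que sur un nombre fini de premiers. Rien à redire.
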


C'est pour d\'emontrer ce corollaire que Dirichlet a introduit ses s\'eries $L$.

\subsubsection{Remarque: densit\'e naturelle et densit\'e analytique} \label{section2.2}

\begin{defn}\phantomsection\label{d2.2.1} Soit $T$ un sous-ensemble de $\sP$ . On dit que $T$
a pour \emph{densit\'e naturelle} $k$ si le rapport 
$$\frac{|\{p\in T\mid p<n\}|}{ |\{p\in \sP\mid p<n\}|}$$
tend vers $k$ quand $n\longrightarrow\infty$.
\end{defn}

On a une seconde notion de densit\'e moins na\"\i ve:

\begin{defn}\phantomsection\label{d2.2.2} Soient $T$ un sous-ensemble de $\sP$ et $k\in \R$.
On dit que $T$ a pour {\sl densit\'e analytique} $k$ si le rapport 
$$\frac{\displaystyle\sum_{p\in T}{1/p^s}}{ \displaystyle \sum_{p\in\sP}{1/p^s}}$$ tend vers $k$ lorsque $s>1$ tend vers $1$.
\end{defn}

Notons que pour les deux notions de densit\'e on a n\'ecessairement $0\leq k\leq
1$ et qu'un ensemble fini a une densit\'e nulle: pour la densit\'e analytique,
on l'a vu au cours de la d\'emonstration du th\'eor\`eme \ref{tdirichlet}. Ce dernier \'enonce donc que, 
pour $(a, m)=1$, l'ensemble
\[\sP_{a\pmod m}=\{p \in\sP \mid p \equiv  a\pmod m\}\] 
a
pour densit\'e analytique $\displaystyle\frac{1}{\varphi(m)}$, o\`u $\varphi$
est l'indicateur d'Euler.

Si un ensemble de nombres premiers a une densit\'e
naturelle, il a une densit\'e analytique et ces deux densit\'es
co\"{\i}ncident (exercice \ref{exo4.1}). On peut montrer que $\sP_{a\pmod m}$ a une densit\'e naturelle
(donc \'egale \`a 
$\displaystyle\frac{1}{\varphi(m)}$).  Par contre, on peut exhiber des $T\subset\sP$ ayant
une densit\'e analytique, mais pas de densit\'e naturelle (Bombieri, \emph{cf}
\cite[ch. 7, exercices 14 et 15]{ellison}). 

\subsubsection{Autres applications du th\'eor\`eme de Dirichlet}\index{Théorème!de Dirichlet} Le th\'eor\`eme \ref{t4.1} a des applications arithm\'etiques importantes \`a des \emph{formules analytiques du nombre de classes} (pour les corps quadratiques et les corps cyclotomiques $\Q(\mu_p)$ avec $p$ premier): voir remarque \ref{r4.2} 2) et Borevi\v c-\v Safarevi\v c \cite[ch. VI]{borsaf}.

\begin{exo}\phantomsection\label{exo4.1} Soit $T$ comme dans la définition \ref{d2.2.2}.  On note
\begin{align*}
\pi_T(n) = |\{p\in T\mid p<n\}|, \quad& \pi(n) =\pi_\sP(n) \quad (n\in \N)\\
 F_T(s) = \sum_{p\in T}{1/p^s}, \quad& F(s)= F_\sP(s)\quad  (s>1).
 \end{align*}
 \begin{enumerate}[label=(\alph*)]
\item V\'erifier l'identit\'e
\[F_T(s) = \sum_{n= 1}^\infty \frac{\pi_T(n+1)-\pi_T(n)}{n^s}.\]
\item En d\'eduire l'identit\'e
\[F_T(s) = \sum_{n= 1}^\infty \pi_T(n+1)\left(\frac{1}{n^s}-\frac{1}{(n+1)^s}\right).\]
\item Supposons que $T$ ait pour densit\'e naturelle $\delta$. Soit $\epsilon>0$: il existe donc $n_0\gg 0$ tel que $|\pi_T(n)-\delta\pi(n)|<\epsilon\pi(n)$ d\`es que $n> n_0$. 
En consid\'erant $F_T(s)-\delta F(s)$, montrer que $T$ a pour densit\'e analytique $\delta$. (On s\'eparera la s\'erie en deux morceaux: $\sum_{n=1}^{n_0-1}$ et $\sum_{n=n_0}^\infty$, et on utilisera le fait que $\lim_{s\to 1} F(s)=+\infty$, \cf preuve du th\'eor\`eme \ref{tdirichlet}.)
\end{enumerate}
\end{exo}

\begin{exo}\phantomsection\label{exo4.2} Soit $a\in\Z-\{0\}$.
\begin{enumerate}[label=(\alph*)]
\item Si $p$ est un nombre premier ne divisant pas $2a$, montrer que $a^{\frac{p-1}{2}}\equiv \pm 1\pmod{p}$. On note $\leg{a}{p}$ l'\'el\'ement de $\{\pm 1\}\subset \Z$ qui s'envoie sur $a^{\frac{p-1}{2}}$ modulo $p$: c'est le \emph{symbole de Legendre de $a$ modulo $p$}.
\item Montrer que, $p$ impair \'etant donn\'e, il existe $b$ tel que $\leg{b}{p}=-1$.\\
Pour la suite, on admettra la \emph{loi de r\'eciprocit\'e quadratique}
\[\leg{p}{q}\leg{q}{p} = (-1)^{\frac{p-1}{2}\cdot \frac{q-1}{2}}\]
pour deux nombres premiers impairs $p,q$, et les formules compl\'ementaires
\[\leg{-1}{p} = (-1)^{\frac{p-1}{2}}, \quad \leg{2}{p} = (-1)^{\frac{p^2-1}{8}}.\]
\item Supposons $a$ premier impair, et soit $m$ un entier premier \`a $2a$:  en utilisant la loi de r\'eciprocit\'e quadratique, donner une expression du produit $\prod\limits_{l\mid m} \leg{a}{l}^{v_l(m)}$ en fonction de $\leg{m}{a}$.
\item Donner une expression du m\^eme produit quand $a=-1$ et $a=2$, en utilisant les formules compl\'ementaires.
\item  Montrer qu'il existe un unique caract\`ere de Dirichlet\index{Caractère!de Dirichlet}
$\chi_a$ modulo $4a$ tel que $\chi_a(p)=\leg{a}{p}$ pour tout nombre
premier $p$ ne divisant pas $2a$. (Se ramener \`a $a$ premier ou $a=-1$, et utiliser (c), (d).)
\item Si $a$ est premier ou $a=-1$, montrer que $\chi_a\ne 1$.
\item Si $a$ n'est pas un carr\'e, montrer que $\chi_a\ne 1$. (\'Ecrire $a=l^na_1$ avec $l$ premier, $n$ impair et $(a_1,l)=1$; utiliser (f) et le lemme chinois.) 
\item Supposons que $a$ soit un carr\'e modulo $p$ pour tout
$p\in\mathcal{P}$ sauf un nombre fini. Montrer que $a$ est un carr\'e. (Utiliser (g) et le th\'eor\`eme de la progression arithm\'etique.)
\end{enumerate}
\end{exo}

\subsection{Premi\`ere g\'en\'eralisation: fonctions $L$ de Hecke}

\subsubsection{Produits directs restreints}\label{resprod}

\begin{defn}\phantomsection Soit $(G_i)_{i\in I}$ une famille d'ensembles et, pour tout $i\in I$, soit $U_i$ un sous-ensemble de $G_i$. On note
\[\resprod_{i\in I} (G_i,U_i) = \{(g_i)\in \prod_i G_i\mid \{j\in I\mid g_j\notin U_j\} \text{ est fini}\}.\]
C'est le \emph{produit direct des $G_i$ restreint relativement aux $U_i$}.
\end{defn}

On a \'evidemment
\[\resprod_{i\in I} (G_i,U_i) =\colim_{J\subset I \text{ fini}} \prod_{i\in J} G_i\times \prod_{i\notin J} U_i.\]

\subsubsection{Places des corps globaux} 
 Soit $K$ un corps global: corps de nombres ou corps de fonctions d'une variable sur un corps fini. Notons $\Sigma_K$ l'ensemble des places de $K$ (classes d'\'equivalences de valeurs absolues non triviales). Notons aussi $\Sigma_K^f$ l'ensemble des places non archim\'ediennes et $\Sigma_K^\infty$ l'ensemble des places archim\'ediennes. Si $\car K=0$, les $v\in \Sigma_K^f$ correspondent bijectivement aux id\'eaux premiers de $O_K$ et les $v\in \Sigma_K^\infty$ aux plongements $K\inj \C$; si $\car K>0$, on a $\Sigma_K^\infty=\emptyset$ et $\Sigma_K=\Sigma_K^f$ est en bijection avec les points ferm\'es d'une courbe $C$, l'unique mod\`ele projectif lisse de $K$ sur son corps des constantes.

Pour $v\in \Sigma_K$, notons $K_v$ le compl\'et\'e de $K$ par rapport \`a $v$. Ainsi:

\begin{enumerate}
\item Si $K$ est un corps de nombres, $K_v$ est un corps $p$-adique si $v$ est non archim\'edienne, correspondant \`a un id\'eal $\fp\subset O_K$ au-dessus de $p$, $K_v=\R$ ou $\C$ si $v$ est archm\'edienne.
\item Si $K$ est un corps de fonctions de corps des constantes $k=\F_q$, $K_v\simeq k_v((t))$ o\`u $k_v$ est une extension finie de $k$ (le corps r\'esiduel de $C$ en $v$, si $C$ est le mod\`ele projectif lisse de $K/k$).
\end{enumerate}

\subsubsection{Ad\`eles et id\`eles} Soit $K$ un corps global.

\begin{defn}\phantomsection a) L'\emph{anneau des ad\`eles de $K$} est l'anneau
\[\bA_K = \resprod_{v\in \Sigma_K} (K_v,O_v)\]
o\`u $O_v$ est l'anneau de valuation de $v$ si $v$ est non archim\'edienne et $O_v=K_v$ si $v$ est archim\'edienne.\\
b) Le \emph{groupe des id\`eles} de $K$ est le groupe des unit\'es $\bI_K$ de $\bA_K$:
\[\bI_K = \resprod_{v\in \Sigma_K} (K_v^*,O_v^*).\]
c) Si $K$ est un corps de nombres, on note aussi
\[\bA^f_K=\resprod_{v \in \Sigma_K^f} (K_v,O_v)\]
(ad\`eles finies) et $\bI^f_K$ le groupe des id\`eles correspondant.
\end{defn}

\begin{rque}\phantomsection Voici une description diff\'erente de $\bA_K$ si $\car K=0$: $\bA_K=\bA_\Z\otimes_\Z K$, avec
\[\bA_\Z=\R\times \prod_p \Z_p.\]
En caract\'eristique $p$, on peut faire la m\^eme chose en partant de l'anneau
\[\bA = \prod_v O_v\]
o\`u $v$ d\'ecrit toutes les places de $K$.
\end{rque}

Les id\`eles ont \'et\'e introduites par Claude Chevalley en 1936 \cite{chevalley}. Les ad\`eles ont \'et\'e introduites un peu plus tard par Artin et Whaples sous le nom de \emph{valuation vectors} \cite{artin-whaples}, nom remplac\'e par ad\`ele par Weil (dans \cite{weil-classes}?)  Le but \'etait de reformuler la th\'eorie du corps de classes sous une forme plus conceptuelle. Une notion plus ancienne d'``anneau des r\'epartitions'' (sans compl\'etions) est due à Weil.

\subsubsection{Topologie des ad\`eles et des id\`eles} R\'ef\'erences: Cassels-Fr\"ohlich \cite[ch. II]{cf},  Lang \cite[ch. VII]{lang}. 

Partons d'une famille $(G_i,U_i)_{i\in I}$ d'\emph{espaces topologiques} comme au num\'ero \ref{resprod}. Supposons $G_i$ localement compact et $U_i$ ouvert dans $G_i$ pour tout $i$; supposons de plus $U_i$ compact pour presque tout $i$ (\ie tout $i$ sauf un nombre fini). Alors les produits $\prod_{i\in J} G_i\times \prod_{i\notin J} U_i$ sont localement compacts, et leur limite inductive $\resprod_{i\in I}(G_i,U_i)$ est \emph{localement compacte} pour la topologie limite inductive.

Ceci s'applique \`a $\bA_K$, $\bI_K$, $\bA_K^f$ et $\bI_K^f$.

\begin{meg}\phantomsection\label{cav2} L'inclusion $\bI_K\inj \bA_K$ est continue, mais la topologie de $\bI_K$ est plus forte que la topologie induite par celle de $\bA_K$ (pour laquelle $x\mapsto x^{-1}$ n'est pas continue). La topologie de $\bI_K$ est celle induite par le plongement
\begin{align*}
\bI_K&\to \bA_K\times \bA_K\\
x&\mapsto (x,x^{-1})
\end{align*}
d'image ferm\'ee.
\end{meg}

On a le plongement diagonal $K\inj \bA_K$, et

\begin{thm}\phantomsection\label{tad} a) $K$ est discret dans $\bA_K$; le quotient $\bA_K/K$ est compact.\\
b) {\bf Th\'eor\`eme d'approximation forte}. Soit $v_0\in \Sigma_K$. Alors le plongement diagonal
\[K\inj \resprod_{v\ne v_0} (K_v,O_v)\]
est d'image dense.
\end{thm}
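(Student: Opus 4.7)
The argument has three main steps.

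\emph{Part (a), discreteness.} I would construct an explicit open neighborhood of $0 \in \bA_K$ meeting $K$ only at $0$. Take $U = \prod_{v \in \Sigma_K^f} O_v \times \prod_{v \in \Sigma_K^\infty} B_v(\epsilon)$, with $B_v(\epsilon)$ a ball of small radius $\epsilon$. Any $\alpha \in K \cap U$ is integral at every finite place, hence belongs to $O_K$; then $|N_{K/\Q}(\alpha)| \in \N$ coincides with $\prod_{v \mid \infty} |\alpha|_v^{n_v}$, which is $< 1$ for $\epsilon$ small enough, forcing $\alpha = 0$. The function-field case uses the product formula directly.

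\emph{Part (a), compactness of $\bA_K/K$.} I would reduce to the prime subfield $F \in \{\Q, \F_p(t)\}$. Choosing an $F$-basis $e_1, \dots, e_n$ of $K$ yields a topological isomorphism $\bA_K \cong \bA_F \otimes_F K \cong \bA_F^n$ sending $K$ to $F^n$, hence $\bA_K/K \cong (\bA_F/F)^n$. For $F = \Q$, the compact set $D = [0,1] \times \hat{\Z}$ satisfies $\bA_\Q = \Q + D$: given $x = (x_\infty, x^f) \in \R \times \bA_\Q^f$, the Chinese remainder theorem produces $r \in \Z[1/N]$ (for $N$ the ``denominator'' of $x^f$) with $x^f - r \in \hat{\Z}$, and subtracting an integer pushes $x_\infty - r$ into $[0,1]$. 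The case $F = \F_p(t)$ is analogous, with $\prod_v O_v$ as fundamental domain.

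\emph{Part (b), density.} Let $W \subset \bA_K$ be a compact set with $\bA_K = W + K$, supplied by (a); enlarge $W$ to the product $W_{v_0} \times W^{(v_0)}$ of its two projections (still compact). Lifting $a \in \bA_K^{(v_0)}$ to $(0, a) \in \bA_K$ and writing $(0, a) = w + k$ yields $a - k = w^{(v_0)} \in W^{(v_0)}$, so $\bA_K^{(v_0)} = W^{(v_0)} + K$ (\emph{bounded approximation}). For each $\alpha \in K^*$, multiplying by $\alpha$ gives $\bA_K = \alpha W + K$, hence $\bA_K^{(v_0)} = \alpha W^{(v_0)} + K$. It then suffices, given any neighborhood $V$ of $0$ in $\bA_K^{(v_0)}$, to find $\alpha \in K^*$ with $\alpha W^{(v_0)} \subseteq V$: for $a \in \bA_K^{(v_0)}$ and $U = a + V$, the bounded approximation produces $k \in K$ with $a - k \in \alpha W^{(v_0)} \subseteq V$, i.e.\ $k \in U$.

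\emph{The main obstacle} is this last existence statement: producing $\alpha \in K^*$ with $|\alpha|_v$ arbitrarily small at a prescribed finite set $S \subset \Sigma_K \setminus \{v_0\}$ and $|\alpha|_v \leq 1$ at every other $v \ne v_0$. By the product formula $|\alpha|_{v_0}$ is then necessarily large, which is permitted. For number fields such $\alpha$ exists by the finiteness of the class group of the $\{v_0\}$-integers (some power of any fractional ideal supported on $S$ is principal), combined with Dirichlet's unit theorem to control signs at archimedean places outside $S$; for function fields it follows from Riemann-Roch applied to an appropriate divisor on the smooth projective model of $K$. One must of course check that these inputs are not themselves proved via strong approximation, to avoid circularity.
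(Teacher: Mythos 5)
Your proof is correct, and for part (a) it follows the same route as the paper (reduction to the prime field $\Q$ or $\F_p(t)$ via a basis, then an explicit fundamental domain; discreteness by the integrality-plus-norm argument). For part (b) the overall scheme — deduce bounded approximation $\bA_K^{(v_0)}=W^{(v_0)}+K$ from compactness, then shrink $W^{(v_0)}$ by multiplying by a suitable $\alpha\in K^*$ — is the standard one, but you diverge from the paper at the key existence statement. The paper (following Cassels--Fr\"ohlich, ch.~II, \S 15) produces $\alpha$ from an \emph{adelic Minkowski lemma}: any id\`ele of sufficiently large norm dominates a nonzero element of $K$, which immediately gives $\alpha$ small outside $v_0$ and large at $v_0$. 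You instead invoke finiteness of the class group of the $\{v_0\}$-integers together with the ($S$-)unit theorem in the number-field case, and Riemann--Roch (take $\alpha\in L(m\cdot v_0-\sum_{v\in S}n_v v)$ with $m\gg 0$) in the function-field case. Both work, and your circularity worry is resolvable but real: the classical proofs of class-number finiteness and the unit theorem go through Minkowski's geometry of numbers and are independent of strong approximation, so there is no circle — but note that in \emph{this} paper the finiteness of $\Cl^\fm(K)$ (proposition \ref{p4.1}) is deduced from compactness of $\bI_K^0/K^*$, whose proof uses \ref{tad}~b); you must therefore use the classical lattice-point proofs, not the adelic ones. The trade-off is that in the number-field case you have not really avoided Minkowski, only repackaged it through two heavier theorems, whereas the direct adelic lemma is a single uniform statement; on the other hand, in the function-field case your Riemann--Roch argument is cleaner and more natural than a geometry-of-numbers substitute, and the paper indeed describes Riemann--Roch as the function-field analogue of Poisson/Minkowski.
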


\begin{proof}[Esquisse] Pour a), soit $K_0=\Q$ ou $\F_q(t)$. Le choix d'une base de $K$ sur $K_0$ fournit un isomorphisme $\bA_K\simeq \bA_{K_0}^{[K:K_0]}$ compatible aux plongements diagonaux. On se ram\`ene ainsi \`a $K=K_0$, et cela r\'esulte d'un calcul facile \cite[p. 65]{cf}. Pour b), on utilise une forme ad\'elique du th\'eor\`eme de Minkowski sur les points des r\'eseaux de $\R^N$, \cf \cite[ch. II, \S 15]{cf}.
\end{proof}

Pour le cas des id\`eles, il faut introduire la \emph{norme}
\begin{align}\label{eq:norme}
||\,||:\bI_K&\to \R^{+*}\\
||(a_v)_{v\in \Sigma_K}||&=\prod_v ||a_v||_v\notag
\end{align}
o\`u, pour tout $v$, $||\,||_v$ est la ``valeur absolue'' normalis\'ee sur $K_v$. Rappelons cette derni\`ere:

\begin{description}
\item[$K_v=\R$] $||a||_v=|a|$.
\item[$K_v=\C$] $||a||_v=|a|^2$.
\item[$K_v$ est non archim\'edien] $||a||_v= N(v)^{-v(a)}$, o\`u $N(v)$ est le cardinal du corps r\'esiduel de $K_v$ et $v(a)$ est la valuation normalis\'ee de $a$ ($v(\pi)=1$ si $\pi$ est une uniformisante).
\end{description}

\begin{thm}\phantomsection\label{tid} Le plongement diagonal
\[K^*\inj \bI_K\]
est d'image discr\`ete. Cette image est contenue dans $\bI_K^0=\Ker(||\,||)$, et $\bI_K^0/K^*$ est compact.
\end{thm}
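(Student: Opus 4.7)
The theorem breaks into three assertions: discreteness of $K^* \hookrightarrow \bI_K$; the inclusion $K^* \subset \bI_K^0$ (the product formula); and compactness of $\bI_K^0/K^*$. Discreteness is immediate from Theorem \ref{tad} a): since $K$ is discrete in $\bA_K$, so is $K \times K$ in $\bA_K \times \bA_K$, and the topological embedding $\bI_K \hookrightarrow \bA_K \times \bA_K$, $x \mapsto (x, x^{-1})$ of Warning \ref{cav2} pulls this back to discreteness of $K^*$ in $\bI_K$.

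For the product formula $\prod_v ||a_v||_v = 1$ ($a \in K^*$), the cleanest argument is structural: multiplication by $a$ is a continuous $K$-linear automorphism of $\bA_K$ preserving $K$, hence induces a continuous automorphism of the compact quotient $\bA_K/K$. By the normalization of the local absolute values \eqref{eq:norme}, multiplication by $a$ on $\bA_K$ has modular character $||a||$; but on a compact group every continuous automorphism has modular character $1$, whence $||a|| = 1$.

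The compactness of $\bI_K^0/K^*$ is the crux, proved by a Minkowski-type argument. Fix a Haar measure $\mu$ on $\bA_K$; since $\bA_K/K$ is compact, its covolume $m := \mu(\bA_K/K)$ is finite. Choose a compact product $B = \prod_v B_v \subset \bA_K$ with $B_v = O_v$ for almost all $v$ and $\mu(B) > m$ (inflate $B_{v_0}$ at a convenient place $v_0$). For any $x \in \bI_K^0$, multiplication by $x$ preserves $\mu$ (modular character $= ||x|| = 1$), so $\mu(xB) > m$ and the projection $xB \to \bA_K/K$ fails to be injective; choose $b_1 \ne b_2$ in $B$ with $a := x(b_1 - b_2) \in K$. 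Then $a \in K^*$ (as $x$ is invertible in $\bI_K$), and $y := x/a \in \bI_K^0$ satisfies $y^{-1} = b_1 - b_2 \in B - B$, so that
\[
||y_v^{-1}||_v \le D_v := \sup\{||b - b'||_v : b, b' \in B_v\},
\]
with $D_v = 1$ for almost all $v$. Now invoke the product formula $\prod_v ||y_v||_v = 1$: the lower bounds $||y_v||_v \ge D_v^{-1}$ translate into upper bounds $||y_v||_v \le M_v := \prod_{v' \ne v} D_{v'}$, a finite product. Hence $y$ lies in
\[
\Omega := \{ z \in \bI_K^0 : D_v^{-1} \le ||z_v||_v \le M_v \text{ for all } v \}.
\]
For almost all non-archimedean $v$ (those with $D_v = 1$ and $M_v < N(v)$, excluding only finitely many places), the two-sided bound combined with $||z_v||_v \in N(v)^{\Z}$ forces $||z_v||_v = 1$, i.e.\ $z_v \in O_v^*$. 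Thus $\Omega$ is compact in the id\`elic topology, and $\bI_K^0/K^*$ is the continuous image of $\Omega$, hence compact.

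\textbf{Main obstacle.} The delicate point is that compactness must be verified in the \emph{id\`elic} topology, which is strictly finer than the subspace topology from $\bA_K$ (Warning \ref{cav2}): one needs a two-sided control on each $||z_v||_v$, and the upper bound is produced precisely by combining the lower bound (from the Minkowski step) with the product formula --- this is why assertions (ii) and (iii) are intertwined rather than independent.
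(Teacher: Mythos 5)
Your proof is correct, and for two of the three assertions it diverges from the paper's (admittedly very sketchy) argument in ways worth noting. For the product formula, the paper reduces to $K_0=\Q$ or $\F_q(t)$ via the compatibility $||a||=||N_{K/K_0}(a)||$ and checks the base case by hand using principality of the ring of integers; you instead use the module argument (multiplication by $a$ induces an automorphism of the compact group $\bA_K/K$, which must preserve its finite Haar measure, so $||a||=\mathrm{mod}(a)=1$). This is precisely the ``autre démonstration'' the paper attributes to Tate's thesis: it is slicker and avoids any case analysis, at the cost of invoking Haar measure and the identification of the normalized absolute values with local modules. For compactness, the paper merely points to Cassels--Fröhlich and says one first shows the topology of $\bI_K^0$ is induced from $\bA_K$ and then uses strong approximation; your Blichfeldt/Minkowski pigeonhole argument is essentially the content of the cited reference, but you sidestep the topology-comparison lemma by directly exhibiting a set $\Omega$ that is visibly compact for the \emph{restricted-product} topology (two-sided bounds everywhere, forcing $z_v\in O_v^*$ at almost all places), which is a clean way to handle the subtlety flagged in Mise en garde \ref{cav2}. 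One small point of rigor: ``the projection $xB\to\bA_K/K$ fails to be injective'' should be justified by the usual fundamental-domain decomposition $\mu(xB)=\sum_{k\in K}\mu((xB-k)\cap F)$, since a merely injective continuous map need not preserve measure a priori; this is routine and does not affect the argument.
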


\begin{proof} La premi\`ere assertion r\'esulte facilement du th\'eor\`eme \ref{tad} a). Le fait que $||a||=1$ pour tout $a\in K^*$ s'appelle la \emph{formule du produit}. Pour la d\'emontrer, on v\'erifie que si $K_0\subset K$, on a $||a|| = ||N_{K/K_0}(a)||$ (ceci s'\'etend \`a $\bI_K$ et $\bI_{K_0}$ pour la d\'efinition convenable de la norme). On est alors ramen\'e \`a $K=\Q$ ou $\F_q(t)$; dans les deux cas on a un anneau d'entiers qui est principal, et la v\'erification devient facile. (Pour une autre d\'emonstration, voir \cite[th. 4.3.1]{tate}.)

Pour la compacit\'e du quotient, on utilise le th\'eor\`eme \ref{tad} b), en montrant d'abord que la topologie de $\bI_K^0$ est induite par celle de $\bA_K$, \cf \ref{cav2} \cite[ch. II, \S 16]{cf}.
\end{proof}

\begin{defn}\phantomsection Le quotient $C_K=\bI_K/K^*$ s'appelle le \emph{groupe des classes d'id\`eles}.
\end{defn}

On a donc une suite exacte
\[0\to C_K^0\to C_K\by{||\,||} \R^{+*}\to 0\]
o\`u $C_K^0$ est le groupe des classes d'id\`eles de norme $1$.

\subsubsection{Raccord avec la th\'eorie alg\'ebrique des nombres ``classique''} Les id\`eles ont \'et\'e introduites pour mettre ensemble les groupes de classes g\'en\'eralis\'es:

Si $A$ est un anneau de Dedekind, le groupe des classes d'id\'eaux $\Cl(A)$ est le quotient du groupe des id\'eaux fractionnaires par le groupe des id\'eaux fractionnaires principaux: il s'identifie au groupe de Picard \allowbreak $\Pic(\Spec A)$.

On g\'en\'eralise cette d\'efinition ainsi: soit $K$ un corps global.

\begin{defn}\phantomsection\label{d4.1} On note $I_K$ le groupe des diviseurs de $K$:
\begin{thlist}
\item Si $\car K=0$, c'est le groupe des id\'eaux fractionnaires de l'anneau des entiers $O_K$.
\item Si $\car K=p>0$, on a $K=\F_q(C)$ pour une courbe projective lisse $C$ g\'eom\'etrquement connexe, o\`u $q$ est une puissance de $p$. On pose $I_K=\Div(C)=Z_0(C)$.
\end{thlist}
On a un homomorphisme surjectif
\begin{align*}
\phi:\bI_K&\to I_K\\
(a_v) &\mapsto \sum_{v\in \Sigma_K^f} v(a_v) v
\end{align*}
continu (pour la topologie discr\`ete sur $I_K$) et tel que  $\phi(f)=(f)$ (diviseur de $f$) pour tout $f\in K^*$.
\end{defn}

\begin{lemme}\phantomsection\label{l4.1} Si $\car K=0$, la restriction de $\phi$ \`a $\bI_K^0$ est surjective.
\end{lemme}

\begin{proof} Si $\fp$ est un id\'eal premier de $O_K$, l'id\`ele $(a_v)$ avec $a_v=1$ pour $v\ne \fp$, $v_\fp(a_\fp)=1$, $||a_v||_v=||a_\fp||_\fp^{-1}$ pour une place archim\'edienne $v$ et $a_v=1$ pour les autres places archim\'ediennes est dans $\bI_K^0$ et s'envoie sur $\fp$.
\end{proof}

\begin{defn}\phantomsection  Un \emph{module} est une combinaison lin\'eaire formelle
\[\fm = \sum_{v\in \Sigma_K} n_v v\]
o\`u $n_v\in \N$ est nul pour presque tout $v$, et $n_v=0$ ou $1$ si $v$ est archim\'edienne. L'ensemble $S=\{v\mid n_v\ne 0\}$ est le \emph{support de $\fm$}.
\end{defn}

\begin{defn}\phantomsection\label{d4.3} Soit $\fm=\sum n_v v$ un module de support $S$.\\
a) On note $I_K^S$ le sous-groupe de $I_K$ form\'e des $\fa$ premiers \`a $S$: $v(\fa)=0$ si $v$ est une place finie de $S$.\\
b) Soit $f\in K^*$. On note $f\equiv 1 \modmult{\fm}$ si
\begin{thlist}
\item $v(f-1)\ge n_v$ lorsque $v$ est finie et $n_v>0$;
\item $f>0$ si $v$ est r\'eelle et $n_v=1$.
\end{thlist}
c) On note 
\[K^\fm=\{ f\in K^*\mid f\equiv 1\modmult{\fm}\}\]
et $\Cl^\fm(K)$ le quotient $I_K^S/(K^\fm)$. C'est le \emph{groupe des classes g\'en\'eralis\'ees} relatives au module $\fm$. (On dit souvent: corps de classes de rayon ou \emph{ray class field}.)
\end{defn}

\begin{exs}\phantomsection\label{ex4.1} 1) $\fm=0$: on retrouve le groupe des classes (de $O_K$ en caract\'eristique z\'ero, de $C$ en caract\'eristique $p$).\\
2) $K=\Q$, $\fm=m$. On trouve un isomorphisme $(\Z/m)^*\iso \Cl^m(\Q)$.
\end{exs}

Revenant aux id\`eles, si $\fm$ est un module, on note 
\[\bI_K^\fm = \{a\in \bI_K\mid a\equiv 1\modmult{\fm}\}\]
(g\'en\'eralisation directe de la d\'efinition \ref{d4.3}), de sorte que  $K^\fm= K^*\cap \bI_K^\fm$. L'homomorphisme $\phi$ de la d\'efinition \ref{d4.1} envoie $\bI_K^\fm$ dans $I_K^S$, et

\begin{prop}\phantomsection\label{p4.1} On a un diagramme
\[\begin{CD}
\bI_K^\fm/K^\fm@>i>> C_K\\
@V{\bar \phi}VV \\
\Cl^\fm(K)
\end{CD}\]
o\`u $i$, induit par l'inclusion $\bI_K^\fm\subset \bI_K$, est bijectif et $\bar \phi$, induit par $\phi$, est surjectif et continu (pour la topologie discr\`ete de $\Cl^\fm(K)$). Si $\car K=0$, $\Cl^\fm(K)$ est fini.
\end{prop}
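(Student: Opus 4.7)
La preuve se d\'ecompose en trois \'etapes: bijectivit\'e de $i$, propri\'et\'es de $\bar\phi$, et finitude de $\Cl^\fm(K)$ en caract\'eristique z\'ero. Les deux premi\`eres se ram\`enent \`a des v\'erifications formelles et \`a un argument d'approximation, tandis que la derni\`ere constitue le c\oe ur non trivial de l'\'enonc\'e et repose sur la compacit\'e de $C_K^0$ (th\'eor\`eme \ref{tid}).

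L'injectivit\'e de $i$ est tautologique, car $K^*\cap\bI_K^\fm = K^\fm$ par d\'efinition de $K^\fm$. Pour la surjectivit\'e, partant de $b\in\bI_K$, je chercherais $f\in K^*$ telle que $bf\in\bI_K^\fm$: les contraintes $v(b_vf-1)\ge n_v$ pour $v\in S$ fini et $b_vf>0$ pour $v\in S$ r\'eel sont des conditions ouvertes portant sur $f$ en un nombre fini de places, satisfaites gr\^ace \`a l'approximation faible (cas particulier du th\'eor\`eme \ref{tad} b)). Pour $\bar\phi$, je v\'erifierais successivement que $\phi$ envoie $\bI_K^\fm$ dans $I_K^S$ (puisque la condition $v(a_v-1)\ge n_v$ force $v(a_v)=0$ aux places finies de $S$), qu'il passe au quotient par $K^\fm$ (car $\phi(K^\fm)\subset(K^\fm)$), qu'il est continu (pour la topologie discr\`ete au but, son noyau $\prod_{v\text{ fin}}O_v^*\times\prod_{v\mid\infty}K_v^*$ \'etant ouvert dans $\bI_K$), et enfin qu'il est surjectif (en relevant $\fa\in I_K^S$ par l'id\`ele de composantes $\pi_v^{v(\fa)}$ aux places finies hors de $S$ et $1$ ailleurs).

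L'\'etape d\'elicate est la finitude en caract\'eristique z\'ero. Mon plan est d'identifier $\Cl^\fm(K)$, via la bijection $i$, au quotient $C_K/U_\fm^*$, o\`u $U_\fm=\ker(\phi|_{\bI_K^\fm})$ d\'esigne le sous-groupe des id\`eles de $\bI_K^\fm$ qui sont unit\'es \`a toutes les places finies, et $U_\fm^*$ son image dans $C_K$. Comme $U_\fm$ est d\'efini par des conditions ouvertes en un nombre fini de places, c'est un sous-groupe ouvert de $\bI_K$, de sorte que $C_K/U_\fm^*$ est discret. Le point cl\'e de la compacit\'e est de montrer que $\|U_\fm\|=\R^{+*}$: on utilise qu'en caract\'eristique z\'ero il existe au moins une place archim\'edienne $v$ o\`u $U_\fm$ contient $K_v^*$ tout entier (ou $\R^{+*}$ si $v\in S$ est r\'eel, de m\^eme norme). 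Il en r\'esulte $C_K=U_\fm^*\cdot C_K^0$, donc $\Cl^\fm(K)\simeq C_K^0/(U_\fm^*\cap C_K^0)$, quotient d'un groupe compact (th\'eor\`eme \ref{tid}) par un sous-groupe ouvert: c'est un groupe \`a la fois compact et discret, donc fini. L'obstacle principal est pr\'ecis\'ement cette articulation entre ouverture de $U_\fm$, surjectivit\'e de la norme et compacit\'e de $C_K^0$; on notera que l'argument tombe en d\'efaut en caract\'eristique positive, o\`u l'absence de places archim\'ediennes donne $\|U_\fm\|=\{1\}$, conform\'ement \`a la non-finitude attendue du groupe de Picard d'une courbe sur un corps fini.
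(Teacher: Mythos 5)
Votre d\'emonstration est correcte et suit pour l'essentiel la m\^eme voie que l'esquisse donn\'ee dans le texte: votre \'etape cl\'e $\|U_\fm\|=\R^{+*}$ joue exactement le r\^ole du lemme \ref{l4.1} (utiliser une place archim\'edienne pour normaliser la norme sans changer le diviseur), et la conclusion est dans les deux cas qu'un quotient discret du groupe compact $C_K^0$ est fini. Votre r\'edaction est simplement plus explicite que l'esquisse du texte, notamment dans l'identification $\Cl^\fm(K)\simeq C_K/U_\fm^*$ avec $U_\fm^*$ ouvert.
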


\begin{proof}[Esquisse] La surjectivit\'e de $i$ r\'esulte facilement du th\'eor\`eme d'approximation forte, et m\^eme de moins. Pour la finitude, le lemme \ref{l4.1} montre que la restriction de $\bar \phi$ \`a $(\bI_K^\fm\cap \bI_K^0)/K^\fm$ est surjective. Donc $\Cl^\fm(K)$ est un quotient discret d'un groupe compact, et est fini.
\end{proof}

\begin{rque}\phantomsection La finitude de $\Cl^\fm(K)$ est fausse si $\car K>0$: d'une part le lemme \ref{l4.1} est faux dans ce cas, et on a m\^eme un diagramme commutatif
\[\begin{CD}
\bI_K^\fm/K^\fm@>||\,||>> \R^*\\
@V{\bar \phi}VV @A\rho AA\\
\Cl^\fm(K) @>\deg>> \Z
\end{CD}\]
o\`u $\rho(1)= q^{-1}$, si $\F_q$ est le corps des constantes de $K$ et o\`u l'homomorphisme $\deg$ est surjectif. On en d\'eduit que $\Cl^\fm(K)^0:=\Ker\deg$ est fini, ce qui est l'\'enonc\'e correct.

Pour avoir un parall\`ele parfait entre caract\'eristique z\'ero et caract\'eristique $p$, il faudrait introduire en caract\'eristique z\'ero des groupes de diviseurs compactifi\'es (munis de m\'etriques \`a l'infini), \cf Szpiro \cite[\S 1]{szpiro}. Cette id\'ee remonte \`a Weil et a donn\'e naissance \`a la th\'eorie d'Arakelov.
\end{rque}

\subsubsection{Caract\`eres de Hecke ou Gr\"o\ss encharaktere: version id\'elique}\index{Caractère!de Hecke}

\begin{defn}\phantomsection
Un \emph{quasicaract\`ere} d'un groupe localement compact $G$ est un homomorphisme continu $\chi:G\to \C^*$. C'est un \emph{caract\`ere} s'il prend ses valeurs dans  le cercle unit\'e $S^1\subset \C^*$.
\end{defn}

\begin{defn}\phantomsection  Un quasicaract\`ere $\chi:\bI_K\to \C^*$ est \emph{non ramifi\'e en $v\in \Sigma_K^f$} si $\chi(O_v^*)= 1$.
\end{defn}

\begin{defn}\phantomsection Un \emph{caract\`ere de Hecke} est un quasicaract\`ere de $C_K$.
\end{defn}

\begin{ex}\phantomsection La norme $||\,||$ \eqref{eq:norme} est un caract\`ere de Hecke, appel\'e \emph{principal}.
\end{ex}

\begin{lemme}\phantomsection Soit $\chi$ un caract\`ere de Hecke. On peut \'ecrire
\[\chi=\prod_v \chi_v\]
o\`u $\chi_v = \chi_{|K_v^*}$; de plus, $\chi_v$ est non ramifi\'e pour presque tout $v$, \ie $\chi_v(O_v^*)=1$ pour tout $v\in \Sigma_K^f$ sauf un nombre fini.
\end{lemme}

\begin{proof} L'assertion ``non ramifi\'e'' provient de la continuit\'e de $\chi$.
\end{proof}

Le lemme suivant permet de se ramener des quasicaract\`eres aux caract\`eres:

\begin{lemme}\phantomsection\label{l4.2} Soit $\chi$ un caract\`ere de Hecke. Il existe un unique $\sigma\in \R$ tel que le caract\`ere de Hecke
\[a\mapsto \chi(a)||a||^{-\sigma}\]
soit \`a valeurs dans $S^1$ (\ie soit un ``vrai'' caract\`ere). On appelle $\sigma$ l'\emph{ex\-po\-sant} de $\chi$. Si $\chi$ est d'exposant $0$, on dit qu'il est \emph{unitaire}.
\end{lemme}

\begin{proof} La restriction de $|\chi|$ au groupe compact $C_K^0$ est triviale, puisque $\R^{+*}$ n'a pas de sous-groupe compact non trivial. Donc $|\chi|$ se factorise par $|| \, ||$. Mais tout endomorphisme continu de $\R^{+*}$ est de la forme $x\mapsto x^\sigma$ pour un $\sigma\in \R$, comme on le voit en passant par le logarithme.
\end{proof}

\subsubsection{Passage aux caract\`eres admissibles} Cette exposition est extraite de l'expos\'e de Tate \cite[ch. VII, \S\S 3,4]{cf}.

\begin{defn}\phantomsection\label{d4.2} Soit $S\subset \Sigma_K$ un ensemble fini contenant $\Sigma_K^\infty$; notons $I_K^S$ l'ensemble des diviseurs premiers \`a (l'ensemble des places finies de) $S$. Si $f\in K^*$, on note $(f)^S$ la partie du diviseur de $f$ qui appartient \`a $I^S$, soit $(f)^S=\sum_{v\in \Sigma_K^f- S} v(f)v$.
\end{defn}

\begin{defn}\phantomsection \label{d4.4} Un \emph{couple admissible} est un couple $(S,\phi)$ o\`u $S$ est comme ci-dessus et   $\phi$ est un homomorphisme de $I_K^S$ vers un groupe topologique commutatif $G$ v\'erifiant:
\begin{quote} Pour tout voisinage $U$ de l'\'el\'ement neutre $1\in G$, il existe $\epsilon>0$ tel que $\phi((f)^S)\in U$ d\`es que $|f-1|_v<\epsilon$ pour tout $v\in S$.
\end{quote}
\end{defn}

\begin{prop}[\protect{\cite[ch. VII, prop. 4.1]{cf}}] \phantomsection\label{p4.2} Via l'isomorphisme $i$ de la proposition \ref{p4.1}, tout couple admissible $(S,\phi)$ induit un homomorphisme continu $\psi:C_K\to G$ tel que $\psi(a)=\phi((a)^S)$ pour tout $a\in \bI_K^S$, o\`u
\[\bI_K^S=\{(a_v)\in \bI_K\mid |a_v|=1 \text{ pour } v\in S\}\]
et $(a)^S$ est d\'efini comme en \ref{d4.2}.\\
R\'eciproquement, s'il existe un voisinage $U$ de $1\in G$ tel que le seul sous-groupe de $G$ contenu dans $U$ soit \'egal \`a $\{1\}$, alors tout homomorphisme continu $\psi:C_K\to G$ provient d'un couple admissible.
\end{prop}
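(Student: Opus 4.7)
La démonstration se décompose naturellement en deux directions. Je commencerais par la plus facile, à savoir le sens réciproque, qui dicte aussi le bon choix de $(S,\phi)$ pour le sens direct.

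\emph{Sens réciproque.} Étant donné $\psi:C_K\to G$ continue et le voisinage $U_0$ ne contenant aucun sous-groupe non trivial, je composerais $\psi$ avec la projection $\bI_K\to C_K$ et utiliserais la continuité pour obtenir un voisinage ouvert de $1$ dans $\bI_K$ de la forme $V=\prod_{v\in T}V_v\times\prod_{v\notin T}O_v^*$ dont l'image tombe dans $U_0$. Pour tout $v\notin T$, l'image $\psi(O_v^*)$ est alors un sous-groupe de $G$ contenu dans $U_0$, donc trivial. Je poserais $S=T\cup\Sigma_K^\infty$ et définirais $\phi:I_K^S\to G$ par $\phi(v)=\psi(j_v(\pi_v))$, où $j_v:K_v^*\inj\bI_K$ est l'inclusion canonique et $\pi_v$ une uniformisante; l'indépendance par rapport à $\pi_v$ vient de la trivialité de $\psi$ sur $O_v^*$. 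L'admissibilité se vérifierait ainsi: pour $f\in K^*$ proche de $1$ en $S$, soit $\tilde f\in\bI_K$ l'idèle défini par $\tilde f_v=1$ pour $v\in S$ et $\tilde f_v=f$ pour $v\notin S$. Par construction $\psi(\tilde f)=\phi((f)^S)$; comme $\tilde f = f\cdot f_S^{-1}$ où $f_S$ a pour composantes $f$ en $S$ et $1$ ailleurs, et comme $\psi(f)=1$, on a $\psi(\tilde f)=\psi(f_S)^{-1}$, qui tend vers $1$ lorsque $f$ s'approche de $1$ en $S$ par continuité de $\psi$.

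\emph{Sens direct.} Partant d'un couple admissible $(S,\phi)$, je définirais $\tilde\phi:\bI_K^S\to G$ par $\tilde\phi(a)=\phi((a)^S)$; c'est clairement un homomorphisme. Sa continuité en $1$ suit du fait qu'un voisinage suffisamment petit de $1$ dans $\bI_K^S$ (imposant $a_v\in 1+\fm_v$ aux places finies d'un ensemble fini $T\supset S$) force $v(a_v)=0$ pour tout $v$ non-archimédien, donc $(a)^S=0$ et $\tilde\phi(a)=1$. Je vérifierais ensuite que $K^*\cap\bI_K^S$ coïncide avec le groupe fini $\mu_K$ des racines de l'unité de $K$: l'hypothèse $S\supset\Sigma_K^\infty$ impose $|f|_v=1$ à toutes les places archimédiennes, ce qui par le théorème de Kronecker force $f$ à être une racine de l'unité; celles-ci étant partout unités, $(\zeta)^S=0$ et $\tilde\phi$ est triviale sur $\mu_K$, induisant donc une application continue sur l'image de $\bI_K^S$ dans $C_K$.

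\emph{Obstacle principal.} Le cœur de la démonstration est l'extension de cette application à $C_K$ tout entier. Le plan serait d'exploiter l'approximation faible: pour $a\in\bI_K$ et tout $\epsilon>0$, on peut trouver $f\in K^*$ tel que $fa$ soit à distance $\epsilon$ de $1$ aux places de $S$, et définir $\psi(a)$ comme la limite des $\phi((fa)^S)$ lorsque $\epsilon$ décroît. L'indépendance par rapport à $f$ s'appuie directement sur la condition d'admissibilité: un autre choix $f'$ donne $f'/f\in K^*$ proche de $1$ en $S$, et $\phi((f'/f)^S)$ dans un voisinage arbitraire de $1$. Ce passage à la limite requiert une hypothèse topologique implicite sur $G$ (séparation et complétude), automatiquement satisfaite dans les applications (notamment $G=\C^*$ ou $G$ profini). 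Le reste — continuité, annulation sur $K^*$, et l'égalité $\psi(a)=\phi((a)^S)$ pour $a\in\bI_K^S$ obtenue avec le choix $f=1$ — se déduit alors directement de la construction.
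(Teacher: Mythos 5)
Votre architecture est la bonne et suit la d\'emonstration standard de Cassels--Fr\"ohlich (que le texte se contente de citer): sens r\'eciproque via l'absence de petits sous-groupes de $G$, sens direct via l'approximation faible et un passage \`a la limite dont vous signalez, \`a juste titre, qu'il requiert $G$ s\'epar\'e et complet. Deux points pr\'ecis ne tiennent cependant pas. D'abord, l'\'egalit\'e $K^*\cap\bI_K^S=\mu_K$ est fausse: le th\'eor\`eme de Kronecker ne s'applique qu'aux \emph{entiers} alg\'ebriques dont tous les conjugu\'es sont de module $1$. Ainsi $f=(3+4i)/5\in\Q(i)^*$ est de module $1$ \`a l'unique place archim\'edienne sans \^etre une racine de l'unit\'e, et son diviseur $(f)=[(2+i)]-[(2-i)]$ est non nul; pour le couple admissible $(\Sigma_K^\infty,\phi)$ avec $\phi(\fa)=\alpha^4$ lorsque $\fa=(\alpha)$, on a $\phi((f)^S)=f^4\ne1$, de sorte que votre $\tilde\phi$ ne passe pas au quotient par $K^*\cap\bI_K^S$.

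Ensuite, et c'est le point essentiel, la compatibilit\'e $\psi(a)=\phi((a)^S)$ pour $a\in\bI_K^S$ ne s'obtient pas avec le choix $f=1$: la condition $|a_v|=1$ pour $v\in S$ ne rend pas $a$ \emph{proche} de $1$ aux places de $S$, donc $f=1$ n'est pas un choix licite dans votre sch\'ema d'approximation. De fait, avec la d\'efinition de $\bI_K^S$ par $|a_v|=1$, la formule est en d\'efaut: pour $K=\Q$, $\psi$ le caract\`ere id\'elique associ\'e au caract\`ere de Dirichlet non trivial $\chi$ modulo $4$ et $S=\{\infty,2\}$, l'id\`ele $a$ de composante $3$ en $2$ et $1$ partout ailleurs appartient \`a $\bI_K^S$ et v\'erifie $(a)^S=0$, mais $\psi(a)=\chi(3)=-1$. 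L'\'enonc\'e que l'on peut effectivement d\'emontrer (celui de Cassels--Fr\"ohlich) porte sur les id\`eles de composante \emph{\'egale} \`a $1$ aux places de $S$; avec cette lecture, $K^*\cap\bI_K^S=\{1\}$, le choix $f=1$ devient licite et votre construction aboutit. Signalons enfin que, dans le sens r\'eciproque, il resterait \`a v\'erifier que le couple $(S,\phi)$ construit induit bien le $\psi$ de d\'epart; cela r\'esulte de la densit\'e de $K^*$ dans $\prod_{v\in S}K_v^*$.
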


On notera que, dans la proposition \ref{p4.2}, le couple admissible $(S,\phi)$ n'est pas uniquement d\'etermin\'e en fonction de $\psi$: ceci correspond \`a l'ambigu\"\i t\'e des caract\`eres de Dirichlet non primitifs et conduit \`a la g\'en\'eralisation du conducteur:

\begin{defn}\phantomsection Soit $\chi:C_K\to \C^*$ un caract\`ere de Hecke\index{Caractère!de Hecke}. On appelle \emph{conducteur} de $\chi$ le diviseur  $\ff(\chi)=\sum_{v\in \Sigma_K^f} n_v v$ o\`u $n_v$ est le plus petit entier $n$ tel que $\chi_v(1+\fp_v^n)=1$, o\`u $\fp_v$ est l'id\'eal maximal de $O_v$ ($n_v=0$ si $\chi_v(O_v^*)=1$).
 \end{defn}

On a aussi la notion de \emph{type \`a l'infini}:  On peut consulter \cite[ch. VIII, \S 1]{cf}, et les notes de Jerry Shurman \cite{jerry} pour une description plus d\'etaill\'ee de la situation. Bri\`evement, en appliquant la proposition \ref{p4.2} \`a un caract\`ere de Hecke $\chi$, on peut voir $\chi$ comme un homomorphisme
\[\chi:I_K^S\to \C^*\]
tel que, si $f\equiv 1\modmult{\ff(\chi)}$, $\chi((f))$ est d\'etermin\'e par le type \`a l'infini de $\chi$: c'est la d\'efinition originelle de Hecke \cite{hecke}. Donnons seulement deux exemples:

\begin{exs}\phantomsection \label{ex4.2} 1) En partant de l'exemple \ref{ex4.1}, on voit que les caract\`eres de Dirichlet sont des cas particuliers de caract\`eres de Hecke unitaires.\\\index{Caractère!de Hecke}\index{Caractère!de Dirichlet}
2) Pour tout corps global $K$ et tout $s\in \C$, la fonction $\fA\mapsto N(\fA)^{-s}$ d\'efinit un caract\`ere de Hecke (c'est celui correspondant \`a $||\,||^{s}:C_K\to \C^*$).
\end{exs}

\subsubsection{Fonctions $L$ de Hecke} \index{Fonction $L$!de Hecke}

\begin{defn}\phantomsection\label{d4.5} Soit $\chi$ un caract\`ere de Hecke, de conducteur $\ff$. On pose
\[L(\chi,s)=\sum_{\fa\in I_K^+} \frac{\chi(\fa)}{N(\fa)^s}\]
o\`u $I_K^+$ d\'esigne le mono\"\i de des diviseurs effectifs de $K$ et o\`u $\chi(\fa):=0$ si $(\fa,\ff)\ne 1$.
\end{defn}

\begin{prop}\phantomsection \label{p4.3} a) On a l'identit\'e
\[L(\chi\cdot ||\,||^t,s)=L(\chi,s+t).\]
b) On a $\rho^+(L(\chi,s))\le1+ \sigma$, o\`u $\sigma$ est l'exposant de $\chi$ (lemme \ref{l4.2}).\\
c) On a le produit eul\'erien habituel
\[L(\chi,s)=\prod_{\fp\nmid \fm} \left(1-\frac{\chi(\fa)}{N(\fa)^s}\right)^{-1}.\]
\end{prop}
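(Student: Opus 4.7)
The three parts are essentially formal consequences of the setup; I would prove them in the order (a), (b), (c).

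For (a), Example~\ref{ex4.2}.2) says that the Hecke character $||\,||^t$ acts on divisors by $\fa\mapsto N(\fa)^{-t}$. Substituting into Definition~\ref{d4.5} gives
\[L(\chi\cdot||\,||^t,s) = \sum_{\fa\in I_K^+}\frac{\chi(\fa)N(\fa)^{-t}}{N(\fa)^s} = \sum_{\fa\in I_K^+}\frac{\chi(\fa)}{N(\fa)^{s+t}}= L(\chi,s+t)\]
as formal Dirichlet-type series, and the identity persists wherever both sides converge.

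For (b), I would use (a) to reduce to the unitary case. Set $\chi_0 := \chi\cdot||\,||^{-\sigma}$, which is unitary by Lemme~\ref{l4.2} and has the same conductor as $\chi$, since $||\,||$ is unramified at every finite place (its restriction to $O_v^*$ is trivial). Part (a) then yields $L(\chi,s)=L(\chi_0,s+\sigma)$, reducing the task to bounding $\rho^+(L(\chi_0,s))$. Since $|\chi_0(\fa)|\le 1$ for every $\fa\in I_K^+$, one has
\[\sum_{\fa\in I_K^+}\frac{|\chi_0(\fa)|}{N(\fa)^{\Re(s)}}\le\zeta_K(\Re(s)),\]
which converges for $\Re(s)>1$ (\S\ref{s:dedekind} or Theorem~\ref{t:abs} applied to $\Spec O_K$ in the number-field case; Theorem~\ref{t:courbe} in the function-field case). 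Hence $\rho^+(L(\chi_0,s))\le 1$, whence the stated bound for $L(\chi,s)$ by the shift in $s$.

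For (c), extended by zero on divisors non-coprime to $\ff$, the function $\chi$ is completely multiplicative on the free commutative monoid $I_K^+$. In the half-plane of absolute convergence provided by (b), the evident generalization of Lemme~\ref{t2.8.1}---with $I_K^+$ replacing $\N-\{0\}$ and prime divisors replacing rational primes---yields the Euler product, the factors at $\fp\mid\ff$ being $1$ by the convention $\chi(\fp)=0$.

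No step is genuinely difficult: the arguments are transcriptions of the classical Dirichlet-series proofs, with unique factorization of effective divisors replacing that of positive integers. The only points requiring attention are the identification $||\,||^t(\fa) = N(\fa)^{-t}$ on divisors (resting on $||a||=N((a)^S)^{-1}$ for $a\in\bI_K^S$, where $S$ contains the archimedean places) and the careful tracking of how the shift in $s$ and the conductor propagate through the reduction to the unitary case.
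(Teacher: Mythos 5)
Your proof is correct and follows essentially the same route as the paper: (a) by direct substitution using Exemple \ref{ex4.2} 2), (b) by reducing to the unitary case via (a) and comparing with a Dedekind zeta function, and (c) by the standard Euler-product argument (Lemme \ref{t2.8.1}) transposed to the monoid of effective divisors. The paper's own proof is a one-line version of exactly these steps.
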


\begin{proof} a) est \'evident (\cf exemple \ref{ex4.2} 2)). On en d\'eduit b) \`a partir du th\'eor\`eme \ref{t2.7.3} (ou du cas de la fonction z\^eta de Riemann)\index{Fonction zêta!de Riemann} en se ramenant \`a $\chi$ unitaire. Enfin, c) se d\'emontre comme d'habitude.
\end{proof}

\enlargethispage*{20pt}

\begin{rque}\phantomsection L'identit\'e de la proposition \ref{p4.3} a) peut aussi s'\'ecrire
 \[L(\chi\cdot ||\,||^t,s)=L(\chi\cdot ||\,||^{t+s},1)\]
ce qui permet de voir une fonction $L$ de Hecke comme une fonction continue sur l'espace des caract\`eres de Hecke. C'est cette interpr\'etation qui est \`a la base de la preuve que Tate a donn\'ee du th\'eor\`eme suivant.\index{Caractère!de Hecke}
\end{rque}

\begin{thm}[Hecke, 1920 \phantomsection\cite{hecke}] \label{t:hecke}Soit $\chi$ un caract\`ere de Hecke unitaire. Posons 
\[\Lambda(\chi,s) = \big(|d_K|N(\ff(\chi))\big)^{s/2}\prod_{v\in \Sigma_K^\R} \Gamma_\R(s+n_v)\prod_{v\in \Sigma_K^\C} \Gamma_\C(s+\frac{|n_v|}{2}) L(\chi,s)\]
o\`u $\Gamma_\R, \Gamma_\C$ ont été définis en \eqref{eq:gammar}, $d_K$ est le discriminant absolu de $K$ si $\car K=0$, $d_K=q^{2g-2}$ si $K=\F_q(C)$ o\`u $C$ est une courbe projective, lisse et g\'eom\'etriquement int\`egre sur $\F_q$, de genre $g$, et
\begin{description}
\item[$v$ r\'eelle] $n_v=1$ si $\chi_v\ne 1$ et $n_v=0$ si $\chi_v=1$.
\item[$v$ complexe] $n_v$ est l'entier $n$ tel que $\chi_v(z)=z^n$ pour $z$ de module $1$. En particulier, $n_v=0$ si $\chi$ est d'ordre fini.
\end{description}
Alors $\Lambda(\chi,s)$ admet un prolongement m\'eromorphe \`a tout le plan complexe, et v\'erifie l'\'equation fonctionnelle\index{Equation fonctionnelle@\'Equation fonctionnelle}
\[\Lambda(\bar \chi,1-s) = W(\chi) \Lambda(\chi,s)\]
o\`u $(\bar \chi)(x):= \overline{\chi(x)}$ et $W(\chi)\in \C^*$  poss\`ede une d\'ecomposition canonique
\[W(\chi)=\prod_{v\in \Sigma_K} W(\chi_v)\]
o\`u $|W(\chi_v)|=1$, $W(\chi_v)=1$ si $\chi_v$ est non ramifi\'e (en g\'en\'eral, $W(\chi_v)$ est une somme de Gauss normalis\'ee).\\
Si $\chi$ est  de la forme $||\,||^{it}$, $t\in\R$, $L(\chi,s)$ a un p\^ole simple en $s=1+it$ et est holomorphe ailleurs. Sinon, $L(\chi,s)$ est holomorphe dans le plan complexe.
\end{thm}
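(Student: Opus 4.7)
Je suivrais l'approche ad\'elique de Tate (1950), qui g\'en\'eralise conceptuellement la preuve donn\'ee par Riemann pour $\zeta(s)$ et reproduite dans la preuve du th\'eor\`eme \ref{triemann}. L'id\'ee centrale est de r\'ealiser $L(\chi,s)$ comme une transform\'ee de Mellin ad\'elique et de d\'eduire l'\'equation fonctionnelle d'une formule de Poisson globale. La preuve originelle de Hecke \cite{hecke} utilise directement des fonctions th\^eta sur $\mathbf{R}^{r_1}\times\mathbf{C}^{r_2}$ g\'en\'eralisant celle du \S \ref{s1}; Tate factorise cet argument en pi\`eces locales plus une formule de Poisson ad\'elique.

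Premi\`ere \'etape (locale). Pour chaque place $v$, on introduit l'espace de Schwartz-Bruhat $\mathcal{S}(K_v)$ et, pour $f_v\in \mathcal{S}(K_v)$, l'\emph{int\'egrale z\^eta locale}
\[Z(f_v,\chi_v,s)=\int_{K_v^*} f_v(x)\chi_v(x)\|x\|_v^s\, d^*x.\]
Elle converge pour $\Re(s)$ assez grand (d\'ependant de l'exposant de $\chi_v$) et admet un prolongement m\'eromorphe. Apr\`es choix d'un caract\`ere additif auto-dual $\psi_v$ et de la transformation de Fourier $\hat f_v$ associ\'ee, on \'etablit l'\emph{\'equation fonctionnelle locale}
\[\frac{Z(\hat f_v,\chi_v^{-1}\|\cdot\|_v,1-s)}{L(\chi_v,1-s)}=\varepsilon(\chi_v,s,\psi_v)\cdot\frac{Z(f_v,\chi_v,s)}{L(\chi_v,s)}\]
o\`u $L(\chi_v,s)$ est le facteur eul\'erien local (resp. $\Gamma_\mathbf{R}$ ou $\Gamma_\mathbf{C}$ pour $v$ archim\'edienne) et $\varepsilon(\chi_v,s,\psi_v)$ est le facteur local (une somme de Gauss normalis\'ee dans le cas non archim\'edien ramifi\'e, $1$ dans le cas non ramifi\'e avec le bon choix de $\psi_v$). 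La v\'erification aux places finies est un calcul explicite avec des fonctions caract\'eristiques d'ordres de congruence; aux places archim\'ediennes, on teste contre des fonctions gaussiennes (\'eventuellement tordues par des puissances de $x$ ou $\bar z^{n}$), ce qui fait appara\^\i tre les bons facteurs gamma.

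Deuxi\`eme \'etape (globale). Pour $f=\otimes_v f_v\in \mathcal{S}(\mathbf{A}_K)$ (avec $f_v=\mathbf{1}_{O_v}$ pour presque tout $v$), on d\'efinit l'\emph{int\'egrale z\^eta globale}
\[Z(f,\chi,s)=\int_{\mathbf{I}_K} f(x)\chi(x)\|x\|^s\, d^*x.\]
En d\'ecoupant $\mathbf{I}_K$ selon $\|x\|\ge 1$ et $\|x\|\le 1$ et en utilisant la d\'ecomposition $\mathbf{I}_K=\mathbf{I}_K^0\times \mathbf{R}^{+*}$, on ram\`ene l'\'etude de $Z(f,\chi,s)$ \`a celle de l'int\'egrale sur $\mathbf{I}_K^0/K^*$ (compact par le th\'eor\`eme \ref{tid}) de la \emph{fonction th\^eta}
\[\Theta(f,x)=\sum_{\alpha\in K^*} f(\alpha x).\]
Le point-cl\'e, et \`a mon sens la principale difficult\'e technique, est la \emph{formule de Poisson ad\'elique}
\[\sum_{\alpha\in K} f(\alpha x)=\frac{1}{\|x\|}\sum_{\alpha\in K}\hat f(\alpha/x),\]
qui exprime l'auto-dualit\'e de $K\subset \mathbf{A}_K$ (cette auto-dualit\'e est en soi non triviale et repose sur la compacit\'e de $\mathbf{A}_K/K$ du th\'eor\`eme \ref{tad} a), combin\'ee \`a la formule du produit). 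De cette formule on d\'eduit, apr\`es un changement de variable $x\mapsto x^{-1}$ dans l'int\'egrale sur $\|x\|\le 1$, l'identit\'e fondamentale
\[Z(f,\chi,s)=Z(\hat f,\chi^{-1}\|\cdot\|,1-s)+(\text{termes de bord}),\]
les termes de bord \'etant identiquement nuls sauf lorsque $\chi$ est principal (\ie de la forme $\|\cdot\|^{it}$), auquel cas ils produisent des p\^oles simples en $s=-it$ et $s=1-it$ (qui se correspondent sous $s\leftrightarrow 1-s$).

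Troisi\`eme \'etape (recollement). En prenant le produit sur $v$ des \'equations fonctionnelles locales et en divisant $Z(f,\chi,s)$ par $\prod_v L(\chi_v,s)$, on obtient le prolongement m\'eromorphe de $\Lambda(\chi,s)$ et l'\'equation fonctionnelle; le facteur global $W(\chi)=\prod_v \varepsilon(\chi_v,s,\psi_v)$ est ind\'ependant de $s$ (puisque $\Lambda$ et $\Lambda(\bar\chi,1-s)$ le sont) et ind\'ependant du choix de $\psi$ (gr\^ace \`a la formule du produit, les changements de $\psi_v$ se compensent); son module vaut $1$ parce que $|\chi_v|=1$ et $\hat{\hat f_v}(x)=f_v(-x)$. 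Les facteurs $|d_K|^{s/2}$ et $N(\mathfrak{f}(\chi))^{s/2}$ apparaissent naturellement en normalisant les mesures de Haar locales (respectivement via la diff\'erente de $K$ et le conducteur de $\chi$). La discussion des p\^oles r\'esulte directement de l'analyse des termes de bord au paragraphe pr\'ec\'edent. L'obstacle principal, comme annonc\'e, est la formule de Poisson ad\'elique: c'est elle qui ``contient'' tout le contenu analytique profond de l'\'enonc\'e, le reste \'etant un formalisme (certes d\'elicat \`a mettre en place, mais essentiellement m\'ecanique une fois choisis les bons tests aux places archim\'ediennes).
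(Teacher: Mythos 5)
The paper does not prove this theorem: it states it without demonstration and merely remarks afterwards that Tate's proof consists in doing Fourier analysis on the adeles, the key point being the Poisson summation formula (an analytic analogue of Riemann--Roch), thereby avoiding Hecke's original recourse to theta functions. Your sketch is a correct and faithful outline of precisely that proof of Tate --- local zeta integrals and local functional equations, the global zeta integral over $\bI_K$ reduced via the compactness of $\bI_K^0/K^*$ and adelic Poisson summation, boundary terms accounting for the pole when $\chi$ is principal --- so it matches the approach the paper points to, and there is nothing in the text to compare it against beyond that one-paragraph description.
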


La preuve de Tate \cite{tate} consiste \`a faire de l'analyse de Fourier sur l'espace des ad\`eles; le point-clef est la formule de Poisson, qui est un analogue analytique du th\'eor\`eme de Riemann-Roch\index{Théorème!de Riemann-Roch}. Cela \'evite le recours qu'avait Hecke \`a des fonctions th\^eta, g\'en\'eralisant la d\'emonstration du th\'eor\`eme \ref{triemann}.

\subsection{Seconde g\'en\'eralisation: fonctions $L$ d'Artin} Pour plus de d\'etails, voir Martinet \cite{martinet}.

\subsubsection{Rappels: th\'eorie de la ramification} R\'ef\'erence: tout bon livre de th\'eorie alg\'ebrique des nombres.

Soient $A$ un anneau de Dedekind de corps des fractions $K$: si $\fp$ est un id\'eal premier de $A$, on note $\kappa(\fp)=A/\fp$ son \emph{corps r\'esiduel}. 

Soit $L$ une extension finie s\'eparable de $K$ et $B$ la cl\^oture int\'egrale de $A$ dans $L$: c'est un anneau de Dedekind fini sur $A$. Si $\fP$ est un id\'eal premier de $B$, $\fp=A\cap \sP$ est un id\'eal premier de $A$: on dit que $\fP$ est \emph{au-dessus de $\fp$} ou que $\fP$ divise $\fp$ (notation: $\fP\mid \fp$). Partant de $\fp$, on a
\[\fp B = \prod_{\fP\mid \fp} \fP^{e_\fP}\]
o\`u $e_\fP$ est (par d\'efinition) l'\emph{indice de ramification} de $\fP$. On a la formule
\begin{equation}\label{eq4.1}
[L:K]= \sum_{\fP\mid \fp} e_\fP f_\fP
\end{equation}
o\`u $f_\fP=[\kappa(\fP):\kappa(\fp)]$ est le \emph{degr\'e r\'esiduel} de $\fP$.

Supposons $L/K$ galoisienne de groupe $G$. Alors tous les $\fP\mid \fp$ sont conjugu\'es sous l'action de $G$. On peut donc noter $e_\fP=e_\fp$, $f_\fP=f_\fp$ et \eqref{eq4.1} se r\'eduit \`a
\begin{equation}\label{eq4.2}
[L:K]= e_\fp f_\fp g_\fp
\end{equation}
o\`u $g_\fp$ est le nombre d'id\'eaux premiers au-dessus de $\fp$.
Choisissons $\fP$ au-dessus de $\fp$: on note 
\[D_\fP=\{g\in G\mid g\fP=\fP\}\]
le \emph{groupe de d\'ecomposition de $\fP$}. Il op\`ere sur le corps r\'esiduel $\kappa(\fP)$; l'extension $\kappa(\fP)/\kappa(\fp)$ est galoisienne et l'homomorphisme induit
\[D_\fP\to \Gal(\kappa(\fP)/\kappa(\fp))\]
est \emph{surjectif}. Son noyau $I_\fP$ est le \emph{groupe d'inertie en $\fP$}.  On a donc
\[(G:D_\fP)=g_\fp,\quad (D_\fP:I_\fP)=f_\fp,\quad |I_\fP| = e_\fp.\]

Soit $\fP'\mid \fp$: il existe $g\in G$ tel que $\fP'=g\fP$, et on a
\[D_{\fP'} =gD_\fP g^{-1},\quad I_{\fP'} =gI_\fP g^{-1}.\]

\subsubsection{Cas des courbes}\label{4.4.2} Supposons $K=k(C)$, o\`u $C$ est une $k$-courbe projective, lisse et g\'eom\'etriquement int\`egre. En consid\'erant les anneaux locaux de $C$ on obtient une th\'eorie locale de la ramification, qu'on peut globaliser.

Soit $L$ une extension finie et s\'eparable de $K$, de corps des constantes $l$. Pour simplifier, je suppose que $l=k$: on peut toujours s'y ramener en rempla\c cant $K$ par $Kl$.

Soit $C'$ le mod\`ele projectif lisse de $L$. Le morphisme $C'\to C$ est fini et plat: c'est un ``rev\^etement ramifi\'e''. SI $x$ est un point ferm\'e de $C'$, on note $e_x$ l'indice de ramification de l'id\'eal premier de $\sO_{C',x}$. L'information globale fournie par la ramification est la \emph{formule de Riemann-Hurwitz}
\[2g'-2=[L:K](2g-2) +\sum_{x\in C'_{(0)}} (e_x-1)\]
o\`u $g$ est le genre de $C$ et $g'$ celui de $C'$.

(Cette formule est d\'emontr\'ee dans Hartshorne \cite[ch. IV, cor. 2.4]{hartshorne} quand $k$ est alg\'ebriquement clos: dans le cas g\'en\'eral, on peut soit recopier la d\'emonstration soit se ramener \`a ce cas en constatant que $g,g'$ et les $e_x$ sont invariants par extension des scalaires.)

Supposons $L/K$ galoisienne de groupe $G$: alors l'extension $l/k$ des corps de constantes est galoisienne de groupe $g$, et l'homomorphisme $G\to g$ est \emph{surjectif}.

\subsubsection{Les automorphismes de Frobenius} Soit $K$ un corps global, et soit $L/K$ une extension finie, galoisienne de groupe $G$. Soit $v$ une place finie de $K$ et $w$ une place finie de $L$ divisant $v$. Le corps r\'esiduel $\kappa(w)$ poss\`ede un automorphisme canonique: l'\emph{automorphisme de Frobenius} $\phi_{w/v}$. Si $\kappa(v)=\F_q$, on a
\[\phi_{w/v}(x)=x^q\]
pour $x\in \kappa(w)$.

\begin{defn}\phantomsection L'\emph{automorphisme de Frobenius de $G$ en $w$} est l'\'el\'ement de $D_w/I_w$ correspondant \`a $\phi_{w/v}$: on garde la notation $\phi_{w/v}$.
\end{defn}

Supposons $v$ non ramifi\'e, de sorte que $I_w=1$ pour tout $w\mid v$. Alors l'ensemble des $\phi_{w/v}$ forme une classe de conjugaison de $G$, not\'ee $\phi_v$.

\subsubsection{Les fonctions $L$ d'Artin}\index{Fonction $L$!d'Artin} Soit $\rho:G\to GL(V)$ une repr\'esentation lin\'eaire sur un $\C$-espace vectoriel $V$ de dimension finie.

\begin{defn}[Artin, 1923-1930 \protect{\cite{artinL,artinL2}}]\phantomsection La \emph{fonction $L$ de $\rho$} est
\[L(\rho,s)=\prod_{v\in \Sigma_K^f} L_v(\rho,s)\]
avec
\[L_v(\rho,s)=\det(1-\rho(\phi_{w/v})N(v)^{-s}\mid V^{I_w})^{-1}.\]
\end{defn}

Expliquons cette d\'efinition: le groupe quotient $D_w/I_w$ op\`ere sur $V^{I_w}$; le polyn\^ome caract\'eristique de $\rho(\phi_{w/v})$ sur cet espace ne d\'epend pas du choix de $w$. Artin a commenc\'e par d\'efinir ses fonctions $L$ aux facteurs ramifi\'es pr\`es \cite{artinL}, puis a compris la formule pour ces derniers dans \cite{artinL2}.

\begin{prop}\phantomsection\label{p4.4} Les fonctions $L$ d'Artin convergent absolument pour $\Re(s)>1$ et ont les propri\'et\'es suivantes:
\begin{thlist}
\item $L(\rho\oplus \rho',s)=L(\rho,s)L(\rho',s)$.
\item Soit $L'/K$ une extension galoisienne de groupe $G'$, contenant $L/K$. Notons $\tilde\rho$ la repr\'esentation de $G'$ d\'eduite de $\rho$ (par ``inflation''). Alors $L(\tilde\rho,s)=L(\rho,s)$.
\item Soit $M/K$ une sous-extension de $L/K$, et soit $H=\Gal(L/M)$. Soit $\theta$ une repr\'esentation lin\'eaire de $H$. Alors $L(\Ind_H^G \theta,s)=L(\theta,s)$.
\end{thlist}
\end{prop}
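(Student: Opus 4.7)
The plan is to establish the four assertions in order of increasing difficulty. For \emph{absolute convergence}, observe that since $G$ is finite, every $\rho(\phi_{w/v})$ has finite order, so its eigenvalues on $V^{I_w}$ are roots of unity. This yields the bound $|L_v(\rho,s)|^{-1}\ge (1-N(v)^{-\sigma})^{\dim V}$ for $\sigma=\Re(s)>0$, whence the Euler product is dominated term-by-term by $\zeta_K(s)^{\dim V}$; by the discussion of $\zeta_K$ at \S\ref{s:dedekind} (and the theorem \ref{t:abs}), this converges absolutely for $\Re(s)>1$, and the product is multipliable in $\Dir(\C)$ there. Point (i) is then immediate: the inertia group $I_w$ acts block-diagonally on $V\oplus V'$, giving $(V\oplus V')^{I_w}=V^{I_w}\oplus V'^{I_w}$, and the same holds for Frobenius, so the determinant factors.

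For (ii), consider the tower $L'/L/K$ with surjection $\pi:G'\twoheadrightarrow G$, and pick compatible places $w'\mid w\mid v$. The standard behaviour of decomposition and inertia groups in towers gives surjections $D_{w'}\twoheadrightarrow D_w$ and $I_{w'}\twoheadrightarrow I_w$ (the kernel of the latter being $\Gal(L'/L)\cap I_{w'}$), with $\phi_{w'/v}$ mapping to $\phi_{w/v}$ modulo inertia. Since $\tilde\rho=\rho\circ\pi$, the subspaces $V^{I_{w'}}$ and $V^{I_w}$ of $V$ coincide and $\tilde\rho(\phi_{w'/v})$ restricts there to $\rho(\phi_{w/v})$; thus $L_v(\tilde\rho,s)=L_v(\rho,s)$ place by place.

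For (iii), the task is to prove the local identity $L_v(\mathrm{Ind}_H^G\theta,s)=\prod_{v'\mid v}L_{v'}(\theta,s)$ at each finite place $v$ of $K$. Fix such a $v$ and a place $w\mid v$ of $L$; set $D=D_w$ and $I=I_w$ in $G$. Classical Galois-theoretic bookkeeping identifies the places $v'$ of $M$ above $v$ with the double-coset space $H\backslash G/D$: for a representative $x$, the place $x^{-1}w$ of $L$ restricts to a place $v'$ of $M$ whose decomposition and inertia groups in $H$ are $H\cap x^{-1}Dx$ and $H\cap x^{-1}Ix$ respectively. The Mackey restriction-induction formula gives
\[\mathrm{Res}^G_D\,\mathrm{Ind}_H^G\theta\simeq\bigoplus_{x\in H\backslash G/D}\mathrm{Ind}_{D\cap x^{-1}Hx}^{D}\bigl({}^{x^{-1}}\theta\bigr),\]
and taking $I$-invariants (exact in characteristic zero) commutes with the direct sum.

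The main obstacle is the local representation-theoretic computation of the characteristic polynomial of a Frobenius lift $\phi\in D$ on the $I$-invariants of each summand. The key reduction is that one may pass to the cyclic quotient $\bar D/\bar I$ generated by the residual Frobenius $\bar\phi$ of order $f(\kappa(w)/\kappa(v))$; the $I$-invariants of the $x$-summand become an induced module of the form $\mathrm{Ind}_{\bar D\cap \overline{x^{-1}Hx}}^{\bar D}\bigl(V^{H\cap x^{-1}Ix}\bigr)$ after the identification $V^{x^{-1}Ix}\cong V^{I'}$ via $x$. A standard orbit decomposition of $\bar\phi$ acting on the coset space then yields the identity
\[\det\bigl(1-t\bar\phi\bigm|\mathrm{Ind}\,V^{I'}\bigr)=\det\bigl(1-t^{f(v'/v)}\,\theta(\phi_{v'})\bigm|V^{I_{v'}}\bigr),\]
and, after the substitution $t=N(v)^{-s}$ and noting $N(v')=N(v)^{f(v'/v)}$, the right-hand side is exactly $L_{v'}(\theta,s)^{-1}$. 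Multiplying over double cosets gives the local identity at $v$, and taking the product over all $v$ concludes.
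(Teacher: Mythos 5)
Your proof is correct. The convergence bound and points (i) and (ii) are exactly the paper's (which treats them as immediate, recording for (ii) only the key surjectivity $I_{w'}\surj I_w$ that you also isolate). For (iii) the underlying mathematics is the same but the route is packaged differently. The paper fixes $w\mid w_1\mid v$, applies the $\exp$--$\Tr$ dictionary \eqref{eq3.3} to convert the equality of Euler factors into the additive trace identity \eqref{eq3.8} for $\Tr\left((\Ind_H^G\theta)(\phi_{w/v}^n)\mid W^{I_{w/v}}\right)$, and delegates that combinatorial identity to \cite[ch. XII, \S 3]{lang}. You work multiplicatively instead: Mackey restriction--induction over the double cosets $H\backslash G/D_w$, their identification with the places $v'$ of $M$ above $v$, passage to $I_w$-invariants summand by summand, and the elementary determinant formula for a module induced from a subgroup of a cyclic group. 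The two computations are interchangeable --- Lang's exercise is precisely your double-coset count read through traces --- but your version has the advantage of displaying the correct local statement $L_v(\Ind_H^G\theta,s)=\prod_{v'\mid v}L_{v'}(\theta,s)$ with the product taken over \emph{all} places of $M$ above $v$, whereas the paper's displayed reduction names only the single place $w_1$ below the chosen $w$ and leaves the remaining double cosets implicit in the citation. What the paper's route buys is brevity and the reuse of a mechanism (\eqref{eq3.3}) needed elsewhere in the text; what yours buys is a self-contained argument whose only inputs are Mackey's formula and the behaviour of invariants under the normal inertia subgroup of the decomposition group.
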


\begin{proof} La convergence absolue se montre en majorant par la fonction z\^eta de Dedekind de $K$\index{Fonction zêta!de Dedekind}. Les propri\'et\'es (i) et (ii) sont triviales\footnote{Pour (ii), le point est que si $w'$ est une place de $L'$ au-dessus de $w$, l'homomorphisme $I_{w'}\to I_w$ est surjectif.}; montrons (iii) qui est la plus importante.

Soit $w\in \Sigma_L^f$ et soient $w_1,v$ les places induites sur $M$ et $K$ respectivement. Pour plus de pr\'ecision, notons $I_{w/w_1}$ et $I_{w/v}$ les groupes d'inertie correspondants. Soit $V$ l'espace de repr\'esentation de $\theta$, et soit $W$ l'espace de repr\'esentation de $\Ind_H^G\theta$. Il suffit de montrer l'\'egalit\'e
\[\det(1-(\Ind_H^G \theta)(\phi_{w/v})N(v)^{-s}\mid W^{I_{w/v}})^{-1}=  \det(1-\theta(\phi_{w/w_1})N(w_1)^{-s}\mid V^{I_{w/w_1}})^{-1}.\]

Posons $f=f_{w_1/w} = [\kappa(w_1):\kappa(v)]$. Comme $N(w_1)=N(v)^f$, il s'agit par \eqref{eq3.3} de d\'emonter l'identit\'e
\[
\sum_{n=1}^\infty\Tr ((\Ind_H^G \theta)(\phi_{w/v}^n)\mid W^{I_{w/v}})\frac{t^n}{n}
= \sum_{n=1}^\infty\Tr (\theta(\phi_{w/w_1}^n)\mid V^{I_{w/w_1}})\frac{t^{nf}}{n}
\]
soit
\begin{equation}\label{eq3.8}\Tr \left((\Ind_H^G \theta)(\phi_{w/v}^n)\mid W^{I_{w/v}}\right)=
\begin{cases}
0&\text{si $f\nmid n$}\\
f\Tr \left(\theta(\phi_{w/w_1}^{n/f})\mid V^{I_{w/w_1}}\right) &\text{si $f\mid n$}
\end{cases}
\end{equation}
ce qui est un petit exercice amusant \cite[ch. XII, \S 3]{lang}.
\end{proof}

La propri\'et\'e (ii) permet de parler de $L(\rho,s)$ pour une repr\'esentation complexe (continue) du \emph{groupe de Galois absolu} de $K$, sans faire le choix d'une extension finie galoisienne $L/K$.

\subsubsection{La loi de r\'eciprocit\'e d'Artin}

Soit $L/K$ une extension \emph{ab\'elienne} de groupe $G$. Soit $S\subset \Sigma_K$ un ensemble fini contenant $\Sigma_K^\infty$ et l'ensemble des places ramifi\'ees dans $L$. Alors, pour tout $v\in \Sigma_K-S$, l'automorphisme de Frobenius $\phi_v$ est un \'el\'ement bien d\'efini de $G$. D'o\`u un homomorphisme
\begin{align}\label{eq:artin}
F_{L/K}:I_K^S&\to G\\
v&\mapsto \phi_v.\notag
\end{align}

C'est l'\emph{application de r\'eciprocit\'e d'Artin}.

\begin{thm}[Artin, 1927 \protect{\cite{artin-rec}}] \phantomsection\label{t:artinrec}Le couple $(S,F_{L/K})$ de \eqref{eq:artin} est admissible au sens de la d\'efinition \ref{d4.4}. De plus, $F_{L/K}$ est surjectif. Il induit un isomorphisme
\[F_{L/K}:\Cl^\fm(K)\iso G\]
pour un module $\fm$ convenable.
\end{thm}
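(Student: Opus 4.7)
Le plan est de traduire l'\'enonc\'e dans le langage ad\'elique fourni par la proposition \ref{p4.1}, de le r\'eduire \`a une combinaison de la th\'eorie du corps de classes local et d'une loi de r\'eciprocit\'e globale, puis de d\'eduire l'admissibilit\'e et l'isomorphisme de consid\'erations de continuit\'e.

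D'abord, je construirais une application de r\'eciprocit\'e ad\'elique candidate $\theta = \prod_v \theta_v : \bI_K \to G$ comme suit. Pour chaque place $v$ de $K$, fixons une place $w$ de $L$ au-dessus de $v$, de groupe de d\'ecomposition $G_v = D_w \subset G$. Pour $v$ non ramifi\'ee dans $L/K$ (en particulier pour $v \notin S$), $\theta_v : K_v^* \to G_v$ sera l'unique homomorphisme continu trivial sur $O_v^*$ et envoyant une uniformisante sur $\phi_{w/v}$. Pour les places ramifi\'ees restantes, on invoque la th\'eorie du corps de classes local pour obtenir une surjection canonique $\theta_v : K_v^* \twoheadrightarrow G_v$ de noyau $N_{L_w/K_v}(L_w^*)$. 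La restriction de $\theta$ \`a $\bI_K^S$ co\"\i ncidera alors, via l'isomorphisme $i$ de la proposition \ref{p4.1}, avec l'homomorphisme induit par $F_{L/K}$.

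Le c\oe ur de la d\'emonstration consiste \`a montrer que $\theta$ est trivial sur la copie diagonale de $K^*$, afin qu'il descende en un homomorphisme continu $C_K \to G$. C'est la loi de r\'eciprocit\'e globale, que j'\'etablirais par deux in\'egalit\'es. La \emph{premi\`ere in\'egalit\'e}, $[C_K : \theta^{-1}(1) \cdot \iota(K^*)] \geq [L:K]$, est la partie analytique: elle suivrait de la comparaison des comportements m\'eromorphes en $s=1$ de $\zeta_L$ d'une part, et de produits de fonctions $L$ de Hecke attach\'ees aux caract\`eres unitaires de $C_K/\theta^{-1}(1)K^*$ d'autre part, en utilisant de mani\`ere cruciale la non-annulation $L(\chi,1)\neq 0$ (th\'eor\`eme \ref{t4.1}, \'etendu aux caract\`eres de Hecke via le th\'eor\`eme \ref{t:hecke}). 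La \emph{seconde in\'egalit\'e}, donnant la borne oppos\'ee, est la partie alg\'ebrique: on se ram\`enerait via une induction de type Brauer au cas des extensions cycliques, o\`u le th\'eor\`eme 90 de Hilbert, combin\'e \`a des calculs explicites de type Kummer apr\`es adjonction de racines de l'unit\'e, montre que le sous-groupe des normes a l'indice attendu. Les deux in\'egalit\'es ensemble forcent l'\'egalit\'e, et en conjonction avec la trivialit\'e manifeste de $\theta$ sur les normes globales $N_{L/K}(L^*)$, elles entra\^\i nent l'annulation de $\theta$ sur $K^*$ tout entier.

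Une fois que $\theta$ se factorise en $\bar\theta:C_K \to G$, l'admissibilit\'e de $(S, F_{L/K})$ est automatique: $\ker \bar\theta$ est ouvert dans $C_K$, donc sa pr\'eimage dans $\bI_K$ contient un voisinage de $1$ de la forme $\prod_{v \in S} U_v \times \prod_{v \notin S} O_v^*$, fournissant le $\epsilon$ requis par la d\'efinition \ref{d4.4}. La surjectivit\'e de $F_{L/K}$ s'ensuit, par exemple du th\'eor\`eme de densit\'e de Chebotarev (lui-m\^eme cons\'equence des m\'ethodes analytiques de la premi\`ere in\'egalit\'e). L'isomorphisme induit $\Cl^\fm(K) \iso G$, pour $\fm$ un multiple convenable du conducteur de $\bar\theta$, provient de l'identification de $\ker \bar\theta$ avec le sous-groupe de $\bI_K^\fm/K^\fm$ engendr\'e par les unit\'es locales aux places de $S$, via la proposition \ref{p4.1}. \emph{L'obstacle principal sera la seconde in\'egalit\'e}: il n'existe pas de raccourci, et l'argument requiert soit un d\'eveloppement substantiel de la cohomologie galoisienne (calcul de la classe fondamentale dans $H^2(G, C_L)$ et application du th\'eor\`eme de Tate), soit l'approche explicite classique remontant \`a Chevalley via la th\'eorie de Kummer et le th\'eor\`eme de la norme de Hasse dans le cas cyclique.
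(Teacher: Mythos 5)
The paper does not actually prove this theorem: its ``proof'' is the single sentence \emph{Voir n'importe quel bon livre de th\'eorie du corps de classes}, plus the remarks that surjectivity follows from \v Cebotarev and that injectivity (the existence of $\fm$) is the hard part. Your text is therefore not competing with an argument in the paper; it is an outline of the standard id\'elique proof of global class field theory, and its architecture (local reciprocity maps glued into $\theta=\prod_v\theta_v$, two index inequalities, descent to $C_K$, \v Cebotarev for surjectivity) is the right one and is consistent with what the paper gestures at.

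Two points in the sketch deserve correction, one cosmetic and one substantive. Cosmetically, you have the two inequalities transposed: the argument using $L(\chi,1)\neq 0$ shows that a finite-index subgroup $H\subset C_K$ containing the norms (hence the split primes, which have density $1/[L:K]$) satisfies $[C_K:H]\le[L:K]$, i.e.\ the analytic input gives the \emph{upper} bound on the norm index, while the Kummer-theoretic/Herbrand-quotient computation for cyclic extensions gives the lower bound. Substantively, the sentence ``Les deux in\'egalit\'es ensemble forcent l'\'egalit\'e, et en conjonction avec la trivialit\'e manifeste de $\theta$ sur les normes globales \dots\ elles entra\^\i nent l'annulation de $\theta$ sur $K^*$ tout entier'' is a genuine gap. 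The two inequalities yield only the norm index theorem $[\bI_K:N_{L/K}\bI_L\cdot K^*]=[L:K]$; since $\Ker\theta$ is merely \emph{some} subgroup of index $\le[L:K]$ containing $N_{L/K}\bI_L$, nothing so far identifies it with $N_{L/K}\bI_L\cdot K^*$, and $\theta(K^*)=1$ does not follow. This vanishing is Artin's reciprocity law proper, and it requires a separate argument: the explicit verification for cyclotomic extensions followed by Artin's reduction (``crossing with cyclotomic extensions'', using \v Cebotarev), or equivalently the determination of the fundamental class in $H^2(G,C_L)$ and Tate's theorem --- ingredients you mention only in passing as part of ``the second inequality'', whereas they constitute a distinct and essential step beyond the index computation.
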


\begin{proof} Voir n'importe quel bon livre de th\'eorie du corps de classes. La surjectivit\'e r\'esulte d'une g\'en\'eralisation du th\'eor\`eme de la progression arithm\'etique: le \emph{th\'eor\`eme de densit\'e de \v Cebotarev}. L'injectivit\'e (\ie l'existence de $\fm$) est beaucoup plus difficile.
\end{proof}

\begin{rque}\phantomsection L'isomorphisme ``abstrait'' du th\'eor\`eme \ref{t:artinrec} est d\^u \`a Takagi (1922, \cite{takagi}); c'est Artin qui a construit cet isomorphisme explicite. Citons Tate \cite[p. 315]{tate-hilbert}:
\begin{quote}\it
How did Artin guess his reciprocity law? He was not looking for it, not trying to solve a Hilbert problem.  Neither was he, as would seem so natural to us today, seeking a natural isomorphism, to make Takagi's theory more functorial. He was led to the law trying to show that a new kind of $L$-series which he introduced really was a generalization of the usual $L$-series. 
\end{quote}
\end{rque}

C'est le contenu du

\begin{cor}\phantomsection\label{c4.3} Supposons que la repr\'esentation $\rho$ de $G$ soit irr\'eductible (\ie de degr\'e $1$). Alors 
\[L(\rho,s)=L(\rho \circ F_{L/K},s)\]
est une fonction $L$ de Hecke.\index{Fonction $L$!de Hecke}
\end{cor}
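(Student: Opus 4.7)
The plan is to reduce the identification of $L(\rho,s)$ with a Hecke $L$-function to two ingredients: (a) the existence of the Hecke character associated with $\rho\circ F_{L/K}$, and (b) a place-by-place comparison of Euler factors.

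First, since $\rho$ is irreducible of degree $1$, it is simply a continuous character $\rho\colon G\to\C^*$. Composing with the Artin reciprocity map gives a homomorphism $\rho\circ F_{L/K}\colon I_K^S\to\C^*$, and it is immediate that admissibility of $(S,F_{L/K})$ in the sense of Definition \ref{d4.4}, asserted by Theorem \ref{t:artinrec}, is preserved under composition with the continuous character $\rho$. Proposition \ref{p4.2} then produces a unique continuous extension $\chi\colon C_K\to\C^*$, i.e.\ a Hecke character, whose associated $L$-function $L(\chi,s)$ falls under Definition \ref{d4.5}. This already identifies $L(\rho\circ F_{L/K},s)$ with a Hecke $L$-function; it remains to check that this equals $L(\rho,s)$ Euler factor by Euler factor.

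Second, for a place $v\notin S$ (unramified in $L/K$), the Artin factor is $L_v(\rho,s)=(1-\rho(\phi_v)N(v)^{-s})^{-1}$. By the very construction of $\chi$ from the admissible pair, $\chi$ is unramified at $v$ and $\chi(v)=\rho(F_{L/K}(v))=\rho(\phi_v)$, so the Hecke factor matches exactly.

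Third, the genuinely substantive step concerns a place $v\in S\cap\Sigma_K^f$. Picking $w\mid v$, since $\dim V=1$ one has $V^{I_w}=V$ or $V^{I_w}=0$ according as $\rho|_{I_w}$ is trivial or not; correspondingly $L_v(\rho,s)$ equals $(1-\rho(\phi_{w/v})N(v)^{-s})^{-1}$ or $1$. One must show that the same dichotomy governs $L_v(\chi,s)$: namely that $v\nmid\ff(\chi)$ precisely when $\rho|_{I_w}=1$, and that in this case $\chi_v(\pi_v)=\rho(\phi_{w/v})$ for a uniformizer $\pi_v$. This is the local-global compatibility of the Artin map — i.e.\ the fact that the global reciprocity isomorphism $\Cl^\fm(K)\iso G$ restricts, locally at $v$, to an identification of $O_v^*$ (modulo the appropriate level) with the image of $I_w$ in $G$, and sends the class of $\pi_v$ to $\phi_{w/v}$. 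This is precisely the hard step: Theorem \ref{t:artinrec} as stated gives only the abstract global isomorphism, whereas the equality of the two $L$-functions at ramified places requires the finer local content of class field theory (equivalently, the explicit description of the conductor $\ff$ in Theorem \ref{t:artinrec}, together with the identification of local Artin conductors with those of characters of $K_v^*$). Once this compatibility is granted, the comparison of factors at $v\in S$ is immediate, and multiplying over all $v$ yields the desired equality $L(\rho,s)=L(\chi,s)$.
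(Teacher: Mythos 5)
Votre démonstration est correcte et bien structurée; notez que le texte ne donne aucune preuve de ce corollaire — il l'énonce comme « le contenu » de la loi de réciprocité, c'est-à-dire comme la motivation historique d'Artin, et laisse la vérification implicite. Votre rédaction comble donc ce vide, et elle isole exactement le bon point: la construction de $\chi$ via l'admissibilité et la proposition \ref{p4.2}, puis la comparaison des facteurs eulériens hors de $S$, sont purement formelles, tandis que l'égalité des facteurs aux places ramifiées (le fait que $v\mid\ff(\chi)$ si et seulement si $\rho_{|I_w}\ne 1$, et que $\chi_v(\pi_v)=\rho(\phi_{w/v})$ quand $\rho_{|I_w}=1$) requiert la compatibilité locale-globale de l'application d'Artin, c'est-à-dire le fait que la composante locale de $F_{L/K}$ envoie $O_v^*$ sur l'image de $I_w$ et la classe d'une uniformisante sur $\phi_{w/v}$. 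C'est effectivement un ingrédient de théorie du corps de classes qui ne se déduit pas formellement de l'isomorphisme abstrait $\Cl^\fm(K)\iso G$ tel qu'énoncé au théorème \ref{t:artinrec}; il est raisonnable de l'admettre ici au même titre que ce théorème, mais vous avez raison de le signaler explicitement plutôt que de le passer sous silence.
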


\subsubsection{Les facteurs locaux archim\'ediens et le conducteur} Pour \'ecrire l'\'equation fonctionnelle que va v\'erifier $L(\rho,s)$, il est pratique d'introduire tout de suite la ``fonction $L$ compl\'et\'ee'' correspondante. 

Commen\c cons par les facteurs \`a l'infini lorsque $\car K=0$. Si $v$ est une place r\'eelle et si $w\mid v$, l'extension $L_w/K_v$ est de degr\'e $1$ ou $2$. Soit $G_w$ son groupe de Galois, et soit $\phi_w$ son g\'en\'erateur (le Frobenius \`a l'infini!): pour $\epsilon=\pm 1$, on pose
\[V^\epsilon =\{v\in V\mid \phi_wv=\epsilon v\}\]
(donc $V^+=V$, $V^-=0$ si $G_w=1$).

\begin{defn}\phantomsection\label{d:gamma} Pour $v\in \Sigma_K^\infty$,
\[\Gamma_v(\rho,s)=
\begin{cases}
\Gamma_\R(s)^{\dim V^+}\Gamma_\R(s + 1)^{\dim V^-} &\text{si $v$ est r\'eelle}\\
\Gamma_\C(s)^{\dim V} &\text{si $v$ est complexe}.
\end{cases}
\]
\end{defn}

La d\'efinition du \emph{conducteur d'Artin} $\ff(\rho)$ \cite{artin-cond} est beaucoup plus d\'elicate: je renvoie par exemple \`a Serre \cite[ch. VI]{serre-cl} o\`u elle est donn\'ee localement en termes des groupes de ramification sup\'erieurs (cette d\'efinition d\'epend du \emph{th\'eor\`eme de Hasse-Arf}).

On pose maintenant
\[\Lambda(\rho,s)=\left(|d_K|^{\dim V}N(\ff(\rho))\right)^{s/2} \prod_{v\in \Sigma_K^\infty}\Gamma_v(\rho,s)\cdot L(\rho,s).\]

\subsubsection{Prolongement analytique et \'equation fonctionnelle}\index{Equation fonctionnelle@\'Equation fonctionnelle}

\begin{thm}[Artin-Brauer, (1947) \phantomsection\cite{brauer}]\label{t4.2} Pour toute repr\'esentation galoisienne $\rho$, la fonction $L$ d'Artin $L(\rho,s)$ se prolonge analytiquement \`a tout le plan complexe et admet une \'equation fonctionnelle de la forme
\[\Lambda(\rho^\vee,1-s)=W(\rho)\Lambda(\rho,s)\]
o\`u $\rho^\vee$ est la repr\'esentation duale de $\rho$ et $W(\rho)$ est un nombre complexe de module $1$.
\end{thm}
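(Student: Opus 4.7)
La strat\'egie est celle de Brauer (1947), qui raffine l'approche originelle d'Artin: d\'eduire le th\'eor\`eme du cas ab\'elien (th\'eor\`eme \ref{t:hecke}) via un th\'eor\`eme d'induction pour les caract\`eres des groupes finis. Le point de d\'epart est la proposition \ref{p4.4}: l'application $\rho\mapsto L(\rho,s)$ est additive pour les sommes directes et compatible \`a l'induction (propri\'et\'es (i) et (iii)). Il s'agit donc de chercher une d\'ecomposition de $\rho$, ou de son caract\`ere, en repr\'esentations dont la fonction $L$ est connue.

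Le c\oe ur de la preuve est le \emph{th\'eor\`eme d'induction de Brauer}: si $G$ est un groupe fini, tout caract\`ere virtuel de $G$ est combinaison $\Z$-lin\'eaire finie de caract\`eres de la forme $\Ind_H^G \chi$, o\`u $H$ parcourt les sous-groupes \'el\'ementaires (produit d'un groupe cyclique par un $p$-groupe) de $G$ et $\chi$ est un caract\`ere de degr\'e $1$ de $H$. En particulier, le caract\`ere de $\rho$ s'\'ecrit $\chi_\rho = \sum_i n_i \Ind_{H_i}^G \chi_i$ avec $n_i\in \Z$ et $\chi_i$ de degr\'e $1$. Par compatibilit\'e \`a l'addition, la soustraction et l'induction, on en d\'eduit formellement, dans un convenable groupe multiplicatif de fonctions m\'eromorphes,
\[L(\rho,s) = \prod_i L(\chi_i,s)^{n_i}\]
o\`u chaque $\chi_i$ est un caract\`ere de degr\'e $1$ du sous-groupe $H_i\subset G=\Gal(L/K)$, qui correspond, par la loi de r\'eciprocit\'e d'Artin (th\'eor\`eme \ref{t:artinrec}, corollaire \ref{c4.3}), \`a un caract\`ere de Hecke du corps fixe $L^{H_i}$. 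Appliqu\'e au th\'eor\`eme de Hecke (th\'eor\`eme \ref{t:hecke}), ceci fournit d'une part le prolongement m\'eromorphe de $L(\rho,s)$ au plan complexe tout entier, et d'autre part une \'equation fonctionnelle pour $\Lambda(\rho,s)$ en combinant celles des $\Lambda(\chi_i,s)$.

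Pour que cette \'equation fonctionnelle prenne la forme annonc\'ee, il faut v\'erifier deux points de compatibilit\'e sous induction: (a) le facteur \`a l'infini $\prod_{v\in \Sigma_K^\infty} \Gamma_v(\rho,s)$ de la d\'efinition \ref{d:gamma} se comporte bien par induction (on d\'ecompose chaque place archim\'edienne de $K^{H_i}$ selon ses places au-dessus dans $K$); (b) le conducteur d'Artin $\ff(\rho)$ satisfait la \emph{formule du conducteur-discriminant}
\[\ff(\Ind_H^G \chi) = \mathfrak{d}_{L^H/K}\cdot N_{L^H/K}(\ff(\chi))\]
qui, combin\'ee avec la multiplicativit\'e des discriminants dans les tours, assure que les facteurs $|d_{K}|^{\dim V}N(\ff(\rho))$ se recombinent correctement. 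Ces deux points sont des calculs locaux standards, le second reposant sur la d\'efinition m\^eme du conducteur d'Artin via les groupes de ramification sup\'erieurs.

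Le principal obstacle conceptuel n'est pas la m\'eromorphie ni l'\'equation fonctionnelle (qui sortent ``gratuitement'' de Brauer + Hecke), mais deux autres questions, que l'on passe en revue mais qui ne font pas partie de l'\'enonc\'e. D'abord, l'\emph{holomorphie} effective de $L(\rho,s)$ hors de $s=0,1$ quand $\rho$ ne contient pas la repr\'esentation triviale: les $n_i$ pouvant \^etre n\'egatifs, Brauer ne donne \emph{a priori} qu'un quotient de fonctions holomorphes, et c'est pr\'ecis\'ement la \emph{conjecture d'Artin}, encore ouverte en g\'en\'eral. Ensuite, la factorisation de la constante $W(\rho)$ en produit de constantes locales $W(\rho_v)$, qui n\'ecessite la th\'eorie des facteurs $\epsilon$ locaux de Langlands-Deligne; pour le simple fait que $|W(\rho)|=1$, il suffit d'appliquer l'\'equation fonctionnelle deux fois en tenant compte de $\Lambda(\rho^\vee,s) = \overline{\Lambda(\rho,\bar s)}$ quand $\rho$ est unitaire.
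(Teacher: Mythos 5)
Votre esquisse est correcte et suit exactement la même stratégie que le texte: théorème d'induction de Brauer, compatibilité des fonctions $L$ complétées (facteurs archimédiens et conducteur, via la formule du conducteur-discriminant) avec l'induction de la proposition \ref{p4.4} (iii), réduction au cas abélien par la loi de réciprocité d'Artin (corollaire \ref{c4.3}), puis application du théorème \ref{t:hecke}. Vos remarques finales sur la conjecture d'Artin et sur la constante $W(\rho)$ sont également conformes à ce que dit le texte ailleurs; il n'y a rien à redire.
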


\begin{proof} Elle repose sur le \emph{th\'eor\`eme de Brauer} \cite{brauer}: toute repr\'esentation lin\'eaire de $G$ est combinaison lin\'eaire \`a coefficients dans $\Z$ d'induites de repr\'esentations de degr\'e $1$. Une version de ce th\'eor\`eme avait \'et\'e prouv\'ee ant\'erieurement par Artin, en rempla\c cant $\Z$ par $\Q$. Cela d\'emontrait le th\'eor\`eme \ref{t4.2} apr\`es \'el\'evation \`a une puissance enti\`ere.

Pour faire la d\'emonstration, il faut v\'erifier que les fonctions $L$ compl\'et\'ees continuent \`a avoir la propri\'et\'e d'induction (iii) de la proposition \ref{p4.4}, ce qui se fait facteur par facteur. On est alors ramen\'e par (une extension du) corollaire \ref{c4.3} au cas d'une fonction $L$ de Hecke, et on applique le th\'eor\`eme \ref{t:hecke}.
\end{proof}

\subsubsection{La conjecture d'Artin}\index{Conjecture!d'Artin}

Le th\'eor\`eme \ref{t4.2} donne la m\'eromorphie des $L(\rho,s)$, mais pas la

\begin{conj}[Artin]\phantomsection Si $\rho$ est une repr\'esentation irr\'eductible diff\'erente de $1$, $L(\rho,s)$ est holomorphe dans le plan complexe.
\end{conj}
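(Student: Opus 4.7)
Le plan se décompose en un cas facile, une réduction formelle, puis un obstacle frontal.

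D'abord, si $\rho$ est de degré $1$, alors c'est un caractère de $G^{\Ab}$, et par le théorème \ref{t:artinrec} joint au corollaire \ref{c4.3}, $L(\rho,s)$ s'identifie à la fonction $L$ de Hecke $L(\chi,s)$ pour un caractère de Hecke $\chi$ non trivial (puisque $\rho\ne 1$ se factorise via l'application de réciprocité en un caractère non trivial de $\Cl^\fm(K)$, donc en un caractère de Hecke unitaire d'ordre fini). Le théorème \ref{t:hecke} fournit alors l'holomorphie sur tout $\C$, puisque le pôle éventuel n'apparaît que pour $\chi=||\,||^{it}$, ce qui est exclu.

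Pour le cas général, j'appliquerais le théorème de Brauer utilisé dans la preuve du théorème \ref{t4.2}: on écrit
\[\rho \;=\; \sum_i n_i \Ind_{H_i}^G \chi_i \qquad (n_i\in\Z,\ \chi_i\text{ de degré }1)\]
et, via la proposition \ref{p4.4} (i) et (iii),
\[L(\rho,s) \;=\; \prod_i L(\chi_i,s)^{n_i}\]
où les $L(\chi_i,s)$ sont holomorphes non nulles sur $\C\smallsetminus\{1\}$ d'après le cas précédent. La contribution éventuelle en $s=1$ se contrôle en regardant $\Hom_G(\rho,\un)=0$ pour $\rho$ irréductible non triviale, ce qui donne l'holomorphie en $s=1$ via la compatibilité entre pôles de $L$ et invariants (argument classique à la Aramata--Brauer). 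Le vrai problème est ailleurs: aux points $s_0\ne 1$ où certains $L(\chi_i,s)$ admettent des zéros, les exposants $n_i<0$ produisent a priori des pôles pour $L(\rho,s)$, qu'il faut démontrer être annulés par des zéros compensatoires venant des $n_j>0$.

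C'est là l'obstacle principal, et il est à la hauteur de la conjecture: il faudrait prouver une relation de positivité subtile entre les multiplicités des zéros de fonctions $L$ de Hecke indépendantes. Je ne vois aucune approche élémentaire : même si l'on savait améliorer Brauer en écrivant $\rho$ comme combinaison $\N$-linéaire de monomiales (ce qui est faux en général, par un contre-exemple classique), l'énoncé tomberait immédiatement. La seule stratégie connue pour attaquer le cas général passe par le programme de Langlands: établir que $\rho$ est \emph{automorphe}, c'est-à-dire qu'il existe une représentation cuspidale $\pi$ de $GL_n(\bA_K)$ (avec $n=\dim\rho$) telle que $L(\rho,s)=L(\pi,s)$; l'holomorphie de $L(\pi,s)$ est alors le théorème de Godement--Jacquet. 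Cette voie a donné les cas partiels connus (Langlands--Tunnell pour les types tétraédral et octaédral, puis Khare--Wintenberger en dimension $2$ impaire via la conjecture de Serre), mais le cas général — notamment icosaédral pair, et toute la dimension $\ge 3$ non résolue par functorialité explicite — reste ouvert, faute précisément d'une théorie suffisante de la fonctorialité de Langlands. Mon « plan » honnête s'arrêterait donc à la réduction au cas de degré $1$ via Brauer, et à la constatation que le reste exige une idée authentiquement nouvelle.
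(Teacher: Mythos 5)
Vous avez correctement identifié que l'énoncé est une \emph{conjecture} et non un théorème: le texte ne la démontre pas et précise immédiatement après l'avoir énoncée qu'elle est ``toujours ouverte, sauf dans les cas où on sait prendre les coefficients positifs dans le théorème de Brauer et des cas de dimension $2$ «automorphes» établis par Langlands et ses successeurs''. Votre analyse coïncide exactement avec ce constat: la réduction au degré $1$ par Brauer (proposition \ref{p4.4} (i) et (iii), théorème \ref{t:artinrec}, corollaire \ref{c4.3}, théorème \ref{t:hecke}) ne donne que la méromorphie (c'est le théorème \ref{t4.2}), et l'obstacle que vous pointez — les exposants $n_i<0$ qui créent des pôles potentiels aux zéros des $L(\chi_i,s)$ — est précisément la raison pour laquelle la conjecture reste ouverte. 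Vos remarques sur la voie automorphe (Langlands--Tunnell, Godement--Jacquet) sont conformes au commentaire du texte.

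La seule chose que vous auriez pu ajouter: le texte démontre bel et bien un cas substantiel de la conjecture, à savoir le théorème \ref{t4.3} de Weil (corps de fonctions, extension régulière), dont la preuve motivique est donnée au \S \ref{cor-loco} — on y montre que $L(\rho,s)$ est carrément un \emph{polynôme} en $q^{-s}$, facteur de $P_1(\tilde C)$, en découpant un motif $M_\rho$ facteur direct de $h^1(\tilde C)$. C'est une stratégie (positivité via les correspondances, ou les poids) qui n'a pas d'analogue connu sur les corps de nombres, ce qui illustre bien la nature de l'obstacle que vous décrivez.
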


Cette conjecture est toujours ouverte, sauf dans les cas o\`u on sait prendre les coefficients positifs dans le th\'eor\`eme de Brauer et des cas de dimension $2$ ``automorphes'' \'etablis par Langlands et ses successeurs.\footnote{Il y a aussi le th\'eor\`eme d'Aramata-Brauer: si $L/K$ est finie galoisienne, $\zeta(L,s)/\zeta(K,s)$ est enti\`ere. Sa preuve repose sur le th\'eor\`eme d'Artin plut\^ot que sur celui de Brauer.} Toutefois, on a:

\begin{thm}[Weil]\phantomsection\label{t4.3} La conjecture d'Artin est vraie sur les corps de fonctions pour les repr\'esentations irr\'eductibles non triviales $\rho:G\to GL_n(\C)$ du groupe de Galois $G$ d'une extension $L/K$ \emph{r\'eguli\`ere}, et l'hypoth\`ese de Riemann\index{Hypothèse de Riemann!pour les courbes} est v\'erifi\'ee.
\end{thm}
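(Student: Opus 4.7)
Le plan est de ramener l'\'etude de $L(\rho,s)$ \`a celle d'une vari\'et\'e ab\'elienne, et d'appliquer le th\'eor\`eme \ref{t3.4} de Weil (hypoth\`ese de Riemann pour les vari\'et\'es ab\'eliennes).

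\textbf{\'Etape 1 (situation g\'eom\'etrique).} Soit $C$ le mod\`ele projectif lisse de $K$ sur $\F_q$, et $C'$ celui de $L$. L'hypoth\`ese de r\'egularit\'e entra\^\i ne que $\F_q$ est aussi le corps des constantes de $L$, donc que $C'$ est g\'eom\'etriquement int\`egre et le morphisme $\pi:C'\to C$ est un rev\^etement $G$-galoisien (\cf \S \ref{4.4.2}). Posons $J=J(C')$, sa jacobienne: c'est une $\F_q$-vari\'et\'e ab\'elienne de dimension $g'$ (genre de $C'$) sur laquelle $G$ op\`ere fonctoriellement.

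\textbf{\'Etape 2 (factorisation de la fonction z\^eta).} En appliquant \eqref{eq3.8} (identit\'e sur l'induction des caract\`eres) place par place non ramifi\'ee, on obtient la factorisation fondamentale
\[\zeta_{C'}(s) \cdot \prod_{v\in S} R_v(s) = \prod_\rho L(\rho,s)^{\dim \rho}\]
o\`u $\rho$ d\'ecrit les repr\'esentations irr\'eductibles de $G$, et $R_v(s)$ est un ``facteur de correction'' aux places ramifi\'ees $v\in S$, de la forme $\det(1-(\cdot) N(v)^{-s})^{\pm 1}$ sur des espaces quotients de $V^{I_w}$. Au num\'erateur, ceci induit une factorisation du polyn\^ome $P_{C'}(t)$ du th\'eor\`eme \ref{t:courbe}, et cette factorisation correspond via le th\'eor\`eme \ref{t3.7} et la discussion qui suit \`a la d\'ecomposition de $J$ en facteurs isog\`enes.

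\textbf{\'Etape 3 (d\'ecomposition isotypique de la jacobienne).} Comme $\Q[G]$ se plonge dans $\End^0(J)$, on peut utiliser les idempotents centraux $e_\rho\in \Q[G]$ associ\'es aux repr\'esentations irr\'eductibles pour produire, \`a l'aide du th\'eor\`eme de compl\`ete r\'eductibilit\'e de Poincar\'e et apr\`es multiplication par un entier convenable, une d\'ecomposition \`a isog\'enie pr\`es
\[J\sim \prod_\rho A_\rho^{\dim \rho}\]
o\`u chaque $A_\rho$ est une $\F_q$-vari\'et\'e ab\'elienne dont le module de Tate $V_\ell A_\rho$ r\'ealise la composante $\rho$-isotypique de $V_\ell J$.

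\textbf{\'Etape 4 (application de Weil).} Pour $\rho$ irr\'eductible non triviale, le th\'eor\`eme \ref{t3.4} a) appliqu\'e \`a $A_\rho$ entra\^\i ne: (i) le polyn\^ome caract\'eristique de Frobenius $\pi_{A_\rho}$ sur $V_\ell A_\rho$ est \`a coefficients entiers, et ses racines inverses sont de valeur absolue complexe $\sqrt{q}$; (ii) ce polyn\^ome, modulo les facteurs $R_v$ \`a l'\'etape 2, co\"\i ncide avec le num\'erateur de $L(\rho,s)$ vue comme fraction rationnelle en $q^{-s}$. Comme $\rho\ne 1$, le facteur ``$\rho_0=1$'' (qui contribue les p\^oles $(1-q^{-s})(1-q^{1-s})$ de $\zeta_{C'}$) n'intervient pas: $L(\rho,s)$ est donc un polyn\^ome en $q^{-s}$, d'o\`u la conjecture d'Artin; ses z\'eros sont aux points $\Re(s)=1/2$, d'o\`u l'hypoth\`ese de Riemann.

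\textbf{Obstacle principal.} La difficult\'e s\'erieuse est le contr\^ole des facteurs $R_v$ aux places ramifi\'ees: il faut v\'erifier que le facteur eul\'erien local de $L(\rho,s)$, qui fait intervenir $V^{I_w}$, correspond bien \`a la composante isotypique \emph{pr\'ec\'ement} (et non \`a un facteur auxiliaire qui introduirait p\^oles ou z\'eros suppl\'ementaires). On peut s'en sortir soit en montrant directement que les $R_v$ sont des polyn\^omes en $N(v)^{-s}$ \`a racines inverses de module $1$ (argument du type corollaire \ref{c:courbesg} comparant rev\^etement ramifi\'e et normalis\'e), soit en passant \`a la courbe ouverte $U=C-S$ et en travaillant avec le syst\`eme local associ\'e \`a $\rho$ sur $U$. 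Dans les deux cas, de tels facteurs ``parasites'' sont sans influence sur les p\^oles et z\'eros dans la bande critique.
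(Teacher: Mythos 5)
Votre plan est correct et suit pour l'essentiel la m\^eme route que le texte: c'est la d\'emonstration originelle de Weil, que le \S \ref{cor-loco} reformule en termes de motifs. L\`a o\`u vous d\'ecomposez la jacobienne $J(C')$ \`a isog\'enie pr\`es en composantes isotypiques $A_\rho$ au moyen des idempotents de $\Q[G]\subset \End^0(J)$, le texte d\'ecoupe directement un motif $M_\rho$ dans $h(\tilde C)$ par le projecteur associ\'e \`a $\rho$, puis observe que $M_\rho$ est facteur direct de $h^1(\tilde C)$ parce que $\rho$ est orthogonale \`a la repr\'esentation triviale (le projecteur $1-\frac{1}{|G|}p^*p_*$ a pour compl\'ement $h(C)$, donc tue les parties $\un$ et $\L$ --- c'est exactement l\`a que sert l'hypoth\`ese de r\'egularit\'e); via le th\'eor\`eme \ref{t3.7}, les deux points de vue sont \'equivalents. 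La conclusion (polynomialit\'e, int\'egralit\'e et racines inverses de module $\sqrt q$) vient alors du th\'eor\`eme \ref{t6.4} pour $h^1(\tilde C)$, resp. de votre th\'eor\`eme \ref{t3.4} pour $A_\rho$, qui ne reposent dans ce cas que sur la positivit\'e \ref{t:lemmeimportant}.

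Deux remarques sur votre \'etape 2 et votre \og obstacle principal \fg. D'abord, la factorisation $\zeta_{C'}(s)=\prod_\rho L(\rho,s)^{\dim\rho}$ est \emph{exacte}, sans facteurs $R_v$: c'est la proposition \ref{p4.4} (i) et (iii) appliqu\'ee \`a la repr\'esentation r\'eguli\`ere $\Ind_1^G\mathbf{1}$, les facteurs locaux d'Artin \'etant pr\'ecis\'ement con\c cus (via $V^{I_w}$) pour que cette identit\'e vaille aussi aux places ramifi\'ees. Ensuite, le vrai point d\'elicat --- identifier $L(\rho,s)$ au polyn\^ome caract\'eristique de Frobenius sur la composante $\rho$-isotypique de $H^1$ --- ne peut pas \^etre r\'egl\'e par votre repli \og les facteurs parasites sont sans influence dans la bande critique \fg: l'\'enonc\'e affirme que $L(\rho,s)$ est un polyn\^ome en $q^{-s}$ dont \emph{tous} les z\'eros sont sur $\Re(s)=1/2$, et des facteurs suppl\'ementaires \`a racines inverses de module $1$ introduiraient des z\'eros ou des p\^oles sur $\Re(s)=0$. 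La r\'esolution est qu'il n'y a \emph{aucun} facteur parasite: la formule des traces appliqu\'ee aux $\pi_{\tilde C}^n g^{-1}$ (points fixes transverses, \cf \S \ref{s2.8.7} et la formule \eqref{eq5.3}), apr\`es sommation sur $g\in G$ pond\'er\'ee par le caract\`ere de $\rho$, reproduit exactement le facteur local $\det(1-\rho(\phi_{w/v})N(v)^{-s}\mid V^{I_w})^{-1}$ y compris aux places ramifi\'ees --- de mani\`ere \'equivalente, la fibre g\'eom\'etrique de $\pi_*\Q_l$ en un point ramifi\'e est $\Q_l[G/I_w]$, de sorte que la partie $\rho$-isotypique de $H^1$ de $\tilde C$ co\"\i ncide avec la cohomologie du prolongement $j_*$ du syst\`eme local associ\'e \`a $\rho$ sur l'ouvert de bonne r\'eduction. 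C'est ce calcul, et non une majoration des facteurs r\'esiduels, qui ach\`eve votre \'etape 2.
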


La preuve est dans  \cite[\S V]{weil2}. En fait, $Z(\rho,t)$ est un polyn\^ome $P(t)\in \Z[t]$ dont les racines inverses sont de valeurs absolues complexes $\sqrt{q}$, si $q$ est le nombre d'\'el\'ements du corps des constantes de $K$. L'id\'ee de la d\'emonstration est de plonger l'alg\`ebre du groupe $\Z[G]$ dans l'anneau des correspondances $\Corr_\equiv(C,C)$ o\`u $C$ est le mod\`ele projectif lisse de $K$, et d'utiliser le th\'eor\`eme de positivit\'e \ref{t:lemmeimportant}.

\begin{rques} a) Le th\'eor\`eme \ref{t4.3} est faux si on enl\`eve l'hypoth\`ese ``r\'eguli\`ere'' (c'est-\`a-dire si $K$ et $L$ n'ont pas le m\^eme corps des constantes): prenons par exemple $K=\F_q(x)$, $L=\F_{q^2}(x)$ et pour $\rho$ le caract\`ere de valeur $-1$ de $G=\Gal(L/K)=\Z/2$. Alors 
\[Z(\rho,t)=\frac{Z(\P^1_{\F_{q^2}},t)}{Z(\P^1_{\F_q},t)}= \frac{(1-t^2)^{-1}(1-q^2t^2)^{-1}}{(1-t)^{-1}(1-qt)^{-1}}=\frac{1}{(1+t)(1+qt)}.\]
Ceci contraste fortement avec le cas des corps de nombres, o\`u on ne demande pas que $K$ et $L$ aient le m\^eme nombre de racines de l'unit\'e.\\
b) Nous donnerons une d\'emonstration motivique du th\'eor\`eme \ref{t4.3} au \S \ref{cor-loco}.
\end{rques}

\subsection{Le mariage d'Artin et de Hecke} 
Il est r\'ealis\'e par Weil qui introduit ses $W$-groupes (appel\'es commun\'ement \emph{groupes de Weil}), modifications d'un groupe de Galois par l'interm\'ediaire du corps de classes \cite{weil-classes}: c'est un groupe localement compact. Dans le cas local non archimédien, ou global de caractéristique $>0$, le groupe de Weil $W(K)$ est l'image réciproque de $\Z$ (engendré par Frobenius) par l'homomorphisme continu $\pi:G_K\to \hat \Z$, où $G_K$ est le groupe de Galois absolu de $K$: si $K$ est local (resp. global), $\pi$ est donné par la restriction à l'extension non ramifiée maximale de $K$ (resp. au sous-corps des constantes). Le groupe de Weil de $\C$ (resp. de $\R$) est $\C^*$ (resp. l'extension non triviale de $\mu_2$ par $\C^*$ réalisée par $\langle \C^*,j\rangle$ dans les unités du corps des quaternions réels).  Dans le cas des corps de nombres, le groupe de Weil $W(L/K)$ d'une extension galoisienne finite de groupe $G$ est une certaine extension de $G$ par le groupe des classes d'id\`eles $C_L$; il y a ensuite un travail hautement non trivial pour montrer que ces extensions forment un système cohérent ``convergeant'' vers un groupe de Weil $W(K)$, extension de $G_K$. Voir aussi Artin-Tate \cite{at}.

Ceci permet à Weil de d\'efinir des fonctions $L$ qui g\'en\'eralisent \`a la fois celles de Hecke et d'Artin, et d'étendre la factorisation d'Artin des fonctions zêta à toutes les fonctions $L$ de Hecke.  
Voir aussi le commentaire de Tate \`a la fin de \cite{tate-hilbert}.

\subsection{La constante de l'équation fonctionnelle}\index{Equation fonctionnelle@\'Equation fonctionnelle} Ce numéro est consacré à
 la constante $W(\rho)$ pour la fonction $L$ d'Artin d'une représentation galoisienne complexe $\rho$,  apparaissant dans l'équation fonctionnelle du théorème \ref{t4.2}. Deux excellentes références sont les articles de Martinet \cite{martinet} et de Tate \cite{tate-local}, dont nous résumons ici les points principaux.
 
Dans sa thèse \cite[4.5]{tate}, Tate donne une formule explicite pour $W(\rho)$ quand $\rho$ est de degré $1$\footnote{Plus correctement, il donne une telle formule pour la fonction $L$ de Hecke du caractère\index{Caractère!de Hecke} correspondant à $\rho$ par la théorie du corps de classes.}; par le théorème de Brauer, cela détermine la valeur de $W(\rho)$ pour $\rho$ quelconque. Mais Tate fait mieux: pour $\rho$ de degré $1$ il décompose $W(\rho)$ en un produit de facteurs locaux $W(\rho_v)$, ne dépendant que de la restriction $\rho_v$ de $\rho$ au groupe de Galois local $G_v$ en la place $v$. Plus précisément, $W(\rho_v)$ est une puissance de $i$ si $v$ est réelle et est une certaine somme de Gauss normalisée si $v$ est non archimédienne, égale à $1$ si $\rho$ est non ramifiée en $v$. Voir \emph{loc. cit.}, 2.5 pour les formules précises.

Peut-on étendre cette décomposition au cas général? La réponse est oui:

\begin{thm}[Langlands, Deligne]\footnote{Langlands et Deligne préfèrent exprimer ce résultat en termes de \emph{facteurs epsilon}.}\phantomsection\label{tld} Soit $K$ un corps local. Il existe une et une seule famille d'homomorphismes
\[W:R(G_E)\to S^1\]
où $E$ décrit les extensions finies de $K$, $G_E$ est le groupe de Galois absolu de $E$, $R(G_E)$ est le groupe des représentations complexes continues de $G_E$, et $S^1\subset \C^*$ est le cercle unité, ayant les propriétés suivantes:
\begin{thlist}
\item Si $\deg\rho=1$, $W(\rho)$ est le facteur local déterminé par Tate.
\item Si $E'/E$ est une extension finie et $\rho\in R(G_{E'})$ avec $\deg\rho=0$, on a $W(\Ind_{G_{E'}}^{G_E} \rho) = W(\rho)$.
\end{thlist}
On a
\begin{equation}\label{eq4.3}
W(\rho)W(\rho^*) = \det\nolimits_\rho(-1) 
\end{equation}
où $\rho^*$ est la représentation duale de $\rho$, et $\det_\rho$ est son déterminant (représentation de degré $1$).
\end{thm}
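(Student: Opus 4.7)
\medskip

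\noindent\textbf{Esquisse de plan.} Je s\'eparerais la d\'emonstration en trois volets: unicit\'e, existence, et formule (\ref{eq4.3}).

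Pour l'\emph{unicit\'e}, le point-cl\'e est un raffinement du th\'eor\`eme de Brauer d\^u \`a Langlands: le sous-$\Z$-module de $R(G_E)$ form\'e des repr\'esentations virtuelles de degr\'e $0$ est engendr\'e par les \'el\'ements de la forme $\Ind_{G_{E'}}^{G_E}(\chi-1_{G_{E'}})$, o\`u $E'$ d\'ecrit les extensions finies de $E$ et $\chi$ un caract\`ere de degr\'e $1$ de $G_{E'}$. Accordant cela, l'axiome (ii) r\'eduit $W$ sur un tel g\'en\'erateur \`a $W(\chi-1_{G_{E'}})= W(\chi)\,W(1_{G_{E'}})^{-1}$, valeur enti\`erement d\'etermin\'ee par (i) puisque la formule explicite de Tate donne $W(1)=1$. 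Pour $\rho\in R(G_E)$ arbitraire, d\'ecomposer $\rho=(\rho-d\cdot 1_{G_E})+d\cdot 1_{G_E}$ avec $d=\deg\rho$ se ram\`ene au cas de degr\'e $0$, et par multiplicativit\'e $W(\rho)$ est ainsi uniquement d\'etermin\'e.

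Pour l'\emph{existence}, l'obstacle principal est la coh\'erence: il faut s'assurer que la valeur de $W$ prescrite \`a partir d'une \'ecriture $\rho=\sum n_i\,\Ind_{G_{E_i}}^{G_E}(\chi_i-1)$ ne d\'epend pas de cette \'ecriture. La strat\'egie que je suivrais, dans l'esprit de Deligne, consiste \`a se ramener au cas global: \'etant donn\'ee une repr\'esentation locale $\rho$ de $G_E$, on produit, par la th\'eorie du corps de classes et un argument d'approximation \`a la \v Cebotarev, un corps de nombres $F$ avec une place $w$ telle que $F_w\simeq E$, puis une repr\'esentation globale continue $\tilde\rho:G_F\to GL(V)$ dont la composante locale en $w$ est $\rho$ et qui est somme d'induites de caract\`eres globaux (ce qui est possible d'apr\`es la variante $\Z$ du th\'eor\`eme de Brauer) ailleurs. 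La constante globale $W(\tilde\rho)$ fournie par le th\'eor\`eme \ref{t4.2} se d\'ecompose alors en un produit $\prod_v W_v$ o\`u, en toute place $v\ne w$, $W_v$ est d\'efini par la formule explicite de Tate appliqu\'ee aux composantes locales des caract\`eres globaux, et on pose $W(\rho):=W_w$. Le travail d\'elicat est de v\'erifier que cette valeur ne d\'epend ni du choix de $(F,w,\tilde\rho)$ ni de l'\'ecriture globale choisie; cela se traduit par une collection d'identit\'es explicites entre sommes de Gauss normalis\'ees, essentiellement ramenables \`a la formule du produit pour les id\`eles et \`a une analyse soigneuse du comportement des facteurs $\varepsilon$ de Tate sous induction d'une extension locale.

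Pour la formule (\ref{eq4.3}), je commencerais par le cas d'un caract\`ere $\chi$: la formule explicite de Tate (impliquant les sommes de Gauss $\tau(\chi,\psi)\tau(\chi^{-1},\psi)=\chi(-1)\cdot q^{n(\chi)}$ apr\`es normalisation) donne directement $W(\chi)W(\chi^{-1})=\chi(-1)=\det_\chi(-1)$. Pour $\rho$ g\'en\'eral, j'utiliserais la compatibilit\'e de l'involution $\rho\mapsto \rho^*$ et du d\'eterminant $\det_\rho(-1)$ \`a l'induction: on a $(\Ind_H^G\theta)^*=\Ind_H^G(\theta^*)$ et $\det_{\Ind_H^G\theta}(-1)=\det_\theta(\mathrm{tr}_{G/H}(-1))\cdot \varepsilon_{G/H}(-1)^{\deg\theta}$, $\varepsilon_{G/H}$ d\'esignant le caract\`ere signature de l'action de $G$ sur $G/H$. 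Combin\'ee \`a la description obtenue \`a l'\'etape 1 \`a partir de la pr\'esentation de Brauer--Langlands, cette multiplicativit\'e propage l'identit\'e du cas des caract\`eres \`a toute repr\'esentation virtuelle, fournissant (\ref{eq4.3}).

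\emph{Obstacle principal.} Le point r\'eellement difficile est le second volet, c'est-\`a-dire la d\'emonstration de l'existence: tout revient \`a montrer que les facteurs locaux $W(\rho_v)$ introduits par Tate en degr\'e $1$ satisfont toutes les identit\'es qu'impose la compatibilit\'e \`a l'induction des repr\'esentations virtuelles de degr\'e $0$. C'est pr\'ecis\'ement l\`a que r\'eside la profondeur du th\'eor\`eme de Langlands-Deligne, et aucune de ces identit\'es n'est accessible par un calcul direct en toute g\'en\'eralit\'e; l'astuce fondamentale est l'utilisation de l'\'equation fonctionnelle globale comme ``colle'' reliant les facteurs locaux entre eux.
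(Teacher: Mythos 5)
Le texte ne démontre pas ce théorème: il se borne à remarquer que l'unicité est une conséquence presque immédiate du théorème de Brauer et que l'existence, beaucoup plus profonde, est due à Langlands (démonstration non publiée d'environ 400 pages) puis à Deligne par voie globale, avec renvoi à \cite{deligne-constantes} et \cite{tate-local}. Votre plan reproduit fidèlement cette architecture: la réduction de l'unicité au fait que les représentations virtuelles de degré $0$ sont engendrées par les $\Ind_{G_{E'}}^{G_E}(\chi-1)$ est bien l'argument standard (c'est la forme du théorème de Brauer utilisée par Tate dans \emph{Local constants}), et votre description de l'existence --- globalisation de la situation locale, utilisation de l'équation fonctionnelle globale du théorème \ref{t4.2} comme \og colle\fg, définition de $W(\rho)$ comme le facteur manquant en la place $w$ --- est exactement la stratégie de Deligne. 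La vérification de \eqref{eq4.3} par le cas des caractères (identité des sommes de Gauss) puis propagation par la formule du déterminant d'une induite est également correcte.

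Cela dit, votre texte reste un plan et non une démonstration: le point que vous identifiez vous-même comme l'obstacle principal --- l'indépendance de $W_w$ vis-à-vis du choix de $(F,w,\tilde\rho)$ et de l'écriture en combinaison d'induites, c'est-à-dire la cohérence de la définition --- constitue l'intégralité du contenu du théorème, et il n'est pas établi ici. En particulier, la construction d'une représentation globale $\tilde\rho$ dont la composante en $w$ est $\rho$ et qui soit \og contrôlable\fg{} en toutes les autres places demande un soin réel (on globalise en fait l'extension galoisienne locale en imposant un comportement modéré ailleurs), et les identités entre constantes locales de Tate qui en résultent ne sont pas des vérifications de routine. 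Votre esquisse est donc un résumé exact de la preuve connue, au même titre que la remarque du texte, mais elle ne saurait en tenir lieu.
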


Cet énoncé s'étend sans difficulté aux représentations continues du groupe de Weil (voir numéro précédent). L'unicité est une conséquence presque immédiate du théorème de Brau\-er. L'existence est une toute autre affaire: Langlands en a donné une démonstration non publiée d'environ 400 pages \cite{langlands} avant que Deligne n'en trouve une démonstration beaucoup plus courte, par voie globale (voir \cite{deligne-constantes} et \cite{tate-local}).

Voici quelques propriétés remarquables de la constante $W(\rho)$:

\begin{thm} a) (Dwork, Deligne, \cite[cor. 4]{tate-local}) Si $\rho$ est irréductible et sauvagement ramifiée, $W(\rho)$ est une racine de l'unité. (C'est aussi le cas si $\rho$ est non ramifiée, par réduction au cas abélien.)\\
b) (Deligne \cite{deligne-constantes2}) Si $\rho$ est une représentation orthogonale de $G_K$ virtuelle de degré $0$ et de déterminant $1$, on a
\[W(\rho)=(-1)^{w_2(\rho)}\]
où $w_2(\rho)\in H^2(K,\Z/2)\simeq \Z/2$ est la \emph{seconde classe de Stiefel-Whitney} de $\rho$. 
\end{thm}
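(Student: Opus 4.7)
\smallskip

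\emph{Part a).} The plan is to apply Kronecker's theorem: an algebraic number is a root of unity if and only if it is an algebraic integer whose image under every embedding into $\C$ has absolute value $1$, and whose $\ell$-adic absolute value is $1$ for every non-archimedean place. Thus I would proceed in three steps. First, observe that $W(\rho)$ is algebraic: by the uniqueness part of Theorem \ref{tld} combined with Tate's explicit formula for degree $1$ and Brauer's theorem, $W(\rho)$ lies in the cyclotomic field generated by the values of $\rho$. Second, the complex absolute value bound: for every $\sigma \in \Gal(\bar\Q/\Q)$ one has $W(\rho)^\sigma = W(\rho^\sigma)$, and $\rho^\sigma$ remains irreducible and wildly ramified, so $|W(\rho^\sigma)| = 1$ by the general fact that $|W(\pi)|=1$ for any irreducible complex representation (or, directly, from the functional equation of Theorem \ref{t4.2}). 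Third, the non-archimedean absolute values: this is Dwork's original contribution for characters and Deligne's extension to higher dimension. The idea is that for a degree $1$ character $\chi$ of conductor $\fp^n$ with $n \ge 2$, the Gauss sum $\tau(\chi) = \sum_x \chi(x) \psi(x/\pi^n)$ can be analyzed via Stickelberger-type congruences, showing that each of its non-archimedean absolute values equals $1$; the wild ramification ($n \ge 2$) is precisely what eliminates the $p$-adic denominator that would otherwise appear. To pass to higher-dimensional irreducible $\rho$, one invokes Brauer's theorem to write $\rho = \sum n_i \Ind_{H_i}^G \chi_i$ with $\chi_i$ of degree $1$, uses inductivity (property (ii) of Theorem \ref{tld}), and verifies that wild ramification of $\rho$ forces enough of the $\chi_i$ to themselves be wildly ramified. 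The main obstacle is precisely this last $p$-adic analysis: controlling the non-archimedean absolute values of Gauss sums uniformly.

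\smallskip

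\emph{Part b).} The starting observation is immediate: an orthogonal representation satisfies $\rho \simeq \rho^*$, so \eqref{eq4.3} of Theorem \ref{tld} gives
\[
W(\rho)^2 = \det\nolimits_\rho(-1) = 1
\]
since $\det\rho = 1$, hence $W(\rho) \in \{\pm 1\}$. It remains to identify the sign with $(-1)^{w_2(\rho)}$.

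\smallskip

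The plan for this identification is to show that both sides, viewed as functions on the Grothendieck group of virtual orthogonal representations of degree $0$ and determinant $1$, define the same homomorphism with values in $\{\pm 1\}$. On the one hand, $\rho \mapsto W(\rho)$ is a homomorphism by Theorem \ref{tld}; on the other, $\rho \mapsto (-1)^{w_2(\rho)}$ is one by the Whitney sum formula for Stiefel-Whitney classes. To check equality, I would reduce to a small family of test cases via an orthogonal analogue of Brauer induction: every virtual orthogonal representation of degree $0$ and determinant $1$ is a $\Z$-linear combination of elements of the form $\Ind_{H}^G(\theta - \theta')$ or $\Ind_H^G \theta - \deg(\theta)\cdot \un$, where $\theta$ ranges over real-valued characters of subgroups $H$ (so $\theta$ factors through $\{\pm 1\}$). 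Verifying the formula on such building blocks requires: (i) checking that the left-hand side is inductive in an appropriate sense (using \eqref{eq4.3} and property (ii) of Theorem \ref{tld}), and (ii) checking that $w_2$ behaves compatibly under induction, which is a classical computation with transfer in Galois cohomology. The final point is a direct verification in the basic case where $\rho$ is a two-dimensional irreducible orthogonal representation with trivial determinant: here both sides can be computed explicitly---$W(\rho)$ from Tate's formula applied to the associated character of the index-two subgroup, and $w_2(\rho)$ from the explicit cocycle description---and they agree.

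\smallskip

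The main obstacle in part b) is the orthogonal Brauer-style reduction: unlike the classical Brauer theorem, one must work inside the subring of virtual representations preserving the orthogonal structure and the determinant condition, which requires a careful analysis of when induction produces orthogonal representations. Once this reduction is in place, both invariants are pinned down by their values on a manageable list of base cases.
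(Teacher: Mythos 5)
First, a point of order: the paper does not prove this theorem. It is stated as a survey of results of Dwork--Deligne and of Deligne, with pointers to \cite{tate-local} and \cite{deligne-constantes2}, so there is no in-paper argument to compare yours against; I can only assess your sketch against the actual proofs in those sources.

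For part a), your Kronecker frame (algebraic, all archimedean conjugates of absolute value $1$, unit at all finite places) is a legitimate way to organize such a statement, but the step you yourself flag as the main obstacle is exactly where the plan breaks. Brauer's theorem gives a virtual decomposition $\rho=\sum n_i\,\Ind_{H_i}^{G}\chi_i$ with \emph{no} control on the ramification of the $\chi_i$: in general it involves tamely ramified and unramified characters, whose normalized Gauss sums are not units at places above $p$ (their $p$-adic valuations are the fractional ones given by Stickelberger), and nothing about $\rho$ being wildly ramified forces the $\chi_i$ to be so. The actual mechanism (Dwork for characters, Deligne in general) is the stationary-phase evaluation of the $\epsilon$-factor: when the conductor exponent of $\chi$ is $\ge 2$ one has a closed formula $\epsilon(\chi,\psi)=\chi(c)^{-1}\psi(c)\,q^{a(\chi)/2}$ with $c$ determined by $\chi(1+y)=\psi(y/c)$ on the appropriate filtration step, which exhibits $W(\chi)$ \emph{directly} as a root of unity; the higher-dimensional irreducible case is reduced to this by Clifford theory relative to the wild inertia subgroup (induction from the stabilizer of a nontrivial wild character), not by Brauer induction. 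Once the explicit formula is available, Kronecker's theorem is not needed at all.

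For part b), the opening step is correct and is the standard one: $\rho\simeq\rho^*$ and $\det\rho=1$ give $W(\rho)^2=\det_\rho(-1)=1$ via \eqref{eq4.3}, so $W(\rho)=\pm1$. The remainder of your sketch has the right shape --- both $\rho\mapsto W(\rho)$ and $\rho\mapsto(-1)^{w_2(\rho)}$ are homomorphisms on the group of virtual orthogonal representations of degree $0$ and determinant $1$ that are inductive in degree $0$, so it suffices to compare them on generators --- and this is indeed the skeleton of Deligne's argument. But essentially all the content lies in the two points you defer: the correct generation statement for this Grothendieck group is not the one you write (besides inductions of quadratic characters one must handle the two-dimensional dihedral orthogonal representations, and proving generation is itself a theorem), and the compatibility of $w_2$ with induction is not ``a classical transfer computation'' --- it is controlled by the Hasse--Witt invariant of the trace form of the subextension, and matching it against the $\lambda$-factors appearing on the $\epsilon$-side, together with the base-case evaluation of quadratic Gauss sums against local Hilbert symbols, is the heart of \cite{deligne-constantes2}. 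As written, your part b) is a plausible table of contents rather than a proof.
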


Notons qu'en vertu de \eqref{eq4.3}, $W(\rho)=\pm 1$ si $\rho$ est \emph{symplectique};  ce signe est intimement lié à la structure de module galoisien de l'anneau des entiers de $L$, où $\rho$ se factorise par $Gal(L/K)$ (Fröhlich, Taylor\dots, voir par exemple \cite{cassou}).

Signalons aussi les travaux de Deligne-Henniart et Henniart sur la variation de $W(\rho)$ quand on tord $\rho$ par un caractère pas trop ramifié \cite{deligne-henniart,henniart}. Enfin, comme $W(\rho)$ appartient à l'extension abélienne maximale $A$ de $\Q$, le groupe de Galois $\Gamma=Gal(A/\Q)$ \emph{opère sur les $W(\rho)$}, et son action peut être comparée à celle de $\Gamma$ sur le caractère de $\rho$ (Fröhlich, \cite[cor. 5.2]{martinet}); ceci permet de majorer le corps cyclotomique contenant $W(\rho)$ \cite{kahn-constantes}.

\section{Les fonctions $L$ de la g\'eom\'etrie}

\subsection{Fonctions zêta ``de Hasse-Weil''} \index{Fonction zêta!de Hasse-Weil}

\subsubsection{Un exemple de Weil \cite{weil-jacobi}} Il y d\'emontre une conjecture de Hasse: sur un corps $k$, soit $C$ la $k$-courbe alg\'ebrique plane d'\'equation
\[Y^e=\gamma X^f+\delta\]
$2\le e\le f$, $\gamma,\delta\in k^*$, $ef\in k^*$. Si $k$ est un corps de nombres, on peut r\'eduire $C$ modulo $\fp$ pour presque tout $\fp\subset O_K$ en gardant les hypoth\`eses, soit $C_\fp$. Soit $S$ l'ensemble des ``mauvais'' $\fp$. 

\begin{defn}[Hasse]\phantomsection
\[\sZ(C,s)=\prod_{\fp\notin S} \zeta(C_\fp,s).\]
\end{defn}

\begin{thm}[Weil \protect{\cite[p. 495]{weil-jacobi}}]\phantomsection\label{twh} $\sZ(C,s)$ admet un prolongement m\'eromorphe \`a $\C$ et satisfait une \'equation fonctionnelle de la forme suivante: $\sZ(C,2-s)\sZ(C,s)^{-1}$ peut s'exprimer comme produit fini de ``facteurs \'el\'ementaires'' (incluant bien s\^ur des fonctions gamma) qui pourraient \^etre facilement \'ecrits explicitement.\index{Equation fonctionnelle@\'Equation fonctionnelle}
\end{thm}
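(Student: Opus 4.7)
The plan is to reduce the statement to Hecke's theorem (Theorem \ref{t:hecke}) via the explicit diagonalization of the Frobenius action on $C$ by characters of the group $\mu_e \times \mu_f$ that acts naturally on $C$ by $(X,Y) \mapsto (\xi X, \eta Y)$. First, I would fix a prime $\fp \notin S$ of residue field $\F_q$ containing all $e$-th and $f$-th roots of unity (a harmless finite extension, handled by factor~(iii) of Proposition~\ref{p4.4}-style manipulations), and count $|C_\fp(\F_{q^n})|$ by writing, for each pair of multiplicative characters $(\chi,\psi)$ of $\F_{q^n}^*$ of orders dividing $e$ and $f$ respectively, the Jacobi-type sum
\[
J_n(\chi,\psi) \;=\; -\sum_{u+v=1} \chi(u)\,\psi(v),
\]
so that by the classical inclusion-exclusion over characters (as in \cite{weil-jacobi}) one obtains
\[
|C_\fp(\F_{q^n})| \;=\; q^n + 1 + \sum_{(\chi,\psi)\ne(1,1)} \chi(\delta)\psi(-\gamma^{-1}) J_n(\chi,\psi),
\]
after normalizing the contribution of $(\chi,\psi)=(1,1)$ to the point at infinity. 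By Proposition~\ref{p4} c), this exhibits $Z(C_\fp,t)$ as a product $\prod_{(\chi,\psi)} (1 - \alpha_{(\chi,\psi)}(\fp)\, t)^{\pm 1}$, where the $\alpha_{(\chi,\psi)}(\fp)$ are (up to a unit of absolute value~1 coming from $\gamma,\delta$) Jacobi sums.

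The second and crucial step is to recognize, for each non-trivial pair $(\chi,\psi)$ of characters of $\mu_e\times\mu_f$, the family $\fp \mapsto \alpha_{(\chi,\psi)}(\fp)$ as the local factor at $\fp$ of a Hecke Grossencharakter $\lambda_{(\chi,\psi)}$ on the cyclotomic field $K = k(\mu_{ef})$. This is the heart of Weil's paper \cite{weil-jacobi}: one uses Stickelberger's theorem on the prime factorization of Gauss sums in $\Q(\mu_{ef})$, together with the Hasse--Davenport relation to handle the norm from $\F_{q^n}$ to $\F_q$, to show that $\alpha_{(\chi,\psi)}$ extends to a multiplicative function on ideals of $K$ whose value on a principal ideal $(\alpha)$ with $\alpha \equiv 1 \pmod{\text{a suitable modulus}}$ is an explicit monomial in the conjugates of $\alpha$; this is exactly the defining property of an algebraic Hecke character as in \S 4.3. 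The archimedean type (the $n_v$) is read off from these monomial exponents, and the conductor is bounded in terms of the primes dividing $ef\gamma\delta$.

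Once this identification is in place, one assembles
\[
\sZ(C,s) \;=\; \zeta_k^{(S)}(s)\,\zeta_k^{(S)}(s-1)\,\prod_{(\chi,\psi)\ne(1,1)} L^{(S)}(\lambda_{(\chi,\psi)},s)^{\pm 1},
\]
where the exponents of $\zeta_k$ account for the trivial factor $(1-t)(1-qt)$ of Proposition~\ref{p4} d) and the superscript $(S)$ indicates the removal of a finite set of Euler factors. Applying Theorem~\ref{t:hecke} to each $\lambda_{(\chi,\psi)}$ (which has exponent $1/2$, so $L(\lambda_{(\chi,\psi)},s)$ is entire), and the standard meromorphic continuation of $\zeta_k$, yields the meromorphic continuation of $\sZ(C,s)$ to $\C$ and the desired functional equation relating $s$ and $2-s$: indeed Hecke's equation sends $s\mapsto 1-s$ for each $\lambda_{(\chi,\psi)}$, but the twist by the natural ``weight'' of these characters (their exponent $1/2$) shifts the symmetry axis to $s = 1$ in the variable of $\sZ$, giving symmetry $s\leftrightarrow 2-s$. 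The missing Euler factors at $\fp\in S$ and at infinity contribute only the explicit ``elementary'' correction—finitely many rational functions in $N(\fp)^{-s}$ together with $\Gamma_\R$, $\Gamma_\C$ factors from Definition~\ref{d:gamma}.

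The main obstacle is precisely the second step: proving that the collection of Jacobi sums assembles into an honest Hecke character on $K = k(\mu_{ef})$ with a well-defined conductor and $\infty$-type. This requires the Stickelberger--Hasse--Davenport machinery over a general number field $k$ (not just $\Q$), and the verification that the ``admissibility'' condition of Definition~\ref{d4.4} is met; everything else is then a formal application of Theorem~\ref{t:hecke} and bookkeeping of local factors.
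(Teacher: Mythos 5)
Votre proposition suit exactement la même stratégie que l'esquisse donnée dans le texte (\S « Idée de la démonstration ») : exprimer les facteurs locaux $Z(C_\fp,t)$ au moyen de sommes de Jacobi indexées par les caractères de $\mu_e\times\mu_f$, reconnaître (via Stickelberger et Hasse--Davenport) que $\fp\mapsto j_\fp^{a,b}$ définit un caractère de Hecke du corps cyclotomique, puis conclure par le théorème \ref{t:hecke}; vous identifiez d'ailleurs correctement le point crucial au même endroit que Weil. Le seul écart est cosmétique : le texte regroupe les caractères conjugués en orbites sous $(\Z/m)^*$, ce qui explique la forme $1+\lambda t^d$ des facteurs locaux, alors que vous les gardez séparés.
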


Citons Weil (commentaires sur \cite{weil-jacobi}):

\begin{quote}\it
Peu avant la guerre, si mes souvenirs sont exacts, G. de Rham me raconta qu'un de ses \'etudiants de Gen\`eve, Pierre Humbert, \'etait all\'e \`a G\"ottingen avec l'intention de travailler sous la direction de Hasse, et que celui-ci lui avait propos\'e un probl\`eme sur lequel de Rham d\'esirait mon avis. Une courbe elliptique $C$ \'etant donn\'ee sur le corps des rationnels, il s'agissait principalement, il me semble, d'\'etudier le produit infini des fonctions z\^eta des courbes $C_p$ obtenues en r\'eduisant $C$ modulo $p$ pour tout nombre premier $p$ pour lequel $C_p$ est de genre $1$; plus pr\'ecis\'ement, il fallait rechercher si ce produit poss\`ede un prolongement analytique et une \'equation fonctionnelle. (...) J'avoue avoir pens\'e que Hasse avait \'et\'e par trop optimiste. (...)  

Au Br\'esil, cherchant des applications de l'hypoth\`ese de Riemann\index{Hypothèse de Riemann!pour les variétés projectives lisses}, je fis des calculs sur les fonctions z\^eta des courbes hyperelliptiques sur un corps fini (...) Comme l'indiquent ces notes, il s'ensuit imm\'ediatement que le produit infini de Hasse pour la courbe $Y^2=X^4+1$ est une fonction $L$ de Hecke\index{Fonction $L$!de Hecke} relative au corps $\Q(i)$. La conjecture de Hasse commen\c cait \`a prendre forme \`a mes yeux. (...)  

(...) (Chevalley) ne croyait pas que les fonctions $L$ de Hecke eussent un r\^ole \`a jouer en th\'eorie des nombres. Je soutenais le contraire (...) et citai en exemple le produit de Hasse pour la courbe $Y^2=X^4+1$, peut-\^etre aussi pour $Y^2=X^3-1$, en ajoutant qu'il devait s'agir l\`a d'un ph\'enom\`ene bien plus g\'en\'eral. L\`a je m'avan\c cais beaucoup; ce qui finalement me donna confiance dans cette id\'ee, ce fut une fois de plus l'analogie entre corps de nombres et corps de fonctions; j'observai (...) que la conjecture de Hasse pour les courbes sur les corps de nombres correspond exactement \`a mes conjectures de 1949 pour les surfaces sur un corps fini, au sujet desquelles il ne me restait plus aucun doute.
\end{quote}

\subsubsection{Id\'ee de la d\'emonstration} Les calculs de \cite{weil4} expriment les fonctions $\zeta(C_\fp,s)$ en termes de sommes de Jacobi: plus pr\'ecis\'ement, $Z(C_\fp,t)$ est de la forme
\[Z(C_\fp,t) = \prod_{a,b} L_{a,b}(t)\]
o\`u $(a,b)$ d\'ecrit le quotient de $\Z/f\times \Z/e$ par l'action diagonale de $(\Z/m)^*$,  $m$ \'etant le ppcm de $e$ et $f$. Hormis les cas triviaux, on a 
\[L_{a,b}(t)=1+\lambda t^d\]
o\`u $d$ est le degr\'e d'une certaine extension cyclotomique $\kappa'$ de $\kappa(\fp)$ et $\lambda$ est essentiellement une \emph{somme de Jacobi}
\[j_\fp^{a,b}=\sum_{x=y=-1}\chi(x)^{a_0}\chi(y)^{b_0}.\]

Cette somme est calcul\'ee dans $\kappa'$, et $\chi$ est un caract\`ere multiplicatif de $\kappa'$, c'est-\`a-dire un homomorphisme $(\kappa')^*\to \C^*$. Le point-cl\'e est alors, toujours en simplifiant un peu, que la loi $\fp\mapsto j_\fp^{a,b}$ \emph{d\'efinit un caract\`ere de Hecke} $\chi^{a,b}$. La fonction $\sZ(C,s)$ est alors essentiellement le produit des fonctions $L$ des $\chi^{a,b}$.

\subsubsection{L'intuition de Weil} Au lieu d'un corps de nombres, choisissons comme base un corps global $k$ de caract\'eristique $p$, soit $k=\F_q(U)$ o\`u $U$ est une courbe lisse. Soit $C$ une courbe lisse sur $k$. Quitte \`a remplacer $U$ par un ouvert non vide, le morphisme $C\to \Spec k$ se prolonge en un $\F_q$-morphisme lisse de type fini
\[f:C_U\to U.\]

La fonction z\^eta de $C_U$\index{Fonction zêta!d'un schéma de type fini sur $\Z$} s'\'ecrit alors (\cf proposition \ref{p4} b)):
\[\zeta(C_U,s) = \prod_{u\in U_{(0)}} \zeta(C_u,s)\]
o\`u $C_u$ est la fibre de $f$ en $u$. \`A droite on reconnait une fonction z\^eta ``\`a la Hasse'', et \`a gauche la fonction z\^eta de la surface (lisse mais pas projective) $C_U$.\index{Fonction zêta!de Hasse-Weil}

(Ce raisonnement typique \`a un nombre fini de facteurs pr\`es, analogue \`a celui indiqu\'e au  num\'ero \ref{appr1}, ne r\'eduit pas tout \`a fait la conjecture de Hasse \`a celle de Weil pour les surfaces, qui n'\'etait \'enonc\'ee que dans le cas projectif lisse: Shreeram Abhyankar ne d\'emontre la r\'esolution des singularit\'es pour les surfaces sur un corps parfait de caract\'eristique $p$ qu'en 1957 \cite{abhy}.) 

\subsubsection{D\'efinition g\'en\'erale des fonctions $L$ de Hasse-Weil: premi\`ere approximation}\label{5.1.4}

Soit $k$ un corps global, et soit $C$ une courbe projective lisse sur $k$. Soit $U$ un mod\`ele de $k$ r\'egulier de type fini sur $\Z$: si $k$ est un corps de nombres, $U$ est de la forme $\Spec O_S$ o\`u $O_S$ est une localisation de l'anneau des entiers de $k$; si $k=\F_q(C)$ pour une courbe projective lisse $C$, $U$ est un ouvert non vide de $C$. Si $U$ est assez petit, le morphisme $C\to \Spec k$ se prolonge en un morphisme projectif lisse
\[f:C_U\to U\]
comme ci-dessus. On peut alors d\'efinir, en premi\`ere approximation:
\[L(C,s) = \zeta(C_U,s)= \prod_{u\in U_{(0)}} \zeta(C_u,s).\]

Supposons d'abord $k=\F_q(C)$. En utilisant le th\'eor\`eme d'Abhyankar \cite{abhy}, on peut compl\'eter $C_U$ en une surface projective lisse $S/\F_q$; alors $\zeta(C_U,s) =\zeta(S,s)\zeta(S-C_U,s)^{-1}$. Le facteur $\zeta(S,s)$ est une fonction rationnelle en $q^{-s}$ et v\'erifie une \'equation fonctionnelle reliant $\zeta(S,s)$ et $\zeta(S,2-s)$. Le ferm\'e $S-C_U$ est de dimension $1$; en compl\'etant et normalisant, on se ram\`ene au cas de courbes projectives lisses et on obtient ainsi une \'equation fonctionnelle (laide) pour $L(C,s)$.\index{Equation fonctionnelle@\'Equation fonctionnelle}

Supposons maintenant que $k$ soit un corps de nombres. Quelle est la g\'en\'eralit\'e de l'argument de Weil?

Le point est que la courbe consid\'er\'ee par Weil (ou plus exactement sa compl\'et\'ee projective) a une jacobienne $J$ \emph{\`a multiplication complexe}: $\End^0(J)$ est une $\Q$-alg\`ebre commutative de rang $2g$ o\`u $g$ est le genre (g\'eom\'etrique) de $C$. Dans le cas que consid\`ere Weil et sur lequel il revient dans \cite{weil-jacobi2},  $\End^0(J)=\prod_{i=1}^r K_i$ o\`u les $K_i$ sont des sous-corps de $\Q(\mu_m)$ pour $m$ assez grand.\footnote{Weil indique dans  \cite{weil-jacobi2} qu'un cas particulier de ses calculs de \cite{weil-jacobi} remonte \`a Eisenstein en 1850! \cite[pp. 192--193]{eis}.}
Cela a \'et\'e g\'en\'eralis\'e par Yutaka Taniyama et Weil au cas de toutes les vari\'et\'es ab\'eliennes \`a multiplication complexe \cite[p. 6]{weil-nikko}, \cite{shimura-taniyama}. Voir \cite[Sommes trig., \S 5]{SGA412} pour une reformulation de \cite{weil-jacobi,weil-jacobi2} en termes de faisceaux $l$-adiques.\index{Faisceau $l$-adique}

Ces r\'esultats contiennent en particulier le cas des courbes elliptiques \`a multiplication complexe, qui remonte \`a Deuring \cite{deuring3}. Le cas des courbes elliptiques sans multiplication complexe d\'efinies sur $\Q$ est beaucoup plus r\'ecent: ici les caract\`eres de Hecke\index{Caractère!de Hecke} ne suffisent plus, et on utilise les \emph{formes modulaires} pour prouver le prolongement analytique et l'\'equation fonctionnelle. C'est la d\'emonstration de la conjecture de Shimura-Taniyama-Weil: travaux d'Andrew Wiles, Richard Taylor, Christophe Breuil, Brian Conrad, Fred Diamond (voir entre autres \cite{wiles,taylor-wiles,bcdt}).

\subsection{Bonne réduction}\label{s5.bonnered} Nous verrons au \S \ref{s5.5} comment Serre propose une définition précise des facteurs locaux de la fonction zêta d'une variété projective lisse $X$ définie sur un corps global. Serre est plus exigeant: il propose une formule non seulement pour ces facteurs locaux, mais pour leur factorisation en termes de la cohomologie de $X$ dans le style du \S \ref{s3.3}. 
\`A l'époque de \cite{serre-dpp}, cette formule  reposait sur un certain nombre de conjectures; parmi celles-ci, les conjectures de Weil sont maintenant démontrées,  ce qui permet d'avoir une définition inconditionnelle des facteurs locaux avec leur factorisation au moins aux places de bonne réduction.  Le but de ce paragraphe est de remarquer que, si on ne demande pas une telle factorisation, la définition de ces facteurs locaux est \emph{élémentaire}.

Commençons par préciser la notion de bonne réduction:

\begin{defn}\phantomsection\label{d5.3}
Soit $K$ un corps muni d'une valuation discrète $v$ de rang $1$, d'anneau de valuation $\sO$ et de corps résiduel $k$. Une $K$-variété projective lisse $X$ a \emph{bonne réduction} (relativement à $v$) s'il existe un $\sO$-schéma projectif \emph{lisse} $\sX$, de fibre générique $\sX_\eta=\sX\otimes_\sO K$ isomorphe à $X$. On dit que $\sX$ est un \emph{modèle lisse} de $X$, et on appelle $\sX\otimes_\sO k$ la \emph{fibre spéciale} associée à $\sX$: c'est une $k$-variété projective lisse.
\end{defn}

Dans certains cas (surfaces K3), on affaiblit la condition que $\sX$ soit un schéma en demandant seulement qu'il soit un \emph{espace algébrique} -- mais la fibre spéciale est toujours supposée être un schéma.

\begin{prop}\phantomsection\label{p5.3} Dans la définition \ref{d5.3}, supposons que $K$ soit un corps global. Alors $X$ a bonne réduction en toutes les places de $K$, sauf (au plus) un nombre fini d'entre elles.
\end{prop}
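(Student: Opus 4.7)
Le plan est de construire un mod\`ele projectif ``brut'' de $X$ sur un ouvert non vide $U$ de l'anneau de Dedekind de base (soit $\Spec O_K$ si $K$ est un corps de nombres, soit un ouvert de $C$ si $K=\F_q(C)$), puis d'utiliser le caract\`ere ouvert du lieu de lissit\'e pour r\'etr\'ecir $U$ jusqu'\`a ce que le mod\`ele devienne lisse sur $U$. L'obstacle principal est simplement l'usage correct du th\'eor\`eme d'ouverture du lieu lisse d'un morphisme de type fini (EGA IV, 17.5.1), mais la strat\'egie globale est tr\`es courte.

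Premi\`erement, je choisirais un plongement ferm\'e $X\inj \P^N_K$ (possible puisque $X$ est projective par hypoth\`ese). L'id\'eal homog\`ene d\'efinissant $X$ est engendr\'e par un nombre fini de polyn\^omes \`a coefficients dans $K$; quitte \`a inverser les d\'enominateurs (qui forment un ensemble fini d'\'el\'ements de $K$), il existe un ouvert non vide $U_0$ de l'anneau de Dedekind de base tel que tous ces coefficients appartiennent \`a $\Gamma(U_0, \sO)$. Cela fournit un sous-sch\'ema ferm\'e $\sX_0\subset \P^N_{U_0}$, projectif sur $U_0$, dont la fibre g\'en\'erique est (isomorphe \`a) $X$.

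Deuxi\`emement, le lieu $V\subset \sX_0$ o\`u le morphisme structural $f:\sX_0\to U_0$ est lisse est un ouvert de $\sX_0$. Comme la fibre g\'en\'erique de $f$, qui n'est autre que $X$, est lisse sur $K$, cet ouvert contient enti\`erement la fibre g\'en\'erique; donc le ferm\'e compl\'ementaire $Z:=\sX_0\setminus V$ ne la rencontre pas. Le morphisme $f$ \'etant projectif est ferm\'e, si bien que $f(Z)$ est un ferm\'e strict de $U_0$. Or $U_0$ est un sch\'ema r\'egulier de dimension $1$, donc $f(Z)$ est un ensemble fini de points ferm\'es $\{v_1,\dots,v_r\}$. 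En posant $U:=U_0\setminus\{v_1,\dots,v_r\}$ et $\sX:=\sX_0\times_{U_0} U$, on obtient un mod\`ele projectif \emph{lisse} de $X$ au-dessus de $U$, ce qui fournit la bonne r\'eduction de $X$ en toute place de $K$ correspondant \`a un point ferm\'e de $U$. Il ne reste donc qu'un nombre fini de places (y compris les places archim\'ediennes dans le cas num\'erique, \'ecart\'ees d'office par le choix de $U_0$, et les points $v_1,\dots,v_r$) en mauvaise r\'eduction potentielle, ce qui est l'\'enonc\'e voulu.
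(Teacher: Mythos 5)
Votre d\'emonstration est correcte et suit essentiellement la m\^eme strat\'egie que celle du texte: mod\`ele projectif au-dessus du sch\'ema de Dedekind de base, ouverture du lieu lisse, puis propret\'e pour conclure que l'image du lieu non lisse est un ferm\'e strict, donc fini. La seule diff\'erence, purement cosm\'etique, est que vous construisez le mod\`ele en chassant les d\'enominateurs des \'equations au-dessus d'un ouvert $U_0$, l\`a o\`u le texte prend directement l'adh\'erence sch\'ematique de $X$ dans $\P^N_C$.
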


\begin{proof} Donnons-nous un plongement $X\subset \P^N_K$. Soit $C= \Spec O_K$ si $K$ est un corps de nombres, et soit $C$ le modèle projectif lisse de $K$ si $K$ est un corps de fonctions. Notons $\sX$ l'adhérence de $X$ dans $\P^N_C$: c'est un $C$-schéma projectif, de fibre générique $X$. Le morphisme $p:\sX\to C$ est lisse en dehors d'un fermé $Z\subset \sX$; comme $p$ est propre, $p(Z)$ est fermé dans $C$, et \emph{différent de $C$} puisque $p$ est lisse au-dessus du point générique de $C$. Alors $p$ est lisse au-dessus de l'ouvert non vide $U=C-p(Z)$. 
\end{proof}

\begin{rques}\phantomsection\label{r5.1} 1) Toute $K$-variété projective lisse n'a pas bonne réduction! Le contre-exemple le plus simple est celui d'une courbe elliptique dont l'invariant $j$ vérifie $v(j)<0$. (On peut aussi prendre une conique sans point rationnel, mais cet exemple est moins pertinent, cf. remarque \ref{r6.2} b).) \\
2) Si $X$ a un modèle lisse, elle en a en général bien d'autres; étant donné deux tels modèles $\sX,\sX'$, leurs fibres spéciales $Y,Y'$ ne sont en général pas isomorphes. Toutefois:
\end{rques}

\begin{prop}\phantomsection\label{p5.2} Dans la situation de la remarque \ref{r5.1} 2), on a $\zeta(Y,s)=\zeta(Y',s)$.
\end{prop}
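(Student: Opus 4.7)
Mon plan est de raisonner cohomologiquement, en invoquant le th�or�me de changement de base propre et lisse en cohomologie $\ell$-adique rappel� au \S \ref{3.6.1}, puis d'appliquer la formule des traces de Grothendieck-Lefschetz pour ramener l'�galit� des fonctions z�ta � celle des modules galoisiens.

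Plus pr�cis�ment, on peut supposer $k=\F_q$ fini (sans quoi $\zeta(Y,s)$ n'a pas de sens dans le cadre du \S 2). D'apr�s la proposition \ref{p4} c), il suffit alors de v�rifier que $|Y(\F_{q^n})|=|Y'(\F_{q^n})|$ pour tout $n\ge 1$. Choisissons un nombre premier $\ell$ invertible dans $k$, et notons $\bar K$ une cl�ture alg�brique de $K$ contenant $\bar k$, avec $I\subset \Gal(\bar K/K)$ le sous-groupe d'inertie. Le th�or�me de changement de base propre et lisse, appliqu� � $\sX\to \Spec \sO$ (puis $\sX'\to \Spec \sO$), fournit des isomorphismes canoniques
\[H^i_\ell(Y_{\bar k})\iso H^i_\ell(X_{\bar K}), \qquad H^i_\ell(Y'_{\bar k})\iso H^i_\ell(X_{\bar K}),\]
compatibles aux actions de $\Gal(\bar k/k)=\Gal(\bar K/K)/I$ (l'inertie agissant trivialement sur le membre de droite gr�ce � la bonne r�duction). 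En composant, on obtient un isomorphisme $\Gal(\bar k/k)$-�quivariant
\[H^i_\ell(Y_{\bar k})\iso H^i_\ell(Y'_{\bar k}),\]
qui respecte notamment l'action du Frobenius g�om�trique $F$.

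Par la formule des traces de Grothendieck-Lefschetz (autrement dit, l'application du th�or�me \ref{t3.6} � $Y$ et $Y'$), on a
\[|Y(\F_{q^n})|=\sum_{i=0}^{2d}(-1)^i\Tr(F^n\mid H^i_\ell(Y_{\bar k}))\]
o� $d=\dim X$, et de m�me pour $Y'$. L'isomorphisme ci-dessus entra�ne donc l'�galit� terme � terme des traces, puis des cardinalit�s, d'o� $\zeta(Y,s)=\zeta(Y',s)$.

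Le point principal, en pratique, est de s'assurer que les isomorphismes issus du changement de base propre et lisse sont bien fonctoriels en $\sX$ et compatibles � l'action de Galois, ce qui est un r�sultat standard (mais non trivial) de SGA 4; une fois ce r�sultat admis -- comme le fait l'expos� au \S \ref{3.6.1} -- la preuve se r�duit � la manipulation formelle ci-dessus. On notera qu'on n'utilise que l'existence d'une cohomologie de Weil (\ie la rationalit� de la fonction z�ta via \eqref{eq3.4}), et pas l'hypoth�se de Riemann de Deligne.
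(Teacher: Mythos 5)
Votre d\'emonstration est correcte, mais elle emprunte un chemin v\'eritablement diff\'erent de celui du texte. La preuve du texte tient en deux lignes: d'apr\`es le th\'eor\`eme \ref{t6.8} (foncteur de sp\'ecialisation sur les motifs de Chow, construit \`a partir des homomorphismes de sp\'ecialisation de Fulton pour les groupes de Chow), les fibres sp\'eciales $Y$ et $Y'$ ont des motifs de Chow isomorphes; et d'apr\`es le lemme \ref{l6.4}, la fonction z\^eta d'une vari\'et\'e projective lisse sur $\F_q$ ne d\'epend que de son motif de Chow. Vous remplacez cet argument motivique par le changement de base propre et lisse en cohomologie $\ell$-adique joint \`a la formule des traces de Lefschetz (th\'eor\`eme \ref{t3.6}), ce qui est essentiellement l'argument qui sous-tend d\'ej\`a le lemme suivant la d\'efinition \ref{d5.2}. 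Votre approche a l'avantage de n'utiliser que le mat\'eriel des \S\S 3 et 5, sans recours aux motifs, et votre r\'eduction au cas $k$ fini est pertinente (c'est le seul cas o\`u $\zeta(Y,s)$ a un sens dans le cadre du \S 2). En contrepartie, elle repose sur un th\'eor\`eme profond de SGA 4, alors que la sp\'ecialisation des cycles de Fulton rel\`eve de la th\'eorie de l'intersection \'el\'ementaire; et elle donne moins: l'isomorphisme $h(Y)\simeq h(Y')$ de motifs de Chow entra\^\i ne l'\'egalit\'e des r\'ealisations pour \emph{toute} cohomologie de Weil, et pas seulement l'\'egalit\'e des fonctions z\^eta. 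Un d\'etail de r\'edaction: l'\'egalit\'e $\Gal(\bar k/k)=\Gal(\bar K/K)/I$ est un abus ($\Gal(\bar k/k)$ est le quotient du groupe de \emph{d\'ecomposition} en une extension choisie de $v$ par son inertie, sauf si $K$ est hens\'elien), mais cela n'affecte pas l'argument.
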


\begin{proof} D'après le théorème \ref{t6.8}, $Y$ et $Y'$ ont des motifs de Chow isomorphes, et d'après le lemme \ref{l6.4}, la fonction zêta d'une $k$-variété projective lisse ne dépend que de son motif de Chow.\index{Fonction zêta!d'un motif (pur)}
\end{proof}

\begin{thm}\phantomsection\label{t5.6} Soit $X$ une $K$-variété projective lisse; notons $\Sigma_K(X)$ l'ensemble des places finies de $K$ où $X$ a bonne réduction. Pour toute place $v\in \Sigma_K(X)$, notons $\zeta_v(X,s)$ la fonction $\zeta(Y,s)$, où $Y$ est la fibre spéciale d'un modèle lisse quelconque de $X$ en $v$ (proposition \ref{p5.2}). Alors le produit
\[\prod_{v\in \Sigma_K(X)} \zeta_v(X,s)\]
converge absolument pour $\Re(s)>\dim X+1$.
\end{thm}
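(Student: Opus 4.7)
Le plan est de globaliser le probl\`eme et de se ramener au th\'eor\`eme \ref{t:abs}. Choisissons un plongement projectif $X \subset \P^N_K$. Soit $C = \Spec O_K$ si $\car K = 0$, et soit $C$ le mod\`ele projectif lisse de $K$ si $\car K > 0$; soit $\sX$ l'adh\'erence (sch\'ematique) de $X$ dans $\P^N_C$. Alors $p : \sX \to C$ est un morphisme projectif de fibre g\'en\'erique $X$. Comme dans la preuve de la proposition \ref{p5.3}, son lieu non lisse se projette sur un ferm\'e strict de $C$, donc il existe un ouvert dense $U \subset C$ au-dessus duquel $p$ est lisse. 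Quitte \`a r\'etr\'ecir $U$, on peut supposer que $C - U$ est un ensemble fini de places contenant toutes les places de $\Sigma_K \setminus \Sigma_K(X)$ (places de mauvaise r\'eduction du mod\`ele choisi mais pas n\'ecessairement de $X$).

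Pour chaque $v \in U_{(0)}$, la fibre $\sX_v$ est alors un mod\`ele lisse de $X$ en $v$, de sorte que $\zeta_v(X,s) = \zeta(\sX_v, s)$. La proposition \ref{p4} b) donne
\[\zeta(\sX_U, s) = \prod_{v \in U_{(0)}} \zeta(\sX_v, s) = \prod_{v \in U_{(0)}} \zeta_v(X, s).\]
Or $\sX_U$ est un $\Z$-sch\'ema de type fini de dimension $\dim X + 1$ (il est fibr\'e sur $U$ qui est de dimension $1$). Par le th\'eor\`eme \ref{t:abs}, $\zeta(\sX_U, s)$ converge absolument pour $\Re(s) > \dim X + 1$, et il en va donc de m\^eme du produit ci-dessus.

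Il reste \`a traiter les places $v \in \Sigma_K(X) \cap (C - U)$, qui sont en nombre fini (puisque $C - U$ l'est). Pour une telle place, $\zeta_v(X,s)$ est, par d\'efinition, la fonction z\^eta d'une vari\'et\'e projective lisse $Y$ sur le corps fini $\kappa(v)$, avec $\dim Y = \dim X$; de nouveau par le th\'eor\`eme \ref{t:abs}, chaque $\zeta_v(X,s)$ converge absolument pour $\Re(s) > \dim X$, \emph{a fortiori} pour $\Re(s) > \dim X + 1$. L'ajout d'un nombre fini de tels facteurs au produit pr\'ec\'edent ne modifie ni sa convergence ni son abscisse de convergence, ce qui \'etablit l'\'enonc\'e.

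Il n'y a essentiellement aucun obstacle dans ce plan; l'observation-cl\'e est qu'en globalisant $X$ en un mod\`ele lisse $\sX_U$ sur un ouvert de $C$, on r\'ecup\`ere presque tous les facteurs locaux $\zeta_v(X,s)$ comme facteurs du produit eul\'erien d'une seule fonction z\^eta d'un $\Z$-sch\'ema de dimension $\dim X + 1$, \`a laquelle s'applique directement la borne \'el\'ementaire du th\'eor\`eme \ref{t:abs}. Aucune borne fine de type Lang-Weil sur les fibres n'est requise ici, ces bornes servant plut\^ot \`a prolonger analytiquement au-del\`a de l'abscisse de convergence absolue.
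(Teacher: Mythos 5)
Votre démonstration est correcte et suit essentiellement la même voie que celle du texte : réduction au produit sur les points fermés d'un ouvert $U$ de bonne réduction via la proposition \ref{p5.3}, identification de ce produit avec $\zeta(p^{-1}(U),s)$ par la proposition \ref{p4} b), puis double application du théorème \ref{t:abs} (à $p^{-1}(U)$ de dimension $\dim X+1$ et aux facteurs exceptionnels en nombre fini).
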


\begin{proof} Reprenons les notations de la preuve de la proposition \ref{p5.3}: l'ensemble $U_{(0)}$ des points fermés de l'ouvert $U$ est contenu dans $\Sigma_K(X)$, et le complémentaire $\Sigma_K(X)-U_{(0)}$ est fini. Comme $\zeta_v(X,s)$ converge absolument pour $\Re(s)>\dim X$ (théorème \ref{t:abs}), il suffit de démontrer la convergence en remplaçant $\Sigma_K(X)$ par $U_{(0)}$. Mais on a
\[\prod_{v\in U_{(0)}} \zeta_v(X,s)=\zeta(p^{-1}(U),s)\]
(proposition \ref{p4}), et le deuxième membre converge absolument pour $\Re(s)>\dim p^{-1}(U)=\dim X+1$ en réappliquant le théorème \ref{t:abs}.
\end{proof}

Le théorème \ref{t5.6} fournit une définition de la ``partie non ramifiée'' de la fonction zêta de Hasse-Weil de $X$.\index{Fonction zêta!de Hasse-Weil}

\subsection{Fonctions $L$ de faisceaux $l$-adiques}\label{s5.2}\index{Faisceau $l$-adique}
R\'ef\'erences: Grothendieck \cite{groth-lefsch}, \cite[exp. XV]{SGA5}, et Illusie \cite{illusie} pour des commentaires historiques tr\`es \'eclairants (d\'epassant largement le cadre des fonctions $L$).

\subsubsection{Faisceaux $l$-adiques}\label{5.2.1}  Citons Serre \cite[fin lettre 26 oct. 61]{corresp}:

\begin{quote} {\it 
 (\`A propos des vari\'et\'es affines: peux-tu d\'ecomposer (filtrer, ou n'importe quoi) leur cohomologie de fa\c con \`a mettre en \'evidence les morceaux qui proviennent de la vari\'et\'e compl\`ete? Je n'arrive pas \`a m'exprimer (*), mais tu comprendras ce que je veux si je le mets sous la forme: quelles conjectures doit-on faire pour la fonction z\^eta d'une vari\'et\'e affine non singuli\`ere?\index{Fonction zêta!d'un schéma de type fini sur $\Z$} Il me para\^\i t scandaleux qu'il soit n\'ecessaire de la plonger (si tant est que ce soit possible!) dans une vari\'et\'e projective non singuli\`ere, pas du tout unique, et d\'egueulasse; d'autre part, je n'arrive \`a rien formuler. As-tu en main une autre homologie que l'usuelle (par exemple \og \`a supports ferm\'es\fg\ ou Dieu sait quoi) qui pourrait servir \`a quelque chose?)
 
 (*)  Il me manquait le langage des motifs.} (note ajout\'ee en 2001).
\end{quote}

puis Grothendieck \cite{groth-lefsch}:

\begin{quote}\it 
Le but de cet expos\'e est de prouver le r\'esultat de rationalit\'e pour toutes les fonctions $L$ envisag\'ees au \S 1, et m\^eme pour un type de fonctions $L$ beaucoup plus g\'en\'eral, associ\'e \`a des \emph{faisceaux $l$-adiques} sur $X$. Nous donnerons en effet une formule explicite du style Lefschetz-Weil de ces fonctions. Les outils essentiels sont de deux sortes:

(a) Le formalisme de la ``\emph{cohomologie \`a supports compacts}'' (...)

(b) Une \emph{formule de Lefschetz g\'en\'eralis\'ee}, due \`a \emph{J. L. Verdier} (...)
\end{quote}

\begin{defn}[\protect{\cite[exp. VI, d\'ef. 1.1.1 et 1.2.1]{SGA5}}]\phantomsection\label{dlad} Soit $X$ un sch\'ema, et soit $l$ un nombre premier. \\
a) Un \emph{faisceau $l$-adique} \index{Faisceau $l$-adique}sur $X$ est un syst\`eme projectif  $F=(F_n)_{n\ge 0}$ de faisceaux \'etales sur $X$ tels que, pour tout $n$, le morphisme de projection $F_{n+1}\to F_n$ soit isomorphe au morphisme canonique $F_{n+1}\to F_{n+1}\otimes_\Z {\Z/l^{n+1}}$, ce qui implique que chaque $F_n$ est annul\'e par $l^{n+1}$.\\
b) $F$ est \emph{constructible} si chaque $F_n$ est constructible (pour cela, il suffit que $F_0$ le soit).\\
c) $F$ est \emph{constant tordu} [constructible]\footnote{Plus tard, on dira aussi \emph{lisse}.} Si les $F_n$ sont localement cons\-tants et constructibles.
\end{defn}

Lorsque $X$ est connexe, si $a\to X$ est un point g\'eom\'etrique, on a une \'equivalence de cat\'egories \cite[exp. VI, prop. 1.2.5]{SGA5}
\begin{multline*}
\{\text{faisceaux $l$-adiques constants tordus}\} \leftrightarrows\\
 \{\text{repr\'esentations continues de $\pi_1(X,a)$ \`a valeurs}\\ \text{sur les $\Z_l$-modules de type fini}\}.
\end{multline*}

\begin{meg}\phantomsection Une repr\'esentation continue d'un groupe profini sur un $\C$-espace vectoriel de dimension finie se factorise par un quotient fini. C'est totalement faux si on remplace $\C$-espace vectoriel par $\Z_l$-module, ou m\^eme par $\Q_l$-espace vectoriel (exemple: $\Z_l$ op\`ere contin\^ument  et fid\`element sur $\Q_l$ par $x\mapsto \exp(lx)$).
\end{meg}

\begin{exo}\phantomsection\label{exo5.1} Soit $k$ un corps commutatif, de cl\^oture s\'eparable $k_s$. Si $G=\Gal(k_s/k)$ et $n$ est un entier inversible dans $k$, l'action de $G$ sur les racines de l'unit\'e $\mu_n$ d\'efinit un homomorphisme
\[\kappa^{(m)}:G\to \Aut(\mu_m)\simeq (\Z/m)^*\]
c'est le \emph{caract\`ere cyclotomique modulo $m$}\index{Caractère!cyclotomique}. En passant \`a la limite projective, on obtient le caract\`ere cyclotomique
\[\kappa:G\to \plim (\Z/m)^*=\prod_{l\ne p} \Z_l^*\]
o\`u $p=\car k$. Par projection sur $\Z_l^*$, on obtient sa composante $l$-primaire $\kappa_l$.
\begin{enumerate}[label=(\alph*)]
\item Supposons $k=\F_q$, et soit $\phi_q\in G$ le Frobenius arithm\'etique ($\phi_q(x)=x^q$). Montrer que $\kappa_l(\phi_q)=q$ pour tout $l\ne p$. En d\'eduire que l'image de $\kappa_l$ est ouverte dans $\Z_l^*$.
\item Supposons que $k=\Q$. Montrer que $\kappa_l$ est surjectif pour tout $l$. (Utiliser l'irr\'eductibilit\'e des polyn\^omes cyclotomiques.)
\item Supposons $k$ de type fini sur son sous-corps premier. Montrer que l'image de $\kappa_l$ est ouverte.
\end{enumerate}
\end{exo}

\begin{exo}\phantomsection\label{exo5.2} Soit $l$ un nombre premier; soit $K$ une extension finie de $\Q_l$ et $O_K$ son anneau d'entiers.
\begin{enumerate}[label=(\alph*)]
\item Soit $x\in 1+l^nO_K$.   Montrer que $x^l\in 1+l^{n+1}O_K$.
\item Soient $x\in 1+lO_K$ et $a\in \Z_l$. Soit $(a_r)$ une suite d'entiers naturels convergeant $l$-adiquement vers $a$. Montrer que la suite $x^{a_n}$ 
converge dans $1+lO_K$ vers une limite $x^a$ qui ne d\'epend que de $a$.
\item Montrer les identit\'es
\[(xy)^a = x^ay^a,\quad (x^a)^b=x^{ab}.\]
\item Soit  $m$ un entier premier \`a $l$. Soit $k$ le corps r\'esiduel de $K$. Montrer que l'homomorphisme de r\'eduction $\mu_m(K)\to \mu_m(k)$ est  bijectif. (Utiliser le lemme de Hensel.)
\item *Soit $\zeta$ une racine de l'unit\'e de $K$, diff\'erente de $1$. Montrer que $\zeta\notin 1+lO_K$ si $l>2$, et que $\zeta\notin 1+4O_K$ si $l=2$. (Traiter d'abord le cas o\`u $\zeta$ est d'ordre premier \`a $l$, en utilisant (d). Sinon, se ramener \`a $\zeta$ d'ordre $l$ si $l>2$ et d'ordre $4$ si $l=2$; traiter s\'epar\'ement le cas $l=2,\zeta=-1$.)
\item Soit $n\ge 1$. Montrer que $U=1+lM_n(\Z_l)$ est un sous-groupe de $GL_n(\Z_l)$, et un pro-$l$-groupe. (Filtrer $U$ par les $U^{(r)}=1+l^rM_n(\Z_l)$, $r\ge 1$, et construire un isomorphisme de groupes $U^{(r)}/U^{(r+1)}\iso M_n(\F_l)$ pour tout $r$.)
\end{enumerate}
\end{exo}

\subsubsection{Images directes, inverses\dots} Soit $f:X\to Y$ un morphisme de sch\'emas. Je renvoie \`a \cite{SGA4} ou Milne \cite[ch. III]{milne} pour la d\'efinition de l'image r\'eciproque des faisceaux $f^*$ et des images directes sup\'erieures $R^qf_*$, et surtout des images directes sup\'erieures \`a support propre $R^qf_!$ (ces derni\`eres si $f$ est un morphisme s\'epar\'e de type fini de sch\'emas localement noeth\'eriens), dont la construction compl\`ete est faite dans l'expos\'e XVII de \cite{SGA4} (voir aussi \cite[ch. VI, \S 3]{milne}).

D'apr\`es \cite[exp. VI]{SGA5}, on peut d\'efinir ces foncteurs sur les faisceaux\index{Faisceau $l$-adique} $l$-adiques\footnote{Ce point est un peu d\'elicat, car en g\'en\'eral l'image directe d'un faisceau $l$-adique ne v\'erifie  plus tout \`a fait la condition de la d\'efinition \ref{dlad} a). La solution est de le ``rectifier'' en utilisant la technique du th\'eor\`eme \ref{tA.1} avec le syst\`eme multiplicatif des morphismes de d\'ecalages $(F_n)\mapsto (F_{n+r})$, \cf \cite[exp. V, \S\S 2.4 et 2.5]{SGA5}.}, et

\begin{thm}[\protect{\cite[exp. VI, lemme 2.2.2 et 2.2.3 A)]{SGA5}}]\phantomsection Si $F$ est constructible, $R^qf_!F$ est constructible pour tout $q\ge 0$ et nul pour $q>2d$, o\`u $d$ est la dimension relative de $f$.
\end{thm}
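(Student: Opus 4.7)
Le plan consiste à se ramener d'abord aux faisceaux de torsion, puis à utiliser la compactification de Nagata pour s\'eparer les aspects ``propre'' et ``ouvert''.

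Premi\`ere \'etape: r\'eduction aux faisceaux de torsion. Par d\'efinition, $F=(F_n)$ avec $F_n$ constructible annul\'e par $l^{n+1}$, et $R^qf_!F$ est construit (apr\`es rectification comme esquiss\'e dans la note) comme un syst\`eme projectif dont les termes sont essentiellement les $R^qf_!F_n$. Il suffit donc d'\'etablir les deux assertions (constructibilit\'e, annulation pour $q>2d$) pour chaque faisceau constructible de torsion $F_n$. L'essentiel du travail porte donc sur la th\'eorie $\et$ usuelle avec coefficients $\Z/l^{n+1}$.

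Deuxi\`eme \'etape: l'annulation pour $q>2d$. On utilise le th\'eor\`eme de changement de base propre pour $Rf_!$ (cit\'e au \S \pageref{pel} dans le cas projectif lisse, mais valable en g\'en\'eral): pour tout point g\'eom\'etrique $\bar s$ de $Y$, on a un isomorphisme canonique
\[(R^qf_!F_n)_{\bar s}\simeq H^q_c(X_{\bar s},F_n|_{X_{\bar s}}).\]
Or $X_{\bar s}$ est un sch\'ema s\'epar\'e de type fini sur un corps s\'eparablement clos, de dimension $\le d$, et le th\'eor\`eme d'Artin sur la dimension cohomologique ($H^q_c(Z,G)=0$ pour $q>2\dim Z$, $Z$ s\'epar\'e de type fini sur un corps s\'eparablement clos, $G$ de torsion) donne aussit\^ot l'annulation voulue. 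Comme le crit\`ere de nullit\'e d'un faisceau \'etale se lit sur les fibres, on en d\'eduit $R^qf_!F_n=0$ pour $q>2d$.

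Troisi\`eme \'etape: constructibilit\'e. On factorise $f$ via la compactification de Nagata comme $f=\bar f\circ j$, o\`u $j:X\to \bar X$ est une immersion ouverte et $\bar f:\bar X\to Y$ est propre. Puisque $Rf_!=R\bar f_*\circ j_!$ et que $j_!F_n$ est constructible (c'est imm\'ediat: ses fibres aux points de $X$ sont celles de $F_n$, et nulles ailleurs), il suffit de montrer que $R^q\bar f_*$ pr\'eserve la constructibilit\'e pour $\bar f$ propre. C'est le \emph{th\'eor\`eme de finitude pour les morphismes propres}: par changement de base propre, les fibres de $R^q\bar f_*G$ sont les $H^q(\bar X_{\bar s},G|_{\bar X_{\bar s}})$, et il faut v\'erifier (a) que ces fibres sont finies et (b) qu'elles forment un faisceau constructible sur $Y$.

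Point-cl\'e et obstacle principal: la finitude $H^q(Z,G)<\infty$ pour $Z$ propre sur un corps s\'eparablement clos et $G$ constructible de torsion. On la d\'emontre par d\'evissage et r\'ecurrence sur $\dim Z$: on se ram\`ene par la suite exacte longue \`a $G$ localement constant constructible sur un ouvert dense de $Z$ (prolong\'e par $j_!$), puis par passage \`a un rev\^etement fini \'etale \`a $G=\Z/l^{n+1}$ constant (quitte \`a utiliser Gabber/Deligne pour $l=p$, ou plus \'el\'ementairement \`a supposer $l\ne p$). Dans ce dernier cas, la finitude r\'esulte de la r\'esolution des singularit\'es (ou, en caract\'eristique positive, de l'alt\'eration de de Jong) combin\'ee au th\'eor\`eme de Lefschetz faible pour les vari\'et\'es projectives lisses. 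La constructibilit\'e globale (b) se d\'eduit de (a) plus d'un th\'eor\`eme de ``constructibilit\'e g\'en\'erique'' affirmant que pour $\bar f$ propre et $G$ constructible, il existe une stratification de $Y$ au-dessus de laquelle $R^q\bar f_*G$ est localement constant constructible -- ce qu'on obtient par un argument de noeth\'erianit\'e combin\'e au changement de base propre. C'est cette partie (dont la preuve compl\`ete occupe les expos\'es XIV--XVII de \cite{SGA4}) qui constitue la difficult\'e majeure; une fois admise, les trois \'etapes ci-dessus fournissent imm\'ediatement l'\'enonc\'e.
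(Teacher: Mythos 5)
Le texte du cours ne démontre pas cet énoncé: il le cite tel quel de SGA 5 (exp. VI, lemme 2.2.2 et 2.2.3 A)), de sorte qu'il n'y a pas de «~preuve du papier~» à laquelle comparer la vôtre. Votre plan reconstitue correctement l'argument standard de SGA 4/5: réduction au cas d'un faisceau de torsion composante par composante, annulation pour $q>2d$ par changement de base (les fibres de $R^qf_!F_n$ sont les $H^q_c$ des fibres géométriques, de dimension $\le d$, auxquelles on applique la borne d'Artin $H^q_c=0$ pour $q>2\dim$), puis constructibilité via la factorisation de Nagata $Rf_!=R\bar f_*\circ j_!$ et le théorème de finitude pour les morphismes propres. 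C'est bien la bonne architecture, et chacune de ces trois étapes est correcte.

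Le seul point à rectifier est votre esquisse de la preuve du théorème de finitude propre lui-même. La finitude de $H^q(Z,\Z/l^{n+1})$ pour $Z$ propre sur un corps séparablement clos ne s'obtient pas par «~résolution des singularités (ou altérations) plus Lefschetz faible~»: le théorème de Lefschetz faible compare $H^q$ d'une variété et d'une section hyperplane dans une plage de degrés et ne fournit aucun amorçage de finitude; quant aux altérations de de Jong, elles sont très postérieures à SGA 4 et ne sont pas nécessaires. Le dévissage standard consiste à se ramener au cas projectif, puis à fibrer en courbes relatives et à utiliser la finitude connue pour les courbes (essentiellement $H^1(C,\mu_n)\simeq {}_n\Pic(C)$ et la finitude des points de torsion de la jacobienne), la récurrence sur la dimension faisant le reste. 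Cela dit, ce sous-théorème est un résultat citable en bloc (SGA 4, exp. XIV; ou \cite[Th. finitude]{SGA412} pour la variante sur une base régulière de dimension $\le 1$), et une fois admis, votre démonstration est complète.
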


\begin{thm}[\protect{\cite[exp. VI, 2.2.3 B)]{SGA5}}]\phantomsection\label{t5.2} La formation des \goodbreak $R^qf_! F$ commute \`a tout changement de base.
\end{thm}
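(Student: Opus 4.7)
Le plan consiste à déduire cet énoncé du théorème de changement de base propre classique pour les faisceaux de torsion, via un dévissage au niveau des systèmes $(F_n)$.

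Commençons par expliciter ce qu'il faut prouver. Étant donné un carré cartésien
\[\begin{CD}
X' @>g'>> X\\
@V{f'}VV @VV{f}V\\
Y' @>g>> Y
\end{CD}\]
avec $f$ séparé de type fini, et $F$ un faisceau $l$-adique constructible sur $X$, il existe un morphisme canonique de changement de base $g^* R^qf_!F\to R^qf'_!(g')^*F$ (défini par adjonction), et il s'agit de montrer que c'est un isomorphisme. La première étape consiste à se ramener au niveau des faisceaux de torsion: par définition de $R^qf_!$ sur les faisceaux $l$-adiques (\emph{cf.} \cite[exp. V, \S 2.4]{SGA5}), cette image directe est obtenue, après rectification, à partir des $R^qf_!F_n$, chacun étant une image directe de faisceau étale de torsion annulé par $l^{n+1}$. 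Le morphisme de changement de base étant construit fonctoriellement niveau par niveau, il suffit de montrer que $g^*R^qf_!F_n \iso R^qf'_!(g')^*F_n$ pour tout $n$, où $F_n$ est un faisceau étale constructible de $\Z/l^{n+1}$-modules.

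La seconde étape est l'invocation du théorème de changement de base pour les faisceaux étales de torsion: pour un morphisme $f$ séparé de type fini entre schémas localement noethériens et un faisceau de torsion $F$, le morphisme $g^*R^qf_!F\to R^qf'_!(g')^*F$ est un isomorphisme. Rappelons la stratégie de ce théorème classique (\cite[exp. XVII]{SGA4}): par compactification de Nagata, on écrit $f=\bar f\circ j$ avec $j$ une immersion ouverte et $\bar f$ propre, de sorte que $R^qf_!F = R^q\bar f_*(j_!F)$; le changement de base pour $j_!$ est trivial (c'est essentiellement l'extension par zéro), tandis que le changement de base pour $R^q\bar f_*$ est le \emph{théorème de changement de base propre}, démontré par dévissage à la dimension relative $\le 1$ puis au cas des courbes, où il se ramène finalement à un calcul explicite de cohomologie de courbes sur des corps algébriquement clos.

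Enfin, il faut vérifier que le procédé de rectification évoqué en note de bas de page (nécessaire pour que $(R^qf_!F_n)_n$ satisfasse effectivement à la condition de la définition \ref{dlad} a)) est compatible avec le changement de base: comme il s'agit d'une opération définie uniformément en $n$ au moyen du système multiplicatif des morphismes de décalage (\cite[exp. V, \S 2.5]{SGA5}), et comme $g^*$ commute à ces décalages, la compatibilité est immédiate. L'obstacle principal n'est donc pas dans le passage $l$-adique --- qui est formel --- mais bien dans le théorème de changement de base propre pour les faisceaux de torsion, résultat profond de \cite{SGA4} qu'on se contente ici de citer.
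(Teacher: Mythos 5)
Votre démonstration est correcte et suit exactement la voie que le texte esquisse lui-même dans les paragraphes qui suivent l'énoncé (le théorème étant cité de SGA~5 sans preuve) : construction fonctorielle du morphisme de changement de base, réduction par compactification de Nagata au cas propre où l'on invoque le théorème de changement de base propre, le dévissage $l$-adique et la compatibilité de la rectification étant formels. Rien à redire.
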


Pr\'ecisons ce dernier point en utilisant le langage des cat\'egories d\'eriv\'ees (\S \ref{catder}). En g\'en\'eral, soit 
\[\begin{CD}
\sC'@>g'_*>> \sC\\
@V{f'_*}VV @V{f_*}VV\\
\sD'@>g_*>> \sD
\end{CD}\]
un carr\'e naturellement commutatif de cat\'egories et de foncteurs: naturellement commutatif signifie qu'on s'est donn\'e un isomorphisme naturel 
\begin{equation}\label{eq5.4}
 f_* g'_*\simeq g_*f'_*.
\end{equation}
 
 Supposons que $g_*$ et $g'_*$ aient des adjoints \`a gauche $g^*,{g'}^*$. De \eqref{eq5.4}, on tire une composition
 \[ f_*\by{f_**\eta'} f_* g'_*{g'}^*\simeq g_*f'_*{g'}^*\]
 o\`u $\eta'$ est l'unit\'e de l'adjonction $({g'}^*,g'_*)$, puis
 \begin{equation}\label{eq5.5}
  g^*f_*\to g^*g_*f'_*{g'}^*\by{\epsilon*f'_*{g'}^*}f'_*{g'}^*
  \end{equation}
 o\`u $\epsilon$ est la co\"unit\'e de l'adjonction $(g^*,g_*)$: c'est le \emph{morphisme de changement de base}. Lorsque cette situation \'emane d'un carr\'e cart\'esien de $\F_q$-sch\'emas de type fini
 \begin{equation}\label{eq5.7}
 \begin{CD}
X'@>g'>> X\\
@V{f'}VV @V{f}VV\\
Y''@>g>> Y
\end{CD}
\end{equation}
avec  $\sC=D^b_c(X,\Z_l)$, etc. et $f_*$ signifie $Rf_*$, etc., le th\'eor\`eme de changement de base propre implique que \eqref{eq5.5} est un isomorphisme quand $f$ est propre et $g$ quelconque.

Si $f:X\to Y$ est maintenant un morphisme quelconque de $\Sch(\F_q)$, on peut le factoriser en
\[X\by{j} \bar X\by{\bar f} Y\]
o\`u $j$ est une immersion ouverte et $\bar f$ est propre (gr\^ace au th\'eor\`eme de compactification de Nagata\footnote{Si $X\subset \bar X_0$ est une compactification de $X$ au-dessus de $\F_q$, on peut prendre pour $\bar X$ l'adh\'erence dans $\bar X_0\times Y$ du graphe de $f$ dans $X\times Y$.}). Par d\'efinition,
\[Rf_! = R\bar f_*\circ j_!,\]
le th\'eor\`eme de changement de base propre assurant que cette d\'efinition est ind\'ependante du choix de $(j,\bar f)$. On d\'efinit alors l'isomorphisme de changement de base
\begin{equation}\label{eq5.6}
g^*Rf_!\iso Rf'_! {g'}^*
\end{equation}
comme le compos\'e de \eqref{eq5.5} pour $\bar f$ et de l'isomorphisme trivial ${g''}^*j_!\iso j'_! {g'}^*$. (Ici, $g''$ est le changement de base de $g$ par $\bar f$ et $j'$ le changement de base de $j$ par $g''$.)

\subsubsection{La fonction $L$ d'un faisceau $l$-adique constructible}\index{Fonction $L$!d'un faisceau $l$-adique}\index{Faisceau $l$-adique}

\begin{defn}[\protect{\cite[exp. XV, \S 3 no 1)]{SGA5}}]\phantomsection a) Soit $F$ un faisceau $l$-adique sur $\Spec \F_q$ ($l\nmid q$), c'est-\`a-dire une repr\'esentation $l$-adique de $\Gal(\bar \F_q/\F_q)\simeq \hat\Z$. On d\'efinit 
\[L(\F_q,F)=\det(1-\phi_q^{-1} t^n\mid F\otimes \Q_l)^{-1}\in \Q_l(t)\subset \Q_l[[t]]\]
o\`u $\phi_q$ est l'automorphisme de Frobenius $x\mapsto x^q$ de $\bar \F_q$ et $q=p^n$.\\
\\
b) Soit $X$ un sch\'ema de type fini sur $\F_q$, et soit $F$ un faisceau $l$-adique constructible sur $X$. On d\'efinit:
\[L(X,F) = \prod_{x\in X_{(0)}} L(x,F_x)\]
o\`u $F_x$ est la fibre de $F$ en $x$ (soit $i_x^*F$, o\`u $i_x:x\to X$ est l'immersion ferm\'ee canonique).
\end{defn}

\subsubsection{Explication de $\phi_q^{-1}$} \label{5.2.3} Soit $\pi_X:X\to X$ l'endomorphisme de Frobenius au-dessus de $\F_q$: il induit un isomorphisme de faisceaux $l$-adiques $(\pi_X)_*F\iso F$,  dont l'inverse d\'efinit apr\`es adjonction un morphisme:
\[Fr_{F/X}^*:\pi_X^* F\to F\]
la ``correspondance de Frobenius''. On v\'erifie ``trivialement'' que la composition
\[H^*(X,F)\to H^*(X,\pi_X^*F)\to H^*(X,F)\]
o\`u la premi\`ere fl\`eche est l'application canonique (d\'efinie par le morphisme $F\to (\pi_X)_*\pi_X^*F$)  et la seconde fl\`eche est $H^*(X,Fr_{F/X}^*)$, est \'egale \`a l'identit\'e. En appliquant ceci \`a $\bar X=X\times_{\F_q} \bar \F_q$, on en d\'eduit que l'action de $\pi_X\times 1$ sur $H^*(\bar X,Fr_{F/X}^*)$ est \emph{l'inverse} de celle de $1\times \pi_{\F_q}$, soit l'action de $\phi_q$: ``le Frobenius arithm\'etique est l'inverse du Frobenius g\'eom\'etrique''.

Pour une excellente exposition de ce sorite incompr\'ehensible, voir  l'exposé de Houzel \cite[exp. XV, \S\S 1 et  2]{SGA5}.

\subsubsection{Convergence; fonctorialit\'e en $F$}

\begin{lemme}\phantomsection Le produit infini $L(X,F)$ est convergent dans $\Q_l[[t]]$ (comme s\'erie formelle).
\end{lemme}

\begin{proof} Cela revient \`a dire que, pour tout $n>0$, l'ensemble
\[\{x\in X_{(0)}\mid v_t(L(x,F_x)-1)\le n\}\]
est fini, ce qui r\'esulte de la d\'efinition.
\end{proof}

\begin{lemme}\phantomsection\label{l5.1} Si $0\to F'\to F\to F''\to 0$ est une suite exacte de faisceaux $l$-adiques constructibles, on a\index{Faisceau $l$-adique}
\[L(X,F)=L(X,F')L(X,F'').\]
\end{lemme}

\begin{proof} C'est \'evident en se ramenant au cas des corps finis.
\end{proof}

\subsubsection{Le th\'eor\`eme}

\begin{thm}[\protect{\cite[exp. XV, \S 3 no 2)]{SGA5}}]\phantomsection \label{t5.1}Soit $f:X\to Y$ un $\F_q$-morphisme de type fini. Pour tout faisceau $l$-adique $F$ sur $X$, on a
\[L(X,F)=\prod_{i=0}^{2n} L(Y,R^if_! F)^{(-1)^i}.\]
\end{thm}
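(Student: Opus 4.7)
La stratégie naturelle est de procéder en deux temps : d'abord réduire au cas ponctuel $Y=\Spec\F_q$ par dévissage sur le morphisme $f$, puis démontrer dans ce cas particulier que l'énoncé est une forme cohomologique (à la Lefschetz) de la fonction $L$, la ``formule de Lefschetz généralisée'' mentionnée par Grothendieck dans l'introduction citée au \S \ref{5.2.1}.

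Pour la réduction, je factoriserais $f:X\to Y$ à travers le morphisme structural $g:Y\to\Spec\F_q$. Supposant le théorème connu pour les deux morphismes composables $f$ et $g$, une double application donnerait d'une part
\[L(X,F)=\prod_j L(\Spec\F_q,R^j(g\circ f)_!F)^{(-1)^j},\]
et d'autre part, en appliquant d'abord le théorème à $f$ puis à chaque $R^i f_!F$ via $g$,
\[\prod_i L(Y,R^if_!F)^{(-1)^i}=\prod_{i,k}L(\Spec\F_q,R^kg_!R^if_!F)^{(-1)^{i+k}}.\]
La suite spectrale de Leray $E_2^{k,i}=R^kg_!R^if_!F\Rightarrow R^{k+i}(g\circ f)_!F$ commute à l'action du Frobenius ; par un argument standard (dans une suite spectrale équivariante le produit alterné des polynômes caractéristiques des pages $E_2$ et $E_\infty$ coïncident, car chaque différentielle $d_r$ fait se compenser noyaux et conoyaux au fur et à mesure qu'on passe d'une page à la suivante), les deux expressions ci-dessus sont donc égales. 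Tout est ramené au cas $Y=\Spec\F_q$, en raisonnant par récurrence sur la dimension de $Y$ après un dévissage similaire utilisant une stratification de $Y$ en sous-schémas localement fermés (les contributions le long des strates se recollent grâce au lemme \ref{l5.1} et à la suite exacte longue des $R^qf_!$ pour un couple $(Z,U)$ où $U$ est un ouvert de complément $Z$).

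Pour $Y=\Spec\F_q$, l'énoncé devient : pour $F$ un faisceau $l$-adique constructible sur un $\F_q$-schéma de type fini $X$,
\[L(X,F)=\prod_{i=0}^{2\dim X}\det\bigl(1-\phi_q^{-1}t\,\big|\,H^i_c(\bar X,F)\otimes\Q_l\bigr)^{(-1)^{i+1}}.\]
En prenant la dérivée logarithmique et en comparant coefficient à coefficient les puissances de $t$, cela équivaut à la formule des traces
\[\sum_{x\in X(\F_{q^n})}\Tr\bigl(\phi_x^n\bigm| F_{\bar x}\otimes\Q_l\bigr)=\sum_i(-1)^i\Tr\bigl(\phi_q^n\bigm| H^i_c(\bar X,F)\otimes\Q_l\bigr)\]
pour tout $n\ge 1$. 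La démonstration de cette identité est la \emph{vraie} difficulté : c'est la formule de Lefschetz-Verdier appliquée à la correspondance de Frobenius $Fr_{F/X}$ construite au \S \ref{5.2.3}. L'argument clé est que le graphe de $\pi_X^n$ rencontre transversalement la diagonale en $X(\F_{q^n})$ (le Frobenius est radiciel), et qu'en ces points la contribution locale à la trace globale coïncide avec $\Tr(\phi^n\mid F_{\bar x})$ ; cela nécessite la théorie générale des correspondances cohomologiques à coefficients dans un faisceau, des complexes dualisants et du formalisme des six opérations, que Grothendieck et Verdier ont précisément développés pour établir ce résultat (voir \cite[exp. III]{SGA5} et le développement dans \cite[exp. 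XII et XV]{SGA5}).

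L'obstacle principal est donc l'établissement de cette formule des traces cohomologique pour $Y=\Spec\F_q$ : les réductions préliminaires sont essentiellement formelles, modulo la vérification soigneuse de la commutation au changement de base \eqref{eq5.6} et le traitement correct du décalage arithmétique/géométrique du Frobenius. Une fois la formule des traces acquise, l'identité de séries formelles entre le produit eulérien définissant $L(X,F)$ et le produit alterné des polynômes caractéristiques en résulte par l'identité \eqref{eq3.3} déjà utilisée dans la preuve du théorème \ref{t3.6}.
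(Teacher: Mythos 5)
Votre proposition est correcte et suit pour l'essentiel la stratégie du texte : un dévissage formel ramenant tout au cas $Y=\Spec\F_q$, puis la formule des traces comme c\oe ur de la démonstration. Deux différences méritent d'être signalées. D'abord, votre réduction à $Y=\Spec\F_q$ passe par la compatibilité à la composition (suite spectrale de Leray appliquée à $g\circ f$ où $g:Y\to\Spec\F_q$), alors que le texte utilise d'abord le changement de base (théorème \ref{t5.2}) pour se ramener aux fibres $X_y\to y$ au-dessus des points fermés de $Y$ ; les deux routes sont valables et reposent sur les mêmes ingrédients (lemme \ref{l5.1}, Leray, changement de base). Ensuite, et c'est la divergence la plus nette, le texte poursuit le dévissage jusqu'à $X=U\subset\P^1$ (récurrence noethérienne, normalisation de Noether, cas des morphismes finis), puis démontre la formule des traces pour les courbes par la méthode inspirée de Nielsen-Wecken : $F$ provient d'un revêtement galoisien fini $\pi:U'\to U$, la formule \eqref{eq5.3} (traces non commutatives de Stallings-Hattori) exprime $\Tr(\phi_q^{-1}\mid R\Gamma_c(\bar U,F))$ en termes des $\Tr(\pi_{U'}h^{-1}\mid H^*_c(\bar U',\Q_l))$, et on conclut par la formule des traces classique (proposition \ref{p3.4}) sur la complétée projective lisse $C'$. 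Vous invoquez au contraire la formule de Lefschetz-Verdier directement sur $X$ quelconque : c'est la première des deux démonstrations que le texte mentionne sans la détailler ; elle évite la réduction aux courbes mais exige d'identifier les termes locaux comme étant les $\Tr(\phi^n\mid F_{\bar x})$, ce qui n'est pas gratuit (même si la transversalité du Frobenius rend ce point accessible). Enfin, vous passez un peu vite sur la subtilité des coefficients de torsion : pour donner un sens aux traces sur $H^*_c(\bar X,F)$ quand on s'est ramené à un faisceau de $\Z/l^n$-modules, il faut travailler avec les complexes parfaits dans la catégorie dérivée, point que le texte souligne comme délicat.
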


En prenant $Y=\Spec \F_q$, on obtient:

\begin{cor}\phantomsection\label{c5.1} $L(X,F)=\prod\limits_{i=0}^{2n} \det(1-\phi_q^{-1}t\mid H^i_c(\bar X,F)\otimes_{\Z_l}\Q_l)^{(-1)^{i+1}}\allowbreak\in \Q_l(t)$.
\end{cor}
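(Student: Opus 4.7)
The plan is to specialize Theorem \ref{t5.1} to $Y=\Spec \F_q$, with $f:X\to \Spec \F_q$ the structural morphism; the corollary is then purely a matter of unpacking definitions.

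First I would record the identification of $R^if_!F$ as a Galois module. The geometric point $\Spec \bar \F_q\to \Spec \F_q$ induces by the proper base change isomorphism \eqref{eq5.6} a canonical isomorphism
\[(R^if_!F)_{\bar \F_q}\simeq H^i_c(\bar X,F)\]
where $\bar X = X\times_{\F_q} \bar \F_q$. In other words, viewing the $l$-adic sheaf $R^if_!F$ on $\Spec \F_q$ as a continuous $\Z_l$-representation of $\Gal(\bar\F_q/\F_q)=\hat\Z$, this representation \emph{is} $H^i_c(\bar X,F)$ with its natural Galois action.

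Next I would compute the local $L$-factor at the unique point of $\Spec \F_q$. By the very definition recalled just before the statement (and by the convention explained in \S \ref{5.2.3} about the arithmetic Frobenius being the inverse of the geometric Frobenius),
\[L(\Spec \F_q,R^if_!F) = \det\bigl(1-\phi_q^{-1}t\mid H^i_c(\bar X,F)\otimes_{\Z_l}\Q_l\bigr)^{-1}.\]

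Substituting into Theorem \ref{t5.1} gives
\[L(X,F) = \prod_{i=0}^{2n} L(\Spec \F_q,R^if_!F)^{(-1)^i} = \prod_{i=0}^{2n}\det\bigl(1-\phi_q^{-1}t\mid H^i_c(\bar X,F)\otimes_{\Z_l}\Q_l\bigr)^{(-1)^{i+1}},\]
the sign flipping because $(-1)\cdot(-1)^i=(-1)^{i+1}$. The fact that the product lies in $\Q_l(t)$ is then automatic, each factor being a polynomial in $t$ with coefficients in $\Q_l$, and the product being finite (the $H^i_c$ vanish for $i>2n$ by the dimension bound recalled just after Definition \ref{dlad}). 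There is no real obstacle here: the entire content of the corollary sits in Theorem \ref{t5.1} and in the identification of fibres of $R^if_!$ with compactly supported cohomology, and the only thing to be slightly careful about is the distinction between arithmetic and geometric Frobenius in \S \ref{5.2.3}, which is precisely what produces the exponent $-1$ on $\phi_q$ in the final formula.
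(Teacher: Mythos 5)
Your proof is correct and follows exactly the paper's route: the paper derives the corollary by the single phrase ``en prenant $Y=\Spec \F_q$'' in Theorem \ref{t5.1}, which is precisely your specialization, together with the identification of the stalk of $R^if_!F$ at the geometric point with $H^i_c(\bar X,F)$ and the definition of the local factor over $\F_q$. Your attention to the arithmetic/geometric Frobenius convention and to the sign $(-1)\cdot(-1)^i=(-1)^{i+1}$ supplies the details the paper leaves implicit.
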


\begin{cor}\phantomsection $Z(X,t)= \prod\limits_{i=0}^{2n} \det(1-\phi_q^{-1}t\mid H^i_c(\bar X,\Q_l))^{(-1)^{i+1}}$; $Z(X,t)\in \Q_l(t)\cap \Q[[t]] = \Q(t)$.
\end{cor}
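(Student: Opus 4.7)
The plan is to apply Corollary \ref{c5.1} to the constant $l$-adic sheaf $\Z_l$ on $X$ (for any $l\ne p$), which immediately yields a formula of the desired shape
\[L(X,\Z_l)=\prod_{i=0}^{2n} \det(1-\phi_q^{-1}t\mid H^i_c(\bar X,\Q_l))^{(-1)^{i+1}}\in \Q_l(t).\]
It then remains (i) to identify $L(X,\Z_l)$ with $Z(X,t)$ and (ii) to descend rationality from $\Q_l$ to $\Q$.

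For (i), take a closed point $x\in X_{(0)}$, with residue field $\F_{q^d}$ where $d=\deg(x)$. The fibre $(\Z_l)_x$ is the trivial $l$-adic representation of $\Gal(\bar\F_q/\F_{q^d})$, so its geometric Frobenius acts as the identity, and the local factor at $x$ reduces to $(1-t^{d})^{-1}$. By the definition of $L(X,\Z_l)$ as the product of these local factors over $X_{(0)}$, and by Proposition \ref{p4} (a) rewritten via $\zeta(X,s)=Z(X,q^{-s})$, one checks termwise that $L(X,\Z_l)=Z(X,t)$. This gives the factorisation
\[Z(X,t)=\prod_{i=0}^{2n} \det(1-\phi_q^{-1}t\mid H^i_c(\bar X,\Q_l))^{(-1)^{i+1}}\]
and in particular $Z(X,t)\in \Q_l(t)$.

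For (ii), observe that by construction $Z(X,t)\in \Q[[t]]$ (indeed $Z(X,t)\in 1+t\Z[[t]]$ from \eqref{eq:Z2}). So we need the identity $\Q_l(t)\cap \Q[[t]]=\Q(t)$. This is precisely the content of Exercise \ref{exo3.3} (c) applied to $K=\Q$ and $L=\Q_l$: rationality of a formal power series $f=\sum a_n t^n$ is equivalent to the existence of a linear recurrence relation $a_{n+k}+b_1 a_{n+k-1}+\dots+b_k a_n=0$ on its coefficients, and when $k$ is minimal the coefficients $b_i$ are uniquely determined; since the $a_n$ lie in $\Q$, the unique $b_i$ obtained over $\Q_l$ automatically belong to $\Q$ as well (equivalently, one can invoke the Hankel determinant criterion cited after \eqref{eq3.4}). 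Hence $Z(X,t)\in \Q(t)$, completing the proof.

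No real obstacle arises; the only point that requires care is the matching of variables and Frobenius conventions when translating Corollary \ref{c5.1} for $F=\Z_l$ into the original definition of $Z(X,t)$ in Proposition \ref{p4} (this is exactly the ``arithmetic Frobenius is the inverse of the geometric Frobenius'' discussion of \S \ref{5.2.3}, and it is built into the sign in $\phi_q^{-1}$).
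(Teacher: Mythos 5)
Your argument is correct and is exactly the paper's: the corollary is obtained by taking $F=\Z_l$ in Corollary \ref{c5.1}, noting $Z(X,t)=L(X,\Z_l)$ (the local factor at a point of degree $d$ being $(1-t^d)^{-1}$), and invoking $\Q_l(t)\cap\Q[[t]]=\Q(t)$ via the Hankel-determinant criterion or Exercise \ref{exo3.3}. Nothing to add.
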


En effet, $Z(X,t)=L(X,\Z_l)$.

\subsubsection{R\'eduction \`a $X=$ un ouvert de $\P^1$ (\loccit).}\label{5.2.6} En quatre \'etapes:

\begin{enumerate}
\item Si le th\'eor\`eme \ref{t5.1} est vrai pour chaque fibre de $f$ au-dessus d'un point ferm\'e de $Y$, alors il est vrai pour $f$. (R\'esulte du th\'eor\`eme \ref{t5.2}.)
\item Le th\'eor\`eme est vrai si $f$ est fini. (Par 1, on se ram\`ene \`a une extension de corps finis $\F_{q'}/\F_q$, et alors
\[L(\F_{q'},F)=L(\F_q,\Ind_{G_{\F_{q'}}}^{G_{\F_q}} F)\]
r\'esulte du m\^eme calcul  que dans la d\'emonstration de la proposition \ref{p4.4} (iii).)
\item Le th\'eor\`eme est stable par composition des morphismes (r\'esulte de la suite spectrale de Leray
\[R^pg_!R^qf_! F\Rightarrow R^{p+q}(g\circ f)_!F\]
et du lemme \ref{l5.1}.)\footnote{Le point ici est: ``les suites spectrales pr\'eservent les caract\'eristiques d'Euler-Poincar\'e''; pour ce genre de raisonnements, voir par exemple Serre \cite[p. V-24]{serre-locale}.}
\item Supposons $Y=\Spec \F_q$. Si $Z$ est un fermé de $X$ d'ouvert compl\'ementaire $U$ et si le th\'eor\`eme est vrai pour, $f_{|Z}$, alors il l'est pour $f$ si et seulement s'il l'est pour $f_{|U}$. (R\'esulte du lemme \ref{l5.1} et de la suite exacte de cohomologie \`a supports propres
\[\dots \to R^q(f_{|U})_!F\to R^qf_!F\to R^q(f_{|Z})_! F\to R^{q+1}(f_{|U})_!F\to \dots)\]
\end{enumerate}

Par le point 4 et par r\'ecurrence noeth\'erienne, on se ram\`ene \`a $X$ affine, puis par le point 2 et le lemme de normalisation  de Noether au cas $X=\A^n$, puis par les points 1 et 3 \`a $n=1$. On voit facilement que le corollaire \ref{c5.1} est vrai pour $\dim X=0$: en r\'eutilisant le point 4, on peut remplacer $\A^1$ par n'importe quel ouvert non vide $U$ de $\P^1$.

\subsubsection{D\'emonstration pour $X=U\subset \P^1$}\label{s5.2.3} En prenant les logarithmes, le corollaire \ref{c5.1} est \'equivalent via \eqref{eq3.3} \`a la \emph{formule des traces}
\begin{equation} \label{eq5.1}
\sum_{x\in U_{(0)}} \Tr(\phi_q^{-n}\mid F_x) = \sum_{i=0}^2 (-1)^i \Tr(\phi_q^{-n}\mid H^i_c(\bar U,F)\otimes \Q_l).
\end{equation}

Comme $\phi_q^n = \phi_{q^n}$, quitte \`a remplacer $\F_q$ par $\F_{q^n}$ on peut supposer $n=1$. De plus, quitte \`a restreindre $U$, on peut supposer $F$ \emph{localement constant}.

Par un argument tr\`es d\'elicat, on se ram\`ene \`a d\'emontrer \eqref{eq5.1} pour un \emph{faisceau $F$ localement libre de $\Z/l^n$-Modules}. Le premier point est qu'alors, le second membre n'est pas \emph{a priori} d\'efini parce que les $\Z/l^n$-modules $H^i_c(\bar U,F)$ ne sont pas libres en g\'en\'eral. De plus, on ne peut pas en g\'en\'eral les remplacer par une r\'esolution projective finie, parce que l'anneau $\Z/l^n$ est de dimension homologique infinie pour $n>1$. Pour r\'esoudre ce probl\`eme, on fait appel de mani\`ere essentielle aux cat\'egories d\'eriv\'ees $D^b_c(X,\Z/l^n)$ et aux \emph{complexes parfaits} $R\Gamma_c(X,F)\in D^b_c(X,\Z/l^n)$ dont la cohomologie est $H^*_c(\bar X,F)$ (voir d\'efinition \ref{dparf} pour les complexes parfaits): on peut alors donner un sens aux traces $\Tr(\phi^{-1}\mid R\Gamma_c(X,F))$, et donc \`a \eqref{eq5.1} dans le cas d'un faisceau de torsion.

L'argument de \cite[exp. XV, \S 3 no 3]{SGA5} utilise de plus des lemmes subtils d'alg\`ebre homologique sur les anneaux $\Z/l^n$. 

Il y a alors deux d\'emonstrations:

\begin{itemize}
\item Une ``formule de Lefschetz-Verdier'' qui calcule le second membre de \eqref{eq5.1} comme somme de termes locaux, qu'il faut ensuite reconna\^\i tre comme \'etant les $\Tr(\phi_q^{-n}\mid F_x)$. Une exposition courte mais d\'etaill\'ee s'en trouve dans \cite{pepin}\footnote{Je remercie Oussama Ouriachi de m'avoir signal\'e cette r\'ef\'erence.}.
\item Une r\'eduction \`a la formule des traces de la proposition \ref{p3.4} inspir\'ee de travaux de Nielsen et Wecken \cite{wecken} que nous allons r\'esumer maintenant. (R\'ef\'erences: \cite[exp. XII]{SGA5}, \cite[Rapport]{SGA412}, \cite[ch. VI, \S 13]{milne}.)
\end{itemize}

Comme $F$ est localement constant constructible, c'est un $\pi_1(U,u)$-module pour un point g\'eom\'etrique $u\to U$; l'action se factorise par le groupe de Galois $G$ d'un rev\^etement \'etale fini (connexe) $\pi:U'\to U$. On veut montrer la formule
\begin{equation}\label{eq5.2}
\sum_{x\in U_{(0)}} \Tr(\phi_q^{-1}\mid F_x) = \Tr(\phi_q^{-1}\mid R\Gamma_c(\bar U,F)).
\end{equation}

Soit $K$ (\resp $K'$) le corps des fonctions de $U$ (\resp de $U'$): l'extension $K'/K$ est galoisienne de groupe $G$. Soit $k=\F_q$ (\resp $k'$) la fermeture alg\'ebrique de $\F_p$ dans $K$ (\resp dans $K'$): on a un diagramme d'extensions galoisiennes
\[\begin{CD}
k'@>>> K'@>>> K'\bar k\\
@A{g}AA @A{G}AA @A{H}AA\\
k@>>> K @>>> K\bar k
\end{CD}\]
o\`u le groupe $H$ est le groupe de Galois du rev\^etement \'etale g\'eom\'etrique $\bar U'=U'\times_{k'}\bar k\to U\times_k \bar k=\bar U$. La partie droite du diagramme ci-dessus est la fibre g\'en\'erique du carr\'e (non n\'ecessairement cart\'esien!)
\[\begin{CD}
U'@<<< \bar U'\\
@V{\pi}V{G}V @V{\bar \pi}V{H}V\\
U@<<\Gamma< \bar U
\end{CD}\]
o\`u $\Gamma=\Gal(\bar k/k)$. On a un diagramme commutatif de suites exactes de groupes profinis
\[\begin{CD}
1@>>> H@>>> \tilde G@>p>> \Gamma@>>> 1\\
&&||&& @VVV @VVV\\
1@>>> H@>>> G@>>> g@>>> 1
\end{CD}\]
o\`u $\tilde G=\Gal(\bar U'/U)$ (les fl\`eches verticales sont surjectives).

Le groupe $\Gamma$ est isomorphe \`a $\hat \Z$, de g\'en\'erateur l'inverse $\phi^{-1}$ du Frobenius ``arithm\'etrique'' $\phi:=\phi_q$. Notons $H_1=p^{-1}(\phi^{-1})\subset \tilde G$: c'est un $H$-torseur dont les \'el\'ements op\`erent sur $\bar U'$. 

Notons d'autre part $\pi_X$ le Frobenius ``g\'eom\'etrique'' de $\bar X$ pour tout $\F_q$-sch\'ema $X$ de type fini, de sorte que l'action de $\pi_X$ sur $R\Gamma_c(\bar X,F)$ est \'egale \`a celle de $\phi^{-1}$ (\cf \S \ref{4.4.2}). Sur $\bar U'$, on a donc l'action de $H_1$ et de $\pi_{U'}$: la premi\`ere est au-dessus de $U$ et la seconde au-dessus de $\bar k$. 

\begin{lemme}\phantomsection Soit $E=\{\sigma\in \Aut_{\bar k}(\bar U')\mid \bar\pi\sigma = \pi_U\bar \pi\}$. Alors
\[H_1=E=\pi_{U'}H.\qed\]
\end{lemme}

Le groupe $H$ op\`ere sur $H_1$ par conjugaison, donc aussi sur $H$ par transport de structure: soit $S$ l'ensemble des orbites de cette action. Pour $h\in H$, notons $Z_h$ son stabilisateur: \`a conjugaison pr\`es, il ne d\'epend que de l'image de $h$ dans $S$. La formule fondamentale est alors (\cite[Rapport, (5.12.1)]{SGA412}, \cite[ch. VI, lemma 13.15]{milne})
\begin{multline}\label{eq5.3}
\Tr(\phi_q^{-1}\mid R\Gamma_c(\bar U,F))=\Tr(\pi_U\mid R\Gamma_c(\bar U,F))\\= \sum_{h\in S}\frac{1}{|Z_h|} \Tr(\pi_{U'} h^{-1}\mid H^*_c(\bar U',\Q_l))\cdot \Tr(h\mid \pi^*F)
\end{multline}
(voir \S \ref{5.2.3} pour la premi\`ere \'egalit\'e), o\`u les nombres $l$-adiques 
\[\frac{1}{|Z_h|} \Tr(\pi_{U'} h^{-1}\mid H^*_c(\bar U',\Q_l))\]
appartiennent \`a $\Z_l$ et  $\pi^*F$ est consid\'er\'e comme faisceau \emph{constant} sur $U'$, muni d'une action de $G$ (donc de $H$).

Soit $C'$ la compl\'etion projective lisse de $U'$: l'action de $G$ se prolonge canoniquement \`a $C'$; notons que les points fixes de $\pi_{C'}h^{-1}$ sur $\bar C'$ sont tous de multiplicit\'e $1$. En leur appliquant la formule des traces de la proposition \ref{p3.4}, un petit calcul fournit le th\'eor\`eme \ref{t5.1} (en passant encore par le point no 4 du \S \ref{5.2.6}).

Pour d\'emontrer la formule \eqref{eq5.3}, on a besoin de la notion de \emph{traces non commutatives} (Stallings \cite{stallings}, Hattori). Soit $R$ un anneau unitaire, non n\'ecessairement commutatif. On note
\[H_0(R) = R/[R,R]\]
le quotient de $R$ par le sous-groupe engendr\'e par les commutateurs $ab-ba$: c'est le $0$-\`eme groupe d'homologie de Hochschild de $R$.  \'Etant donn\'e une matrice $M= (m_{ij})\in M_n(R)$, on d\'efinit sa trace comme
\[\Tr_R(M)=\sum m_{ii} \in H_0(R).\]

Cette d\'efinition s'\'etend aux endomorphismes des $R$-modules \`a gauche, projectifs de type fini. On montre que, si $C$ est un complexe parfait sur $R$ et que $f$ est un endomorphisme de $C$ dans $D(R)$, la trace de $f$ est bien d\'efinie \cite[Rapport, 4.3]{SGA412}. Alors \eqref{eq5.3} r\'esulte de propri\'et\'es formelles de la trace non commutative, la plus importante \'etant:

\begin{lemme}[\protect{\cite[Rapport, prop. 5.6]{SGA412}}]\phantomsection Soit $H$ un groupe fini, et soit $P$ un $\Lambda[H]$-module projectif, o\`u $\Lambda$ est un anneau (par exemple, $\Lambda=\Z/l^n$). Soit $u$ un endomorphisme de $P$. Alors
\[\Tr_\Lambda(u) = |H|\Tr_{\Lambda[H]}(u).\]
\end{lemme}

\subsubsection{G\'en\'eralisations: complexes parfaits et faisceaux de Weil} Tout d'abord, on peut d\'efinir la fonction $L$ d'un complexe parfait $C$ de faisceaux $l$-adiques\index{Faisceau $l$-adique} sur $X$:\index{Fonction $L$!d'un faisceau $l$-adique}
\[L(X,C)=\prod_{i\in \Z} L(X,H^i(C))^{(-1)^i}.\]

Cette d\'efinition ne d\'epend que de la classe de $C$ dans $D^b_c(X,\Q_l)$ (voir \S \ref{5.3.3}). Le th\'eor\`eme \ref{t5.1} se reformule alors de mani\`ere plus agr\'eable:
\begin{equation}\label{eq5.12}
L(X,C)=L(Y,Rf_! C).
\end{equation}

Une autre g\'en\'eralisation est aux \emph{faisceaux de Weil}. Pour $X$ de type fini sur $\F_p$, avec $\bar X=X\times_{\F_p}\bar \F_p$, un faisceau de Weil sur $X$ est un faisceau \'etale $F$ sur $\bar X$ muni d'une action de Frobenius, c'est-\`a-dire d'un morphisme $\pi_F:\pi_X^*F\to F$. Un faisceau \'etale sur $X$ est un cas particulier de faisceau de Weil. On dit constructible, $l$-adique, etc si ces adjectifs s'appliquent \`a $F$ sur $\bar X$.

On peut alors d\'efinir la fonction $L$ d'un faisceau de Weil, voire d'un complexe parfait de tels faisceaux, puis \'enoncer le m\^eme th\'eor\`eme, avec la m\^eme d\'emonstration \cite[\S 10]{deligne-constantes}.

L'id\'ee des faisceaux de Weil a \'et\'e reprise par Lichtenbaum pour d\'efinir sa \emph{topologie Weil-\'etale}, et la cohomologie correspondante \cite{licht}.

\subsection{L'\'equation fonctionnelle en caract\'eristique $p$}\index{Equation fonctionnelle@\'Equation fonctionnelle}

\subsubsection{Le formalisme des six op\'erations} Pour exprimer une \'equation fonctionnelle que v\'erifie la fonction $L$ d'un faisceau $l$-adique, on a besoin du formalisme des six op\'erations
\[f^*,f_*,f_!,f^!,\otimes,\uHom\]
invent\'e par Grothendieck. Risquons la

\begin{defn}\phantomsection\label{d5.1}  Soit $S$ un sch\'ema, et soit $\Sch(S)$ la cat\'egorie des $S$-sch\'emas de type fini (les morphismes sont les morphismes de $S$-sch\'emas). Un \emph{formalisme des $4$ op\'erations} sur $S$ est la donn\'ee:
\begin{enumerate}
\item Pour tout $X\in \Sch(S)$, d'une cat\'egorie $T(X)$.
\item Pour tout morphisme $f:X\to Y$ dans $\Sch(S)$, de $4$ foncteurs
\[f^*,f^!:T(Y)\to T(X),\quad f_*,f_!:T(X)\to T(Y)\]
munis d'isomorphismes naturels coh\'erents $(g\circ f)^*\simeq f^*\circ g^*$, etc., poss\'edant les propri\'et\'es suivantes:
\begin{enumerate}
\item $f_*$ est adjoint \`a droite de $f^*$, $f_!$ est adjoint \`a gauche de $f^!$.
\item {\bf Recollement faible}:\label{essai2} Soit $Z\by{i} X\yb{j} U$ une d\'ecomposition de $X$ en ferm\'e ($Z$) et ouvert compl\'ementaire ($U$); alors le couple $(i^*;j^*)$ est conservatif: si $\phi:x\to y$ est une fl\`eche de $T(X)$ telle que $i^*\phi$ et $j^*\phi$ soient des isomorphismes,  $\phi$ est un isomorphisme. De plus, $i_*$ et $j_*$ sont pleinement fidèles.
\item On a un morphisme de foncteurs $f_!\to f_*$, qui est un isomorphisme si $f$ est propre.
\item\label{essai} si $f$ est lisse, il existe une auto\'equivalence $Th(f)$ de $T(Y)$ telle que $f^!\simeq Th(f)\circ f^*$, avec des compatibilit\'es ``\'evidentes'' pour la composition des morphismes lisses. Si $f$ est \'etale, on a $Th(f)=1$.
\end{enumerate}
\end{enumerate}
\end{defn}

Pour les deux derni\`eres op\'erations, on suppose que pour tout $X$, $T(X)$ soit munie d'une structure de cat\'egorie mono\"\i dale ferm\'ee, en g\'en\'eral unitaire et sym\'etrique\footnote{Mais Ayoub \'evite cette hypoth\`ese dans \cite[ch. 2]{ayoub-ast}.}  (\S \ref{sA.2}, d\'efinition \ref{dA.10}).  On d\'esire alors les propri\'et\'es suivantes:
\begin{description}
\item[M1] Les $f^*$ sont des foncteurs mono\"\i daux.
\item[M2] On a des accouplements canoniques ``associatifs et unitaires''
\[f^*a\otimes f^! b\to f^!(a\otimes b)\]
qui sont des isomorphismes pour $f$ lisse; dans ce cas,  l'auto\'equivalence $Th(f)$ de la d\'efinition \ref{d5.1} \ref{essai} est obtenue en faisant $b=\un$ ci-dessus.
\item[M3] Le morphisme de foncteurs
\[f_!(f^*a\otimes c)\to a\otimes f_! c\]
d\'eduit de M2 par adjonction est un isomorphisme (formule de projection).
\end{description}

On en d\'eduit alors formellement un isomorphisme de foncteurs

\begin{description}
\item[M4] $\uHom(f_!a,b)\iso f_*\uHom(a,f^!b)$.
\end{description}

Soit $\sM$ une cat\'egorie mono\"\i dale sym\'etrique unitaire ferm\'ee, et soit $K$ un objet de $\sM$. Pour tout $X\in \sM$, le morphisme d'\'evaluation (co\"unit\'e de l'adjonction $\otimes-\uHom$)
\[X\otimes \uHom(X,K)\to K\]
donne par adjonction une \emph{fl\`eche de bidualit\'e}
\begin{equation}\label{eq5.8}
X\to \uHom(\uHom(X,K),K).
\end{equation}

\begin{defn}\phantomsection On dit que $K$ est un \emph{objet dualisant} si \eqref{eq5.8} est un isomorphisme pour tout $X\in \sM$.
\end{defn}

Un objet dualisant n'est pas uniquement d\'etermin\'e: si $K$ est dualisant, $L\otimes K$ est dualisant pour tout objet inversible $L$ de $\sM$ (\ie tel qu'il existe $L^{-1}$ et un isomorphisme $\un\iso L\otimes L^{-1}$). La r\'eciproque est vraie: si $K'$ est dualisant, $L=\uHom(K,K')$ est inversible, d'inverse $\uHom(K',K)$, et $K\otimes L\iso K'$.

On d\'esire alors les propri\'et\'es suivantes:
\begin{description}
\item[M5] Si $K\in T(Y)$ est dualisant, alors $f^!K\in T(X)$ est dualisant pour tout $f:X\to Y$.
\item[M6]\ Si $X$ est r\'egulier, $\un\in T(X)$ est dualisant.
\end{description}

Supposons $S$ r\'egulier et v\'erifiant {\bf M6}. Soit $f:X\to S$ un morphisme lisse: d'apr\`es {\bf M2}, $f^*\un$ est inversible et les conditions {\bf M5} et {\bf M6} sont \'equivalentes pour $X$.

Supposons toujours $S$ r\'egulier: on pose alors
\[K_X=f_X^!\un\]
pour tout $X\in \Sch(S)$, o\`u $f_X:X\to S$ est le morphisme structural, et
\[D_X(a) = \uHom(a,K_X)\]
pour $a\in T(X)$. Supposant {\bf M1},\dots, {\bf M6} v\'erifi\'es, $D_X$ est une bidualit\'e pour tout $X\in \Sch(S)$ et on d\'emontre formellement (?) les identit\'es suivantes pour un $S$-morphisme $f:X\to Y$:
\begin{gather}
D_X(f^*y)\simeq f^! D_Y(y), \quad D_X(f^!y)\simeq f^* D_Y(y)\label{eq5.9}\\
D_Y(f_*x) \simeq f_!D_X(x),\quad D_Y(f_!x)\simeq f_* D_X(x).\label{eq5.10}
\end{gather}

En pratique, on demande que les $T(X)$ soient des cat\'egories triangul\'ees et les foncteurs $f^*$, etc. des foncteurs triangul\'es.

\begin{rque}\phantomsection J'ai essay\'e de donner une liste de propri\'et\'es qui sont vraies dans tous les cas que je connais, mais je ne tente pas ici de donner un syst\`eme minimal d'axiomes, ni une collection maximale de propri\'et\'es, ni (sauf cas faciles) une id\'ee des d\'emonstrations possibles des implications. Non seulement cela d\'epasserait le cadre de ce cours, mais cette t\^ache semble impossible \`a l'heure actuelle. Il existe \`a ma connaissance quatre formalismes des six op\'erations en g\'eom\'etrie alg\'ebrique:
\begin{enumerate}
\item La dualit\'e coh\'erente de Grothendieck, expliqu\'ee par Robin Hartshorne dans \cite{hartshorne-res} et reprise par Brian Conrad \cite{conrad}, puis Amnon Neeman \cite{neeman-dual}. (Mais $j_!$ n'existe pas pour une immersion ouverte $j$\dots)
\item La dualit\'e en cohomologie \'etale ou $l$-adique\index{Cohomologie!$l$-adique}, qui est l'objet de SGA 4 et SGA 5 en passant par SGA 4 1/2 (voir ci-dessous).
\item La dualit\'e dans le monde motivique \`a la Voevodsky, construite par Joseph Ayoub dans \cite{ayoub-ast}.
\item La dualité pour les $D$-modules holonomes et pour les modules de 
Hodge mixtes (par exemple \cite{schnell}).
\end{enumerate}
Dans les quatre cas, les principes des axiomes de d\'epart et des d\'emonstrations se ressemblent sans tout \`a fait se recouvrir. On part de donn\'ees ``faciles'', \`a savoir $f^*$ et $f_*$ dans les cas 1 et 2, et on construit les autres en se fondant sur des th\'eorèmes non triviaux de changement de base et de finitude. Mais les strat\'egies sont diff\'erentes. Dans le cas \'etale on a changement de base pour tout morphisme propre et par tout morphgisme lisse; dans le cas coh\'erent, on a un th\'eor\`eme de changement de base par tout morphisme plat, alors que pour les morphismes propres on n'a qu'un th\'eor\`eme de semi-continuit\'e. Le cas 3 est encore diff\'erent: on utilise en plus la connaissance (facile) d'un adjoint \`a gauche de $f^*$ quand $f$ est lisse, plus des propri\'et\'es de $\A^1$-invariance par homotopie qui sont forc\'ees axiomatiquement (alors qu'elles sont d\'eduites des th\'eor\`emes dans le cas 2 et fausses dans le cas 1). Contrairement au cas \'etale, le changement de base par un morphisme lisse est facile à démontrer et le changement de base pour un morphisme propre s'en d\'eduit en passant par l'axiome de recollement faible (d\'ef. \ref{d5.1} \ref{essai2}), qui est la propri\'et\'e la moins formelle.  Je ne dirai rien du cas 4, faute de compétence.

Il semble (\`a l'auteur) qu'un travail de fondements clarifiant les liens entre ces constructions et d\'egageant un squelette commun \'eventuel serait utile \`a entreprendre.
\end{rque}

\subsubsection{Les six op\'erations en cohomologie \'etale} Pour tout sch\'ema $X$, tout nombre premier $l$ inversible dans $X$ et tout entier $n\ge 1$, notons
\[D^b_c(X,\Z/l^n)=D^b_c(X)\]
la sous-cat\'egorie pleine de $D(X_\et,\Z/l^n)$ (cat\'egorie d\'eriv\'ee des faisceaux \'etales de $\Z/l^n$-modules) form\'ee des complexes born\'es $C$ tels que $H^i(C)$ soit constructible pour tout $i\in \Z$. C'est une sous-cat\'egorie \'epaisse de $D(X_\et,\Z/l^n)$.

Soit $f:X\to Y$ un morphisme de sch\'emas. Il est \'evident que 
\[f^*D^b_c(Y)\subset D^b_c(X);\] 
le th\'eor\`eme de finitude pour la cohomologie \`a supports propres assure l'existence d'un foncteur triangul\'e
\[Rf_!:D^b_c(X)\to D^b_c(Y)\]
lorsque $f$ est compactifiable\footnote{Ou, plus g\'en\'eralement, s\'epar\'e de type fini \`a condition que $X$ et $Y$ soient quasi-compacts et quasi-s\'epar\'es: \cite[exp. XVII, \S 7]{SGA4}.}. Qu'en est-il pour les autres op\'erations $f_*,f^!,\uHom,\otimes$?

\begin{thm}[Deligne, \protect{\cite[Th. finitude, th. 1.1]{SGA412}}]\phantomsection\label{tfinitude} Soit $S$ un sch\'ema r\'egulier de dimension $\le 1$, et soit $\Sch(S)$ la cat\'egorie des $S$-sch\'emas de type fini. Donnons-nous un nombre premier $l$ inversible sur $S$. Soit $f:X\to Y$ un morphisme de $\Sch(S)$, et soit $F$ un faisceau \'etale constructible sur $X$, de $l^n$-torsion. Alors $R^qf_*F$ est constructible pour tout $q\ge 0$ et nul pour $q\gg0$. Par cons\'equent:
\[Rf_* D^b_c(X)\subset D^b_c(Y).\]
\end{thm}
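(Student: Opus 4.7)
La démonstration procède par une série de dévissages ramenant à un cas local fondamental. L'ingrédient essentiel est le théorème de compactification de Nagata: tout morphisme $f:X\to Y$ de $\Sch(S)$ se factorise en $f=\bar f\circ j$, où $j:X\hookrightarrow \bar X$ est une immersion ouverte et $\bar f:\bar X\to Y$ est propre. Par le théorème de changement de base propre, on a $R\bar f_*\simeq R\bar f_!$ sur les faisceaux de torsion, et ce foncteur préserve déjà la constructibilité avec annulation en degrés $>2\dim\bar f$. Il suffit donc de traiter le cas de l'immersion ouverte $j$, autrement dit de montrer: si $j:U\hookrightarrow X$ est une immersion ouverte dans $\Sch(S)$ et $F$ un faisceau constructible de $\Z/l^n$-modules sur $U$, alors les $R^qj_*F$ sont constructibles et nuls pour $q$ assez grand.

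Pour établir ce point, on observe que la constructibilité est locale pour la topologie étale; on peut donc calculer les fibres $(R^qj_*F)_{\bar x}$ pour $\bar x$ un point géométrique de $X-U$. Via l'identification standard avec $H^q(U\times_X X^{\mathrm{sh}}_{\bar x},F)$, où $X^{\mathrm{sh}}_{\bar x}$ désigne l'hensélisé strict de $X$ en $\bar x$, il s'agit de contrôler la cohomologie étale de $F$ restreint à un certain ouvert d'un schéma local régulier. Grâce au théorème de changement de base par un morphisme lisse et à la stratification de $X$ par des sous-schémas où $F$ est localement constant (quitte à raffiner), on se ramène au cas où $X$ est lisse sur $S$, en fibrant $X$ localement par des morphismes lisses de dimension relative $1$ (fibrations en courbes). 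Par récurrence sur la dimension relative, le problème se résume au cas d'une courbe lisse sur un trait hensélien strict.

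Le cas fondamental est alors le suivant: $S$ est le spectre d'un anneau de valuation discrète strictement hensélien (éventuellement le spectre d'un corps algébriquement clos si $\dim S=0$), $X$ est une courbe lisse sur $S$, $Z\subset X$ est un fermé et $j:U=X-Z\hookrightarrow X$, avec $F$ un faisceau localement constant constructible de $\Z/l^n$-modules sur $U$. En un point géométrique $\bar x\in Z$, la fibre $(R^qj_*F)_{\bar x}$ s'interprète comme un groupe de cohomologie du ``type local'' associé au corps des fractions $K_x$ de l'hensélisé strict en $\bar x$: pour $q=0$, c'est $F^I$ où $I$ est un quotient de $\Gal(\bar K_x/K_x)$; pour $q=1$, c'est $H^1(I,F)$; pour $q\ge 2$, on obtient la nullité en utilisant que la dimension cohomologique $l$-adique de l'inertie modérée est $\le 1$ (puisque $l$ est inversible dans le corps résiduel). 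La finitude de ces groupes s'établit en décomposant l'action de l'inertie en partie modérée (procyclique d'ordre premier à la caractéristique résiduelle) et sauvage (pro-$p$-groupe), grâce notamment au lemme d'Abhyankar qui permet, après un revêtement ramifié convenable, de rendre le faisceau modérément ramifié.

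L'obstacle principal est la maîtrise de la ramification sauvage en caractéristique résiduelle $p>0$: c'est là que la condition $\dim S\le 1$ est essentielle, car elle limite la complexité locale à celle d'un anneau de valuation discrète pour lequel on dispose de la théorie classique de la ramification et des groupes d'inertie supérieurs. En dimension supérieure, la finitude de $R^q j_*F$ n'est pas automatique et requiert des hypothèses supplémentaires (travaux ultérieurs de Gabber). La dernière assertion de l'énoncé (stabilité de $D^b_c$ par $Rf_*$) s'en déduit formellement: étant donné $C\in D^b_c(X)$, on décompose $Rf_*C$ à l'aide de la suite spectrale $R^pf_*H^q(C)\Rightarrow H^{p+q}(Rf_*C)$, dont tous les termes sont constructibles et nuls pour $(p,q)$ hors d'une région bornée grâce à la finitude cohomologique déjà établie.
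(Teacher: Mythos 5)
Le texte ne démontre pas cet énoncé: il est cité comme théorème de Deligne, avec renvoi à l'exposé \emph{Théorèmes de finitude en cohomologie $l$-adique} de SGA~$4\frac{1}{2}$, et aucune preuve n'est donnée dans le cours. Votre esquisse reproduit fidèlement la stratégie de Deligne: compactification de Nagata et théorème de finitude pour les morphismes propres pour se ramener aux immersions ouvertes, dévissage par stratification et fibration en courbes, cas fondamental d'une courbe relative sur un trait strictement hensélien où l'on contrôle les fibres de $R^qj_*F$ par la cohomologie galoisienne locale (inertie modérée procyclique, inertie sauvage pro-$p$, lemme d'Abhyankar), et déduction formelle de la stabilité de $D^b_c$ par la suite spectrale $R^pf_*H^q(C)\Rightarrow H^{p+q}(Rf_*C)$. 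Vous identifiez aussi correctement le rôle de l'hypothèse $\dim S\le 1$ et le fait que la dimension supérieure relève de Gabber.

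Deux imprécisions à signaler. D'abord, l'isomorphisme $R\bar f_!\simeq R\bar f_*$ pour $\bar f$ propre est la \emph{définition} de $Rf_!$ (cf.\ la construction rappelée au \S 5.3 du texte), et non une conséquence du changement de base propre; ce dont vous avez besoin est le théorème de finitude pour les morphismes propres (SGA~4, exp.~XIV/XVII), qui, lui, se démontre par changement de base propre et réduction aux courbes. Ensuite, au point fermé de la fibre spéciale d'une courbe relative sur un trait, l'hensélisé strict est de dimension $2$: l'analyse locale n'est donc pas littéralement celle d'un anneau de valuation discrète, et c'est précisément là que Deligne doit combiner le lemme d'Abhyankar avec la structure du groupe fondamental modéré d'un tel anneau local régulier de dimension $2$; votre formulation suggère à tort que seule la théorie classique de la ramification des corps locaux intervient. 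Enfin, la réduction « on peut supposer $X$ lisse sur $S$ » demande un dévissage soigneux (méthode de la trace pour les morphismes finis surjectifs, récurrence noethérienne), la résolution des singularités n'étant pas disponible; à ce niveau d'esquisse c'est admissible, mais c'est un point où l'argument complet est substantiellement plus délicat.
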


\begin{cor}[ibid., cor. 1.5, cor. 1.6]\phantomsection Sous les m\^emes hypoth\`eses,
\[Rf^! D^b_c(Y)\subset D^b_c(X);\quad \RHom(D^b_c(X),D^b_c(X))\subset D^b_c(X).\]
\end{cor}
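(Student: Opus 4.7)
\emph{Proposal.} The plan is to derive both inclusions from the finiteness theorem \ref{tfinitude} for $Rf_*$, combined with the standard six-functor manipulations. For the first assertion, $Rf^!D^b_c(Y)\subset D^b_c(X)$, I would first reduce to handling two cases separately: open immersions and closed immersions. Using Nagata compactification, factor $f=\bar f\circ j$ with $j$ an open immersion and $\bar f$ proper; since $j^!=j^*$ preserves $D^b_c$ tautologically, it suffices to prove the assertion for $\bar f$ proper. For a proper $\bar f$, I would use Zariski-local arguments (Chow's lemma or a relative projective embedding) to reduce further to two elementary building blocks: the structure morphism of relative projective space $p:\P^n_Y\to Y$, for which purity gives $p^!\simeq p^*(n)[2n]$ (hence preserves constructibility trivially), and a closed immersion $i:Z\to X$ of complement $j:U\hookrightarrow X$.

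For the closed immersion case, the key input is the localization triangle
\[i_*i^!F\to F\to Rj_*j^*F\by{+1}\]
valid for any $F\in D^b_c(X)$. Since $j^*F\in D^b_c(U)$ (trivially) and $Rj_*j^*F\in D^b_c(X)$ by Theorem \ref{tfinitude} applied to the open immersion $j$, the cone $i_*i^!F$ lies in $D^b_c(X)$; but $i_*$ is fully faithful and reflects constructibility (since $i_*i^!F\in D^b_c(X)\iff i^!F\in D^b_c(Z)$ via the equivalence $i_*:D^b_c(Z)\iso D^b_c(X)_{\text{supported on }Z}$), which yields $i^!F\in D^b_c(Z)$.

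For the second assertion, $\RHom(D^b_c(X),D^b_c(X))\subset D^b_c(X)$, I would proceed by dévissage on the first variable. Any constructible complex $A\in D^b_c(X)$ admits, after a suitable stratification of $X$, a filtration whose successive quotients are of the form $j_!\sL[n]$ where $j:U\hookrightarrow X$ is a locally closed immersion into a stratum and $\sL$ is a locally constant constructible sheaf on $U$. Using the triangulated structure and the two-out-of-three property for $D^b_c$, it suffices to treat $A=j_!\sL$. The adjunction between $j_!$ and $j^!$ yields the natural isomorphism
\[\RHom(j_!\sL,B)\simeq Rj_*\RHom(\sL,j^!B),\]
and the right-hand side lies in $D^b_c(X)$ because (i) $j^!B\in D^b_c(U)$ by the first assertion just established, (ii) $\sL$ is locally constant constructible so $\RHom(\sL,j^!B)$ is constructible on $U$ (this is essentially a local computation with dualizable coefficients), and (iii) $Rj_*$ preserves $D^b_c$ by Theorem \ref{tfinitude}.

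The main obstacle I anticipate is the reduction of the proper case in the first assertion to the two elementary building blocks (smooth morphism + closed immersion): globally a proper morphism need not be projective, so one must combine Chow's lemma with a cohomological descent argument, or alternatively invoke the full strength of the proper base change machinery to handle non-projective proper morphisms directly. Everything else in the proposal is essentially formal manipulation of the six-functor adjunctions, once the finiteness of $Rf_*$ from Theorem \ref{tfinitude} is in hand.
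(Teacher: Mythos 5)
The paper offers no proof of this corollary at all — it simply cites [SGA 4$\frac12$, Th.\ finitude, cor.\ 1.5 et 1.6] — and your argument is essentially the one given there, so the overall architecture is sound. Two points deserve attention, one of which is a genuine gap.

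First, the obstacle you single out as the main one (non-projectivity of proper morphisms, Chow's lemma, cohomological descent) is not an obstacle, because the Nagata factorization is not needed at all: constructibility of the cohomology sheaves of $Rf^!K$ is a local question on $X$, and $f^!$ commutes with restriction to an open $u:U\inj X$ of the source (since $u^!=u^*$ and $(f\circ u)^!=u^!f^!$). Locally on $X$, any morphism of $\Sch(S)$ factors as a locally closed immersion into $\A^n_Y$ followed by the smooth projection $\A^n_Y\to Y$; the smooth case is purity, the open immersion case is trivial, and the closed immersion case is exactly your triangle $i_*Ri^!F\to F\to Rj_*j^*F\by{+1}$ combined with Theorem \ref{tfinitude} for $Rj_*$ and the fact that $i_*$ identifies $D^b_c(Z)$ with the complexes of $D^b_c(X)$ supported on $Z$. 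That last step is the real content and you have it right.

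Second, in the $\RHom$ assertion the step you dismiss as \emph{a local computation with dualizable coefficients} is where care is needed: a locally constant constructible sheaf of $\Z/l^n$-modules is \emph{not} dualizable unless its stalks are free, because $\Z/l^n$ has infinite global dimension. Locally $\sL$ is constant with finite stalk $M$; one must resolve $M$ by finite free $\Z/l^n$-modules and check degree by degree that each cohomology sheaf of $\RHom(\sL,j^!B)$ is constructible. Moreover boundedness genuinely fails: already over a point, $\RHom_{\Z/l^2}(\Z/l,\Z/l)$ has nonzero cohomology in every degree $\ge 0$. So the inclusion into $D^b_c$ as displayed requires a finite Tor-dimension hypothesis on the first argument — this is precisely why the text passes to $D^b_{ctf}$ in Corollary \ref{c5.2}, and why the cited cor.\ 1.6 of [SGA 4$\frac12$] is stated for $D^-_c\times D^+_c\to D^+_c$. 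Your dévissage and the adjunction $\RHom(j_!\sL,B)\simeq Rj_*\RHom(\sL,j^!B)$ are otherwise exactly the right tools, resting as they should on Theorem \ref{tfinitude} and on the first assertion.
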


Pour la derni\`ere op\'eration $\otimes$, on a besoin d'une condition de Tor-dimension finie:

\begin{defn}[\protect{\cite[exp. XVII, 4.1.9]{SGA4}}]\phantomsection Un objet $K\in\break D^b(X_\et,\Z/l^n)$ est \emph{de \text{\rm Tor}-dimension $\le d$} s'il v\'erifie les conditions \'equivalentes suivantes:
\begin{thlist}
\item Pour tout $L\in D^b(X_\et,\Z/l^n)$ tel que $H^i(L)=0$ pour $i<l$, $H^i(K\oo\limits^L L)=0$ pour $i<l-d$.
\item Il existe un quasi-isomorphisme $K'\to K$, avec $K'$ \`a composantes plates et nulles pour $i<-d$.
\end{thlist}
On dit que $K$ est de Tor-dimension finie s'il est de Tor-dimension $\le d$ pour un entier $d$ convenable; on note 
\[D^b_{tf}(X)=\{K\in D^b(X_\et,\Z/l^n)\mid K \text{ est de Tor-dimension finie}\}\]
et 
\[D^b_{ctf}(X)=D^b_c(X)\cap D^b_{tf}(X).\]
\end{defn}

On voit facilement que cette condition est stable par $\oo\limits^L$.

\begin{cor}[\protect{\cite[Th. finitude, rem 1.7 et th. 4.3]{SGA412}}]\phantomsection\label{c5.2} $X\mapsto D^b_{ctf}(X)$ est stable par les six op\'erations $f^*,Rf_*,Rf_!,Rf^!, \oo\limits^L$, $\RHom$. De plus, le complexe
\[K_X=Rf_X^!\Z/l^n,\]
o\`u $f_X:X\to S$ est le morphisme structural, est dualisant.
\end{cor}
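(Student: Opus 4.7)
Le plan suit la strat\'egie classique de Deligne: traiter directement les op\'erations ``faciles'' ($f^*$, $Rf_!$, $\oo\limits^L$), puis d\'eduire les autres via la dualit\'e de Poincar\'e-Verdier, une fois $K_X$ reconnu comme dualisant.

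Pour les op\'erations faciles: $f^*$ sur les faisceaux \'etales de $\Z/l^n$-modules est exact, donc pr\'eserve platitude et constructibilit\'e. Pour $\oo\limits^L$, si $K'\to K$ et $L'\to L$ sont des r\'esolutions plates nulles en degr\'es $<-d_K$ et $<-d_L$ respectivement, alors $K'\otimes L'$ est un complexe plat (composante par composante) nul en degr\'es $<-d_K-d_L$ qui calcule $K\oo\limits^L L$, d'o\`u la Tor-dimension $\le d_K+d_L$; la constructibilit\'e du produit tensoriel est \'el\'ementaire. Pour $Rf_!$, la constructibilit\'e et le caract\`ere born\'e de $Rf_! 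K$ r\'esultent du th\'eor\`eme de finitude \`a supports propres \cite{SGA4} et de l'amplitude cohomologique $[0,2\dim f]$ de $Rf_!$; la Tor-dimension se pr\'eserve via la formule de projection: pour $L\in D^{\le 0}(Y)$, on a $Rf_!K\oo\limits^L L\simeq Rf_!(K\oo\limits^L f^*L)$, et si $K$ est de Tor-dim $\le d$, ce membre de droite est concentr\'e en degr\'es $\ge -d$.

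Le c\oe ur de la d\'emonstration, et l'\'etape la plus d\'elicate, est la construction de $Rf^!$ et la v\'erification que $K_X=Rf_X^!\Z/l^n$ est dualisant. On construit $Rf^!$ comme adjoint \`a droite de $Rf_!$ via la factorisation $f=\bar f\circ j$ (immersion ouverte suivie d'un morphisme propre), en posant $Rf^!=j^*\circ R\bar f^!$ avec $R\bar f^!$ adjoint \`a droite de $R\bar f_*$. Pour \'etablir que $K_X$ est dualisant, on proc\`ede par r\'ecurrence noeth\'erienne sur $\dim X$ et d\'evissage: dans le cas o\`u $f_X:X\to S$ est lisse de dimension relative $d$, la puret\'e cohomologique \cite{SGA4} donne $K_X\simeq\Z/l^n(d)[2d]$, qui est $\otimes$-inversible, trivialement dualisant et manifestement dans $D^b_{ctf}(X)$. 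Pour $X$ g\'en\'eral, on r\'ealise localement $X$ comme ferm\'e $i:X\inj X'$ d'un $S$-sch\'ema lisse $X'$; alors $K_X\simeq Ri^!K_{X'}$ par transitivit\'e de $(-)^!$, et la conservativit\'e du couple $(i^*,j^*)$ appliqu\'ee au triangle distingu\'e $j_!j^*\to\mathrm{id}\to i_*i^*$ propage la bidualit\'e du cas lisse au cas g\'en\'eral.

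Une fois la bidualit\'e $D_X D_X\simeq\mathrm{id}$ sur $D^b_c(X)$ \'etablie et $K_X$ reconnu dans $D^b_{ctf}(X)$, les formules \eqref{eq5.9} et \eqref{eq5.10} ram\`enent les trois op\'erations restantes aux cas d\'ej\`a trait\'es: $Rf_*\simeq D_Y Rf_! D_X$, $Rf^!\simeq D_X f^* D_Y$, et $\RHom(K,L)\simeq D_X(K\oo\limits^L D_X L)$ via l'identit\'e $\uHom(a\otimes b,c)\simeq\uHom(a,\uHom(b,c))$ combin\'ee \`a la bidualit\'e sur $L$. Il reste donc \`a v\'erifier que $D_X=\RHom(-,K_X)$ lui-m\^eme pr\'eserve $D^b_{ctf}(X)$: pour $K\in D^b_{ctf}$ repr\'esent\'e localement par un complexe born\'e de faisceaux constructibles plats, cela d\'ecoule de la constructibilit\'e d\'ej\`a connue de $\RHom$ sur $D^b_c$ combin\'ee \`a une analyse locale montrant que $\uHom$ contre un faisceau constructible plat pr\'eserve la Tor-dimension finie.
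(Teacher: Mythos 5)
Le texte ne d\'emontre pas ce corollaire: il renvoie \`a SGA~4\,$\frac12$ (Th. finitude, rem.~1.7 et th.~4.3), la seule donn\'ee interne \'etant le th\'eor\`eme \ref{tfinitude} (constructibilit\'e des $R^qf_*F$), \'enonc\'e juste avant et dont le corollaire est cens\'e d\'ecouler. Votre squelette formel -- op\'erations \og faciles \fg\ d'abord ($f^*$, $\oo\limits^L$, $Rf_!$), puis dualit\'e pour le reste -- est bien celui de la th\'eorie, et vos v\'erifications pour $f^*$, $\oo\limits^L$ et la conservation de la Tor-dimension par $Rf_!$ via la formule de projection sont correctes. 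Mais l'ordre logique que vous proposez contient une circularit\'e qui escamote la difficult\'e principale.

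Vous d\'eduisez la stabilit\'e par $Rf_*$ de la formule $Rf_*\simeq D_Y\circ Rf_!\circ D_X$, ce qui suppose acquis que $D_X=\RHom(-,K_X)$ pr\'eserve $D^b_{ctf}(X)$. Or \'etablir ce point -- et d\'ej\`a \'etablir que $K_X$ est dualisant par le d\'evissage que vous d\'ecrivez -- exige de contr\^oler le dual d'objets de la forme $j_!L$ pour $j:U\inj X$ une immersion ouverte et $L$ lisse; et l'adjonction donne $D_X(j_!L)\simeq Rj_*D_U(L)$. La constructibilit\'e de ce dual est donc \emph{exactement} la constructibilit\'e de $Rj_*$ appliqu\'e \`a un complexe constructible, c'est-\`a-dire le th\'eor\`eme \ref{tfinitude} dans son cas le plus d\'elicat (l'immersion ouverte, o\`u la r\'egularit\'e et la dimension $\le 1$ de $S$ interviennent de fa\c con essentielle: l'\'enonc\'e est faux sur une base quelconque). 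Dans SGA~4\,$\frac12$ l'ordre est inverse du v\^otre: Deligne d\'emontre d'abord la finitude de $Rf_*$ directement (d\'evissage aux courbes relatives et acyclicit\'e locale g\'en\'erique), puis en d\'eduit la stabilit\'e par $Rf^!$ et $\RHom$ et le caract\`ere dualisant de $K_X$. Il faut donc, dans votre plan, invoquer le th\'eor\`eme \ref{tfinitude} d\`es l'\'etape \og $K_X$ dualisant \fg\ (et pour la stabilit\'e par $Rf_*$), faute de quoi l'argument tourne en rond.
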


\subsubsection{Passage \`a la cohomologie  $l$-adique}\index{Cohomologie!$l$-adique}\label{5.3.3} Ce passage est non trivial: un point d\'elicat est de d\'efinir correctement des cat\'egories $D^b_c(X,\Z_l)$ et $D^b_c(X,\Q_l)$, voire $D^b_c(X,\bar \Q_l)$. 

Expliquons le probl\`eme de la premi\`ere. On aimerait d\'efinir 
\begin{equation}\label{eq5.11}
D^b_c(X,\Z/l^n)=2-\plim_n D^b_{ctf}(X,\Z/l^n)
\end{equation}
o\`u, \'etant donn\'e un syst\`eme projectif $(\sC_n,F_n:\sC_n\to \sC_{n-1})$ de cat\'egories, $2-\plim \sC_n$ est la cat\'egorie dont les objets sont les syst\`emes $(X_n, u_n:F_n(X_n)\iso X_{n-1})$ avec $X_n\in \sC_n$, les morphismes \'etant donn\'es ``composante par composante''. Malheureusement, si les $\sC_n$ sont des cat\'egories triangul\'ees et les $F_n$ des foncteurs triangul\'es, la structure triangul\'ee est en g\'en\'eral perdue apr\`es passage \`a $2-\plim$, \`a cause de la non-exactitude du foncteur limite projective (sur les groupes ab\'eliens). 

Heureusement, dans le cas o\`u $X$ est de type fini sur $\Spec \Z[1/l]$, le th\'eor\`eme \ref{tfinitude}, joint au fait que  $H^i_\et(\Spec \Z[1/l],F)]$ est fini pour tout $i\in \Z$ et tout faisceau constructible de $\Z/l^n$-modules $F$\footnote{Ce qui se r\'eduit essentiellement \`a la finitude du groupe des classes de $\Z[1/l,\mu_l]$ et au fait que son groupe d'unit\'es est un $\Z$-module de type fini.}, implique que $\Hom(K,L)$ est un groupe fini pour tous $K,L\in D^b_c(X,\Z/l^n)$. La condition de Mittag-Leffler implique alors que \eqref{eq5.11} d\'efinit bien une cat\'egorie triangul\'ee. C'est ce que fait Deligne dans \cite[1.1.2]{weilII}.

Le probl\`eme de la bonne d\'efinition de $D(X_\et,\Z_l)$ pour $X$ quelconque n'a \'et\'e r\'esolu que plus tard par Ofer Gabber (non publi\'e) et ind\'ependamment par Torsten Ekedahl \cite{ekedahl}.

Pour d\'efinir $D^b_c(X,\Q_l)$ ou $D^b_c(X,\bar\Q_l)$, on part de $D^b_c(X,\Z_l)$ et on tensorise les groupes d'homomorphismes par $\Q_l$ ou $\bar \Q_l$, en prenant ensuite l'enveloppe karoubienne si besoin est. On a alors:

\begin{thm}\phantomsection Pour $X$ variant dans $\Sch(\Z[1/l])$, $X\mapsto D^b(X,\Q_l)$ admet un formalisme des six op\'erations et de dualit\'e parall\`ele \`a celle du corollaire \ref{c5.2}.
\end{thm}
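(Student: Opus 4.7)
The plan is to transfer the six-functor formalism stepwise from the torsion coefficient rings $\Z/l^n$, where Corollary \ref{c5.2} provides everything, first to the $l$-adic coefficients $\Z_l$, and then tensor to $\Q_l$. First I would show that at each finite level $n$, the six operations on $D^b_{ctf}(-,\Z/l^n)$ commute with the reduction transition functors $-\otimes^L_{\Z/l^{n+1}}\Z/l^n$ up to canonical isomorphism. For $f^*$ this is tautological; for $Rf_*$ and $Rf_!$ it follows from the fact that the constructions (Godement-type resolutions, Nagata compactification, etc.) are natural; for $\otimes^L$ it is clear on Tor-finite complexes; for $R\uHom$ and $Rf^!$ it follows by adjunction from the other cases together with the existence and compatibility of the dualizing complexes $K_X^{(n)} := Rf_X^!\Z/l^n$, and their compatibility $K_X^{(n)} \otimes^L_{\Z/l^{n+1}}\Z/l^n \simeq K_X^{(n)}$.

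Next I would pass to the 2-limit $D^b_c(X,\Z_l) = 2-\plim_n D^b_{ctf}(X,\Z/l^n)$. The compatibility with the transition functors guarantees that each of the six operations is induced ``componentwise'' on the limit categories. To make this rigorous one needs that $\Hom$-groups behave well, i.e.\ that $\plim^1$ vanishes; over $\Z[1/l]$ this is where the finiteness result of Deligne cited in \S\ref{5.3.3} intervenes for $X$ of finite type over $\Spec \Z[1/l]$, and where Gabber's and Ekedahl's constructions \cite{ekedahl} are needed for general $X \in \Sch(\Z[1/l])$. Once $D^b_c(X,\Z_l)$ is recognized as a triangulated category with six operations, the verification of the axioms \textbf{M1}--\textbf{M6}, of base change $g^*Rf_! \iso Rf'_! g'^*$, of the projection formula, and of the dualities \eqref{eq5.9}--\eqref{eq5.10} reduces to the corresponding assertions modulo $l^n$, because a morphism in the 2-limit is an isomorphism iff it is so at every finite level.

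Finally, $D^b_c(X,\Q_l)$ is obtained from $D^b_c(X,\Z_l)$ by tensoring $\Hom$-groups with $\Q_l$ over $\Z_l$ (and passing to the Karoubian envelope). Since every one of the six operations is $\Z_l$-linear on $\Hom$-spaces and since $-\otimes_{\Z_l}\Q_l$ is exact, each operation descends to a $\Q_l$-linear triangulated functor on the rational categories; adjunctions, unit and counit transformations, and all structural isomorphisms transport formally because tensoring with $\Q_l$ commutes with finite direct sums and cones. The dualizing object becomes $K_X \otimes_{\Z_l}\Q_l$, and the bidualitly isomorphism \eqref{eq5.8} follows from the corresponding one at the $\Z_l$-level by tensoring.

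The hard part is entirely at the integral stage: establishing that $D^b_c(-,\Z_l)$ is a well-behaved triangulated category on all of $\Sch(\Z[1/l])$, with compatible six operations. This involves the delicate Mittag--Leffler/Ekedahl machinery, and it requires a uniform-in-$n$ version of the compactification and finiteness results underlying Corollary \ref{c5.2}. Once this integral formalism is in place, the passage from $\Z_l$ to $\Q_l$ is a purely formal piece of additive category theory, and the parallelism with Corollary \ref{c5.2} is automatic.
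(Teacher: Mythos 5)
Votre proposition suit exactement la route que le texte esquisse lui-même au \S~\ref{5.3.3} et délègue à Deligne \cite[1.1.2]{weilII} et à Gabber--Ekedahl \cite{ekedahl}: passage à la 2-limite des catégories $D^b_{ctf}(X,\Z/l^n)$, la condition de Mittag-Leffler issue de la finitude des groupes $\Hom$ au-dessus de $\Z[1/l]$ assurant que la structure triangulée survit, puis tensorisation des $\Hom$ par $\Q_l$ et enveloppe karoubienne. Le seul point énoncé trop vite est la commutation de $Rf_*$ aux foncteurs de réduction $-\oo^L_{\Z/l^{n+1}}\Z/l^n$, qui n'est pas une simple «~naturalité des constructions~» mais une conséquence du théorème de finitude \ref{tfinitude}; pour le reste, l'argument coïncide avec celui du texte.
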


\subsubsection{L'\'equation fonctionnelle de Grothendieck}\label{s5.3.4} Nous disposons maintenant du langage qui permet de formuler l'\'equation fonctionnelle \'etudi\'ee par Grothendieck dans \cite[lettre du 30-9-64]{corresp}.\index{Equation fonctionnelle@\'Equation fonctionnelle}

Soit $X\in \Sch(\F_q)$. Donnons-nous un objet $E\in D^b_c(X,\Q_l)$. On aimerait relier la fonction $L(X,E)$ \`a la fonction $L(X,D_X(E))$, o\`u $D_X$ est la dualit\'e donn\'ee ci-dessus.

Pour expliquer le point d\'elicat, calculons cette derni\`ere fonction en appliquant la formule \eqref{eq5.12} avec $Y=\Spec \F_q$:
\[L(X,D_X(E))=L(\F_q,Rf_!D_X(E))=L(\F_q,D_{\F_q}(Rf_*E))=L(\F_q,(Rf_*E)^*)\]
o\`u $^*$ est la dualit\'e simple des $\Q_l$-espaces vectoriels gradu\'es (pour la r\`egle de Koszul). Ainsi, on trouve une expression non pas en termes de la cohomologie \`a supports propres de $E$, mais de sa cohomologie ordinaire. 

Cela n'arr\^ete pas Grothendieck, qui pose
\[R_\infty f(E) = [Rf_*E]-[Rf_!E]\]
dans le groupe de Grothendieck de $D^b_c(\F_q,\Q_l)$ (voir \S \ref{K0}). Il en tire l'\'equation fonctionnelle
\begin{equation}\label{eq5.13}
L(X,D_X(E),t)=(-t)^{-\chi(E)}\delta(E)L(X,E,t^{-1})A_\infty(t)
\end{equation}
o\`u 
\[\chi(E)= \sum_i (-1)^i \dim_{\Q_l} H^i_c(\bar X, E), \quad \delta(E) = \prod_i \det(\pi_{\F_q}\mid H^i_c(\bar X,E))^{(-1)^i}\]
et $A_\infty(t)=L(\F_q,R_\infty f(E),t)^{-1}$ est consid\'er\'e comme un ``terme correctif \`a l'infini''. (La formule \eqref{eq5.13} n'est pas difficile \`a d\'emontrer, en suivant la m\'ethode de d\'emonstration de \eqref{eq3.7}: voir le calcul de \eqref{eq3.6}.)

Grothendieck justifie cette expression en donnant une formule explicite pour $R_\infty f(E)$ en termes d'une compactification $\bar X$ de $X$:
\[\xymatrix{
X\ar[r]^j\ar[rd]_f& \bar X\ar[d]^{\bar f}&Y\ar[l]_i\ar[ld]^g\\
&\Spec \F_q
}\]
o\`u $Y$ est le ferm\'e compl\'ementaire:
\[R_\infty f(E) = [Rg_*(i^*Rj_*(E))].\]

Prouvons cette formule: pour tout $G\in D^b_c(\bar X,\Q_l)$, on a un triangle exact canonique:
\[j_! j^* G \to G\to i_*i^*G\by{+1}.\]

Prenant $G=Rj_* E$ et utilisant que $j^*Rj_*=Id_{X}$, on obtient
\[j_! E \to j_* E\to i_*i^*j_*E\by{+1}.\]

En appliquant maintenant $R\bar f_!$, on obtient
\[R\bar f_!j_! E \to R\bar f_!j_* E\to R\bar f_!i_*i^*Rj_*E\by{+1}.\]

Mais $\bar f$ est propre, donc $R\bar f_!=R\bar f_*$ et le triangle ci-dessus se r\'ecrit
\[R(\bar fj)_! E \to R(\bar fj)_* E\to R(\bar fi)_*i^*Rj_*E\by{+1}\]
soit
\[Rf_! E \to Rf_* E\to Rg_*i^*Rj_*E\by{+1}.\]

Lorsque $f$ est propre, la formule \eqref{eq5.13} devient plus propre (sic) puisqu'alors $A_\infty(t)=1$. 

Supposons maintenant $f$ lisse, purement de dimension $n$; alors
\[K_X = Rf^!\Q_l = f^*\Q_l(n)[2n]=\Q_l(n)[2n]\]
d'o\`u
\[D_X(E)= E^*(n)[2n]\]
o\`u $E^*=\RHom(E,\Q_l)$ est le dual ``na\"\i f'' de $E$. Dans ce cas, \eqref{eq5.13} se r\'ecrit
\begin{equation}\label{eq5.13lisse}
L(X,E^*,q^{-n}t)=(-t)^{-\chi(E)}\delta(E)L(X,E,t^{-1})A_\infty(t).
\end{equation}

Un cas particulier important, qui sera justifi\'e par un exemple ci-dessous, est celui o\`u $E$ est ``faiblement polarisable de poids $\rho$'' au sens que
\begin{equation}\label{eqpol}
E^*\simeq E(\rho)
\end{equation}
pour un certain entier $\rho\in\Z$, o\`u $E(\rho):=E\otimes \Q_l(\rho)$. Dans ce cas, \eqref{eq5.13lisse} prend la forme (en rempla\c cant $t$ par $1/t$)
\begin{equation}\label{eq5.13lissepol}
L(X,E,1/q^{n+\rho}t)=(-t)^{\chi(E)}\delta(E)L(X,E,t)A_\infty(t^{-1}).
\end{equation}

On a alors la g\'en\'eralisation suivante de \eqref{eq3.7} ``\`a coefficients'':

\begin{thm}[Grothendieck, ibid.]\phantomsection\label{tgef} En plus des hypoth\`eses ci-dessus, supposons $f$ propre. Donc:
\begin{itemize}
\item $f$ est propre et lisse;
\item $E$ est faiblement polarisable de poids $\rho$ au sens de \eqref{eqpol}.
\end{itemize}
Alors $L(X,E,t)$ v\'erifie l'\'equation fonctionnelle
\[L(X,E,1/q^{n+\rho}t)= (-t)^{\chi(E)}\delta(E)L(X,E,t)\]
avec
\[\chi(E)= \sum_i (-1)^i \dim_{\Q_l} H^i(\bar X, E), \quad \delta(E) = \prod_i \det(\pi_{\F_q}\mid H^i(\bar X,E))^{(-1)^i}.\]
De plus, on a $\delta(E)^2=q^{(\rho+n)\chi(E)}$, donc 
\[\delta(E)=\pm q^{\frac{(\rho+n)\chi(E)}{2}}.\]
\end{thm}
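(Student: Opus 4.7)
The plan is to mimic the proof of the functional equation \eqref{eq3.7} in Theorem \ref{t3.6}: trivial coefficients are replaced by the polarizable $l$-adic sheaf $E$, and the axioms of a Weil cohomology are replaced by the six-operation formalism in $D^b_c(-,\Q_l)$ of \S 5.3 together with the Poincar\'e pairing it provides. From \eqref{eq5.12} with $Y = \Spec \F_q$ combined with \eqref{eq3.3} one obtains the factorization
\[L(X,E,t) = \prod_{i=0}^{2n} P_i(t)^{(-1)^{i+1}}, \qquad P_i(t)=\det(1-\pi_X^* t\mid H^i(\bar X, E)),\]
so the functional equation will follow once $P_i(q^{-n-\rho}t^{-1})$ is related to $P_{2n-i}(t)$.

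The key input is Poincar\'e duality. Since $f$ is smooth of relative dimension $n$, the dualizing complex is $K_X = Rf^!\Q_l = \Q_l(n)[2n]$, and since $f$ is also proper, $Rf_! = Rf_*$. The six-operation formalism then yields, for each $i$, a perfect Frobenius-equivariant pairing
\[H^i(\bar X, E) \otimes H^{2n-i}(\bar X, E^*) \longrightarrow H^{2n}(\bar X, \Q_l) \simeq \Q_l(-n).\]
Substituting the polarization $E^* \simeq E(\rho)$ turns this into a perfect pairing
\[H^i(\bar X, E) \otimes H^{2n-i}(\bar X, E) \longrightarrow \Q_l(-n-\rho)\]
on whose target $\pi_X^*$ acts by $q^{n+\rho}$. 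Consequently $b_i := \dim H^i(\bar X, E) = b_{2n-i}$, and the Frobenius eigenvalues $\alpha_1, \ldots, \alpha_{b_i}$ on $H^i(\bar X, E)$ can be matched with the eigenvalues $\beta_1, \ldots, \beta_{b_i}$ on $H^{2n-i}(\bar X, E)$ so that $\alpha_j \beta_{\sigma(j)} = q^{n+\rho}$ for some permutation $\sigma$.

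The remainder is a computation entirely parallel to \eqref{eq3.6}. Direct substitution using $\alpha \beta = q^{n+\rho}$ gives
\[P_i(q^{-n-\rho}t^{-1}) = \frac{(-1)^{b_i}}{t^{b_i}\, d_{2n-i}}\,P_{2n-i}(t), \qquad d_j := \det\bigl(\pi_X^* \mid H^j(\bar X, E)\bigr).\]
Raising to the power $(-1)^{i+1}$, taking the product over $i$, and reindexing $j=2n-i$ (legitimate because $b_i = b_{2n-i}$) turns the $P_{2n-i}$ factors into $L(X,E,t)$ and assembles the numerical factors into $(-t)^{\chi(E)}\delta(E)$, yielding the announced functional equation. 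For the last assertion, the same eigenvalue pairing gives $d_i\, d_{2n-i} = q^{(n+\rho)b_i}$, so
\[\delta(E)^2 = \prod_i d_i^{(-1)^i}\cdot \prod_i d_{2n-i}^{(-1)^{i}} = \prod_i (d_i d_{2n-i})^{(-1)^i} = q^{(n+\rho)\chi(E)},\]
whence $\delta(E) = \pm q^{(n+\rho)\chi(E)/2}$. The only non-elementary ingredient is Poincar\'e duality in $D^b_c(X,\Q_l)$, packaged in the six-functor formalism and ultimately resting on the finiteness theorem \ref{tfinitude}; everything else is the same linear-algebraic bookkeeping as in the case $E=\Q_l$ of Theorem \ref{t3.6}.
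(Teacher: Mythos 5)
Your proof is correct and follows essentially the same route as the paper: Poincaré duality in the six-functor formalism ($D_X(E)=E^*(n)[2n]$, $Rf_!=Rf_*$ for $f$ proper) combined with the polarization $E^*\simeq E(\rho)$, followed by exactly the eigenvalue bookkeeping of \eqref{eq3.6}. The paper merely organizes this slightly differently, first proving the general equation \eqref{eq5.13} with the correction term $A_\infty(t)$ and then observing that $A_\infty=1$ when $f$ is proper, whereas you work directly in the proper smooth case; the substance is identical.
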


\begin{proof} La seule chose restant \`a d\'emontrer est la relation entre $\delta(E)$ et $\chi(E)$. Mais par dualit\'e de Poincar\'e, on peut aussi \'ecrire 
\[\delta(E) = \prod_i \det(\pi_{\F_q}\mid H^i(\bar X,D_X(E)))^{(-1)^{i+1}}.
\]

En rempla\c cant $D_X(E)$ par sa valeur $E(\rho)$ et en comparant \`a l'expression pr\'ec\'edente, on obtient ais\'ement la relation annonc\'ee.
\end{proof}

\subsubsection{Un cas particulier} Soit $K=\F_q(C)$ avec $C$ projective lisse, et soit $j:U\inj C$ un ouvert non vide. Donnons-nous un $\Q_l$-faisceau lisse $M$ sur $U$, et prenons $E=j_*M$. On a alors
\begin{align*}
E^* &= j_*(M^*)\\
D_C(E) &= E^*(1)[2].
\intertext{Si $M$ est faiblement polarisable de poids $\rho$, cette derni\`ere formule se r\'ecrit}
D_C(E) &= E(\rho+1)[2]
\end{align*}
et on trouve:

\begin{thm}[Grothendieck, ibid.]\phantomsection\label{tgef2} Avec les notations et hypoth\`eses ci-dessus, on a la m\^eme \'equation fonctionnelle que dans le th\'eor\`eme \ref{tgef}, avec $n=1$. De plus, on a une factorisation
\[L(X,E,t) = \frac{P_1(t)}{P_0(t)P_2(t)}\]
avec $P_i(t)= \det(1-t\pi_{\F_q}\mid H^i(\bar C, j_*M))$.
\end{thm}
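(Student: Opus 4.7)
The plan is to reduce Theorem~\ref{tgef2} to two results already at our disposal: Corollary~\ref{c5.1} for the factorization, and Theorem~\ref{tgef} for the functional equation. The situation is especially favourable because $f\colon C\to\Spec\F_q$ is proper (as $C$ is projective) and smooth of relative dimension $1$, so the dualizing object on $C$ is simply $\Q_l(1)[2]$ and cohomology with compact supports coincides with ordinary cohomology.

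First I would handle the factorization. Applying Corollary~\ref{c5.1} to $f\colon C\to\Spec\F_q$ with the constructible complex $E=j_*M$ gives
\[L(C,E,t)=\prod_{i=0}^{2}\det(1-\phi_q^{-1}t\mid H^i_c(\bar C,E)\otimes\Q_l)^{(-1)^{i+1}},\]
the product being truncated at $i=2$ since $\dim\bar C=1$. Properness of $f$ identifies $Rf_!$ with $Rf_*$, so $H^i_c(\bar C,E)=H^i(\bar C,j_*M)$. Finally, the sorite of \S\ref{5.2.3} replaces the arithmetic Frobenius $\phi_q^{-1}$ by the geometric Frobenius $\pi_{\F_q}$, producing exactly the $P_i(t)$ of the statement.

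Second, for the functional equation I would invoke Theorem~\ref{tgef} with $n=1$. The morphism $f$ is both proper and lisse, so the only hypothesis to verify is that $E=j_*M$ is faiblement polarisable de poids $\rho$ on $C$, i.e.\ that $E^*\simeq E(\rho)$. The preliminary computation stated just before Theorem~\ref{tgef2} already records $E^*=j_*(M^*)$; combining this with the polarisation $M^*\simeq M(\rho)$ on $U$ and the trivial fact that tensoring with the lisse rank-one sheaf $\Q_l(\rho)$ commutes with $j_*$ gives $E^*\simeq j_*(M(\rho))=(j_*M)(\rho)=E(\rho)$, as required. Theorem~\ref{tgef} then yields the functional equation
\[L(C,E,1/q^{1+\rho}t)=(-t)^{\chi(E)}\delta(E)\,L(C,E,t)\]
with $\chi(E)$ and $\delta(E)$ computed from the $H^i(\bar C,j_*M)$, together with the relation $\delta(E)=\pm q^{(1+\rho)\chi(E)/2}$.

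The main — and in fact rather minor — point is precisely the transfer of the polarisation from $U$ to $C$, which allows Theorem~\ref{tgef} to be applied to a sheaf $E$ that is not lisse on the whole of $C$; everything else is mechanical bookkeeping with the six operations and the Lefschetz trace formula already established. No deeper ingredient (such as purity or weights in the sense of Deligne) is needed at this stage, since Theorem~\ref{tgef} merely requires the abstract polarisation isomorphism.
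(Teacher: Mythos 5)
Your proposal is correct and follows essentially the same route as the paper, which gives no separate proof: the factorization is Corollary \ref{c5.1} applied to the proper morphism $C\to\Spec\F_q$ (so $Rf_!=Rf_*$ and the product stops at $i=2$), and the functional equation is Theorem \ref{tgef} with $n=1$ once the polarization is transported from $U$ to $C$ via $E^*=j_*(M^*)$, exactly as in the computation preceding the statement. Your identification of that transfer as the only non-mechanical point matches the paper's presentation.
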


\subsection{La th\'eorie des poids} R\'ef\'erence: Deligne \cite{weilII}.

Citons Grothendieck \cite[lettre du 31.10.61]{corresp},  r\'eponse \`a celle de Serre du 26 oct. 61 cit\'ee au \S\ref{5.2.1}:

\begin{quote} \it
Je n'ai pas bien compris ta question sur la cohomologie\index{Cohomologie!de Weil} (de Weil, je pr\'esume) des vari\'et\'es affines, et d'ailleurs je ne sais rien d'une \'eventuelle filtration naturelle, m\^eme pour une vari\'et\'e compl\`ete. On pourra en recauser quand tu seras \`a Harvard.
\end{quote}

Et encore (R\'ecoltes et Semailles, vol. 5, p. 792, note 4):

\begin{quote} {\it Je croyais, \`a tort, me rappeler que j'avais introduit la filtration par 
les poids d'un motif, se refl\'etant (pour tout $\ell$) en la filtration correspondante sur la r\'ealisation $\ell$-adique de ce motif (filtration d\'efinie en termes de 
valeurs absolues de valeurs propres de Frobenius). En fait, Deligne m'a rappel\'e 
que je n'avais travaill\'e qu'avec les notions de poids virtuelles (ce qui 
revenait à travailler avec des motifs virtuels, \'el\'ements d'un groupe de 
Grothendieck convenable\dots). C'est Deligne qui a d\'ecouvert ce fait important 
que la notion virtuelle avec laquelle je travaillais devrait correspondre \`a une 
filtration canonique, par poids croissants. Cette d\'ecouverte 
(toute aussi conjecturale que la th\'eorie conjecturale des motifs) a fourni 
aussit\^ot la clef d'une d\'efinition en forme des structures de 
Hodge-Deligne (dites aussi structures de Hodge mixtes) sur le 
corps des complexes, comme transcription \`a la Hodge des structures d\'ej\`a 
connues sur le motif et sur sa r\'ealisation de Hodge.} \end{quote}

Je vais d\'ecrire ci-dessous la th\'eorie des poids dans le cas $l$-adique, qui est le sujet de \cite{weilII}.

\subsubsection{Poids et faisceaux mixtes}

\begin{defn}  Soit $q$ une puissance d'un nombre premier, et soit $i\in\Z$. Un nombre $\alpha\in\bar \Q$ est  un \emph{$q$-nombre de Weil de poids $i$} si $|\sigma\alpha|=q^{i/2}$ pour tout $\sigma\in \Gal(\bar\Q/\Q)$.\index{Nombres!de Weil}
\end{defn}

\begin{rque}\phantomsection\label{r5.2} Soit $\alpha$ un $q$-nombre de Weil de poids $i$. Alors $\sigma \alpha$ est un $q$-nombre de Weil de poids $i$ pour tout $\sigma\in Gal(\bar \Q/\Q)$. Supposons $\sigma$ induit par la conjugaison complexe. Alors $\alpha\cdot \sigma \alpha$ est un nombre réel positif, de valeur absolue $q$, donc égal à $q$. Ainsi, $q/\alpha$ est \emph{conjugué} de $\alpha$.
\end{rque}

\begin{defn} 
Soit $X\in \Sch(\Z[1/l])$ et soit $F$ un faisceau $\Q_l$-adique sur $X$. 
\begin{thlist}
\item $F$ est (ponctuellement) \emph{pur de poids $i$} si, pour tout $x\in X_{(0)}$, les valeurs propres de $\pi_x\mid F_x$ sont des $N(x)$-nombres de Weil de poids $i$. ($\pi_x$ d\'esigne comme d'habitude le Frobenius g\'eom\'etrique de $x$.)
\item $F$ est \emph{mixte de poids $\le i$} s'il existe une filtration finie de $F$, de quotients successifs purs de poids $\le i$.
\end{thlist} 
\end{defn}

\begin{exs} $\Q_l(1)$ est pur de poids $-2$. Si $\sA$ est un sch\'ema ab\'elien sur $X$, $V_l(\sA)=T_l(\sA)\otimes \Q_l$ est pur de poids $-1$ gr\^ace au th\'eor\`eme \ref{t3.4} a). (Ce dernier r\'esultat n'est pas utilis\'e dans \cite{weilII}.)
\end{exs}

En fait, Deligne travaille avec une notion plus fine de $\iota$-poids et de faisceaux $\iota$-mixtes, o\`u $\iota$ est un isomorphisme fix\'e de $\bar \Q_l$ sur $\C$, de sorte que mixte $\iff$ $\iota$-mixte pour tout $\iota$: en effet, les valeurs propres de $\pi_x\mid F_x$ ci-dessus sont \emph{a priori} des \'el\'ements de $\Q_l$. Implicitement, ceci utilise l'axiome du choix! Citons-le, \cite[rem. 1.2.11]{weilII}:

\begin{quote} \it Je ne pr\'etends pas croire \`a l'existence d'isomorphismes entre $\bar \Q_l$ et $\C$, et ceux-ci ne sont qu'une commodit\'e d'exposition. Chaque fois que nous prouverons qu'un nombre est pur, un fragment facile de la d\'emonstration suffirait \`a \'etablir qu'il est alg\'ebrique. Pour le reste des arguments, il suffirait alors de consid\'erer les plongements complexes du sous-corps de $\bar \Q_l$ for\-m\'e des nombres alg\'ebriques, et ceci ne requiert pas l'axiome du choix.
\end{quote}

Pour la d\'efinition correcte de $\iota$-poids (\`a valeurs dans $\R$) et des faisceaux $\iota$-purs et $\iota$-mixtes, voir \cite[1.2.6]{weilII}. 

Il y a d'abord la conjecture frappante:

\begin{conj} Si $X\in \Sch(\F_p)$, tout $\Q_l$-faisceau constructible sur $X$ est mixte.
\end{conj}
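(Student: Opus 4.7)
\emph{Plan de d\'emonstration.} L'approche se d\'ecompose en trois \'etapes de d\'evissage, chacune de plus en plus d\'elicate. Le principe g\'en\'eral est que la classe des faisceaux mixtes sur un sch\'ema $X$ donn\'e est stable par sous-quotients et extensions (imm\'ediat depuis la d\'efinition), de sorte qu'il suffit de traiter des ``briques \'el\'ementaires''.

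Premi\`erement, je voudrais r\'eduire au cas d'un faisceau lisse sur un sch\'ema r\'egulier. Soit $F$ un $\Q_l$-faisceau constructible sur $X$; par constructibilit\'e il existe un ouvert dense $j:U\inj X$, avec compl\'ementaire $i:Z\inj X$, sur lequel $F$ est lisse. La suite exacte $0\to j_!j^*F\to F\to i_*i^*F\to 0$ et une r\'ecurrence noeth\'erienne sur $\dim X$ ram\`enent \`a montrer d'une part que $j_!G$ est mixte d\`es que $G$ l'est (ce qui est tautologique vu que la mixit\'e se teste fibre par fibre) et, d'autre part, que tout faisceau lisse sur un sch\'ema r\'egulier est mixte.

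Deuxi\`emement, je voudrais r\'eduire au cas des courbes. Par un lemme de normalisation de Noether puis par projection, on peut fibrer (un ouvert dense de) $X$ en courbes au-dessus d'une base $Y$ de dimension $\dim X - 1$. L'outil central ici est le th\'eor\`eme fondamental de pr\'eservation des poids par image directe \`a supports compacts: si $F$ est mixte de poids $\le w$ sur $X$ et $f:X\to Y$, alors $R^if_!F$ est mixte de poids $\le w+i$ sur $Y$. Joint aux propri\'et\'es de changement de base \eqref{eq5.6}, ceci permet, par r\'ecurrence sur $\dim X$, de se ramener au cas o\`u $X$ est une courbe lisse g\'eom\'etriquement connexe sur un corps fini $\F_q$.

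Troisi\`emement, et c'est le c\oe ur du probl\`eme, il reste \`a \'etablir la mixit\'e d'un faisceau lisse $F$ sur une courbe $U/\F_q$. La repr\'esentation associ\'ee $\rho:\pi_1(U,\bar u)\to GL(V)$ admet une filtration de Jordan--H\"older relative au sous-groupe g\'eom\'etrique $\pi_1^{\mathrm{geom}}(U,\bar u)=\Ker(\pi_1(U,\bar u)\to \Gal(\bar\F_q/\F_q))$, dont les quotients sont g\'eom\'etriquement irr\'eductibles; ceci ram\`ene au cas o\`u $F$ est g\'eom\'etriquement irr\'eductible. En tordant par un caract\`ere convenable du groupe $\Gal(\bar\F_q/\F_q)$, on peut encore supposer que $\det\rho$ est d'ordre fini. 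Il s'agit alors de montrer que $F$ est pur, de poids $0$.

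\textbf{L'obstacle principal} r\'eside enti\`erement dans cette derni\`ere \'etape. La strat\'egie (celle de Deligne dans \cite{weilII}) consiste \`a \'etudier les fonctions $L(U,F^{\otimes 2k},s)$ (ou plut\^ot $F\otimes F^\vee$ et ses puissances), \`a leur appliquer l'\'equation fonctionnelle de Grothendieck \eqref{eq5.13} (ou plut\^ot sa version du th\'eor\`eme \ref{tgef2}), et \`a exploiter une propri\'et\'e cruciale de \emph{positivit\'e} des coefficients pour encadrer les valeurs absolues complexes des valeurs propres du Frobenius aux points ferm\'es. Cette positivit\'e (variante arithm\'etique de l'argument de Rankin--Selberg) est l'analogue de l'in\'egalit\'e de Castelnuovo--Severi (th\'eor\`eme \ref{t:lemmeimportant}) qui, pour les surfaces, d\'ecoule du th\'eor\`eme de l'indice de Hodge: c'est l\`a que se concentre la vraie difficult\'e, et c'est cette positivit\'e qui a donn\'e son nom au th\'eor\`eme principal de Weil II.
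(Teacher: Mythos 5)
Le texte que vous commentez est pr\'esent\'e dans le cours comme une \emph{conjecture} (c'est la conjecture 1.2.9--1.2.10 de Deligne dans \cite{weilII}): le papier n'en donne aucune d\'emonstration, et votre texte n'en est pas une non plus -- c'est une strat\'egie dont vous reconnaissez vous-m\^eme que le point d\'ecisif reste ouvert. Le probl\`eme est que vous attribuez la r\'esolution de ce point \`a un argument qui ne le r\'esout pas. La positivit\'e \`a la Rankin--Selberg de \cite{weilII}, combin\'ee \`a l'\'equation fonctionnelle du th\'eor\`eme \ref{tgef2}, sert \`a d\'emontrer deux choses: que $Rf_!$ pr\'eserve la mixit\'e (th. 3.3.1) et que la cohomologie de $j_*F$ est pure \emph{lorsque $F$ est d\'ej\`a suppos\'e ponctuellement pur} (th. \ref{t5.4}). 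Elle ne d\'emontre pas qu'un faisceau lisse g\'eom\'etriquement irr\'eductible \`a d\'eterminant d'ordre fini sur une courbe est pur de poids $0$: c'est pr\'ecis\'ement l'\'enonc\'e conjectural, \'etabli seulement bien plus tard par L. Lafforgue comme cons\'equence de la correspondance de Langlands pour $GL_n$ sur les corps de fonctions -- un outil d'une tout autre nature que ceux du texte. Votre \'etape 3 ne comble donc pas l'obstacle, elle le renomme.

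Votre \'etape 2 est par ailleurs \`a rebours. Savoir que $R^if_!F$ est mixte sur la base $Y$ ne dit rien sur $F$ lui-m\^eme: le th\'eor\`eme de Deligne propage la mixit\'e \emph{de la source vers le but}, alors que vous voulez la d\'eduire pour la source. La r\'eduction correcte consiste \`a restreindre $F$ aux courbes trac\'ees sur $X$ (fibres, ou courbes passant par un point ferm\'e donn\'e), ce qui donne, en admettant le cas des courbes, que les valeurs propres de Frobenius en chaque point ferm\'e sont des nombres de Weil de poids born\'es; mais il reste alors \`a globaliser cette information ponctuelle en une filtration finie de $F$ \`a quotients purs, ce qui est la d\'efinition m\^eme de la mixit\'e et constitue une difficult\'e s\'erieuse (c'est l'objet des travaux ult\'erieurs de Drinfeld et de Deligne sur les compagnons, via des arguments de courbes \og remplissantes \fg\ ou d'irr\'eductibilit\'e de Hilbert). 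Seules vos r\'eductions de l'\'etape 1 (d\'evissage par $j_!$, $i_*$ et r\'ecurrence noeth\'erienne au cas lisse) sont effectivement standard.
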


Le th\'eor\`eme principal est:

\begin{thm}[\protect{\cite[th. 3.3.1]{weilII}}] Soit $f:X\to Y$ un morphisme de $\Sch(\Z[1/l])$, et soit $F$ un $\Q_l$-faisceau sur $X$, mixte de poids $\le \rho$. Alors $R^qf_! F$ est mixte de poids $\le \rho+q$ pour tout $q\ge 0$.
\end{thm}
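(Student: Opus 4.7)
The proof will proceed by a cascade of reductions, ultimately bringing everything down to the case of a lisse pure sheaf on a smooth curve over a finite field. As a preliminary step, working pointwise on closed points of $Y$ via the proper base change isomorphism \eqref{eq5.6}, I reduce to the case $Y = \Spec \F_q$; in the mixed-characteristic setting over $\Z[1/l]$, this passes through the residue fields of closed points of $Y$, where Frobenius and the notion of weight make sense. Then, by d\'evissage using the long exact sequence in compactly supported cohomology for an open-closed stratification (as in step 4 of \S \ref{5.2.6}), and the fact that being mixed of weight bounded by a given constant is stable under extensions, one can assume $F$ is pure of some weight $w \le \rho$ and that $X$ is smooth and geometrically connected. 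The residual task is: for smooth $X/\F_q$ of dimension $n$ and lisse pure $F$ of weight $w$, show that $H^i_c(\bar X, F)$ is mixed of weights $\le w+i$.

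Next I would reduce from higher-dimensional $X$ to the case of curves, using a fibration technique. After slicing $X$ by hyperplanes (Lefschetz pencils, or generic projections to $\P^1$ combined with blow-ups), one obtains a morphism $g : X' \to U$ with $U$ a curve and fibers of dimension $n-1$. Via the Leray spectral sequence for $Rg_!$ and induction on dimension, the statement for $X$ follows from the curve case applied to $R^\bullet g_! F$ on $U$, together with the inductive hypothesis applied to each fiber. Handling the monodromy of the pencil and the behavior at bad fibers is already a nontrivial component of this reduction.

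The third and decisive step treats the case of a smooth geometrically connected curve $U$ over $\F_q$ and a lisse $\Q_l$-sheaf $F$ on $U$ pure of weight $w$. The target is to bound in absolute value by $q^{(w+1)/2}$ all eigenvalues $\alpha$ of Frobenius acting on $H^1_c(\bar U, F)$. Thanks to the rationality from Theorem \ref{t5.1}, this is equivalent to controlling the poles and zeros of
\[ L(U, F, t) = \prod_{x \in U_{(0)}} \det(1 - \pi_x t^{\deg x} \mid F_x)^{-1}, \]
which factors as $P_1(t)/P_0(t)P_2(t)$ in $\Q_l(t)$. The key idea is the Rankin--Selberg-type trick of passing to the even tensor powers $F^{\otimes 2k}$: since by purity the Frobenius eigenvalues on $F_x$ have absolute value $N(x)^{w/2}$, the logarithmic Dirichlet coefficients of $L(U, F^{\otimes 2k}, t)$ are real and nonnegative. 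Combined with the functional equation furnished by Theorem \ref{tgef2}, applied to a compactification $j : U \hookrightarrow C$ with $F$ replaced by $j_* F$, and with a non-vanishing argument for the $L$-function in the spirit of Hadamard--de la Vall\'ee Poussin, one extracts a uniform bound on the eigenvalues, then passes to the limit $k \to \infty$ to squeeze these bounds onto the correct line.

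The hard part will be precisely this third step. The delicate issue is combining positivity (from the Euler product of the self-dual even power) with the functional equation so as to pin the Frobenius eigenvalues onto a narrow vertical strip, then using the $k \to \infty$ limit to collapse this strip to a single line. Controlling the estimates uniformly in $k$ and, crucially, arranging that the global geometric monodromy representation of $F$ is sufficiently rich (semisimple, preserving a perfect pairing, so that $F^{\otimes 2k}$ contains the trivial representation with controlled multiplicity) constitutes Deligne's subtle \emph{lemme principal} and is where the bulk of the technical work in \cite{weilII} resides. The reduction stages, while nontrivial, are essentially formal consequences of the six-operation formalism; the genuine mathematical depth lies in the positivity-functional-equation-non-vanishing loop on the curve.
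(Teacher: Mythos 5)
First, a point of framing: the paper does not prove this theorem. It is stated with the reference \cite[th. 3.3.1]{weilII}, and the introduction announces explicitly that Deligne's weight theorems are presented \emph{sans d\'emonstrations}. There is therefore no in-paper argument to compare yours against; what you have written is an outline of Deligne's own proof. As such an outline, its architecture is essentially faithful to \cite{weilII}: reduction to $Y$ a point over a finite field by proper base change, d\'evissage to a lisse pure sheaf on a smooth variety, fibration in curves plus the Leray spectral sequence and induction on the dimension, and finally the curve case treated by positivity of the even tensor powers combined with the functional equation (th\'eor\`eme \ref{tgef2}) and a Hadamard--de la Vall\'ee Poussin argument, which Deligne does indeed use in \cite[3.2.1]{weilII}. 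One inaccuracy of detail: Lefschetz pencils and vanishing cycles belong to the first proof \cite{weilI}; one of the simplifications of Weil II is precisely that the reduction to curves uses only generic fibrations over affine space.

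The genuine gap is that the proposal stops exactly where the theorem begins. Your third step --- the main lemma for a lisse pure sheaf on a curve, which is th\'eor\`eme \ref{t5.4} of the paper --- is described, not proved, and you say so yourself. Everything that makes the theorem true is concentrated there: the positivity of the coefficients of $L(U,F^{\otimes 2k},t)$ already requires $F$ to be $\iota$-r\'eel (e.g.\ after replacing $F$ by $F\oplus\bar F$); a Landau-type argument must then locate the first singularity of the Euler product on a zero of $P_2$, hence on the weights of the coinvariants $H^2_c(\bar U,F^{\otimes 2k})\simeq (F^{\otimes 2k})_{\pi_1}(-1)$; one must pass from this to a bound on the weights of the fibres of $F$ itself (not directly of its cohomology), and only then, letting $k\to\infty$ and invoking the non-vanishing argument, to the bound $\le \rho+1$ on $H^1_c$. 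None of these estimates is carried out. As written, the proposal is an accurate table of contents for Deligne's argument rather than a proof of the statement.
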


Deligne en d\'eduit:

\begin{thm}[\protect{\cite[th. 3.4.1]{weilII}}] \phantomsection\label{twII} Soit $X\in \Sch(\F_q)$, et soit $F$ un $\Q_l$-faisceau lisse mixte sur $X$. Alors\\
a) $F$ admet une filtration croissante finie $W_\bullet F$, la \emph{filtration par le poids}, telle que pour tout $i\in \Z$, $\Gr_i F:=W_i F/W_{i-1} F$ soit lisse et pur de poids $i$. Cette filtration est unique et strictement fonctorielle en $F$.\\
b) Si $F$ est pur et $X$ est normal, $F_{|\bar X}$ est semi-simple.
\end{thm}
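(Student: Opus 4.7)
The strategy for (a) is to extract both existence and uniqueness from one vanishing principle: for lisse $\Q_l$-sheaves $F, G$ on $X\in\Sch(\F_q)$ that are pure of weights $i$ and $j$ respectively with $i\ne j$, one has $\Hom(F,G)=0$. Indeed such a morphism is a Frobenius-equivariant global section of $\uHom(F,G)$, which is pure of weight $j-i\ne 0$; at each closed point $x$ the Frobenius $\pi_x$ acts on the stalk with eigenvalues of absolute value $N(x)^{(j-i)/2}\ne 1$, so no fixed vector exists, and specialization at any closed point then forces such an equivariant section (and hence the morphism) to vanish. Uniqueness of the weight filtration follows immediately: given two candidates $W_\bullet,W'_\bullet$, the composite $W_iF\hookrightarrow F\twoheadrightarrow F/W'_iF$ would induce a non-zero morphism from an object with graded pieces of weights $\le i$ to one with pieces of weights $>i$, contradicting the vanishing; symmetry then yields $W_iF=W'_iF$.

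For existence in (a), I proceed by induction on the length of a given filtration of $F$ by lisse subsheaves with pure graded pieces (such a filtration exists by the definition of mixte). The heart is a bubble-sort step: given $0\to F'\to E\to F''\to 0$ with $F',F''$ pure lisse of weights $i>j$, the extension splits. The class $\eta\in\operatorname{Ext}^1_X(F'',F')$ is computed by the Hochschild--Serre spectral sequence for $1\to\pi_1(\bar X,\bar x)\to\pi_1(X,\bar x)\to\hat\Z\to 1$, with $E_2^{p,q}=H^p(\hat\Z,H^q(\bar X,\uHom(F'',F')))$. Since $\uHom(F'',F')$ is pure of positive weight $i-j\ge 1$, the main weight theorem \cite[th.~3.3.1]{weilII} implies that $H^0(\bar X,-)$ and $H^1(\bar X,-)$ are Frobenius modules whose eigenvalues all have absolute value $>1$; hence both the invariants $H^0(\hat\Z,-)$ and the coinvariants $H^1(\hat\Z,-)$ (on which $\phi-1$ acts invertibly) vanish, killing $\eta$. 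Iterating, one straightens any initial filtration into one with strictly increasing weights, whose graded pieces are then automatically lisse and pure.

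For (b), set $V=F_{\bar x}$ with $F$ pure of weight $i$ on normal $X$, viewed as a $\pi_1(\bar X,\bar x)$-module. If $V$ is not semisimple, choose a minimal non-split extension $0\to V_1\to V\to V_2\to 0$ of $\pi_1(\bar X)$-modules, corresponding to lisse sub- and quotient sheaves $F_1,F_2$ of $F$, still pure of weight $i$. The class $\eta\in\operatorname{Ext}^1_{\pi_1(\bar X)}(V_2,V_1)=H^1(\bar X,\uHom(F_2,F_1))$ is Frobenius-invariant, since the extension is already defined over $X$. But $\uHom(F_2,F_1)$ is pure of weight $0$, so by \cite[th.~3.3.1]{weilII} the group $H^1(\bar X,-)$ has weights $\ge 1$, forcing its Frobenius-invariants and hence $\eta$ to vanish---contradiction. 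The main obstacle is the refined case when $V_1$ and $V_2$ share irreducible constituents: then by Schur $\Hom_{\pi_1(\bar X)}(V_2,V_1)\ne 0$, and the Hochschild--Serre analysis produces an $H^1(\hat\Z,-)$ contribution that sees a potential weight-$0$ piece when one tries to lift the obstruction from $\pi_1(\bar X)$ to $\pi_1(X)$. Overcoming this requires a delicate analysis of how Frobenius acts on Schur self-hom spaces, using normality of $X$ to get well-behaved isotypic decompositions; this is where the weight theorem is used in its sharpest form and constitutes the most subtle step of the argument.
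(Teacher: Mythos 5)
The paper offers no proof of this statement: it is quoted directly from Deligne's Weil II ([th. 3.4.1]) as a black box, so there is nothing in the text to compare your argument against and I assess it on its own terms. Your part (a) is a faithful reconstruction of Deligne's argument: the $\Hom$-vanishing between pure lisse sheaves of distinct weights via Frobenius eigenvalues at a single closed point, the resulting uniqueness, and the ``bubble sort'' resting on the vanishing of $\operatorname{Ext}^1_X(F'',F')=H^1(X,\uHom(F'',F'))$ when the sub has strictly larger weight than the quotient, read off from the exact sequence $0\to H^1(\hat\Z,H^0(\bar X,\mathcal G))\to H^1(X,\mathcal G)\to H^1(\bar X,\mathcal G)^{\hat\Z}\to 0$, is exactly the right mechanism. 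The one point you pass over is that the lower bound ``the weights of $H^1(\bar X,\mathcal G)$ are $\ge w+1$'' is not a formal consequence of [th. 3.3.1] for arbitrary $X$: one must reduce to smooth affine curves, where it follows from 3.3.1 by Poincar\'e duality. That reduction is standard and acceptable at sketch level.

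Part (b), however, has a genuine gap. You choose a minimal non-split extension $0\to V_1\to V\to V_2\to 0$ of $\pi_1(\bar X)$-modules and assert that it ``corresponds to lisse sub- and quotient sheaves of $F$'' and that its class $\eta$ is Frobenius-invariant ``since the extension is already defined over $X$''. But an arbitrary $\pi_1(\bar X)$-submodule $V_1$ of $V$ need not be stable under $\pi_1(X)$; when it is not, there is no subsheaf $F_1$ of $F$ over $X$, ``pure of weight $i$'' is meaningless for it, and the extension is precisely \emph{not} defined over $X$ --- so the Frobenius-invariance of $\eta$, which is the entire engine of your argument, is unavailable. The repair (Deligne's) is to take $V_1$ to be the maximal semisimple $\pi_1(\bar X)$-submodule of $V$ (the socle): being canonical, it is automatically $\pi_1(X)$-stable because $\pi_1(\bar X)$ is normal in $\pi_1(X)$. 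Then the extension \emph{is} defined over $X$, its image in $H^1(\bar X,\uHom(F_2,F_1))^{\hat\Z}$ vanishes because $\uHom(F_2,F_1)$ is pure of weight $0$ and $H^1(\bar X,-)$ has weights $\ge 1$, hence $V=V_1\oplus W$ as $\pi_1(\bar X)$-modules, and maximality of the socle forces $W=0$. Your closing paragraph about ``shared irreducible constituents'' and ``Schur self-hom spaces'' misidentifies the difficulty and does not resolve it; normality of $X$ enters not through isotypic decompositions but in producing a curve $C\to X$ with $\pi_1(\bar C)\to\pi_1(\bar X)$ surjective, which reduces the needed weight bound on $H^1(\bar X,-)$ to the curve case.
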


\begin{cor}[\protect{\cite[th. 6.1.11]{weilII}}] Pour $X\in \Sch(\F_q)$, notons \break $D^b_m(X,\Q_l)$ la sous-cat\'egorie \'epaisse de $D^b_c(X,\Q_l)$ form\'ee des objets $C$ tels que $H^i(C)$ soit mixte pour tout $i\in \Z$. Alors $D^b_m$ est stable par les six op\'erations.
\end{cor}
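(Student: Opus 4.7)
The plan is to verify stability under each of the six operations separately, reducing systematically to the behaviour on a single mixed sheaf. Since $D^b_m(X,\Q_l)$ is a thick triangulated subcategory of $D^b_c(X,\Q_l)$ and since each of the six operations is a triangulated functor, standard truncation/hypercohomology spectral sequence arguments reduce the problem to checking, for each operation, that its output has mixed cohomology sheaves whenever it is applied to objects $F$ (and $G$, for bifunctors) which are single mixed sheaves placed in degree $0$.

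First I would handle the three ``easy'' operations. For $f^*F$, the geometric fibres are pulled back from geometric fibres of $F$ along the map induced by $f$ on closed points, so the Frobenius eigenvalues at a closed point $x\in X$ are Frobenius eigenvalues on $F_{f(x)}$ raised to the power $[\kappa(x):\kappa(f(x))]$; their absolute values lie in the required set. For $\otimes^L$, if $F$ and $G$ are pure of weights $a$ and $b$, then $F\otimes^L G$ is pure of weight $a+b$ (the product of a $q$-Weil number of weight $a$ and of weight $b$ has weight $a+b$); filtering general mixed sheaves by their weight filtration (theorem \ref{twII} a)) one concludes by dévissage. Finally, stability under $Rf_!$ is precisely theorem 3.3.1 of \cite{weilII} cited just above, which in fact gives the sharper bound $R^qf_!F$ mixte de poids $\le \rho+q$.

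The remaining three operations $Rf_*$, $Rf^!$ and $R\uHom$ I would deduce from the previous three by duality, using the identities \eqref{eq5.9}--\eqref{eq5.10} recalled in \S 5.3: namely $Rf_*\simeq D_Y\circ Rf_!\circ D_X$, $Rf^!\simeq D_X\circ f^*\circ D_Y$ and $R\uHom(A,B)\simeq D_X(A\otimes^L D_X(B))$. All three reductions boil down to showing that the dualization functor $D_X=R\uHom(-,K_X)$ preserves $D^b_m(X,\Q_l)$. To establish this, I would separate the two ingredients: (i) the dualizing complex $K_X=Rf_X^!\,\Q_l$ is itself mixed; (ii) $R\uHom(F,G)$ is mixed when $F,G$ are mixed. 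Granting (i), biduality reduces (ii) to the case $G=K_X$, and then to the stability of $\otimes^L$ and of $D_X$ on a single mixed sheaf.

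The main obstacle is point (i), the mixedness of the dualizing complex $K_X$ on an arbitrary (possibly singular) $X\in\Sch(\F_q)$. For $X$ smooth of pure dimension $n$ one has $K_X=\Q_l(n)[2n]$, pure of weight $-2n$, so the serious content is the singular case. I would argue by Noetherian induction on $\dim X$: choose a dense open $j\colon U\hookrightarrow X$ with $U$ smooth, of closed complement $i\colon Z\hookrightarrow X$ with $\dim Z<\dim X$; from the localization triangle applied to $K_X$, the complex $j^*K_X=K_U$ is pure and $i^*K_X$ is computed from $K_Z$ (which is mixed by induction) via an exact triangle involving $R\Gamma_Z$, so $K_X$ is mixed. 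An equivalent route is to invoke de Jong's theorem on alterations, producing a proper generically étale $p\colon\tilde X\to X$ with $\tilde X$ smooth: then $Rp_!K_{\tilde X}$ is mixed by stability of $Rf_!$ and the trace map exhibits $K_X$ as a summand of it after inverting $\deg(p)\in\Q_l^*$. Once (i) is in hand, the duality identities above produce all remaining stabilities, completing the proof.
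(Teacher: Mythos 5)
Le texte ne donne aucune démonstration de ce corollaire: il renvoie simplement à \cite[th. 6.1.11]{weilII}. Votre architecture générale est d'ailleurs celle de Deligne: $f^*$, $\otimes^L$ et $Rf_!$ se traitent directement (le dernier étant le théorème 3.3.1 cité juste au-dessus), et les trois opérations restantes se ramènent formellement, via \eqref{eq5.9}--\eqref{eq5.10}, à la stabilité par la dualité $D_X$. Jusque-là, tout est correct.

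C'est précisément dans le traitement de $D_X$ que se trouve une lacune réelle. D'abord, votre réduction de la stabilité de $R\uHom(F,G)$ au cas $G=K_X$, puis «~à la stabilité de $\otimes^L$~», ne fonctionne pas: pour $F$ constructible non lisse, $R\uHom(F,K_X)$ n'est pas de la forme $F^\vee\otimes^L K_X$, et tout dévissage par stratification de $F$ fait apparaître des termes $Rj_*$ ou $i^!$ dont le caractère mixte est exactement ce qu'on cherche à établir (le triangle $j_!D_U(j^*F)\to D_X(F)\to i_*D_Z(i^!F)$ demande de savoir que $i^!F$ est mixte, c'est-à-dire que $i^*Rj_*j^*F$ l'est). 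Ensuite, vos deux preuves de la mixité de $K_X$ souffrent du même cercle: le triangle de localisation est $i_*K_Z\to K_X\to Rj_*K_U$, donc la récurrence noethérienne présuppose la stabilité par $Rj_*$; et l'argument par altérations (anachronique par rapport à Weil II, mais passons) ne fait de $K_X$ un facteur direct de $Rp_*K_{\tilde X}$ qu'au-dessus de l'ouvert où $p$ est fini étale --- sur le fermé complémentaire on ne contrôle que $i^!K_X=K_Z$ et non $i^*K_X$, qui est ce dont on a besoin pour conclure par recollement. L'ingrédient que votre rédaction ne rencontre jamais est l'énoncé local non formel: pour $j:U\inj X$ ouvert d'une courbe lisse et $F$ lisse mixte sur $U$, le faisceau $j_*F$ (et donc $i^*Rj_*F$) est mixte --- c'est le contenu de l'analyse de monodromie locale et des poids de \cite[1.8.4 et 1.10]{weilII} (dont le théorème \ref{t5.4} du texte est un avatar global); le cas général s'y ramène par fibration en courbes. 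Sans cette étape, la stabilité par $Rf_*$, $Rf^!$ et $R\uHom$ n'est pas démontrée.
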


Le m\^eme corollaire vaut pour $X\in \Sch(\Q)$ \`a condition de d\'efinir un faisceau mixte sur $X$ comme un $\Q_l$-faisceau provenant d'un faiceau mixte sur $\sX_n$ pour $n$ multiplicativement assez grand, o\`u $\sX_n$ est un mod\`ele de type fini de $X$ sur $\Z[1/n]$ (\emph{ibid}).

Un autre r\'esultat important est:

\begin{thm}[\protect{\cite[th. 3.2.3]{weilII}}]\phantomsection\label{t5.4} Soient $C$ une courbe projective lisse sur $\F_q$, $j:U\to C$ un ouvert dense et $F$ un $\Q_l$-faisceau lisse sur $U$, pur de poids $\rho$. Alors les valeurs propres de Frobenius agissant sur $H^i(\bar C,j_*F)$ sont des $q$-nombres de Weil de poids $\rho+i$.
\end{thm}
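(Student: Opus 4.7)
The plan is to reduce to the case $i=1$, which is the heart of the matter, and then to deduce the weight estimate from a clever combination of the functional equation (Theorem \ref{tgef2}) and a positivity argument for $L$-functions applied to high tensor powers of $F$ (``Rankin's trick'').

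First I would handle the easy cases $i=0$ and $i=2$. One has $H^0(\bar C, j_* F) = (j_*F)^{\pi_1(\bar C,\bar \eta)} = F^{\pi_1(\bar U,\bar \eta)}$, which sits as a sub-$\pi_1$-module of the generic stalk of $F$; since $F$ is pure of weight $\rho$, Frobenius eigenvalues on this subspace are $q$-Weil numbers of weight $\rho$, i.e.\ weight $\rho+0$. For $i=2$ one invokes Poincar\'e duality on $\bar C$ for the middle extension $j_*F$: if $F$ is pure of weight $\rho$ and faiblement polarisable of weight $\rho$ in the sense of \eqref{eqpol}, then $H^2(\bar C, j_*F)$ is dual to $H^0(\bar C, j_*F^\vee)$ up to a Tate twist by $\Q_l(-1)$, which (together with the $i=0$ case applied to $F^\vee$) gives weight $\rho+2$. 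In the general case one reduces to the polarisable situation by passing to $F \oplus F^\vee(\rho)$, using the additivity of the statement.

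The core of the proof is the case $i=1$. Using the factorisation of Theorem \ref{tgef2},
\[
L(U, F, t) = \frac{\det(1 - t\pi_{\F_q}\mid H^1(\bar C, j_*F))}{\det(1 - t\pi_{\F_q}\mid H^0)\det(1 - t\pi_{\F_q}\mid H^2)},
\]
the inverse eigenvalues $\alpha$ on $H^1(\bar C, j_*F)$ are exactly the poles of $L(U,F,t^{-1})$ after removing the contributions of $H^0, H^2$ (whose weights are already known). I would then replace $F$ by $F^{\otimes 2k}$ and exploit two features. First, $F^{\otimes 2k}$ is pure of weight $2k\rho$, and the Euler product $L(U, F^{\otimes 2k}, t)$ converges absolutely for $|t| < q^{-(2k\rho/2 + 1)}= q^{-(k\rho+1)}$ \emph{provided} one has upper bounds on the local Frobenius eigenvalues from purity of $F$. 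Second, when $F$ is self-dual (or replaced by $F \otimes F^\vee$, which is automatically self-dual and contains the trivial representation), the series $\log L(U, F^{\otimes 2k}, t)$ has non-negative real coefficients up to an explicit rational correction, so its radius of convergence is controlled by its \emph{real} pole of smallest modulus. Comparing these two pieces of information and taking the $2k$-th root yields, for every eigenvalue $\alpha$,
\[
|\alpha| \le q^{(\rho+1)/2 + O(1/k)},
\]
and letting $k \to \infty$ gives the upper bound $|\alpha| \le q^{(\rho+1)/2}$. Applying the same reasoning to each Galois conjugate $\sigma\alpha$ yields the same bound for all conjugates.

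Finally, I would invoke the functional equation of Theorem \ref{tgef2} to promote the inequality to the desired equality. The functional equation sends the eigenvalue $\alpha$ on $H^1(\bar C, j_*F)$ to an eigenvalue of the form $q^{\rho+1}/\alpha'$ for some Galois conjugate $\alpha'$ (by the involution on Frobenius eigenvalues induced by duality and the polarisation). Applying the upper bound just obtained to $q^{\rho+1}/\alpha'$ yields $|q^{\rho+1}/\alpha'| \le q^{(\rho+1)/2}$, i.e.\ $|\alpha'| \ge q^{(\rho+1)/2}$, and since this holds for every conjugate one concludes $|\sigma\alpha| = q^{(\rho+1)/2}$ for all $\sigma$, which is precisely the assertion that $\alpha$ is a $q$-Weil number of weight $\rho+1$. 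The main obstacle is the second step: ensuring that the tensor power argument really gives the announced $1/k$ loss. This requires a precise analysis of how many trivial sub-representations appear in $F^{\otimes 2k}$ (a Schur--Weyl type count) and a careful handling of the local factors at the points of $C \setminus U$, where $F$ is only defined generically; here the Grothendieck--Ogg--Shafarevich formula is needed to bound the degrees of the polynomials $P_i(t)$ uniformly in $k$.
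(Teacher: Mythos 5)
First, a framing remark: the paper does not actually prove this theorem. It is stated as a citation of Deligne (\cite[th.~3.2.3]{weilII}), in a chapter whose results on the theory of weights the introduction explicitly announces \emph{sans d\'emonstrations}. So there is no proof of the paper to compare yours against; what you have written is an outline of (a version of) Deligne's own strategy. The architecture is the right one --- $i=0$ via invariants of the generic stalk, $i=2$ via Poincar\'e duality for the middle extension, reduction to the weakly polarizable case by passing to $F\oplus F^\vee(-\rho)$, and the final promotion of an upper bound to purity via the self-duality of $H^1(\bar C,j_*F)$ up to a twist by $\rho+1$.

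The central step, however, fails as stated. Absolute convergence of the Euler product of $L(C,j_*F^{\otimes 2k},t)$ in $|t|<q^{-(k\rho+1)}$, and Landau's theorem applied to the positive-coefficient series $\log L$, both locate the first \emph{pole} of the rational function $L=P_1/(P_0P_2)$ on the positive real axis, i.e.\ they control the eigenvalues on $H^0$ and $H^2$ --- which you already know. Neither says anything about the \emph{zeros} of the numerator $P_1$: an absolutely convergent product of non-vanishing Euler factors has no poles in its disc of convergence, and the absence of zeros there only reproduces the trivial bound $|\sigma\alpha|\le q^{\rho/2+1}$ (weight $\le\rho+2$). Moreover the expected $1/k$ improvement does not materialize, because the Frobenius eigenvalues on $H^1(\bar C,j_*(F^{\otimes 2k}))$ are not the $2k$-th powers of those on $H^1(\bar C,j_*F)$ (K\"unneth sends $H^1(\bar C,j_*F)^{\otimes 2k}$ into the cohomology of $C^{2k}$, not into $H^1$ of a sheaf on $C$). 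Closing the remaining gap of one unit of weight is precisely the content of Deligne's main lemma, and requires either the Hadamard--de la Vall\'ee Poussin zero-free-region argument of \cite[\S 3]{weilII} or the K\"unneth/Lefschetz-pencil induction of \cite{weilI}; neither appears in your sketch, and the obstacle you flag (counting trivial constituents of $F^{\otimes 2k}$, Grothendieck--Ogg--\v Safarevi\v c) is not where the difficulty lies. A secondary gap: the local factors of $L(C,j_*F,t)$ at the points of $C\setminus U$ involve the inertia invariants $F^{I_x}$, whose Frobenius eigenvalues are not controlled by pointwise purity of $F$ on $U$ alone; bounding them uses the local monodromy theorem and cannot be absorbed into ``careful handling.''
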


On aura enfin besoin de l'\'enonc\'e suivant, qui n'est pas explicite dans \cite{weilII} (mais voir \cite[5.1.14 et l. 5 au-dessus de 5.1.9]{bbd}):

\begin{prop}\phantomsection\label{p5.1} Soit $p:\sX\to U$ un morphisme propre et lisse, o\`u $U$ est une vari\'et\'e lisse de dimension $d$ sur $\F_q$. Soit $F$ un $\Q_l$-faisceau lisse sur $\sX$, pur de poids $\rho$. Alors, pour tout $i\ge 0$, $R^ip_* F$ est lisse, et pur de poids $\rho+i$.
\end{prop}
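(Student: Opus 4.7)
Le plan est de démontrer séparément la lissité de $R^ip_*F$, puis sa pureté du poids $\rho+i$ par encadrement fibre par fibre. Pour la lissité, j'invoquerais le théorème de changement de base propre: pour tout point géométrique $\bar u\to U$, on a un isomorphisme canonique $(R^ip_*F)_{\bar u}\simeq H^i(\sX_{\bar u},F_{|\sX_{\bar u}})$. La lissité de $p$ et celle de $F$ entraînent par ailleurs, via le changement de base lisse, que les morphismes de spécialisation entre fibres géométriques sont des isomorphismes; il en résulte que $R^ip_*F$ est localement constant constructible sur $U$, donc lisse.

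Le faisceau $R^ip_*F$ étant lisse, sa pureté se vérifie en chaque point fermé $u\in U_{(0)}$: il s'agit de montrer que les valeurs propres de $\pi_u$ opérant sur $(R^ip_*F)_u\simeq H^i(\sX_{\bar u},F_{|\sX_{\bar u}})$ sont des $N(u)$-nombres de Weil de poids $\rho+i$. Or $Y:=\sX_u$ est une variété propre et lisse sur $\kappa(u)$, et $G:=F_{|Y}$ y est lisse et pure de poids $\rho$. Le problème se ramène ainsi à l'énoncé fibre par fibre suivant: pour $Y$ propre et lisse sur $\F_q$ et $G$ lisse pure de poids $\rho$ sur $Y$, $H^i(\bar Y,G)$ est pur de poids $\rho+i$.

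Pour ce dernier, j'établirais séparément majoration et minoration. La majoration résulte directement du théorème 3.3.1 de \cite{weilII}: $Y$ étant propre, $H^i_c=H^i$, et $G$ mixte (car pure) de poids $\rho$ donne $H^i(\bar Y,G)$ mixte de poids $\le \rho+i$. Pour la minoration, j'utiliserais la dualité de Poincaré pour $Y$ propre lisse de dimension $n$: elle fournit un accouplement parfait Galois-équivariant
\[H^i(\bar Y,G)\times H^{2n-i}(\bar Y,G^\vee(n))\to H^{2n}(\bar Y,\Q_l(n))\simeq \Q_l.\]
Or $G^\vee(n)$ est pure de poids $-\rho-2n$, donc la majoration appliquée à ce faisceau donne que $H^{2n-i}(\bar Y,G^\vee(n))$ est mixte de poids $\le -\rho-2n+(2n-i)=-\rho-i$. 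Par dualité (les valeurs propres de Frobenius sur $H^i(\bar Y,G)$ sont les inverses de celles sur $H^{2n-i}(\bar Y,G^\vee(n))$), $H^i(\bar Y,G)$ est mixte de poids $\ge \rho+i$, d'où la pureté en combinant avec la majoration.

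L'obstacle principal se situe dans la disponibilité effective du théorème profond de Weil II (le théorème 3.3.1 de majoration du poids pour $Rf_!$) ainsi que de la dualité de Poincaré $l$-adique pour une variété propre et lisse à coefficients lisses; ces outils étant admis, l'argument développé ci-dessus est essentiellement formel.
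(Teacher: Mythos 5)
Votre démonstration est correcte et suit pour l'essentiel la même stratégie que celle du texte: la lissité par changement de base propre et lisse, la majoration du poids par le théorème principal de Weil II, et la minoration en appliquant cette même majoration au faisceau dual via la dualité de Poincaré. La seule différence est de présentation: vous appliquez la dualité fibre par fibre (ce qui est légitime, la pureté étant une notion ponctuelle), tandis que le texte utilise la dualité de Verdier relative $D_U(Rp_*F)\simeq Rp_*D_\sX(F)$ au-dessus de $U$.
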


Dans le cas $d=0$, $F=\Q_l$, on retrouve la derni\`ere conjecture de Weil (hypoth\`ese de Riemann)\index{Hypothèse de Riemann!pour les variétés projectives lisses} g\'en\'eralis\'ee du cas projectif lisse au cas propre et lisse, \cite[cor. 3.3.9]{weilII}.

\begin{proof} Que $R^ip_*F$ soit lisse r\'esulte du fait que $p$ est propre et lisse. Le th\'eor\`eme \ref{twII} implique qu'il est mixte de poids $\le \rho+i$; pour conclure, il suffit de voir que son dual est mixte de poids $\le -\rho-i$. Pour cela, on peut supposer $\sX$ connexe, donc \'equidimensionnel. On applique le formalisme \eqref{eq5.10}:
\[D_U(Rp_*F)\simeq Rp_*D_\sX(F)\]
qui donne
\[
\RHom(Rp_*F,\Q_l(d)[2d])\simeq Rp_*\RHom(F,\Q_l(n+d)[2n+2d])
\]
o\`u $n$ est la dimension relative de $p$, d'o\`u 
\[
\RHom(Rp_*F,\Q_l)\simeq Rp_*\RHom(F,\Q_l(n)[2n])
\]
et en prenant $H^{-i}$:
\[\uHom(R^ip_*F,\Q_l)\simeq R^{2n-i}p_*(\uHom(F,\Q_l))(n)\]
(noter qu'on a $\RHom(G,\Q_l)=\uHom(G,\Q_l)[0]$ pour tout faisceau lisse $G$). On conclut en r\'eappliquant le th\'eor\`eme \ref{twII}.
\end{proof}

\subsubsection{Le th\'eor\`eme de Lefschetz difficile}\label{s5.4.2}

Soit $X$ une vari\'et\'e projective lisse de dimension $n$ sur un corps $k$. Choisissons une section hyperplane lisse  $Y\subset X$ pour un plongement projectif convenable $i:X\inj \P^N$. Par le th\'eor\`eme de Bertini \cite{hartshorne}, l'ensemble de ces sections d\'efinit un ouvert non vide $U$ d'un espace affine: $Y$ existe donc toujours si $k$ est infini.

Si $k$ est fini, il se peut que $U(k)=\emptyset$: dans ce cas, on compose $i$ avec un \emph{plongement de Veronese}
\[\P^N = \P(V)\inj \P(S^r(V))\]
induit par $x\mapsto x^r$. Pour $r$ assez grand, l'ouvert $U_r$ correspondant a un point rationnel sur $k$ (\cf \cite[exp. XVII]{SGA7}). Ceci revient \`a remplacer les sections hyperplanes dans $\P^N$ par des sections par des hypersurfaces de degr\'e $r$.

Notons $L=\cl^1(Y)\in H^2(X)(1)$, o\`u $H$ est une cohomologie de Weil. Pour $i\le n$, on peut consid\'erer l'homomorphisme
\begin{equation}\label{eq5.14}
\begin{CD}
H^i(X)@>\cdot [L]^{n-i}>> H^{2n-i}(X)(n-i). 
\end{CD}
\end{equation}

Si $k=\C$ et $H=H_B$, \eqref{eq5.14} est un isomorphisme: cela r\'esulte de la th\'eorie de Hodge. En utilisant les th\'eor\`emes de comparaison du \S \ref{3.6.1}, on en d\'eduit que \eqref{eq5.14} est un isomorphisme si $H=H_l$ et $\car k=0$.

Le cas d'un corps fini est vrai mais beaucoup plus difficile:

\begin{thm}[\protect{\cite[th. 4.1.1]{weilII}}] \eqref{eq5.14} est un isomorphisme si $k\allowbreak=\F_q$ et $H=H_l$, avec $l\nmid q$.
\end{thm}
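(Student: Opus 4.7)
Le plan consiste à adapter la stratégie de Deligne de \cite[\S 4]{weilII}, qui ramène le théorème de Lefschetz difficile sur $\F_q$ au théorème de semi-simplicité \ref{twII} b) pour les faisceaux lisses purs, via un argument de pinceau de Lefschetz. Tout d'abord, je procéderais par récurrence sur $n=\dim X$ pour me ramener au cas de la dimension médiane. Par dualité de Poincaré, les énoncés pour $i$ et $2n-i$ sont équivalents, donc on peut supposer $i\le n$. En appliquant le théorème de Lefschetz faible à une section hyperplane lisse $Y\subset X$ de dimension $n-1$ et la factorisation de $L$ via la composition restriction-Gysin $H^i(X)\to H^i(Y)\to H^{i+2}(X)(1)$, on voit que l'hypothèse de récurrence pour $Y$, jointe à l'injectivité de $L\colon H^{n-1}(X)\to H^{n+1}(X)(1)$, entraîne tous les cas. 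Par dualité de Poincaré, cette dernière injectivité équivaut à la non-dégénérescence du cup-produit sur la partie primitive de $H^{n-1}(X)$.

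Ensuite, je choisirais un pinceau de Lefschetz de sections hyperplanes de $X$. Sur $\F_q$ ceci demande quelques précautions, comme expliqué au \S \ref{s5.4.2}: on compose le plongement projectif donné avec un plongement de Veronese $\P^N\hookrightarrow \P(S^rV)$ de degré $r$ assez grand pour que l'ouvert paramétrant les pinceaux de Lefschetz admette un point $\F_q$-rationnel. Après éclatement de l'axe du pinceau -- sous-schéma lisse de codimension $2$ de $X$ -- on obtient un morphisme $f\colon \tilde X \to \P^1$ dont les fibres sont les sections hyperplanes, lisses au-dessus d'un ouvert dense $U\subset \P^1$ et présentant chacune un unique point double ordinaire aux points de $\P^1\setminus U$. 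L'objet-clé est le faisceau $\mathcal{F}=R^{n-1}f_*\Q_l|_U$, lisse sur $U$ et pur de poids $n-1$ d'après la proposition \ref{p5.1}. Les cycles évanescents aux fibres singulières engendrent un sous-faisceau $\mathcal{E}\subset \mathcal{F}$ stable par la monodromie géométrique, l'action étant décrite en termes de réflexions de Picard-Lefschetz (transvections symplectiques si $n-1$ est impair, réflexions orthogonales si $n-1$ est pair).

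L'obstacle principal sera de montrer que la monodromie géométrique sur $\mathcal{E}_{\bar\eta}$ est absolument irréductible. Cela se fait en deux étapes: (a) l'ensemble des cycles évanescents forme une seule orbite (au signe près) sous $\pi_1(U_{\bar\F_q})$, grâce à l'irréductibilité de la famille universelle des pinceaux de Lefschetz et un argument de spécialisation; (b) un sous-groupe du groupe orthogonal ou symplectique de $\mathcal{E}_{\bar\eta}$ engendré par une seule classe de conjugaison de réflexions ou de transvections non triviales agissant transitivement sur un ensemble générateur de vecteurs doit être le groupe tout entier, lequel agit irréductiblement sur sa représentation définissante. L'étape (b) est un énoncé de théorie des groupes de type Kazhdan-Margulis; dans le cas symplectique, elle repose sur la classification des sous-groupes engendrés par des transvections, et dans le cas orthogonal, il faut en outre un argument de non-trivialité reposant sur l'existence de cycles évanescents d'auto-intersection non nulle.

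Pour conclure, j'utiliserais le théorème \ref{twII} b): comme $\mathcal{F}|_{\bar U}$ est pur et $\bar U$ est normale, $\mathcal{F}|_{\bar U}$ est semi-simple, donc $\mathcal{E}|_{\bar U}$ en est un facteur direct, sans sous-$\pi_1(\bar U)$-faisceau propre non nul d'après (b). Le cup-produit sur $H^{n-1}(Y_t)$ définit une forme bilinéaire $\pi_1$-équivariante sur $\mathcal{F}$; sa restriction à $\mathcal{E}$ est non nulle par la formule de Picard-Lefschetz (un cycle évanescent a un auto-accouplement non nul, ou s'accouple non trivialement avec un cycle évanescent conjugué), donc non dégénérée par le lemme de Schur. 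Combinée au théorème des cycles invariants globaux de Deligne, qui identifie $H^{n-1}(Y_t)^{\pi_1(\bar U)}$ à l'image de $H^{n-1}(\tilde X)\to H^{n-1}(Y_t)$, cela donne la non-dégénérescence du cup-produit sur la cohomologie primitive médiane de $X$, et donc le théorème de Lefschetz difficile.
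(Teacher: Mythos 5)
Le texte ne d\'emontre pas ce th\'eor\`eme: il renvoie \`a \cite[th. 4.1.1]{weilII} en signalant seulement que le cas d'un corps fini est beaucoup plus difficile. Votre proposition est donc \`a comparer \`a la preuve de Deligne, dont elle reproduit fid\`element l'architecture: r\'eduction au degr\'e m\'edian par Lefschetz faible et dualit\'e de Poincar\'e, pinceau de Lefschetz rendu disponible sur $\F_q$ par un plongement de Veronese, formule de Picard--Lefschetz, irr\'eductibilit\'e de la partie \'evanescente, semi-simplicit\'e des faisceaux lisses purs (th\'eor\`eme \ref{twII} b)) et lemme de Schur. Sur ce plan, votre sch\'ema est correct et identifie tous les ingr\'edients.

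Il reste cependant un plan et non une preuve, et trois points demandent \`a \^etre corrig\'es ou compl\'et\'es. (1) L'\'etape que vous d\'esignez vous-m\^eme comme l'obstacle principal --- l'irr\'eductibilit\'e de la monodromie sur la partie \'evanescente --- n'est qu'\'evoqu\'ee; c'est le contenu de \cite[exp. XVIII]{SGA7}. Vous invoquez d'ailleurs plus que n\'ecessaire: pour Lefschetz difficile, il est inutile de savoir que le groupe de monodromie est le groupe symplectique ou orthogonal tout entier (\'enonc\'e \`a la Kazhdan--Margulis, utilis\'e dans \cite{weilI} pour d'autres fins); la seule irr\'eductibilit\'e de $\sE_{\bar\eta}/(\sE_{\bar\eta}\cap\sE_{\bar\eta}^\perp)$ suffit, et elle se d\'eduit en deux lignes de la conjugaison des cycles \'evanescents via Picard--Lefschetz (tout sous-module sur lequel la forme ne s'annule pas identiquement contre un cycle \'evanescent contient ce cycle, donc tous ses conjugu\'es, donc tout). (2) Le th\'eor\`eme des cycles invariants globaux que vous utilisez en conclusion est superflu et potentiellement circulaire: seule l'inclusion de l'image de $H^{n-1}(X)\to H^{n-1}(Y_t)$ dans les invariants, donc dans $\sE^\perp$, est n\'ecessaire, et elle est formelle; l'\'egalit\'e, elle, repose sur la d\'eg\'en\'erescence de la suite spectrale de Leray, traditionnellement d\'eduite de Lefschetz difficile. (3) Dans le cas symplectique ($n-1$ impair), l'affirmation qu'un cycle \'evanescent s'accouple non trivialement avec un conjugu\'e doit \^etre justifi\'ee: si tous les cycles \'evanescents \'etaient deux \`a deux orthogonaux, la monodromie g\'eom\'etrique serait ab\'elienne unipotente, et c'est alors la semi-simplicit\'e (une repr\'esentation unipotente semi-simple est triviale) jointe \`a la dualit\'e de Poincar\'e sur la fibre qui force $\sE=0$; ce cas d\'eg\'en\'er\'e doit \^etre trait\'e \`a part.
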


D'autre part, la dualit\'e de Poincar\'e fournit un accouplement parfait \eqref{eq:dP}. En combinant les deux, on obtient:

\begin{cor}\phantomsection\label{c5.5} Pour tout $i\le n$, le choix de $L$ fournit un accouplement parfait, Galois-\'equivariant et $(-1)^i$-sym\'etrique:
\[H^i_l(X)\times H^i_l(X)\to \Q_l(-i).\]
\end{cor}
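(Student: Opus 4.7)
L'id\'ee est purement formelle : on combine le th\'eor\`eme de Lefschetz difficile avec la dualit\'e de Poincar\'e, et la sym\'etrie se d\'eduit de la r\`egle de Koszul appliqu\'ee au cup-produit.

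D'abord, je d\'efinirais l'accouplement par la formule
\[
\langle x,y\rangle_L = \Tr_X\bigl(x\cdot [L]^{n-i}\cdot y\bigr)
\qquad (x,y\in H^i_l(X)).
\]
Comme $[L]\in H^2_l(X)(1)$, on a $[L]^{n-i}\in H^{2(n-i)}_l(X)(n-i)$, donc le produit $x\cdot[L]^{n-i}\cdot y$ vit dans $H^{2n}_l(X)(n-i)$ et l'image par $\Tr_X:H^{2n}_l(X)\to \Q_l(-n)$ appartient bien \`a $\Q_l(-i)$.

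Pour v\'erifier que l'accouplement est parfait, je factoriserais $\langle\cdot,\cdot\rangle_L$ en le compos\'e
\[
H^i_l(X)\times H^i_l(X)\xrightarrow{\mathrm{id}\times\, \cdot[L]^{n-i}} H^i_l(X)\times H^{2n-i}_l(X)(n-i)\longrightarrow \Q_l(-i),
\]
o\`u la seconde fl\`eche est l'accouplement de Poincar\'e \eqref{eq:dP} tordu. Le th\'eor\`eme de Lefschetz difficile qui vient d'\^etre \'enonc\'e affirme que $\cdot[L]^{n-i}:H^i_l(X)\iso H^{2n-i}_l(X)(n-i)$ est un isomorphisme, et l'axiome (vii) d'une cohomologie de Weil dit que l'accouplement de Poincar\'e est non d\'eg\'en\'er\'e ; le compos\'e est donc non d\'eg\'en\'er\'e, c'est-\`a-dire parfait (les espaces sont de dimension finie par (i)).

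Pour la $\Gal(\overline{\F_q}/\F_q)$-\'equivariance, je noterais que $Y\subset X$ est d\'efini sur $\F_q$ (quitte \`a prendre un plongement de Veronese, comme indiqu\'e au \S\ref{s5.4.2}), donc $[L]\in H^2_l(X)(1)$ est fix\'e par le Frobenius ; la naturalit\'e du cup-produit et l'\'equation \eqref{eq3.2} appliqu\'ee au morphisme structural donnent alors, pour l'action du Frobenius g\'eom\'etrique $\pi_X^*$,
\[
\pi_X^*\bigl(x\cdot [L]^{n-i}\cdot y\bigr) = \pi_X^* x\cdot [L]^{n-i}\cdot \pi_X^* y,
\]
d'o\`u l'\'equivariance compte tenu du twist de Tate sur $\Q_l(-i)$.

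Reste la $(-1)^i$-sym\'etrie. Par commutativit\'e gradu\'ee du cup-produit \eqref{eq:cupproduit} (r\`egle de Koszul issue de l'axiome (vi)), $[L]^{n-i}$ est de degr\'e pair $2(n-i)$, donc commute \`a toute classe, et
\[
x\cdot [L]^{n-i}\cdot y = [L]^{n-i}\cdot x\cdot y = (-1)^{i\cdot i}\,[L]^{n-i}\cdot y\cdot x = (-1)^i\, y\cdot [L]^{n-i}\cdot x,
\]
d'o\`u $\langle x,y\rangle_L=(-1)^i\langle y,x\rangle_L$ en appliquant $\Tr_X$. L'\'etape d\'elicate est donc enti\`erement contenue dans le th\'eor\`eme de Lefschetz difficile d\'ej\`a admis ; tout le reste est du formalisme.
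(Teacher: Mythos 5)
Votre démonstration est correcte et suit exactement la voie que le texte indique (et ne détaille pas) : on compose l'isomorphisme de Lefschetz difficile $\cdot[L]^{n-i}$ avec l'accouplement de Poincaré, la Galois-équivariance venant de ce que $[L]=\cl^1(Y)$ avec $Y$ défini sur $\F_q$, et la $(-1)^i$-symétrie de la commutativité graduée du cup-produit (le facteur $[L]^{n-i}$ étant de degré pair). Rien à redire.
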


\subsection{La fonction $L$ compl\'et\'ee d'une vari\'et\'e projective lisse sur un corps global}\label{s5.5} R\'ef\'erence: Serre \cite{serre-dpp}.\index{Fonction zêta!de Hasse-Weil}

\subsubsection{Le probl\`eme} Soit $K$ un corps global, et soit $X\in \V(K)$. Si $v\in \Sigma_K^f$ est une place de bonne r\'eduction pour $X$, on peut d\'efinir un \emph{facteur local en $v$} de la fonction $L$ (ou z\^eta) de $X$:
\[L_v(X,s) = \zeta(X(v),s)\]
o\`u $X(v)$ est la fibre sp\'eciale en $v$ d'un mod\`ele projectif lisse de $X$ sur $O_v$, puis une fonction $L$ ``de Hasse-Weil'' partielle
\[L_0(X,s) = \prod_{\begin{smallmatrix}v\in \Sigma_K^f\\v \text{ bonne}\end{smallmatrix}} L_v(X,s).\]

De mani\`ere essentiellement \'equivalente, le morphisme $X\to \Spec K$ s'\'etend en un morphisme projectif lisse $\sX\to U$, o\`u $U=\Spec O_S$ pour un anneau de $S$-entiers de $K$ convenable si $K$ est un corps de nombres, et $U$ est un ouvert du mod\`ele projectif lisse de $K$ si $K$ est un corps de fonctions; on a alors
\[L_0(X,s)=\zeta(\sX,s)\]
\`a un nombre fini de facteurs eul\'eriens pr\`es (d\'ependant du choix de $U$). 

Dans le cas de certaines courbes sur un corps de nombres, on peut d\'emontrer (\`a l'aide des caract\`eres de Hecke)\index{Caractère!de Hecke} que $L_0(X,s)$ admet un prolongement analytique \`a $\C$ et une \'equation fonctionnelle: \cf th\'eor\`eme \ref{twh} et \S \ref{5.1.4}. \index{Equation fonctionnelle@\'Equation fonctionnelle}

Dans \cite{serre-dpp}, Serre \'etudie la question suivante: peut-on associer \`a $X$ un ``facteur local'' en chaque place $v\in \Sigma_K$, et un ``terme exponentiel'', qui permettent de conjecturer une ``jolie'' \'equation fonctionnelle?

Il r\'epond \`a cette question de mani\`ere plus pr\'ecise, en attachant \`a chaque ``groupe de cohomologie de $X$'' des facteurs locaux et un termes exponentiel, qui lui permettent de formuler une conjecture comme ci-dessus. De plus, cette conjecture est vraie en caract\'eristique $p$ essentiellement par r\'eduction au th\'eor\`eme \ref{tgef}.

On fixe donc dans la suite une $K$-vari\'et\'e projective lisse $X$ de dimension $n$, et un entier $i\in [0,2n]$.

\subsubsection{Les facteurs locaux non archim\'ediens}

\begin{defn}\phantomsection\label{d5.2} Soit $v\in \Sigma_K^f$. On pose
\[L_v(K,H^i(X),s)= \det(1-\pi_vN(v)^{-s}\mid H^i_l(X)^{I_v})^{-1}\]
o\`u
\begin{itemize}
\item $l$ est un nombre premier ne divisant pas $N(v)$.
\item $H^i_l(X)=H^i_\et(\bar X,\Q_l)$ est la cohomologie\index{Cohomologie!de Weil} de Weil attach\'ee \`a $l$ (cohomologie $l$-adique\index{Cohomologie!$l$-adique} g\'eom\'etrique).
\item $I_v$ est le groupe d'inertie en $v$.
\item $\pi_v$ est (une classe de conjugaison de) l'\'el\'ement de Frobenius g\'eom\'etrique en $v$.
\end{itemize}
\end{defn}

Pr\'ecisons cette d\'efinition. Le groupe de Galois absolu $G_v$ de $K_v$, compl\'et\'e de $K$ en $v$, s'ins\`ere dans une suite exacte
\[1\to I_v\to G_v\to G(v)\to 1\]
o\`u $G(v)$ est le groupe de Galois absolu du corps r\'esiduel $\kappa(v)$. L'action de $G_v$ sur $H^i_l(X)^{I_v}$ se factorise par $G(v)$, et l'\'el\'ement de $G(v)$ intervenant dans la d\'efinition \ref{d5.2} est, comme toujours, l'inverse du Frobenius ``arithm\'etique''.

\begin{lemme} Supposons que $X$ ait bonne r\'eduction en $v$, et soit $X(v)$ la fibre sp\'eciale d'un mod\`ele projectif lisse $\sX_v$ de $X$ sur $O_v$. Alors
\[L_v(K,H^i(X),s)=P_i(X(v),N(v)^{-s})^{-1}\]
o\`u $P_i\in \Z[t]$ est le polyn\^ome intervenant dans la d\'ecomposition de la fonction $Z(X(v),t)$, \cf \eqref{eq3.4}. En particulier:
\begin{enumerate}
\item $L_v(K,H^i(X),s)$ ne d\'epend pas du choix de $l$;
\item $\prod_{i=0}^{2n} L_v(K,H^i(X),s)^{(-1)^i}=\zeta(X(v),s)$.
\end{enumerate}
\end{lemme}

\begin{proof} La premi\`ere affirmation r\'esulte du th\'eorème de changement de base propre et lisse, \cf \S \ref{3.6.1}: celui-ci contient le fait que (sous l'hypoth\`ese de bonne r\'eduction) $I_v$ op\`ere trivialement sur $H^i_l(X)$. L'int\'egralit\'e des $P_i$ et leur ind\'ependance de $l$ r\'esulte alors de l'hypoth\`ese de Riemann\index{Hypothèse de Riemann!pour les variétés projectives lisses} \cite{weilI}, \cf lemme \ref{l3.4}. La derni\`ere identit\'e est claire.
\end{proof}

\subsubsection{Le cas de mauvaise r\'eduction} Que se passe-t-il quand $X$ n'a pas bonne r\'eduction en $v$?  L'ind\'ependance de $l$ est alors une conjecture:

\begin{conj}[Serre \protect{\cite[conj. $C_5$ et $C_6$]{serre-dpp}}]\phantomsection\label{c5.4} Le polyn\^ome
\[\det(1-\pi_vt\mid H^i_l(X)^{I_v})\] 
est ind\'ependant du choix de $l$ et \`a coefficients entiers; ses racines inverses sont des nombres de Weil\index{Nombres!de Weil} de poids compris entre $0$ et $i$.
\end{conj}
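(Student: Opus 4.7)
Le plan consiste \`a s\'eparer les places de bonne et de mauvaise r\'eduction, puis \`a d\'eployer dans chaque cas la th\'eorie des poids de Deligne sur un mod\`ele convenable de $X$. En une place $v$ de bonne r\'eduction, le th\'eor\`eme de changement de base propre et lisse donne $H^i_l(X)^{I_v} = H^i_l(X)$ avec action triviale de l'inertie, et le polyn\^ome caract\'eristique de $\pi_v$ y co\"\i ncide avec celui calcul\'e sur $H^i_l(X(v))$; les trois assertions r\'esultent alors de Weil I appliqu\'e \`a la fibre sp\'eciale $X(v)$, les racines inverses \'etant m\^eme des $N(v)$-nombres de Weil de poids exactement $i$. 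On est ainsi ramen\'e aux places de mauvaise r\'eduction.

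Pla\c cons-nous d'abord dans le cas des corps de fonctions, $K = \F_q(C)$. \'Etendons $X$ en un morphisme projectif et lisse $f: \mathcal{X} \to U$ avec $U \subset C$ ouvert dense, et posons $\mathcal{F} = R^if_*\Q_l$; d'apr\`es la proposition \ref{p5.1}, $\mathcal{F}$ est lisse sur $U$ et pur de poids $i$. Pour $v \in C \setminus U$, la fibre de $j_*\mathcal{F}$ en (un point g\'eom\'etrique au-dessus de) $v$, o\`u $j: U \hookrightarrow C$ est l'inclusion, s'identifie canoniquement \`a $H^i_l(X)^{I_v}$ munie de son action de Frobenius, et la th\'eorie des poids de Deligne (th\'eor\`eme \ref{twII}) appliqu\'ee \`a $j_*\mathcal{F}$ entra\^\i ne que les valeurs propres de $\pi_v$ sont des $N(v)$-nombres de Weil de poids $\le i$. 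Ceci fournit la majoration des poids; la minoration $\ge 0$ \'equivaudrait \`a la sym\'etrie des valeurs propres autour de $i/2$.

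Le principal obstacle est pr\'ecis\'ement la \emph{conjecture de monodromie-poids}: l'op\'erateur nilpotent $N$ provenant du th\'eor\`eme de monodromie quasi-unipotente de Grothendieck (apr\`es une extension finie de $K_v$ sur laquelle l'inertie agit \`a travers son quotient pro-$l$) devrait induire des isomorphismes $N^k: \Gr^W_{i+k} \iso \Gr^W_{i-k}(-k)$ sur la filtration de monodromie, ce qui donnerait \`a la fois l'intervalle de poids $[0,i]$ et, suivant l'argument du lemme \ref{l3.4}, l'ind\'ependance de $l$ --- les valeurs propres \'etant alg\'ebriques, stables sous $\Gal(\bar\Q/\Q)$ en raison de leurs valeurs absolues archim\'ediennes prescrites, et leurs multiplicit\'es fix\'ees par la rationalit\'e d'une fonction $L$ globale. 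L'int\'egralit\'e en d\'ecoule, car tout nombre alg\'ebrique dont les conjugu\'es archim\'ediens ont pour valeur absolue commune $N(v)^{w/2}$ avec $w \in \Z$ est entier \`a un d\'enominateur contr\^ol\'e pr\`es, \'elimin\'e par un petit argument de d\'eploiement.

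Dans le cas des corps de nombres, la m\^eme strat\'egie s'applique, le th\'eor\`eme \ref{twII} \'etant remplac\'e, aux places $v$ divisant la caract\'eristique r\'esiduelle $p$, par le th\'eor\`eme de comparaison potentiellement semi-stable de Fontaine: on attache \`a $X_{K_v}$ une repr\'esentation de Weil-Deligne via $D_{\mathrm{pst}}$, dont la semi-simplification Frobenius est conjecturalement ind\'ependante de $l$ et dont l'op\'erateur de monodromie contr\^ole les invariants d'inertie. La structure de l'argument est identique et le c\oe ur de la difficult\'e reste le m\^eme: la conjecture de monodromie-poids, ouverte en g\'en\'eral mais connue pour les courbes, les vari\'et\'es ab\'eliennes, certaines surfaces (via Rapoport-Zink et les alt\'erations de de Jong), et les vari\'et\'es uniformis\'ees par des espaces de Drinfeld (Ito, Scholze).
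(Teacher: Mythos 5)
L'\'enonc\'e en question est une \emph{conjecture}: le texte ne le d\'emontre pas, il se contente d'en recenser les cas connus (th\'eor\`eme \ref{t5.3}: $i \in \{0,1,2n-1,2n\}$ via les r\'esultats de Grothendieck sur $H^1$ dans SGA 7 et la dualit\'e de Poincar\'e; $\car K > 0$ par Terasoma) et d'indiquer, d'apr\`es T. Saito, qu'elle r\'esulterait de la conjecture de monodromie-poids \emph{jointe \`a} l'alg\'ebricit\'e du $i$-\`eme projecteur de K\"unneth. Votre proposition suit pour l'essentiel ce m\^eme sch\'ema de r\'eduction et reconna\^\i t honn\^etement que la monodromie-poids reste l'obstacle central; elle ne constitue donc pas une d\'emonstration, ce qui est in\'evitable en l'\'etat des connaissances.

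Deux points m\'eritent cependant d'\^etre corrig\'es. D'abord, dans le cas des corps de fonctions, la conjecture de monodromie-poids est un \emph{th\'eor\`eme} de Deligne (\cite[1.8.4]{weilII}) et la conjecture \ref{c5.4} y est effectivement d\'emontr\'ee (Terasoma, th\'eor\`eme \ref{t5.3} (ii)); vous traitez ce cas comme encore conditionnel. Ensuite, votre argument d'ind\'ependance de $l$, calqu\'e sur celui du lemme \ref{l3.4}, ne s'applique pas tel quel aux places de mauvaise r\'eduction: ce lemme s\'epare les $P_i$ parce que leurs racines ont des poids \emph{distincts} (exactement $i$ pour $P_i$), alors qu'ici les racines inverses de $\det(1-\pi_v t \mid H^i_l(X)^{I_v})$ ont des poids r\'epartis dans tout l'intervalle $[0,i]$ et se m\'elangent donc, dans toute fonction $L$ globale, avec celles provenant des $H^j$ pour $j \ne i$. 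C'est pr\'ecis\'ement pour cette raison que T. Saito doit supposer en outre l'alg\'ebricit\'e du projecteur de K\"unneth, hypoth\`ese absente de votre argument. Enfin, la digression sur $D_{\mathrm{pst}}$ est hors sujet pour l'\'enonc\'e tel qu'il est formul\'e, qui ne concerne que les $l$ ne divisant pas $N(v)$.
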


\begin{thm}\phantomsection\label{t5.3} La conjecture \ref{c5.4} est vraie dans les cas suivants:
\begin{thlist}
\item Si $i\in \{0,1,2n-1,2n\}$.
\item Si $\car K>0$.
\end{thlist}
\end{thm}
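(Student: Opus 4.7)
Je propose de traiter s\'epar\'ement les deux assertions : dans (i) je distingue les valeurs de $i$, et dans (ii) je ``spreade'' $X$ pour appliquer Weil II.

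\textbf{Partie (i).} Les cas $i=0$ et $i=2n$ sont \'el\'ementaires. En effet, $H^0_l(X)$ est la repr\'esentation de permutation du groupe de Galois absolu de $K$ sur les composantes connexes g\'eom\'etriques de $X$; le polyn\^ome caract\'eristique de $\pi_v$ sur ses invariants par $I_v$ est un produit de facteurs $(1-t^{d_j})$ d\'etermin\'es par l'action galoisienne sur $\pi_0(X_{\bar K})$, manifestement \`a coefficients entiers, ind\'ependant de $l$, et \`a racines inverses des racines de l'unit\'e (poids $0$). Le cas $i=2n$ s'en d\'eduit par dualit\'e de Poincar\'e (torsion par $\Q_l(-n)$, poids $2n$). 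Le cas $i=2n-1$ se ram\`ene au cas $i=1$ toujours via la dualit\'e de Poincar\'e : l'accouplement parfait et galois-\'equivariant $H^1_l(X)\times H^{2n-1}_l(X)\to \Q_l(-n)$ identifie $H^{2n-1}_l(X)$ \`a $H^1_l(X)^\vee(-n)$, et la connaissance du polyn\^ome caract\'eristique de $\pi_v$ sur $H^1_l(X)^{I_v}$ d\'etermine par transposition-torsion celui sur $H^{2n-1}_l(X)^{I_v}$.

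Le c\oe ur de (i) est donc le cas $i=1$. Par l'axiome suppl\'ementaire d\'ecrit dans la remarque \ref{r3.4}, $H^1_l(X) \simeq H^1_l(\Alb(X))$ (quitte au besoin \`a \'etendre $K$ pour que $X$ ait un point rationnel, ce qui n'affecte ni l'ind\'ependance de $l$ ni les poids), ce qui r\'eduit la question au cas o\`u $X=A$ est une vari\'et\'e ab\'elienne sur $K$. On invoque alors la th\'eorie des mod\`eles de N\'eron (Grothendieck, SGA 7) : soit $\sA$ le mod\`ele de N\'eron de $A$ sur $O_v$ et $\sA_s^0$ la composante neutre de sa fibre sp\'eciale; on a l'isomorphisme canonique
\[V_l(A)^{I_v}\simeq V_l(\sA_s^0).\]
Par le th\'eor\`eme de r\'eduction semi-stable de Grothendieck, apr\`es extension galoisienne finie convenable de $K_v$, $\sA_s^0$ est une extension d'une vari\'et\'e ab\'elienne $B/\kappa(v)$ par un tore $T$, et on obtient par passage au module de Tate une suite exacte $0\to V_l(T)\to V_l(\sA_s^0)\to V_l(B)\to 0$. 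Dualement, $H^1_l(A)^{I_v}$ est muni d'une filtration dont les gradu\'es sont reli\'es \`a $H^1_l(B)$ (nombres de Weil de poids $1$, par le th\'eor\`eme \ref{t3.4}) et au $\Z$-r\'eseau des caract\`eres de $T$ (nombres de Weil de poids $0$, car l'action de Frobenius sur ce r\'eseau s'effectue par des permutations, \`a racines caract\'eristiques des racines de l'unit\'e). Ces deux contributions \'etant purement g\'eom\'etriques, leur contribution au polyn\^ome cherch\'e est \`a coefficients entiers et ind\'ependante de $l$.

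\textbf{Partie (ii).} \'Ecrivons $K=\F_q(C)$ avec $C$ projective lisse, et \'etendons $X\to \Spec K$ en un morphisme projectif lisse $f:\sX\to U$ au-dessus d'un ouvert dense $U\subset C$; soit $v\in C\setminus U$ et notons $j:U\hookrightarrow C$ l'inclusion. Par la proposition \ref{p5.1}, le faisceau $\mathcal{F}_i:=R^if_*\Q_l$ est lisse sur $U$ et pur de poids $i$. La fibre de $j_*\mathcal{F}_i$ en un point g\'eom\'etrique $\bar v$ au-dessus de $v$ s'identifie canoniquement, avec son action de Frobenius, \`a $H^i_l(X)^{I_v}$. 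Le th\'eor\`eme \ref{t5.4} donne alors imm\'ediatement que les racines inverses du polyn\^ome caract\'eristique de $\pi_v$ sont des $N(v)$-nombres de Weil de poids $\le i$; la borne $\ge 0$, ainsi que l'int\'egralit\'e et l'ind\'ependance de $l$, s'obtiennent par d\'evissage : une alt\'eration propre et lisse (th\'eor\`eme de de Jong) compactifiant $\sX$ au-dessus de $\F_q$ permet de r\'ealiser le polyn\^ome cherch\'e comme facteur d'un polyn\^ome caract\'eristique de Frobenius sur un gradu\'e de la filtration par les poids de la cohomologie $l$-adique d'une vari\'et\'e projective lisse sur $\F_q$, cadre dans lequel le th\'eor\`eme \ref{t3.6} compl\'et\'e par le lemme \ref{l3.4} garantit simultan\'ement l'int\'egralit\'e et l'ind\'ependance de $l$.

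\textbf{Obstacle principal.} La difficult\'e centrale est le cas $i=1$ de (i) : elle repose sur deux r\'esultats profonds de Grothendieck (construction fonctorielle des mod\`eles de N\'eron et surtout th\'eor\`eme de r\'eduction semi-stable des vari\'et\'es ab\'eliennes), ainsi que sur un calcul soigneux de la filtration par le poids sur $H^1_l(A)^{I_v}$ et son ad\'equation aux donn\'ees g\'eom\'etriques $B$ et $T$. Pour (ii), la partie qualitative (majoration des poids) est imm\'ediate \`a partir de Weil II, mais l'int\'egralit\'e et l'ind\'ependance de $l$ exigent le d\'etour par une alt\'eration compactifiante, argument technique qui sort strictement du formalisme d\'evelopp\'e dans les sections pr\'ec\'edentes du texte.
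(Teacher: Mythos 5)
Le texte se contente de renvoyer \`a Grothendieck (\cite[exp. IX, th. 4.3 et cor. 4.4]{SGA7}) pour le cas $i=1$ de (i) et \`a Terasoma \cite{terasoma} pour (ii); vous proposez au contraire de d\'erouler les arguments sous-jacents, ce qui est l\'egitime mais vous expose l\`a o\`u la simple citation ne le ferait pas. Pour (i), votre sch\'ema (mod\`ele de N\'eron, $V_l(A)^{I_v}\simeq V_l(\sA_s^0)$, r\'eduction semi-stable, gradu\'es de poids $0$ et $1$ donn\'es par le r\'eseau des caract\`eres du tore et par la vari\'et\'e ab\'elienne $B$) est bien celui de Grothendieck et est correct dans ses grandes lignes. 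Un point \`a corriger cependant: la r\'eduction \`a une vari\'et\'e ab\'elienne ne doit pas passer par une extension de $K$ destin\'ee \`a fournir un point rationnel. \'Etendre $K$ remplace $\pi_v$ par une puissance et r\'etr\'ecit $I_v$, donc agrandit $H^1_l(X)^{I_v}$: l'affirmation que cela ``n'affecte ni l'ind\'ependance de $l$ ni les poids'' n'est pas justifi\'ee pour l'ind\'ependance de $l$ (conna\^\i tre $\det(1-\pi_v^f t\mid H^{I_{v'}})$ ne d\'etermine pas $\det(1-\pi_v t\mid H^{I_v})$). Le d\'etour est inutile: $H^1_l(X)\simeq V_l(\Pic^0_{X/K})^\vee$ canoniquement et sans point rationnel (c'est ce que fait le texte via le th\'eor\`eme \ref{t3.5}), ou bien on utilise le torseur d'Albanese.

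Pour (ii), il y a un vrai trou. D'abord, le th\'eor\`eme \ref{t5.4} porte sur les valeurs propres de Frobenius sur la cohomologie \emph{globale} $H^i(\bar C, j_*F)$; l'\'enonc\'e dont vous avez besoin est \emph{local}: les poids du Frobenius local sur la fibre $(j_*\mathcal{F}_i)_{\bar v}=H^i_l(X)^{I_v}$ sont $\le i$. C'est un r\'esultat de Deligne (\cite[1.8.1 et 1.8.4]{weilII}, conjecture de monodromie-poids sur les corps de fonctions), qui ne ``d\'ecoule pas imm\'ediatement'' du th\'eor\`eme \ref{t5.4}. Ensuite et surtout, l'\'etape d\'ecisive --- int\'egralit\'e et ind\'ependance de $l$ --- est escamot\'ee: votre ``d\'evissage'' par alt\'eration pr\'etend r\'ealiser le polyn\^ome comme facteur d'un polyn\^ome caract\'eristique ``sur un gradu\'e de la filtration par les poids de la cohomologie d'une vari\'et\'e projective lisse sur $\F_q$'', mais cette cohomologie est \emph{pure} et sa filtration par les poids est triviale; ce qu'il faut r\'eellement contr\^oler, c'est que la filtration de monodromie-poids sur $H^i_l(X)^{I_v}$ est ind\'ependante de $l$ et que ses gradu\'es s'identifient \`a des objets g\'eom\'etriques communs \`a tous les $l$ --- c'est pr\'ecis\'ement le contenu du th\'eor\`eme de Terasoma, qui repose sur \cite[1.8.4]{weilII} et n'est pas une cons\'equence formelle du th\'eor\`eme \ref{t3.6} et du lemme \ref{l3.4}. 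En l'\'etat, votre partie (ii) red\'emontre la majoration des poids mais laisse ouverte la partie difficile de la conjecture \ref{c5.4}.
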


\begin{proof} (i) est  trivial pour $i=0$; c'est un th\'eor\`eme de Grothendieck pour $i=1$: \cite[exp. IX, th. 4.3 et cor. 4.4]{SGA7}.  Les autres cas s'en d\'eduisent par dualit\'e de Poincar\'e.

(Pour appliquer les r\'esultats de Grothendieck, on observe que $H^1_l(X) = T_l(\Pic^0(X))$ o\`u $\Pic^0(X)$ est la vari\'et\'e de Picard de $X$, \cf th\'eor\`eme \ref{t3.5}.)

Le cas (ii) est d\^u \`a  Tomohide Terasoma \cite{terasoma}. 
\end{proof}

\subsubsection{La conjecture de monodromie-poids}\index{Conjecture!de monodromie-poids}\label{s5.5.4}
La conjecture \ref{c5.4} est li\'ee \`a cette conjecture, qui peut se résumer par le slogan: 

\begin{quote}
\it Sur $H^i_l(X)$, la filtration par le poids co\"\i ncide avec la filtration par la monodromie.
\end{quote}

Plus précisément, on d\'efinit sur $H^i_l(X)$ une certaine filtration $M_*H^i_l(X)$ $\Gal(\bar K/K)$-invariante: la \emph{filtration par la monodromie} \cite[1.7]{weilII}. Elle est sujette au  \emph{th\'eor\`eme de monodromie $l$-adique de Grothendieck} (théorème \ref{t5.7} ci-dessous):\index{Théorème!de monodromie $l$-adique}

\begin{defn} Soient $\Gamma$ un groupe profini, $l$ un nombre premier, et $\rho:\Gamma\to GL_n(\Q_l)$ une repr\'esentation continue. On dit que $\rho$ est \emph{unipotente} si, pour tout $g\in \Gamma$, l'endomorphisme $\rho(g)-1$ est nilpotent. On dit que $\rho$ est \emph{quasi-unipotente} s'il existe un sous-groupe ouvert $U\subset \Gamma$ tel que  la restriction de $\rho$ \`a $U$ soit unipotente.
\end{defn}

\begin{thm}[\protect{\cite[exp. I]{SGA7}}]\phantomsection\label{t5.7} Soit $K$ un corps complet pour une valuation discr\`ete, \`a corps r\'esiduel fini $k$ de caract\'eristique $p$, et soit $\rho:G\to GL_n(\Q_l)$ une repr\'esentation continue, o\`u $G=\Gal(K_s/K)$ et soit $l\ne p$. Alors la restriction de $\rho$ au groupe d'inertie $I$ est quasi-unipotente.
\end{thm}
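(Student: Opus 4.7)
Je suivrai l'argument classique de Grothendieck, qui exploite la structure tr\`es contrainte de l'inertie $I$: extension du quotient mod\'er\'e $I^t \simeq \prod_{l'\ne p}\Z_{l'}(1)$ par le pro-$p$-groupe d'inertie sauvage $P$, avec l'action de tout rel\`evement $\phi\in G$ du Frobenius g\'eom\'etrique donn\'ee par $\phi t \phi^{-1} = t^q$ dans $I^t$ (\emph{caract\`ere cyclotomique}, \cf exercice \ref{exo5.1} (a)), o\`u $q=|k|$.

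Je proc\'ederai en trois \'etapes. D'abord, je montrerai que $\rho(P)$ est fini. L'image $\rho(G)$ \'etant compacte, elle est conjugu\'ee \`a un sous-groupe de $GL_n(\Z_l)$, qui poss\`ede un sous-groupe ouvert $U$ pro-$l$ d'apr\`es l'exercice \ref{exo5.2} (f). Comme $p\ne l$, tout homomorphisme continu du pro-$p$-groupe $P$ vers le pro-$l$-groupe $U$ est trivial, donc $\rho^{-1}(U)\cap P$ est un sous-groupe ouvert de $P$ contenu dans $\Ker \rho$; quitte \`a remplacer $K$ par une extension finie, on peut supposer $\rho(P)=1$. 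Par le m\^eme argument appliqu\'e \`a chaque facteur $\Z_{l'}(1)$ pour $l'\notin\{l,p\}$, on peut de plus supposer que $\rho|_I$ se factorise \`a travers le facteur $\Z_l(1)$ de $I^t$; soit $t$ un g\'en\'erateur topologique de ce facteur.

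Reste alors un joli argument de torsion par le Frobenius. La relation $\rho(\phi)\rho(t)\rho(\phi)^{-1}=\rho(t)^q$ entra\^\i ne que $\rho(t)$ et $\rho(t)^q$ ont les m\^emes valeurs propres; pour toute valeur propre $\alpha$ de $\rho(t)$, l'ensemble $\{\alpha^{q^k}\mid k\ge 0\}$ est donc contenu dans l'ensemble (fini) des valeurs propres, ce qui force l'existence de $k<k'$ avec $\alpha^{q^k}=\alpha^{q^{k'}}$. Donc $\alpha$ est une racine de l'unit\'e, et il existe $N\ge 1$ tel que $\rho(t)^N$ soit unipotent. Comme $t^N$ engendre topologiquement un sous-groupe ouvert de $\Z_l(1)$, cela fournit l'unipotence de $\rho$ sur un sous-groupe ouvert de $I$.

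L'obstacle principal se situe dans la premi\`ere \'etape: contr\^oler l'image d'un pro-$p$-groupe dans un groupe $l$-adique repose sur la structure pro-$l$ du sous-groupe $1+lM_n(\Z_l)$ de $GL_n(\Z_l)$ (ou $1+4M_n(\Z_2)$ si $l=2$), fournie par l'exercice \ref{exo5.2} (f); c'est aussi le seul endroit o\`u intervient l'hypoth\`ese $l\ne p$. Une fois ce contr\^ole acquis, le reste est purement formel.
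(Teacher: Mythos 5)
Votre démonstration est correcte et suit pour l'essentiel la même route que celle du texte (exercice \ref{exo5.3}, d'après Serre--Tate): élimination de l'inertie sauvage et des facteurs $\Z_{l'}(1)$, $l'\ne l$, par l'argument pro-$p$ contre pro-$l$, puis torsion par le Frobenius $\phi t\phi^{-1}=t^q$ pour montrer que les valeurs propres de $\rho(t)$ sont des racines de l'unité. La seule (petite) variante est la conclusion: le texte passe d'abord à une extension finie où $\rho\equiv 1\pmod{l^2}$ pour forcer ces racines de l'unité à valoir $1$ (d'où l'unipotence sur l'inertie de cette extension, via l'exercice \ref{exo5.2} (e)), tandis que vous prenez simplement une puissance $N$-ième du générateur modéré, ce qui donne tout aussi bien la quasi-unipotence.
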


Voir l'exercice \ref{exo5.3} pour une démonstration, et \cite[\emph{loc. cit.}]{SGA7} pour d'autres (avec des hypothèses plus faibles sur $k$).

Ce théorème implique que l'action de $I_v$ sur $\Gr_j^M H^i_l(X)$ se factorise par un quotient fini; si $F$ est un relev\'e du Frobenius g\'eom\'etrique dans $\Gal(\bar K_v/K_v)$, son action sur $\Gr_j^M H^i_l(X)$ est donc bien d\'efinie \`a une racine de l'unit\'e pr\`es. Ceci permet de formuler:

\begin{conj}\phantomsection\label{c5.6} Pour tout $j\in \Z$, les valeurs propres de Frobenius op\'erant sur $\Gr_j^M H^i_l(X)$ sont des nombres de Weil de poids $i+j$.
\end{conj}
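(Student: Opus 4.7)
The plan is to reduce Conjecture \ref{c5.6} to a statement about a spectral sequence on the special fiber of a suitable model, then use the Weil conjectures (already known, via Deligne \cite{weilI,weilII}) to analyze the weights of its $E_1$-page.

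The first step is to reduce to the case of strictly semistable reduction. By Theorem \ref{t5.7}, after replacing $K_v$ by a finite extension we may assume that the inertia acts unipotently on $H^i_l(X)$, so the monodromy operator $N=\log(\pi)$ (where $\pi$ is a topological generator of the tame inertia, normalized suitably) is well defined as a nilpotent endomorphism, and the filtration $M_\bullet$ is the unique increasing filtration such that $N(M_j)\subset M_{j-2}$ and $N^j$ induces an isomorphism $\Gr^M_j\iso \Gr^M_{-j}$ (shifted by $i$ so that the relevant graded pieces sit between weights $0$ and $2i$). I would then assume, invoking semistable reduction after further base change, that $X$ extends to a regular proper model $\sX\to \Spec \sO_v$ whose special fiber $Y=\bigcup_k Y_k$ is a reduced simple normal crossings divisor.

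The second step is to bring in the \emph{Rapoport--Zink weight spectral sequence}. For each tuple of indices $I=(k_0,\dots,k_p)$ set $Y_I=Y_{k_0}\cap\dots\cap Y_{k_p}$, each of which is smooth projective over the residue field $\kappa(v)$. Rapoport--Zink construct a spectral sequence
\[
E_1^{-r,i+r}=\bigoplus_{k\ge\max(0,-r)} H^{i-r-2k}\bigl(\overline{Y^{(r+2k+1)}},\Q_l(-r-k)\bigr)\Longrightarrow H^i_l(X),
\]
where $Y^{(s)}=\coprod_{|I|=s}Y_I$, whose abutment filtration equals the monodromy filtration \emph{provided the spectral sequence degenerates at $E_2$}. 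By Deligne's theorem (Theorem \ref{t:courbe} generalized to arbitrary dimension in \cite{weilI}), every direct summand in $E_1^{-r,i+r}$ is pure of Frobenius weight $i+r$, because $H^{i-r-2k}(\overline{Y^{(r+2k+1)}})$ is pure of weight $i-r-2k$ and the Tate twist $(-r-k)$ shifts the weight by $2(r+k)$. Hence $E_2$ is also pure of weight $i+r$ on the column indexed by $-r$, and if the spectral sequence degenerates at $E_2$, then $\Gr^M_j H^i_l(X)$ is pure of weight $i+j$ as desired.

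The decisive obstacle, therefore, is the $E_2$-degeneration. In equal characteristic $p>0$ this degeneration follows from Deligne's purity theorem in \cite{weilII}: the differentials on $E_r$ for $r\ge 2$ would necessarily mix Frobenius weights, which is impossible between pure pieces of the same weight — this is exactly how Deligne proves the conjecture in that case, and also the proof of Theorem \ref{t5.3}(ii) by Terasoma for the (weaker) independence-of-$\ell$ statement. In mixed characteristic, however, there is no global geometric input producing purity of the abutment a priori, and the weight-by-weight argument breaks down. My proposal would be to attack this obstacle by (i)~establishing semistable reduction (known in many cases, conjectured in general), and then (ii)~combining the $E_1$-purity above with the comparison between the $\ell$-adic monodromy filtration and a $p$-adic monodromy filtration coming from the $(\varphi,N)$-module $D_{\mathrm{st}}(H^i_l(X))$ of $p$-adic Hodge theory; the hope, realized in partial cases by Rapoport--Zink (surfaces), Ito (varieties uniformized by Drinfeld spaces), and Scholze (via perfectoid methods for complete intersections in toric varieties), is to bootstrap purity from a situation where it can be verified directly. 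I expect this $E_2$-degeneration step to remain the essential unresolved difficulty, exactly as in the existing literature on the monodromy-weight conjecture.
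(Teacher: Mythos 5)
The statement you are addressing is presented in the paper as a \emph{conjecture} (the monodromy--weight conjecture, \S\ref{s5.5.4}); the paper offers no proof, only a survey of the known cases (Deligne in equal characteristic via \cite[1.8.4]{weilII}, Rapoport--Zink for surfaces, Scholze in certain new cases, M.~Saito conditionally on the standard conjectures). Your proposal is honest about not closing the argument, and the framework you set up --- semistable reduction, the Rapoport--Zink weight spectral sequence, purity of the $E_1$-page via the Weil conjectures --- is indeed the standard one and matches the references the paper cites.

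You do, however, misidentify where the argument actually breaks. Over a local field with \emph{finite} residue field, the $E_2$-degeneration of the Rapoport--Zink spectral sequence is \emph{not} the obstacle: it follows from exactly the Frobenius-weight argument you invoke. The term $E_1^{-r,i+r}$ is pure of weight $i+r$, while the target of $d_s$ for $s\ge 2$, namely $E_s^{-r+s,\,i+r-s+1}$, contributes to $H^{i+1}$ in column $-(r-s)$ and is therefore pure of weight $(i+1)+(r-s)=i+r+1-s\ne i+r$; a Frobenius-equivariant map between pure modules of distinct weights over a finite field is zero. Hence the spectral sequence degenerates at $E_2$ in mixed characteristic just as in equal characteristic, and the abutment filtration automatically has pure graded pieces of the predicted weights. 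The genuinely open point is the \emph{identification of this abutment filtration with the monodromy filtration} $M_\bullet$ --- equivalently, the bijectivity of $N^r:E_2^{-r,i+r}\to E_2^{r,i-r}(-r)$; without it, purity of $\Gr^M_j H^i_l(X)$ does not follow. Note also that in equal characteristic Deligne does not argue through this spectral sequence at all: he proves directly that the graded pieces of the local monodromy filtration of a pure lisse sheaf on a curve are pure of the expected weights, by a global argument on the curve (the one underlying Theorem \ref{t5.4}), and the coincidence of the two filtrations is then formal. It is precisely this global input that is unavailable over a $p$-adic field, and this --- not the degeneration --- is what Rapoport--Zink, Ito and Scholze have to supply in the cases they treat.
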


Si $K=\F_q(C)$ pour une courbe $C$, cette conjecture est d\'emontr\'ee par Deligne dans  \cite[1.8.4]{weilII}; la preuve de Terasoma du théorème \ref{t5.3} (ii) repose sur ce r\'esultat. 

D'apr\`es Takeshi Saito \cite[cor. 0.6]{tsaito}, la conjecture \ref{c5.4} r\'esulte de la conjecture \ref{c5.6} et de l'alg\'ebricit\'e du $i$-\`eme projecteur de K\"unneth de $X$. Cela lui permet de d\'emontrer la conjecture \ref{c5.4} pour les \emph{surfaces sur un corps de nombres}: dans ce cas, on sait en effet que tous les projecteurs de K\"unneth sont alg\'ebriques, et la démonstration de la conjecture de monodromie-poids  est due \`a Rapoport-Zink \cite[Satz 2.13]{rz}.

De plus, Morihiko Saito a montr\'e que (sur un corps de nombres) la conjecture \ref{c5.6} r\'esulte des conjectures standard de Grothendieck \cite{msaito}. Enfin, Peter Scholze l'a d\'emontrée dans un certain nombre de cas nouveaux \cite{scholze}.

\begin{exo}\phantomsection\label{exo5.3}  Pour faire cet exercice, il faut conna\^\i tre un peu de th\'eorie de la ramification des corps locaux, qu'on peut trouver dans \cite[ch. IV, \S\S 1 et 2]{serre-cl}. 
L'exercice suit la démonstration de  \cite[prop. dans l'appendice]{serre-tate}. On pourra utiliser les exercices \ref{exo5.1} et \ref{exo5.2}.

\begin{enumerate}[label=(\alph*)]
\item Soit $\rho$ comme dans le théorème \ref{t5.7}, et soit  $V$ son espace de repr\'esentation. Montrer que $G$ laisse invariant un r\'eseau de $V$.  (Soit $L_0$ un r\'eseau quelconque de $V$: montrer que $\rho(G)\cdot L_0$ est compact, donc est un r\'eseau.)
\item En d\'eduire qu'on peut supposer que $\rho$ est \`a valeurs dans $GL_n(\Z_l)$.
\item Montrer qu'il suffit de d\'emontrer le th\'eor\`eme en rempla\c cant $G$ par un sous-groupe ouvert, c'est-\`a-dire $K$ par une extension finie.
\item En d\'eduire qu'on peut supposer que, pour tout $\sigma\in G$, on a $\rho(\sigma)\equiv 1_n\pmod{l^2M_n(\Z_l)}$ o\`u $1_n$ est la matrice unit\'e (consid\'erer la composition $G\by{\rho}GL_n(\Z_l)\to GL_n(\Z/l^2)$). On va alors montrer que $\rho_{|I}$ est unipotente.
\item Montrer que $\rho(P)=1$, où $P$ est le groupe d'inertie sauvage de $K$. (Utiliser l'exercice \ref{exo5.2} (f).)
\item Montrer que $\rho$ se factorise m\^eme \`a travers le quotient $I_m^l=\Z_l(1)$ de $I_m$.
\item Soit $s\in I_m^l$, et soit $\lambda\in L$ une valeur propre de $\rho(s)$ dans une extension finie $L$ de $\Q_l$. Montrer que $\lambda\in 1+l^2O_L$.
\item Soit $q$ le cardinal du corps r\'esiduel de $K$. Montrer que $\lambda^q$ est aussi une valeur propre de $\rho(s)$. (Utiliser l'exercice \ref{exo5.1} (a) et le fait que l'homomorphisme de \cite[ch. IV, \S 2, cor. 1]{serre-cl} est Galois-équivariant.)
\item En d\'eduire que $\lambda^{q^i}$ est valeur propre de $\rho(s)$ pour tout $i\ge 0$, puis que $\lambda$ est une racine de l'unit\'e.
\item En d\'eduire que $\lambda=1$. (Utiliser (g) et l'exercice \ref{exo5.2} (d).)
\item Conclure que $\rho(s)$ est bien unipotente.
\end{enumerate}
\end{exo}

\subsubsection{Les facteurs locaux archim\'ediens} Soit maintenant $v\in \Sigma_K^\infty$. Pour d\'efinir un facteur local $\Gamma_v(K,H^i(X),v)$, on utilise la \emph{d\'ecomposition de Hodge}
\[H^i(X_v(\C),\C)=\bigoplus_{p+q=i} H^{p,q}\]
o\`u $X_v=X\otimes_K K_v$.

Commen\c cons par le cas complexe:

\begin{defn} Supposons $K_v=\C$. Alors on pose
\[\Gamma_v(K,H^i(X),s)= \prod_{p,q} \Gamma_\C(s-\inf(p,q))^{h(p,q)}, \quad h(p,q)=\dim H^{p,q}.\]
\end{defn}

Le cas r\'eel est plus int\'eressant: la conjugaison complexe $\pi_v\in \Gal(\bar K_v/K_v)\allowbreak\simeq \Z/2$ op\`ere alors sur $X_v(\C)$ (on identifie $\bar K_v$ \`a $\C$), donc sur $H^i(X_v(\C),\C)$ en transformant $H^{p,q}$ en $H^{q,p}$.

\begin{defn} Supposons $K_v=\R$.\\
a) Si $i$ est impair, on pose
\[\Gamma_v(K,H^i(X),s)= \prod_{p<q} \Gamma_\C(s-p))^{h(p,q)}.\]
b) Si $i=2j$ est pair, on pose
\[\Gamma_v(K,H^i(X),s)= \prod_{p<q} \Gamma_\C(s-p))^{h(p,q)}\Gamma_\R(s-j)^{h(j,+)}\Gamma_\R(s-j+1)^{h(j,-)}\]
o\`u $h(j,\epsilon)$ est la dimension de l'espace propre de valeur propre $\epsilon$ pour l'action de $\pi_v$ sur $H^{j,j}$.
\end{defn}

\subsubsection{Le facteur exponentiel}\label{5.5.5} Sa d\'efinition fait intervenir le \emph{conducteur de $H^i(X)$}: c'est un diviseur effectif
\[\ff = \sum_{v\in \Sigma_K^f} f(v)v\]
o\`u $f(v)=\epsilon_v+ \delta_v$, ces entiers \'etant d\'efinis comme suit:
\[\epsilon_v = \dim H^i_l(X)- \dim H^i_l(X)^{I_v}.\]

Pour d\'efinir $\delta_v$, on remplace la repr\'esentation $H^i_l(X)$ de $G_v$ par sa \emph{semi-simplifi\'ee} $H^i_l(X)^{ss}$: le th\'eor\`eme de monodromie $l$-adique\index{Théorème!de monodromie $l$-adique} implique que l'action de $I_v$ sur $H^i_l(X)^{ss}$ se factorise par un quotient fini. On peut alors donner un sens \`a son \emph{conducteur de Swan} (une variante du conducteur d'Artin, \cite[p. 130]{raynaud}) ou \cite[\S 2.1]{serre-dpp}): $\delta_v$ est l'exposant de celui-ci.

On d\'efinit aussi le discriminant de $K$ par
\[D=\begin{cases}
|d_{K/\Q}| &\text{si $\car K=0$}\\
q^{2g-2} &\text{si $K=\F_q(C)$}
\end{cases}
\]
o\`u, dans le deuxi\`eme cas, $C$ est le mod\`ele projectif lisse de $K$, $g$ est son genre et $\F_q$ est son corps des constantes.

\begin{defn} $A=N(\ff)D^{B_i}$ o\`u $B_i=\dim_{\Q_l} H^i_l(X)$.
\end{defn}

\subsubsection{La fonction $L$ compl\'et\'ee}

\begin{defn}[Serre \protect{\cite{serre-dpp}}, \`a la notation pr\`es] On pose
\[L(K,H^i(X),s) = \prod_{v\in \Sigma_K^f} L_v(K,H^i(X),s)\]
\[\Lambda_\infty(K,H^i(X),s)=A^{s/2} \prod_{v\in \Sigma_K^\infty} \Gamma_v(K,H^i(X),s)\]
\[\Lambda(K,H^i(X),s)=\Lambda_\infty(K,H^i(X),s)L(K,H^i(X),s).\]
\end{defn}

\begin{rque} Le terme exponentiel $A^{s/2}$ se d\'ecompose en produit de facteurs locaux: T. Saito m'a sugg\'er\'e qu'il pourrait \^etre int\'eressant de les incorporer aux facteurs locaux $L_v(K,H^i(X),s)$ correspondants.
\end{rque}

\subsubsection{L'\'equation fonctionnelle}\index{Equation fonctionnelle@\'Equation fonctionnelle}

\begin{conj}[Serre, ibid.]\phantomsection\label{c5.3} La fonction $\Lambda(K,H^i(X),s)$ admet un prolongement m\'eromorphe \`a tout le plan complexe, et une \'equation fonctionnelle
\[\Lambda(K,H^i(X),s) = w\Lambda(K,H^i(X),i+1-s),\quad w=\pm1.\]
\end{conj}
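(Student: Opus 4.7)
La conjecture étant ouverte dans le cas des corps de nombres, je me limite à esquisser l'approche dans le cas où $K = \F_q(C)$ pour une courbe projective lisse géométriquement connexe $C/\F_q$, cas où elle est démontrée (cf. théorème \ref{t5.5} du texte). L'idée est de ramener l'énoncé à l'équation fonctionnelle grothendieckienne du théorème \ref{tgef2} appliquée à un faisceau $l$-adique construit sur $C$.

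D'abord, je choisis un modèle: il existe un ouvert non vide $j:U\hookrightarrow C$ et un $U$-schéma projectif lisse $p:\sX\to U$ de fibre générique $X$ (proposition \ref{p5.3}). Je pose $F=R^ip_*\Q_l$ sur $U$. La proposition \ref{p5.1} montre que $F$ est lisse et pur de poids $i$; le théorème de Lefschetz difficile et la dualité de Poincaré (corollaire \ref{c5.5}) fournissent un isomorphisme $F^*\simeq F(i)$, c'est-à-dire que $F$ est faiblement polarisable de poids $\rho=i$ au sens de \eqref{eqpol}.  Le théorème de changement de base propre et lisse assure que, pour $v\in U_{(0)}$, la fibre $F_v$ s'identifie, comme représentation galoisienne, à $H^i_l(X_v)$ avec action triviale de $I_v$.

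Ensuite, j'observe que $j_*F$ ``gère'' correctement les mauvaises places: pour tout $v\in C_{(0)}$, on a $(j_*F)_{\bar v}=F_{\bar\eta}^{I_v}=H^i_l(X)^{I_v}$ (où $\bar v$ est un point géométrique au-dessus de $v$), ce qui est précisément la représentation entrant dans la définition \ref{d5.2}. Je déduis de la définition de $L(C,j_*F)$ et du théorème \ref{t5.1} l'identification (via $t=q^{-s}$):
\[L(C,j_*F,q^{-s})=\prod_{v\in C_{(0)}}L_v(K,H^i(X),s)=L(K,H^i(X),s).\]
Comme $\Sigma_K^\infty=\emptyset$ en caractéristique positive, $\Lambda_\infty=A^{s/2}$ où $A=N(\ff)D^{B_i}$ (avec $D=q^{2g-2}$, $g$ le genre de $C$).

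Enfin, j'applique le théorème \ref{tgef2} à $E=j_*F$ avec $n=1$ et $\rho=i$: on obtient
\[L(C,j_*F,1/q^{i+1}t)=(-t)^{\chi(E)}\delta(E)\,L(C,j_*F,t),\qquad \delta(E)^2=q^{(i+1)\chi(E)}.\]
En substituant $t=q^{-s}$, ceci se transforme en une équation reliant $L(K,H^i(X),s)$ et $L(K,H^i(X),i+1-s)$. Pour obtenir la forme exacte de la conjecture \ref{c5.3}, il reste à vérifier que le terme exponentiel $A^{s/2}$ absorbe les puissances $q^{-s\chi(E)/2}$ et $\delta(E)$: c'est essentiellement la compatibilité entre $\chi(E)$ et le discriminant de $j_*F$ (via Riemann--Roch ou Grothendieck--Ogg--Shafarevich), et entre $A$ et $N(\ff)D^{B_i}$ à travers l'identification du conducteur de Serre (\S \ref{5.5.5}) avec le conducteur d'Artin--Swan de $H^i_l(X)$ en chaque $v\notin U$. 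Le signe $w=\pm 1$ apparaît alors comme $\delta(E)/q^{(i+1)\chi(E)/2}\in\{\pm1\}$.

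Le point le plus délicat, et là où la preuve est réellement non triviale, est la vérification de cette compatibilité entre les invariants ``motiviques'' de Serre (conducteur, terme exponentiel) et les invariants ``$l$-adiques'' du théorème \ref{tgef2}. En particulier, montrer que $\delta(E)^2=q^{(i+1)\chi(E)}$ coïncide avec le facteur prédit par \ref{c5.3} repose sur la conjecture de monodromie--poids (conjecture \ref{c5.6}), démontrée par Deligne en caractéristique $p>0$ (\S \ref{s5.5.4}). Dans le cas des corps de nombres, c'est aussi le principal obstacle conceptuel: la conjecture \ref{c5.4} et la conjecture de monodromie--poids sont seulement partiellement connues, et il n'existe pas d'analogue du théorème \ref{tgef2} au-dessus de $\Spec O_K$, puisque cette base n'est pas propre --- le ``terme à l'infini'' d'Arakelov (qui devrait jouer le rôle de la compactification $C\setminus U$) n'est pas une simple somme de complexes de faisceaux $l$-adiques.
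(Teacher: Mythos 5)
Votre esquisse suit essentiellement la démonstration du théorème \ref{t5.5} donnée dans le texte : même modèle $p:\sX\to U$, même faisceau $j_*R^ip_*\Q_l$ identifié place par place à $H^i_l(X)^{I_v}$, même recours à la polarisabilité faible (proposition \ref{p5.1} et corollaire \ref{c5.5}), au théorème \ref{tgef2} et à la formule de Grothendieck--Ogg--\v Safarevi\v c pour absorber le terme exponentiel. Seule petite imprécision : la relation $\delta(E)^2=q^{(i+1)\chi(E)}$, d'où $w=\pm1$, est une conséquence formelle de la dualité de Poincaré et de \eqref{eqpol} (cf. la preuve du théorème \ref{tgef}) et non de la conjecture de monodromie--poids, laquelle intervient plutôt, via Terasoma (théorème \ref{t5.3} (ii)), pour la rationalité et l'indépendance de $l$ des facteurs locaux, puis pour la détermination plus fine de la constante.
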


Pour $X$ une courbe elliptique, cette conjecture (sous une forme non cohomologique) remonte \`a Weil \cite{weilma}.

\begin{thm}[Grothendieck, Deligne]\phantomsection\label{t5.5} La conjecture \ref{c5.3} est vraie si $\car K>0$. De plus, si $K=\F_q(C)$ comme ci-dessus, on a
\[\Lambda(K,H^i(X),s)= A^{s/2}\frac{P_1(q^{-s})}{P_0(q^{-s})P_2(q^{-s})} \]
avec $P_0,P_1,P_2\in \Z[t]$, o\`u les racines inverses de $P_j$ sont de valeur absolue complexe $q^{\frac{i+j}{2}}$.
\end{thm}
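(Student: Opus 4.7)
Soit $K=\F_q(C)$ et $X\in\V(K)$ projective lisse de dimension $n$. Le plan est de ramener l'énoncé au théorème \ref{tgef2} de Grothendieck, appliqué à un faisceau $l$-adique lisse sur un ouvert convenable de $C$, puis d'identifier les deux membres de l'équation fonctionnelle avec les objets définis par Serre. Fixons un nombre premier $l$ avec $l\ne p=\car\F_q$.

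D'abord, par la proposition \ref{p5.3} appliquée à la fibre générique, on choisit un ouvert dense $j:U\inj C$ et un $U$-schéma projectif lisse $p:\sX\to U$ de fibre générique $X$. On pose $F=R^ip_*\Q_l$. Par changement de base propre et lisse, $F$ est un $\Q_l$-faisceau lisse sur $U$, dont la fibre au point générique s'identifie (comme représentation de $\Gal(\bar K/K)$) à $H^i_l(X)$; par la proposition \ref{p5.1}, $F$ est pur de poids $i$. Je considère alors le faisceau $j_*F$ sur $C$. La propriété fondamentale à utiliser est que, pour tout $v\in C_{(0)}$, la fibre géométrique $(j_*F)_{\bar v}$ est isomorphe à $F^{I_v}=H^i_l(X)^{I_v}$ (égale à $F_{\bar v}$ si $v\in U$). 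Compte tenu de la définition \ref{d5.2} et de la formule
\[L(C,j_*F,q^{-s}) = \prod_{v\in C_{(0)}}\det(1-\pi_v N(v)^{-s}\mid (j_*F)_{\bar v})^{-1},\]
on obtient l'identité $L(C,j_*F,q^{-s})=L(K,H^i(X),s)$.

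Ensuite, j'applique le théorème \ref{tgef2}: la fonction $L(C,j_*F,t)$ est une fraction rationnelle $P_1(t)/(P_0(t)P_2(t))$ avec $P_j(t)=\det(1-t\pi_{\F_q}\mid H^j(\bar C,j_*F))$, vérifiant l'équation fonctionnelle du théorème \ref{tgef}; le poids de $F$ étant $\rho=i$ et la dimension étant $n=1$, elle s'écrit, en posant $t=q^{-s}$,
\[L(C,j_*F,q^{-(i+1-s)})=(-q^{-s})^{\chi}\,\delta\,L(C,j_*F,q^{-s}),\quad \delta=\pm q^{(i+1)\chi/2},\]
où $\chi=\chi(j_*F)=\sum_j(-1)^j\dim H^j(\bar C,j_*F)$. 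Par le théorème \ref{t5.4}, les racines inverses de $P_j$ sont des $q$-nombres de Weil de poids $i+j$; par conséquent chaque $P_j$ est stable sous $\Gal(\bar\Q/\Q)$ et l'indépendance de $l$ plus l'intégralité résultent du théorème \ref{t5.3} (ii) appliqué point par point (et du raisonnement du lemme \ref{l3.4}).

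Il reste à raccorder le facteur exponentiel et à vérifier la forme précise de l'équation fonctionnelle. C'est l'étape la plus technique: il faut calculer $\chi(\bar C,j_*F)$ en termes du conducteur de Serre. La suite exacte $0\to j_!F\to j_*F\to \bigoplus_{v\in C-U} (i_v)_*F^{I_v}\to 0$ donne
\[\chi(\bar C,j_*F)=\chi_c(\bar U,F)+\sum_{v\in C-U}\dim F^{I_v},\]
et la formule de Grothendieck-Ogg-Shafarevich fournit
\[\chi_c(\bar U,F)=(2-2g-|C-U|)B_i-\sum_{v\in C-U}\mathrm{Sw}_v(F),\]
avec $B_i=\dim H^i_l(X)$. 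En combinant et en utilisant les définitions $\epsilon_v=B_i-\dim F^{I_v}$ et $\delta_v=\mathrm{Sw}_v(F)$ (conducteur sauvage de la semi-simplifiée, mais qui coïncide ici avec celui de $F$ puisque $F$ est déjà semi-simple aux points de mauvaise réduction au niveau de l'inertie sauvage, voir \S\ref{5.5.5}), on obtient
\[-\chi(\bar C,j_*F)=(2g-2)B_i+\deg\ff=\log_q A.\]
L'équation fonctionnelle ci-dessus se récrit donc, en multipliant par $A^{s/2}A^{(i+1-s)/2}=A^{(i+1)/2}$ et en posant $\Lambda(K,H^i(X),s)=A^{s/2}L(C,j_*F,q^{-s})$,
\[\Lambda(K,H^i(X),i+1-s)=w\,\Lambda(K,H^i(X),s),\]
avec $w=\pm q^{(i+1)\chi/2}\cdot A^{-(i+1)/2}\cdot(-1)^{\chi}$; l'identité $\delta^2=q^{(i+1)\chi}=A^{i+1}$ (voir théorème \ref{tgef}) assure que $w\in\{\pm 1\}$. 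Le point délicat de toute la preuve réside précisément dans ce calcul comptable, en particulier dans l'identification exacte des exposants sauvages $\delta_v$ intervenant dans la formule de Grothendieck-Ogg-Shafarevich avec ceux qui figurent dans la définition du conducteur de Serre (qui, lui, fait intervenir la semi-simplification); cette coïncidence dépend de propriétés non triviales des conducteurs de Swan rappelées à la section \ref{5.5.5}.
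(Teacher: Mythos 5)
Votre d\'emonstration suit pour l'essentiel la m\^eme route que celle du texte: r\'eduction au th\'eor\`eme \ref{tgef2} appliqu\'e au faisceau $j_*R^ip_*\Q_l$, identification des facteurs locaux via $(j_x)_*H^i_l(X)=H^i_l(X)^{I_x}$, th\'eor\`eme \ref{t5.4} pour les poids, th\'eor\`eme \ref{t5.3} (ii) et le raisonnement du lemme \ref{l3.4} pour l'int\'egralit\'e et l'ind\'ependance de $l$, enfin formule de Grothendieck-Ogg-\v Safarevi\v c pour raccorder $\chi(\bar C,j_*F)$ au conducteur de Serre. Votre d\'eveloppement de cette derni\`ere \'etape (suite exacte $0\to j_!F\to j_*F\to \bigoplus_v (i_v)_*F^{I_v}\to 0$) est plus explicite que le texte, qui cite directement la formule de \cite{raynaud}; c'est un compl\'ement utile, \`a condition de pond\'erer les termes locaux par $\deg(v)$ (la formule porte sur les points g\'eom\'etriques, non sur les points ferm\'es).

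Il reste une lacune r\'eelle: vous appliquez le th\'eor\`eme \ref{tgef2} en n'invoquant que la puret\'e de $F=R^ip_*\Q_l$ (proposition \ref{p5.1}), alors que l'hypoth\`ese de ce th\'eor\`eme est la \emph{polarisabilit\'e faible} \eqref{eqpol}, c'est-\`a-dire un isomorphisme $F^*\simeq F(i)$. La puret\'e au sens des valeurs propres de Frobenius ne fournit pas cet isomorphisme; sans lui, le th\'eor\`eme \ref{tgef} ne relie $L(C,j_*F,t)$ qu'\`a la fonction $L$ du \emph{dual}, et on n'obtient pas d'\'equation fonctionnelle de $\Lambda(K,H^i(X),s)$ avec elle-m\^eme. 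Il faut invoquer le corollaire \ref{c5.5} (dualit\'e de Poincar\'e et th\'eor\`eme de Lefschetz difficile, qui fournissent un accouplement parfait Galois-\'equivariant et $(-1)^i$-sym\'etrique $H^i_l(X)\times H^i_l(X)\to \Q_l(-i)$), puis \'etendre cet \'enonc\'e au faisceau $R^ip_*\Q_l$ quitte \`a restreindre $U$ pour disposer d'une polarisation relative \`a $p$. Deux points mineurs pour finir: avec $A=q^{-\chi}$ on a $\delta^2=q^{(i+1)\chi}=A^{-(i+1)}$ et $w=(-1)^\chi\delta A^{(i+1)/2}$ --- vos deux erreurs de signe se compensent et la conclusion $w=\pm1$ subsiste; par ailleurs la formule de Grothendieck-Ogg-\v Safarevi\v c est \'enonc\'ee pour les faisceaux constructibles de $\Z/l^n$-modules, il faut donc l'appliquer \`a $j_*H^i_\et(\bar X,\Z/l^n)$ pour tout $n$ puis passer \`a la limite.
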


\begin{proof}  Le morphisme projectif lisse $X\to \Spec K$ s'\'etend en un morphisme projectif lisse $p:\sX\to U$ o\`u $U$ est un ouvert convenable de $C$; si $j_U:U\inj C$ est l'immersion correspondante, on a
\[j_*H^i_l(X)=(j_U)_*R^ip_*\Q_l\]
o\`u $j:\Spec K\to C$ est l'inclusion du point g\'en\'erique.

Ainsi, $j_*H^i_l(X)$ est un $\Q_l$-faisceau mixte sur $C$. Je dis qu'au terme exponentiel pr\`es, on a:
\[\Lambda(K,H^i(X),s) = L(C, j_*H^i_l(X),q^{-s}).\]

En effet, consid\'erons le facteur local du membre de droite en $x\in C_{(0)}$: 
\[L(\kappa(x),i_x^*j_*H^i_l(X),q^{-s})= L(\kappa(x),i_x^*(j_x)_*H^i_l(X),q^{-s})\]
o\`u $j_x$ est l'inclusion locale $\Spec K\to \Spec \sO_{C,x}$. Il r\'esulte de la description galoisienne de l'image directe par une telle immersion ouverte  (par exemple \cite[ch. II, ex. 3.15]{milne}) que 
\[(j_x)_*H^i_l(X)=H^i_l(X)^{I_x}\]
comme module sur le groupe de Galois absolu de $K$. Donc
\[L(\kappa(x),i_x^*(j_x)_*H^i_l(X),q^{-s})=L_x(K,H^i(X),s)\]
au sens de la d\'efinition \ref{d5.2}. 

D'apr\`es la proposition \ref{p5.1}, $R^ip_*\Q_l$ est lisse et pur de poids $i$. Le corollaire \ref{c5.5} implique que le faisceau $H^i_l(X)$ est ``faiblement polarisable de poids $i$''; quitte \`a restreindre $U$\footnote{Pour disposer d'une bonne polarisation relative \`a $p$.}, cet \'enonc\'e s'\'etend au faisceau $R^ip_*\Q_l$. 
En appliquant le th\'eor\`eme \ref{tgef2}, on trouve donc une \'equation fonctionnelle de la forme 
\begin{equation}\label{eq5.15}
L(C, j_*H^i_l(X),q^{i+1-s})= \pm(-q^{-s})^{\chi} q^{\frac{(i+1)\chi}{2}}L(C, j_*H^i_l(X),q^{s})  
\end{equation}
o\`u $\chi=\chi(C,j_*H^i_l(X))$, et la factorisation annonc\'ee (\emph{a priori}, \`a coefficients dans $\Q_l$). 

De plus, le th\'eor\`eme \ref{t5.4} implique que les racines inverses de $P_j$ sont des nombres de Weil de poids $i+j$. D'autre part, le th\'eor\`eme \ref{t5.3} implique que $L(C,j_*H^i_l(X),t)\in \Z[[t]]$: c'est donc une fraction rationnelle \`a coefficients rationnels  (\cf preuve du th\'eor\`eme \ref{t3.6}).   Le m\^eme raisonnement que dans la preuve du lemme \ref{l3.4} montre alors que les $P_j$ sont premiers entre eux deux \`a deux, \`a coefficients entiers et ind\'ependants du choix de $l$.

Il reste \`a d\'eduire de \eqref{eq5.15} l'\'equation fonctionnelle donn\'ee dans l'\'enonc\'e de la conjecture \ref{c5.3}. Cela r\'esulte de la \emph{formule de Grothendieck-Ogg-\v Safarevi\v c} \cite[th. 1]{raynaud}:
\[\chi(C,j_*H^i_l(X))=(2-2g)B_i - \deg \ff\]
(\cf \S\ref{5.5.5} pour $\ff$). Un anc\^etre de cette formule est d\^u (comme toujours\dots) \`a Weil: \cite[ch. VI, \S 4]{serre-cl}.

(Cet argument n'est pas tout \`a fait complet: la formule de Gro\-then\-dieck-Ogg-\v Savarevi\v c est  valable pour les $\Z/l^n$-faisceaux constructibles; il faut donc l'appliquer \`a $j_*H^i_\et(\bar X,\Z/l^n)$  pour tout $n\ge 1$, puis passer \`a la limite.) 
\end{proof}

Pour la constante de l'équation fonctionnelle, je renvoie à l'exposé de Deligne \cite{deligne-constante} qui repose essentiellement sur le théorème \ref{tld} et la conjecture de monodromie-poids du \S \ref{s5.5.4}.

\section{Motifs}

R\'ef\'erences: Kleiman \cite{kleiman-motives}, Scholl \cite{scholl}, Andr\'e \cite{andre}.

On se donne un corps de base $k$.

\subsection{La probl\'ematique} Citons Serre \cite{serre-motifs}:

\begin{quote} \it
La situation d\'ecrite ci-dessus n'est pas tout \`a fait satisfaisante. On dispose de trop de groupes de cohomologie qui ne sont pas suffisamment li\'es entre eux -- malgr\'e les isomorphismes de compatibilit\'e. Par exemple, si $X$ et $Y$ sont deux vari\'et\'es (projectives, lisses), et $f: H^i_\et(X,\Q_\ell)\to H^i_\et(Y,\Q_\ell)$ une application $\Q_\ell$-lin\'eaire, où $\ell$ est un nombre premier fix\'e, il n'est pas possible en g\'en\'eral de d\'eduire de $f$ une application analogue pour la cohomologie $\ell'$-adique, o\`u $\ell'$ est un autre nombre premier. Pourtant, on a le sentiment que c'est possible pour certains $f$, ceux qui sont ``motiv\'es'' (par exemple ceux qui proviennent d'un morphisme de $Y$ dans $X$, ou plus g\'en\'eralement d'une correspondance alg\'ebrique entre $X$ et $Y$). Encore faut-il savoir ce que ``motiv\'e'' veut dire!
\end{quote}

Les isomorphismes de compatibilit\'e dont parle Serre sont ceux du \S \ref{3.6.1} ainsi qu'un isomorphisme analogue entre cohomologie de Betti\index{Cohomologie!de Betti} et cohomologie de de Rham\index{Cohomologie!de de Rham}: ils n'existent qu'en caract\'eristique z\'ero. En caract\'eristique $p$ la situation est plus myst\'erieuse: on n'a pas d'isomorphismes de comparaison liant les diff\'erentes cohomologies $l$-adiques. Pourtant, le polyn\^ome caract\'eristique de l'action de Frobenius sur la cohomologie $l$-adique\index{Cohomologie!$l$-adique} est \`a coefficients entiers et ind\'ependant de $l$\dots

On pourrait imaginer l'existence d'une cohomologie de Weil initiale, \`a coefficients dans $\Q$ et s'envoyant vers toutes les cohomologies $l$-adiques: c'est impossible par l'observation de Serre d\'ecrite au \S \ref{3.5.9}. Alors?

L'id\'ee de Grothendieck est de trouver une cohomologie de Weil initiale, non pas \`a valeurs dans une cat\'egorie d'espaces vectoriels sur un corps puisque c'est impossible, mais dans une cat\'egorie ab\'elienne convenable. Cette cat\'egorie serait la \emph{cat\'egorie des motifs} et la cohomologie de Weil universelle la \emph{cohomologie motivique}.\footnote{Cette terminologie a \'et\'e adopt\'ee par Lichtenbaum, puis Friedlander, Suslin et Voevodsky, dans un sens tr\`es diff\'erent (groupes Hom dans la cat\'egorie des motifs triangul\'es): on prendra garde \`a ne pas confondre les deux notions.}

L'id\'ee de Grothendieck pour donner un sens \`a ``motiv\'e'' est, comme le sugg\`ere Serre, de partir des correspondances alg\'ebriques, point commun entre les diverses cohomologies de Weil par l'interm\'ediaire des applications classes de cycles. Ceci d\'efinit d\'ej\`a une cat\'egorie additive dont les objets sont les vari\'et\'es projectives lisses; en raffinant cette derni\`ere par des proc\'ed\'es purement cat\'egoriques, on arrive \`a une cat\'egorie mono\"\i dale sym\'etrique \emph{rigide} (th\'eor\`eme \ref{t6.5}): la cat\'egorie des motifs purs $\sM_\sim(k,F)$.

Implicitement, on a fait le choix d'un corps $F$ de coefficients et d'une relation d'\'equivalence ad\'equate $\sim$ (\S \ref{s6.2}). Si on choisit pour $\sim$ l'\'equivalence num\'erique, la cat\'egorie $\sM_\sim(k,F)$ est \emph{ab\'elienne semi-simple} (th\'eor\`eme \ref{tjannsen}). Si $k$ est fini, il r\'esulte de la derni\`ere conjecture de Weil\index{Conjectures!de Weil} (Deligne) que $\sM_\sim(k,F)$ admet une \emph{graduation par le poids} (th\'eor\`eme \ref{t6.3} et proposition \ref{p6.4}) et une partie du programme de Grothendieck est r\'ealis\'ee.  En particulier, tout motif simple a un poids bien d\'etermin\'e $i$, et sa fonction z\^eta\index{Fonction zêta!d'un motif (pur)} est un polyn\^ome ou l'inverse d'un polyn\^ome suivant la parit\'e de $i$, ses racines inverses \'etant des nombres de Weil de poids $i$  (th\'eor\`eme \ref{t6.4}). Comme cas particulier, on retrouve le th\'eor\`eme de Weil sur la conjecture d'Artin\index{Conjecture!d'Artin} en caract\'eristique $p$ (\S \ref{cor-loco}).

Malheureusement, des problèmes ouverts limitent la portée de la définition de Grothendieck: les \emph{conjectures standard}, déjà mentionnées au \S\ref{3.6.1}. Par exemple, on ne sait pas si les projecteurs qui définissent la graduation d'une cohomologie\index{Cohomologie!de Weil} de Weil (``composantes de K\"unneth de la diagonale'') sont toujours représentés par des cycles algébriques. Pour approcher les fonctions $L$ sur les corps de nombres, Deligne a alors introduit une variante de la cat\'egorie des motifs de Grothendieck: les Hom de cette cat\'egorie ne sont pas form\'es de cycles alg\'ebriques, mais de ``cycles de Hodge absolus'' \cite{deligneL,deligne900}, et les conjectures standard y sont vraies grâce au théorème de Lefschetz difficile et à la polarisabilité des structures de Hodge portées par la cohomologie de Betti\index{Cohomologie!de Betti} des vatiétés projectives lisses complexes. Ceci a été poussé plus loin par Yves André \cite{andreihes}, chez qui les cycles de Hodge absolus sont remplacés par des ``cycles motivés''. Je n'aborderai pas ces th\'eories, et encore moins les aspects les plus excitants de la th\'eorie des motifs qui d\'epassent l'enjeu des conjectures de Weil: th\'eorie tannakienne et groupe de Galois motivique \cite{saa,deligne900,andreihes}.

\subsection{Relations d'\'equivalence ad\'equates} \label{s6.2}

\begin{defn} Soit $\V(k)$ la cat\'egorie des $k$-vari\'et\'es projectives lisses, et soit $F$ un anneau commutatif. Une relation d'\'equivalence $\sim$ sur les cycles alg\'ebriques \`a coefficients dans $F$ est \emph{ad\'equate} si elle est $F$-lin\'eaire et si:
\begin{thlist}
\item {\bf Lemme de d\'eplacement.} Soient $\alpha,\beta\in Z^*(X,F)=Z^*(X)\otimes_\Z F$ pour $X\in \V(k)$. Alors il existe $\beta'\sim \beta$ tel que $\alpha$ et $\beta'$ se coupent proprement (c'est-\`a-dire avec les bonnes codimensions).
\item Soient $X,Y\in \V(k)$, et soient $\alpha\in Z^*(X)$, $\beta\in Z^*(X\times Y)$ tels que $\alpha\times Y$ et $\beta$ s'intersectent proprement. Alors $a\sim 0$ implique $(p_Y)_*(\beta\cdot (\alpha\times Y))\sim 0$.
\end{thlist}
On dira que $(F,\sim)$ est un \emph{couple adéquat}.
\end{defn}

\'Etant donn\'e un couple ad\'equat $(F,\sim)$ et $X\in \V(k)$, on note $A^*_\sim(X,F)=Z^*(X,F)/\sim$ et  $A_*^\sim(X,F)=Z_*(X,F)/\sim$: on peut faire de la th\'eorie de l'intersection sur $A^*_\sim(X,F)$.

\begin{exs} Voici les principales relations d'\'equivalences ad\'equates:
\[\sim_\rat\ge \sim_\alg\ge \sim_\tnil\ge \sim_H\ge \sim_\num\]
respectivement, l'\'equivalence rationnelle, l'\'equivalence alg\'ebrique, l'\'equivalence de smash-nilpotence de Voevodsky, l'\'equivalence homologique relative \`a une cohomologie de Weil $H$ (lorsque $F=\Q$), et l'\'equivalence num\'erique. Le signe $\ge$ indique la relation d'implication entre ces \'equivalences. L'\'equivalence rationnelle est la plus fine des \'equivalences ad\'equates, et l'\'equivalence num\'erique est la moins fine si $F$ est un corps. Pour plus de d\'etails, on pourra consulter Fulton \cite[ch. 19]{fulton} et Andr\'e \cite[ch. 3]{andre}. Extrayons seulement, pour plus tard, de la derni\`ere r\'ef\'erence:
\end{exs}

\begin{lemme}\phantomsection\label{l6.1} $\sim_\rat$ est la plus fine des \'equivalences ad\'equates.
\end{lemme}

\begin{proof}[Démonstration (d'apr\`es \protect{\cite[3.2.2.1]{andre}})]. Soit $\sim$ une \'equivalence ad\'equate pour les cycles alg\'ebriques \`a coefficients
dans $F$. La condition (ii) nous ram\`ene \`a prouver que $[0] \sim [\infty]$ sur $\P^1$. Par la condition (i), il existe un cycle $\sum n_i[x_i] \sim [1]$, $n_i \in F$, tel que $\sum n_i[x_i]\cdot [1]$ soit bien d\'efini, \ie $x_i \ne 1$. Appliquons (ii) en prenant pour $\beta$ le graphe de la fraction rationnelle $1-\prod \left(\frac{x-x_i}{1-x_i}\right)^{m_i}$ ($m_i >0$) et $\alpha=\sum n_i[x_i]-[1]$: on obtient que $mn[1]\sim m[0]$, o\`u $m=\sum m_i$, $n=
\sum n_i$; comme les $m_i$ sont arbitraires, on conclut que $n[1] \sim [0]$. En appliquant
l'automorphisme $x\mapsto 1/x$ (et de nouveau la condition (ii)), on obtient $n[1]\sim[\infty]$, d'o\`u $[0] \sim [\infty]$.
\end{proof}

On a aussi:

\begin{thm}\phantomsection\label{t6.9} Les groupes de cycles (\`a coefficients entiers) modulo l'\'equivalence num\'erique sont des $\Z$-modules libres de type fini.
\end{thm}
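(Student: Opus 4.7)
Fix $X\in \V(k)$ purement de dimension $d$ et $i\in [0,d]$, et notons $N^i(X)=Z^i(X)/\sim_\num$. Le plan est de prouver le th\'eor\`eme en trois \'etapes: (a) $N^i(X)$ est sans torsion; (b) $N^i(X)_\Q:=N^i(X)\otimes\Q$ est de dimension finie sur $\Q$; (c) en combinant (a) et (b), $N^i(X)$ se plonge dans un $\Z$-module libre de rang fini, et est donc lui-m\^eme libre de rang fini. L'\'etape (a) est imm\'ediate~: si $n\alpha\sim_\num 0$ avec $n\in\Z-\{0\}$, alors pour tout $\gamma\in Z^{d-i}(X)$ on a $n\deg(\alpha\cdot\gamma)=0$ dans $\Z$, d'o\`u $\deg(\alpha\cdot\gamma)=0$ et $\alpha\sim_\num 0$. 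L'\'etape (c) se d\'eduit de (b) en choisissant $\beta_1,\dots,\beta_n\in N^{d-i}(X)$ dont les images forment une $\Q$-base de $N^{d-i}(X)_\Q$; l'application $N^i(X)\to \Z^n$, $\alpha\mapsto (\deg(\alpha\cdot \beta_j))_j$, est injective (son noyau est form\'e de cycles num\'eriquement orthogonaux \`a un syst\`eme de g\'en\'erateurs du $\Q$-espace $N^{d-i}(X)_\Q$, donc \`a tout $Z^{d-i}(X)$), donc $N^i(X)$ est un sous-groupe de $\Z^n$.

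Le c\oe ur est l'\'etape (b). Je fixerais une cohomologie de Weil $H^*$ sur $\V(k)$ \`a coefficients dans un corps $F$ de caract\'eristique z\'ero (qui existe par \S\ref{3.5.9}: $l$-adique avec $l\ne \car k$ convient). Soit $V_i\subseteq H^{2i}(X)(i)$ l'image de l'application de classe de cycle $Z^i(X)_\Q\to H^{2i}(X)(i)$, un $\Q$-sous-espace, et $V_i^F=F\cdot V_i$ le $F$-sous-espace engendr\'e, de dimension finie sur $F$ puisque $H^{2i}(X)(i)$ l'est. La compatibilit\'e de $\cl$ au produit d'intersection et \`a $\Tr_X$ (axiomes (vi)--(viii) de la d\'efinition~\ref{d3.4}) montre que l'accouplement \`a valeurs dans $\Q$ donn\'e par $V_i\times V_{d-i}\to \Q$ est la restriction de l'accouplement de Poincar\'e $H^{2i}(X)(i)\times H^{2d-2i}(X)(d-i)\to F$, qui s'\'etend $F$-bilin\'eairement en $V_i^F\times V_{d-i}^F\to F$. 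Par $F$-lin\'earit\'e, un \'el\'ement $\alpha\in V_i$ s'annule contre tout $V_{d-i}$ (sur $\Q$) si et seulement s'il s'annule contre tout $V_{d-i}^F$ (sur $F$); donc le noyau de $V_i\twoheadrightarrow N^i(X)_\Q$ est $V_i\cap R$, o\`u $R$ est le $F$-radical de l'accouplement \'etendu. On obtient ainsi une injection $N^i(X)_\Q\hookrightarrow \bar V_i:=V_i^F/R$, $F$-espace de dimension finie sur lequel l'accouplement induit avec $\bar V_{d-i}$ est $F$-parfait.

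Le point d\'elicat — et l'obstacle principal — est qu'un $\Q$-sous-espace d'un $F$-espace de dimension finie peut parfaitement \^etre de dimension infinie sur $\Q$ (penser \`a $\Q\subset \R$). Pour le contourner, j'utiliserais la propri\'et\'e de sym\'etrique $F$-engendrement: puisque $V_i$ $F$-engendre $V_i^F$, son image $N^i(X)_\Q$ $F$-engendre $\bar V_i$; de m\^eme pour $N^{d-i}(X)_\Q$ dans $\bar V_{d-i}$. Choisissons alors $\bar\alpha_1,\dots,\bar\alpha_n\in N^i(X)_\Q$ donnant une $F$-base de $\bar V_i$, et $\bar\gamma_1,\dots,\bar\gamma_n\in N^{d-i}(X)_\Q$ donnant une $F$-base de $\bar V_{d-i}$. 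La matrice $M=(\langle \bar\alpha_i,\bar\gamma_j\rangle)\in M_n(F)$ a en fait ses entr\'ees dans $\Q$ (car l'accouplement restreint \`a $N^i(X)_\Q\times N^{d-i}(X)_\Q$ est \`a valeurs dans $\Q$) et est inversible sur $F$ (matrice d'un accouplement $F$-non d\'eg\'en\'er\'e entre deux $F$-bases), donc $\det M\in \Q\cap F^*=\Q^*$ et $M\in GL_n(\Q)$. Pour tout $\bar\beta\in N^i(X)_\Q$, \'ecrivons $\bar\beta=\sum_i c_i\bar\alpha_i$ avec $c_i\in F$: le vecteur $(\langle\bar\beta,\bar\gamma_j\rangle)_j=(c_i)_iM$ est \`a coordonn\'ees rationnelles, et l'inversibilit\'e de $M$ sur $\Q$ force $c_i\in \Q$. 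Donc $N^i(X)_\Q$ est engendr\'e sur $\Q$ par $\bar\alpha_1,\dots,\bar\alpha_n$, ce qui ach\`eve l'\'etape (b) et la d\'emonstration.
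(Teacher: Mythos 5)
Votre démonstration est correcte, et elle repose sur le même mécanisme final que celle du texte: plonger $N^i(X)$ dans $\Z^n$ par $\alpha\mapsto(\deg(\alpha\cdot\beta_j))_j$ pour un nombre fini de cycles $\beta_j$, puis invoquer que tout sous-groupe de $\Z^n$ est libre de type fini. La différence est dans le choix des $\beta_j$ et dans le coût pour établir l'injectivité. Le texte (suivant André) prend pour $(\beta_j)$ un système \emph{maximal} d'éléments de $A^{d-i}_H(X)$ dont les classes de cohomologie sont $K$-linéairement indépendantes: l'injectivité est alors immédiate, car $\cl(\beta)$ est pour tout $\beta$ une combinaison $K$-linéaire des $\cl(\beta_j)$, d'où $\deg(\alpha\beta)=\Tr_X(\cl(\alpha)\cl(\beta))=0$ dès que tous les $\deg(\alpha\beta_j)$ s'annulent — aucune finitude sur $\Q$ n'est requise, seule compte la finitude de la dimension de $H^{2(d-i)}(X)(d-i)$ sur $K$. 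Vous choisissez au contraire les $\beta_j$ formant une $\Q$-base de $N^{d-i}(X)_\Q$, ce qui vous oblige à démontrer d'abord l'étape (b), la finitude de $\dim_\Q N^i(X)_\Q$; votre argument de descente du corps des coefficients via la matrice de Gram $M\in GL_n(F)\cap M_n(\Q)=$ (après inversion) $GL_n(\Q)$ est correct et c'est la bonne façon de contourner l'obstacle que vous identifiez (un $\Q$-sous-espace d'un $F$-espace de dimension finie peut être de $\Q$-dimension infinie). Ce détour vous donne en prime l'énoncé plus précis $\dim_\Q N^i(X)_\Q\le\dim_F H^{2i}(X)(i)$, fait utile ailleurs (c'est l'ingrédient de base du théorème de Jannsen), mais inutile pour le théorème visé. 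Notez enfin que votre étape (a) est superflue: une fois obtenue l'injection $N^i(X)\hookrightarrow\Z^n$, l'absence de torsion en découle.
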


\begin{proof}[Démonstration (d'après \protect{\cite[3.4.6]{andre}})] Soit $H$ une cohomologie de Weil, de corps de coefficients $K$. Pour $X$ projective lisse de dimension $d$ et pour $r\ge 0$, plongeons $A^{d-r}_H(X)$ dans le $K$-espace vectoriel de dimension finie $H^{2(d-r)}(X)(d-r)$ via l'application classe de cycle $\cl$. Soit $(\beta_1,\dots,\beta_n)$ un système maximal d'éléments de $A^{d-r}_H(X)$ tel que $\cl(\beta_1),\dots,\cl(\beta_n)$ soient $K$-linéairement indépendants. Alors l'homomorphisme
\[\alpha\mapsto (\deg(\alpha\beta_1),\dots,\deg(\alpha\beta_n))\]
de $A^r_\num(X)$ dans $\Z^n$ est \emph{injectif}: en effet, si $\beta\in A^{d-r}_H(X)$, $\cl(\beta)$ est combinaison linéaire des $\cl(\beta_i)$, donc $\deg(\alpha\beta)=0$ par la compatibilité entre le produit d'intersection et l'accouplement de Poincaré (définition \ref{d3.4} (viii)).  Ceci conclut la démonstration puisque $\Z$ est noethérien.
\end{proof}

\subsection{Cat\'egorie des correspondances}

\begin{defn} Soit $\sim$ une relation d'\'equivalence ad\'equate. On d\'efinit la cat\'egorie $\Corr_\sim(k,F)$:
\begin{description}
\item[Objets] $[X]$, $X\in \V(k)$.
\item[Morphismes] $\Corr_\sim(k,F)([X],[Y])=\Corr_\sim^0(X,Y)$, o\`u $\Corr_\sim^0$ est le groupe des correspondances modulo $\sim$, \cf d\'efinition \ref{d3.3} dans le cas de $\sim_\rat$.
\end{description}
La composition des correspondances se d\'efinit comme dans \loccit
\end{defn}

\begin{defn} On a un foncteur contravariant de $\V(k)$ vers $\Corr_\sim(k,F)$:
\begin{align*}
\V(k)^\op&\to \Corr_\sim(k,F)\\
X&\mapsto [X]\\
f&\mapsto {}^t [\Gamma_f].
\end{align*}
\end{defn}

La cat\'egorie $\V(k)$ est mono\"\i dale sym\'etrique unitaire (d\'efinition \ref{dA.5}) pour le produit des vari\'et\'es (unit\'e: $\Spec k$), et

\begin{lemme} Le foncteur $[\ ]$ commute aux coproduits finis: la cat\'egorie $\Corr_\sim(k,F)$ est $F$-lin\'eaire. De plus, il existe une structure mono\"\i dale sym\'etrique unitaire $F$-lin\'eaire canonique sur $\Corr_\sim(k,F)$ telle que le foncteur ci-dessus soit mono\"\i dal sym\'etrique.
\end{lemme}

\begin{proof}  Par construction, $\Corr_\sim(k,F)$ est une $F$-cat\'egorie (ses $\Hom$ sont des $F$-modules et la composition est $F$-bilin\'eaire); si $X,Y\in \V(k)$, on v\'erifie tout de suite que $[X]\oplus [Y]$ est repr\'esent\'e par $[X\coprod Y]$ et que $[\emptyset]$ repr\'esente l'objet nul. 
Pour la seconde affirmation, on d\'efinit $[X]\otimes [Y]=[X\times Y]$, et le produit tensoriel des correspondances est donn\'e par le produit des cycles:
\[\Corr_\sim(X,Y)\otimes \Corr_\sim(X',Y')\to \Corr_\sim(X\times X',Y\times Y').\]

La v\'erification des axiomes se fait sans difficult\'e.
\end{proof}

\subsection{Motifs purs effectifs}

\begin{defn} On garde les notations ci-dessus. La cat\'egorie $\sM_\sim^\eff(k,F)$ des \emph{motifs effectifs} (\`a coefficients dans $F$, relativement \`a $\sim$) est l'enveloppe karoubienne de $\Corr_\sim(k,F)$ (d\'efinition \ref{dA.4}). On abr\`ege cette notation en $\sM_\sim^\eff(k)$ ou $\sM_\sim^\eff$ quand il n'y a pas de risque d'ambigu\"\i t\'e.
\end{defn}

D'apr\`es le lemme \ref{lA.2} et la proposition \ref{pA.2}, la structure mono\"\i dale sym\'etrique $F$-lin\'eaire de $\Corr_\sim(k,F)$ s'\'etend canoniquement \`a $\sM^\eff_\sim(k,F)$. On note
\[[X]\mapsto h(X)\]
le foncteur canonique $\Corr_\sim(k,F)\to \sM_\sim^\eff(k,F)$. S'il est besoin de pr\'eciser, on note $h_\sim$ au lieu de $h$.

D'apr\`es la d\'emonstration du lemme \ref{l6.1}, $[0]=[\infty]$ dans $A_0^\sim(\P^1,F)$; en appliquant une homographie convenable $z\mapsto \frac{az+b}{cz+d}$, on en d\'eduit que $[x]=[y]$ dans $A_0^\sim(\P^1,F)$ pour tous $x,y\in \P^1(k)$. Ainsi, la classe du projecteur
\[p:\P^1\to \Spec k\by{i_x}\P^1\]
dans $\Corr^0_\sim(\P^1,F)$ ne d\'epend pas du choix de $x\in \P^1(k)$.

\begin{defn} Le \emph{motif de Lefschetz} $\L$ est l'image du projecteur $1-p$. On a donc
\[h(\P^1)=\un\oplus \L.\]
Pour tout motif effectif $M\in \sM_\sim^\eff(k,F)$ et tout entier $n\ge 0$, on note (sic)
\[M(-n)=M\otimes \L^{\otimes n}.\] 
\end{defn}

\begin{prop}\phantomsection\label{p6.1} $\L$ est quasi-inversible (d\'efinition \ref{dA.7}).
\end{prop}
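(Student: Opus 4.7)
Le plan consiste \`a montrer que le foncteur $-\otimes\L:\sM_\sim^\eff(k,F)\to \sM_\sim^\eff(k,F)$ est pleinement fid\`ele, ce qui d\'efinit la quasi-inversibilit\'e d'apr\`es la d\'efinition \ref{dA.7}. Comme tout motif effectif est un facteur direct d'un motif $h(X)$ pour $X\in \V(k)$, et que $-\otimes\L$ est additif, il suffit d'\'etablir que, pour tous $X,Y\in \V(k)$, l'application naturelle
\[\phi_{X,Y}:\Hom(h(X),h(Y))\longrightarrow \Hom(h(X)\otimes\L, h(Y)\otimes\L),\quad \alpha\mapsto \alpha\otimes 1_\L\]
est une bijection.

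L'outil central sera la \emph{formule du fibr\'e projectif} modulo $\sim$: pour toute $Z\in\V(k)$ et tout entier $n\ge 0$,
\[A_\sim^n(Z\times\P^1,F) \simeq A_\sim^n(Z,F)\oplus A_\sim^{n-1}(Z,F),\]
l'isomorphisme \'etant donn\'e par $(\alpha,\beta)\mapsto p_Z^*\alpha + p_Z^*\beta\cdot [Z\times\{x_0\}]$. Cette formule est classique pour l'\'equivalence rationnelle (\cite[ch. 3]{fulton}) et s'\'etend \`a toute \'equivalence ad\'equate $\sim$, puisque les op\'erations qui la constituent sont induites par des correspondances alg\'ebriques, donc respectent $\sim$ par les axiomes des \'equivalences ad\'equates.

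Appliquant l'identification $h(Z\times\P^1)\simeq h(Z)\oplus h(Z)\otimes\L$ aux vari\'et\'es $X$ et $Y$, on obtient une d\'ecomposition
\[\Hom(h(X\times\P^1),h(Y\times\P^1))=\bigoplus_{a,b\in\{0,1\}}\Hom(h(X)\otimes\L^a,h(Y)\otimes\L^b).\]
D'autre part, en identifiant ce m\^eme groupe avec $A_\sim^{\dim X+1}(X\times Y\times\P^1\times\P^1,F)$ et en appliquant deux fois la formule du fibr\'e projectif aux deux facteurs $\P^1$, on obtient une seconde d\'ecomposition en quatre pi\`eces de codimensions respectives $\dim X+1$, $\dim X$, $\dim X$ et $\dim X-1$ dans $A_\sim^*(X\times Y,F)$. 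Les deux pi\`eces de codimension $\dim X$ correspondront respectivement \`a $\Hom(h(X),h(Y))$ et \`a $\Hom(h(X)\otimes\L,h(Y)\otimes\L)$ (les deux autres pi\`eces correspondant aux termes crois\'es), ce qui fournit l'isomorphisme recherch\'e.

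Le point d\'elicat sera l'identification explicite des deux d\'ecompositions et la v\'erification que $\phi_{X,Y}$ r\'ealise bien l'isomorphisme entre les deux pi\`eces de codimension $\dim X$. Il s'agit d'un calcul de composition de correspondances utilisant que les deux projecteurs $e_0$ (sur $\un$) et $e_\L$ (sur $\L$), vus dans $\Corr_\sim^0(\P^1,\P^1)=A_\sim^1(\P^1\times\P^1,F)$, sont repr\'esent\'es respectivement par les classes hyperplanes $[\{x_0\}\times\P^1]$ et $[\P^1\times\{x_0\}]$, dont la somme vaut la classe de la diagonale $[\Delta_{\P^1}]$.
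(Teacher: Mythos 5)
Votre démonstration est correcte et suit essentiellement la même voie que celle du texte: le texte établit la formule de la droite projective \eqref{edp} (en se ramenant à l'équivalence rationnelle via le lemme \ref{l6.1}), puis l'itère une fois sur $A^{\dim X+1}_\sim(X\times Y\times\P^1\times\P^1,F)$ pour identifier le facteur $\sM_\sim^\eff(h(X)(-1),h(Y)(-1))$ à $\sM_\sim^\eff(h(X),h(Y))$. Votre rédaction explicite simplement le « on voit facilement » du texte (décomposition en quatre pièces, projecteurs $[\{x_0\}\times\P^1]$ et $[\P^1\times\{x_0\}]$ de somme $[\Delta_{\P^1}]$), ce qui est bienvenu mais ne constitue pas une approche différente.
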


\begin{proof} Pour tout $X\in \V(k)$, les projecteurs $p$ et $1-p$ induisent la \emph{formule de la droite projective}
\begin{equation}\label{edp}
A^n_\sim(X\times\P^1,F)\simeq A^n_\sim(X,F)\oplus A^{n-1}_\sim(X,F)
\end{equation}
(se r\'eduire au cas de l'\'equivalence rationnelle, \cf lemme \ref{l6.1}, puis appliquer \cite[th. 3.3]{fulton}). En it\'erant une fois et en suivant le sens des fl\`eches, on voit facilement que ceci fournit un isomorphisme
\[\sM_\sim^\eff(h(X),h(Y))\iso \sM_\sim^\eff(h(X)(-1),h(Y)(-1))\]
induit par $-\otimes \L$.
\end{proof}

On a aussi:

\begin{prop}\phantomsection\label{p6.2} Pour tout $X\in \V(k)$ et tout $n\ge 0$, on a des isomorphismes naturels
\[A^n_\sim(X,F)\simeq \sM_\sim^\eff(\L^n,h(X)),\quad A_n^\sim(X,F)\simeq \sM_\sim^\eff(h(X),\L^n).\]
De plus, la composition
\[ \sM_\sim^\eff(\L^n,h(X))\times \sM_\sim^\eff(h(X),\L^n)\to \sM_\sim^\eff(\L^n,\L^n)\osi F\]
s'identifie au produit d'intersection
\[A^n_\sim(X,F)\times A_n^\sim(X,F)\to F.\]
\end{prop}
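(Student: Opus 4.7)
The plan is to compute $\sM^\eff_\sim(\L^n,h(X))$ and $\sM^\eff_\sim(h(X),\L^n)$ explicitly by realizing $\L^n$ as a direct summand of $h((\P^1)^n)$, and then to match the induced decompositions of the Hom groups with those arising from the iterated projective bundle formula \eqref{edp}.

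For the base case $n=0$, one has $\L^0=\un=h(\Spec k)$, and the identifications
\[\sM^\eff_\sim(\un,h(X))=\Corr_\sim([\Spec k],[X])=A^0_\sim(X,F),\]
\[\sM^\eff_\sim(h(X),\un)=\Corr_\sim([X],[\Spec k])=A^{\dim X}_\sim(X,F)=A^\sim_0(X,F)\]
are immediate from the definition of $\sM^\eff_\sim$ as the Karoubian envelope of $\Corr_\sim$ (assuming $X$ equidimensional; the general case follows by additivity on connected components).

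For $n\ge 1$, I would decompose $h((\P^1)^n)=\bigoplus_{S\subset\{1,\ldots,n\}}\L^{\otimes|S|}$ via the canonical splitting $h(\P^1)=\un\oplus\L$; then $\sM^\eff_\sim(\L^n,h(X))$ is the direct summand of $\sM^\eff_\sim(h((\P^1)^n),h(X))=A^n_\sim((\P^1)^n\times X,F)$ cut out by precomposition with the idempotent $e_\L^{\otimes n}$, where $e_\L=1-h(p)\circ h(i)$ for $i\colon\Spec k\to\P^1$ a rational point and $p\colon\P^1\to\Spec k$. Iterating \eqref{edp} yields
\[A^n_\sim((\P^1)^n\times X,F)\cong\bigoplus_{a\in\{0,1\}^n} A^{n-|a|}_\sim(X,F)\cdot H^a,\]
where $H^a=\prod_{i:\,a_i=1}\pi_i^*[\mathrm{pt}]$ for $\pi_i\colon(\P^1)^n\times X\to\P^1$ the $i$-th projection. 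To match these decompositions index-by-index, one first treats the case $n=1$: a direct calculation using \eqref{eq:comp-corr} and the projection formula (noting $p_*\pi^*\alpha=0$ by vanishing of the fiberwise push-forward) shows that $e_\un=1-e_\L$ kills the summand $\pi^*A^1_\sim(X,F)$ and preserves $A^0_\sim(X,F)\cdot H$, so the $\L$-summand is $\pi^*A^1_\sim(X,F)\cong A^1_\sim(X,F)$. By tensor-compatibility, for general $n$ the summand $\L^n$ corresponds to the index $a=0$, giving $\sM^\eff_\sim(\L^n,h(X))\cong A^n_\sim(X,F)$. The second isomorphism follows symmetrically: the $\L^n$-summand of $\sM^\eff_\sim(h(X),h((\P^1)^n))=A^{\dim X}_\sim(X\times(\P^1)^n,F)$ corresponds instead to $a=(1,\ldots,1)$, giving $A^{\dim X-n}_\sim(X,F)=A^\sim_n(X,F)$.

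For the intersection-product compatibility, lift $f_\alpha\colon\L^n\to h(X)$ and $g_\beta\colon h(X)\to\L^n$ to correspondences $\tilde f_\alpha=\pi^*\alpha$ and $\tilde g_\beta=\pi^*\beta\cdot H^{(1,\ldots,1)}$. A direct computation using \eqref{eq:comp-corr} and the projection formula gives
\[\tilde g_\beta\circ\tilde f_\alpha=\deg(\alpha\cdot\beta)\cdot[(\P^1)^n\times\{(x_0,\ldots,x_0)\}]\in A^n_\sim((\P^1)^n\times(\P^1)^n,F),\]
and the K\"unneth decomposition of the diagonal of $(\P^1)^n$ (based, for $n=1$, on $\Delta_{\P^1}=[\{x_0\}\times\P^1]+[\P^1\times\{x_0\}]$) identifies the correspondence $[(\P^1)^n\times\{(x_0,\ldots,x_0)\}]$ with $e_\L^{\otimes n}$; hence $g_\beta\circ f_\alpha=\deg(\alpha\cdot\beta)\in F=\End_{\sM^\eff_\sim}(\L^n)$, the latter isomorphism coming from Prop \ref{p6.1}. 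The main obstacle is the careful index-by-index matching of the categorical and cohomological decompositions, which reduces essentially to the identity $e_\L=[\P^1\times\{x_0\}]$ in $A^1_\sim(\P^1\times\P^1,F)$; once this is secured, all remaining verifications are direct calculations with the correspondence calculus and the projection formula.
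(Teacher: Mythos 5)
Votre démonstration est correcte et suit essentiellement la même voie que celle du texte, qui se réduit à la phrase «~cela résulte de la formule \eqref{edp}, par itération~»~: vous ne faites qu'expliciter cette itération (décomposition de $h((\P^1)^n)$, identification du facteur $\L^{\otimes n}$ via $e_\L=[\P^1\times\{x_0\}]$, puis calcul direct de la composition par la formule de projection). Les vérifications de détail (annulation de $(p_X)_*\pi_X^*\alpha$, décomposition de la diagonale de $\P^1$) sont exactes.
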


\begin{proof} Cela r\'esulte encore de la formule \eqref{edp}, par it\'eration.
\end{proof}

\subsection{Motifs purs}

\begin{defn} On note 
\[\sM_\sim(k,F)=\sM_\sim^\eff(k,F)[\L^{-1}]\]
\cf d\'efinition \ref{dA.8}.
\end{defn}

Compte tenu de la proposition \ref{p6.1}, le th\'eor\`eme \ref{tA.3} donne:

\begin{thm} Le foncteur canonique $\sM_\sim^\eff(k,F)\to \sM_\sim(k,F)$ est pleinement fid\`ele; la structure mono\"\i dale sym\'etrique de $\sM_\sim^\eff(k,F)$ s'\'etend \`a $\sM_\sim(k,F)$.\qed
\end{thm}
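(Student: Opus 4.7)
Le plan est d'invoquer directement le th\'eor\`eme g\'en\'eral \ref{tA.3} de l'appendice, qui traite pr\'ecis\'ement l'inversion d'un objet quasi-inversible pour le produit tensoriel dans une cat\'egorie mono\"\i dale sym\'etrique unitaire. La proposition \ref{p6.1} ayant d\'ej\`a \'etabli que $\L$ est quasi-inversible dans $\sM_\sim^\eff(k,F)$, les deux assertions -- pleine fid\'elit\'e du foncteur canonique et extension de la structure mono\"\i dale sym\'etrique -- tomberont comme cas particulier.

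Pour la pleine fid\'elit\'e, j'utiliserais la description explicite des $\Hom$ dans la cat\'egorie localis\'ee fournie par \ref{tA.3}, \`a savoir
\[\sM_\sim(k,F)(M,N)\simeq \colim_n \sM_\sim^\eff(k,F)(M\otimes \L^{\otimes n},N\otimes \L^{\otimes n}).\]
La quasi-inversibilit\'e de $\L$ dit exactement que les fl\`eches de transition de ce syst\`eme filtrant -- qui sont donn\'ees par $-\otimes \L$ -- sont des bijections. Le foncteur canonique s'identifie alors \`a l'inclusion du premier terme dans la colimite, qui est donc bijective sur chaque groupe de morphismes.

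Pour l'extension de la structure mono\"\i dale sym\'etrique, le th\'eor\`eme \ref{tA.3} construit $\otimes$ sur $\sM_\sim(k,F)$ par la formule $(M\otimes \L^{-m})\otimes (N\otimes \L^{-n})=(M\otimes N)\otimes \L^{-(m+n)}$, et v\'erifie l'ind\'ependance vis-\`a-vis des repr\'esentants ainsi que la persistance des contraintes d'associativit\'e, d'unit\'e et de sym\'etrie. L'unit\'e est h\'erit\'ee de celle de $\sM_\sim^\eff(k,F)$, et la compatibilit\'e du foncteur canonique avec le produit tensoriel est tautologique (on prend $m=n=0$).

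Le seul obstacle s\'erieux aurait \'et\'e la compatibilit\'e de la contrainte de sym\'etrie $\tau_{\L,\L}$ avec le proc\'ed\'e de localisation -- un point bien connu d'\^etre subtil dans ce genre de constructions (c'est lui qui force, dans d'autres contextes, la condition que $\tau_{\L,\L}$ agisse comme l'identit\'e sur $\L^{\otimes 2}$). Ce point est pr\'ecis\'ement ce que garantit l'hypoth\`ese de quasi-inversibilit\'e dans \ref{tA.3}, de sorte que la preuve se r\'eduit effectivement \`a une invocation formelle de ce th\'eor\`eme g\'en\'eral.
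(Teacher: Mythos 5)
Votre démonstration est correcte et suit exactement la voie du texte: le théorème est précisément le corollaire de la proposition \ref{p6.1} (quasi-inversibilité de $\L$) et du théorème \ref{tA.3}, la pleine fidélité provenant du fait que les flèches de transition $-\otimes\L$ du système filtrant définissant les $\Hom$ de la localisée sont bijectives, et l'extension de la symétrie étant garantie par le lemme \ref{lA.5} c) (l'action de $\fS_3$ sur $\L^{\otimes 3}$ se factorise par la signature, donc $(123)$ agit trivialement) joint à la proposition de Voevodsky. Rien à redire.
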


\begin{defn} On note $\T=\L^{-1}$ un quasi-inverse de $\L$: c'est le \emph{motif de Tate}. Pour $M\in \sM_\sim(k,F)$ et $n\in \Z$, on note
\[M(n)=
\begin{cases}
M\otimes \L^{\otimes -n}&\text{si $n\le 0$}\\
M\otimes \T^{\otimes n}&\text{si $n\ge 0$.}
\end{cases}\]
\end{defn}

Pour r\'esumer, on a construit une cha\^\i ne de cat\'egories et foncteurs mono\"\i daux sym\'etriques
\[\begin{CD}
\V(k)^\op@>>> \Corr_\sim(k,F)@>\natural>> \sM_\sim^\eff(k,F)@>\L^{-1}>> \sM_\sim(k,F)\\
X&\mapsto& [X]&\mapsto& h(X)&\mapsto & h(X)\\
f&\mapsto &[{}^t\Gamma_f]
\end{CD}\]
les deux derniers \'etant pleinement fid\`eles.

\begin{rque} Ce qui pr\'ed\`ede est la construction classique des motifs purs, comme la pr\'esentait Grothendieck. Dans \cite{jannsen}, Uwe Jannsen a introduit une construction directe de $\sM_\sim(k,F)$ comme la cat\'egorie des triplets $(X,p,n)$ o\`u $X\in \V(k)$, $p$ est un projecteur sur $X$ et $n\in \Z$. Cette description est reprise par exemple dans \cite{scholl} et \cite{andre}. Je pr\'ef\`ere suivre la construction initiale de Grothendieck, donn\'ee dans \cite{kleiman-motives}, pour deux raisons:
\begin{itemize}
\item elle met mieux en relief les enjeux cat\'egoriques;
\item elle est le mod\`ele de la construction ult\'erieure des motifs triangul\'es par Voevodsky \cite{voetri};
\item l'autre construction est id\'eale pour la structure tensorielle, mais moins pour la structure additive:  il est difficile de d\'ecrire $(X,p,m)\oplus (Y,q,n)$ si $m\ne n$. Le recours \`a la formule de la droite projective \eqref{edp} est n\'ecessaire et plus artificiel, \cf \cite[1.14]{scholl}.
\end{itemize}
\end{rque}

\subsection{Rigidit\'e}

\begin{thm}\phantomsection\label{t6.5} La cat\'egorie $\sM_\sim(k,F)$ est rigide (d\'efinition \break \ref{dDP}).
\end{thm}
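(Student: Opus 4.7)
The plan is to construct, for each $X \in \V(k)$ equidimensional of dimension $d$, a strong dual of $h(X)$. The key input is Proposition \ref{p6.2}, which, via the quasi-invertibility of $\L$ (Proposition \ref{p6.1}), gives canonical identifications
\[\sM_\sim(\un,\, h(X)(d)\otimes h(X)) \simeq A^d_\sim(X\times X, F),\qquad \sM_\sim(h(X)\otimes h(X)(d),\,\un) \simeq A_d^\sim(X\times X, F).\]
Since $\dim(X\times X) = 2d$, the class $[\Delta_X]$ of the diagonal lies in both groups, and I would take both the coevaluation $\eta\colon \un\to h(X)(d)\otimes h(X)$ and the evaluation $\epsilon\colon h(X)\otimes h(X)(d)\to \un$ to be $[\Delta_X]$.

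Before verifying the triangle identities, I would dispose of two formal reductions. First, in any symmetric monoidal category, the full subcategory of dualizable objects is stable under tensor products and direct summands, and a quasi-invertible object is automatically dualizable (its quasi-inverse being a strong dual). Combined with the fact that, by construction, every object of $\sM_\sim(k,F)$ is a direct summand of $h(X)\otimes \T^{\otimes n}$ for some $X\in \V(k)$ and $n\in \Z$, and that $\T$ is quasi-invertible with quasi-inverse $\L$, this reduces rigidity of $\sM_\sim(k,F)$ to the dualizability of each $h(X)$. Second, the functor $h$ is additive and sends disjoint unions to direct sums, so one may assume $X$ connected, hence purely of dimension $d$.

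The triangle identities state that the two composites
\[h(X)\xrightarrow{\,1\otimes \eta\,} h(X)\otimes h(X)(d)\otimes h(X)\xrightarrow{\,\epsilon\otimes 1\,} h(X)\]
and its mirror image on $h(X)(d)$ are identities. Unravelling $\eta$, $\epsilon$ and the tensor product of morphisms through the composition rule \eqref{eq:comp-corr}, each identity reduces, after identification, to the cycle-theoretic equality
\[(p_{14})_*\bigl(p_{12}^*[\Delta_X]\cdot p_{23}^*[\Delta_X]\cdot p_{34}^*[\Delta_X]\bigr)=[\Delta_X]\quad \text{in }A^d_\sim(X\times X, F),\]
where the $p_{ij}$ are the projections from $X^4$. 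Set-theoretically, the three pullbacks meet exactly along the small diagonal $\{(x,x,x,x)\}\subset X^4$, which $p_{14}$ maps isomorphically onto $\Delta_X$; the moving lemma (axiom (i) of an adequate equivalence) is what makes this set-theoretic observation legitimate as an identity of intersection classes modulo $\sim$.

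The main obstacle is bookkeeping rather than mathematical content. The underlying geometric fact---``diagonal composed with diagonal is diagonal''---is essentially trivial, but to reach it one must track carefully: the associativity and symmetry constraints of $\otimes$; the passage $\sM_\sim^\eff \to \sM_\sim$ (so that $h(X)(d)$ is to be read as $h(X)\otimes \T^{\otimes d}$, and the identification with $A^d_\sim(X\times X,F)$ proceeds by tensoring back with $\L^{\otimes d}$ and invoking Proposition \ref{p6.1}); and the two dual flavours of Proposition \ref{p6.2}, one for $\sM_\sim(\L^n,h(Y))$ and one for $\sM_\sim(h(Y),\L^n)$. Once this dictionary is written down, the verification is mechanical, both triangle identities collapsing to the single intersection calculation displayed above.
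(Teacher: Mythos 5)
Your proposal is correct and follows essentially the same route as the paper: reduce by stability of dualizable objects under tensor products and direct summands to the case of $h(X)$ with $X$ equidimensional, take $h(X)^*=h(X)(d)$, and define both $\eta$ and $\epsilon$ as the class of the diagonal via Proposition \ref{p6.2} and the identification $A^d_\sim(X\times X,F)=A_d^\sim(X\times X,F)$. The paper leaves the triangle identities as an exercise, whereas you sketch their reduction to the intersection identity for iterated diagonals, which is the content of that exercise.
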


\begin{proof} Soit $\sM'$ la sous-cat\'egorie pleine de $\sM_\sim(k,F)$ dont les objets sont les $h(X)(n)$ pour $X\in \V(k)$ \'equidimensionnel et $n\in \Z$. Son enveloppe karoubienne est $\sM_\sim(k,F)$. D'apr\`es les propositions \ref{pA.4} c) et \ref{pA.3}, il suffit de monter que $h(X)$ est fortement dualisable pour $X$ \'equidimensionnel.

Soit $n=\dim X$. Je dis que
\[h(X)^*=h(X)(n)\]
d\'efinit un dual fort de $X$. Pour cela, il faut d\'ecrire les morphismes $\eta$ et $\epsilon$ de la d\'efinition \ref{dDP}. \`A un twist \`a la Tate pr\`es, il s'agit de morphismes
\[\L^n\by{\eta}h(X)\otimes h(X), \quad h(X)\otimes h(X)\by{\epsilon} \L^n.\] 

En utilisant l'identit\'e $h(X)\otimes h(X)=h(X\times X)$ et la proposition \ref{p6.2}, ces morphismes correspondent \`a des classes de
\[A^n_\sim(X\times X,F)=A_n^\sim(X\times X,F).\]

Dans les deux cas, on prend la classe de la diagonale $[\Delta_X]$. La v\'erification des identit\'es de la d\'efinition \ref{dDP} sont laiss\'ees en exercice.
\end{proof}

\subsection{Le th\'eor\`eme de Jannsen} 

\begin{thm}[Jannsen, 1991]\phantomsection\label{tjannsen} Si $F$ est un corps de caract\'eristique z\'ero, la cat\'egorie $\sM_\num(k,F)$ est ab\'elienne semi-simple.
\end{thm}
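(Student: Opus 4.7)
The plan is to reduce the theorem to showing that, for every $X \in \V(k)$, the endomorphism $F$-algebra
\[A(X) := \End_{\sM_\num(k,F)}(h(X))\]
is finite-dimensional and semisimple. Once this is known, every object of $\sM_\num(k,F)$ --- being a direct summand of some $h(X)(n)$ --- has semisimple Artinian endomorphism algebra, and therefore decomposes into finitely many simple summands whose endomorphism rings are division $F$-algebras; the pseudo-abelian $F$-linear category $\sM_\num(k,F)$ is then abelian semisimple by standard nonsense. Finite-dimensionality is immediate: $A(X) = A^n_\num(X \times X, F)$ with $n = \dim X$ is finite-dimensional by Theorem \ref{t6.9}.

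For semisimplicity, the strategy exploits the rigidity of $\sM_\num(k,F)$ (Theorem \ref{t6.5}). Every $f \in A(X)$ has a categorical trace $\Tr(f) \in \End(\un) = F$, and unwinding the duality morphisms $\eta, \epsilon$ from the proof of \ref{t6.5} yields the explicit formula $\Tr(h) = \deg(\Delta_X \cdot {}^t h)$. The symmetric bilinear form $\phi(f, g) := \Tr(f \circ g)$ on $A(X)$ therefore coincides with the intersection pairing on $A^n_\num(X \times X, F)$, and it is \emph{non-degenerate} by the very definition of numerical equivalence. Granting the key lemma that $\Tr$ vanishes on nilpotent elements of $A(X)$, semisimplicity drops out at once: the Jacobson radical $J := J(A(X))$ is a nilpotent ideal, so for $j \in J$ and any $a \in A(X)$ the product $ja \in J$ is nilpotent, whence $\phi(j, a) = \Tr(ja) = 0$; non-degeneracy of $\phi$ forces $j = 0$.

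The technical heart --- and where characteristic zero enters --- is proving this vanishing. Fix a Weil cohomology $H$ with coefficient field $K$ (e.g.\ $l$-adic cohomology, available in any characteristic). The quotient $\End_{\sM_H}(h(X)) \twoheadrightarrow A(X)$ is surjective, and any two lifts of a given numerical class $g$ differ by the $H$-class of a numerically trivial correspondence, whose Lefschetz trace $\sum_i (-1)^i \Tr_K(\cdot \mid H^i(X)) = \deg(\Delta_X \cdot {}^t \cdot)$ vanishes by definition. Proposition \ref{p3.4} therefore gives, for any lift $\tilde g$,
\[\Tr(g) = \sum_i (-1)^i \Tr_K(\tilde g \mid H^i(X)).\]
Applied to $g = f^k$ with $\tilde g = \tilde f^k$ for a fixed lift $\tilde f$ of $f$, and writing $\lambda_1, \dots, \lambda_r \in \bar K$ for the eigenvalues of $\tilde f$ on $H^*(X)$ (with multiplicity) together with their cohomological parity signs $\epsilon_j \in \{\pm 1\}$, this yields
\[\Tr(f^k) = \sum_{j=1}^r \epsilon_j \lambda_j^k \qquad (k \ge 1).\]
If $f^N = 0$ in $A(X)$ the left-hand side vanishes for all $k \ge N$, so the generating function $\sum_{k \ge 0} \Tr(f^k) t^k = \sum_j \epsilon_j/(1 - \lambda_j t)$ is a polynomial; its only possible poles are at the distinct points $1/\lambda_j$ with $\lambda_j \ne 0$, so their residues vanish, forcing $\sum_{j : \lambda_j = \mu} \epsilon_j = 0$ for each nonzero $\mu$, and therefore $\Tr(f) = \sum_j \epsilon_j \lambda_j = 0$.

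The main obstacle is precisely this last step. The naive hope --- that a numerically nilpotent $f$ admits a lift $\tilde f$ acting nilpotently on $H^*(X)$ --- is false, since distinct lifts of $f$ may differ by correspondences that are homologically nonzero while numerically trivial. The partial-fraction manipulation is what extracts $\Tr(f) = 0$ from the vanishing of $\Tr(f^k)$ for all large $k$, and characteristic zero is essential here: in characteristic $p$ the analogous computation collides with $p$-torsion phenomena and the theorem genuinely fails without further hypotheses.
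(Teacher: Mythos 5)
Your proof is correct, and it is in substance the argument of Jannsen that the paper cites without reproducing (the text only remarks that the proof ``repose sur l'existence d'une cohomologie de Weil'' and is close in spirit to that of Theorem \ref{t6.9} --- which your use of the non-degenerate intersection pairing together with a Weil cohomology confirms). The mechanism --- the trace form on $A(X)=A^n_\num(X\times X,F)$ is non-degenerate by the very definition of $\sim_\num$, and the categorical trace kills nilpotents, proved by lifting to $\sM_H$ and extracting $\Tr(f)=0$ from the vanishing of the super-power-sums $\Tr(f^k)$ for $k\gg 0$ via partial fractions --- is exactly Jannsen's, merely packaged dually: he shows that the radical of the image of the correspondence algebra in $\End_K(H^*(X))$ lies in the kernel of the super-trace pairing, whereas you argue directly inside $A(X)$. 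Two elisions are worth flagging, neither fatal. First, the passage from ``every $\End(M)$ is a finite-dimensional semisimple $F$-algebra'' to ``the pseudo-ab\'elienne category is abelian semisimple'' is a genuine (if formal) lemma of Jannsen, not quite pure nonsense. Second, for $F\ne\Q$ your $F$-valued intersection pairing and your $K$-valued cohomological traces live in different fields; one should first reduce to $F=\Q$ via $A^*_\num(-,F)\simeq A^*_\num(-,\Q)\otimes_\Q F$ (a consequence of Theorem \ref{t6.9}) together with the invariance of semisimplicity under $-\otimes_\Q F$ in characteristic zero --- and this reduction, not the vaguer ``$p$-torsion phenomena'' of your closing remark, is the precise point where the hypothesis $\car F=0$ enters.
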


Avant la perc\'ee de Jannsen, on savait que cet \'enonc\'e r\'esultait des conjectures standard de Grothendieck.\index{Conjectures!standard} Le fait qu'on puisse le d\'emontrer inconditionnellement a \'et\'e une surprise g\'en\'erale. Je renvoie \`a l'article de Jannsen \cite{jannsen} pour sa d\'emonstration originelle, et \`a \cite{ak2} pour une version abstraite de cette d\'emonstration.

Celle-ci repose sur l'existence d'une cohomologie de Weil; son principe est tr\`es proche de celui de la d\'emonstration du théorème \ref{t6.9}.

\subsection{Spécialisation} Soit $K$ un corps muni d'une valuation discrète $v$ de rang $1$, d'anneau de valuation $\sO$ et de corps résiduel $k$. On aimerait définir un foncteur de spécialisation
\[\sM_\sim(K,F)\to \sM_\sim(k,F)\]
au moins pour certains couples adéquats $(F,\sim)$. J'ignore si c'est possible dans cette généralité, mais c'est au moins le cas si on se restreint à une sous-catégorie pleine de $\sM_\sim(K,F)$:

\begin{defn} On note $\sM_\sim(K,v,F)$ la sous-catégorie épaisse de $\sM_\sim(K,F)$ engendrée par les motifs de la forme $h(X)$, où $X$ est une $K$-variété projective lisse qui a bonne réduction relativement à $v$ (cf. définition \ref{d5.3}). Les objets de $\sM_\sim(K,v,F)$ sont les \emph{motifs à bonne réduction} (relativement à $v$).
\end{defn}

\begin{rques}\phantomsection\label{r6.2} a) $\P^1$ a bonne réduction; si $X,X'$ ont bonne réduction, $X\times X'$ a bonne réduction. Cela  implique que la sous-catégorie $\sM_\sim(K,v,F)\subset \sM_\sim(K,F)$ est \emph{rigide}.\\
b) Supposons $\car k\ne 2$ et soit $C$ la conique projective anisotrope d'équation $aX_0^2+\pi X_1^2 -X_2^2=0$, où $a\in K^*- K^{*2}$ et $\pi$ est une uniformisante. Alors $C$ n'a pas bonne réduction, mais $h(C)\simeq h(\P^1)$ dans $\sM_\sim(K,F)$ dès que $2$ est inversible dans $F$. (Cet exemple marche aussi en caractéristique résiduelle $2$, avec une modification convenable.) Ainsi, le motif d'une $K$-variété projective lisse $X$ peut être à bonne réduction même si $X$ n'est pas à bonne réduction.
\end{rques}

\begin{thm}\phantomsection\label{t6.8} Il existe un unique $\otimes$-foncteur 
\[sp:\sM_\sim(K,v,F)\to \sM_\sim(k,F)\] 
envoyant le motif d'une $K$-variété projective lisse $X$ à bonne réduction sur celui de sa fibre spéciale (relative à un modèle lisse quelconque), dans les cas suivants:
\begin{enumerate}
\item $\sim$ est l'équivalence rationnelle, $F$ est quelconque.
\item $\sim$ est l'équivalence algébrique.
\item $\sim$ est l'équivalence de smash-nilpotence, $F$ est quelconque.
\item $\sim$ est l'équivalence homologique $l$-adique\index{Cohomologie!$l$-adique} où le nombre premier $l$ est inversible dans $k$, $F\supset \Q$.
\item $\car k=0$, $\sim$ est une cohomologie\index{Cohomologie!de Weil!classique} de Weil classique, $F\supset \Q$.
\end{enumerate}
\end{thm}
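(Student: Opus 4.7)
La strat\'egie est de d\'efinir $sp$ d'abord sur les correspondances, puis d'\'etendre \`a l'enveloppe karoubienne et \`a la localisation \`a l'aide du th\'eor\`eme \ref{tA.3}. Soient $X,X'$ deux $K$-vari\'et\'es projectives lisses ayant bonne r\'eduction, de mod\`eles lisses $\sX,\sX'$ sur $\sO$. Alors $\sX\times_\sO \sX'$ est projectif lisse sur $\sO$, de fibre g\'en\'erique $X\times X'$ et de fibre sp\'eciale $X_s\times X'_s$. \`A un cycle irr\'eductible $Z\subset X\times X'$, on associe son adh\'erence sch\'ematique $\bar Z\subset \sX\times_\sO \sX'$, qui est plate sur $\sO$ (l'id\'eal maximal de $\sO$ \'etant non diviseur de z\'ero dans l'anneau local g\'en\'erique de $\bar Z$); sa fibre sp\'eciale $\bar Z\cap (X_s\times X'_s)$ d\'efinit un cycle $sp(Z)\in Z^*(X_s\times X'_s)$, la \emph{sp\'ecialisation de $Z$}. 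Par lin\'earit\'e on obtient un homomorphisme $sp:Z^*(X\times X')\to Z^*(X_s\times X'_s)$.

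La premi\`ere \'etape-cl\'e est de montrer que $sp$ passe au quotient par $\sim$. Pour $\sim_\rat$, c'est le contenu de la th\'eorie de la sp\'ecialisation de Fulton \cite[ch. 20]{fulton} (l'outil technique essentiel \'etant la d\'eformation vers le c\^one normal et la platitude des cycles limites). Pour $\sim_\alg$, on observe que si $Z\sim_\alg Z'$ sur $X\times X'$ \emph{via} une courbe lisse $T/K$, on peut prendre un mod\`ele lisse $\sT/\sO$ quitte \`a r\'etr\'ecir ou \`a �clater soigneusement; la sp\'ecialisation commute alors au param\'etrage sur $\sT$. Pour $\sim_\tnil$, si $\alpha^{\otimes N}=0$ dans $Z^*(X^N)/\sim_\rat$, l'adh\'erence commute aux produits sch\'ematiques, donc $sp(\alpha)^{\otimes N}=0$ par le cas de $\sim_\rat$. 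Pour $\sim_H$ $l$-adique avec $l$ inversible dans $k$, on utilise l'isomorphisme de changement de base propre et lisse $H^*_\et(\bar X,\Q_l)\simeq H^*_\et(\bar{\sX},\Q_l)\simeq H^*_\et(\bar X_s,\Q_l)$ (\S \ref{3.6.1}, \emph{Changement de base propre et lisse}) et la compatibilit\'e de la classe de cycle \`a la sp\'ecialisation (qui se d\'eduit de ce que $\cl(\bar Z)\in H^{2c}_\et(\bar{\sX}\times \bar{\sX'},\Q_l(c))$ s'envoie sur $\cl(Z)$ \emph{et} sur $\cl(sp(Z))$ par les deux isomorphismes de base). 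Enfin, en caract\'eristique $0$, l'argument analogue s'applique aux autres cohomologies de Weil classiques (Betti via Artin-Grothendieck, de Rham via comparaison dR-Betti puis un lissage complexe, cristalline non concern\'ee puisque $\car k=0$ est exclu ici).

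La deuxi\`eme \'etape est la compatibilit\'e \`a la composition des correspondances. Donn\'es $\alpha\in Z^*_\sim(X\times X')$ et $\beta\in Z^*_\sim(X'\times X'')$, il faut \'etablir $sp(\beta\circ\alpha)=sp(\beta)\circ sp(\alpha)$. Gr\^ace au lemme de d\'eplacement pour $\sim$, on peut supposer que toutes les intersections en jeu sont propres, et le calcul se ram\`ene alors \`a la commutation de trois op\'erations avec la sp\'ecialisation : produit d'intersection (\cite[th.~20.3]{fulton}), image directe par un morphisme propre ($p_{13}$ sur le triple produit), et formation de l'adh\'erence. Cette v\'erification prolonge $sp$ en un foncteur $F$-lin\'eaire et mono\"\i dal sym\'etrique d\'efini sur la sous-cat\'egorie pleine de $\Corr_\sim(K,F)$ engendr\'ee par les $[X]$ \`a bonne r\'eduction, o\`u l'on a en outre choisi un mod\`ele lisse $\sX$ pour chaque tel $X$.

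Le point d\'elicat restant est l'\emph{ind\'ependance du choix du mod\`ele lisse}, indispensable pour que $sp$ descende \`a une sous-cat\'egorie intrins\`eque. Soient $\sX_1,\sX_2$ deux mod\`eles lisses de $X$ : la diagonale $\Delta_X\subset X\times X$ s'\'etend en $\bar\Delta\subset \sX_1\times_\sO \sX_2$ (adh\'erence plate), et sa fibre sp\'eciale d\'efinit une correspondance $\gamma\in \Corr^0_\sim(X_{s,1},X_{s,2})$; en \'echangeant les r\^oles, on obtient $\gamma'\in \Corr^0_\sim(X_{s,2},X_{s,1})$. La deuxi\`eme \'etape appliqu\'ee aux compos\'ees $\gamma'\circ \gamma$ et $\gamma\circ\gamma'$ (qui sp\'ecialisent la classe de $\Delta_X\circ\Delta_X=\Delta_X$) montre que $\gamma$ est un isomorphisme dans $\sM_\sim(k,F)$ entre $h(X_{s,1})$ et $h(X_{s,2})$, naturel en les correspondances. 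Ceci permet de d\'efinir $sp$ sur $\Corr_\sim(K,v,F)$ ind\'ependamment du choix des mod\`eles. Le foncteur s'\'etend alors \`a l'enveloppe karoubienne $\sM^\eff_\sim(K,v,F)$ par le lemme \ref{lA.2}; comme $sp(\L_K)=\L_k$ (prendre le mod\`ele $\P^1_\sO$), le th\'eor\`eme \ref{tA.3} fournit l'extension \`a $\sM_\sim(K,v,F)$. L'unicit\'e r\'esulte de ce que toute fl\`eche entre $h(X)(m)$ et $h(Y)(n)$ provient d'un cycle sur $X\times Y$, dont la sp\'ecialisation est impos\'ee par la compatibilit\'e \`a l'adh\'erence sch\'ematique.

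Le principal obstacle conceptuel est l'\'etape (1) pour les cas (4)~et~(5): il faut formuler un \'enonc\'e pr\'ecis de compatibilit\'e cycle/cohomologie sur la base $\sO$, en passant par la cohomologie de $\sX\times\sX'$ vue comme ``interpolation'' entre les cohomologies des deux fibres. Dans le cas $l$-adique, ce pont est fourni par les deux isomorphismes de changement de base (lisse pour passer � $\bar K$, propre et lisse pour descendre � $\bar k$) combin�s au fait que l'adh�rence plate $\bar Z$ d�finit une classe dans $H^*_\et(\bar{\sX},\Q_l)$ qui est envoy�e sur $[Z]$ et sur $[sp(Z)]$; la v�rification ponctuelle de cette double compatibilit� est la partie la moins formelle du programme. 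L'ind\'ependance du mod\`ele, bien que techniquement routini\`ere, doit �tre �tablie avant la proposition \ref{p5.2} pour �viter toute circularit\'e.
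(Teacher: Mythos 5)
Votre d\'emonstration suit pour l'essentiel la m\^eme strat\'egie que celle du texte: homomorphismes de sp\'ecialisation de Fulton \cite[ex. 20.3.1]{fulton} sur les groupes de correspondances, compatibilit\'e \`a la composition via \cite[20.3]{fulton}, ind\'ependance du mod\`ele lisse obtenue en sp\'ecialisant la diagonale pour produire un syst\`eme transitif d'isomorphismes, changement de base propre et lisse pour le cas (4), th\'eor\`emes de comparaison pour le cas (5) et r\'eduction du cas (3) au cas (1). Le seul point o\`u vous divergez r\'eellement est le cas (2) de l'\'equivalence alg\'ebrique: votre argument consistant \`a \'etendre la courbe $T$ param\'etrant l'\'equivalence en un mod\`ele sur $\sO$ ``quitte \`a r\'etr\'ecir ou \`a \'eclater soigneusement'' reste vague --- rien ne garantit que $T$ ait bonne r\'eduction, ni que les deux points de $T$ reliant $Z$ \`a $Z'$ se sp\'ecialisent convenablement. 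Le texte proc\`ede plus directement: la suite exacte
\[A^{n-1}_\sim(Y,F)\by{i_*} A^n_\sim(\sX,F)\by{j^*} A^n_\sim(X,F)\to 0\]
et l'identit\'e $i^*i_*=0$, qui suffisent \`a construire la sp\'ecialisation modulo l'\'equivalence rationnelle, valent aussi pour l'\'equivalence alg\'ebrique (\cite[ex. 10.3.4]{fulton}); la m\^eme construction s'applique donc mot pour mot, sans aucun recours \`a un mod\`ele de la courbe param\'etrante, et il serait pr\'ef\'erable de substituer cet argument au v\^otre. Le reste de votre plan (extension \`a l'enveloppe karoubienne, inversion de $\L$, unicit\'e) co\"\i ncide avec la d\'emonstration du texte.
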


\begin{proof} Si le foncteur existe, son unicité est évidente ainsi que sa tensorialité.

Le cas 4 résulte du théorème de changement de base propre et lisse (cf. \S \ref{3.6.1}), et le cas 5 s'en déduit par les théorèmes de comparaison. Le cas 3 se déduit immédiatement du cas 1. Il reste à traiter les cas 1 et 2.

J'ai appris l'argument qui suit de Yves André. On utilise les homomorphismes de spécialisation \cite[ex. 20.3.1]{fulton}
\[\sigma:A_\sim^i(X,F)\to A_\sim^i(Y,F)\quad (\sim=\sim_\rat, \sim_\alg) \]
où $X/K$ a bonne réduction et $Y$ est la fibre spéciale d'un modèle lisse $\sX$ de $X$. 

Précisons: dans \emph{loc. cit.}, Fulton ne traite que le cas de l'équivalence rationnelle. Mais sa construction repose sur les suites exactes \cite[prop. 1.8]{fulton}
\begin{equation}\label{eq6.3}
A_\sim^{n-1}(Y,F)\by{i_*} A_\sim^n(\sX,F)\by{j^*} A_\sim^n(X,F)\to 0 
\end{equation}
(où $j:X\to \sX$, $i:Y\to \sX$ sont les immersions ouverte et fermée correspondant à la situation), et l'identité 
\begin{equation}\label{eq6.4}
i^*i_*=0
\end{equation}
déduite de \cite[cor. 6.3]{fulton}.  Or la suite exacte \eqref{eq6.1} vaut également pour l'équivalence algébrique \cite[ex. 10.3.4]{fulton}, ainsi évidemment que l'identité \eqref{eq6.4}.

Si $X'$ est une autre variété à bonne réduction, munie d'un modèle lisse $\sX'$ de fibre spéciale $Y'$, $\sX\otimes_\sO\sX'$ est un modèle lisse de $X\times X'$, de fibre spéciale $Y\times Y'$, et  $\sigma$ induit un homomorphisme
\[sp:\Corr_\sim(K,F)(X,X')\to \Corr_\sim(k,F)(Y,Y').\]

Si $X''$ est une troisième variété à bonne réduction, munie d'un modèle lisse $\sX''$ de fibre spéciale $Y''$, et si \[\alpha\in \Corr_\sim(K,F)(X,X'),\quad \beta\in \Corr_\sim(K,F)(X',X'')\] 
sont deux correspondances, les compatibilités démontrées dans \cite[20.3]{fulton} impliquent que
\[sp(\beta\circ \alpha) = sp(\beta)\circ sp(\alpha).\]

En particulier, prenons $X=X'=X''$, $\alpha=\beta=\Delta_X$, $\sX''=\sX$: on obtient que $sp(\alpha)$ et $sp(\beta)$ sont des isomorphismes inverses  de $h(Y)$ sur $h(Y')$. En faisant varier les modèles lisses de $X$, on obtient un \emph{système transitif d'isomorphismes} entre les motifs $h(Y)$, où $Y$ décrit les fibres spéciales correspondantes. Ceci montre que le motif $h(Y)$ est bien déterminé à isomorphisme unique près, et que $sp$ définit bien un foncteur; ce foncteur est clairement monoîdal, en réutilisant la compatibilité de $\sigma$ au produit \cite[20.3]{fulton}.

La construction de $sp$ s'étend maintenant automatiquement aux motifs effectifs, puis à tous les motifs. 
\end{proof}

\begin{rque} Le cas de l'équivalence numérique n'est pas clair; il résulterait des conjectures standard.\index{Conjectures!standard} On pourra consulter \cite[3]{aknote} pour une manière de contourner ce problème.
\end{rque}

\subsection{Th\'eorie motivique des poids (cas pur)}

On suppose que $F$ est un corps de caract\'eristique z\'ero, et on se donne une cohomologie\index{Cohomologie!de Weil} de Weil $H^*$ \`a coefficients dans $K\supset F$.

\begin{defn} Soit $X\in \V(k)$, et soit $i\in\Z$. Le \emph{$i$-\`eme projecteur de K\"unneth de $X$} (relativement \`a $H$) est la projection $p^i$ de $H^*(X)=\bigoplus_j H^j(X)$ sur $H^i(X)$. C'est un endomorphisme idempotent de $H^*(X)$.
\end{defn}

Remarquons que cette d\'efinition s'\'etend \`a tout motif $M\in \sM_H(k,F)$: on peut parler des \emph{projecteurs de K\"unneth de $M$}.

\begin{defn}\phantomsection\label{d6.2} Soit $M\in \sM_H(k,F)$, et soit $i\in \Z$.\\
a)  On dit que $M$ est \emph{purement de poids $i$} si $H^j(M)=0$ pour $j\ne i$. On abrège cette définition par la notation $w(M)=i$. \\
b) On dit que \emph{le $i$-\`eme projecteur de K\"unneth de $M$ est alg\'ebrique} si on a 
\[p^i=H(p^i_{alg})\quad \text{pour } p^i_{alg}\in \End(M).\]
On abr\'egera cette phrase en: ``$p^i_M$ est alg\'ebrique''. 
 Si $M=h(X)$ pour $X\in\V(k)$, on \'ecrit $p^i_X$ pour $p^i_M$.\\
\end{defn}

Si cette condition est v\'erifi\'ee, le repr\'esentant $p^i_{alg}$ est unique dans $\End_{\sM_H}(M)$; on peut donc le noter $p^i_M$ sans ambigu\"\i t\'e. Notons aussi $M^{(i)}=\IM p^i_M$: c'est la \emph{partie de poids $i$} de $M$. Si $M=h(X)$, on note $h^i(X)$ pour $M^{(i)}$.

Les conjectures standard de Grothendieck impliquent que, pour tout $M$ et pour tout $i$, le $i$-\`eme projecteur de K\"unneth de $M$ est alg\'ebrique. Indiquons maintenant ce qui est connu inconditionnellement, \ie sans supposer ces conjectures vraies:\index{Conjectures!standard}

\begin{lemme}\phantomsection\label{l6.2} Soient $M,N\in \sM_H(k,F)$ et soient $i,j\in \Z$.\\
a) Si $p^i_M$ et $p^i_N$ sont alg\'ebriques,  $p^i_{M\oplus N}$ est alg\'ebrique.\\
b) Si $p^i_M$ est alg\'ebrique, $p^i_{M'}$ est alg\'ebrique pour tout facteur direct $M'$ de $M$.\\
c) Si les $p^*_M$ et les $p^*_N$ sont alg\'ebriques, les $p^*_{M\otimes N}$ sont alg\'ebriques.\\
d) Si $p^i_M$ est alg\'ebrique, $p^{-i}_{M^*}$ est alg\'ebrique.
\end{lemme}

\begin{proof} a), c) et d) r\'esultent des identit\'es (dans $\Vec_K^*$)
\[p^i_{M\oplus N} = p^i_M\oplus p^i_N,\quad p^k_{M\otimes N}=\sum_{i+j=k}p^i_M\otimes p^j_N, \quad p^{-i}_{M^*}={}^tp^i_M.\]

Pour b), soit $p\in \End(M)$ un projecteur d'image $M'$. On remarque que $p^i_M$ est \emph{central} dans $\End(H^*(M))$, donc \emph{a fortiori} dans son sous-anneau $\End(M)$. Donc $p^i_M$ commute \`a $p$ et $pp^i_M$ est encore un projecteur: on voit tout de suite que c'est $p^i_{M'}$.
\end{proof}

\begin{thm}\phantomsection\label{t6.2} Soit $H$ une cohomologie\index{Cohomologie!de Weil!classique} de Weil vérifiant l'axiome de la remarque \ref{r3.4} (par exemple une cohomologie de Weil classique, \S \ref{3.5.9}).\\
a) Pour tout $X\in \V(k)$, purement de dimension $n$, $p^i_X$ est alg\'ebrique pour $i=0,1,2n-1,2n$. On a $h^{2n}(X)\simeq \L^n$ si $X$ est géométriquement connexe.\\
b) Si $n=2$, tous les $p^i_X$ sont alg\'ebriques.\\
c) Si $A$ est une vari\'et\'e ab\'elienne, tous les $p^i_A$ sont alg\'ebriques.
\end{thm}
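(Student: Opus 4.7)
Le plan est de traiter les trois parties dans l'ordre (c), (a), (b), la partie (a) pour $i = 1, 2n-1$ s'appuyant sur (c) via le morphisme d'Albanese. Puisque $F$ est de caractéristique zéro, toute $X \in \V(k)$ admet un $0$-cycle $e_X$ de degré $1$ dans $A_0^\sim(X, F)$, obtenu en divisant un point fermé par son degré, ce qui sera utilisé partout.

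Pour la partie (a) aux degrés extrémaux $i = 0, 2n$ et $X$ géométriquement connexe, je considérerais les cycles $X \times e_X$ et $e_X \times X$ dans $A^n_\sim(X \times X, F)$: il s'agirait de vérifier, à l'aide de \eqref{eq:act-corr} et des axiomes (iii), (v), (vii) de la cohomologie de Weil, que ce sont des idempotents orthogonaux dont les classes en cohomologie sont respectivement $p^{2n}_X$ et $p^0_X$; l'isomorphisme $h^{2n}(X) \simeq \L^n$ en résulterait comme sous-produit. Le cas non géométriquement connexe se traiterait par extension des scalaires et descente galoisienne, à l'aide du lemme \ref{l3.5}.

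Pour la partie (c), l'idée est d'exploiter le fait que $H^*(A) \simeq \bigwedge^* H^1(A)$ (\cite{kunnemann}) et que $[n]^*: A \to A$ agit sur $H^1(A)$ par multiplication par $n$, donc sur $H^i(A)$ par $n^i$. Fixant $n \geq 2$, je définirais les polynômes d'interpolation de Lagrange
\begin{equation*}
\pi^i := \prod_{j \neq i} \frac{\Gamma_{[n]}^* - n^j \Delta_A}{n^i - n^j} \in \End(h(A)),
\end{equation*}
bien définis puisque $F \supset \Q$; ils agissent en cohomologie comme les projecteurs idempotents sur les $H^i(A)$, fournissant les $p^i_A$ algébriquement. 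La partie (a) pour $i = 1$ s'en déduirait via le morphisme d'Albanese: le $0$-cycle $e_X$ détermine un morphisme $\alpha: X \to A := \Alb(X)$, l'axiome supplémentaire de la remarque \ref{r3.4} donne $\alpha^*: H^1(A) \iso H^1(X)$, et l'on transporterait $p^1_A$ en une correspondance algébrique sur $X \times X$ par composition avec les correspondances naturellement associées à $\alpha$ (graphe et transposé), à normaliser par un calcul direct pour assurer l'idempotence. Le cas $i = 2n-1$ suivrait par transposition (lemme \ref{l3.1}).

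La partie (b) en découle immédiatement: pour $n = 2$, les indices non couverts par (a) se réduisent à $i = 2$, et $p^2_X = \Delta_X - \sum_{i \neq 2} p^i_X$ est algébrique comme combinaison linéaire de cycles algébriques. L'obstacle principal est la construction du transfert pour $i = 1$: il faut à la fois identifier la normalisation correcte pour obtenir un véritable idempotent (et pas simplement une correspondance qui agit comme $p^1$ modulo scalaire en cohomologie), et gérer la descente galoisienne dans le cas non géométriquement connexe. C'est précisément l'axiome $H^1(\Alb X) \iso H^1(X)$ de la remarque \ref{r3.4} qui rend cette construction possible, en garantissant que tout $H^1(X)$ est capté par la cohomologie de la variété abélienne $\Alb(X)$.
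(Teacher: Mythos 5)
Vos parties (c), (a) pour $i=0,2n$ et (b) sont correctes. Pour (c), l'interpolation de Lagrange sur $\Gamma_{[n]}^*$ est exactement l'argument de Lieberman--Kleiman: une correspondance est nulle modulo $\sim_H$ si et seulement si sa classe de cohomologie est nulle, donc $\End_{\sM_H}(h(A))$ s'injecte dans $\End(H^*(A))$ et $\pi^i$ est bien idempotent dès que son image cohomologique l'est. Le texte se contente ici de renvoyer à \cite{dm} et \cite{kunnemann}; votre rédaction est donc plus explicite sur ce point. Pour $i=0,2n$, votre construction via $e_X\times X$ et $X\times e_X$ coïncide en substance avec celle du texte, qui passe par le motif $h(k_X)$ du corps des constantes et évite ainsi la descente galoisienne dans le cas non géométriquement connexe. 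La partie (b) est bien une conséquence formelle de (a).

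En revanche, votre traitement de $i=1$ (et $2n-1$) comporte une lacune réelle. La correspondance ${}^t\Gamma_\alpha=\alpha_*$ associée au morphisme d'Albanese $\alpha:X\to A=\Alb(X)$ n'est pas de degré $0$: si $g=\dim A$ et $n=\dim X$, elle induit $\alpha_*:H^i(X)\to H^{i+2(g-n)}(A)(g-n)$, de sorte que $p^1_A\circ\alpha_*$ est \emph{nulle} sur $H^1(X)$ dès que $g\ne n$; aucune normalisation scalaire de $\alpha^*\circ p^1_A\circ\alpha_*$ ne peut donc donner $p^1_X$. L'axiome de la remarque \ref{r3.4} garantit que $\alpha^*:H^1(A)\to H^1(X)$ est un isomorphisme, mais il ne fournit aucune correspondance \emph{algébrique} de degré $0$ réalisant une section de $\alpha^*$. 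La construction effective (Grothendieck, puis Murre, cf. \cite[\S 4]{scholl}) exige des ingrédients supplémentaires: une section linéaire it\'er\'ee $C\subset X$ qui est une courbe, l'injection $\Pic^0_X\inj J(C)$ (Lefschetz faible pour les variétés de Picard), le théorème de réductibilité complète de Poincaré pour la scinder à isogénie près, et surtout le théorème de Weil \ref{t3.7} identifiant $\Corr_\equiv(C,C)$ à $\End(J(C))$, qui convertit ce scindage en un diviseur sur $C\times C$ que l'on pousse ensuite sur $X\times X$. C'est ce mécanisme, et non le seul morphisme d'Albanese, qui produit la correspondance de transfert; sans lui, votre esquisse ne se complète pas.
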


\begin{proof} a)   Notons $k_X$ le corps des constantes de $X$, c'est-\`a-dire la fermeture alg\'ebrique de $k$ dans $k(X)$. Comme $X$ est lisse, $k_X/k$ est s\'eparable, donc $h(\Spec k_X)$ est d\'efini. Je dis que 
\begin{align*}
h^0_X&=h(k_X)\\
 &= \un \quad \text{si } k_X=k.
 \end{align*}

Notons d'abord que $H^0(k_X)\iso H^0(X)$ (axiome (v) d'une cohomologie de Weil). Consid\'erons maintenant un point ferm\'e $i_x:x\inj X$, de corps r\'esiduel $E$. Soit $f:\Spec E\to \Spec k_X$ le morphisme canonique. Alors la composition
\[h(k_X)\by{{p'}^*}h(X)\by{i_x^*} h(x)\by{f_*}h(k_X)\]
est \'egale \`a $\deg f\cdot 1_{h(k_X)}$ gr\^ace \`a la formule du corollaire \ref{c:proj} (qui est valable en toute g\'en\'eralit\'e). Donc  $\frac{1}{\deg f}{p'}^*f_*i_x^*$ est le projecteur $p^0_X$ cherch\'e. Par dualit\'e, on obtient l'alg\'ebricit\'e de $p^{2n}_X$ et la valeur de $h^{2n}(X)$.

Le cas de $p^1_X$ et $p^{2n-1}_X$ est beaucoup plus d\'elicat: il est d\^u \`a Grothendieck et g\'en\'eralis\'e par Jakob Murre \`a l'\'equivalence rationnelle (les deux preuves reposant sur un m\^eme th\'eor\`eme de Weil). Je renvoie \`a \cite[\S 4]{scholl} pour les d\'etails.

b) r\'esulte de a) puisque 
\[p^2_X=1_{h(X)}-p^0_X-p^1_X-p^3_X-p^4_X.\]

c) Ce th\'eor\`eme est d\^u \`a Lieberman-Kleiman, et g\'en\'eralis\'e par Shermenev, Deninger-Murre et K\"unnemann \`a l'\'equivalence rationnelle: je renvoie \`a  \cite{dm} et \cite{kunnemann} pour les d\'etails.
\end{proof}

\begin{cor} Pour tout $M\in \sM_H^\eff$, $p^i_M$ est alg\'ebrique pour $i=0,1$.
\end{cor}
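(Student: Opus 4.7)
The plan is to reduce the statement to Theorem \ref{t6.2} a) via the lemma on stability properties of algebraicity (Lemma \ref{l6.2}).

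First, I would observe that by construction, $\sM_H^\eff(k,F)$ is the karoubian envelope of $\Corr_\sim(k,F)$. Hence any effective motif $M$ is (isomorphic to) the image $\IM(p)$ of a projector $p \in \End_{\Corr_H}(h(X))$ for some $X \in \V(k)$, so in particular $M$ is a direct summand of $h(X)$.

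Next, since $X$ is a projective lisse $k$-scheme, one can decompose it into a finite disjoint union $X = \coprod_j X_j$ of its connected components, each of which is purely of some dimension $n_j$. This gives $h(X) = \bigoplus_j h(X_j)$ in $\sM_H^\eff$. Theorem \ref{t6.2} a), applied to each equidimensional $X_j$, shows that $p^0_{h(X_j)}$ and $p^1_{h(X_j)}$ are algebraic (the indices $0$ and $1$ always lying in the set $\{0,1,2n_j-1,2n_j\}$, including the degenerate cases $n_j = 0$ where $h(X_j)$ is concentrated in degree $0$ anyway, and $n_j = 1$ where $2n_j-1 = 1$).

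Lemma \ref{l6.2} a) then yields the algebraicity of $p^0_{h(X)}$ and $p^1_{h(X)}$, as these projectors are just the direct sums of the $p^i_{h(X_j)}$. Finally, since $M$ is a direct summand of $h(X)$, Lemma \ref{l6.2} b) delivers the algebraicity of $p^0_M$ and $p^1_M$. There is no serious obstacle: the argument is a purely formal consequence of the two results already established, the only point requiring mild care being the preliminary reduction to the equidimensional case before invoking Theorem \ref{t6.2} a).
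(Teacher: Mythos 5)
Your proof is correct and follows essentially the same route as the paper, which reduces via Lemma \ref{l6.2} b) to $M=h(X)$ and invokes Theorem \ref{t6.2} a). Your extra step of splitting $X$ into equidimensional components and applying Lemma \ref{l6.2} a) is a welcome (if routine) precision that the paper leaves implicit.
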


\begin{proof} Par le lemme \ref{l6.2} b), on se ram\`ene \`a $M=h(X)$ pour $X\in \V(k)$ et on applique le th\'eor\`eme \ref{t6.2} a).
\end{proof}

Enfin, le th\'eor\`eme suivant est beaucoup plus profond: il repose sur la d\'emonstration par Deligne de l'hypoth\`ese de Riemann\index{Hypothèse de Riemann!pour les variétés projectives lisses} \cite{weilII}.

\begin{thm}[Katz-Messing, \protect{\cite{KM}}]\phantomsection\label{t6.3} Si $k$ est fini, les projecteurs de K\"unneth sont alg\'ebriques pour tout $X\in\V(k)$ et $H=H_l$ pour tout $l\ne \car k$, ainsi que pour la cohomologie cristalline\index{Cohomologie!cristalline}. De plus, $p^i_X$ est donn\'e par un cycle alg\'ebrique ind\'ependant de la cohomologie de Weil choisie, et combinaison lin\'eaire de puissances de Frobenius.
\end{thm}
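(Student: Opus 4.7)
Le plan est d'exploiter l'hypoth\`ese de Riemann de Deligne pour exprimer les projecteurs de K\"unneth comme des polyn\^omes explicites en la correspondance de Frobenius, \`a coefficients rationnels ne d\'ependant pas du choix de la cohomologie.

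On commence par fixer $X\in \V(\F_q)$ g\'eom\'etriquement connexe de dimension $n$, et par consid\'erer la correspondance de Frobenius $\pi\in \Corr^0(X,X)$ (graphe de $F_X:X\to X$, � transposition pr\`es comme au \S\ref{s2.8.7}): c'est un cycle alg\'ebrique � coefficients entiers. Son action sur $H^i_l(X)$ a pour polyn\^ome caract\'eristique  $P_i(t)=\det(t-\pi_X^*\mid H^i_l(X))$. Gr\^ace \`a la derni\`ere conjecture de Weil (derni\`ere section du \S\ref{3.6.1}) et au lemme \ref{l3.4}, on sait que $P_i\in \Z[t]$, est ind\'ependant de $l\ne \car k$, et que ses racines sont des $q$-nombres de Weil de poids $i$. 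L'\'enonc\'e analogue pour la cohomologie cristalline est �galement connu (et c'est un point que Katz-Messing doivent v\'erifier s\'epar\'ement).

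L'observation cl\'e est alors que les $P_i$, pour $i=0,\dots,2n$, sont \emph{deux \`a deux premiers entre eux} dans $\Q[t]$, puisque leurs racines ont des valeurs absolues complexes deux \`a deux distinctes, \`a savoir $q^{i/2}$. Le lemme chinois appliqu\'e dans l'anneau $\Q[t]/\bigl(\prod_i P_i(t)\bigr)$ fournit donc des polyn\^omes $Q_i(t)\in \Q[t]$, uniquement d\'etermin\'es modulo $\prod_i P_i$, tels que $Q_i\equiv 1\pmod{P_i}$ et $Q_i\equiv 0\pmod{P_j}$ pour tout $j\ne i$. Ces polyn\^omes $Q_i$ ne d\'ependent que des $P_i$, donc pas de la cohomologie de Weil choisie.

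Il reste \`a v\'erifier que $Q_i(\pi)$, vu comme \'el\'ement de $\End_{\sM_H}(h(X))\otimes \Q$ (somme finie de puissances de composition de $\pi$ � coefficients rationnels), est le projecteur cherch\'e. C'est imm\'ediat par Cayley-Hamilton: $P_j(\pi_X^*)$ annule $H^j_l(X)$, donc $Q_i(\pi_X^*)$ s'annule sur $H^j_l(X)$ pour $j\ne i$ (car $P_j\mid Q_i$) et agit comme l'identit\'e sur $H^i_l(X)$ (car $Q_i\equiv 1\pmod{P_i}$). Ainsi $H(Q_i(\pi))=p^i_X$, et $Q_i(\pi)$ est l'\emph{unique} repr\'esentant alg\'ebrique demand\'e par la d\'efinition \ref{d6.2} b).

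L'ind\'ependance du cycle obtenu vis-\`a-vis du choix de la cohomologie de Weil est une cons\'equence directe de la construction: la correspondance $\pi$ est intrins\`eque \`a $X/\F_q$, et les coefficients $Q_i$ sont d\'etermin\'es par les $P_i$, eux-m\^emes ind\'ependants de ce choix. Le seul point v\'eritablement non trivial dans tout ce plan est l'hypoth\`ese de Riemann, qui fournit la s\'eparation des poids indispensable \`a la coprimalit\'e des $P_i$; c'est elle qui explique que Katz-Messing aient d\^u attendre la d\'emonstration des conjectures de Weil par Deligne. Le reste --- lemme chinois, Cayley-Hamilton, passage de la cohomologie \`a la correspondance alg\'ebrique --- est un exercice d'alg\`ebre lin\'eaire.
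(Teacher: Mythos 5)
Le texte ne d\'emontre pas cet \'enonc\'e: il se contente de renvoyer \`a \cite{KM}. Votre esquisse reconstitue correctement l'argument standard de Katz--Messing dans le cas $l$-adique: l'hypoth\`ese de Riemann de Deligne s\'epare les valeurs absolues des racines des $P_i$, donc les $P_i$ sont deux \`a deux premiers entre eux dans $\Q[t]$; le lemme chinois fournit des $Q_i\in\Q[t]$ avec $Q_i\equiv\delta_{ij}\pmod{P_j}$, et Cayley--Hamilton montre que $Q_i(\pi)$ r\'ealise $p^i_X$ en cohomologie $l$-adique. Comme on travaille modulo l'\'equivalence homologique, $\End_{\sM_H}(h(X))$ s'injecte dans $\End(H^*(X))$, de sorte que l'idempotence de $Q_i(\pi)$ et l'unicit\'e du repr\'esentant sont automatiques. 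Cette partie est juste, et c'est bien le m\'ecanisme de \cite{KM}.

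En revanche, la phrase \og l'\'enonc\'e analogue pour la cohomologie cristalline est \'egalement connu\fg\ escamote pr\'ecis\'ement le contenu principal de \cite{KM}. Pour que $Q_i(\pi)$ agisse comme $\delta_{ij}$ sur $H^j_\cris(X)$, il faut savoir que $P_j(\pi_X^*)$ annule $H^j_\cris(X)$, c'est-\`a-dire que les valeurs propres de Frobenius sur $H^j_\cris(X)$ sont parmi les racines (inverses) de $P_j$. Or, a priori, la formule des traces ne contr\^ole que le produit altern\'e $\prod_j \det(1-\pi_X^* t\mid H^j_\cris(X))^{(-1)^{j+1}}=Z(X,t)^{-1}$: rien n'interdit en principe que les racines soient redistribu\'ees entre les degr\'es cohomologiques. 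Katz et Messing l\`event cette ind\'etermination par une r\'ecurrence sur la dimension utilisant le th\'eor\`eme de Lefschetz faible (pour $j<n$, les valeurs propres sur $H^j(X)$ sont parmi celles sur $H^j$ d'une section hyperplane lisse) et la dualit\'e de Poincar\'e, le degr\'e m\'edian \'etant alors d\'etermin\'e par la fonction z\^eta et la s\'eparation des poids. Sans ce point, l'ind\'ependance du cycle $Q_i(\pi)$ vis-\`a-vis de la cohomologie de Weil choisie n'est pas \'etablie. Votre texte est donc une r\'eduction correcte de l'\'enonc\'e \`a ce th\'eor\`eme de comparaison, qu'il faudrait soit d\'emontrer, soit identifier explicitement comme le c\oe ur de \cite{KM} plut\^ot que comme un fait \og connu\fg.
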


\subsection{Exemple: motifs d'Artin}

\begin{defn} Un \emph{motif d'Artin} est un facteur direct de $h(X)$, o\`u $X$ est de dimension $0$. \end{defn}

Un $k$-sch\'ema lisse de dimension $0$ est de la forme $\Spec E$, o\`u $E$ est une $k$-alg\`ebre \'etale, c'est-\`a-dire produit $\prod_{i=1}^r E_i$ d'extensions finies s\'eparables de $k$. 

\begin{prop}\phantomsection\label{p6.5} Soit $\sM_\sim^0(k,F)\subset\sM_\sim(k,F)$ la sous-cat\'egorie pleine form\'ee des motifs d'Artin. Alors:
\begin{thlist}
\item $\sM^0_\sim(k,F)\subset \sM_\sim^\eff(k,F)$.
\item $\sM^0_\sim(k,F)$ est une sous-cat\'egorie mono\"\i dale sym\'etrique rigide de $\sM_\sim(k,F)$.
\item $\sM^0_\sim(k,F)$ ne d\'epend pas du choix de l'\'equivalence ad\'equate $\sim$, au sens suivant: si $\sim\ge \sim'$, le foncteur $\sM^0_\sim(k,F)\to \sM^0_{\sim'}(k,F)$ est une \'equivalence de cat\'egories.
\item Pour $\sim=\sim_H$ où $H$ est une cohomologie de Weil vérifiant l'axiome de la remarque \ref{r3.4}, on a
\[\sM_H^0(k,F)=\{M\in \sM_H^\eff\mid w(M)=0\}\]
(cf. définition \ref{d6.2} a)).
\end{thlist}
\end{prop}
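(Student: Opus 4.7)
The plan is to handle (i) and (ii) together by tracing through the definitions and the rigidity computation from Theorem \ref{t6.5}; then prove (iii) by observing that all adequate equivalences coincide on $0$-dimensional varieties; and finally prove (iv), where only the nontrivial inclusion requires real work, by cutting out the weight-zero part using the algebraic $0$-th K\"unneth projector.

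For (i), an Artin motif is, by definition, a direct factor of $h(X)$ with $\dim X=0$; such an $h(X)$ lives in $\sM_\sim^\eff(k,F)$, and direct factors of effective motifs are effective. For (ii), the subcategory contains $\un=h(\Spec k)$ and is closed under $\otimes$ since $h(X)\otimes h(Y)=h(X\times Y)$ with $X\times Y$ still $0$-dimensional. For rigidity, the proof of Theorem \ref{t6.5} shows $h(X)^*\simeq h(X)(n)$ with $n=\dim X$, which for $n=0$ reduces to $h(X)^*\simeq h(X)$; duals of arbitrary direct factors then exist by Proposition \ref{pA.4}, so $\sM_\sim^0(k,F)$ is a rigid mono\"{\i}dal symmetric subcategory.

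For (iii), the key observation is that if $X,Y$ are $0$-dimensional and smooth over $k$, then $X\times Y=\Spec E$ for some \'etale $k$-algebra $E$, and $Z^*(X\times Y,F)=Z_0(X\times Y,F)$ is freely generated by the closed points. There are no positive-dimensional subvarieties supporting non-constant rational functions, so $\sim_\rat$ is trivial on such cycles; by Lemma \ref{l6.1} every adequate equivalence is then trivial as well. Hence the canonical projection $A^0_\sim(X\times Y,F)\to A^0_{\sim'}(X\times Y,F)$ is a bijection, the induced functor $\Corr_\sim\to\Corr_{\sim'}$ restricted to $0$-dimensional objects is fully faithful and essentially surjective, and passing to Karoubi envelopes yields the claimed equivalence.

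For (iv), the inclusion $\subset$ is immediate: if $M$ is a direct factor of $h(X)$ with $\dim X=0$, axiom (i) of a Weil cohomology gives $H^*(X)=H^0(X)$, so $H^*(M)$ is concentrated in degree $0$ and $w(M)=0$. For $\supset$, take $M\in\sM_H^\eff$ with $w(M)=0$, written as the image of a projector $p\in\End(h(X))$ with $X\in\V(k)$ (which we may assume equidimensional, decomposing $X$ into components if necessary). By Theorem \ref{t6.2} a), $p^0_X$ is algebraic; since K\"unneth projectors are central in $\End(H^*(X))$, $p^0_X$ commutes with $p$ in $\End(h(X))$, and $p\cdot p^0_X$ is a projector whose image is the weight-$0$ component $M^{(0)}$ of $M$. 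The hypothesis $w(M)=0$ forces $M^{(0)}=M$, so $M$ is a direct factor of $h^0(X)$. The proof of Theorem \ref{t6.2} a) identifies $h^0(X)\simeq h(k_X)$, where $k_X$ is the field of constants of $X$, and $\Spec k_X$ is $0$-dimensional; hence $M$ is a direct factor of $h(k_X)$, i.e.\ an Artin motif.

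The main obstacle is the $\supset$ half of (iv): it rests entirely on the algebraicity of $p^0_X$ (Theorem \ref{t6.2} a))\,---\,without an algebraic cycle representing this K\"unneth component, cutting the weight-$0$ piece out of an arbitrary direct factor of $h(X)$ is only a statement about the cohomological realization and has no content in $\sM_H^\eff$. Everything else is essentially formal: parts (i)--(iii) are manipulations of definitions plus the vanishing of adequate equivalences in dimension $0$, while the final identification $h^0(X)\simeq h(k_X)$ only uses the normalization axiom (v).
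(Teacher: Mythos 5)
Your parts (i), (ii) and (iv) are correct and follow essentially the same (very terse) route as the paper: in particular your treatment of (iv) --- cutting $M$ down by the algebraic projector $p^0_X$, using its centrality, and identifying $h^0(X)\simeq h(k_X)$ via the proof of Theorem \ref{t6.2} a) --- is exactly the argument the paper compresses into ``$M$ est facteur direct de $h^0(X)$''.

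There is, however, a genuine logical gap in (iii). You argue: ``$\sim_\rat$ is trivial on such cycles; by Lemma \ref{l6.1} every adequate equivalence is then trivial as well.'' Lemma \ref{l6.1} says that $\sim_\rat$ is the \emph{finest} adequate equivalence, i.e.\ $\alpha\sim_\rat\beta$ implies $\alpha\sim\beta$ for every adequate $\sim$. Knowing that the finest equivalence identifies nothing therefore gives \emph{no} upper bound on how much a coarser $\sim$ identifies --- the implication you need goes in the opposite direction. What must be shown is that $\ker\bigl(Z^0(X\times Y,F)\to A^0_\sim(X\times Y,F)\bigr)=0$ for \emph{every} adequate $\sim$, and for that you have to bound $\sim$ from the coarse side, not the fine side. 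The fix is the degree pairing on points: if $\alpha=\sum n_x[x]\sim 0$ on the zero-dimensional scheme $X\times Y$, axiom (ii) of adequate equivalences applied with $\beta=[x\times\Spec k]$ and pushforward to $\Spec k$ gives $n_x\deg(x)\sim 0$ in $A^0(\Spec k,F)=F$, hence $n_x=0$ since $F$ is a field of characteristic zero. Equivalently: numerical equivalence is already trivial on such cycles (the intersection matrix of the points is diagonal with nonzero entries), and every adequate equivalence over a field lies between $\sim_\rat$ and $\sim_\num$, both of which induce the identity on $Z^0$ here. With that substitution the rest of your (iii) --- full faithfulness and essential surjectivity of $\Corr_\sim\to\Corr_{\sim'}$ on zero-dimensional objects, then passage to Karoubi envelopes --- goes through.
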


\begin{proof} (i) et (ii) sont \'evidents; pour (iii) on se ram\`ene \`a montrer que 
\[\sM_\sim(h(X),h(Y))\iso \sM_{\sim'}(h(X),h(Y))\]
si $\dim X=\dim Y=0$, ce qui est \'evident puisqu'il s'agit de cycles de codimension $0$. Dans (iv), l'inclusion $\subset$ est évidente; inversement, si $M$ est effectif et $w(M)=0$, écrivons-le comme facteur direct de $h(X)$ pour $X$ projective lisse. Alors $M$ est facteur direct de $h^0(X)$ (cf. théorème \ref{t6.2} a)).
\end{proof}

\begin{thm}\phantomsection\label{t6.6} Notons $G=\Gal(k_s/k)$.  La cat\'egorie $\sM^0_\sim(k,F)$ est \'equivalente \`a la cat\'egorie (mono\"\i dale sym\'etrique) $\Rep_F(G)$ des repr\'esentations lin\'eaires continues de $G$ \`a coefficients dans $F$.
\end{thm}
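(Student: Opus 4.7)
The plan is to construct the equivalence explicitly by Galois descent. First, by the proposition \ref{p6.5} (iii) I may take $\sim=\sim_{\rat}$; on a smooth $0$-dimensional variety rational equivalence is trivial, so that $A^0_{\rat}(Z,F)$ is simply the free $F$-module on the closed points of $Z$. Every $X\in\V(k)$ of dimension $0$ is of the form $X=\Spec E$ with $E$ an étale $k$-algebra, and the Galois correspondence $E\mapsto\Hom_k(E,k_s)$ realises a contravariant equivalence between the category of such $X$ and the category of finite discrete $G$-sets. Accordingly I would define
\[
\Phi\colon\Corr_\sim(k,F)|_{\dim 0}\longrightarrow\Rep_F(G),\qquad X\longmapsto F[X(k_s)],
\]
sending a $0$-dimensional smooth $k$-scheme to its associated permutation $G$-representation.

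Secondly I would identify the morphism groups. Since $X\times Y$ is again $0$-dimensional, its closed points are in bijection with the orbits of $G$ acting diagonally on $X(k_s)\times Y(k_s)$, hence
\[
\Corr^0_\sim(X,Y)=A^0_{\rat}(X\times Y,F)=F[(X(k_s)\times Y(k_s))/G]\simeq\bigl(F^{X(k_s)\times Y(k_s)}\bigr)^{G}.
\]
The last space consists of $G$-invariant $F$-valued functions on $X(k_s)\times Y(k_s)$ and, via matrix coefficients, is canonically $\Hom_{F[G]}\bigl(F[X(k_s)],F[Y(k_s)]\bigr)$. To check that $\Phi$ respects composition, one unfolds the formula \eqref{eq:comp-corr}: on a $0$-dimensional triple product, the intersection product is diagonal, $p_{12}^*$ and $p_{23}^*$ are simply restrictions of functions, and $(p_{13})_*$ is summation over the middle factor, which together produce exactly the convolution $(ba)_{xz}=\sum_{y\in Y(k_s)}b_{yz}a_{xy}$ defining matrix multiplication in $\Hom_{F[G]}$. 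The identification $(X\times Y)(k_s)=X(k_s)\times Y(k_s)$ as $G$-sets together with $F[S\times T]\simeq F[S]\otimes_F F[T]$ shows, moreover, that $\Phi$ is monoidal symmetric.

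Finally I would extend $\Phi$ to pseudo-abelian envelopes. Since $\Rep_F(G)$ is already pseudo-abelian (semisimple, in fact, because $\car F=0$ by the running convention of this chapter, so Maschke applies to any finite quotient of $G$), this extension is automatic and yields a fully faithful monoidal functor $\bar\Phi\colon\sM^0_\sim(k,F)\to\Rep_F(G)$. For essential surjectivity, any continuous $\rho\in\Rep_F(G)$ factors through a finite quotient $H=\Gal(L/k)$, and by Maschke the underlying $F[H]$-module is a direct summand of a finite multiple of the regular representation $F[H]$, which is precisely $\Phi(\Spec L)$ (the $G$-set $(\Spec L)(k_s)$ being $H$ on which $G$ acts through the quotient $G\to H$). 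Hence $\rho$ lies in the essential image of $\bar\Phi$, and $\bar\Phi$ is an equivalence. The main obstacle in this programme is this last step: the argument rests squarely on Maschke's theorem, so if the hypothesis $\car F=0$ were dropped one would recover only the full subcategory of $\Rep_F(G)$ generated under direct summands by permutation representations, and the theorem would have to be reformulated accordingly.
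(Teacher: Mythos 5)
Your proof is correct and follows the same overall strategy as the paper's: send $X$ to the permutation representation $F[X(k_s)]$, extend to the karoubian envelope, and get essential surjectivity from the fact that a continuous representation factors through a finite quotient and is then (by Maschke, $\car F=0$) a summand of a multiple of the regular representation $\rho_{\Spec L}$. The one place you genuinely diverge is full faithfulness: the paper invokes rigidity (proposition \ref{p6.5} (ii)) to reduce to the single computation $\sM_\sim(\un,h(X))\simeq\Hom_G(F,F[X(k_s)])\simeq F$ for $X$ the spectrum of a field, using transitivity of the $G$-action, whereas you identify \emph{all} the Hom groups directly, $\Corr^0_\sim(X,Y)\simeq (F^{X(k_s)\times Y(k_s)})^G\simeq\Hom_{F[G]}(F[X(k_s)],F[Y(k_s)])$, and check that the composition \eqref{eq:comp-corr} becomes convolution over the middle factor. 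Your route is more computational but buys something the paper leaves implicit: it simultaneously verifies that $X\mapsto F[X(k_s)]$ is actually a functor on $\Corr_\sim$ (compatibility with composition of correspondences), which the paper's "ceci s'\'etend en un foncteur" glosses over. Your closing remark on what survives when $\car F>0$ (only the pseudo-abelian hull of permutation representations) is a correct and worthwhile observation.
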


\begin{proof} Si $\dim X=0$, la th\'eorie de Galois revue par Grothendieck associe \`a $X$ l'ensemble fini $X(k_s)$ muni de l'action (continue) de $G$, d'o\`u la repr\'esentation ``de permutation''
\[\rho_X=F[X(k_s)]\in \Rep_F(G).\]

Comme $\Rep_F(G)$ est pseudo-ab\'elienne, ceci s'\'etend en un foncteur
\[\rho:\sM_\sim^0(k,F)\to \Rep_F(G)\]
qui est visiblement mono\"\i dal sym\'etrique. Il reste \`a voir que $\rho$ est une \'equivalence de cat\'egories.

Pour la pleine fid\'elit\'e, on se ram\`ene par rigidit\'e (proposition \ref{p6.5} (ii)) \`a voir que
\begin{equation}\label{eq6.1}
\sM_\sim(\un,h(X))\iso \Rep_F(G)(F,\rho_X) 
\end{equation}
si $X=\Spec E$ o\`u $E$ est une extension finie s\'eparable de $k$. Le groupe de gauche est $A^0_\sim(X,F)\simeq F$ et celui de droite est $\Hom_G(F,F[X(k_s)])\simeq F$ puisque l'action de $G$ sur $X(k_s)=\Hom_k(E,k_s)$ est transitive. On voit tout de suite que \eqref{eq6.1} envoie le g\'en\'erateur $[X]$ du membre de gauche sur l'\'el\'ement $\sum\limits_{x\in X(k_s)} x$ du membre de droite.

Il reste \`a d\'emontrer l'essentielle surjectivit\'e. Soit $\rho\in \Rep_F(G)$. Par continuit\'e, $\rho$ se factorise \`a travers $\bar G=\Gal(E/k)$, o\`u $E$ est une extension galoisienne finie de $k$. Alors $\rho$ est facteur direct de la repr\'esentation r\'eguli\`ere $r_{\bar G}=\rho_{\Spec E}$.
\end{proof}

\subsection{Exemple: $h^1$ de vari\'et\'es ab\'eliennes}

Soit $A$ une vari\'et\'e ab\'elienne sur $k$. Dapr\`es le th\'eor\`eme \ref{t6.2} c), les projecteurs de K\"unneth de $h(A)$ sont alg\'ebriques; d'apr\`es Deninger-Murre \cite{dm}, on dispose m\^eme de projecteurs de Chow-K\"unneth $p^i_A$ modulo l'\'equivalence rationnelle, naturels pour les homomorphismes de vari\'et\'es ab\'eliennes. Ceci d\'efinit un foncteur (contravariant)
\[h^1:\Ab(k)^\op\to \sM_\rat(k,\Q)\]
o\`u $\Ab(k)$ est la cat\'egorie des $k$-vari\'et\'es ab\'eliennes (objets: vari\'et\'es ab\'eliennes; morphismes: homomorphismes de vari\'et\'es ab\'eliennes). 

\begin{thm}\phantomsection\label{t6.7} Le foncteur $h^1$ est additif. Il induit un foncteur pleinement fid\`ele
\[h^1:\Ab^0(k)^\op\to \sM_\rat(k,\Q)\]
o\`u $\Ab^0(k)$ a les m\^emes objets que $\Ab(k)$, les morphismes \'etant tensoris\'es par $\Q$. Sa compos\'ee avec la projection $\sM_\rat(k,\Q)\to \sM_\sim(k,\Q)$ est encore pleinement fid\`ele pour toute relation d'\'equivalence ad\'equate $\sim$. Si $H$ est une cohomologie de Weil vérifiant l'axiome de la remarque \ref{r3.4}, l'image essentielle de $h^1$ dans $\sM_H^\eff$ est formée des motifs purement de poids $1$.
\end{thm}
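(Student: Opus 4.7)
The plan is to break the theorem into four steps corresponding to its four assertions, and to reduce each one to classical inputs on abelian varieties (Poincar\'e reducibility, the seesaw/square theorem, the structure of $\Pic^0(A\times B)$) combined with the functorial Chow--K\"unneth decomposition of Deninger--Murre.

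\emph{Step 1 (Additivity).} Given two homomorphisms $f,g\in \Hom_k(A,B)$, I will compute $h^1(f+g)$ by factoring $f+g$ as $\mu_B\circ(f\times g)\circ\delta_A$, where $\delta_A\colon A\to A\times A$ is the diagonal and $\mu_B\colon B\times B\to B$ the sum. The K\"unnemann decomposition $h(A)\simeq \bigoplus \bigwedge^i h^1(A)$, together with the naturality of the $p^1_A$ (Deninger--Murre), gives $p^1_B\circ \mu_B^*=(p^1_B\otimes\un)+(\un\otimes p^1_B)$ on $h^1(B)\otimes h^1(B)$, while $\delta_A^*\circ p^1_A^{\otimes 2}$ is the codiagonal on $h^1(A)$. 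Assembling yields $h^1(f+g)=h^1(f)+h^1(g)$.

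\emph{Step 2 (Faithfulness).} Suppose $f\colon A\to B$ satisfies $h^1(f)=0$ in $\sM_{\rat}(k,\Q)$. Applying any Weil cohomology $H$ (say $H_\ell$ with $\ell\ne\car k$), one has $H^1(f)=V_\ell(f)^{\vee}\otimes\Q_\ell$. Because $\Hom_k(A,B)\otimes\Z_\ell\hookrightarrow \Hom(T_\ell A,T_\ell B)$, the vanishing of $V_\ell(f)$ forces $f=0$ in $\Ab^0(k)$.

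\emph{Step 3 (Fullness).} This is the main obstacle. Realising morphisms in $\sM_{\rat}(h^1(B),h^1(A))$ as a subspace of $\Corr^0_{\rat}(B,A)\otimes\Q\subset CH^{\dim A}(A\times B)_\Q$, and further cutting by $p^1_A$ on the left and $p^1_B$ on the right, gives an isomorphism
\[
\sM_{\rat}\bigl(h^1(B),h^1(A)\bigr)\;\simeq\; p^1_A\cdot CH^1(A\times B)_\Q\cdot p^1_B.
\]
The seesaw principle and the theorem of the square produce the canonical decomposition
\[
\Pic(A\times B)_\Q\;\simeq\; \Pic(A)_\Q\oplus\Pic(B)_\Q\oplus \Hom_k(A,B^*)_\Q,
\]
and the first two summands are killed by $p^1_A$ and by $p^1_B$ respectively, since $p^1_?$ acts by $0$ on $h^0$ and $h^2$. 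Thus the target is naturally $\Hom_k(A,B^*)_\Q$, and composing with the bidualit\'e $B\simeq B^{**}$ and a choice of polarisation (trivialised after tensoring with $\Q$) identifies this with $\Hom_k(A,B)\otimes\Q$, providing the inverse of $h^1$. Compatibility with composition then follows from the multiplicativity of the pullback in $CH^*$.

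\emph{Step 4 (Independence of $\sim$ and essential image).} For any adequate equivalence $\sim$ one has a projection $\sM_{\rat}\to \sM_\sim$; fullness is automatic, and faithfulness follows because the composite $\Ab^0(k)^{\op}\to\sM_\sim(k,\Q)\to\sM_H(k,\Q)\to \Vec_K$ induced by a Weil cohomology is already faithful (Step 2), so kernels in $\sM_\sim$ are trivial. For the essential image, let $M\in\sM_H^{\eff}$ be purely of weight $1$. Then $M$ is a direct summand of $h(X)=h^1(X)$ (after projecting by $p^1_X$) for some $X\in\V(k)$; using the axiom of Remark \ref{r3.4}, the Albanese map induces $h^1(X)\iso h^1(\Alb(X))$, so $M$ is a direct summand of $h^1(A)$ for $A=\Alb(X)$. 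Poincar\'e's complete reducibility theorem implies that $\Ab^0(k)$ is pseudo-abelian, so via the full faithfulness of Step 3 any idempotent on $h^1(A)$ comes from an idempotent in $\End^0(A)$, whose image is an abelian subvariety $B\subset A$ (up to isogeny) realising $M\simeq h^1(B)$.

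Step 3 is the part requiring genuine input from the geometry of abelian varieties; the rest is an orchestration of functoriality of Chow--K\"unneth and Poincar\'e reducibility.
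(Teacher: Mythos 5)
Your Steps 1 and 2 are fine and genuinely different from the paper: additivity via the primitivity of $h^1$ for $\mu_B\circ(f\times g)\circ\delta_A$ is a workable alternative to the paper's one-line use of $p^1_A\circ n_A=n\,p^1_A$, and faithfulness (modulo $\sim_\rat$) via the injection $\Hom_k(A,B)\otimes\Z_l\hookrightarrow\Hom(T_l A,T_l B)$ is cleaner than the paper's reduction to Jacobians. The essential-image argument in Step 4 also matches the paper's. But there are two genuine gaps.

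In Step 3, the isomorphism $\sM_{\rat}(h^1(B),h^1(A))\simeq p^1_A\cdot CH^1(A\times B)_\Q\cdot p^1_B$ is precisely the non-formal content of fullness and does not follow from ``cutting by $p^1$'': a priori $\Hom(h^1(B),h^1(A))=p^1_A\circ CH^{\dim B}(B\times A)_\Q\circ p^1_B$, and an element of $CH^1(A\times B)$ is not even a degree-zero correspondence from $B$ to $A$ unless $\dim B=1$. Passing from $CH^{\dim B}$ to $CH^1$ requires the self-duality $h^1(B)^\vee\simeq h^1(B)(1)$ in $\sM_\rat$ (motivic hard Lefschetz in degree one for abelian varieties, due to Lieberman--Kleiman and K\"unnemann), which identifies $\Hom(h^1(B),h^1(A))$ with $\Hom(\L,h^1(B)\otimes h^1(A))$, i.e.\ with the $(1,1)$-K\"unneth piece of $\Pic(B\times A)_\Q$; only then does your seesaw decomposition apply. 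As written you assume what you must prove. The paper sidesteps this entirely by reducing (via Poincar\'e reducibility and the fact that every abelian variety is a quotient of a Jacobian) to the case of Jacobians of curves, where correspondences are divisorial for dimension reasons, and then quoting Weil's isomorphism $\Corr_\equiv(C,C')\iso\Hom_k(J(C),J(C'))$ (th\'eor\`eme \ref{t3.7}).

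In Step 4, your faithfulness argument for a general adequate $\sim$ invokes a composite $\sM_\sim(k,\Q)\to\sM_H(k,\Q)\to\Vec_K$, which does not exist when $\sim$ is coarser than homological equivalence --- in particular for $\sim_\num$, which is the only case that needs an argument (fullness descends trivially to coarser equivalences; faithfulness does not). To see that $h^1_\num(f)=0$ forces $f=0$ you need a numerical invariant detecting $f$, e.g.\ the positivity $\Tr({}^\rho f\circ f)>0$ for $f\ne 0$ (th\'eor\`eme \ref{t:rosati}), which is an intersection number and hence depends only on numerical classes. This is again exactly what Weil's theorem packages in the paper's proof, which checks fullness for $\sim_\rat$ and faithfulness for $\sim_\num$ and deduces all intermediate cases.
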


\begin{proof} Par construction des projecteurs $p^i_A$, on a
\[p^i_A\circ n_A = n^ip^i_A\]
pour tout $i\ge 0$ et $n\in\Z$, o\`u $n_A$ est l'endomorphisme de multiplication par $n$ sur $A$ \cite[th. 3.1]{dm}. On en d\'eduit facilement l'additivit\'e de $h^1$, qui s'\'etend donc \`a $\Ab^0(k)$. Comme $\sM_\rat(k,\Q)\to \sM_\sim(k,\Q)$ est plein pour tout $\sim$ et que $\sim_\num$ est l'\'equivalence ad\'equate la moins fine, il suffit de montrer que $h^1$ est plein pour $\sim=\sim_\rat$ et fid\`ele pour $\sim=\sim_\num$. Pour cela, on peut se ramener au cas de deux jacobiennes de courbes $J$ et $J'$, et les deux \'enonc\'es r\'esultent de l'isomorphisme de Weil (th\'eor\`eme \ref{t3.7}). Enfin, la dernière assertion se démontre comme pour la proposition \ref{p6.5} (iv). 
\end{proof}

\subsection{La fonction z\^eta d'un endomorphisme} \index{Fonction zêta!d'un endomorphisme}

\subsubsection{Cas d'une relation d'\'equivalence suffisamment fine}

En appliquant le th\'eor\`eme \ref{tA.4}, on obtient:

\begin{thm}\phantomsection\label{t6.1} Supposons que $F$ soit un corps de caract\'eristique z\'ero. Pour tout $M\in \sM_\rat(k,F)$ et tout endomorphisme $f\in \End(M)$, la fonction $Z(f,t)$ de la d\'efinition \ref{dA.9} est une fraction rationnelle \`a coefficients dans $F$. Si $f$ est un isomorphisme, on a une \'equation fonctionnelle\index{Equation fonctionnelle@\'Equation fonctionnelle} de la forme
\[Z({}^tf^{-1},t^{-1}) = \det(f)(-t)^{\chi(M)}Z(f,t)\]
o\`u ${}^tf\in \End(M^*)$ est le transpos\'e de $f$ et $\det(f)$ est un scalaire donné par la formule du théorème \ref{tA.4}. Dans cet \'enonc\'e, on peut remplacer $\sim_\rat$ par $\sim_\alg$ ou $\sim_\tnil$.
\end{thm}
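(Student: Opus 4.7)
The plan is to obtain Theorem~\ref{t6.1} as a direct application of the abstract Theorem~\ref{tA.4} in the appendix. By Theorem~\ref{t6.5}, the category $\sM_\rat(k,F)$ is rigid symmetric monoidal, and it is $F$-linear by construction; since $F$ has characteristic zero, all the hypotheses of Theorem~\ref{tA.4} are satisfied. Applying it to $(M,f)$ with $M\in\sM_\rat(k,F)$ and $f\in\End(M)$ gives at once the rationality $Z(f,t)\in F(t)$, and, when $f$ is invertible, the definition of $\det(f)\in F^\times$ together with the claimed functional equation
\[Z({}^tf^{-1},t^{-1}) = \det(f)(-t)^{\chi(M)}Z(f,t).\]

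For the extension to $\sim_\alg$ and $\sim_\tnil$, I would observe that the construction of $\sM_\sim(k,F)$ (correspondence category, pseudo-abelian envelope, inversion of $\L$) only uses that $\sim$ is an adequate equivalence: it produces a rigid symmetric monoidal $F$-linear category for any such $\sim$. The proof of Theorem~\ref{t6.5} goes through unchanged, so Theorem~\ref{tA.4} applies verbatim to $\sM_\alg(k,F)$ and $\sM_\tnil(k,F)$, yielding the same two conclusions.

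The reduction of Theorem~\ref{t6.1} to Theorem~\ref{tA.4} is therefore purely formal. The substance lies entirely in the appendix result, and the hard part will be there: one must show that, in an abstract rigid symmetric monoidal $F$-linear category (with $F$ of characteristic zero), the sequence $\operatorname{tr}(f^n)$ satisfies a linear recurrence over $F$ and one must define $\det(f)$ as a coherent multiplicative scalar on invertible endomorphisms. In practice this rests on a Kimura--O'Sullivan-style decomposition $M \simeq M_+\oplus M_-$ with $\wedge^{n+1}M_+ = \operatorname{Sym}^{m+1}M_- = 0$, together with the trace formula proved abstractly via rigidity (see Proposition~\ref{p3.4} and Lemma~\ref{lA.6}); the characteristic zero assumption on $F$ enters to make sense of the exponential and to rule out parasitic torsion in the trace computations. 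The exclusion of $\sim_H$ and $\sim_\num$ from the statement reflects the fact that the abstract argument requires the kernel of the relevant projection to behave well with respect to the duality --- a property guaranteed for $\sim_\rat,\sim_\alg,\sim_\tnil$ but conditional (via the standard conjectures) for the coarser equivalences.
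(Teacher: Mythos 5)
There is a genuine gap. You write that, since $\sM_\rat(k,F)$ is rigid symmetric monoidal and $F$-linear with $F$ of characteristic zero, ``all the hypotheses of Theorem~\ref{tA.4} are satisfied''. But rigidity and $F$-linearity are \emph{not} the hypotheses of Theorem~\ref{tA.4}: that theorem explicitly assumes the existence of a symmetric monoidal functor $H^*:\sA\to \Vec_K^*$ into graded vector spaces over an extension $K$ of $F$. Supplying such a functor is the entire content of the paper's proof of Theorem~\ref{t6.1}: one takes a Weil cohomology, e.g.\ $l$-adic cohomology for $l\ne\car k$, whose existence is a deep fact and not a formal consequence of the construction of $\sM_\rat(k,F)$. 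This also explains the clause about $\sim_\alg$ and $\sim_\tnil$: these equivalences are finer than $\sim_{H}$, so the realization functor factors through them, whereas no such functor is known out of $\sM_\num$ (your suggested explanation in terms of ``kernels behaving well with respect to duality'' is not the issue).

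Your description of what Theorem~\ref{tA.4} proves is likewise off the mark. Its proof does not establish an abstract linear recurrence for $\Tr(f^n)$ in an arbitrary rigid $F$-linear category (which is false in general, and is precisely why one would otherwise need Kimura--O'Sullivan finiteness); it uses Lemma~\ref{lA.6} to transport the traces along $H^*$, reduces to $\sA=\Vec_K^*$, triangularizes $f$, and concludes with $\Q[[t]]\cap K(t)$-type descent. So the ``hard part'' you defer to the appendix is not there; the missing input in your argument is the Weil cohomology itself.
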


\begin{proof} Il suffit de conna\^\i tre l'existence d'un foncteur mono\"\i dal sym\'etrique $H^*=\sM_\rat(k,F)\to \Vec_K^*$ pour un corps $K$ contenant $F$: un tel foncteur est donn\'e par une cohomologie\index{Cohomologie!de Weil} de Weil, par exemple la cohomologie $l$-adique\index{Cohomologie!$l$-adique} pour un $l\ne \car k$.
\end{proof}

\subsubsection{Cas de l'\'equivalence num\'erique} La conjecture standard\index{Conjectures!standard} principale de Grothendieck \cite{grothendieck-standard} pr\'edit que l'\'equivalence homologique co\"\i ncide avec l'\'equivalence num\'erique pour toute cohomologie de Weil. Faute de conna\^\i tre cette conjecture, l'extension du th\'eor\`eme \ref{t6.1} aux motifs modulo l'\'equivalence num\'erique est une question d\'elicate. Les r\'esultats qui suivent sont extraits de \cite{aknote}.

Soit $A$ un anneau; rappelons la d\'efinition de son \emph{radical de Jacobson}:
\[R=\{a\in A\mid \forall b\in A, 1-ab\text{ est inversible}\}.\]

\begin{prop}[\protect{\cite[cor. 3 et prop. 5]{aknote}}]\phantomsection\label{p6.3} Consid\'erons les motifs \`a coefficients dans un corps $F$ de caract\'eristique $0$. Soit $H$ une cohomologie\index{Cohomologie!de Weil!classique} de Weil classique (\S \ref{3.5.9}), et soit $M\in \sM_H(k,F)=\sM_H$. Alors\\
a) Le radical de Jacobson $R$ de $A_H=\End(M)$ est nilpotent, et $A_H/R$ est une $F$-alg\`ebre semi-simple.\\
b) \cite[cor. 1]{jannsen} Si la somme des projecteurs de K\"unneth de degr\'es pairs de $M$ est alg\'ebrique, $A_H/R\iso A_\num$ o\`u $A_\num=\End_{\sM_\num}(M_\num)$, $M_\num$ \'etant la projection de $M$ dans $\sM_\num$.\\
c)  Soit $M\in \sM_H$ tel que la somme des projecteurs de K\"unneth de degr\'es pairs de $M$ soit alg\'ebrique. Si $M_\num=0$, alors $M=0$.\\
d) (\cf \cite[rk. 4]{jannsen}) Soit $M\in \sM_\num$. Alors il existe $\tilde M\in \sM_H$ tel que $\tilde M_\num\simeq M$.
\end{prop}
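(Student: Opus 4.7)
The plan is to establish (a) by standard ring-theoretic tools, prove (c) next by a direct trace computation exploiting the algebraicity of the even Künneth projector, deduce (b) from (c) by an idempotent-lifting argument, and finally treat (d) independently using the nilpotence of the radical obtained in (a).

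For (a), I would begin by noting that $A_H=\End_{\sM_H}(M)$ is a finite-dimensional $F$-algebra: the cohomological realization $\sM_H\to \Vec_K^*$ is faithful by definition of $\sim_H$, embedding $A_H$ into the finite-dimensional $K$-algebra $\End_K(H^*(M))$, and the quotient $A_H\twoheadrightarrow A_\num$ has finite-dimensional target by Theorem \ref{t6.9} while the kernel is controlled by the image of the cycle class map in cohomology. Classical Wedderburn theory for finite-dimensional algebras over a field then yields that the Jacobson radical $R=\mathrm{rad}(A_H)$ is nilpotent and $A_H/R$ is semi-simple.

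Next I would attack (c) directly. The algebraicity of $p^+=\sum_{i\text{ even}}p^i$ (hence also of $p^-=1_M-p^+$) places $p^\pm\in A_H$, and one can then introduce the auxiliary $F$-linear functional
\[\Tr^+(g)=\Tr(g\circ p^+)-\Tr(g\circ p^-)\]
on $A_H$, where $\Tr$ is the canonical categorical trace of the rigid $\otimes$-category $\sM_H$. A direct cohomological computation on the decomposition $H^*(M)=H^+\oplus H^-$ into even and odd parts gives $\Tr^+(g)=\sum_i \Tr(g\mid H^i(M))$, the ordinary unsigned trace on the $K$-vector space $H^*(M)$. Now if $M_\num=0$ then $\mathrm{id}_M\sim_\num 0$, which in the rigid setting means $\Tr(h)=0$ for every $h\in A_H$; specializing to $h=p^\pm$ gives $\Tr^+(\mathrm{id}_M)=\dim_K H^*(M)=0$, so $H^*(M)=0$ and therefore $M=0$ by faithfulness of the realization. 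For (b), write $\pi:A_H\twoheadrightarrow A_\num$ with kernel $J$. The inclusion $R\subseteq J$ follows at once because $A_\num$ is semi-simple by Theorem \ref{tjannsen}, so $\mathrm{rad}(A_\num)=0$ and $\pi(R)\subseteq\mathrm{rad}(A_\num)=0$. For the reverse inclusion, if $J/R$ were a nonzero ideal of the semi-simple algebra $A_H/R$ it would contain a nonzero idempotent $\bar e$; lifting $\bar e$ to an idempotent $e\in A_H$ through the nilpotent ideal $R$ would produce a nonzero direct summand $M_e$ of $M$, which inherits the Künneth hypothesis by Lemma \ref{l6.2} b) and satisfies $(M_e)_\num=0$ since $e\in J$, contradicting (c). Hence $J=R$, yielding (b).

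For (d), drop the Künneth hypothesis and represent an arbitrary $M\in\sM_\num$ as $(X,q,0)$ with $q$ an idempotent of $A_\num^{\dim X}(X\times X,F)$; the problem is to lift $q$ to an idempotent of $A_H^{\dim X}(X\times X,F)$. The easy half $R\subseteq J$ of (b), which uses only Theorem \ref{tjannsen} and not the Künneth hypothesis, still provides a surjection $\bar\pi:A_H/R\twoheadrightarrow A_\num$ between finite-dimensional semi-simple $F$-algebras; any two-sided ideal in such an algebra is a direct factor, so $\bar\pi$ admits an $F$-algebra section, and applying it to $q$ produces an idempotent of $A_H/R$ over $q$, which then lifts through the nilpotent ideal $R$ to the desired $\tilde q\in A_H$. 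The main obstacle is the trace computation in (c): the algebraicity of $p^+$ is precisely what allows the unsigned cohomological dimension $\dim H^*(M)$ to be expressed as a motivic invariant $\Tr^+(\mathrm{id}_M)$ intrinsic to $A_H$; without this algebraicity one only has access to the Euler characteristic $\chi(M)=\sum_i(-1)^i\dim H^i(M)$, which is too coarse to detect the vanishing of a motif with $M_\num=0$.
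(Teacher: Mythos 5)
Your treatments of b), c) and d) are correct and follow the same route as Jannsen and Andr\'e--Kahn: the identity $\Tr(g\circ p^+)-\Tr(g\circ p^-)=\sum_i\Tr\bigl(g\mid H^i(M)\bigr)$ is exactly the device that converts numerical triviality of $1_M$ into the vanishing of $\dim_K H^*(M)$, hence of $M$ (the coefficients being of characteristic zero); the idempotent-lifting argument reduces b) to c); and splitting the surjection $A_H/R\twoheadrightarrow A_\num$ of semi-simple algebras and then lifting through the nilpotent ideal $R$ is the right way to get d) without any K\"unneth hypothesis. (Minor point: Lemma \ref{l6.2} b) is stated for the individual projectors $p^i$, but its proof --- centrality of $p^i$ in $\End(H^*(M))$ --- applies verbatim to the sum $p^+$, as you implicitly use.)

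The proof of a), on which all of the above rests, has a genuine gap: you assert that $A_H=\End_{\sM_H}(M)$ is finite-dimensional over $F$ and then invoke Wedderburn, but neither of your justifications establishes this. The realization only exhibits $A_H$ as an $F$-subalgebra of the finite-dimensional \emph{$K$-algebra} $\End_K(H^*(M))$; when $K$ is strictly larger than $F$ (e.g.\ $F=\Q$, $K=\Q_l$ over a finite field --- the case the paper actually needs), such a subalgebra may well be infinite-dimensional over $F$. As for the surjection $A_H\twoheadrightarrow A_\num$, Theorem \ref{t6.9} bounds the target but says nothing about the kernel, which is spanned by the classes of cycles that are homologically non-trivial but numerically trivial; the finiteness of that group is open (it is empty under the standard conjecture $\sim_H=\sim_\num$, and in characteristic zero one can salvage finite-dimensionality of $A_H$ via the rational structure of Betti cohomology, but over $\F_q$ nothing of the sort is known). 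This is precisely why a) is non-trivial: the argument of \cite{aknote}, abstracted in \cite[th.\ 1 b)]{ak2}, must prove nilpotence of the radical and semi-simplicity of the quotient for an $F$-algebra that is only known to embed into a finite-dimensional $K$-algebra and to admit the finite-dimensional semi-simple quotient $A_\num$, without ever knowing that $A_H$ itself is finite-dimensional. To complete your proof you would have to either restrict to situations where finite-dimensionality of $A_H$ is known, or replace your first paragraph by that abstract ring-theoretic theorem.
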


On trouvera une version abstraite de cette proposition dans \cite[th. 1 b)]{ak2}.

\begin{prop}[\protect{\cite[prop. 6]{aknote}}]\phantomsection\label{p6.4} Notons $\sM_H^*$ la sous-cat\'egorie plei\-ne de $\sM_H$ form\'ee des $M$ dont tous les projecteurs de K\"unneth sont alg\'ebriques, et soit $\sM_\num^*$ son image essentielle dans $\sM_\num$. Alors\\
a) $\sM_\num^*$ ne d\'epend pas du choix de $H$ (classique).\\
b) Pour $M\in\sM_\num^*$, provenant de $\tilde M_H\in\sM_H^*$ (\cf proposition \ref{p6.3} d)), l'image $p^i_M$ de $p^i_{\tilde M_H}$ dans $\End(M)$ ne d\'epend pas du choix de $H$ ni de celui de $\tilde M_H$. \\
c) Si $k$ est fini, $\sM_\num^*=\sM_\num$.
\end{prop}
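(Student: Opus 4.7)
Je traiterai c) s\'epar\'ement via Katz-Messing, puis a) et b) conjointement en m'appuyant sur la proposition \ref{p6.3}.

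Pour c), le th\'eor\`eme \ref{t6.3} garantit, pour toute $X \in \V(k)$ avec $k$ fini, l'alg\'ebricit\'e de tous les projecteurs $p^i_X$ par des cycles ind\'ependants de la cohomologie de Weil choisie. Combin\'e avec le lemme \ref{l6.2} a)--c), ceci montre $\sM_H^* = \sM_H$ pour toute cohomologie de Weil classique $H$, d'o\`u $\sM_\num^* = \sM_\num$.

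Pour a) et b), soit $M \in \sM_\num^*$ provenant de $\tilde M \in \sM_H^*$: l'hypoth\`ese de la proposition \ref{p6.3} b) est v\'erifi\'ee, donc $A_H/R \iso A_\num$, et les projecteurs $p^i_{\tilde M}$ se projettent sur un syst\`eme d'idempotents orthogonaux $\bar p^i \in A_\num$ sommant \`a $1$. Ind\'ependance du rel\`evement \`a $H$ fix\'e: si $\tilde M_1, \tilde M_2 \in \sM_H^*$ rel\`event tous deux $M$, la pleinitude du foncteur $\sM_H \to \sM_\num$ permet de relever $\id_M$ en $\phi: \tilde M_1 \to \tilde M_2$ et $\psi: \tilde M_2 \to \tilde M_1$; les compositions $\psi\phi, \phi\psi$ diff\`erent de l'identit\'e par un \'el\'ement du radical, donc sont inversibles par nilpotence, et $\phi$ est un isomorphisme dans $\sM_H$. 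Puisque $H$ est un foncteur \`a valeurs dans les espaces vectoriels gradu\'es (les morphismes pr\'eservant le degr\'e), $H(\phi)$ envoie $H^i(\tilde M_1)$ sur $H^i(\tilde M_2)$, d'o\`u $\phi \circ p^i_{\tilde M_1} = p^i_{\tilde M_2} \circ \phi$ dans $\sM_H$ (par injectivit\'e de $H$ sur les correspondances modulo $\sim_H$); projetant \`a $\sM_\num$ avec $\bar\phi = \id_M$, on obtient $\bar p^i_1 = \bar p^i_2$.

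Reste l'ind\'ependance de $H$, qui est l'obstacle principal. Soient $H_1, H_2$ deux cohomologies de Weil classiques et $M \in \sM_\num^*$ via $\tilde M_1 \in \sM_{H_1}^*$. Par la proposition \ref{p6.3} d), je choisirais un rel\`evement quelconque $\tilde M_2 \in \sM_{H_2}$ de $M$; le noyau de $\End(\tilde M_2) \to \End(M)$ \'etant nilpotent par la proposition \ref{p6.3} a), le proc\'ed\'e classique de rel\`evement d'idempotents modulo un id\'eal nilpotent produit un syst\`eme orthogonal $q^i \in \End(\tilde M_2)$ relevant les $\bar p^i$ d\'ej\`a construits et sommant \`a $1$. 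Il faut alors \'etablir que $q^i = p^i_{\tilde M_2}$, c'est-\`a-dire que ces idempotents abstraits co\"\i ncident avec les projecteurs de K\"unneth cohomologiques pour $H_2$: c'est l\`a le c\oe ur d\'elicat de la d\'emonstration. L'argument utiliserait la compatibilit\'e des traces cat\'egoriques dans la cat\'egorie rigide $\sM_\num$ avec celles dans $\sM_{H_1}$ et $\sM_{H_2}$: la trace intrins\`eque de $\bar p^i$ dans $\sM_\num$ est \'egale \`a $\dim H_1^i(\tilde M_1)$ calcul\'ee via $\tilde M_1$; par les th\'eor\`emes de comparaison entre cohomologies de Weil classiques (transcendants en caract\'eristique z\'ero via Artin et de Rham, et par r\'eduction \`a Katz-Messing sur un corps fini via sp\'ecialisation en caract\'eristique positive), les nombres de Betti co\"\i ncident; la trace de $q^i$ calcul\'ee dans $\sM_{H_2}$ forcerait alors, par un argument dimensionnel sur les images $H_2(q^i) \subset H_2(\tilde M_2)$ combin\'e \`a la positivit\'e de la graduation cohomologique, l'identification $q^i = p^i_{\tilde M_2}$. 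Il en r\'esulterait $\tilde M_2 \in \sM_{H_2}^*$ (donnant a)) et $\bar q^i = \bar p^i$ (donnant b)).
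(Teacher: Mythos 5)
Le texte lui-m\^eme ne d\'emontre pas cette proposition: il renvoie \`a \cite{aknote} en signalant explicitement que l'\'enonc\'e y est d\'emontr\'e \emph{incompl\`etement}, la d\'emonstration (due \`a O. Gabber) \'etant compl\'et\'ee dans \cite[app. B]{ak}; il note seulement que c) ne fait que r\'ep\'eter le th\'eor\`eme \ref{t6.3}. Votre traitement de c) co\"\i ncide donc avec celui du texte, et votre argument d'ind\'ependance du rel\`evement \emph{\`a $H$ fix\'e} (pl\'enitude de $\sM_H\to\sM_\num$, nilpotence du noyau via la proposition \ref{p6.3}, conjugaison par un isomorphisme pr\'eservant le degr\'e) est correct.

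En revanche, le c\oe ur de a) et b) --- l'ind\'ependance de $H$ --- reste un trou, et l'argument de traces que vous esquissez pour le combler ne peut pas aboutir. La trace cat\'egorique de $\bar p^i$ calcul\'ee via $H_2$ vaut $\sum_j (-1)^j \dim H_2^j(\IM q^i)$ (notez au passage le signe $(-1)^i$ que vous omettez du c\^ot\'e $H_1$); l'\'egalit\'e de cette somme altern\'ee avec $(-1)^iB_i$ ne force nullement $H_2(q^i)$ \`a \^etre concentr\'e en degr\'e $i$: m\^eme en tenant compte de l'orthogonalit\'e des $q^i$ et de $\sum q^i=1$, une somme altern\'ee de dimensions ne localise pas l'image d'un idempotent dans un seul degr\'e cohomologique (pour une courbe de genre $1$, un idempotent d'image de dimensions $(1,3,0)$ en degr\'es $(0,1,2)$ a la m\^eme trace que $p^1$ et la m\^eme dimension totale que $h(X)$). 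C'est pr\'ecis\'ement l\`a que la d\'emonstration est d\'elicate: en caract\'eristique $p$ il n'y a pas d'isomorphisme de comparaison entre les $H_\ell$ pour $\ell$ variable, ni avec la cohomologie cristalline, et l'argument de Gabber consiste en un d\'evissage de sp\'ecialisation ramenant au cas d'un corps fini, o\`u le th\'eor\`eme \ref{t6.3} fournit un repr\'esentant alg\'ebrique des projecteurs de K\"unneth \emph{commun \`a toutes les cohomologies de Weil classiques}. Votre incise \og par r\'eduction \`a Katz-Messing sur un corps fini via sp\'ecialisation\fg\ pointe dans la bonne direction, mais c'est tout le contenu de la preuve: il faudrait \'etablir la compatibilit\'e du foncteur de sp\'ecialisation (th\'eor\`eme \ref{t6.8}) avec les projecteurs de K\"unneth et avec la descente \`a l'\'equivalence num\'erique, ce que votre texte ne fait pas.
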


Cet \'enonc\'e est d\'emontr\'e incompl\`etement dans \cite{aknote}; la d\'emonstration (suivant O. Gabber) est compl\'et\'ee dans \cite[app. B]{ak}. Notons que c) ne fait que répéter le théorème \ref{t6.3}.

\begin{cor}\phantomsection\label{c6.1} Pour tout motif simple $S\in \sM_\num^*$, il existe un unique $i\in \Z$ tel que $p^i_S\ne 0$: c'est le \emph{poids} de $S$.\qed
\end{cor}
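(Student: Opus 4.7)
The plan is to combine Jannsen's semisimplicity theorem with the well-definedness of the Künneth projectors guaranteed by proposition \ref{p6.4}, reducing the statement to an elementary fact about idempotents in a division algebra.

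First I would observe that, since $S$ is simple in the abelian semisimple category $\sM_\num(k,F)$ (théorème \ref{tjannsen}) and $F$ has characteristic zero, Schur's lemma gives that $D:=\End_{\sM_\num}(S)$ is a division algebra over $F$. By hypothesis $S$ lies in $\sM_\num^*$, so by proposition \ref{p6.4} b) the Künneth projectors $p^i_S\in D$ are well-defined elements, independent of the auxiliary choices of a classical Weil cohomology $H$ and of a lift $\tilde S\in\sM_H^*$.

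Next I would verify that the family $(p^i_S)_{i\in\Z}$ is a family of pairwise orthogonal idempotents in $D$ with $\sum_i p^i_S = 1_S$. To see this, pick any classical Weil cohomology $H$ and a lift $\tilde S\in\sM_H^*$. In $\End_{H^*}(H^*(\tilde S))$ the Künneth projectors obviously satisfy $p^ip^j=\delta_{ij}p^i$ and $\sum_ip^i=1$; by proposition \ref{p6.3} a) the radical $R$ of $A_H=\End_{\sM_H}(\tilde S)$ is nilpotent and $A_H/R\hookrightarrow \End(H^*(\tilde S))$ via the realization functor, so these relations already hold in $A_H/R$. Finally, proposition \ref{p6.3} b) identifies $A_H/R$ with the image of $A_H$ in $A_\num=D$ under the projection $\sM_H\to\sM_\num$, so the desired relations hold in $D$.

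Now the conclusion is immediate: in the division algebra $D$, the only idempotents are $0$ and $1$, so each $p^i_S$ equals $0$ or $1_S$. Orthogonality forces at most one of the $p^i_S$ to be non-zero, while $\sum_ip^i_S=1_S\neq 0$ forces at least one to be non-zero. Hence there exists a unique $i\in\Z$ with $p^i_S\neq 0$, and for that $i$ we have $p^i_S=1_S$.

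The only genuinely subtle point—and what I would treat most carefully—is the transfer of the orthogonality and completeness relations from the cohomology of a lift $\tilde S$ down to $\End_{\sM_\num}(S)$, since \emph{a priori} the algebraic representatives $p^i_{\tilde S}\in A_H$ are only defined modulo the kernel of $H$. The point is precisely that by proposition \ref{p6.3} a)–b) this ambiguity is absorbed in the (nilpotent) radical, so the relations become exact in $A_H/R\simeq A_\num=D$; everything else is then formal.
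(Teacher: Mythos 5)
Your proof is correct and is precisely the argument the paper leaves implicit (the corollary is stated without proof): by Jannsen's theorem and simplicity, $\End_{\sM_\num}(S)$ is a division algebra, and the $p^i_S$ form there a complete family of pairwise orthogonal idempotents, of which exactly one must be nonzero. One small slip worth correcting: the realization functor is faithful on $\sM_H$ by the very definition of homological equivalence, so the relations $p^ip^j=\delta_{ij}p^i$ and $\sum_i p^i=1$ hold already in $A_H$ itself and then pass to the quotient $A_\num\simeq A_H/R$; the map $A_H/R\to\End(H^*(\tilde S))$ you invoke is not in general injective (that would force $R=0$), but this does not affect the argument.
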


\begin{defn}\phantomsection\label{d6.1} Un motif $M\in \sM_\num^*$ est \emph{pur de poids $i$} s'il est somme directe de motifs simples de poids $i$.
\end{defn}

\begin{cor}\phantomsection\label{c6.2} Soit $M\in \sM_\num^*$, pur de poids $i$. Alors pour tout $f\in \End(M)$, $Z(f,t)=P(t)^{(-1)^{i+1}}$ o\`u $P\in F[t]$. On a l'équation fonctionnelle du théorème \ref{t6.1}.\index{Equation fonctionnelle@\'Equation fonctionnelle}
\end{cor}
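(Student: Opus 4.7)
My plan is to establish Corollary~\ref{c6.2} by lifting from $\sM_\num^*$ to $\sM_H$ for some classical Weil cohomology $H$ (for instance $\ell$-adic cohomology with $\ell\neq\car k$), where the formalism behind Theorem~\ref{t6.1} applies directly via the fiber functor $H^*\colon\sM_H\to\Vec_K^*$. By hypothesis and Corollary~\ref{c6.1}, the numerical projectors satisfy $p^j_M=0$ for $j\neq i$. The first and key step is to produce a lift $\tilde M_H\in\sM_H^*$ of $M$ which is moreover pure of weight $i$ in the sense of Definition~\ref{d6.2}~a). Start with any lift of $M$ given by Proposition~\ref{p6.3}~d); by Proposition~\ref{p6.4} it may be taken in $\sM_H^*$. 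Now kill the parasitic components of wrong weight: for each $j\neq i$, the summand $\IM p^j_{\tilde M_H}\subset\tilde M_H$ lies in $\sM_H^*$ (Lemma~\ref{l6.2}~b)) and has zero image in $\sM_\num$ (since $\IM p^j_M=0$), so by Proposition~\ref{p6.3}~c) it vanishes in $\sM_H$ itself. Replacing $\tilde M_H$ by $\IM p^i_{\tilde M_H}$ yields a lift with $H^j(\tilde M_H)=0$ for $j\neq i$.

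Next I would lift $f\in\End(M)$ to $\tilde f\in\End(\tilde M_H)$. By Proposition~\ref{p6.3}~a),~b), the kernel of $\End(\tilde M_H)\twoheadrightarrow\End(M)$ is the nilpotent Jacobson radical, so such a lift exists, and is invertible iff $f$ is. Since the monoidal quotient $\sM_H\to\sM_\num$ restricts to the identity on $\End(\un)=F$ (codimension-zero cycles are insensitive to the choice of adequate equivalence), categorical traces are preserved: $\Tr(f^n)=\Tr(\tilde f^n)\in F$ for all $n\ge 0$, and hence $Z(f,t)=Z(\tilde f,t)$ as formal power series. Applying now $H^*$, which is a symmetric monoidal functor to graded $K$-vector spaces, the graded Lefschetz trace formula (as in \eqref{eq3.3}) specializes, because $H^*(\tilde M_H)$ lives in a single degree, to
\[ Z(\tilde f,t) \;=\; \det\bigl(1-H^i(\tilde f)\,t \bigm| H^i(\tilde M_H)\bigr)^{(-1)^{i+1}} \;=\; P(t)^{(-1)^{i+1}} \]
with $P\in K[t]$ and $P(0)=1$. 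Since $Z(f,t)\in F[[t]]$ a priori, the elementary identity $F[[t]]\cap K[t]=F[t]$ (for polynomials with constant term $1$) forces $P\in F[t]$.

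Finally, when $f$ is an isomorphism the functional equation is obtained by applying Theorem~\ref{t6.1}'s functional equation to the invertible lift $\tilde f\in\End(\tilde M_H)$, and observing that the invariants $\det(\tilde f)$ and $\chi(\tilde M_H)$ are scalars in $\End(\un)=F$ that coincide with $\det(f)$ and $\chi(M)$ under the identification above; moreover ${}^t\tilde f^{-1}$ is a lift of ${}^t f^{-1}$ in the dual. The main obstacle is Step~1: ensuring that the lift $\tilde M_H$ is pure in the cohomological sense of Definition~\ref{d6.2}, which is exactly the compatibility between the numerical weight of Corollary~\ref{c6.1} and the cohomological weight supplied by Proposition~\ref{p6.4}; once this bridge is crossed, everything else follows formally from rigidity, monoidality, and the nilpotence of the radical.
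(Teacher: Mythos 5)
Your argument is correct and is essentially the paper's, whose entire proof is that the statement ``résulte immédiatement de la formule des traces (lemme \ref{lA.6})'': what you have written is precisely the unpacking of that remark, namely lifting $M$ and $f$ to $\sM_H$ via Propositions \ref{p6.3} and \ref{p6.4}, and then applying Lemma \ref{lA.6} twice, once for the projection $\sM_H\to\sM_\num$ and once for the fiber functor $H^*$. The extra care you take (that the lift can be chosen cohomologically concentrated in degree $i$, that $f$ lifts through the nilpotent radical, and that $P$ descends from $K[t]$ to $F[t]$) fills in details the paper leaves implicit but does not change the approach.
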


\begin{proof} Cela r\'esulte imm\'ediatement de la formule des traces (lemme \ref{lA.6}).
\end{proof}

\subsection{Cas d'un corps de base fini}

Supposons maintenant $k=\F_q$. Pour tout $X\in \V(k)$, notons  $\pi_X$ l'endomorphisme de Frobenius vu comme correspondance alg\'ebrique (transpos\'e du graphe du Frobenius absolu de $X$). Alors $\pi_X$ est \emph{central} dans $\Corr_\rat^0(X,X)$; plus g\'en\'eralement, pour toute correspondance $\gamma\in\Corr_\rat^0(X,Y)$, le diagramme
\[\begin{CD}
h(X)@>\gamma>> h(Y)\\
@V{\pi_X}VV @V{\pi_Y}VV\\
h(X)@>\gamma>> h(Y) 
\end{CD}\]
est commutatif. De plus, $\pi_{X\times Y}=\pi_X\otimes \pi_Y$. On en d\'eduit que $X\mapsto \pi_X$ s'\'etend en un \emph{$\otimes$-automorphisme du foncteur identique}
\[\sM_\rat(k,\Q)\ni M\mapsto \pi_M\in \Aut(M)\]
vérifiant les identités
\begin{equation}\label{eq6.2}
\pi_{M\oplus N} = \pi_M\oplus \pi_N, \quad \pi_{M\otimes N} = \pi_M\otimes \pi_N, \quad \pi_{M^*} = {}^t \pi_M^{-1}.
\end{equation}
(la dernière identité étant une propriété générale d'un $\otimes$-automorphisme du foncteur identique, cf. \cite[I, (3.2.3.6)]{saa}).

Notons de plus que  $\sM_\num=\sM_\num^*$ par \cite{KM}.

\begin{prop}\phantomsection\label{l6.3} a) L'endomorphisme $\pi_\L$ du motif de Lefschetz $\L$ est la multiplication par $q$. \\
b) Si $X$ est géométriquement connexe de dimension $n$, l'endomorphisme $\pi_{h^{2n}(X)}$ est la multiplication par $q^n$.\\
c) Si $M\in \sM_\num^\eff(k,\Q)$ est pur de poids $i$ (définition \ref{d6.1}), on a
\[\det(\pi_M) = \pm q^{i\chi(M)/2}\]
où $\det(\pi_M)$ est le nombre rationnel apparaissant dans l'équation fonctionnelle du théorème \ref{t6.1}.\index{Equation fonctionnelle@\'Equation fonctionnelle} \end{prop}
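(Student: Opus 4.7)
Pour (a), je calculerais $\pi_{\P^1}={}^t\Gamma_{F_{\P^1}}$ directement comme cycle sur $\P^1\times\P^1$. Notons $H=[\{x_0\}\times\P^1]$ et $H'=[\P^1\times\{x_0\}]$ les deux classes engendrant $A^1(\P^1\times\P^1)$, avec $H^2=H'^2=0$ et $H\cdot H'=1$; on a $\Delta_{\P^1}=H+H'$. En appliquant la formule de composition \eqref{eq:comp-corr} \`a $p=h(p^{\P^1})\circ h(i_{x_0})$, on trouve $p_0:=p=H$, d'o\`u $p_2:=\Delta_{\P^1}-p_0=H'$. D'autre part, le graphe du Frobenius v\'erifie $\Gamma_{F_{\P^1}}\cdot H=1$ (projection sur le premier facteur de degr\'e~$1$) et $\Gamma_{F_{\P^1}}\cdot H'=q$ (projection sur le second de degr\'e~$q$), d'o\`u $\Gamma_{F_{\P^1}}=qH+H'$ et $\pi_{\P^1}=H+qH'=p_0+q\cdot p_2$. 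Puisque $\pi_\un=1$ (le graphe du Frobenius de $\Spec k$ est la diagonale), la d\'ecomposition donne $\pi_\L=q$.

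Pour (b), le th\'eor\`eme \ref{t6.2}(a) fournit un isomorphisme $h^{2n}(X)\simeq\L^{\otimes n}$ d\`es que $X$ est g\'eom\'etriquement connexe de dimension $n$. La multiplicativit\'e \eqref{eq6.2} combin\'ee \`a (a) donne imm\'ediatement $\pi_{h^{2n}(X)}=\pi_\L^{\otimes n}=q^n$.

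Pour (c), je me ram\`enerais d'abord au cas o\`u $M$ est simple par additivit\'e de $\chi$ et multiplicativit\'e de $\det$. Fixons ensuite une cohomologie de Weil $l$-adique $H$ avec $l\ne\car k$: l'hypoth\`ese ``$M$ pur de poids $i$'' donne $H^j(M)=0$ pour $j\ne i$, donc $\chi(M)=(-1)^iN$ o\`u $N=\dim H^i(M)$. Posant $P_i(t)=\det(1-\pi_M t\mid H^i(M))=\prod_j(1-\alpha_j t)$, on a $Z(\pi_M,t)=P_i(t)^{(-1)^{i+1}}$. Un calcul direct (utilisant $\pi_{M^*}={}^t\pi_M^{-1}$, dont les valeurs propres sont les $\alpha_j^{-1}$) montre que l'\'equation fonctionnelle du th\'eor\`eme \ref{t6.1} se r\'ecrit
\[\det(\pi_M)=\Bigl(\prod_j\alpha_j\Bigr)^{(-1)^i}=\det(\pi_M\mid H^i(M))^{(-1)^i}.\]
J'invoquerais alors le th\'eor\`eme de Deligne (derni\`ere conjecture de Weil \cite{weilI}): les $\alpha_j$ sont des $q$-nombres de Weil de poids $i$. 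Par la remarque \ref{r5.2}, l'involution $\alpha\mapsto q^i/\alpha$ envoie chaque $\alpha_j$ sur un de ses conjugu\'es galoisiens. Comme $P_i\in\Z[t]$, l'ensemble $\{\alpha_j\}$ est stable par Galois, et cette involution le permute; donc $\bigl(\prod_j\alpha_j\bigr)^2=q^{iN}$, soit $\prod_j\alpha_j=\pm q^{iN/2}$. Par suite, $\det(\pi_M)=\pm q^{iN(-1)^i/2}=\pm q^{i\chi(M)/2}$.

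Le point le plus d\'elicat est l'invocation du th\'eor\`eme de Deligne dans la preuve de (c): sans celui-ci, on n'aurait que l'alg\'ebricit\'e des $\alpha_j$, sans contr\^ole sur leurs valeurs absolues complexes. Les parties (a) et (b) sont quant \`a elles essentiellement formelles, ne reposant que sur un calcul explicite de cycles sur $\P^1\times\P^1$ et sur le th\'eor\`eme \ref{t6.2}(a).
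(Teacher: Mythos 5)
Votre démonstration est correcte et suit pour l'essentiel la même voie que celle du texte : calcul explicite de la correspondance de Frobenius sur $\P^1\times\P^1$ pour (a) (vous la décomposez dans la base $\{H,H'\}$ par nombres d'intersection là où le texte regarde directement l'équation $t_1\mapsto t_1^q$ du diviseur, ce qui revient au même), réduction à $\L^{\otimes n}$ via le théorème \ref{t6.2} et \eqref{eq6.2} pour (b), et pour (c) la formule du théorème \ref{tA.4} combinée à l'hypothèse de Riemann et à la stabilité galoisienne de l'ensemble des valeurs propres, qui donne $\bigl(\prod_j\alpha_j\bigr)^2=q^{iN}$. Le seul point laissé implicite — pourquoi le polynôme caractéristique de $\pi_M$ sur $H^i(M)$ est à coefficients rationnels (via la rationalité des traces $\Tr(\pi_M^n)$) — l'est au même degré dans le texte.
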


\begin{proof} a) Soit $p=p^2_{\P^1}$ le second projecteur de Künneth de la droite projective: il découpe $\L$ sur le motif de $\P^1$. Il faut donc voir que
\[\pi_{\P^1}p = qp.\]
Mais $p$ est la classe de $0\times \P^1$ dans $CH^1(\P^1\times \P^1)$, transposé du graphe de la composition $\P^1\to \{0\}\inj \P^1$. Si $(t_1,t_2)$ sont les coordonnées de $\P^1\times \P^1$, l'équation de ce diviseur est $t_1=0$. Quand on lui applique $\pi_{\P^1}$, on obtient l'équation $t_1^q=0$, d'où la conclusion. b) résulte maintenant de a), du théorème \ref{t6.2} et de \eqref{eq6.2}. Enfin, pour c) on utilise la formule du théorème \ref{tA.4}, l'hypothèse de Riemann\index{Hypothèse de Riemann!pour les variétés projectives lisses} pour $H^i(X)$ et le fait que les racines inverses du polynôme caractéristique de $\pi_M$ forment un sous-ensemble (Galois-invariant!) de celui des racines inverses du polynôme caractéristique de $\pi_{h^i(X)}$.
\end{proof}

\begin{rques}\phantomsection\label{r6.3} a) En particulier, $\chi(M)$ est pair dans c) si $i$ est impair, au moins si $q$ n'est pas un carré. (C'est vrai conjecturalement même si $q$ est un carré: voir exercice \ref{exo6.1} c).)
\\
b) Si $M$ est de la forme $h^i(X)$ avec $i$ impair, le signe est $+1$ dans l'expression de $\det(\pi_M)$. En effet, la dualité de Poincaré et le théorème de Lefschetz difficile (\S \ref{s5.4.2}) munissent $H^i_l(X)$, pour $l$ premier ne divisant pas $q$, d'un accouplement parfait, Galois-équivariant
\[H^i_l(X)\times H^i_l(X)\to \Q_l(-i)\]
qui est \emph{alterné} puisque $i$ est impair; en particulier, $d=\dim H^i_l(X)$ est pair. Le déterminant d'une matrice symplectique étant égal à $1$, celui de $\pi_M$ est égal à $(\sqrt{q^i})^d$. 
Voir remarque \ref{r3.6} pour le cas $i$ pair.\\
c) On pourra consulter \cite[III]{kahn-zeta} pour une approche différente de la proposition \ref{l6.3} c) (et une généralisation aux ``fonctions zêta motiviques''), utilisant la notion de déterminant d'un motif. 
\end{rques}

\begin{defn} Pour $M\in \sM_\num(k,\Q)$, on note
\[Z(M,t)=Z(\pi_M,t).\]
On pose aussi $\zeta(M,s)=Z(M,q^{-s})$: c'est la \emph{fonction z\^eta} de $M$.\index{Fonction zêta!d'un motif (pur)}
\end{defn}

\begin{lemme}\phantomsection\label{l6.4}
a) Si $M=h(X)$ pour $X\in \V(k)$, on a $Z(M,t)=Z(X,t)$.\\
b) Pour tout $M\in \sM_\num(k,\Q)$, on a
\[Z(M(1),t)=Z(M,q^{-1}t).\]
\end{lemme}

\begin{proof} a) est clair, \cf preuve du th\'eor\`eme \ref{t3.6}, et b) est un simple calcul.
\end{proof}

\begin{thm}\phantomsection\label{t6.4}\index{Equation fonctionnelle@\'Equation fonctionnelle} Supposons $M\in \sM_\num^\eff(k,\Q)$ pur de poids $i$. Alors $Z(M,t)=P(t)^{(-1)^{i+1}}$ o\`u $P(0)=1$ et  $P\in \Z[t]$. On a une \'equation fonctionnelle de la forme
\[Z(M^*,t^{-1}) = \det(\pi_M)(-t)^{\chi(M)}Z(M,t),\quad \det(\pi_M) = \pm q^{i\chi(M)/2}\]
où $\chi(M)$ est pair et le signe de $\det(\pi_M)$ est $+$ si $i$ est impair et $M$ de la forme $h^i(X)$, cf. remarques \ref{r6.3} a) et b). De plus, les racines inverses de $P$ sont des nombres de Weil de poids $i$. 
\end{thm}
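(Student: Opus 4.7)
The plan is to assemble this theorem from Corollary \ref{c6.2}, Proposition \ref{l6.3}~c), Remark \ref{r6.3} and Deligne's Riemann hypothesis. My first step would be a reduction: by Proposition \ref{p6.4}~c) (via the Katz-Messing Theorem \ref{t6.3}) one has $\sM_\num^* = \sM_\num$ over $\F_q$, so every motif has algebraic K\"unneth projectors; a motif pure of weight $i$ in the sense of Definition \ref{d6.1} is a direct sum of simple motifs of weight $i$, and each simple summand of some $h(Y)$ that is of weight $i$ must already be a summand of $h^i(Y)$ (because $p^i_Y$ is central in $\End h(Y)$ and restricts to the identity on that summand). Passing to a disjoint union, I may therefore assume that $M$ is a direct summand of $h^i(X)$ for a single $X\in\V(\F_q)$.

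Given this reduction, Corollary \ref{c6.2} applied to $f=\pi_M\in\End(M)$ immediately yields $Z(M,t) = P(t)^{(-1)^{i+1}}$ for some $P\in\Q[t]$ with $P(0)=1$, together with the functional equation provided by Theorem \ref{t6.1}; Proposition \ref{l6.3}~c) evaluates $\det(\pi_M) = \pm q^{i\chi(M)/2}$. For $i$ odd and $M=h^i(X)$, the Galois-equivariant symplectic form on $H^i_l(X)$ supplied by Poincar\'e duality and hard Lefschetz (Corollary \ref{c5.5}) forces $\dim H^i_l(X)$, hence $|\chi(M)|$, to be even and fixes the sign $\det(\pi_M)=+q^{i\chi(M)/2}$, exactly as in Remark \ref{r6.3}; for $i$ even the integrality of $i\chi(M)/2$ is automatic.

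The genuinely new ingredient is the integrality of $P$ and the statement that its inverse roots are $q$-Weil numbers of weight $i$. Here I would invoke Deligne's Riemann hypothesis \cite{weilI}: the eigenvalues of $\pi_X^*$ on $H^i_l(X)$ are $q$-Weil numbers of weight $i$ and algebraic integers, and $\det(1-\pi_X^* t\mid H^i_l(X)) \in \Z[t]$ is independent of $l\ne \car\F_q$ (Lemma \ref{l3.4}). Using Proposition \ref{p6.3}~d), I would lift the numerical idempotent cutting $M$ out of $h^i(X)$ to a homological idempotent in $\End_{\sM_H}(h^i(X))$; the resulting $\pi_X$-stable subspace $V\subset H^i_l(X)$ is well defined up to $\pi_X$-equivariant isomorphism by Proposition \ref{p6.4}~b), and its reciprocal characteristic polynomial coincides with $P$. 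The inverse roots of $P$ are therefore algebraic-integer $q$-Weil numbers of weight $i$; since $P\in\Q[t]$, its coefficients are Galois-invariant symmetric functions of these algebraic integers, hence integers, so $P\in\Z[t]$.

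The hard part is precisely this last paragraph. It rests both on Deligne's deep theorem \cite{weilI} and on the delicate Proposition \ref{p6.4}~b) (whose proof follows unpublished ideas of Gabber), which guarantees that the subspace of $H^i_l(X)$ cut out by the homological lift of the numerical projector does not depend on the choice of lift up to $\pi_X$-equivariant isomorphism. Once those two inputs are granted, the remainder of the theorem is pure bookkeeping, combining Corollary \ref{c6.2}, Proposition \ref{l6.3}~c), Remark \ref{r6.3} and Lemma \ref{l3.4}.
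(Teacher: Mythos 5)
Your proposal is correct and follows essentially the same route as the paper: Corollaire \ref{c6.2} and la proposition \ref{l6.3}~c) pour la forme de $Z(M,t)$ et l'\'equation fonctionnelle, puis r\'ealisation de $M$ comme facteur direct de $h^i(X)$ de sorte que $P$ divise $P_i$, et conclusion par l'hypoth\`ese de Riemann de Deligne. Votre troisi\`eme paragraphe ne fait qu'expliciter (via les propositions \ref{p6.3}~d) et \ref{p6.4}~b)) le rel\`evement d'idempotents que la preuve du texte laisse implicite.
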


\begin{proof} Le corollaire \ref{c6.2} donne la premi\`ere et la seconde partie de l'\'e\-non\-c\'e, en utilisant \eqref{eq6.2} pour l'équation fonctionnelle.
 
 Pour la dernière assertion (y compris $P\in \Z[t]$), on \'ecrit $M$ comme facteur direct de $h(X)$ pour $X\in\V(k)$ (non nécessairement connexe); alors $M$ est m\^eme facteur direct de $h^i(X)$, et $P$ divise le polyn\^ome $P_i$ attach\'e \`a $X$. La conclusion d\'ecoule alors de l'hypoth\`ese de Riemann\index{Hypothèse de Riemann!pour les courbes}.
\end{proof}

En particulier, pour $X$ projective lisse,
\begin{equation}\label{eq6.5}
Z(h^i(X),t) = P_i(t)^{(-1)^{i+1}}
\end{equation}
où $P_i$ est le $i$-ème polyn\^ome intervenant dans la factorisation de $Z(X,t)$ (\S \ref{s3.3}).

\begin{rques}\phantomsection\label{r6.1} a) Si on enl\`eve la condition que $S$ soit effectif, l'\'enonc\'e reste valable mais $P$ est alors \`a coefficients dans $\Z[1/q]$: par exemple, $Z(\T,t)=1-q^{-1}t$.\\
b) Le th\'eor\`eme \ref{t6.4} s'\'etend au cas o\`u les coefficients $\Q$ sont remplac\'es par un corps de nombres $F$; le polyn\^ome $P$ est alors \`a coefficients dans $O_F$ si $M$ est effectif, dans $O_F[1/q]$ en g\'en\'eral.\\
c) Pour $i=0,1$, le corollaire  \ref{c6.2} et le théorème \ref{t6.4} ne dépendent pas des résultats de Deligne et Katz-Messing, mais ``seulement'' de ceux de Weil (formule pour les morphismes entre jacobiennes de courbes et hypothèse de Riemann pour les courbes): cf. théorèmes \ref{t6.6} et \ref{t6.7}. 
\end{rques}

\subsection{La conjecture de Tate}\index{Conjecture!de Tate} On continue à supposer que $k=\F_q$.

\begin{conj}\label{co6.1} Pour toute $k$-variété projective lisse $X$ et tout entier $i\ge 0$, on a
\[\ord_{s=i} \zeta(X,s) = -\rg A^i_\num(X).\]
\end{conj}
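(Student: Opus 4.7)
The plan is to work entirely inside $\sM_\num(k,\Q) = \sM_\num^*$ (using Katz--Messing, Theorem \ref{t6.3}), and reduce the conjecture to two standard open problems: the cohomological Tate conjecture and the agreement of numerical and $l$-adic homological equivalence.

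First I would apply the motivic factorization $h(X)=\bigoplus_{j=0}^{2n} h^j(X)$, giving
\[\zeta(X,s) = \prod_{j=0}^{2n} P_j(q^{-s})^{(-1)^{j+1}}\]
by \eqref{eq6.5}. Since the inverse roots of $P_j$ are Weil numbers of weight $j$ (Theorem \ref{t6.4}), the value $q^i = q^{-s}|_{s=-i}\cdot q^{2i}$ can only appear as a zero or pole at $s=i$ through the factor $P_{2i}$: a pole arises exactly from eigenvalues of $\pi_X^\ast$ equal to $q^i$ on $H^{2i}_l(X)$. Hence
\[-\ord_{s=i}\zeta(X,s) = \dim_{\Q_l} H^{2i}_l(X)(i)^{G_k},\]
where the twist moves the eigenvalue condition $\pi=q^i$ to a Galois invariance condition. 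So the conjecture becomes: the cycle class map
\[\cl^i \otimes \Q_l : A^i_\num(X)\otimes\Q_l \longrightarrow H^{2i}_l(X)(i)^{G_k}\]
is an \emph{isomorphism}.

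The strategy from the motivic side is to produce a candidate inverse. By semi-simplicity of $\sM_\num$ (Jannsen, Theorem \ref{tjannsen}), the motive $h^{2i}(X)(i)$ decomposes as a sum of isotypic components; the part contributing the pole at $s=i$ is the $\un$-isotypic component, namely $\Hom_{\sM_\num}(\un, h^{2i}(X)(i))$. By Proposition \ref{p6.2} this group is exactly $A^i_\num(X,\Q)$. On the cohomological side, $\Hom_{\Vec_{\Q_l}^*}(\Q_l, H^{2i}_l(X)(i))^{G_k} = H^{2i}_l(X)(i)^{G_k}$ is the analogous $\un$-isotypic piece for the realization functor. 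So one is comparing the $\un$-isotypic components of $h^{2i}(X)(i)$ in $\sM_\num$ with those of its $l$-adic realization.

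The main obstacle is precisely that this comparison requires the two standard conjectures alluded to in \S\ref{3.6.1}: (a) the cycle class $A^i_\hom(X)\otimes\Q_l \to H^{2i}_l(X)(i)^{G_k}$ is surjective (Tate), which says that all Frobenius invariants are motivated; and (b) numerical and $l$-adic homological equivalence coincide (part of the standard conjectures), so that the left-hand side has the same rank as $A^i_\num(X)$. Thus the plan reduces Conjecture \ref{co6.1} to the conjunction (a)+(b), but neither of these is accessible by the techniques of this course. A reasonable intermediate goal is to establish the conjecture for motives built from abelian varieties using Theorem \ref{t6.7} and known cases of the Tate conjecture for abelian varieties over finite fields (Tate, Zarhin, Milne, \dots, cf.\ Theorem \ref{tt}), where (a) is a theorem and where (b) is known for divisors; extending this to higher codimension on products of abelian varieties would already be a significant step beyond what is presented here.
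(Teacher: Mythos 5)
The statement you were given is an \emph{open conjecture} (the Tate conjecture in the form of \cite{tate-seattle}); the paper contains no proof of it, only the surrounding known equivalences and special cases, and you rightly do not claim to prove it either. Your general line of attack --- factor $\zeta(X,s)$ through the weight decomposition via \eqref{eq6.5}, observe that by the Riemann hypothesis only $P_{2i}$ can contribute a zero or pole at $s=i$, then compare the $\un$-isotypic part of $h^{2i}(X)(i)$ in $\sM_\num$ (which is $A^i_\num(X)\otimes\Q$ by Proposition \ref{p6.2}) with that of its $l$-adic realization --- is exactly the standard one, and is the content of the discussion following Conjecture \ref{co6.1} and of exercise \ref{exo6.2}.

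There is, however, a concrete gap in your pivotal identity $-\ord_{s=i}\zeta(X,s)=\dim_{\Q_l}H^{2i}_l(X)(i)^{G_k}$. The order of the pole of $P_{2i}(q^{-s})^{-1}$ at $s=i$ is the multiplicity of $q^{i}$ as an inverse root of $P_{2i}$, i.e.\ the dimension of the \emph{generalized} eigenspace of Frobenius for that eigenvalue, whereas $H^{2i}_l(X)(i)^{G_k}$ is the honest eigenspace. The two coincide only when Frobenius has no nontrivial Jordan block there, which is precisely the condition $S^i(X,l)$ of the paper (equivalently, bijectivity of $H^{2i}_l(X)(i)^{G}\to H^{2i}_l(X)(i)_{G}$). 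Consequently your reduction of Conjecture \ref{co6.1} to ``(a) Tate surjectivity $+$ (b) hom $=$ num'' is too weak: those two hypotheses give $\rg A^i_\num(X)=\dim H^{2i}_l(X)(i)^{G}$, but not the equality of this number with the pole order. The correct statement, quoted in the paper from \cite[th. 2.9]{tate-seattle}, is that the conjecture for $(X,i)$ is equivalent to $T^i(X,l)+T^{d-i}(X,l)+S^i(X,l)$, and that it then \emph{implies} ``hom $=$ num'' in codimensions $i$ and $d-i$ rather than taking it as an input. Your closing remarks on abelian varieties are consistent with the end of that section (where $T^1$ and $S^1$ are known by \cite{tate-fini}), but the semisimplicity condition must be added to your list of required inputs throughout.
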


(Rappelons que le groupe $A^i_\num(X)$ est libre de type fini d'après le théorème \ref{t6.9}.)

Cette conjecture est vraie pour $i=0$ d'après la partie ``hypothèse de Riemann'' des conjectures de Weil. Pour plus de détails sur le cas $i>0$, je revoie à l'exposé de Tate \cite{tate-seattle}. En particulier, pour un nombre premier $l\nmid q$, on a les énoncés suivants, où $G=\Gal(\bar k/k)$:
\begin{description}
\item[$T^i(X,l)$] L'homomorphisme déduit de la classe de cycle
\[CH^i(X)\otimes \Q_l\to H^{2i}_l(X)(i)^G,\]
est surjectif.
\item[$S^i(X,l)$] L'application composée
\[H_l^{2i}(X)(i)^G\inj H_l^{2i}(X)(i)\surj H_l^{2i}(X)(i)_G\]
est bijective.
\item[$SS^i(X,l)$] L'action de $G$ sur $H^i_l(X)$ est semi-simple.
\end{description}

On a évidemment $SS^{2i}(X,l)\Rightarrow S^i(X,l)$, et:

\begin{thm}[\protect{\cite[th. 2.9]{tate-seattle}}] Pour tout $l$, la conjecture \ref{co6.1} pour $(X,i)$ est équivalente à $T^i(X,l)+T^{d-i}(X,l)+S^i(X,l)$; elle implique la conjecture standard ``équivalence homologique = équivalence numérique'' en codimensions $i$ et $d-i$. De plus, $S^i(X,l) \iff S^{d-i}(X,l)$.
\end{thm}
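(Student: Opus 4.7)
\emph{The plan.} The key ingredients are Deligne's Riemann hypothesis, Poincaré duality (both its algebraic form, which defines $\sim_\num$, and its $l$-adic form with Galois action), and a natural chain of four $\mathbb{Q}_l$-vector spaces. First I would translate the pole order: by \eqref{eq6.5} one has $\zeta(X,s) = \prod_j P_j(q^{-s})^{(-1)^{j+1}}$, and the reciprocal roots of $P_j$ are $q$-nombres de Weil of weight $j$. An integer $s=i$ can therefore be a zero or pole of $\zeta(X,s)$ only through a factor $P_{2i}$ vanishing at the reciprocal root $q^i$; since $2i$ is even, $P_{2i}$ sits in the denominator, producing a pole whose order equals the multiplicity of $q^i$ as a reciprocal root of $P_{2i}$, so that
\[
-\ord_{s=i}\zeta(X,s) \;=\; \dim_{\mathbb{Q}_l} V^{\mathrm{gen}},
\]
where $V^{\mathrm{gen}}$ denotes the generalized eigenspace of geometric Frobenius $\pi$ on $H^{2i}_l(X)(i)$ at eigenvalue $1$.

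The core of the argument is the chain
\[
\rg A^i_\num(X) \;\leq\; \dim_{\mathbb{Q}_l} N^i \;\leq\; \dim_{\mathbb{Q}_l} H^{2i}_l(X)(i)^G \;\leq\; \dim_{\mathbb{Q}_l} V^{\mathrm{gen}},
\]
where $N^i$ is the image of the cycle class map $CH^i(X)\otimes\mathbb{Q}_l \to H^{2i}_l(X)(i)$. The three inequalities have transparent meanings: the rightmost is an equality iff $\pi$ acts semisimply on $V^{\mathrm{gen}}$, i.e.\ iff $S^i(X,l)$; the middle one is precisely $T^i(X,l)$; and the leftmost expresses that $l$-adic homological equivalence coincides with numerical equivalence in codimension $i$, since $CH^i(X)\otimes\mathbb{Q}_l \twoheadrightarrow A^i_\num(X)\otimes\mathbb{Q}_l$ factors through the surjection $CH^i(X)\otimes\mathbb{Q}_l \twoheadrightarrow N^i$.

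For the equivalence $S^i(X,l) \iff S^{d-i}(X,l)$, I would invoke the perfect Poincaré pairing $H^{2i}_l(X)(i) \otimes H^{2(d-i)}_l(X)(d-i) \to \mathbb{Q}_l$: it makes the action of $\pi$ on one side the inverse-transpose of its action on the other, and Jordan block sizes at eigenvalue $1$ are invariant under transposition and inversion. For the direction conj.~\ref{co6.1} $\Rightarrow T^i + T^{d-i} + S^i$, equality of the extremes of the chain forces all intermediate inequalities to be equalities, yielding $T^i$ and $S^i$; the functional equation \eqref{eq3.7} provides $\ord_{s=i}\zeta = \ord_{s=d-i}\zeta$ and the perfection of the numerical pairing $A^i_\num \times A^{d-i}_\num \to \mathbb{Z}$ gives $\rg A^i_\num = \rg A^{d-i}_\num$, so conj.~\ref{co6.1} also holds at $(X,d-i)$, whence $T^{d-i}$ by the same argument.

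The main obstacle is the converse. Assuming $T^i + T^{d-i} + S^i$ (hence $S^{d-i}$ by the above), the two rightmost inequalities in the chain become equalities, and it remains to identify $\dim_{\mathbb{Q}_l} N^i$ with $\rg A^i_\num(X)$. The key move is that the defining perfect pairing $A^i_\num\otimes\mathbb{Q}_l \times A^{d-i}_\num\otimes\mathbb{Q}_l \to \mathbb{Q}_l$ factors as the pair of surjections $N^i \twoheadrightarrow A^i_\num\otimes\mathbb{Q}_l$ and $N^{d-i} \twoheadrightarrow A^{d-i}_\num\otimes\mathbb{Q}_l$ followed by the cup-product pairing on $N^i \times N^{d-i}$. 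Under $T^i + T^{d-i}$ one has $N^i = H^{2i}_l(X)(i)^G$ and $N^{d-i} = H^{2(d-i)}_l(X)(d-i)^G$; under $S^{d-i}$, the latter coincides with $H^{2(d-i)}_l(X)(d-i)_G$, which is the Poincaré dual of $H^{2i}_l(X)(i)^G$. Hence the cup-product pairing on $N^i \times N^{d-i}$ is itself perfect, and a standard bilinear-algebra lemma (when a perfect pairing factors through surjections onto a target carrying another perfect pairing, those surjections are forced to be isomorphisms) yields $N^i \simeq A^i_\num\otimes\mathbb{Q}_l$, delivering the desired equality and, as a by-product, the coincidence of $\sim_H$ and $\sim_\num$ in codimensions $i$ and $d-i$.
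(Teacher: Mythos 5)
Votre argument est correct. Le texte ne reproduit pas de démonstration de cet énoncé — il est cité de l'article de Tate \cite[th. 2.9]{tate-seattle} — et votre chaîne d'inégalités $\rg A^i_\num(X)\le \dim N^i\le \dim H^{2i}_l(X)(i)^G\le \dim V^{\mathrm{gen}}$, jointe à l'argument de dualité (pour identifier le terme de gauche et pour $S^i\iff S^{d-i}$) et à l'équation fonctionnelle (pour passer de $i$ à $d-i$), est précisément la démonstration originelle de Tate.
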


\begin{thm}[\protect{\cite[th. 6]{kahn-ss}}] On a l'équivalence
\[S^d(X\times X,l)\iff SS^i(X,l) \text{ pour tout } i. \]
\end{thm}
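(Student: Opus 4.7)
The plan is to reduce $S^d(X\times X,l)$ to a semisimplicity question on each $H^i_l(X)$ separately, by means of a Künneth–Poincaré decomposition of $H^{2d}_l(X\times X)(d)$ that is equivariant for the Galois action.

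First, I would use the Künneth formula together with Poincaré duality, which gives $H^{2d-i}_l(X)(d)\simeq H^i_l(X)^\vee$ as $G$-modules ($G=\Gal(\bar k/k)=\hat\Z$), to produce a $G$-equivariant isomorphism
\[
H^{2d}_l(X\times X)(d)\;\simeq\;\bigoplus_{i=0}^{2d} H^i_l(X)\otimes_{\Q_l}H^i_l(X)^\vee\;\simeq\;\bigoplus_{i=0}^{2d}\End_{\Q_l}(H^i_l(X)),
\]
where $G$ acts on the $i$-th summand by conjugation via $\phi_i:=\phi\mid_{H^i_l(X)}$. The twist $(d)$ is essential here: it is exactly what converts the tensor action $\phi_i\otimes\phi_{2d-i}$ on $H^i_l(X)\otimes H^{2d-i}_l(X)(d)$ into the conjugation action $f\mapsto\phi_i f\phi_i^{-1}$ on $\End(H^i_l(X))$.

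Second, since $S^d(X\times X,l)$ asserts bijectivity of the natural map $V^G\to V_G$ for $V=H^{2d}_l(X\times X)(d)$, and this map splits along any $G$-stable direct sum, it becomes equivalent to the assertion that, for every $i\in[0,2d]$, the canonical map
\[
\End_{\Q_l}(H^i_l(X))^{G}\longrightarrow \End_{\Q_l}(H^i_l(X))_{G}
\]
is bijective (with $G$ acting by conjugation via $\phi_i$).

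The conceptual core of the argument, and the only nontrivial step, is the following linear-algebra lemma: for a finite-dimensional $\Q_l$-vector space $V$ and $\phi\in GL(V)$, the map $\End(V)^\phi\to\End(V)_\phi$ (under conjugation) is bijective iff $\phi$ is semisimple. Indeed, this map is bijective iff $1$ is a semisimple eigenvalue of the conjugation operator on $\End(V)$. Decomposing $V=\bigoplus_\alpha V_\alpha$ into generalized $\phi$-eigenspaces, one has $\End(V)=\bigoplus_{\alpha,\beta}\Hom(V_\alpha,V_\beta)$, and conjugation acts on $\Hom(V_\alpha,V_\beta)$ with all eigenvalues equal to $\beta/\alpha$. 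The generalized $1$-eigenspace of conjugation is therefore $\bigoplus_\alpha\End(V_\alpha)$, while the true $1$-eigenspace is $\bigoplus_\alpha\End_{\phi\mid_{V_\alpha}}(V_\alpha)$; these agree iff $\phi\mid_{V_\alpha}$ is central in $\End(V_\alpha)$, i.e.\ scalar, for every $\alpha$ — which is exactly the semisimplicity of $\phi$.

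Combining the three steps gives $S^d(X\times X,l)\iff SS^i(X,l)$ for all $i$. The main obstacle is verifying the $G$-equivariance of the Künneth–Poincaré decomposition with the correct twist, so that the Galois action on each summand is genuinely conjugation by $\phi_i$ (and not something twisted by a scalar that would alter which eigenvalue of conjugation is at stake); once that is properly in place, the reduction to the linear-algebra lemma is formal, and the lemma itself is elementary.
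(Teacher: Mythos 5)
Your proof is correct and follows essentially the same route as the cited argument of \cite{kahn-ss}: the Galois-equivariant Künneth--Poincaré identification $H^{2d}_l(X\times X)(d)\simeq\bigoplus_i\End(H^i_l(X))$ with Frobenius acting by conjugation, followed by the linear-algebra fact that $1$ is a semisimple eigenvalue of $\operatorname{Ad}_\phi$ on $\End(V)$ iff $\phi$ is semisimple on $V$ (combined with the standard observation that $V^G\to V_G$ is bijective iff the eigenvalue $1$ of Frobenius is semisimple). Nothing further to add.
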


Les énoncés $T^i(X,l)$ et $S^1(X,l)$ sont connus quand $i=1$ et $X$ est une variété abélienne \cite{tate-fini}: c'est une variante du théorème \ref{tt}. Par conséquent, la conjecture \ref{co6.1} est connue pour $i=1$ quand $X$ est une variété abélienne.

\begin{exo}\phantomsection\label{exo6.2} Dans cet exercice, on considère  le foncteur de réalisation enrichi
\[R_l^*:\sM_l(k,\Q_l)\to \Rep^*(G,\Q_l)\]
induit par la cohomologie de Weil $H_l$, à valeurs dans la catégorie abélienne des représentations $\Q_l$-adiques $\Z$-graduées de $G$.\\
a) (cf. \cite[prop. 7.3.1.3 1)]{andre}) Montrer que  la conjecture $T^i(X,l)$ pour tout $(X,i)$ équivaut à l'énoncé suivant, pour tout $(M,N)\in \sM_l(k,\Q_l)\times \sM_l(k,\Q_l)$:
\begin{description}
\item[T(M,N,l)] L'homomorphisme
\[\sM_l(k,\Q_l)(M,N)\to \Rep^*(G,\Q_l)(R_l^*(M),R_l^*(N))\]
est surjectif.
\end{description}
Autrement dit, $R_l^*$ est \emph{plein}. (Montrer que $T(M,N,l)\iff T(\un,M^*\otimes N,l)$, puis se ramener à $T^*(X,l)$ pour un $X$ convenable.)\\
b) Montrer de même que la conjecture $SS^i(X,l)$ pour tout $(X,i)$ implique pour tout $M\in \sM_l(k,\Q_l)$:
\begin{description}
\item[SS(M,l)] $R_l^*(M)$ est semi-simple.
\end{description}
c) Soient $M,N\in \sM_l(k,\Q_l)$. Supposons que $\dim R_l^i(M)=\dim R_l^i(N)$ pour tout $i$ et que les valeurs propres de l'action de Frobenius sur ces espaces vectoriels co\"\i ncident. Sous les conjectures $T(M,N,l)$, $SS(M,l)$ et $SS(N,l)$, en déduire que $M\simeq N$.\\
d) Montrer que la conjecture \ref{co6.1} implique l'énoncé suivant: pour tout $M\in \sM_\num(k,\Q)$ et tout $i\in \Z$, on a
\[\ord_{s=i} \zeta(M,s) = -\dim_\Q \sM_\num(M,\L^i).\]
(Reprendre la preuve de \cite[th. 2.9]{tate-seattle}.)
\end{exo}

\begin{exo}\phantomsection\label{exo6.1} Dans cet exercice, on examine ce que devient le signe de l'équation fonctionnelle dans le théorème \ref{t6.4} quand on remplace $h^i(X)$ par un motif quelconque de poids $i$ impair. \\
a) Soit $M\in \sM_\num^\eff(k,\Q)$, purement de poids $i$. Montrer qu'on peut écrire $M\simeq M'\oplus M''$, où les valeurs propres de $\pi_{M'}$ (resp. de $\pi_{M''}$) sont de la forme $\pm q^{i/2}$ (resp. ne sont pas de cette forme). (Construire l'idempotent correspondant comme polyn\^ome en $\pi_M$.)\\
b) Si $M'=0$, montrer que $\chi(M)$ est pair, que $\det(\pi_M)>0$, et que le signe de l'équation fonctionnelle de $\zeta(M,s)$ est $+1$. (Utiliser les formules du théorème \ref{t6.1} et du théorème \ref{tA.4}, ainsi que la remarque \ref{r5.2}.)\\
c) On suppose $M''=0$ et $i$ impair. Soit $\epsilon\in \{+,- \}$. D'après Deuring \cite{deuring2} (voir aussi Honda \cite{honda}), il existe une courbe elliptique supersingulière $E_\epsilon$, définie sur $k$ et telle que $\pi_{E_\epsilon}$ ait pour valeur propre $\epsilon q^{i/2}$. En utilisant l'exercice \ref{exo6.2} c), montrer que sous la conjecture \ref{co6.1}, $M$ est de la forme $\left(h^1(E_\epsilon)(\frac{1-i}{2})\right)^r$ pour un $r>0$. En déduire que les conclusions de b) restent vraies.\footnote{En examinant la démonstration, on voit que la conjecture \ref{co6.1} est en fait utilisée en codimension $\frac{i+1}{2}$.}\\
d) Que se passe-t-il quand $i$ est pair?
\end{exo}

\subsection{Coronidis loco}\label{cor-loco}\index{Conjecture!d'Artin}

D\'eduisons de ce qui pr\'ec\`ede une d\'emonstration motivique du th\'eor\`eme de Weil \ref{t4.3} (conjecture d'Artin sur les corps de fonctions): d'après la remarque \ref{r6.1} c), elle n'utilise que les résultats de Weil.

Soit $K$ un corps de fonctions d'une variable sur $\F_q$:  quitte \`a augmenter $\F_q$, on peut le supposer alg\'ebriquement ferm\'e dans $K$. Soit $L$ une extension finie régulière de $K$, galoisienne de groupe $G$. Soit $\rho:G\to GL_n(\C)$ une repr\'esentation lin\'eaire, irr\'eductible et non triviale. Alors $\rho$ est d\'efinie sur un corps de nombres $F$. En choisissant une place finie $\lambda$ de $F$ au-dessus d'un nombre premier $l$ qui ne divise pas $q$, on peut voir $\rho$ comme repr\'esentation \`a valeurs dans le compl\'et\'e $F_\lambda$, extension finie de $\Q_l$.

Soit $p:\tilde C\to C$ le rev\^etement (ramifi\'e), galoisien de groupe $G$, des mod\`eles projectifs lisses de $K$ et $L$. Le groupe $G$ op\`ere sur le motif $h(\tilde C)\in \sM_\num(\F_q,F_\lambda)$, et $\rho$ y d\'ecoupe un motif $M_\rho$; la formule des traces (\'etendue aux coefficients $F_\lambda$) montre que
\[L(\rho^*,t)=Z(M_\rho,t).\footnote{Prendre garde que les fonctions $L$ d'Artin sont d\'efinies en termes des Frobenius arithm\'etiques, contrairement aux fonctions $L$ de faisceaux $l$-adiques\index{Faisceau $l$-adique} et de motifs: pour passer d'une convention à l'autre, il faut  remplacer $\rho$ par sa duale.}
\]

Par hypoth\`ese sur $\rho$, $M_\rho$ est facteur direct de l'image du projecteur $\pi=1-\frac{1}{|G|}p^*p_*$. \'Ecrivons
\begin{align*}
h(C)&=\un\oplus h^1(C)\oplus \L\\
h(\tilde C)&=\un\oplus h^1(\tilde C)\oplus \L
\end{align*}
(on a utilis\'e l'hypoth\`ese que l'extension $L/K$ est r\'eguli\`ere). Comme $\IM(1-\pi)=h(C)$, on voit que $M_\rho$ est \emph{facteur direct de $h^1(\tilde C)$}; donc $Z(M_\rho,t)$ est un polyn\^ome divisant $P_1(\tilde C)$. Ainsi $\zeta(\rho^*,s)=Z(M_\rho,q^{-s})$ est une fonction enti\`ere, dont les z\'eros sont situ\'es sur la droite $\Re(s)=1/2$.

\appendix

\section{Cat\'egories karoubiennes et cat\'egories mono\"\i dales}

\subsection{Cat\'egories karoubiennes}

\subsubsection{Endomorphismes idempotents (ou projecteurs)}

\begin{defn} Soit $\sC$ une cat\'egorie, et soit $C$ un objet de $\sC$. Un endomorphisme $p$ de $C$ est \emph{idempotent} si $p^2=p$. On dit aussi que $p$ est un \emph{projecteur}.
\end{defn}

\begin{defn} Soient $\sC,C,p$ comme ci-dessus. L'\emph{image} de $p$ est, s'il existe, le noyau $\IM p$ du couple de fl\`eches parall\`eles $(1_X,p)$.
\end{defn}

(Cette d\'efinition est justifi\'ee par le calcul ensembliste suivant, lui-m\^eme justifi\'e par le lemme de Yoneda: si $x\in C$ est de la forme $p(y)$, alors $p(x)=p^2(y)=p(y)=x$.)

\begin{lemme}\phantomsection\label{l:retr} Supposons que $p$ ait une image. Alors $\IM p$ est canoniquement r\'etracte de $C$.
\end{lemme}

\begin{proof} Par d\'efinition, on a un morphisme canonique $\IM p\to C$, \'egalisant les deux fl\`eches $1_X,p$. Inversement, l'endomorphisme $p:C\to C$ \'egalise ces deux fl\`eches, donc se factorise canoniquement par $\IM p$. On voit imm\'ediatement que la composition
\[\IM p\to C\to \IM p\]
est \'egale \`a l'identit\'e.
\end{proof}

\begin{defn} La cat\'egorie $\sC$ est \emph{karoubienne} si tout projecteur a une image.
\end{defn}

\subsubsection{Enveloppes karoubiennes}

\begin{defn}\phantomsection\label{dA.4} Soit $\sC$ une cat\'egorie. Une \emph{enveloppe karoubienne} de $\sC$ (en anglais: \emph{idempotent completion}) est un foncteur $i:\sC\to \sC^\natural$, o\`u $\sC^\natural$ est pseudo-ab\'elienne, v\'erifiant la propri\'et\'e $2$-universelle suivante: pour toute cat\'egorie karoubienne $\sD$, le foncteur de restriction
\[i^*:\Fonct(\sC^\natural,\sD)\to \Fonct(\sC,\sD)\]
est une \'equivalence de cat\'egories.
\end{defn}

\begin{thm}\phantomsection\label{tA.2} Une enveloppe karoubienne existe toujours; le foncteur $i$ est pleinement fid\`ele et le foncteur $\sC^\natural\to (\sC^\natural)^\natural$ (enveloppe karoubienne de $\sC^\natural)$ est une \'equivalence de cat\'egories.
\end{thm}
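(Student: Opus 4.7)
The approach is to give an explicit construction of $\sC^\natural$ and to verify each assertion by direct inspection.

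\textbf{Construction.} Define $\sC^\natural$ as follows. An object is a pair $(X,p)$ where $X\in \sC$ and $p\in \End_\sC(X)$ satisfies $p^2=p$. A morphism $(X,p)\to (Y,q)$ is a morphism $f:X\to Y$ in $\sC$ satisfying $qfp=f$ (equivalently $qf=f=fp$). Composition is inherited from $\sC$, and the identity of $(X,p)$ is $p$ itself; one checks that these data satisfy the axioms of a category. Define $i:\sC\to \sC^\natural$ by $X\mapsto (X,1_X)$ and $(f:X\to Y)\mapsto f$.

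\textbf{Fully faithfulness and Karoubianness.} A morphism $i(X)\to i(Y)$ is an arrow $f:X\to Y$ satisfying $1_Y\cdot f\cdot 1_X=f$, which is automatic; hence $i$ induces a bijection on $\Hom$-sets, so $i$ is fully faithful. To see that $\sC^\natural$ is karoubian, let $e:(X,p)\to (X,p)$ be an idempotent, so $e\in \End_\sC(X)$ with $pep=e$ and $e\circ e=e$ in $\sC^\natural$, i.e.\ $e^2=e$ in $\sC$ as well. Then $(X,e)$ is an object of $\sC^\natural$, and the arrows
\[r=e:(X,p)\to (X,e),\qquad s=e:(X,e)\to (X,p)\]
satisfy $sr=e$ (the idempotent of $(X,p)$ whose image we want) and $rs=e$ (the identity of $(X,e)$); one verifies the universal property characterizing $(X,e)$ as the image of $e$ in the sense of the preceding lemma.

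\textbf{Universal property.} Let $\sD$ be karoubian and $F:\sC\to \sD$ a functor. Define $\tilde F:\sC^\natural\to \sD$ by sending $(X,p)$ to $\IM F(p)$ (which exists in $\sD$), and sending $f:(X,p)\to (Y,q)$ to the unique morphism $\IM F(p)\to \IM F(q)$ whose composition with the canonical retraction/section pair is $F(f)$. A direct verification shows $\tilde F$ is a functor, $\tilde F\circ i\simeq F$ naturally, and any two extensions of $F$ are canonically isomorphic; moreover the construction is functorial in $F$ and sends natural transformations to natural transformations. This gives the quasi-inverse to the restriction functor $i^*$ and proves the $2$-universal property. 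The statement that $\sC^\natural\to (\sC^\natural)^\natural$ is an equivalence then follows formally: $\sC^\natural$ is already karoubian, so the identity functor extends to a quasi-inverse of the canonical one.

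\textbf{Main obstacle.} The only subtle point is bookkeeping in the universal property: one must check that $\tilde F$ is well-defined on morphisms (independent of the choice of section/retraction pairs representing $\IM F(p)$ and $\IM F(q)$), that composition is preserved, and that the natural isomorphism $\tilde F\circ i\simeq F$ together with the assignment $F\mapsto \tilde F$ promotes the equivalence to a genuine $2$-categorical statement rather than a mere bijection on objects. All of this is routine but tedious; the construction of the image of $e$ in the proof of karoubianness is the key conceptual ingredient that makes the whole argument work.
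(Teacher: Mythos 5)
Your construction is exactly the one the paper uses (the constructive version from SGA 4, Exp. IV, ex. 7.5): objects are pairs $(C,p)$, morphisms are the $f$ with $f=qfp$, the identity of $(C,p)$ is $p$, and $i(C)=(C,1_C)$. The paper merely states that the universal property is easily verified, so your proposal is correct and follows the same route, just with the routine verifications written out.
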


\begin{proof} On en trouve deux dans \cite{SGA4}: l'une dans l'exercice 7.5 de l'expos\'e IV et l'autre dans l'exercice 8.7.8 de l'expos\'e I. Donnons la premi\`ere, qui a l'avantage d'\^etre constructive (la seconde, qui utilise les ind-objets, est plus conceptuelle):

D\'efinissons une cat\'egorie $\sC^\natural$ dont les objets sont les couples $(C,p)$ o\`u $C\in \sC$ et $p$ est un projecteur de $C$. Pour un autre objet $(D,q)$, on d\'efinit:
\[\sC^\natural((C,p),(D,q))=\{f\in \sC(C,D)\mid f=qfp.\}\]

La composition des morphismes est induite par celle de $\sC$; l'identit\'e de $(C,p)$ et $p$. Le foncteur $i$ est d\'efini par
\[i(C)=(C,1_C)\]
et $i$ est l'identit\'e sur les morphismes. La propri\'et\'e universelle se v\'erifie facilement, et implique la derni\`ere affirmation.
\end{proof}

\subsubsection{Cas des cat\'egories additives}

\begin{prop} Si $\sC$ est additive, $\sC^\natural$ est additive. De plus, pour tout objet $C\in \sC$ et tout projecteur $p$ de $C$, le morphisme canonique de $\sC^\natural$
\[\IM(p)\oplus \IM(1-p)\to C\]
est un isomorphisme.
\end{prop}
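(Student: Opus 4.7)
Je vais proc�der en deux temps, en m'appuyant sur la description explicite de $\sC^\natural$ donn�e dans la preuve du th\'eor\`eme \ref{tA.2}: les objets sont les couples $(C,p)$ avec $p^2=p$ dans $\End_\sC(C)$, et
\[\sC^\natural((C,p),(D,q))=\{f\in \sC(C,D)\mid f=qfp\}.\]

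Pour la premi\`ere assertion, je munis chaque $\Hom$ de $\sC^\natural$ de la structure de groupe ab\'elien induite par celle de $\sC(C,D)$: le sous-ensemble $\{f\mid f=qfp\}$ est bien un sous-groupe (la condition est lin\'eaire en $f$ et contient $0$), et la bilin\'earit\'e de la composition dans $\sC^\natural$ se d\'eduit imm\'ediatement de celle de $\sC$. L'objet $(0,1_0)$ est \`a la fois initial et final dans $\sC^\natural$. Reste \`a construire les sommes finies: \'etant donn\'e deux objets $(C,p)$ et $(D,q)$, je pose $(C,p)\oplus (D,q):=(C\oplus D,p\oplus q)$, o\`u $p\oplus q$ est le projecteur de matrice $\operatorname{diag}(p,q)$; les injections et projections de la somme directe ambiante dans $\sC$ d\'efinissent, apr\`es composition avec les projecteurs voulus, des morphismes de $\sC^\natural$ satisfaisant les identit\'es d'un biproduit. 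Ces v\'erifications sont purement calculatoires.

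Pour la seconde assertion, je commence par identifier les images en jeu: je montre que, pour $p^2=p$, l'image dans $\sC^\natural$ de $p\in \End(i(C))=\End((C,1_C))$ est $(C,p)$, de co\"unit\'e d'inclusion  repr�sent�e par $p$ elle-m\^eme. Cela r\'esulte du lemme \ref{l:retr}: $(C,p)$ est r\'etracte de $(C,1_C)$ via les fl\`eches $p:(C,p)\to(C,1_C)$ et $p:(C,1_C)\to(C,p)$ (v\'erifier dans les deux cas que $p=1\cdot p\cdot p$ et $p=p\cdot p\cdot 1$), et l'identification avec le noyau du couple $(1_{(C,1_C)},p)$ se v\'erifie directement par la propri\'et\'e universelle. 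Comme $1-p$ est aussi un projecteur, on a de m\^eme $\IM(1-p)=(C,1-p)$.

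Il reste alors \`a produire des fl\`eches inverses l'une de l'autre. Je d\'efinis explicitement
\[\alpha:(C,p)\oplus (C,1-p)\longrightarrow (C,1_C),\qquad \beta:(C,1_C)\longrightarrow (C,p)\oplus (C,1-p)\]
par les matrices $\alpha=(p,\;1-p)$ et $\beta={}^t(p,\;1-p)$. La relation cl\'e est \emph{l'orthogonalit\'e} $p(1-p)=(1-p)p=0$: elle assure d'une part que $\alpha$ et $\beta$ sont bien des morphismes de $\sC^\natural$, d'autre part que $\beta\alpha=p\oplus (1-p)=1_{(C,p)\oplus (C,1-p)}$, tandis que $\alpha\beta=p+(1-p)=1_C=1_{(C,1_C)}$. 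Le morphisme $\alpha$ co�ncide tautologiquement avec le morphisme canonique $\IM(p)\oplus \IM(1-p)\to (C,1_C)$ issu des deux inclusions d'images, ce qui conclut.

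L'unique obstacle potentiel est la v\'erification soigneuse, dans la seconde \'etape, que les morphismes candidats $p$ et $1-p$ (vus comme fl\`eches entre objets distincts de $\sC^\natural$) respectent bien la condition $f=qfp$: tout tient \`a l'identit\'e $p^2=p$ et �quivalemment $(1-p)^2=1-p$, mais il faut l'appliquer � chaque v\'erification.
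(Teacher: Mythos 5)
Votre preuve est correcte et suit essentiellement la même démarche que celle du texte, qui se contente d'esquisser les deux points (préservation de l'additivité par la construction explicite de $\sC^\natural$, puis vérification que les morphismes du lemme \ref{l:retr} pour $p$ et $1-p$ font de $C$ un biproduit de $\IM(p)$ et $\IM(1-p)$). Vous ne faites qu'expliciter les calculs matriciels que le texte laisse au lecteur.
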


(Noter que, si $p$ est un projecteur, alors $1-p$ est un projecteur; de plus, $\IM(1-p)=\Ker(0,p)=:\Ker(p)$.)

\begin{proof} Par d\'efinition \cite[ch. VIII, \S 2]{mcl}, $\sC$ est additive si ses Hom sont des groupes ab\'eliens, la composition des morphismes \'etant bilin\'eaire, et si les produits finis y sont repr\'esentables. Ces propri\'et\'es sont clairement pr\'eserv\'ees par la construction donn\'ee dans la preuve du th\'eor\`eme \ref{tA.2}.

Pour la derni\`ere affirmation, on v\'erfie que les morphismes donn\'es par le lemme \ref{l:retr} (pour $p$ et $1-p$) font de $C$ un biproduit de $\IM(p)$ et $\IM(1-p)$ au sens de \cite[ch. VIII, d\'ef.]{mcl}.
\end{proof}

\subsubsection{Structures $F$-lin\'eaires}

\begin{defn}\phantomsection\label{dA.6} Soit $F$ un anneau unitaire. Une \emph{structure $F$-lin\'eaire} sur une cat\'egorie additive $\sA$  est la donn\'ee d'un homomorphisme d'anneaux $F\to\End(Id_{\sA})$.
\end{defn}

Explicitons: les endomorphismes du foncteur identique de $\sA$ ont une structure naturelle d'anneau. Un homomorphisme comme ci-dessus est la donn\'ee, pour tout $\lambda\in F$ et tout $A\in \sA$, d'un endomorphisme $\lambda_A:A\to A$, naturel en $A$, additif et multiplicatif en $\lambda$. Il revient au m\^eme de se donner des structures de $F$-modules bilat\`eres sur les $\sA(A,B)$, compatibles \`a la composition.

\begin{lemme}\phantomsection\label{lA.2} Soit $\sA$ une cat\'egorie additive: toute structure $F$-lin\'eaire sur $\sA$ s'\'etend canoniquement \`a son enveloppe karoubienne.
\end{lemme}

\begin{proof} Exercice.
\end{proof}

\subsection{Cat\'egories mono\"\i dales}\label{sA.2}

\subsubsection{D\'efinitions}

\begin{defn} Une \emph{cat\'egorie mono\"\i dale} est un triplet $(\sA,\otimes,a)$ o\`u 
\begin{itemize}
\item $\sA$ est une cat\'egorie;
\item $\otimes:\sA\times \sA\to \sA$ est un foncteur;
\item la \emph{contrainte d'associativit\'e} $a$ est un isomorphisme naturel
\[a_{A,B,C}: A\otimes(B\otimes C)\iso (A\otimes B)\otimes C\]
soumis \`a la condition de coh\'erence suivante: pour tous $A,B,C,D\in \sA$, le diagramme pentagonal
\[\xymatrix{
A\otimes(B\otimes(C\otimes D)) \ar[r]^{a_{A,B,C\otimes D}}\ar[d]^{1_A\otimes a_{B,C,D}}& (A\otimes B)\otimes (C\otimes D)\ar[r]^{a_{A\otimes B,C,D}} & ((A\otimes B)\otimes C)\otimes D\\
A\otimes ((B\otimes C)\otimes D)\ar[rr]^{a_{A,B\otimes C,D}}&& (A\otimes(B\otimes C))\otimes D\ar[u]^{a_{A,B,C}\otimes 1_D}}\]
est commutatif.
\end{itemize}
\end{defn}

\begin{rque} Cette d\'efinition se trouve dans Mac Lane \cite[ch. VII, \S 1]{mcl}, mais aussi dans Saavedra \cite[ch. I, 1.1.1]{saa} (\`a la terminologie pr\`es). Ceci s'applique aussi aux d\'efinitions qui suivent. J'ai repris de Saavedra la terminologie de \emph{contrainte}.
\end{rque}

Le th\'eor\`eme de coh\'erence de Mac Lane \cite[ch. VII, \S 2, th. 1]{mcl} \'enonce que la commutativit\'e ci-dessus entra\^\i ne ``toutes'' les autres commutativit\'es (i.e. contraintes d'associativité supérieures) imaginables. 

\begin{defn} Une cat\'egorie $\sA$ munie d'un bifoncteur $\otimes$ comme ci-dessus est \emph{unitaire} si elle est munie d'un \emph{objet unit\'e} $\un$ et, pour tout $A\in \sA$, d'isomorphismes naturels (contraintes d'unit\'e)
\[\un\otimes A\iso A\osi A\otimes \un.\]
Si $\sA$ est mono\"\i dale, on demande des compatibilit\'es ``\'evidentes'' entre la contrainte d'associativit\'e et les contraintes d'unit\'e.
\end{defn}

\begin{lemme}\phantomsection\label{lA.4} Soit $\sA$ une cat\'egorie mono\"\i dale unitaire. Alors le mono\"\i de $\End_\sA(\un)$ est  commutatif.
\end{lemme}

\begin{proof} On dispose de deux lois de composition sur $\End_\sA(\un)$: la composition des morphismes $\circ$ et leur produit tensoriel $\otimes$. On v\'erifie facilement que chaque loi est distributive par rapport \`a l'autre; un lemme bien connu des topologues (servant \`a montrer que les groupes d'homotopie sup\'erieurs des sph\`eres sont commutatifs) implique alors que ces deux lois sont \'egales et commutatives.
\end{proof}

\begin{defn} Une \emph{cat\'egorie mono\"\i dale sym\'etrique} est une cat\'egorie mono\"\i dale munie de \emph{contraintes de commutativit\'e}
\[\sigma_{A,B}:A\otimes B\iso B\otimes A\]
telles que $\sigma^2=1$, et compatibles \`a la containte de commutativit\'e en un sens ``\'evident''. On peut ajouter une unit\'e; on a alors une cat\'egorie mono\"\i dale sym\'etrique unitaire ($\otimes$-cat\'egorie ACU dans la terminologie de Saavedra \cite{saa}).
\end{defn}

On trouvera dans \cite[ch. I]{saa} une infinit\'e de sorites sur les diverses compatibilit\'es entre contraintes. 
 En particulier, si $\sA$ est mono\"\i dale, une unit\'e est unique \`a isomorphisme pr\`es, et les contraintes d'unit\'e impliquent les autres contraintes.

\begin{lemme}\phantomsection\label{lsym} Soit $A$ un objet d'une cat\'egorie mono\"\i dale sym\'etrique $\sA$. Pour tout $n\ge 0$, la contrainte de commutativit\'e induit un homomorphisme canonique
\[\fS_n\to \Aut_\sA(A^{\otimes n})\]
o\`u $\fS_n$ est le groupe sym\'etrique.
\end{lemme}

\begin{proof} On se ram\`ene au cas $n=2$ en utilisant la pr\'esentation ``de Coxeter'' de $\fS_n$ pour les g\'en\'erateurs $\sigma_i=(i,i+1)$ ($1\le i<n$). La v\'erification des relations
\[\sigma_i\sigma_{i+1}\sigma_i = \sigma_{i+1}\sigma_i\sigma_{i+1}\]
provient de l'interaction entre les contraintes d'associativit\'e et de commutativit\'e.
\end{proof}

\begin{rque} Une notion plus faible que la sym\'etrie est la \emph{condition de tressage}: on ne demande plus \`a la contrainte de commutativit\'e $\sigma$ d'\^etre involutive, mais seulement de v\'erifier les \emph{relations de Yang-Baxter} \cite[ch. XI]{mcl}. Cette notion est importante dans la th\'eorie des groupes quantiques, ainsi que pour l'\'etude du groupe de Grothendieck-Teichm\"uller introduit par Drinfeld. L'action du groupe sym\'etrique est alors remplac\'ee par celle du \emph{groupe des tresses}.
\end{rque}

\subsubsection{Foncteurs mono\"\i daux}

\begin{defn} Soient $\sA$, $\sB$ deux cat\'egories mono\"\i dales. Un \emph{foncteur mono\"\i dal} de $\sA$ vers $\sB$ est un couple $(F,\phi)$ o\`u $F:\sA\to \sB$ est un foncteur et $\phi$ est un isomorphisme naturel
\[\phi_{A,B}:F(A)\otimes F(B)\iso F(A\otimes B)\]
compatible aux contraintes d'associativit\'e en un sens ``\'evident''. Si $\sA,\sB$ sont unitaires, sym\'etriques, on a de m\^eme la notion de foncteur mono\"\i dal unitaire, sym\'etrique.
\end{defn}

\begin{rque} Il arrive souvent qu'on ait une transformation naturelle comme ci-dessus, mais qui n'est pas un isomorphisme. On parle alors de foncteur pseudo-mono\"\i dal. On dit aussi parfois \emph{faiblement mono\"\i dal}, ou en anglais \emph{lax-monoidal} (mono\"\i dal au sens l\^ache). Par ailleurs, Mac Lane dit ``mono\"\i dal'' l\`a o\`u j'utilise ``pseudo-mono\"\i dal'' et ``strong mono\"\i dal'' l\`a o\`u j'utilise ``monoidal''. Il faut v\'erifier la terminologie selon les auteurs!

On dit enfin qu'un foncteur est \emph{strictement mono\"\i dal} si $\phi$ est l'identit\'e: ce choix semble faire consensus.
\end{rque}

Une cat\'egorie mono\"\i dale est dite \emph{stricte} si sa contrainte d'associativit\'e est \'egale \`a l'identit\'e. Un \'enonc\'e \'equivalent au th\'eor\`eme de Mac Lane est le suivant: 

\begin{thm}[\protect{\cite[ch. XI, \S 2, th. 1]{mcl}}]
Toute cat\'egorie mono\"\i dale est (mono\"\i dalement) \'equivalente \`a une cat\'egorie mono\"\i dale stricte.
\end{thm}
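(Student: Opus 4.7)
L'approche consistera � construire explicitement une cat\'egorie mono\"\i dale stricte $\sA^{str}$ accompagn\'ee d'un foncteur mono\"\i dal $F : \sA^{str} \to \sA$ qui soit une \'equivalence de cat\'egories. Les objets de $\sA^{str}$ seront les suites finies $\underline{A} = (A_1, \dots, A_n)$ d'objets de $\sA$, incluant la suite vide $\emptyset$. Le produit tensoriel sur les objets sera la concat\'enation $\underline{A} * \underline{B}$, qui est strictement associative et admet $\emptyset$ comme unit\'e stricte par d\'efinition. Fixons pour chaque suite $\underline{A}$ un repr\'esentant canonique $T(\underline{A}) \in \sA$, par exemple par parenth\'esage \`a gauche : $T(\emptyset) = \un$, $T((A)) = A$, $T(\underline{A}) = T((A_1,\dots,A_{n-1})) \otimes A_n$. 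On pose alors
\[\sA^{str}(\underline{A}, \underline{B}) := \sA(T(\underline{A}), T(\underline{B})),\]
la composition et les identit\'es \'etant directement h\'erit\'ees de $\sA$.

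Pour d\'efinir le produit tensoriel des morphismes, j'utiliserai le th\'eor\`eme de coh\'erence de Mac Lane cit\'e plus haut : il fournit, pour tout couple $(\underline{A}, \underline{B})$, un unique isomorphisme $\phi_{\underline{A},\underline{B}} : T(\underline{A}) \otimes T(\underline{B}) \iso T(\underline{A} * \underline{B})$ construit \`a partir des contraintes d'associativit\'e et d'unit\'e. Pour $f : \underline{A}\to \underline{B}$ et $g : \underline{C}\to \underline{D}$ dans $\sA^{str}$, je pose
\[f \otimes_{str} g := \phi_{\underline{B},\underline{D}} \circ (f \otimes_\sA g) \circ \phi_{\underline{A},\underline{C}}^{-1}.\]
La bifonctorialit\'e, l'associativit\'e \emph{stricte} et l'unitarit\'e stricte de $\otimes_{str}$ se ram\`enent alors \`a des identit\'es entre compositions d'isomorphismes de coh\'erence de m\^eme source et m\^eme but, qui sont automatiquement v\'erifi\'ees par l'unicit\'e affirm\'ee par le th\'eor\`eme de coh\'erence. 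Si $\sA$ est unitaire (resp. sym\'etrique), la m\^eme m\'ethode fournit une structure unitaire stricte (resp. sym\'etrique) sur $\sA^{str}$, en transportant les contraintes de commutativit\'e via les $\phi_{\underline{A},\underline{B}}$.

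Le foncteur $F : \sA^{str} \to \sA$ d\'efini par $F(\underline{A}) = T(\underline{A})$ et $F(f) = f$ est pleinement fid\`ele par construction m\^eme des morphismes de $\sA^{str}$, et essentiellement surjectif puisque $F((A)) = A$ pour tout $A \in \sA$ ; c'est donc une \'equivalence de cat\'egories. La structure pseudo-mono\"\i dale forte sur $F$ est donn\'ee par la famille $(\phi_{\underline{A},\underline{B}})$ elle-m\^eme, et sa compatibilit\'e avec les contraintes d'associativit\'e (et d'unit\'e, et de commutativit\'e dans le cas sym\'etrique) rel\`eve \`a nouveau de la coh\'erence. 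Le point le plus d\'elicat, ou du moins celui qui porte tout le contenu non formel de la d\'emonstration, est pr\'ecis\'ement l'appel au th\'eor\`eme de coh\'erence : toute la construction n'est, en un sens, qu'une reformulation de ce th\'eor\`eme en langage d'\'equivalence de cat\'egories mono\"\i dales.
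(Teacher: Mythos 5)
Votre d\'emonstration est correcte; la comparaison avec \og la preuve du texte\fg{} est toutefois particuli\`ere, puisque le texte ne d\'emontre pas ce th\'eor\`eme et renvoie simplement \`a Mac Lane. Votre construction --- objets $=$ suites finies, produit tensoriel $=$ concat\'enation, morphismes transport\'es le long d'une \'evaluation $T$ parenth\'es\'ee \`a gauche, produit tensoriel des morphismes obtenu par conjugaison par les isomorphismes de coh\'erence $\phi_{\underline{A},\underline{B}}$ --- est la strictification \og par les mots\fg{} (\`a la Joyal--Street). Elle diff\`ere de celle de la r\'ef\'erence cit\'ee, o\`u Mac Lane proc\`ede \`a la Cayley: il plonge $\sA$ dans une cat\'egorie d'endofoncteurs de $\sA$ munis de donn\'ees de compatibilit\'e, le produit tensoriel strict \'etant la composition des foncteurs. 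Les deux approches sont \'egalement standard; la v\^otre rend la pleine fid\'elit\'e et la surjectivit\'e essentielle imm\'ediates et s'\'etend mot pour mot aux cas unitaire et sym\'etrique, mais elle prend le th\'eor\`eme de coh\'erence comme \emph{entr\'ee} (ce qui est l\'egitime ici, puisqu'il est \'enonc\'e juste avant dans le texte), alors que la construction de Mac Lane n'en a pas besoin et sert au contraire souvent \`a le red\'emontrer. Deux points m\'eriteraient d'\^etre explicit\'es pour que l'argument soit complet: d'une part, la v\'erification de la bifonctorialit\'e et de l'associativit\'e stricte de $\otimes_{str}$ utilise, outre l'unicit\'e des isomorphismes canoniques, la \emph{naturalit\'e} des contraintes pour faire passer $f\otimes_\sA g$ \`a travers les $\phi$; d'autre part, la forme du th\'eor\`eme de coh\'erence r\'eellement invoqu\'ee est celle qui inclut les contraintes d'unit\'e (axiome du triangle), indispensable pour d\'efinir $\phi_{\underline{A},\emptyset}$ et $\phi_{\emptyset,\underline{B}}$, et non le seul pentagone. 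Ce sont des v\'erifications de routine, pas des lacunes.
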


Gr\^ace \`a ce th\'eor\`eme, on s'autorise \`a ne pas noter les parenth\`eses dans une cat\'egorie mono\"\i dale pour les produits de trois objets ou plus.

\subsubsection{Cat\'egories mono\"\i dales ferm\'ees; objets dualisables}

\begin{defn}\phantomsection\label{dA.10} Une cat\'egorie mono\"\i dale $\sA$ est \emph{ferm\'ee} si le foncteur $\otimes$ a un adjoint \`a droite, nomm\'e \emph{$\Hom$ interne}.
\end{defn}

Plus correctement, la d\'efinition demande l'existence d'un bifoncteur $\uHom:\sA^\op\times \sA\to \sA$ et d'isomorphismes 
\[\sA(A\otimes B,C)\simeq \sA(A,\uHom(B,C))\]
naturels en $A,B,C$.

Si $\sA$ est unitaire, appliquons l'identit\'e ci-dessus avec $A=\un$; on trouve:
\[\sA(B,C)\simeq \sA(\un,\uHom(B,C))\]
ce qui justifie la terminologie ``Hom interne''.

\begin{defn}[Dold-Puppe] \phantomsection\label{dDP} Soit $\sA$ une cat\'egorie mono\"\i dale sym\'etrique unitaire. Un objet $A\in \sA$ est \emph{fortement dualisable} s'il existe un triplet $(A^*,\eta,\epsilon)$, avec $A^*\in \sA$ (le \emph{dual} de $A$), et
\[\eta:\un\to A\otimes A^* \text{ (unit\'e)}, \quad \epsilon:A^*\otimes A\to \un  \text{ (co\"unit\'e)}\]
tels que les compositions
\[\begin{CD}
A@>\eta\otimes 1_A>> A\otimes A^*\otimes A @>1_A\otimes \epsilon>> A\\
A^*@> 1_{A^*}\otimes\eta>> A^*\otimes A\otimes A^* @> \epsilon\otimes1_{A^*}>> A^*
\end{CD} \]
soient \'egales \`a l'identit\'e.\\
On dit que $\sA$ est \emph{rigide} si tout objet de $\sA$ est fortement dualisable.
\end{defn}

\begin{prop}\phantomsection\label{pA.4} a) Si $A$ est fortement dualisable, le triplet $(A^*,\eta,\epsilon)$ est unique \`a isomorphisme unique pr\`es. De plus, $A^*$ est fortement dualisable, de dual $A$ et de constantes de structure $\eta^*=\sigma\circ \eta$, $\epsilon^*=\epsilon\circ \sigma$ o\`u $\sigma$ est la contrainte de commutativit\'e.\\
b) Soit $A$ fortement dualisable. Alors pour tous $B,C\in \sA$, on a des isomorphismes naturels
\[\sA(B\otimes A,C)\simeq \sA(B,C\otimes A^*),\quad \sA(B,C\otimes A)\simeq \sA(B\otimes A^*,C).\]
c) Si $A,B$ sont fortement dualisables de duaux $A^*,B^*$, alors $A\otimes B$ est fortement dualisable de dual $A^*\otimes B^*$.
\end{prop}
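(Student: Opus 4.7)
Le plan est de traiter successivement les trois points en exploitant syst\'ematiquement les identit\'es triangulaires de la d\'efinition \ref{dDP}. Quitte \`a remplacer $\sA$ par une cat\'egorie mono\"\i dale stricte \'equivalente (th\'eor\`eme de coh\'erence de Mac Lane), je peux ignorer les contraintes d'associativit\'e dans les calculs et les \'ecrire ``comme si'' le produit tensoriel \'etait strictement associatif.

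Pour (a), je commencerais par l'unicit\'e. \'Etant donn\'e deux triplets de dualit\'e $(A^*,\eta,\epsilon)$ et $(A',\eta',\epsilon')$ pour $A$, on d\'efinit des morphismes
\[f = (\epsilon\otimes 1_{A'})\circ(1_{A^*}\otimes \eta'):A^*\to A',\qquad g = (1_{A^*}\otimes \epsilon')\circ(\eta \otimes 1_{A'}):A'\to A^*.\]
Un calcul direct utilisant les identit\'es triangulaires pour les deux triplets montre que $g\circ f = 1_{A^*}$ et $f\circ g=1_{A'}$; l'unicit\'e de cet isomorphisme se d\'emontre par un argument analogue. La seconde partie de (a) revient \`a v\'erifier que $(A,\eta^*,\epsilon^*)$ satisfait les identit\'es triangulaires pour $A^*$: cela se ram\`ene aux identit\'es pour $(\eta,\epsilon)$ via la naturalit\'e et l'involutivit\'e de $\sigma$.

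Pour (b), je construirais explicitement les bijections inverses. \`A $f:B\otimes A\to C$ on associe
\[\hat f = (f\otimes 1_{A^*})\circ (1_B\otimes \eta):B\to C\otimes A^*,\]
et inversement \`a $g:B\to C\otimes A^*$ on associe
\[\check g = (1_C\otimes \epsilon)\circ (g\otimes 1_A):B\otimes A\to C.\]
Les \'egalit\'es $\check{\hat f}=f$ et $\hat{\check g}=g$ d\'ecoulent imm\'ediatement des identit\'es triangulaires. La seconde adjonction s'obtient en appliquant la premi\`ere \`a $A^*$, qui est fortement dualisable de dual $A$ gr\^ace \`a (a).

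Pour (c), on pose $(A\otimes B)^*=A^*\otimes B^*$ et on d\'efinit
\begin{align*}
\eta_{A\otimes B}&: \un \by{\eta_A\otimes \eta_B} (A\otimes A^*)\otimes (B\otimes B^*) \by{1_A\otimes \sigma_{A^*,B}\otimes 1_{B^*}} (A\otimes B)\otimes (A^*\otimes B^*),\\
\epsilon_{A\otimes B}&: (A^*\otimes B^*)\otimes (A\otimes B) \by{1_{A^*}\otimes \sigma_{B^*,A}\otimes 1_B} (A^*\otimes A)\otimes (B^*\otimes B) \by{\epsilon_A\otimes \epsilon_B} \un.
\end{align*}
La v\'erification des identit\'es triangulaires pour ce triplet combine celles pour $A$ et $B$ avec la naturalit\'e de $\sigma$: la partie centrale du diagramme fait intervenir une composition $\sigma_{B,A^*}\circ\sigma_{A^*,B}=1$, qui permet de d\'ecoupler les contributions de $A$ et de $B$. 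L'\'etape techniquement la plus d\'elicate sera le contr\^ole soigneux des contraintes de commutativit\'e dans les diagrammes hexagonaux correspondants; le lemme \ref{lsym}, qui garantit une action bien d\'efinie du groupe sym\'etrique sur les puissances tensorielles, assure la coh\'erence globale de ces manipulations.
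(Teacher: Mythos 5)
Votre d\'emonstration est correcte et suit essentiellement la m\^eme voie que celle du texte, qui se contente d'ailleurs d'une esquisse : les deux compositions que vous donnez en b) sont exactement celles du texte, et l'unicit\'e dans a) ainsi que le point c) y sont ``laiss\'es au lecteur'' --- vous ne faites que compl\'eter ces v\'erifications par les arguments standard (identit\'es triangulaires plus naturalit\'e de $\sigma$). Une seule coquille : votre morphisme $g$ dans a) n'a pas de sens tel qu'\'ecrit, puisque $\eta\otimes 1_{A'}$ aboutit dans $A\otimes A^*\otimes A'$, o\`u l'on ne peut pas appliquer $1_{A^*}\otimes \epsilon'$; il faut lire $g=(\epsilon'\otimes 1_{A^*})\circ(1_{A'}\otimes\eta)$.
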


\begin{proof} a) L'unicit\'e est laiss\'ee au lecteur. La seconde affirmation est obtenue en faisant subir la permutation $(13)$ aux termes centraux des deux compositions.

b) Construisons par exemple le premier isomorphisme; dans un sens:
\[\sA(B\otimes A,C)\by{\otimes1_{A^*}}\sA(B\otimes A\otimes A^*,C\otimes A^*)\by{(1_B\otimes\eta)^*} \sA(B,C\otimes A^*).\]

Dans l'autre:
\[\sA(B,C\otimes A^*)\by{\otimes 1_A}\sA(B\otimes A,C\otimes A^*\otimes A)\by{(1_C\otimes \epsilon)_*}\sA(B\otimes A,C).\]

Les axiomes impliquent que la composition dans les deux sens est \'egale \`a l'identit\'e.

c) est laiss\'e au lecteur.
\end{proof}

\begin{ex}\phantomsection\label{e5.1} En prenant $B=\un$ et $C$ fortement dualisable, on obtient un isomorphisme
\[\sA(A,C)\simeq \sA(\un,C\otimes A^*)\simeq \sA(\un,A^*\otimes C)\simeq \sA(C^*,A^*)\]
appel\'e \emph{transposition}: on le note $f\mapsto {}^tf$. Cette op\'eration est involutive et v\'erifie
\[{}^t(g\circ f) = {}^t f\circ {}^t g,\quad {}^t(f\otimes g) = {}^t f\otimes {}^tg.\]
\end{ex}

\begin{lemme} Soient $\sA$ une cat\'egorie mono\"\i dale sy\-m\'e\-tri\-que unitaire et  $A\in \sA$ un  objet fortement dualisable. Alors pour tout foncteur mono\"\i dal sym\'etrique unitaire $F:\sA\to \sB$ vers une autre cat\'egorie mono\"\i dale sym\'etrique unitaire,  $F(A)$ est fortement dualisable.
\end{lemme}

\begin{proof} Trivial.
\end{proof}

\subsubsection{Cat\'egories avec suspension}

\begin{defn}\phantomsection\label{dsusp} a) Une \emph{cat\'egorie avec suspension} est un couple $(\sA,\Sigma_\sA)$ o\`u $\sA$ est une cat\'egorie et $\Sigma_\sA$ est un endofoncteur de $\sA$. Un \emph{morphisme} de cat\'egories avec suspension $(\sA,\Sigma_\sA)\to (\sB,\Sigma_\sB)$ est un couple $(F,\rho)$ o\`u $F:\sA\to \sB$ est un foncteur et $\eta:F\circ \Sigma_A \Rightarrow \Sigma_\sB\circ F$ est un isomorphisme naturel. Un \emph{$2$-morphisme} entre morphismes parall\`eles $(F,\phi)$, $(G,\psi)$ est une transformation naturelle $u:F\Rightarrow F$ telle que le diagramme
\[\begin{CD}
F \Sigma_\sA(A)@>\phi_A>> \Sigma_\sB F(A)\\
@Vu_{\Sigma_\sA(A)}VV @V\Sigma_\sB(u_A)VV\\
G \Sigma_\sA(A)@>\psi_A>> \Sigma_\sB G(A)
\end{CD}\]
soit commutatif pour tout $A\in \sA$. \'Etant donn\'e deux cat\'egories avec suspension $(\sA,\Sigma_\sA)$, $(\sB,\Sigma_\sB)$, on note
\[\uHom((\sA,\Sigma_{\sA}),(\sB,\Sigma_\sB))\]
la cat\'egorie dont les objets sont les morphismes de $(\sA,\Sigma_\sA)$ vers $(\sB,\Sigma_\sB)$ et les morphismes sont les $2$-morphismes.
\\
b) Si $(\sA,\Sigma_\sA)$ est une cat\'egorie avec suspension, on dit que $\Sigma_\sA$ est \emph{inversible} si c'est une auto\'equivalence.
\end{defn}

\begin{lemme}\phantomsection\label{lsusp} Soit $(\sA,\Sigma_\sA)$ une cat\'egorie avec suspension. Il exis\-te un morphisme $\rho:(\sA,\Sigma_\sA)\to (\sA[\Sigma_\sA^{-1}],\tilde \Sigma_\sA)$ o\`u $\tilde\Sigma_\sA$ est inversible, $2$-universel au sens suivant: pour toute cat\'egorie avec suspension $(\sB,\Sigma_\sB)$ o\`u $\Sigma_\sB$ est inversible, le foncteur
\[\rho^*:\uHom((\sA[\Sigma_\sA^{-1}],\tilde \Sigma_\sA),(\sB,\Sigma_\sB))\to \uHom((\sA,\Sigma_{\sA}),(\sB,\Sigma_\sB))\]
est une \'equivalence de cat\'egories.\\
On peut choisir pour $\tilde \Sigma_\sA$ un automorphisme de cat\'egories.
\end{lemme}

\begin{proof} Soit $\sA[\Sigma_\sA^{-1}]$ la cat\'egorie dont les objets sont les couples $(A,m)$ pour $A\in \sA$, $m\in \Z$, les morphismes \'etant donn\'es par la formule
\[\sA[\Sigma_\sA^{-1}]((A,m),(B,n)) = \colim_{k\gg 0} \sA(\Sigma_\sA^{m+k}(A),\Sigma_\sA^{n+k}(B))\]
o\`u les morphismes de transition sont induits par $\Sigma_\sA$. La composition des morphismes est d\'efinie de mani\`ere \'evidente. Soit $\tilde\Sigma_{A}$ l'endofoncteur
\[(A,m)\mapsto (A,m)[1]=(A,m+1), \quad f[1] = f\]
pour l'identification canonique
\[\sA[\Sigma_\sA^{-1}]((A,m),(B,n))=\sA[\Sigma_\sA^{-1}]((A,m+1),(B,n+1)).\]

C'est \'evidemment un automorphisme de $\sA[\Sigma_\sA^{-1}]($. On a un foncteur canonique 
\begin{align*}
\rho:\sA&\to \sA[\Sigma_\sA^{-1}]\\
A&\mapsto (A,0)\\
f&\mapsto f
\end{align*}
L'\'egalit\'e $\Sigma_\sA^{m+k}(\Sigma_\sA(A))=\Sigma^{m+k+1}(A)$ donne un isomorphisme naturel $(A,1)\iso (\Sigma_\sA A,0)$, soit
\[\eta:[1]\circ \rho\iso \rho\circ \Sigma_\sA.\]

\'Etant donn\'e deux morphismes $(F,\phi), (G,\psi):(\sA[\Sigma_\sA^{-1}],[1 ])\rightrightarrows (\sB,\Sigma_\sB)$,  un $2$-morphisme $u:F\circ \rho\Rightarrow G\circ \rho$ s'\'etend uniquement en un $2$-morphisme $u:F\Rightarrow G$ par la formule
\[u_{(A,m)} = u_A[m].\]

Cela montre que $\rho^*$ est pleinement fid\`ele. D'autre part, si $(F,\phi):(\sA,\Sigma_\sA)\to (\sB,\Sigma_\sB)$ est un morphisme avec $\Sigma_\sB$ inversible, il s'\'etend \`a un $2$-isomorphisme pr\`es en un morphisme $(\tilde F,\tilde \phi):(\sA[\Sigma_\sA^{-1}],[1])\to (\sB,\Sigma_\sB)$ par la formule
\[F(A,m) = 
\begin{cases}
\Sigma_\sB^{m} F(A)&\text{si $m\ge 0$}\\
{\Sigma'_\sB}^{-m} F(A)&\text{si $m< 0$}
\end{cases}\]
o\`u $\Sigma'_\sB$ est un quasi-inverse de $\Sigma_\sB$. Ce qui montre que $\rho^*$ est essentiellement surjectif.
\end{proof}

\subsubsection{Objets inversibles; localisations multiplicatives} Soit $\sA$ une cat\'egorie mono\"\i dale sym\'etrique unitaire. 

\begin{lemme}\phantomsection\label{lA.3} Pour un objet $L\in \sA$, les conditions suivantes sont \'equivalentes:
\begin{thlist}
\item L'endofoncteur $t_L:A\mapsto A\otimes L$ est une \'equivalence de cat\'egories.
\item Il existe $L'\in \sA$ et un isomorphisme $\un\iso L'\otimes L$.
\end{thlist}
\end{lemme}

\begin{proof} (ii) est le cas particulier de (i): ``$\un$ est dans l'image essentielle de $t_L$''. Inversement, si (i) est vrai, un quasi-inverse de $t_L$ est donn\'e par $t_{L'}$.
\end{proof}

\begin{defn}\phantomsection\label{dA.7} Un objet $L$ de $\sA$ est \emph{inversible} s'il v\'erifie les conditions \'equivalentes du lemme \ref{lA.3}, \emph{quasi-inversible} si le foncteur $t_L$ de ce lemme est pleinement fid\`ele.
\end{defn}

\begin{lemme}\phantomsection\label{lA.5} a) Pour tout objet $L$ de $\sA$  on a un homomorphisme canonique
\[\End_\sA(\un)\to \End_\sA(L).\]
b) Si $L$ est quasi-inversible, c'est un isomorphisme; en particulier, le mono\"\i de $\End_\sA(L)$ est commutatif.\\
c) Si $L$ est quasi-inversible,, alors pour tout $n\ge 2$ l'action de $\fS_n$ sur $L^{\otimes n}$ (lemme \ref{lsym}) se factorise \`a travers la signature.
\end{lemme}

\begin{proof} a) L'homomorphisme est induit par $t_L$, modulo la contrainte d'unit\'e.

b) C'est \'evident.

c) r\'esulte de b) appliqu\'e \`a $L^{\otimes n}$.
\end{proof}

\begin{defn}\phantomsection\label{dA.8} Soit $L\in \sA$. On note $\sA[L^{-1}]$ la cat\'egorie $\sA[t_L^{-1}]$, o\`u $(\sA,t_L)$ est consid\'er\'ee comme une cat\'egorie avec suspension (\cf d\'efinition \ref{dsusp} et lemme \ref{lsusp}).
\end{defn}

Par construction, on a un foncteur canonique $\sA\to \sA[L^{-1}]$. Il n'est pas vrai en g\'en\'eral que la structure mono\"\i dale sym\'etrique de $\sA$ s'\'etende \`a $\sA[L^{-1}]$: par le lemme \ref{lA.5} c), une condition n\'ecessaire pour cela est que la permutation $(123)$ op\`ere trivialement sur l'image de $L^{\otimes 3}$ dans $\sA[L^{-1}]$. En fait:

\begin{prop}[Voevodsky \protect{\cite{voe-icm}}] La condition ci-dessus est suffisante.
\end{prop}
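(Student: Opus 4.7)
Le plan est de construire directement le bifoncteur $\otimes$ sur $\sA[L^{-1}]$, v\'erifier sa bifonctorialit\'e, puis \'etablir les contraintes d'associativit\'e, d'unit\'e et de commutativit\'e. Sur les objets, on pose $(A,m)\otimes (B,n)=(A\otimes B, m+n)$. Pour d\'efinir $f\otimes g$ o\`u $f:(A,m)\to (A',m')$ et $g:(B,n)\to (B',n')$ sont repr\'esent\'es par $\tilde f:A\otimes L^{\otimes(m+k)}\to A'\otimes L^{\otimes(m'+k)}$ et $\tilde g:B\otimes L^{\otimes(n+j)}\to B'\otimes L^{\otimes(n'+j)}$, on forme le produit tensoriel dans $\sA$ puis on utilise la contrainte de commutativit\'e de $\sA$ pour transporter les puissances de $L$ \`a droite, obtenant un morphisme $A\otimes B\otimes L^{\otimes(m+n+k+j)}\to A'\otimes B'\otimes L^{\otimes(m'+n'+k+j)}$ dont la classe dans $\sA[L^{-1}]$ est le candidat $f\otimes g$.

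Le point cl\'e, et l'obstacle principal, est la bonne d\'efinition de cette construction: deux choix diff\'erents de stabilisation (changement de $k$ et $j$) ou deux conventions de d\'eplacement des facteurs $L$ peuvent conduire \`a des repr\'esentants qui diff\`erent, dans $\sA$, par une permutation appartenant \`a un sous-groupe de $\fS_{k+j}$ agissant sur $L^{\otimes(k+j)}$ (par le lemme \ref{lsym}). Il faut donc montrer que toutes les permutations qui apparaissent deviennent l'identit\'e dans $\sA[L^{-1}]$. Ceci se fait en deux temps. D'abord, par un argument de naturalit\'e et de stabilit\'e de la contrainte de commutativit\'e, les permutations intervenant sont toutes \emph{paires}, car elles proviennent de la commutation de deux blocs de m\^eme parit\'e (un bloc $L^{\otimes(m+k)}$ avec un objet $B$ puis un bloc $L^{\otimes(n+j)}$), et leur sous-groupe engendr\'e est contenu dans un produit d'alternants $A_r\subset \fS_r$. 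Ensuite, sous l'hypoth\`ese que la permutation cyclique $(123)$ op\`ere trivialement sur $L^{\otimes 3}$ dans $\sA[L^{-1}]$, on d\'emontre par r\'ecurrence sur $r$ que tout $r$-cycle (donc tout \'el\'ement de $A_r$) agit trivialement sur $L^{\otimes r}$: en effet, un $3$-cycle agissant sur trois facteurs adjacents provient de $(123)$ sur un sous-facteur tensoriel $L^{\otimes 3}$ tensoris\'e avec les identit\'es, et $A_r$ est engendr\'e par les $3$-cycles.

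Une fois la bifonctorialit\'e assur\'ee, le foncteur canonique $\rho:\sA\to \sA[L^{-1}]$ est pseudo-mono\"\i dal de mani\`ere \'evidente via la m\^eme construction. Les contraintes d'associativit\'e, d'unit\'e et de commutativit\'e sont h\'erit\'ees de celles de $\sA$: le pentagone et les hexagones de coh\'erence se r\'eduisent, apr\`es d\'eplacement des puissances de $L$, aux commutativit\'es correspondantes dans $\sA$ modulo des permutations sur des puissances de $L$ qui, par l'\'etape pr\'ec\'edente, sont triviales dans $\sA[L^{-1}]$. On obtient ainsi sur $\sA[L^{-1}]$ une structure mono\"\i dale sym\'etrique unitaire, v\'erifiant la propri\'et\'e $2$-universelle correspondante parmi les cat\'egories mono\"\i dales sym\'etriques unitaires o\`u l'image de $L$ est inversible.

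La difficult\'e principale est donc l'\'etape combinatoire contr\^olant les permutations intervenant; elle repose de fa\c con essentielle sur le fait que $A_n$ est engendr\'e par les $3$-cycles. L'hypoth\`ese sur $(123)$ est pr\'ecis\'ement la traduction minimale de cette contrainte combinatoire, ce qui explique pourquoi elle est \`a la fois n\'ecessaire (comme observ\'e avant l'\'enonc\'e) et suffisante.
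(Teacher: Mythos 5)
Le texte ne démontre pas cette proposition : il renvoie à Riou \cite[rem. 5.4 et th. 5.5]{rioudea}. Votre stratégie générale (définir $\otimes$ sur des représentants, contrôler les permutations des facteurs $L$ qui apparaissent, puis utiliser que les permutations \emph{paires} agissent trivialement parce que le noyau de $\fS_n\to\Aut(L^{\otimes n})$ est distingué et contient, par l'hypothèse et par conjugaison, tous les $3$-cycles, donc $A_n$) est bien la bonne ; c'est d'ailleurs ainsi qu'on traite la partie « groupe altern\'e ». (Au passage, « tout $r$-cycle, donc tout élément de $A_r$ » est mal dit : un $r$-cycle n'est pair que si $r$ est impair, et $A_r$ n'est pas formé de $r$-cycles ; l'argument correct est celui par normalité du noyau.)

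Le point qui ne tient pas est l'affirmation centrale « les permutations intervenant sont toutes paires », que vous justifiez par la « commutation de deux blocs de même parité » : le déplacement du bloc $L^{\otimes(m+k)}$ devant l'objet $B$ est un isomorphisme de $\sA$ (contrainte de commutativité) et ne crée aucune ambiguïté ; l'ambiguïté réelle vient du changement de stabilisation, que vous n'analysez pas. Concrètement, comparez $\phi(\tilde f\otimes 1_L,\tilde g)$ et $\phi(\tilde f,\tilde g)\otimes 1_L$ : après mélange, le nouveau facteur $L$ occupe la position $m+k+1$ du bloc $L^{\otimes(m+n+k+j+1)}$ dans le premier cas et la dernière position dans le second ; l'écart est donc un cycle de longueur $n+j+1$, qui est une permutation \emph{impaire} dès que $n+j$ est impair. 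Or le lemme \ref{lA.5} c) ne donne que la factorisation de l'action par la signature, et l'hypothèse sur $(123)$ ne dit rien des permutations impaires : l'exemple $L=S^1$ dans la catégorie homotopique des espaces pointés (où la transposition de $L^{\otimes 2}$ agit par le degré $-1$, non trivial même après stabilisation, alors que le cycle de $L^{\otimes 3}$ est de degré $+1$) montre que votre construction naïve n'est pas bien définie. C'est précisément là toute la subtilité de l'énoncé. Une réparation standard : se restreindre au sous-système cofinal des indices pairs, c'est-à-dire stabiliser par $L^{\otimes 2}$ à chaque pas ; l'écart devient alors la permutation faisant passer un bloc de taille $2$ devant un bloc de taille $q$, de signature $(-1)^{2q}=+1$, donc paire, et votre argument combinatoire s'applique. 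Sans un tel dispositif (c'est en substance ce que fait Riou via un système cofinal convenable), la démonstration est incomplète.
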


Pour une d\'emonstration \'el\'egante, je renvoie \`a Riou \cite[rem. 5.4 et th. 5.5]{rioudea}.\footnote{Cette probl\'ematique n'appara\^\i t pas explicitement dans Saavedra, mais semble implicite dans les calculs de \cite[ch. I, 2.6]{saa}.}

En particulier:

\begin{thm}\phantomsection\label{tA.3} Si $L$ est quasi-inversible, le foncteur $\sA\to \sA[L^{-1}]$ est pleinement fid\`ele et la structure mono\"\i dale sym\'etrique de $\sA$ s'\'etend \`a  $\sA[L^{-1}]$ le long de ce foncteur.
\end{thm}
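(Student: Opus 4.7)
Le plan est de traiter s�par�ment les deux assertions, en s'appuyant sur la construction explicite de $\sA[t_L^{-1}]$ fournie par le lemme \ref{lsusp} et sur le crit�re de Voevodsky cit� juste avant l'�nonc�.

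D'abord, je v�rifierais la pleine fid�lit� du foncteur canonique $\rho:\sA\to \sA[L^{-1}]$. Par construction, pour $A,B\in \sA$ identifi�s aux objets $(A,0),(B,0)$, on a
\[\sA[L^{-1}]((A,0),(B,0)) = \colim_{k\ge 0}\sA(A\otimes L^{\otimes k},B\otimes L^{\otimes k}),\]
les morphismes de transition �tant donn�s par le foncteur $t_L$ (modulo la contrainte d'associativit�, qu'on peut supposer stricte gr�ce au th�or�me de Mac Lane). L'hypoth�se que $L$ est quasi-inversible (d�finition \ref{dA.7}) signifie exactement que $t_L$ est pleinement fid�le, donc chaque morphisme de transition
\[\sA(A\otimes L^{\otimes k},B\otimes L^{\otimes k})\to \sA(A\otimes L^{\otimes k+1},B\otimes L^{\otimes k+1})\]
est bijectif. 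La colimite est donc constante de valeur $\sA(A,B)$, ce qui prouve la pleine fid�lit�.

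Ensuite, pour l'extension de la structure mono�dale sym�trique, j'appliquerais la proposition de Voevodsky rappel�e juste avant l'�nonc�: il suffit de v�rifier que la permutation cyclique $(123)$ op�re trivialement sur l'image de $L^{\otimes 3}$ dans $\sA[L^{-1}]$. Or le lemme \ref{lA.5} c), qui utilise pr�cis�ment la quasi-inversibilit� de $L$, affirme que l'action de $\fS_n$ sur $L^{\otimes n}$ dans $\sA$ (d�finie par le lemme \ref{lsym}) se factorise � travers la signature $\fS_n\to \{\pm 1\}$. Comme $(123)=(12)(23)$ est de signature $+1$, elle op�re d�j� trivialement sur $L^{\otimes 3}$ dans $\sA$, donc \emph{a fortiori} dans $\sA[L^{-1}]$. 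Le crit�re de Voevodsky s'applique et fournit l'extension cherch�e.

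Il n'y a pas de difficult� substantielle ici: l'\oe uvre conceptuelle a �t� faite en amont, d'une part en identifiant la bonne d�finition de la quasi-inversibilit� (qui rend les morphismes de transition bijectifs), et d'autre part dans le r�sultat non trivial de Voevodsky qui r�duit l'existence de la structure mono�dale sym�trique sur $\sA[L^{-1}]$ � la condition cohomologique portant sur $(123)$. Le seul point demandant un peu d'attention serait la compatibilit� entre les contraintes d'associativit� et de commutativit� de $\sA$ et celles qu'on obtient sur $\sA[L^{-1}]$, mais cela r�sulte formellement du caract�re $2$-universel fourni par le lemme \ref{lsusp} appliqu� aux cat�gories avec suspension sous-jacentes aux structures mono�dales.
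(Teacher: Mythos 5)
Votre démonstration est correcte et suit exactement la même voie que celle du texte, qui se contente de dire que la première affirmation est évidente et que la seconde résulte de la proposition de Voevodsky; vous ne faites qu'expliciter les détails (bijectivité des morphismes de transition du colimite via la pleine fidélité de $t_L$, et trivialité de l'action de $(123)$ sur $L^{\otimes 3}$ via le lemme \ref{lA.5} c)). Rien à redire.
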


\begin{proof} La premi\`ere affirmation est \'evidente, et la seconde r\'esulte de la proposition pr\'ec\'edente.
\end{proof}

\subsubsection{Cat\'egories mono\"\i dales et enveloppes karoubiennes}

\begin{prop}\phantomsection\label{pA.2} Soit $\sA$ une cat\'egorie. Toute structure mono\"\i dale sur $\sA$ s'\'etend canoniquement \`a son enveloppe karoubienne. De m\^eme pour une structure mono\"\i dale unitaire, mono\"\i dale sym\'etrique unitaire. Si $\sA$ est ferm\'ee, $\sA^\natural$ est ferm\'ee.
\end{prop}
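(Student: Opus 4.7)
L'id\'ee directrice est d'utiliser la description explicite de $\sA^\natural$ donn\'ee dans la d\'emonstration du th\'eor\`eme \ref{tA.2}: ses objets sont les couples $(A,p)$ avec $A\in\sA$ et $p$ un projecteur de $A$, les morphismes $(A,p)\to(B,q)$ \'etant les $f\in\sA(A,B)$ tels que $f=qfp$. Je d\'efinirais alors
\[(A,p)\otimes(B,q):=(A\otimes B,\ p\otimes q),\]
ce qui a un sens car $(p\otimes q)^2=p^2\otimes q^2=p\otimes q$ (la bifonctorialit\'e de $\otimes$ suffit pour ceci). Sur les morphismes, je poserais $(f,g)\mapsto f\otimes g$; la v\'erification que $f\otimes g$ est admissible, \emph{i.e.} v\'erifie $(p'\otimes q')(f\otimes g)(p\otimes q)=f\otimes g$, se ram\`ene imm\'ediatement \`a la fonctorialit\'e de $\otimes$ sur $\sA$ et aux relations $p'fp=f$, $q'gq=g$.

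Les contraintes d'associativit\'e (resp.\ d'unit\'e, de commutativit\'e) se transportent par la formule suivante: pour $(A,p),(B,q),(C,r)\in\sA^\natural$, on d\'efinit
\[a_{(A,p),(B,q),(C,r)}:=(p\otimes q)\otimes r\ \circ\ a_{A,B,C}\ \circ\ p\otimes(q\otimes r),\]
qui est un morphisme de $\sA^\natural$ admissible par construction, dont la nature d'isomorphisme se v\'erifie en montrant que son inverse est donn\'e par $a_{A,B,C}^{-1}$ compos\'e de la m\^eme mani\`ere. Le pentagone de Mac Lane est alors h\'erit\'e de celui de $\sA$. Pour l'unit\'e, on prend $(\un,1_{\un})$, et la contrainte d'unit\'e $\un\otimes A\iso A$ se transporte en $(\un\otimes A,1_{\un}\otimes p)\iso (A,p)$ via l'unique admissibilisation de la contrainte originelle. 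Les contraintes de commutativit\'e se traitent de la m\^eme mani\`ere, et la compatibilit\'e entre contraintes est pr\'eserv\'ee parce que chaque diagramme de coh\'erence dans $\sA^\natural$ est simplement un compos\'e, encadr\'e par les projecteurs appropri\'es, du diagramme correspondant dans $\sA$.

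Pour la cat\'egorie ferm\'ee, je poserais
\[\uHom((B,q),(C,r)):=(\uHom(B,C),\ \uHom(q,r)),\]
o\`u $\uHom(q,r)$ est l'endomorphisme de $\uHom(B,C)$ correspondant, via l'adjonction, au morphisme
\[\uHom(B,C)\otimes B\ \by{1\otimes q}\ \uHom(B,C)\otimes B\ \by{\mathrm{ev}}\ C\ \by{r}\ C.\]
C'est un projecteur, car $q$ et $r$ le sont, et il d\'efinit donc un objet de $\sA^\natural$. Il reste \`a v\'erifier la bijection naturelle
\[\sA^\natural\bigl((A,p)\otimes(B,q),\,(C,r)\bigr)\simeq\sA^\natural\bigl((A,p),\,\uHom((B,q),(C,r))\bigr);\]
celle-ci se d\'eduit de l'adjonction dans $\sA$ en constatant que la condition d'admissibilit\'e $f=r\circ f\circ(p\otimes q)$ \`a gauche correspond, par l'adjonction de $\sA$, \`a la condition d'admissibilit\'e pour l'adjoint $\tilde f:A\to\uHom(B,C)$ de \^etre dans l'image du projecteur $\uHom(q,r)$ pr\'e-compos\'e avec $p$.

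La partie la plus d\'elicate est sans doute cette derni\`ere v\'erification pour la structure ferm\'ee, car elle requiert de d\'erouler soigneusement l'adjonction et de reconna\^\i tre que les deux conditions d'admissibilit\'e se correspondent bien via elle. Les autres \'etapes ne sont que des \emph{verifications of routine}: pour chaque diagramme de coh\'erence dans $\sA^\natural$, on l'obtient en encadrant le diagramme correspondant de $\sA$ par les projecteurs id\^oines, et la commutativit\'e est h\'erit\'ee. L'\'enonc\'e de canonicit\'e r\'esulte alors formellement de la propri\'et\'e $2$-universelle de $\sA^\natural$: si $\sA\to\sB$ est un $\otimes$-foncteur vers une cat\'egorie mono\"\i dale karoubienne, son extension unique \`a $\sA^\natural$ (donn\'ee par le th\'eor\`eme \ref{tA.2}) est automatiquement mono\"\i dale pour la structure ainsi d\'efinie.
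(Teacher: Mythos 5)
Votre d\'emonstration est correcte, mais elle suit une route diff\'erente de celle du texte. Le texte invoque directement la propri\'et\'e $2$-universelle de l'enveloppe karoubienne: le bifoncteur compos\'e $\sA\times\sA\by{\otimes}\sA\by{i}\sA^\natural$ s'\'etend canoniquement \`a $(\sA\times\sA)^\natural\simeq\sA^\natural\times\sA^\natural$, et les contraintes (qui sont des transformations naturelles entre tels foncteurs) s'\'etendent de m\^eme; tout est alors canonique par construction et il n'y a rien \`a v\'erifier. Vous travaillez au contraire avec le mod\`ele explicite $(A,p)$ du th\'eor\`eme \ref{tA.2} et d\'efinissez $\otimes$, les contraintes (correctement encadr\'ees par les projecteurs --- notez qu'avec la naturalit\'e de $a$, votre formule se r\'eduit \`a $((p\otimes q)\otimes r)\circ a_{A,B,C}$) et le $\uHom$ interne \`a la main. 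Les deux approches sont l\'egitimes et chacune a son int\'er\^et: l'argument abstrait est plus court et donne la canonicit\'e gratuitement, mais il laisse implicite le cas de la structure ferm\'ee (l'adjoint \`a droite de $\otimes$ ne se transporte pas par la seule $2$-universalit\'e appliqu\'ee \`a $\otimes$); votre construction explicite de $\uHom((B,q),(C,r))=(\uHom(B,C),\uHom(q,r))$ et la v\'erification de l'adjonction comblent pr\'ecis\'ement ce point, qui est bien la partie la moins formelle de l'\'enonc\'e.
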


\begin{proof} Par la propri\'et\'e universelle de l'enveloppe karoubienne, le bifoncteur
\[\sA\times \sA\by{\otimes} \sA\by{i}\sA^\natural\]
s'\'etend canoniquement en un foncteur de $(\sA\times \sA)^\natural\simeq \sA^\natural\times \sA^\natural$. De m\^eme les contraintes d'associativit\'e, etc., s'\'etendent.
\end{proof}

\begin{prop}\phantomsection\label{pA.3} Supposons $\sA$ mono\"\i dale, sym\'etrique, rigide et karoubienne. Soit $A\in \sA$, et soit $p=p^2$ un projecteur de $A$. Supposons $A$ fortement dualisable; alors $\IM p$ est fortement dualisable, de dual $\IM {}^tp$ (voir exemple \ref{e5.1}).
\end{prop}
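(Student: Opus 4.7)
L'approche naturelle est de construire explicitement un dual fort pour $\IM p$ sous la forme $\IM {}^t p$, puis de v\'erifier les deux identit\'es triangulaires de la d\'efinition \ref{dDP}. La transposition de l'exemple \ref{e5.1} \'etant contravariante, on a ${}^t(p^2) = ({}^t p)^2$; donc ${}^t p$ est un projecteur sur $A^*$, et l'hypoth\`ese que $\sA$ est karoubienne garantit l'existence de $B := \IM p$ et $B^* := \IM {}^t p$. Notons $i: B \to A$, $r: A \to B$ les morphismes donn\'es par le lemme \ref{l:retr} (soit $ri = 1_B$ et $ir = p$), et de m\^eme $(i^*, r^*)$ pour ${}^t p$ (soit $r^* i^* = 1_{B^*}$ et $i^* r^* = {}^t p$).

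Le plan est de poser
\[\eta_B := (r \otimes r^*) \circ \eta : \un \to B \otimes B^*, \qquad \epsilon_B := \epsilon \circ (i^* \otimes i) : B^* \otimes B \to \un,\]
o\`u $(\eta, \epsilon)$ sont les constantes de structure de la dualit\'e $(A, A^*)$. En utilisant que $(1_B \otimes i^* \otimes i) \circ ((r \otimes r^*) \otimes 1_B) = r \otimes {}^t p \otimes i$, la premi\`ere identit\'e triangulaire $(1_B \otimes \epsilon_B) \circ (\eta_B \otimes 1_B) = 1_B$ se r\'eduira \`a
\[r \circ \Bigl[(1_A \otimes \epsilon) \circ (1_A \otimes {}^t p \otimes 1_A) \circ (\eta \otimes 1_A)\Bigr] \circ i = 1_B.\]
Le point-cl\'e sera l'\emph{identit\'e caract\'eristique du transpos\'e}, \`a savoir $\epsilon \circ ({}^t p \otimes 1_A) = \epsilon \circ (1_{A^*} \otimes p)$, qui r\'esulte directement de la construction de $f \mapsto {}^t f$ via les adjonctions de la proposition \ref{pA.4} b). Combin\'ee avec la premi\`ere identit\'e triangulaire pour $A$ et la naturalit\'e du produit tensoriel, elle ram\`enera le crochet \`a $p = ir$, d'o\`u $r \circ p \circ i = (ri)(ri) = 1_B$. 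La seconde identit\'e triangulaire se traitera sym\'etriquement: apr\`es r\'eductions analogues, le segment central se reconna\^\i tra comme la d\'efinition explicite
\[{}^t p = (\epsilon \otimes 1_{A^*}) \circ (1_{A^*} \otimes p \otimes 1_{A^*}) \circ (1_{A^*} \otimes \eta),\]
et on conclura par $r^* \circ {}^t p \circ i^* = r^* \circ (i^* r^*) \circ i^* = 1_{B^*}$.

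L'obstacle principal est essentiellement combinatoire: il faudra manier les contraintes d'associativit\'e et la naturalit\'e avec pr\'ecision lors de l'insertion des r\'etractions $(i, r)$ et $(i^*, r^*)$ dans les diagrammes de dualit\'e forte. L'identit\'e caract\'eristique du transpos\'e mentionn\'ee ci-dessus est en revanche formelle et imm\'ediate \`a partir de la formule explicite pour ${}^t f$; c'est elle qui constitue le ressort conceptuel du calcul, et qui manifeste le fait que la construction de $\IM {}^t p$ est bien \emph{compatible} avec la dualit\'e forte de $A$.
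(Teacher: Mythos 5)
Votre d\'emonstration est correcte et constitue l'argument canonique attendu: le texte laisse cette proposition en \og Exercice\fg, de sorte qu'il n'y a pas de preuve du papier \`a laquelle comparer la v\^otre. Les deux points que vous isolez — l'identit\'e d'adjonction $\epsilon\circ({}^t p\otimes 1_A)=\epsilon\circ(1_{A^*}\otimes p)$ pour la premi\`ere identit\'e triangulaire, et la formule explicite de ${}^tp$ pour la seconde — suffisent bien, avec $ir=p$, $ri=1_B$ et $i^*r^*={}^tp$, $r^*i^*=1_{B^\vee}$, \`a r\'eduire les deux compositions \`a $r\circ p\circ i=1_{\IM p}$ et $r^*\circ{}^tp\circ i^*=1_{\IM{}^tp}$.
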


\begin{proof} Exercice.
\end{proof}

\subsubsection{La trace d'un endomorphisme}

\begin{defn}\phantomsection\label{dA.3} Soient $\sA$ une cat\'egorie mono\"\i dale sym\'etrique unitaire, $A\in \sA$ un objet fortement dualisable et $f$ un endomorphisme de $A$. On lui associe un \'el\'ement
\[\Tr(f)\in \End_\sA(\un),\]
d\'efini comme la composition
\[\un\by{\eta} A\otimes A^*\by{f\otimes1} A\otimes A^*\by{\sigma} A^*\otimes A\by{\epsilon}\un.\]
C'est la \emph{trace} de $f$. Si on veut pr\'eciser, on note $\Tr_\sA(f)$.\\
On note $\chi(A)=\Tr(1_A)$: c'est la \emph{caract\'eristique d'Euler-Poincar\'e} de $A$.
\end{defn}

\begin{prop} Soient $A,B$ deux objets fortement dualisables de $\sA$. La trace a les propri\'et\'es suivantes:
\begin{gather*}
\Tr(fg)=\Tr(gf)\quad \text{si } f\in \sA(A,B), g\in \sB(B,A);\\
\Tr(f\otimes g) = \Tr(f)\Tr(g) \quad \text{si } f\in \End_\sA(A), g\in \End_\sA(B).
\end{gather*}
De plus, si $f\in \in \sA(A,B), g\in \sB(B,A)$, on a la formule suivante; notant $\iota_{AB}$ l'isomorphisme canonique $ \sA(\un,A^*\otimes B)\iso\sA(A,B)$:
\[\Tr(g\circ f) ={}^t (\iota_{AB}^{-1}(f))\circ \iota_{BA}^{-1}(g).\]
\end{prop}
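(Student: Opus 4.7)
Plan de d\'emonstration: la strat\'egie est uniforme pour les trois identit\'es: chacune se ram\`ene \`a une manipulation combinatoire des identit\'es triangulaires de la d\'efinition \ref{dDP}, jointe \`a la naturalit\'e de la contrainte de commutativit\'e $\sigma$. Avant toute chose, j'invoquerai le th\'eor\`eme de coh\'erence de Mac Lane pour remplacer $\sA$ par une cat\'egorie mono\"\i dale sym\'etrique stricte \'equivalente, ce qui permettra de supprimer les contraintes d'associativit\'e et d'unit\'e et de ne suivre que les contraintes de commutativit\'e. On pensera alors chaque morphisme comme un ``diagramme de cordes'' ($\eta$ formant un cap, $\epsilon$ un cup, $\sigma$ un croisement), les traces apparaissant comme des boucles ferm\'ees obtenues en reliant $\eta$ \`a $\epsilon$.

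Pour la cyclicit\'e, je partirai de
\[\Tr(gf) = \epsilon_A \circ \sigma_{A, A^*} \circ ((g \circ f) \otimes 1_{A^*}) \circ \eta_A\]
et j'utiliserai l'identit\'e triangulaire pour $B$ afin d'ins\'erer la ``bulle'' $B \to B \otimes B^* \otimes B \to B$ \'egale \`a $1_B$ entre $f$ et $g$. En faisant glisser par naturalit\'e de $\sigma$ les brins $A, A^*, B, B^*$ les uns sur les autres, le diagramme se r\'eorganise en celui d\'efinissant $\Tr(fg)$. Il n'y a ici aucun obstacle conceptuel, seulement du soin \`a prendre.

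Pour la multiplicativit\'e tensorielle, j'utiliserai la proposition \ref{pA.4} c): un dual fort de $A \otimes B$ est $B^* \otimes A^*$, avec $\eta_{A \otimes B}$ et $\epsilon_{A \otimes B}$ obtenus \`a partir de $\eta_A, \eta_B, \epsilon_A, \epsilon_B$ par insertion de l'instance ad\'equate de $\sigma$. Substitu\'ees dans la d\'efinition de $\Tr(f \otimes g)$, ces formules font \'eclater le diagramme en deux boucles ferm\'ees disjointes, donnant le morphisme $\Tr(f) \otimes \Tr(g): \un \otimes \un \to \un \otimes \un$. J'ach\`everai en invoquant le lemme \ref{lA.4}: sur $\End(\un)$ les deux lois $\otimes$ et $\circ$ co\"\i ncident (argument d'Eckmann--Hilton), d'o\`u $\Tr(f) \otimes \Tr(g) = \Tr(f) \cdot \Tr(g)$.

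Pour la formule matricielle, j'expliciterai d'abord, \`a partir de la preuve de la proposition \ref{pA.4} b), l'isomorphisme $\iota_{AB}^{-1}: \sA(A, B) \iso \sA(\un, A^* \otimes B)$ par la formule $\iota_{AB}^{-1}(h) = \sigma_{B, A^*} \circ (h \otimes 1_{A^*}) \circ \eta_A$, et je d\'eploierai de m\^eme la transposition de l'exemple \ref{e5.1}. Je d\'eroulerai ensuite le compos\'e ${}^t(\iota_{AB}^{-1}(f)) \circ \iota_{BA}^{-1}(g): \un \to B^* \otimes A \to \un$ et je v\'erifierai, par application des identit\'es triangulaires pour $A$ et pour $B$, que le diagramme obtenu co\"\i ncide avec celui de la d\'efinition \ref{dA.3} de $\Tr(g \circ f)$. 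L'obstacle principal sera, dans chaque cas, purement combinatoire: tenir la comptabilit\'e exacte des contraintes de commutativit\'e \`a chaque \'etape. Le passage \`a un mod\`ele strict, combin\'e \`a la pratique des diagrammes de cordes, rend ces v\'erifications m\'ecaniques mais encombrantes \`a r\'ediger en d\'etail.
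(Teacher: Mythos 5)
Le texte ne donne aucune d\'emonstration de cette proposition (elle est \og laiss\'ee au lecteur en exercice\fg), il n'y a donc pas d'argument du papier auquel comparer le v\^otre; votre plan est l'argument standard (strictification par coh\'erence de Mac Lane, identit\'es triangulaires, naturalit\'e de $\sigma$, puis Eckmann--Hilton via le lemme \ref{lA.4} pour identifier $\otimes$ et $\circ$ sur $\End(\un)$) et il est correct. Le seul point que vous utilisez tacitement dans la seconde identit\'e est que la trace ne d\'epend pas du choix du triplet de dualit\'e $(A^*,\eta,\epsilon)$ --- ce que garantit l'unicit\'e \`a isomorphisme unique pr\`es de la proposition \ref{pA.4} a) --- lorsque vous calculez $\Tr(f\otimes g)$ avec le dual $B^*\otimes A^*$ de $A\otimes B$; cela m\'erite d'\^etre dit explicitement, mais le niveau de d\'etail de votre esquisse est par ailleurs comparable \`a celui du texte.
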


\begin{proof} Laiss\'ee au lecteur en exercice.
\end{proof}

\begin{lemme}[Formule des traces]\label{lA.6} Soient $(\sA,A,f)$ comme \allowbreak dans la d\'efinition \ref{dA.3}, et soit $T:\sA\to \sB$ un foncteur mo\-no\-\"\i \-dal sym\'etrique unitaire. Alors
\[T(\Tr_\sA(f))=\Tr_\sB(T(f)).\]
\end{lemme}

\begin{proof} Trivial.
\end{proof}

\subsubsection{Cas additif: la fonction z\^eta d'un endomorphisme}\index{Fonction zêta!d'un endomorphisme}

\begin{defn}\phantomsection\label{dA.5} Soient $\sA$ une cat\'egorie et $F$ un anneau commutatif unitaire. Une structure mono\"\i dale et une structure $F$-lin\'eaire sur $\sA$ sont \emph{compatibles} si, pour deux morphismes $f,g$ et $\lambda\in F$, on a
\[\lambda(f\otimes g) = (\lambda f)\otimes g =f\lambda\otimes g=f\otimes \lambda g= f\otimes (g\lambda)=(f\otimes g)\lambda.\]
\end{defn}

Dans ce cas, $\End_\sA(\un)$ est une $F$-alg\`ebre et, si $A\in \sA$ est fortement dualisable, la trace $\Tr_\sA:\End_\sA(A)\to \End_\sA(\un)$ est $F$-lin\'eaire.

\begin{exs} 1) $F$ est un corps commutatif, $\sA=\Vec_F$ (espaces vectoriels non gradués) muni du produit tensoriel des espaces vectoriels. Alors $V\in \sA$ est fortement dualisable si et seulement s'il est de dimension finie; si $f\in \End(V)$, $\Tr_\sA(f)$ est la trace habituelle. Pour le voir ais\'ement, on peut se ramener (par fonctorialit\'e!) \`a $F$ alg\'ebriquement clos, mettre $f$ sous forme triangulaire, et enfin se ramener \`a $\dim V=1$ auquel cas l'\'enonc\'e est trivial. On peut aussi faire une d\'emonstration directe.\\
2) M\^eme exemple, mais $\sA=\Vec_F^*$, la contrainte de commutativit\'e \'etant donn\'ee par la r\`egle de Koszul: si $V,W\in \sA$ sont homog\`enes de degr\'es $i,j$, alors pour $(v,w)\in V\times W$
\[\sigma(v\otimes w) = (-1)^{ij} w\otimes v.\]
Si $V\in \Vec_F^*$ est de dimension finie et  $f\in \End(V)$, on trouve alors
\[\Tr_\sA(f)=\sum_{i\in\Z} (-1)^i \Tr(f\mid V^i).\]
\end{exs}

\begin{defn}\phantomsection\label{dA.9} Supposons que $F=\End_\sA(\un)$ soit une $\Q$-al\-g\`e\-bre. Soit $f$ un endomorphisme d'un objet fortement dualisable $A$. On d\'efinit sa \emph{fonction z\^eta}:
\[Z(f,t) = \exp\left(\sum_{n=1}^\infty \Tr_\sA(f^n) \frac{t^n}{n} \right)\in F[[t]].\]
\end{defn}

La th\'eor\`eme \ref{t3.6} et sa d\'emonstration prennent alors la forme abstraite suivante:

\begin{thm}\phantomsection\label{tA.4} Supposons que $F$ soit un corps et qu'il existe un foncteur mono\"\i dal sym\'etrique
\[H^*:\sA\to \Vec_K^*\]
o\`u $K$ est une extension de $F$. Alors, pour tout objet fortement dualisable $A\in \sA$ et tout endomorphisme $f$ de $A$, on a
\[Z(f,t)\in F(t).\]
Si $f$ est un isomorphisme, on a une \'equation fonctionnelle\index{Equation fonctionnelle@\'Equation fonctionnelle} de la forme
\[Z({}^tf^{-1},t^{-1}) = \det(f)(-t)^{\chi(A)}Z(f,t)\]
o\`u ${}^tf\in \End(A^*)$ est le transpos\'e de $f$ et $\det(f)$ est un scalaire; si $Z(f,t)=\displaystyle\frac{\prod_{i=1}^m (1-\alpha_i t)}{\prod_{j=1}^n (1-\beta_j t)}$ avec $\alpha_i,\beta_j\in \bar F$, on a
\[\det(f) = \frac{\prod_{i=1}^m \alpha_i}{\prod_{j=1}^n \beta_j}.\]
\end{thm}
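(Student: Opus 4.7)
The plan is to push the whole problem into $\Vec_K^*$ using the fiber functor $H^*$, where everything becomes elementary linear algebra. Since $H^*$ is monoidal symmetric unitary, it preserves strong dualizability and, by the trace formula (Lemma \ref{lA.6}), sends $\Tr_\sA$ to $\Tr_{\Vec_K^*}$. The Koszul trace on a finite-dimensional graded $K$-vector space gives
\[\Tr_\sA(f^n) \;=\; \sum_i (-1)^i \Tr_K\!\bigl(H^i(f)^n \mid H^i(A)\bigr),\]
and applying the basic identity \eqref{eq3.3} degree-by-degree yields
\[Z(f,t) \;=\; \prod_i P_i(t)^{(-1)^{i+1}} \in K(t), \qquad P_i(t) = \det\!\bigl(1 - H^i(f)\,t \mid H^i(A)\bigr).\]
The trace lives in $\End_\sA(\un) = F$, so $Z(f,t) \in F[[t]] \cap K(t)$. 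As in the proof of Theorem \ref{t3.6}, the Hankel-determinant criterion for rationality of a power series (\cite[Prop. 5.2.1]{amice}) shows that $F[[t]] \cap K(t) = F(t)$, giving $Z(f,t) \in F(t)$.

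For the functional equation, assume $f$ is an isomorphism. Functoriality of transposition under the monoidal symmetric functor $H^*$ gives $H^*({}^tf) = {}^t H^*(f)$ in $\Vec_K^*$; concretely, on the identification $H^i(A^*) \simeq H^{-i}(A)^*$, the endomorphism $H^i({}^tf^{-1})$ acts as ${}^t H^{-i}(f)^{-1}$. Since the ordinary determinant is invariant under transpose, the computation above applied to ${}^tf^{-1}$ and reindexed by $j = -i$ gives
\[Z({}^tf^{-1},t) \;=\; \prod_j \det\!\bigl(1 - H^j(f)^{-1}t \mid H^j(A)\bigr)^{(-1)^{j+1}}.\]
Now I use the elementary identity $\det(1 - g^{-1}/t \mid V) = (-1)^d\,t^{-d}\,\det(g)^{-1}\,\det(1-gt\mid V)$, valid for any $g \in GL(V)$ with $d = \dim V$, inside each factor after substituting $t \mapsto t^{-1}$. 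Collecting the $(-1)^{d_j}$, $t^{-d_j}$ and $\det(H^j(f))^{\pm 1}$ contributions across all $j$, and using that $\sum_j (-1)^j d_j = \chi(A)$ (by the trace formula applied to $1_A$ and the Koszul trace in $\Vec_K^*$), one obtains
\[Z({}^tf^{-1},t^{-1}) \;=\; (-t)^{\chi(A)}\,\Bigl(\prod_j \det(H^j(f))^{(-1)^j}\Bigr)\,Z(f,t).\]
This is the claimed functional equation with $\det(f) := \prod_j \det(H^j(f))^{(-1)^j}$, which visibly equals the alternating product of inverse roots read off from the canonical factorization $Z(f,t) = \prod_i P_i(t)^{(-1)^{i+1}}$.

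The computations are largely routine once the framework is set up; the main obstacle is the bookkeeping of signs coming from the Koszul rule on $\Vec_K^*$ and from the regrouping $j \leftrightarrow -j$ forced by $H^i(A^*) \simeq H^{-i}(A)^*$. A subsidiary point worth checking is the descent of rationality from $K$ to $F$: this is precisely what the Hankel-determinant characterization supplies, since the vanishing conditions on those determinants are expressible entirely in terms of the coefficients of $Z(f,t)$, which lie in $F$, so rationality over $K$ forces rationality over $F$. Note that the proof uses nothing about $\sA$ beyond the existence of the monoidal symmetric functor $H^*$ into a graded vector space category, so the statement and its proof remain valid when $\sim_\rat$ is replaced by $\sim_\alg$ or $\sim_\tnil$ (as claimed in the theorem), it being enough that such a fiber functor factors through the corresponding category of motives.
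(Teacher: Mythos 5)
Your proof is correct and is essentially the paper's: both transport everything through $H^*$ into $\Vec_K^*$ via the trace formula (lemme \ref{lA.6}) and finish by linear algebra there — the paper merely pushes the reduction further (to $A$ homogeneous, then by triangularization over $\bar K$ to $\dim A=1$), which is exactly the proof of the identity \eqref{eq3.3} that you invoke wholesale. Two points to your credit: you make explicit the descent $F[[t]]\cap K(t)=F(t)$ via Hankel determinants, which the paper's opening reduction (``on peut supposer $\sA=\Vec_K^*$'') silently elides even though, as written, it only yields $Z(f,t)\in K(t)$; and the value you derive, $\det(f)=\prod_j\det(H^j(f))^{(-1)^j}=\prod_j\beta_j\big/\prod_i\alpha_i$, is the correct one — testing on a one-dimensional $A$ placed in even degree, or on $M=\L$ where the functional equation forces $\det(\pi_\L)=q$ while $Z(\L,t)=(1-qt)^{-1}$, shows that the fraction displayed in the statement of the theorem is inverted.
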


\begin{proof} On peut supposer $\sA=\Vec_K^*$ et $H^*=Id$. Alors $f$ est homogène de degré $0$, ce qui permet de se ramener au cas où $A$ est homogène, disons de degré $i$. Sans perte de généralité, on peut de plus supposer $K$ algébriquement clos; mettant $f$ sous forme triangulaire supérieure, on se ramène alors au cas où $\dim A=1$ et où $f$ est la multiplication par un scalaire $\alpha$. On trouve alors
\[Z(f,t) =
\begin{cases}
\displaystyle \frac{1}{1-\alpha t} &\text{si $i$ est pair}\\
1-\alpha t &\text{si $i$ est impair.}
\end{cases}
\]

L'équation fonctionnelle, avec la valeur de $\det(f)$, s'en déduit en observant que $\chi(A)=(-1)^i$.
\end{proof}

On trouvera dans \cite[th. 3.2 et rem. 3.3]{kahn-zeta} des conditions plus faibles entra\^\i nant les conclusions de ce th\'eor\`eme.

\section{Cat\'egories triangul\'ees, cat\'egories d\'eriv\'ees et complexes parfaits}

\subsection{Localisation}  R\'ef\'erence:  Gabriel-Zisman \cite{gz}.

\subsubsection{Cat\'egories de fractions} Si $A$ est un anneau commutatif et $S$ est une partie de $A$, on sait d\'efinir le localis\'e de $A$, $S^{-1}A$, par rapport \`a $S$ \cite{bbki-ac}: il est muni d'un homomorphisme $A\to S^{-1} A$ qui est universel pour les homomorphismes rendant les \'el\'ements de $S$ inversibles. 

Si $A$ est int\`egre, la construction de $S^{-1} A$ est facile: c'est le sous-anneau du corps de fractions de $A$ engendr\'e par $A$ et $S^{-1}$. En g\'en\'eral, il faut \^etre plus soigneux avec sa construction \cite[\S 2, no 1, d\'ef. 2]{bbki-ac} .

Quand $A$ n'est pas commutatif (mais tout de m\^eme unitaire), le probl\`eme universel indiqu\'e ci-dessus a toujours une solution, m\^eme s'il n'est pas facile de trouver une r\'ef\'erence dans la litt\'erature \`a part le cas o\`u $S$ v\'erifie la ``condition de Ore''. 

Modulo des probl\`emes ensemblistes, cette situation se g\'en\'eralise\footnote{Voir exemple \ref{exA.1}.} \`a une cat\'egorie quelconque comme suit:

\begin{defn}\phantomsection Soit $\sC$ une cat\'egorie, et soit $S$ un ensemble de morphismes de $\sC$. Un foncteur $F:\sC\to \sD$ \emph{inverse $S$} si, pour tout $s\in S$, $F(s)$ est un isomorphisme de $\sD$.\\ 
Une \emph{localisation} de $\sC$ par rapport \`a $S$ est un foncteur $F_u:\sC\to \sC'$ v\'erifiant la propri\'et\'e $2$-universelle suivante: pour toute cat\'egorie $\sD$, le foncteur
\[\Fonct(\sC',\sD)\by{F_u^*} \Fonct(\sC,\sD)\]
est pleinement fid\`ele, d'image essentielle les foncteurs inversant $S$.
\end{defn}

\begin{thm}[\protect{\cite[ch. I, 1.1 et 1.2]{gz}}]\phantomsection Si $\sC$ est petite, elle admet une localisation. Plus pr\'ecis\'ement, il existe une cat\'egorie $S^{-1}\sC$ ayant les m\^emes objets que $\sC$ et un foncteur $P_S:\sC\to S^{-1}\sC$ qui est l'identit\'e sur les objets, v\'erifiant la propri\'et\'e $2$-universelle ci-dessus. \end{thm}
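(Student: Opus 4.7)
Le plan est de construire explicitement $S^{-1}\sC$ par un proc\'ed\'e de ``zigzags'', en suivant l'id\'ee classique: inverser formellement les \'el\'ements de $S$ revient \`a autoriser dans les morphismes composites des fl\`eches ``\`a rebours'' pour les \'el\'ements de $S$, puis \`a quotienter par les relations qui rendent cette construction fonctorielle.

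Je proc\'ederais en plusieurs \'etapes. D'abord, je fixerais la classe d'objets: $\Ob(S^{-1}\sC):=\Ob(\sC)$. Ensuite, pour deux objets $X,Y$, je consid\'ererais l'ensemble $Z(X,Y)$ des \emph{zigzags finis} de $X$ vers $Y$, c'est-\`a-dire des suites altern\'ees
\[X=X_0\by{f_1} X_1 \yb{s_1} X_2\by{f_2}\cdots X_{n-1}\by{f_n} X_n=Y\]
o\`u les $f_i$ sont des morphismes quelconques de $\sC$ et les $s_j$ appartiennent \`a $S$ (ou, plus pr\'ecis\'ement, j'autoriserais un formalisme graphique encodant les deux types d'orientation possibles, comme dans \cite[ch. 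I, \S 1]{gz}). Je d\'efinirais alors $S^{-1}\sC(X,Y)$ comme le quotient de $Z(X,Y)$ par la plus petite relation d'\'equivalence compatible avec la concat\'enation engendr\'ee par les op\'erations \'el\'ementaires suivantes: (a) contracter deux fl\`eches cons\'ecutives de m\^eme direction en leur compos\'ee dans $\sC$, (b) contracter une paire $X\by{s}X'\yb{s}X$ ou $X\yb{s} X' \by{s}X$ en l'identit\'e de $X$ (respectivement $X'$), (c) remplacer un morphisme identit\'e par le zigzag vide. La composition est alors induite par la concat\'enation des zigzags, et est bien d\'efinie gr\^ace au passage au quotient; les identit\'es sont les zigzags vides. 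L'\'etape cl\'e est de v\'erifier l'associativit\'e, qui est ici \'evidente car la concat\'enation l'est.

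On d\'efinit $P_S:\sC\to S^{-1}\sC$ comme l'identit\'e sur les objets et en envoyant une fl\`eche $f:X\to Y$ sur la classe du zigzag de longueur $1$ correspondant. Par construction, $P_S(s)$ est inversible dans $S^{-1}\sC$ pour tout $s\in S$ (un inverse \'etant donn\'e par le zigzag inverse). Pour la propri\'et\'e $2$-universelle, soit $F:\sC\to \sD$ un foncteur inversant $S$: on d\'efinit $\tilde F:S^{-1}\sC\to \sD$ en posant $\tilde F(X)=F(X)$ et, pour un zigzag repr\'esentant un morphisme, $\tilde F$ est obtenu en appliquant $F$ aux fl\`eches ``directes'' et $F(s)^{-1}$ aux fl\`eches inverses. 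La compatibilit\'e avec les relations (a)--(c) d\'ecoule de la fonctorialit\'e de $F$ et de l'inversibilit\'e des $F(s)$. L'unicit\'e \`a isomorphisme naturel pr\`es r\'esulte du fait que $P_S$ est surjectif sur les objets et que tout morphisme de $S^{-1}\sC$ se d\'ecompose en compos\'e de $P_S(f)$ et de $P_S(s)^{-1}$.

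Le principal obstacle est de nature ensembliste: sans l'hypoth\`ese que $\sC$ est petite, la classe $Z(X,Y)$ des zigzags n'est pas \emph{a priori} un ensemble (il n'y a pas de borne sur la longueur), ce qui rendrait la construction probl\'ematique dans le cadre usuel des cat\'egories localement petites. L'hypoth\`ese que $\sC$ est petite garantit que $Z(X,Y)$ est un ensemble (r\'eunion d\'enombrable de produits cart\'esiens finis d'ensembles de morphismes de $\sC$), donc son quotient aussi. C'est pr\'ecis\'ement pour contourner ce probl\`eme que l'\'enonc\'e se restreint au cas petit; dans le cas g\'en\'eral, il faut soit accepter que $S^{-1}\sC$ soit une ``grande'' cat\'egorie au sens de Grothendieck, soit imposer un calcul de fractions \`a la Ore pour obtenir des repr\'esentants canoniques de longueur born\'ee.
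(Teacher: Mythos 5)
Votre construction est correcte et c'est pr\'ecis\'ement celle de Gabriel--Zisman \cite[ch. I, 1.1]{gz}, \`a laquelle le texte renvoie sans donner de d\'emonstration: cat\'egorie des zigzags (chemins dans le graphe obtenu en ajoutant des renversements formels des fl\`eches de $S$) modulo la congruence engendr\'ee par la composition, la simplification $s\,s^{-1}=\mathrm{id}$ et les identit\'es, avec la remarque ensembliste attendue sur la petitesse de $\sC$. La seule impr\'ecision, mineure, concerne la pleine fid\'elit\'e de $P_S^*$: il faudrait expliciter que toute transformation naturelle $G\circ P_S\Rightarrow G'\circ P_S$ est automatiquement naturelle pour les morphismes de $S^{-1}\sC$ parce que ceux-ci sont des compos\'es de $P_S(f)$ et de $P_S(s)^{-1}$, mais cela d\'ecoule de l'argument de d\'ecomposition que vous donnez d\'ej\`a.
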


(Une cat\'egorie $\sC$ est \emph{petite} si la classe $Ob(\sC)$ de ses objets est un ensemble et, pour tout $(c,d)\in Ob(\sC)^2$, $\sC(c,d)$ est un ensemble. Dans le langage des univers de Grothendieck, on se donne un univers $\sU$ et on dit qu'un ensemble est $\sU$-petit s'il appartient \`a $\sU$.)

\begin{ex}\phantomsection\label{exA.1} Soit $M$ un mono\"\i de. On lui associe une cat\'egorie \`a un seul objet, not\'ee $BM$. Si $S$ est un sous-ensemble de $M$, la cat\'egorie $S^{-1}BM$ est de la forme $B(S^{-1}M)$ o\`u $S^{-1} M$ est le \emph{mono\"\i de des fractions de $M$ par rapport \`a $S$}. Si $M$ est un anneau, on retrouve l'anneau des fractions de $M$.
\end{ex}

\begin{thm}[\protect{\cite[ch. I, 1.3]{gz}}] \phantomsection Soit $G:\sC\to \sD$ un foncteur, d'adjoint \`a droite $D:\sD\to \sC$. Notons $\epsilon: GD\to Id_{\sD}$ la co\"unit\'e de cette adjonction. Les conditions suivantes sont \'equivalentes:
\begin{thlist}
\item $\epsilon$ est un isomorphisme de foncteurs.
\item $D$ est pleinement fid\`ele.
\item $G$ est une localisation relative \`a $S=\{s\in Fl(\sC)\mid G \text{ inverse } s\}$.
\item Pour toute cat\'egorie $\sE$, le foncteur
\[\Fonct(\sD,\sE)\by{G^*} \Fonct(\sC,\sE)\]
est pleinement fid\`ele.
\end{thlist}
\end{thm}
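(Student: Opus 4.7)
Plan: I would establish the four-term equivalence via the cycle (i) $\Leftrightarrow$ (ii) $\Rightarrow$ (iv) $\Rightarrow$ (iii) $\Rightarrow$ (i), so that each arrow can be handled with material already on the table (adjunction, triangle identities, definition of localization).

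The equivalence (i) $\Leftrightarrow$ (ii) is the standard fact about right adjoints. Using the adjunction bijection $\sC(Dd,Dd') \iso \sD(GDd,d')$, the hom map $D_{d,d'}:\sD(d,d')\to \sC(Dd,Dd')$ becomes precomposition with $\epsilon_d:GDd\to d$. By Yoneda, this is a bijection for every $d'$ if and only if $\epsilon_d$ is invertible; ranging over $d$ gives the claim.

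For (ii) $\Rightarrow$ (iv), assume $\epsilon$ is invertible. Given $F_1,F_2:\sD\to \sE$ and $\beta:F_1G\to F_2G$, define
\[
\tilde\beta \;=\; F_2\epsilon \,\circ\, \beta D\,\circ\,(F_1\epsilon)^{-1}\colon F_1 \;\longrightarrow\; F_1GD \;\longrightarrow\; F_2GD \;\longrightarrow\; F_2.
\]
The triangle identity $\epsilon G\circ G\eta=\mathrm{Id}_G$ (which forces $G\eta$ to be an iso inverse to $\epsilon G$ once $\epsilon$ is invertible) and naturality of $\epsilon$ give $\widetilde{\alpha G}=\alpha$ and $\widetilde{\beta}\,G=\beta$, so $\alpha\mapsto \alpha G$ is a bijection. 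Hence $G^*$ is fully faithful, uniformly in $\sE$.

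For (iv) $\Rightarrow$ (iii) and (iii) $\Rightarrow$ (i), observe that any functor of the form $F'G$ inverts $S$ by the very definition of $S$, so the image of $G^*$ lies in functors inverting $S$. Conversely, given $F:\sC\to \sE$ inverting $S$, consider $FD:\sD\to \sE$ and the natural transformation $F\eta:F\to FDG$; this is an isomorphism provided $\eta_c\in S$ for all $c$, i.e.\ provided $G\eta_c$ is invertible. The triangle identity $\epsilon G\cdot G\eta=\mathrm{Id}_G$ shows $G\eta$ is a split monomorphism with retraction $\epsilon G$. To promote this to an iso, apply (iv) with $\sE=\sD$ to transport $G\eta:G\to GDG$ back to a unique $\zeta:\mathrm{Id}_\sD\to GD$ with $\zeta G=G\eta$; faithfulness of $G^*$ on $\mathrm{End}(\mathrm{Id}_\sD)$ turns $\epsilon G\cdot G\eta=\mathrm{Id}_G$ into $\epsilon\zeta=\mathrm{Id}_{\mathrm{Id}_\sD}$, and the reverse composition $\zeta\epsilon$ is handled by checking that the idempotent $G\eta\cdot\epsilon G$ on $GDG$ satisfies $\epsilon G\cdot P=\epsilon G$, so the corresponding idempotent on $GD$ agrees with $\mathrm{Id}_{GD}$ after further application of (iv) to the bijection $\mathrm{End}(GD)\iso\mathrm{End}(GDG)$. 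Once $\epsilon$ is invertible (i.e.\ (i) holds), (iii) follows formally and closes the cycle.

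The hard step will be the second half of (iv) $\Rightarrow$ (i), namely upgrading the split-mono relation $\epsilon G\cdot G\eta=\mathrm{Id}_G$ to a genuine two-sided inverse pair $(\epsilon,\zeta)$; the triangle identities give one composition for free, but the other requires exploiting full faithfulness of $G^*$ for a well-chosen $\sE$ to promote the formal idempotent $\zeta\epsilon$ to the identity. All other implications are diagram chases using only the adjunction data $(\eta,\epsilon)$ and the universal property of localization.
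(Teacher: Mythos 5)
The paper does not prove this statement: it is quoted from Gabriel--Zisman \cite[ch. I, 1.3]{gz}, so your argument has to stand on its own. The implications (i) $\Leftrightarrow$ (ii) and (i) $\Rightarrow$ (iv) are correct as written, and so is (i) $\Rightarrow$ (iii) (full faithfulness of $G^*$ plus the observation that $F\eta:F\to FDG$ is invertible once $G\eta=(\epsilon G)^{-1}$ puts every $S$-inverting $F$ in the essential image). The implication (iii) $\Rightarrow$ (iv), which your cycle needs but which you never address, is in fact immediate from the paper's definition of a localisation, so the only real issue is the step you yourself flag as the hard one: (iv) $\Rightarrow$ (i).

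There, you correctly produce by fullness of $G^*$ (with $\sE=\sD$) a transformation $\zeta:Id_{\sD}\to GD$ with $\zeta G=G\eta$, and faithfulness turns $\epsilon G\circ G\eta=Id_G$ into $\epsilon\zeta=Id$. But your argument for $\zeta\epsilon=Id_{GD}$ is a non sequitur. Under the bijection $\End(GD)\iso\End(GDG)$ the idempotent $P=G\eta\circ\epsilon G$ corresponds to $\zeta\epsilon$, and the relation $\epsilon G\circ P=\epsilon G$ corresponds to $\epsilon\circ(\zeta\epsilon)=\epsilon$, which is automatic from $\epsilon\zeta=Id$ and carries no new information; to conclude $P=Id_{GDG}$ from $\epsilon G\circ P=\epsilon G$ you would need $\epsilon G$ to be a monomorphism, which is essentially what is to be proved. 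The step can be repaired, but by a different mechanism: naturality of $\zeta$ applied to the morphism $\epsilon_d:GDd\to d$ gives $\zeta_d\circ\epsilon_d=GD\epsilon_d\circ\zeta_{GDd}$; since $\zeta_{GDd}=(\zeta G)_{Dd}=G\eta_{Dd}$, the \emph{second} triangle identity $D\epsilon\circ\eta D=Id_D$ yields $GD\epsilon_d\circ G\eta_{Dd}=G(D\epsilon_d\circ\eta_{Dd})=Id_{GDd}$, hence $\zeta\epsilon=Id_{GD}$. Your proof never invokes that second triangle identity, and without it (or some substitute) the split monomorphism $\zeta$ cannot be promoted to an isomorphism.
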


\subsubsection{Calcul des fractions} Lorsque $M$ est un mono\"\i de non commutatif, le mono\"\i de $S^{-1}M$ est en g\'en\'eral tr\`es difficile \`a d\'ecrire quand $S\subset M$ est quelconque: un \'el\'ement g\'en\'eral de $S^{-1}M$ est de la forme $s_1^{-1}m_1s_2^{-1}m_2\dots s_r^{-1} m_r$, o\`u $s_i\in S$, $m_i\in M$, l'entier $r$ \'etant \emph{a priori} non born\'e et les relations entre ces g\'en\'erateurs n'\'etant pas explicites. La situation est meilleure quand on a un \emph{calcul des fractions}: c'est la condition de Ore lorsque $M$ est ``sans torsion''. Cette situation se g\'en\'eralise au cas d'une cat\'egorie quelconque et donne lieu \`a:

\begin{defn}[\protect{\cite[ch. I, 2.2]{gz}}]\phantomsection Soit $\sC$ une cat\'egorie et soit $S\subset Fl(\sC)$ une classe de morphismes. On dit que $S$ \emph{admet un calcul des fractions \`a gauche} si les conditions suivantes sont v\'erifi\'ees:
\begin{description}
\item[Multiplicativit\'e]  $S$ contient les identit\'es des objets de $\sC$ et est stable par composition.
\item[Condition de Ore] Pour tout diagramme $X'\yb{s} X\by{u} Y$ o\`u $s\in S$, il existe un carr\'e commutatif
\[\begin{CD}
X@>u>> Y\\
@V{s}VV @V{s'}VV\\
X'@>u'>> Y'
\end{CD}\]
o\`u $s'\in S$.
\item[Simplification] Si $X\begin{smallmatrix}f\\ \rightrightarrows\\g \end{smallmatrix} Y$ sont deux morphismes parall\`eles et $s\in S$ est tel que $fs=gs$, il existe $t\in S$ tel que $tf=tg$.
\end{description}
D\'efinition duale pour le calcul des fractions \`a droite.
\end{defn}

Si $S$ admet un calcul des fractions \`a gauche, la condition de Ore implique que toute fl\`eche de $S^{-1} \sC$ peut s'\'ecrire $s^{-1}f$, o\`u $f\in Fl(\sC)$ et $s\in S$. Elle fournit aussi une r\`egle de composition de deux telles fl\`eches; enfin, la condition de simplification dit quand deux couples $(f,s)$ et $(f',s')$ d\'efinissent la m\^eme fl\`eche $s^{-1}f$: voir \cite[ch. I, 2.3]{gz} pour plus de pr\'ecisions.

\begin{ex}\phantomsection Soit $\sA$ une cat\'egorie ab\'elienne. Une sous-cat\'egorie $\sB\subset \sA$ est dite \emph{de Serre} si elle est pleine et si, \'etant donn\'e une suite exacte courte $0\to A'\to A\to A''\to 0$, $A\in \sB$ $\iff$ $A',A''\in \sB$. On associe \`a $\sB$ la classe de morphismes
\[S_\sB=\{s\mid \Ker s, \Coker s\in \sB\}.\]
Alors $S_\sB$ admet un calcul de fractions \`a gauche et \`a droite, et la cat\'egorie $\sA/\sB:=S_\sB^{-1}\sA$ est ab\'elienne: c'est le \emph{quotient de Serre} de $\sA$ par $\sB$. Ainsi, on peut dire que le premier exemple de localisation de cat\'egories apparaissant dans la litt\'erature est celui de Serre dans \cite{serre-classes} (o\`u $\sA=$ $\{$groupes ab\'eliens$\}$ et $\sB=$ par exemple $\{$groupes ab\'eliens finis de torsion $p$-primaire$\}$). Voir aussi \cite[nos 56--60]{FAC}.\\
Un autre exemple, ci-dessous, sera celui d'une sous-cat\'egorie \'epaisse d'une cat\'egorie triangul\'ee.
\end{ex}

\begin{thm}[\protect{\cite[ch. I, 4.1, prop.]{gz}}]\phantomsection\label{tA.1} Soit $\sC$ une petite cat\'egorie, et soit $S\subset Fl(\sC)$ un ensemble de fl\`eches admettant un calcul des fractions \`a gauche. Alors le foncteur de localisation
\[P_S:\sC\to S^{-1}\sC\]
admet un adjoint \`a droite si et seulement si la condition suivante est v\'erifi\'ee: pour tout objet $c\in \sC$, il existe un morphisme $s:c\to d$ tel que
\begin{thlist}
\item $P_S(s)$ soit inversible;
\item pour tout $t:x\to y$ de $S$, l'application $t^*:\sC(y,d)\to \sC(x,d)$ soit bijective.
\end{thlist}
Si cette condition est v\'erifi\'ee, $c\mapsto d$ d\'efinit l'adjoint \`a droite cherch\'e et la fl\`eche $s$ correspond \`a l'unit\'e de l'adjonction.
\end{thm}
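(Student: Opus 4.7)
Le plan est d'exploiter la description explicite des morphismes dans $S^{-1}\sC$ fournie par le calcul de fractions \`a gauche: tout morphisme $P_S(c)\to P_S(c')$ se repr\'esente par un ``toit'' $c\by{f}y\yb{t}c'$ avec $t\in S$, deux toits d\'efinissant le m\^eme morphisme si et seulement s'ils s'\'egalisent apr\`es amplification \`a gauche par un \'el\'ement de $S$. C'est cette description qui permettra de faire le lien entre la structure ``globale'' de l'adjonction et les conditions ``locales'' (i) et (ii).

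Pour la n\'ecessit\'e, on supposera donn\'e un adjoint \`a droite $D$ avec unit\'e $\eta:\mathrm{Id}_\sC\Rightarrow DP_S$; on prendra $d=DP_S(c)$ et $s=\eta_c$. L'identit\'e triangulaire entra\^\i ne (i): $P_S(\eta_c)$ correspond par l'adjonction au morphisme identit\'e de $P_S(c)$, donc est inversible. Pour (ii), la cha\^\i ne de bijections naturelles $\sC(z,d)\simeq (S^{-1}\sC)(P_S(z),P_S(c))$ pour $z\in\{x,y\}$ transporte l'application $t^*$ en $P_S(t)^*$, qui est bijective puisque $P_S(t)$ est inversible.

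L'essentiel se concentre sur la direction r\'eciproque. Soit donn\'ee une famille $\{(d_c,s_c)\}_{c\in\sC}$ v\'erifiant (i) et (ii). Le c\oe ur de la preuve consistera \`a \'etablir, pour tous $c,c'\in\sC$, que la transformation $\phi\mapsto P_S(s_{c'})^{-1}\circ P_S(\phi)$ est une bijection
\[\sC(c,d_{c'})\iso (S^{-1}\sC)(P_S(c),P_S(c')).\]
La surjectivit\'e utilisera (ii) de mani\`ere essentielle: \'etant donn\'e un toit $c\by{f}y\yb{t}c'$ avec $t\in S$, la bijectivit\'e de $t^*:\sC(y,d_{c'})\to\sC(c',d_{c'})$ fournit un unique $g:y\to d_{c'}$ avec $g\circ t=s_{c'}$, et $\phi:=g\circ f$ convient (un calcul direct montre $P_S(\phi)=P_S(s_{c'})P_S(t)^{-1}P_S(f)$). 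Pour l'injectivit\'e, si $P_S(\phi)=P_S(\phi')$, l'axiome de simplification fournit $t:d_{c'}\to d''$ dans $S$ avec $t\phi=t\phi'$; la condition (ii) appliqu\'ee \`a $t$ avec but $d_{c'}$ donne $u:d''\to d_{c'}$ tel que $ut=\mathrm{id}_{d_{c'}}$, d'o\`u $\phi=ut\phi=ut\phi'=\phi'$.

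L'\'etape la plus d\'elicate sera le passage d'une bijection naturelle en $c$ \`a la construction effective d'un foncteur $D:S^{-1}\sC\to\sC$ adjoint \`a $P_S$. Pour tout morphisme $\psi:c_1'\to c_2'$ dans $S^{-1}\sC$, on d\'efinira $D(\psi):d_{c_1'}\to d_{c_2'}$ comme l'unique fl\`eche de $\sC$ correspondant \`a $\psi\circ P_S(s_{c_1'})^{-1}\in (S^{-1}\sC)(P_S(d_{c_1'}),P_S(c_2'))$ via la bijection ci-dessus appliqu\'ee \`a $c=d_{c_1'}$; la fonctorialit\'e $D(\psi\circ\psi')=D(\psi)\circ D(\psi')$ d\'ecoulera alors automatiquement de l'unicit\'e dans la bijection. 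La naturalit\'e en la seconde variable, combin\'ee aux bijections obtenues, exhibera $P_S\dashv D$ avec $s_c$ pour unit\'e, ce qui ach\`evera la d\'emonstration.
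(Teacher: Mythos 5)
Votre d\'emonstration est correcte, et c'est essentiellement l'argument attendu; notez que le texte ne d\'emontre pas cet \'enonc\'e (il le cite de Gabriel--Zisman), il n'y a donc pas de preuve interne \`a comparer. Une seule r\'eserve, dans le sens direct: l'identit\'e triangulaire $\epsilon_{P_S(c)}\circ P_S(\eta_c)=\mathrm{id}$ ne montre \emph{a priori} que le fait que $P_S(\eta_c)$ est un monomorphisme scind\'e; pour obtenir (i) il faut invoquer le fait que la co\"unit\'e $\epsilon$ est un isomorphisme d\`es que le foncteur de gauche est une localisation --- c'est pr\'ecis\'ement le th\'eor\`eme de Gabriel--Zisman rappel\'e juste avant dans le texte. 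Le c\oe ur de votre preuve (la bijection $\sC(c,d_{c'})\iso (S^{-1}\sC)(P_S(c),P_S(c'))$ tir\'ee de (ii), son extension ponctuelle en un foncteur $D$ par unicit\'e, et l'identification de l'unit\'e avec $s_c$) est complet et correct; signalons seulement que l'existence de $t\in S$ \'egalisant $\phi$ et $\phi'$ lorsque $P_S(\phi)=P_S(\phi')$ n'est pas litt\'eralement l'axiome de simplification, mais la description explicite des Hom de $S^{-1}\sC$ dont cet axiome est un ingr\'edient.
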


\subsection{Cat\'egories triangul\'ees et d\'eriv\'ees}
R\'ef\'erences: la th\`ese de Verdier \cite{verdier-th}, son \'etat $0$ \cite[C.D.]{SGA412}, Residues and Duality \cite[ch. 1]{hartshorne-res}\dots

\subsubsection{Le c\^one d'un morphisme de complexes} Soit $\sA$ une cat\'egorie additive: notons $C(\sA)$ la cat\'egorie des complexes de cocha\^\i nes
\[\dots\to C^{n-1}\by{d^{n-1}} C^n\by{d^n}C^{n+1}\to \dots\]
 de $\sA$.

\begin{defn}\phantomsection\label{dA.1} Soit $f:C\to D$ un morphisme de $C(\sA)$. Le \emph{c\^one} de $f$ est le complexe $E$ o\`u
\[E^n=D^n\oplus C^{n+1}\]
et la diff\'erentielle $d_E^n: E^n\to E^{n+1}$ est donn\'ee par la matrice
\[d_E^n=\begin{pmatrix}d^n_D& f^{n+1}\\ 0 & -d_C^{n+1} \end{pmatrix}.\]
On a des morphismes de complexes
\[D\by{g} E\by{h} C[1]\]
o\`u $(C[1])^n=C^{n+1}$, $d_{C[1]}^n=-d_C^{n+1}$: 
\[g(d)=(d,0);\quad  h(d,c) = c.\]
\end{defn}

On trouvera dans \cite[ch. I, prop. 3.1.3]{verdier-th} une description de cette construction comme partie d'une adjonction.

\subsubsection{Homotopies}

\begin{defn}\phantomsection Soient $C\begin{smallmatrix}f\\ \rightrightarrows\\g \end{smallmatrix}D$ deux morphismes parall\`eles de $C(\sA)$. Une \emph{homotopie de $f$ \`a $g$} est un homomorphisme gradu\'e de degr\'e $-1$:
\[H^n:C^n\to D^{n-1}\]
tel que $g-f=d_DH+Hd_C$.\\
S'il existe une homotopie entre $f$ et $g$, on dit qu'ils sont \emph{homotopes}.
\end{defn}

\begin{lemme}\phantomsection\label{lA.1} La relation d'homotopie est une relation d'\'equivalence compatible \`a l'addition et \`a la composition des morphismes.\qed 
\end{lemme}

\begin{lemme}\phantomsection Soit $f:C\to D$ un morphisme de $C(\sA)$, de c\^one $E$, et soient $g,h$ comme dans la d\'efinition \ref{dA.1}. Alors $h\circ g=0$ et $g\circ f$, $f[1]\circ h$ sont homotopes \`a $0$.
\end{lemme}

\begin{proof} Une homotopie $H_1$ de $0$ \`a $g\circ f$ est donn\'ee par
\[H_1(c)= (0,c)\]
et une homotopie $H_2$ de $0$ \`a $f[1]\circ h$ est donn\'ee par
\[H_2(d,c)=d.\]
\end{proof}

\begin{defn}\phantomsection\label{dA.2} Soit $K(\sA)$ la cat\'egorie dont les objets sont ceux de $C(\sA)$, les morphismes \'etant donn\'es par
\[K(\sA)(C,D) = C(\sA)(C,D)/\sim_h\]
o\`u $\sim_h$ est la relation d'homotopie (\cf lemme \ref{lA.1}). C'est la \emph{cat\'egorie homotopique} de $\sA$. Si $f:C\to D$ est un morphisme de $C(\sA)$, on lui associe le triangle
\[t(f): C\by{f} D\by{g} E\by{h}C[1]\]
de la d\'efinition \ref{dA.1}, pouss\'e dans $K(\sA)$.
\end{defn}

\subsubsection{Cat\'egories triangul\'ees}

\begin{defn}\phantomsection a) Une \emph{$\Z$-cat\'egorie} est une cat\'egorie $\sT$ munie d'une auto\'equivalence de cat\'egories $X\mapsto X[1]$; si c'est un automorphisme de cat\'egories, on dit \emph{$\Z$-cat\'egorie stricte}.  On appelle cette auto\'equivalence \emph{d\'ecalage}, \emph{suspension} ou \emph{translation}.\\
b) Un \emph{triangle} de $\sT$ est un diagramme
\[X\to Y\to Z\to X[1]\]
souvent abr\'eg\'e en
\[X\to Y\to Z\by{+1}.\]
Un \emph{morphisme de triangles} est la notion \'evidente.
\end{defn}

Une autre notation courante dans la litt\'erature, qui explique la terminologie, est
\[\xymatrix{
&X\ar[dr]\\
Z\ar[ur]^{+1} && Y.\ar[ll]
}\]

\begin{defn}[Verdier \protect{\cite[ch. II, d\'ef. 1.1.1]{verdier-th}}]\phantomsection Une \emph{cat\'egorie triangul\'ee} est la donn\'ee d'une $\Z$-cat\'egorie additive stricte $\sT$ et d'une classe de triangles, appel\'es \emph{triangles exacts}, poss\'edant les propri\'et\'es suivantes:
\begin{description}
\item[TR1] Tout triangle isomorphe \`a un triangle exact est exact. Pour tout objet $X$ de $\sT$, le triangle $X\by{1_X} X\to 0\to X[1]$ est exact. Tout morphisme $u:X\to Y$ de $\sT$ est contenu dans un triangle exact $X\by{u} Y\by{v} Z\by{w} X[1]$.
\item[TR2] Un triangle  $X\by{u} Y\by{v} Z\by{w} X[1]$ est exact si et seulement si le triangle $Y\by{v} Z\by{w} X[1]\by{-u[1]} Y[1]$ est exact.
\item[TR3] Tout diagramme commutatif
\[\begin{CD}
X@>u>> Y@>v>> Z @>w>> X[1]\\
@V{f}VV @V{g}VV\\
X'@>u'>> Y'@>v'>> Z' @>w'>> X'[1]
\end{CD}\]
o\`u les lignes sont exactes, peut se compl\'eter en un diagramme commutatif
\[\begin{CD}
X@>u>> Y@>v>> Z @>w>> X[1]\\
@V{f}VV @V{g}VV@V{h}VV@V{f[1]}VV\\
X'@>u'>> Y'@>v'>> Z' @>w'>> X'[1].
\end{CD}\]
\item[TR4] (``axiome de l'octa\`edre'') Pour tout triangle commutatif
\[\xymatrix{
&X_2\ar[dr]^{u_1}\\
X_1\ar[ur]^{u_3}\ar[rr]^{u_2} && X_3
}\]
et tout triplet de triangles exacts
\[\begin{CD}
X_1@>u_2>> X_2@>v_3>> Z_3@>w_3>> X_1[1]\\
X_2@>u_1>> X_3@>v_1>> Z_1@>w_1>> X_2[1]\\
X_1@>u_3>> X_3@>v_2>> Z_2@>w_2>> X_1[1]
\end{CD}\]
il existe deux morphismes $m_1:Z_3\to Z_2$, $m_3:Z_2\to Z_1$ tels que $(1_{X_1},u_1,m_1)$ et $(u_3,1_{X_3}m_3)$ soient des morphismes de triangles et que le triangle
\[Z_3\by{m_1}Z_2\by{m_1}Z_1\by{v_3[1]w_1} Z_3[1]\]
soit exact.
\end{description}
\end{defn}

Voir \cite[1.1.7--1.1.14]{bbd} pour des commentaires sur l'axiome de l'octa\`edre.

\begin{thm}[\protect{\cite[ch. II, prop. 1.3.2]{verdier-th}}]\phantomsection Soit $\sA$ une cat\'egorie additive. La cat\'egorie $K(\sA)$ de la d\'efinition \ref{dA.2}, munie des triangles $t(f)$, est triangul\'ee.
\end{thm}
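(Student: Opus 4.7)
La premi\`ere \'etape consiste \`a constater que $K(\sA)$ est une $\Z$-cat\'egorie additive stricte: l'additivit\'e suit du lemme \ref{lA.1} (la relation d'homotopie est compatible \`a l'addition), et le d\'ecalage $C\mapsto C[1]$ est visiblement un automorphisme de $K(\sA)$, d'inverse strict $C\mapsto C[-1]$. Pour l'axiome TR1, la stabilit\'e par isomorphisme est tautologique et tout morphisme de $K(\sA)$ peut se repr\'esenter par un morphisme de $C(\sA)$, qui s'ins\`ere dans un triangle $t(f)$. Il reste \`a voir que $X\by{1_X} X\to 0\to X[1]$ est exact, ce qui revient \`a montrer que le c\^one $E(1_X)$, de diff\'erentielle $\begin{pmatrix} d_X & 1\\ 0 & -d_X \end{pmatrix}$, est homotopiquement trivial: l'homotopie $H(x,x')=(x',0)$ contracte l'identit\'e.

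Pour TR2, partant du triangle standard $t(u): X\by{u} Y\by{g} E(u)\by{h} X[1]$, on doit montrer que le triangle tourn\'e $Y\by{g} E(u)\by{h} X[1]\by{-u[1]} Y[1]$ est isomorphe \`a $t(g)$. Je construis explicitement une \'equivalence d'homotopie $E(g)\simeq X[1]$: le complexe $E(g)$ a pour composantes $Y^n\oplus X^{n+1}\oplus Y^{n+1}$ et les applications $\phi:(y,x,y')\mapsto x$ et $\psi:x\mapsto (0,x,0)$ v\'erifient $\phi\psi=\mathrm{id}$, l'autre composition $\psi\phi$ \'etant homotope \`a l'identit\'e par une homotopie explicite. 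La compatibilit\'e avec les fl\`eches du triangle tourn\'e donne l'isomorphisme voulu dans $K(\sA)$. L'axiome \'etant sym\'etrique dans les deux sens, on en d\'eduit aussi la r\'eciproque.

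Pour TR3, je choisis des repr\'esentants dans $C(\sA)$ des morphismes $f,g$; l'hypoth\`ese que le carr\'e de gauche commute dans $K(\sA)$ fournit une homotopie $H:X^{\bullet}\to {Y'}^{\bullet-1}$ telle que $gu-u'f=d_{Y'}H+H d_X$. Je d\'efinis alors $h:E(u)\to E(u')$ par la matrice $\begin{pmatrix} g & H\\ 0 & f[1] \end{pmatrix}$ sur $E(u)^n=Y^n\oplus X^{n+1}$; un calcul direct montre que $h$ est un morphisme de complexes et que les deux carr\'es restants commutent (au moins dans $K(\sA)$). Notons au passage que $h$ d\'epend du choix de $H$: c'est l'origine de la non-fonctorialit\'e bien connue du c\^one, qui n'affecte pas la v\'erification de TR3.

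L'obstacle principal est l'axiome de l'octa\`edre TR4. \'Etant donn\'e la factorisation $u_3=u_1 u_2$, je prends pour $Z_i$ les c\^ones explicites $Z_3=E(u_2)$, $Z_1=E(u_1)$, $Z_2=E(u_3)$, et je d\'efinis $m_1:Z_3\to Z_2$ par $(x_2,x_1)\mapsto (u_1 x_2, x_1)$ et $m_3:Z_2\to Z_1$ par $(x_3,x_1)\mapsto (x_3, u_2 x_1)$, en prenant garde aux signes. Il s'agit de morphismes de complexes rendant commutatifs les morphismes de triangles $(1,u_1,m_1)$ et $(u_2,1,m_3)$, ce qui est une v\'erification matricielle directe. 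Le point non trivial est de montrer que le triangle
\[Z_3\by{m_1} Z_2\by{m_3} Z_1\by{v_3[1]w_1} Z_3[1]\]
est exact, c'est-\`a-dire isomorphe \`a $t(m_1)$. Pour cela, je dois construire une \'equivalence d'homotopie explicite entre $E(m_1)$ et $Z_1$: $E(m_1)$ contient $Z_1$ comme facteur direct naturel, et le compl\'ementaire $E(1_{X_3})$ d\'ecal\'e convenablement est contractile par l'argument d\'ej\`a utilis\'e pour TR1. La difficult\'e r\'eside enti\`erement dans la gestion combinatoire des signes et dans l'\'ecriture de cette homotopie de contraction; tout le reste de la preuve est formellement automatique \`a partir des constructions.
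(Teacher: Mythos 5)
Le texte ne d\'emontre pas ce th\'eor\`eme: il renvoie \`a la th\`ese de Verdier, dont la preuve est pr\'ecis\'ement la v\'erification directe que vous esquissez. Votre plan est donc le bon, et l'architecture (contractibilit\'e du c\^one de l'identit\'e pour TR1, identification du c\^one it\'er\'e \`a un d\'ecal\'e pour TR2, insertion d'une homotopie dans la matrice du morphisme de c\^ones pour TR3, d\'evissage de $E(m_1)$ pour TR4) est correcte; votre traitement de TR3 est d'ailleurs complet tel quel. Trois formules explicites sont toutefois fausses et doivent \^etre corrig\'ees avant de d\'evelopper l'esquisse. (1) Avec la convention $E(1_X)^n=X^n\oplus X^{n+1}$ de la d\'efinition \ref{dA.1}, l'homotopie contractante est $(x,x')\mapsto(0,x)$ et non $(x',0)$: votre formule n'envoie pas les composantes dans les bons degr\'es. (2) Votre section $\psi\colon x\mapsto(0,x,0)$ de $X[1]$ dans $E(g)$ n'est pas un morphisme de complexes, puisque $d(0,x,0)=(ux,-d_Xx,0)$ alors que $\psi(-d_Xx)=(0,-d_Xx,0)$; la bonne section est $x\mapsto(0,x,-ux)$, et c'est avec elle que $\phi\psi=\mathrm{id}$ et $\psi\phi\simeq\mathrm{id}$ se v\'erifient. (3) Dans TR4, la partie \og contractile \fg\ de $E(m_1)^n=X_3^n\oplus X_1^{n+1}\oplus X_2^{n+1}\oplus X_1^{n+2}$ est port\'ee par les deux copies de $X_1$, c'est-\`a-dire par le c\^one de $1_{X_1}$ convenablement d\'ecal\'e, et non par celui de $1_{X_3}$; de plus ce n'est pas un facteur direct de complexes (la diff\'erentielle m\'elange les blocs), et la projection na\"\i ve $E(m_1)\to Z_1$ n'est pas un morphisme de complexes: il faut la corriger en $(x_3,x_1,x_2,x_1')\mapsto(x_3,x_2+u_2x_1)$. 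Aucune de ces retouches ne remet en cause la strat\'egie, mais telles quelles les formules (1), (2) et la description de (3) ne passeraient pas la v\'erification que vous annoncez comme \og directe \fg.
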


\begin{defn}\phantomsection a) Soient $\sT,\sT'$ deux cat\'egories triangul\'ees. Un \emph{foncteur triangul\'e} de $\sT$ vers $\sT'$ est un foncteur additif $F:\sT\to \sT'$, muni d'isomorphismes naturels de commutation aux foncteurs de d\'ecalage, et transformant triangles exacts en triangles exacts.\\
b) Soient $\sT$ une cat\'egorie triangul\'ee et $\sA$ une cat\'egorie ab\'elienne. Un \emph{foncteur homologique de $\sT$ vers $\sA$} est un foncteur $H:\sT\to \sA$ tel que, pour tout triangle exact $X\to Y\to Z\by{+1}$ de $\sT$, la suite de $\sA$
\[HX\to HY\to HZ\to H(X[1])\]
soit exacte. (Il est automatiquement additif: \cite[ch. II, rem. 1.2.7]{verdier-th}).\\
Un \emph{foncteur cohomologique de $\sT$ vers $\sA$} est un foncteur homologique de $\sT$ vers $\sA^\op$. 
\end{defn}

\begin{defn}\phantomsection Soient $\sT,\sT'$ deux cat\'egories triangul\'ees, et \allowbreak $F_1,F_2:\sT\rightrightarrows\sT'$ deux foncteurs triangul\'es parall\`eles. Une transformation naturelle $u:F_1\Rightarrow F_2$ est \emph{triangul\'ee} si elle commute aux d\'ecalages au sens suivant: pour tout $X\in \sT$, le diagramme
\[\begin{CD}
F_1(X[1])@>\sim>> F_1(X)[1]\\
@V{u_{X[1]}}VV @V{u_X[1]}VV\\
F_2(X[1])@>\sim>> F_2(X)[1]
\end{CD}\]
commute, o\`u les isomorphismes horizontaux sont les isomorphismes de commutation aux d\'ecalages inclus dans la structure de $F_1$ et $F_2$.
\end{defn}

\begin{thm}[\protect{\cite[ch. II, prop. 1.2.1]{verdier-th}}]\phantomsection Soit $\sT$ une cat\'egorie triangul\'ee, et soit $\Ab$ la cat\'egorie des groupes ab\'eliens. Pour tout objet $X\in \sT$, le foncteur $\sT(X,-):\sT\to \Ab$ est homologique et le foncteur $\sT(-,X):\sT\to \Ab^\op$ est cohomologique.
\end{thm}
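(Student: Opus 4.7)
The plan is to prove that $\sT(X,-)$ is homological; the cohomological statement for $\sT(-,X)$ will then follow by duality, since $\sT^\op$ inherits a triangulated structure in which $\sT(-,X)$ is represented by $\sT^\op(X,-)$. Thanks to axiom \textbf{TR2}, obtaining the full long exact sequence reduces to showing exactness at the middle spot: given any exact triangle $A\xrightarrow{u} B\xrightarrow{v} C\xrightarrow{w} A[1]$, the sequence
\[
\sT(X,A)\xrightarrow{u_*}\sT(X,B)\xrightarrow{v_*}\sT(X,C)
\]
is exact at $\sT(X,B)$, because applying this exactness to each of the rotated triangles produced by \textbf{TR2} stitches the pieces into the long exact sequence.

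First I would verify that $v\circ u=0$. The triangle $A\xrightarrow{1_A} A\to 0\to A[1]$ is exact by \textbf{TR1}; applying \textbf{TR3} to the morphism of triangles with first vertical $1_A$ and second vertical $u$ produces a third fill-in $0\to C$, and commutativity of the middle square forces $v\circ u=0$. Consequently $\IM u_*\subseteq \Ker v_*$.

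The essential step is the reverse inclusion $\Ker v_*\subseteq \IM u_*$: given $f:X\to B$ with $vf=0$, I must produce $g:X\to A$ with $ug=f$. The subtlety is that \textbf{TR3} as stated fills in the \emph{third} vertical of a morphism of triangles from the first two, whereas the map we seek plays the role of a \emph{first} vertical. The standard device is to rotate both the trivial triangle $X\xrightarrow{1_X} X\to 0\to X[1]$ and the given exact triangle once by \textbf{TR2}, obtaining the two exact triangles
\[
X\xrightarrow{0} 0\xrightarrow{0} X[1]\xrightarrow{-1_{X[1]}} X[1],\qquad B\xrightarrow{v} C\xrightarrow{w} A[1]\xrightarrow{-u[1]} B[1].
\]
The first two verticals $f:X\to B$ and $0:0\to C$ satisfy the compatibility $v\circ f=0\circ 0$ precisely because $vf=0$, so \textbf{TR3} provides a fill-in $h:X[1]\to A[1]$. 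Commutativity of the final square reads $(-u[1])\circ h=f[1]\circ(-1_{X[1]})$, i.e.\ $u[1]\circ h=f[1]$, and setting $g:=h[-1]$ gives $ug=f$, as required.

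The main obstacle is exactly this rotation trick: \textbf{TR3} is structurally asymmetric, and the art is in selecting which rotation of each triangle places the sought-after morphism in the third position. Once that bookkeeping is done, the verification is routine; the only care needed is with the signs introduced by \textbf{TR2}, which cancel correctly in the final commutative square. The cohomological statement for $\sT(-,X)$ then requires only the observation that the axioms \textbf{TR1}--\textbf{TR4} are self-dual, so $\sT^\op$ carries a triangulated structure whose exact triangles are those of $\sT$ reversed, and applying the already-proved homological result in $\sT^\op$ finishes the proof.
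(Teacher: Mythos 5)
Your argument is correct and is the standard proof (the paper itself states this result without proof, citing Verdier, so there is nothing to compare it against beyond the classical argument). The reduction via \textbf{TR2} to exactness at the middle spot, the verification that $v\circ u=0$ from the triangle $A\xrightarrow{1_A}A\to 0\to A[1]$ and \textbf{TR3}, and the rotation trick placing the sought morphism in the third vertical all check out, including the signs: the last square gives $(-u[1])\circ h=f[1]\circ(-1_{X[1]})$, hence $u\circ h[-1]=f$ since the shift is an automorphism. The appeal to self-duality of the axioms for the contravariant statement is likewise standard and acceptable.
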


\begin{rque}\phantomsection a) Dans \cite{verdier-th}, Verdier utilise la terminologie ``triangle distingu\'e'', ``foncteur exact'' et ``foncteur cohomologique'' respectivement pour (ici) ``triangle exact'', ``foncteur triangul\'e'' et ``foncteur homologique''. La terminologie choisie ici s'est progressivement impos\'ee: ``triangle exact'' refl\`ete plus fid\`element le fait qu'un tel triangle g\'en\`ere des suites exactes longues, et ``foncteur exact'' est particuli\`erement malheureux puisque les foncteurs triangul\'es usuels sont des d\'eriv\'es totaux de foncteurs non exacts entre cat\'egories ab\'eliennes! De m\^eme, il para\^\i t pr\'ef\'erable de r\'eserver ``homologique'' aux foncteurs covariants et ``cohomologique'' aux foncteurs contravariants.\\
b) L'hypoth\`ese que le foncteur de d\'ecalage soit un automorphisme de $\sT$ exclut des exemples de cat\'egories triangul\'ees la cat\'egorie homotopique stable des topologues: le foncteur suspension n'y est qu'une auto-\'equivalence de cat\'egories. C'est n\'eanmoins une diff\'erence inessentielle. J'ai pr\'ef\'er\'e garder l'hypothèse de Verdier pour \'eviter d'\'eventuels probl\`emes impr\'evus.
\end{rque}

\subsubsection{$K_0$ de cat\'egories ab\'eliennes et triangul\'ees}\label{K0} Si $\sA$ est une (petite) cat\'egorie ab\'elienne, son \emph{groupe de Grothendieck} $K_0(\sA)$ est le groupe ab\'elien d\'efini par g\'en\'erateurs et relations:
\begin{description}
\item[G\'en\'erateurs] $[A]$, o\`u $A$ d\'ecrit les classes d'isomorphismes d'objets de $\sA$.
\item[Relations] $[A]=[A']+[A'']$ pour toute suite exacte courte $0\to A'\to A\to A''\to 0$.
\end{description}

Si $\sT$ est une (petite) cat\'egorie triangul\'ee, on d\'efinit $K_0(\sT)$ de la m\^eme mani\`ere, les relations provenant des triangles exacts.

Si $D^b(\sA)$ est la cat\'egorie d\'eriv\'ee born\'ee d'une cat\'egorie ab\'elienne $\sA$, le plongement $A\mapsto A[0]$ de $\sA$ dans $D^b(\sA)$ induit un \emph{isomorphisme}
\[K_0(\sA)\iso K_0(D^b(\sA))\]
\cite[exp. VIII, \S 4]{SGA5}.

\subsubsection{Le quotient de Verdier} Dans ce num\'ero, je passe hypocritement sur les probl\`emes ensemblistes.

\begin{defn}\phantomsection a) Soit $\sT$ une cat\'egorie triangul\'ee. Une sous-ca\-t\'e\-go\-rie $\sS\subset \sT$ est 
\emph{triangul\'ee}  si elle est pleine, stable par d\'ecalage, et si pour tout morphisme $f:X\to Y$ de $\sS$, il existe un triangle exact $X\by{f}Y\by{g} Z\by{+1}$ de $\sT$ avec $Z\in \sS$. Le foncteur d'inclusion est alors triangul\'e.\\
b) $\sS$ est \emph{\'epaisse}\footnote{Verdier disait: \emph{satur\'ee}.} si, de plus, elle est stable par facteurs directs.
\end{defn}

Soit $\sS\subset \sT$ une sous-cat\'egorie triangul\'ee. On lui associe la classe de morphismes
\begin{multline*}S(\sS)= \{s:X\to Y\mid X,Y\in \sT; \text{ il existe un triangle exact }\\
 X\by{s} Y\to Z\by{+1} \text{ avec } Z\in \sS\}.
 \end{multline*}

\begin{thm}[\protect{\cite[ch. II, prop. 2.1.8 et th. 2.2.6]{verdier-th}}]\phantomsection La classe $S(\sS)$ admet un calcul des fractions \`a gauche et \`a droite. Notons $\sT/\sS$ la cat\'egorie localis\'ee: elle poss\`ede une unique structure de cat\'egorie triangul\'ee telle que le foncteur de localisation $P:\sT\to \sT/\sS$ soit triangul\'e; c'est celle dont les triangles exacts sont les images des triangles exacts de $\sT$. La cat\'egorie $\sT/\sS$ est le \emph{quotient de Verdier} de $\sS$ par $\sT$.
\end{thm}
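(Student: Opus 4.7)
\medskip

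Le plan se décompose en deux temps: vérifier le calcul des fractions, puis construire la structure triangulée sur le quotient.

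Première étape: le calcul des fractions bilatère pour $S(\sS)$. La stabilité par identités résulte de TR1 (le triangle $X\by{1_X} X\to 0\by{+1}$ est exact et $0\in \sS$). Pour la composition, donnons-nous $s:X\to Y$ et $s':Y\to W$ dans $S(\sS)$, de cônes respectifs $Z,Z'\in\sS$; l'axiome de l'octaèdre TR4 appliqué à la factorisation $X\by{s} Y\by{s'} W$ fournit un triangle exact $Z\to \mathrm{cone}(s'\circ s)\to Z'\by{+1}$, d'où $\mathrm{cone}(s'\circ s)\in \sS$ car $\sS$ est triangulée et stable par extensions. Pour la condition de Ore à gauche, étant donné $X'\yb{s} X\by{u} Y$ avec $s\in S(\sS)$ de cône $Z\in\sS$, on part du triangle exact $Z[-1]\to X\by{s} X'\to Z\by{+1}$ et on complète la composition $Z[-1]\to X\by{u} Y$ en un triangle exact $Z[-1]\to Y\by{s'} Y'\to Z\by{+1}$ (via TR1); l'axiome TR3 fournit alors un morphisme $u':X'\to Y'$ rendant le carré commutatif, et $s'\in S(\sS)$ puisque son cône est $Z$. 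Le cas à droite est dual. Enfin, pour la simplification: si $f,g:Y\rightrightarrows Z$ vérifient $fs=gs$ pour $s:X\to Y$ dans $S(\sS)$, complétons $s$ en un triangle exact $X\by{s} Y\by{p} W\by{+1}$ avec $W\in \sS$; la condition $(f-g)s=0$ et l'exactitude (suite longue associée au foncteur $\sT(-,Z)$) impliquent que $f-g$ se factorise en $h\circ p$ pour un certain $h:W\to Z$. Complétons $h$ en un triangle exact $W\by{h} Z\by{t} Z'\to W[1]$: son cône $W[1]$ appartient à $\sS$, donc $t\in S(\sS)$, et $t(f-g)=th\circ p=0$, d'où $tf=tg$.

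Deuxième étape: la structure triangulée sur $\sT/\sS$. On décrète qu'un triangle de $\sT/\sS$ est \emph{exact} s'il est isomorphe (dans $\sT/\sS$) à l'image par $P$ d'un triangle exact de $\sT$; par construction $P$ est alors triangulé. Les axiomes TR1 et TR2 sont immédiats à partir de leurs analogues dans $\sT$ et du fait que $P$ commute à la translation et préserve les isomorphismes. Pour TR3, étant donnés deux triangles exacts provenant de $\sT$ et un morphisme des deux premiers sommets dans $\sT/\sS$, on représente ce morphisme par un toit $X\yb{s} \tilde X\to X'$ avec $s\in S(\sS)$; en utilisant la condition de Ore, on peut remplacer le toit par un vrai morphisme de $\sT$ quitte à remplacer l'un des triangles par un triangle isomorphe (obtenu via $s$, qui devient inversible dans $\sT/\sS$), puis on applique TR3 dans $\sT$ et on pousse dans $\sT/\sS$. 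L'axiome TR4 se démontre de manière analogue, en ramenant par calcul de fractions les données à des morphismes véritables de $\sT$, puis en appliquant TR4 dans $\sT$ et en projetant.

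Troisième étape: l'unicité. Toute structure triangulée sur $\sT/\sS$ rendant $P$ triangulé doit contenir les images par $P$ des triangles exacts de $\sT$; réciproquement, si $T$ est un triangle exact pour une telle structure, on peut toujours relever ses deux premiers sommets et le premier morphisme à $\sT$ (via calcul de fractions), compléter en un triangle exact dans $\sT$, et appliquer TR3 dans $\sT/\sS$ pour obtenir un isomorphisme entre $T$ et l'image de ce relèvement; la classe des triangles exacts est donc bien déterminée.

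L'obstacle principal sera la vérification détaillée de TR3 et surtout TR4 dans $\sT/\sS$: la manipulation simultanée de plusieurs toits représentant les morphismes, combinée à l'usage répété de l'octaèdre et à la nécessité de remplacer certains triangles par des triangles isomorphes pour aligner les données, exige un diagramme-chasse technique et soigneux, où il faut systématiquement exploiter que $\sS$ est stable par cônes et par facteurs directs pour maintenir les cônes intermédiaires dans $S(\sS)$.
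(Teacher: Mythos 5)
Le texte ne donne aucune démonstration de cet énoncé: il renvoie simplement à la thèse de Verdier. Votre esquisse suit précisément la route standard (celle de Verdier) et les étapes clefs sont correctes: la vérification du calcul des fractions bilatère (identités par TR1, composition par l'octaèdre et la stabilité de $\sS$ par rotation, condition de Ore par complétion d'un composé $Z[-1]\to X\by{u} Y$ puis TR3, simplification par factorisation de $f-g$ à travers le cône de $s$ via le foncteur cohomologique $\sT(-,Z)$), puis la définition des triangles exacts du quotient comme images des triangles exacts de $\sT$ à isomorphisme près, et l'unicité par relèvement d'un toit et comparaison via TR3.

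Deux points méritent d'être explicités si vous rédigez complètement. D'une part, dans l'argument d'unicité, l'isomorphisme entre $T$ et l'image du relèvement s'obtient par TR3 \emph{suivi du} fait qu'un morphisme de triangles exacts qui est un isomorphisme sur deux sommets l'est sur le troisième (conséquence du lemme des cinq appliqué aux foncteurs cohomologiques $\sT/\sS(-,W)$): ce pas n'est pas gratuit et doit être invoqué. D'autre part, la vérification de TR4 dans $\sT/\sS$ — que vous ne faites que signaler — est réellement la partie longue et délicate de la preuve de Verdier; une esquisse honnête doit au minimum indiquer comment ramener, par la condition de Ore itérée, les trois morphismes du triangle commutatif de départ à de véritables morphismes de $\sT$ ayant un sommet commun, ce qui demande plus qu'une application directe du calcul des fractions à chaque flèche séparément.
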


\begin{rque}\phantomsection La cat\'egorie $\sS^\natural = \Ker P$ est la plus petite sous-cat\'egorie \'epaisse de $\sT$ qui contient $\sS$: ses objets sont les facteurs directs des objets de $\sS$. Pour cette raison, on se limite parfois \`a localiser par rapport \`a une sous-cat\'egorie \'epaisse (ce que ne fait pas Verdier).
\end{rque}

\begin{defn}\phantomsection Soit $\sC$ une classe d'objets de $\sT$. L'\emph{orthogonal \`a droite} (\resp \emph{\`a gauche}) de $\sC$ est
\[\sC^\perp = \{X\in \sT\mid \sT(\sC,X)=0\}\]
(\resp
\[{}^\perp\sC = \{X\in \sT\mid \sT(X,\sC)=0\}).\]
Si $\sC$ est stable par translation, ce sont des sous-cat\'egories \'epaisses de $\sT$.
\end{defn}

La th\'eor\`eme suivant retraduit le th\'eor\`eme \ref{tA.1} dans le contexte triangul\'e:

\begin{thm}[\protect{\cite[ch. II, prop. 2.3.3]{verdier-th}}]\phantomsection Soit $\sS$ une sous-ca\-t\'e\-go\-rie triangul\'ee de $\sT$. Le foncteur de localisation $P:\sT\to \sT/\sS$ admet un adjoint \`a droite (\resp \`a gauche) si et seulement si sa restriction \`a $\sS^\perp$ (\resp \`a ${}^\perp \sS$) est essentiellement surjective.
\end{thm}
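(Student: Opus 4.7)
La strat\'egie naturelle est de r\'eduire l'\'enonc\'e au crit\`ere g\'en\'eral du th\'eor\`eme \ref{tA.1}. On sait en effet d'apr\`es le th\'eor\`eme imm\'ediatement pr\'ec\'edent que $S(\sS)$ admet un calcul des fractions \`a gauche et que $\sT/\sS=S(\sS)^{-1}\sT$. On traite le cas de l'adjoint \`a droite, l'autre s'obtenant par dualit\'e (passage \`a $\sT^\op$). D'apr\`es le th\'eor\`eme \ref{tA.1}, $P$ admet un adjoint \`a droite si et seulement si, pour tout $c\in\sT$, il existe un morphisme $s:c\to d$ tel que (i) $P(s)$ soit inversible, \emph{i.e.} $s\in S(\sS)$, et (ii) pour tout $t:x\to y$ de $S(\sS)$, l'application $t^*:\sT(y,d)\to\sT(x,d)$ soit bijective.

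La premi\`ere t\^ache est de reformuler la condition (ii). Pour $t\in S(\sS)$, compl\'etons-le en un triangle exact $x\by{t}y\to z\by{+1}$ avec $z\in \sS$. En appliquant le foncteur cohomologique $\sT(-,d)$, on obtient une suite exacte longue; la bijectivit\'e de $t^*$ pour \emph{tous} les $t\in S(\sS)$ \'equivaut, en utilisant la stabilit\'e de $\sS$ par translation, \`a l'annulation de $\sT(z[i],d)$ pour tout $z\in \sS$ et tout $i\in\Z$, soit exactement \`a $d\in \sS^\perp$. Le crit\`ere du th\'eor\`eme \ref{tA.1} se reformule donc ainsi: $P$ admet un adjoint \`a droite si et seulement si, pour tout $c\in \sT$, il existe un morphisme $s:c\to d$ avec $d\in \sS^\perp$ et $\mathrm{cone}(s)\in \sS$.

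Il reste \`a relier cette condition \`a l'essentielle surjectivit\'e de $P_{|\sS^\perp}$. Le sens direct est imm\'ediat: un tel $s$ fournit un isomorphisme $P(c)\iso P(d)$ dans $\sT/\sS$ avec $d\in \sS^\perp$. Pour la r\'eciproque, supposons $P_{|\sS^\perp}$ essentiellement surjectif et soit $c\in\sT$; choisissons $d\in \sS^\perp$ avec un isomorphisme $P(c)\iso P(d)$. Par le calcul des fractions, celui-ci est repr\'esent\'e par un toit $c\by{f}e\yb{t}d$ avec $t\in S(\sS)$, et la caract\'erisation des isomorphismes dans le calcul des fractions entra\^\i ne que $f\in S(\sS)$ \'egalement.

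L'\'etape cruciale --- et probablement celle qui demande le plus d'attention --- est de ``redresser'' ce toit en un morphisme direct $c\to d$. L'id\'ee est d'exploiter pleinement l'hypoth\`ese $d\in \sS^\perp$: compl\'etons $t$ en un triangle exact $d\by{t}e\to z\by{+1}$ avec $z\in\sS$ et appliquons-lui $\sT(-,d)$. Comme $\sT(z,d)=\sT(z[1],d)=0$, on obtient $t^*:\sT(e,d)\iso\sT(d,d)$, ce qui fournit une unique r\'etraction $r:e\to d$ de $t$. Posons alors $s:=r\circ f:c\to d$. Dans $\sT/\sS$, on a $P(r)=P(t)^{-1}$ puisque $P(t)$ est inversible et $P(r)P(t)=P(\mathrm{id}_d)$, donc $P(s)=P(t)^{-1}P(f)$ est l'isomorphisme $P(c)\iso P(d)$ de d\'epart. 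Ainsi $s\in S(\sS)$ et $d\in \sS^\perp$, ce qui ach\`eve de v\'erifier la condition reformul\'ee et termine la d\'emonstration.
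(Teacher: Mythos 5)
Votre démonstration est correcte et suit exactement la voie que le texte indique sans la détailler : le paragraphe précédant l'énoncé signale que ce théorème « retraduit le théorème \ref{tA.1} dans le contexte triangulé », et c'est précisément ce que vous faites (reformulation de la condition (ii) en $d\in\sS^\perp$ via la suite exacte longue, puis redressement du toit grâce à l'annulation de $\sT(z,d)$). Seule imprécision, sans conséquence : « $P(s)$ inversible, \emph{i.e.} $s\in S(\sS)$ » confond $S(\sS)$ avec sa saturation $S(\sS^\natural)$ lorsque $\sS$ n'est pas épaisse, mais votre argument n'utilise en réalité que l'inversibilité de $P(s)$, qui est bien la condition (i) du théorème \ref{tA.1}.
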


\subsubsection{Cat\'egories d\'eriv\'ees}\label{catder}

\begin{defn}\phantomsection a) Soit $\sA$ une cat\'egorie ab\'elienne, et soient $C,D\in C(\sA)$. Un morphisme $f:C\to D$ est un \emph{quasi-isomorphisme} si $H^*(f):H^*(C)\to H^*(D)$ est un isomorphisme. Un objet $C$ est \emph{acyclique} si $H^*(C)=0$, \ie s'il est quasi-isomorphe \`a $0$.\\
b) On note $D(\sA)$ le quotient de Verdier $K(\sA)/\sS$, o\`u $\sS$ est la sous-cat\'egorie \'epaisse des complexes acycliques. C'est la \emph{cat\'egorie d\'eriv\'ee} de $\sA$.
\end{defn}

\begin{rque}\phantomsection On a des variantes ``born\'ees'' de $K(\sA)$ et $D(\sA)$:
\begin{align*}
K^+(\sA) &= \{C\in K(\sA)\mid C^n=0 \text{ pour } n\ll 0\}\\
K^-(\sA) &= \{C\in K(\sA)\mid C^n=0 \text{ pour } n\gg 0\}\\
K^b(\sA) &= K^+(\sA)\cap K^-(\sA)
\end{align*}
de m\^eme pour $D^\#(\sA)$ ($\#\in \{+,-,b\}$).
\end{rque}

\begin{prop}\phantomsection\label{pA.1} Soit $K_P(\sA)$ (\resp $K_I(\sA)$ la sous-cat\'egorie pleine de $K(\sA)$ form\'ee des complexes dont les termes sont projectifs (\resp injectifs): ce sont des sous-cat\'egories \'epaisses de $K(\sA)$.\\
a) Les compositions
\[K_P(\sA)\to K(\sA)\to D(\sA),\quad K_I(\sA)\to K(\sA)\to D(\sA)\]
sont pleinement fid\`eles.\\
b) Si tout objet de $\sA$ est un quotient d'un objet projectif (\resp se plonge dans un objet injectif), l'image essentielle de la premi\`ere (\resp seconde) composition contient $D^-(\sA)$ (\resp $D^+(\sA)$).
\end{prop}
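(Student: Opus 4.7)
The proof organizes itself around a single technical heart, which I will call the \emph{annulation lemma}: if $P$ is a (suitably bounded) complex of projectives and $Z \in K(\sA)$ is acyclic, then every chain map $P \to Z$ is null-homotopic; dually for complexes of injectives. My first step will be to prove this lemma. For bounded above $P$, the proof proceeds by descending induction: assuming we have built the homotopy $H^n$ in degrees $>n$, the obstruction to extending it to degree $n$ is a map from $P^n$ to a certain cocycle subobject of $Z^{n-1}$; acyclicity of $Z$ means this cocycle is a boundary (after lifting along an epimorphism), and projectivity of $P^n$ allows us to lift. The dual statement is proved similarly for bounded below complexes of injectives.

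Part~(a) then follows from the calculus of fractions for the Verdier localization $D(\sA) = K(\sA)/\mathrm{Acyc}$. Let $P, P' \in K_P(\sA)$; I need to show $K(\sA)(P,P') \to D(\sA)(P,P')$ is bijective. For \emph{injectivity}, suppose $f \colon P \to P'$ becomes zero in $D(\sA)$: by the calculus of fractions, there exists a quasi-isomorphism $s \colon Q \to P$ with $f s = 0$ in $K(\sA)$. Completing $s$ to an exact triangle $Q \xrightarrow{s} P \to C \xrightarrow{+1}$, the cone $C$ is acyclic (since $s$ is a quasi-isomorphism), and $f$ factors as $P \to C \xrightarrow{h} P'$; applying the annulation lemma (dually, using that $P'$ has projective terms and invoking the dual formulation on the opposite category) one concludes $h$, and hence $f$, is null-homotopic. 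For \emph{surjectivity}, a morphism in $D(\sA)(P,P')$ is represented by a roof $P \xleftarrow{s} Q \xrightarrow{g} P'$; extending $s$ to a triangle $C[-1] \to Q \to P \to C$ with $C$ acyclic, the obstruction to lifting $g$ along $s$ lies in $K(\sA)(C[-1], P')$, which vanishes by the same lemma. The argument for $K_I(\sA)$ is strictly dual. Thickness of $K_P(\sA)$ and $K_I(\sA)$ in $K(\sA)$ is immediate from stability of projectives (resp. injectives) under direct summands.

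For part~(b), given $X \in D^-(\sA)$, I will build inductively a quasi-isomorphism $P \to X$ with $P \in K^-_P(\sA) \subset K_P(\sA)$. Using the hypothesis that every object is a quotient of a projective, one constructs $P$ degree by degree from above: truncate $X$ so that $X^n = 0$ for $n > N$, choose a projective surjection $P^N \twoheadrightarrow X^N$, then at each lower degree $n$ choose $P^n$ a projective object surjecting onto the appropriate fibred product designed so that the map $P \to X$ is surjective on cohomology and simultaneously kills the kernel below. The standard bookkeeping (e.g.\ as in Hartshorne's \emph{Residues and Duality}, ch.~I) shows this procedure yields the desired quasi-isomorphism, placing $X$ in the essential image of $K_P(\sA) \to D(\sA)$. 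The dual construction for injectives and $D^+(\sA)$ is identical modulo reversal of arrows.

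The main obstacle is the annulation lemma in full generality, as stated \emph{without} any boundedness assumption on $P$: the inductive construction of the homotopy requires a starting point, which a bounded above complex provides but an unbounded one does not. In practice, part~(a) is typically applied to $K^-_P$ (resp.\ $K^+_I$), where the lemma is genuinely available; for the fully unbounded version one needs either finite projective dimension or a Spaltenstein-style argument with K-projective resolutions. I would therefore present the proof as stated but flag that the cleanest reading of the proposition is obtained by intersecting with the appropriate boundedness condition, which is in any case sufficient for the applications to part~(b) and to the constructions of derived functors that motivate the result.
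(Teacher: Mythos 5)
The paper offers no proof of this proposition (it is classical; the intended references are Verdier's th\`ese and Hartshorne's \emph{Residues and Duality}, both cited nearby), so your proposal must be judged on its own. Its architecture --- the annulation lemma combined with the calculus of fractions for (a), and the degree-by-degree construction of resolutions for (b) --- is the standard and correct one, and your closing caveat about boundedness is not a quibble but a necessity: over $\Z/4$ the complex $\cdots\to\Z/4\by{2}\Z/4\by{2}\Z/4\to\cdots$ is acyclic with projective (even free) terms yet not contractible, so for unbounded complexes the functor $K_P(\sA)\to D(\sA)$ is not even faithful, and the proposition must be read with $K^-_P$ and $K^+_I$ (which is all that part (b) and the construction of derived functors require).

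The genuine gap is in your proof of (a): both times you apply the key lemma, you apply it in the wrong direction. You invoke the vanishing of $K(\sA)(C,P')$ for $C$ acyclic and $P'$ a complex of projectives --- once to kill $h$ in the injectivity argument, once as the obstruction group $K(\sA)(C[-1],P')$ in the surjectivity argument. That vanishing is false: the statement dual to your lemma (passing to $\sA^{\op}$ turns projectives into injectives \emph{and} reverses arrows) concerns maps from acyclic complexes into bounded-below complexes of \emph{injectives}, because the inductive construction of the homotopy now requires extending a morphism defined on $\operatorname{im}(d_C)\subset C^{n+1}$ to all of $C^{n+1}$. Concretely, in abelian groups the acyclic complex $\Z\by{2}\Z\to\Z/2$ (degrees $0,1,2$) maps to $\Z[0]$ by the identity in degree $0$, and this chain map is not null-homotopic. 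Your injectivity argument survives with a one-line repair: in the factorization $f=h\circ u$ with $u\colon P\to C$, it is $u$, not $h$, that is null-homotopic ($P$ bounded above with projective terms, $C$ acyclic), whence $f=0$. The surjectivity argument, however, must be restructured: either represent morphisms of $D(\sA)$ by left fractions $P\to R\yb{t}P'$ and deduce from $K(\sA)(P,Z)=0$ for $Z$ acyclic, via the long exact sequence, that $t_*\colon K(\sA)(P,P')\to K(\sA)(P,R)$ is bijective; or note that the same vanishing gives every quasi-isomorphism $s\colon Q\to P$ a section $r$ up to homotopy, so that the roof $(s,g)$ equals the class of $g\circ r$.
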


\begin{rque}\phantomsection La proposition ci-dessus est la version d\'eriv\'ee des r\'esolutions injectives et projectives, qui permet de faire des calculs en alg\`ebre homologique. Dans les ann\'ees 60 et 70, on \'etait tr\`es attentif aux questions de ``bornitude'' et aux pathologies qui risquaient de se produire si on les supprimait. Cette situation a \'et\'e r\'evolutionn\'ee dans les ann\'ees 1990 par Spaltenstein \cite{spaltenstein}, puis B\"okstedt-Neeman \cite{bn} qui ont introduit des m\'ethodes permettant de travailler de la m\^eme mani\`ere avec des cat\'egories d\'eriv\'ees non born\'ees. Une \'etape fondamentale a \'et\'e l'article de Thomason-Trobaugh identifiant les complexes parfaits aux objets compacts de la cat\'egorie d\'eriv\'ee des faisceaux quasi-coh\'erents sur un sch\'ema raisonnable \cite{tt}. L'importance des cat\'egories d\'eriv\'ees non born\'ees a alors cr\^u avec les travaux ult\'erieurs de Neeman \cite{neeman}: par exemple, si $\sA$ est une cat\'egorie ab\'elienne admettant des sommes directes infinies exactes, alors $D(\sA)$ est stable par sommes directes infinies ce qui n'est le cas ni de $D^+(\sA)$, ni de $D^-(\sA)$, et encore moins de $D^b(\sA)$.
\end{rque}

\subsubsection{Foncteurs d\'eriv\'es}

Soit $F:\sA\to \sB$ un foncteur exact \`a gauche entre cat\'egories ab\'eliennes. Si $\sA$ a assez d'injectifs au sens de la proposition \ref{pA.1} b), on d\'efinit les \emph{foncteurs d\'eriv\'es \`a droite} de $F$ par la formule
\[R^qF(A) = H^q(I^*)\]
o\`u $I^*$ est une r\'esolution injective de $A$. L'id\'ee qui sous-tend le concept des cat\'egories d\'eriv\'ees est de conserver l'information contenue dans $I^*$ mais perdue en passant \`a sa cohomologie. Cela conduit \`a la notion de foncteur d\'eriv\'e total.

\begin{defn}\phantomsection $F:\sA\to \sB$ \'etant comme ci-dessus, on dit que $F$ est \emph{d\'erivable \`a droite} s'il existe un foncteur triangul\'e $RF:D(\sA)\to D(\sB)$ et une transformation naturelle
\[\phi_A:F(A)[0]\to RF(A)\]
(o\`u $B[0]\in D(\sB)$ d\'esigne l'objet $B$ de $\sB$, consid\'er\'e comme complexe concentr\'e en degr\'e $0$), ayant la propri\'et\'e universelle suivante: pour tout foncteur triangul\'e $G:D(\sA)\to D(\sB)$ muni d'une transformation naturelle $\psi:F[0]\Rightarrow G$, il existe une unique transformation naturelle $\theta:RF\Rightarrow G$ tel que $\psi = \theta \circ \phi$.\\
{\bf Variante}: on remplace $D$ par $D^+$.
\end{defn}

Cette notion est un cas particulier de celle d'extension de Kan \cite[X.3]{mcl}.

\begin{prop}\phantomsection Si $\sA$ a assez d'injectifs, $RF$ est d\'efini sur $D^+(\sA)$; il s'\'etend \`a $D(\sA)$ [au moins] si $R^qF = 0$ pour $q\gg 0$. (On dit alors que $F$ est \emph{de dimension cohomologique finie}.)
\end{prop}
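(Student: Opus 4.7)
\medskip
\noindent\textbf{Plan de d\'emonstration.} L'id\'ee est de construire $RF$ d'abord sur $D^+(\sA)$ via des r\'esolutions injectives, puis de l'\'etendre \`a $D(\sA)$ par troncation sous l'hypoth\`ese de dimension cohomologique finie.

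\smallskip

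Je commencerais par invoquer la proposition \ref{pA.1}: notant $\sI\subset \sA$ la sous-cat\'egorie pleine des objets injectifs et $K^+(\sI)$ la sous-cat\'egorie pleine correspondante de $K^+(\sA)$, la composition $K^+(\sI)\to K^+(\sA)\to D^+(\sA)$ est une \'equivalence de cat\'egories triangul\'ees (pleine fid\'elit\'e par a), surjectivit\'e essentielle par b), compte tenu de l'hypoth\`ese ``assez d'injectifs''). Choisissons un quasi-inverse $\iota:D^+(\sA)\to K^+(\sI)$: cela revient \`a choisir fonctoriellement, \`a isomorphisme unique pr\`es, une r\'esolution injective $A\to \iota(A)$ pour tout $A\in D^+(\sA)$. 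D'autre part, $F$ \'etant additif s'\'etend terme \`a terme en un foncteur $K(F):K(\sA)\to K(\sB)$ qui pr\'eserve les homotopies (donc passe au quotient) et envoie c\^ones sur c\^ones, donc est triangul\'e. On pose alors $RF:=\bar F\circ \iota$, o\`u $\bar F$ est la compos\'ee $K^+(\sI)\by{K^+(F)} K^+(\sB)\to D^+(\sB)\to D(\sB)$. Le morphisme $\phi_A:F(A)[0]\to RF(A)$ s'obtient en appliquant $F$ \`a l'augmentation $A\to \iota(A)$.

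\smallskip

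Pour la propri\'et\'e universelle, soit $(G,\psi)$ comme dans l'\'enonc\'e. \'Etant donn\'e $A\in D^+(\sA)$ et sa r\'esolution $A\iso \iota(A)=I^*$, on construit $\theta_A:RF(A)\to G(A)$ de la mani\`ere suivante: via $\psi$, on dispose du morphisme $F(I^n)[0]\to G(I^n[0])$ en chaque degr\'e, d'o\`u par fonctorialit\'e triangul\'ee un morphisme $F(I^*)\to G(I^*)\iso G(A)$ (les troncations b\^etes permettent de se ramener terme par terme). L'unicit\'e de $\theta$ se v\'erifie par r\'ecurrence sur la ``longueur'' d'une r\'esolution tronqu\'ee: en degr\'e $0$, toute transformation naturelle triangul\'ee $RF\Rightarrow G$ se restreint \`a $F[0]\Rightarrow G\circ [0]$, qui doit \^etre $\psi$ par l'\'equation $\psi=\theta\circ \phi$; les degr\'es sup\'erieurs sont alors d\'etermin\'es par les triangles exacts attach\'es aux troncations.

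\smallskip

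Pour l'extension \`a $D(\sA)$, supposons $R^qF=0$ pour $q>d$. L'observation cl\'e est que si $C\in D(\sA)$ et $\tau^{\ge m}C$ d\'esigne la troncation canonique, alors le morphisme naturel $RF(\tau^{\ge m}C)\to RF(\tau^{\ge m-1}C)$ (d\'ej\`a d\'efini dans $D^+(\sB)$) induit un isomorphisme en cohomologie de degr\'e $n$ d\`es que $m-1+d<n$, car sa fibre dans $D^+(\sB)$ est concentr\'ee en degr\'es $\le m-1+d$ (la diff\'erence $H^{m-1}(C)[-(m-1)]$ a sa cohomologie de $RF$ dans $[m-1,m-1+d]$). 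On peut donc \emph{d\'efinir} $H^n(RF(C))$ comme la valeur stabilis\'ee $H^n(RF(\tau^{\ge m}C))$ pour $m\ll 0$, mais l'objet complexe $RF(C)$ lui-m\^eme demande plus de soin. La mani\`ere classique est de construire une \emph{r\'esolution de Cartan-Eilenberg} de $C$: un bicomplexe \`a lignes born\'ees et \`a termes injectifs quasi-isomorphe \`a $C$, dont le complexe total (bien d\'efini gr\^ace \`a la dimension cohomologique finie, qui borne uniform\'ement la longueur des r\'esolutions verticales) fournit le repr\'esentant $RF(C)$ cherch\'e. C'est le point le plus d\'elicat: il demande de v\'erifier l'ind\'ependance du choix de la r\'esolution de Cartan-Eilenberg \`a homotopie pr\`es. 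Une alternative moderne plus souple est l'usage des r\'esolutions $K$-injectives de Spaltenstein, qui rend l'extension fonctorielle automatique et qui marche m\^eme sans l'hypoth\`ese de dimension cohomologique finie --- mais dans le cadre born\'e ci-dessus cette hypoth\`ese est essentielle pour \'eviter les complexes totaux divergents.
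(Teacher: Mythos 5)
Le texte n'offre aucune d\'emonstration de cette proposition: elle y est \'enonc\'ee comme un fait standard, juste apr\`es la proposition \ref{pA.1}. Votre proposition suit pr\'ecis\'ement la construction classique (Verdier, Hartshorne \emph{Residues and Duality}): \'equivalence $K^+(\sI)\iso D^+(\sA)$ tir\'ee de la proposition \ref{pA.1}, d\'efinition $RF=\bar F\circ\iota$, puis extension par r\'esolutions de Cartan--Eilenberg tronqu\'ees. C'est la bonne approche et l'architecture est correcte.

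Deux points demandent toutefois r\'eparation. D'abord, la construction de $\theta$ \og par fonctorialit\'e triangul\'ee\fg\ \`a partir des morphismes degr\'e par degr\'e $F(I^n)[0]\to G(I^n[0])$ n'est pas automatique: les compl\'etions de morphismes fournies par TR3 ne sont ni uniques ni fonctorielles, et pour un complexe de $D^+$ non born\'e sup\'erieurement la r\'ecurrence sur les troncations b\^etes ne conclut pas sans un argument de (co)limite homotopique; la voie propre consiste \`a observer que la donn\'ee $\psi$ s'\'etend canoniquement aux complexes de $K^+(\sI)$ (ou \`a formuler la propri\'et\'e universelle au niveau de $Q_\sB\circ K(F)\Rightarrow G\circ Q_\sA$), apr\`es quoi $\theta_A=\psi_{\iota(A)}$ tombe imm\'ediatement. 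Ensuite, dans l'\'etape de Cartan--Eilenberg, l'hypoth\`ese $R^qF=0$ pour $q>d$ ne borne \emph{pas} la longueur des r\'esolutions injectives: un objet de dimension $F$-cohomologique finie peut avoir une r\'esolution injective infinie. Ce que la dimension cohomologique finie permet, c'est de \emph{tronquer} chaque r\'esolution injective en degr\'e $d$ en rempla\c cant le dernier terme par le conoyau, lequel est $F$-acyclique par d\'ecalage de dimension; on obtient ainsi un bicomplexe \`a colonnes de longueur $\le d$ dont les termes sont seulement $F$-acycliques, et il faut alors invoquer le fait (standard mais non trivial) que les r\'esolutions $F$-acycliques calculent $RF$. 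Tel qu'\'ecrit (\og termes injectifs\fg, \og longueur des r\'esolutions verticales born\'ee par la dimension cohomologique finie\fg), le bicomplexe annonc\'e n'existe pas en g\'en\'eral.
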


\begin{thm}\phantomsection Soient $\sA\by{F}\sB\by{G}\sC$ des foncteurs exacts \`a gauche entre cat\'egories ab\'eliennes. Supposons que $\sA$ et $\sB$ aient assez d'injectifs et que $F$ transforme tout objet injectif de $\sA$ en un objet $G$-acyclique de $\sB$ (\ie un objet $B$ tel que $R^qG(B)=0$ pour tout $q>0$). Alors on a un isomorphisme canonique
\[R(G\circ F) \simeq RG\circ RF\]
sur $D^+(\sA)$ (et sur $D(\sA)$ sous des hypoth\`eses de dimension cohomologique finie).
\end{thm}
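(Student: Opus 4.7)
Mon plan est d'abord de construire un morphisme canonique de foncteurs triangulés $R(G\circ F) \Rightarrow RG\circ RF$ sur $D^+(\sA)$, puis de vérifier qu'il s'agit d'un isomorphisme. Pour la construction, il suffit de composer les transformations $F[0]\Rightarrow RF$ et $G[0]\Rightarrow RG$ fournies par la définition de foncteur dérivé, ce qui donne une transformation $(G\circ F)[0] \Rightarrow RG\circ RF$. Comme $RG\circ RF$ est un foncteur triangulé $D^+(\sA)\to D^+(\sC)$, la propriété universelle (extension de Kan) de $R(G\circ F)$ fournit un unique morphisme de foncteurs triangulés $R(G\circ F)\Rightarrow RG\circ RF$ rendant commutatif le triangle correspondant.

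Pour démontrer que ce morphisme est un isomorphisme, j'utiliserais la proposition \ref{pA.1} afin de me ramener à évaluer les deux membres sur un complexe $I^\bullet\in K^+(\sA)$ à composantes injectives, ce qui est permis car tout objet de $D^+(\sA)$ est isomorphe à un tel $I^\bullet$. Alors $RF(I^\bullet)\simeq F(I^\bullet)$ et $R(G\circ F)(I^\bullet)\simeq G(F(I^\bullet))$, de sorte que l'énoncé se réduit au sous-lemme suivant : \emph{si $B^\bullet\in K^+(\sB)$ a toutes ses composantes $G$-acycliques, alors le morphisme canonique $G(B^\bullet)\to RG(B^\bullet)$ est un isomorphisme dans $D^+(\sC)$}. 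Appliqué à $B^\bullet = F(I^\bullet)$, dont les composantes $F(I^n)$ sont $G$-acycliques par hypothèse sur $F$, ceci conclurait la preuve dans le cas borné inférieurement.

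La démonstration du sous-lemme se ferait au moyen d'une résolution de Cartan-Eilenberg $B^\bullet\to J^{\bullet,\bullet}$, c'est-à-dire d'un bicomplexe d'injectifs de $\sB$ tel que, pour chaque ligne et au niveau des cocycles, cobords et cohomologies, les suites augmentées soient des résolutions injectives. Le complexe total $\mathrm{Tot}(J^{\bullet,\bullet})$ est alors un complexe borné inférieurement d'injectifs quasi-isomorphe à $B^\bullet$, et calcule donc $RG(B^\bullet)$. Pour voir que $G(B^\bullet)\to \mathrm{Tot}(G(J^{\bullet,\bullet}))$ est un quasi-isomorphisme, j'examinerais la suite spectrale du bicomplexe $G(J^{\bullet,\bullet})$ obtenue en prenant d'abord la cohomologie verticale : comme chaque $B^p$ est $G$-acyclique, on a $H^q_v(G(J^{p,\bullet})) = R^qG(B^p)$, qui vaut $G(B^p)$ pour $q=0$ et est nul sinon. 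La suite spectrale dégénère à $E_2$ et fournit $H^n(\mathrm{Tot}(G(J^{\bullet,\bullet}))) \simeq H^n(G(B^\bullet))$, comme voulu.

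L'extension à $D(\sA)$ sous des hypothèses de dimension cohomologique finie pour $F$ et $G$ se ferait par un argument de troncation : les foncteurs dérivés décalant la cohomologie d'une quantité bornée, pour calculer la cohomologie en degré $n$ de chaque membre, il suffit de remplacer $A\in D(\sA)$ par une troncation $\tau^{\ge m}A\in D^+(\sA)$ avec $m$ assez petit devant $n$, ce qui ramène au cas déjà traité. Le principal obstacle technique est le sous-lemme ci-dessus, où il faut manier correctement la résolution de Cartan-Eilenberg et le passage de la suite spectrale au quasi-isomorphisme ; on peut d'ailleurs éviter ce point en raisonnant par récurrence sur la longueur du support de $B^\bullet$ au moyen des triangles exacts de troncation bête et du lemme des cinq, ce qui ramène le tout au cas d'un seul objet $G$-acyclique, trivial par définition.
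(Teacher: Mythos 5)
Le texte énonce ce théorème sans démonstration (c'est un rappel de matériel standard dans l'appendice B), il n'y a donc pas de preuve de l'article à laquelle comparer la vôtre ; je l'évalue donc pour elle-même. Votre argument est la démonstration classique et il est correct : construction du morphisme canonique $R(G\circ F)\Rightarrow RG\circ RF$ par la propriété universelle du foncteur dérivé total, réduction à un complexe borné inférieurement d'injectifs $I^\bullet$ grâce à la proposition \ref{pA.1} (sur lequel $RF\simeq F$ et $R(G\circ F)\simeq G\circ F$), puis le sous-lemme affirmant qu'un complexe borné inférieurement à composantes $G$-acycliques calcule $RG$, appliqué à $F(I^\bullet)$ en vertu de l'hypothèse sur $F$ — c'est exactement là que cette hypothèse intervient. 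La preuve du sous-lemme par résolution de Cartan--Eilenberg et suite spectrale du bicomplexe est standard ; votre variante par récurrence sur la longueur du support est elle aussi valable pour un complexe seulement borné inférieurement, car $H^n(RG(B^\bullet))$ ne dépend que de la troncation bête $\sigma^{\le n}B^\bullet$, qui est bornée. Les seuls points que vous passez un peu vite sont (i) la vérification que le morphisme abstrait fourni par la propriété universelle coïncide, sur $I^\bullet$, avec l'identification évidente $G(F(I^\bullet))\to RG(F(I^\bullet))$ — une chasse au diagramme de naturalité sans difficulté — et (ii) dans l'extension au cas non borné, le fait qu'il faut la dimension cohomologique finie de $F$ \emph{et} de $G$ pour que les deux membres soient définis et que la troncation contrôle la cohomologie en chaque degré ; tel est bien le sens des hypothèses évoquées dans l'énoncé.
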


Ce th\'eor\`eme remplace et g\'en\'eralise la suite spectrale des foncteurs compos\'es:
\[R^pGR^qF(A)\Rightarrow R^{p+q}(G\circ F)(A).\footnote{Au moins pour des raisonnement formels! Il est difficile d'\'eviter les suites spectrales pour faire des calculs concrets\dots}\]

\subsubsection{D\'eriv\'es et adjoints} Ce qui est dit ci-dessus pour la d\'erivation \`a droite s'applique \`a la d\'erivation \`a gauche, \emph{mutatis mutandis}. Que se passe-t-il dans le cas de foncteurs adjoints?

Voici un exemple de r\'eponse:

\begin{prop}\phantomsection Soient $\sA,\sB$ deux cat\'egories ab\'eliennes, et $G:\sA\to \sB$, $D:\sB\to \sA$ un couple de foncteurs adjoints ($G$ est adjoint \`a gauche de $D$): donc $G$ est exact \`a droite et $D$ est exact \`a gauche. Supposons que $\sB$ ait assez d'injectifs et que $\sA$ ait assez de projectifs. Alors les foncteurs d\'eriv\'es totaux $LG:D^-(\sA)\to D^-(\sB)$, $RD:D^+(sB)\to D^+(\sA)$ existent et leurs restrictions \`a $D^b(\sA)$ et $D^b(\sB)$ sont adjointes l'une de l'autre.
\end{prop}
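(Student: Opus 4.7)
Ma stratégie se décompose en trois étapes: construire $LG$ et $RD$ via des résolutions, réécrire les $\Hom$ dans la catégorie dérivée comme $\Hom$ dans la catégorie homotopique grâce à la proposition \ref{pA.1}, puis transporter l'adjonction ``terme à terme'' au niveau des complexes.

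Pour l'existence de $LG$, j'utiliserais l'analogue projectif de la proposition \ref{pA.1}: si $\sA$ a assez de projectifs, la composition $K_P^-(\sA)\to K^-(\sA)\to D^-(\sA)$ est une équivalence de catégories (pleinement fidèle et essentiellement surjective). Pour $A\in D^-(\sA)$, je choisirais un quasi-isomorphisme $P\to A$ avec $P\in K_P^-(\sA)$ et je poserais $LG(A)=G(P)$; le foncteur $G$, étant additif, préserve les homotopies, donc envoie $K_P^-(\sA)$ dans $K^-(\sB)$, et la propriété universelle du foncteur dérivé est immédiate (tout foncteur triangulé $K^-(\sA)\to D^-(\sB)$ qui s'annule sur les complexes acycliques de projectifs factorise par $D^-(\sA)$). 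Dualement, pour $B\in D^+(\sB)$, je choisirais $B\to I$ avec $I\in K_I^+(\sB)$ et poserais $RD(B)=D(I)$.

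Le cœur de la preuve est alors le calcul suivant pour $A\in D^b(\sA)$ et $B\in D^b(\sB)$, avec $P\to A$ et $B\to I$ comme ci-dessus:
\begin{align*}
\Hom_{D(\sB)}(LG(A),B) &= \Hom_{D(\sB)}(G(P),I)= \Hom_{K(\sB)}(G(P),I),\\
\Hom_{D(\sA)}(A,RD(B)) &= \Hom_{D(\sA)}(P,D(I))= \Hom_{K(\sA)}(P,D(I)),
\end{align*}
la seconde égalité de chaque ligne résultant du fait que, pour un complexe $X$ à termes injectifs (resp. $Y$ à termes projectifs) borné à gauche (resp. à droite), $\Hom_K(-,X)$ (resp. $\Hom_K(Y,-)$) envoie quasi-isomorphismes sur isomorphismes, donc se factorise par $D$. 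L'adjonction $(G,D)$ dans les catégories abéliennes induit un isomorphisme terme à terme $\Hom_\sB(G(P)^n,I^n)\simeq \Hom_\sA(P^n,D(I)^n)$, et la naturalité de l'unité et de la coünité garantit que les différentielles correspondent, d'où un isomorphisme de complexes d'homomorphismes $\Hom^\bullet_\sB(G(P),I)\simeq \Hom^\bullet_\sA(P,D(I))$. En prenant $H^0$, on obtient l'isomorphisme voulu au niveau $K$, naturel en $A$ et $B$.

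L'obstacle principal me semble être la vérification soigneuse que l'adjonction terme à terme préserve bien la relation d'homotopie: si $h:G(P)^n\to I^{n-1}$ est une homotopie, son transposé $\tilde h:P^n\to D(I)^{n-1}$ par l'adjonction doit satisfaire $d_{D(I)}\tilde h+\tilde h d_P=\widetilde{d_I h+hd_{G(P)}}$, ce qui découle formellement du fait que $G$ et $D$ commutent aux différentielles via les morphismes d'adjonction. Un point auxiliaire à surveiller: $LG(A)$ et $RD(B)$ ne sont pas nécessairement bornés (sauf si $G$ ou $D$ est de dimension cohomologique finie), mais cela n'affecte pas l'énoncé puisqu'on calcule les $\Hom$ dans $D^-(\sB)$ et $D^+(\sA)$ respectivement. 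La naturalité des isomorphismes en $A$ et $B$ (indépendamment du choix des résolutions) suit du caractère fonctoriel des résolutions projectives/injectives à homotopie près, déjà utilisé pour définir $LG$ et $RD$.
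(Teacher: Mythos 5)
Votre d\'emonstration est correcte et suit la voie classique. Le texte, lui, ne d\'emontre pas cette proposition~: il l'\'enonce sans preuve et renvoie simplement \`a \cite{km} pour une g\'en\'eralisation dans un cadre tr\`es abstrait. Votre argument --- remplacer $A$ par une r\'esolution projective $P$ et $B$ par une r\'esolution injective $I$, identifier les $\Hom$ d\'eriv\'es aux $\Hom$ homotopiques gr\^ace aux \'equivalences $K^-_P(\sA)\simeq D^-(\sA)$ et $K^+_I(\sB)\simeq D^+(\sB)$, puis transporter l'adjonction terme \`a terme en un isomorphisme de complexes $\Hom^\bullet_\sB(G(P),I)\simeq \Hom^\bullet_\sA(P,D(I))$ compatible aux diff\'erentielles, donc aux homotopies apr\`es passage \`a $H^0$ --- est exactement la d\'emonstration standard et comble la lacune que le texte laisse au lecteur. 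Deux points mineurs de r\'edaction~: dans l'isomorphisme terme \`a terme il faut consid\'erer $\Hom_\sB(G(P)^n,I^{n+k})$ pour tout $k$ et pas seulement $k=0$ (ce que vous faites implicitement en parlant ensuite du complexe $\Hom^\bullet$ tout entier)~; et la \og propri\'et\'e universelle imm\'ediate \fg\ de $LG$ m\'erite la pr\'ecision qu'un complexe acyclique born\'e sup\'erieurement \`a termes projectifs est contractile, de sorte que $G$ transforme les quasi-isomorphismes entre objets de $K_P^-(\sA)$ en homotopies d'\'equivalence, ce qui justifie la descente de $G$ le long de l'\'equivalence $K_P^-(\sA)\simeq D^-(\sA)$.
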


Pour une g\'en\'eralisation dans un cadre tr\`es abstrait, on peut consulter \cite{km}.

\subsubsection{Exemples: $\oo\limits^L$ et $\RHom$}

\begin{defn}\phantomsection Soit $\sA$ une cat\'egorie ab\'elienne munie d'une structure mono\"\i dale sym\'etrique $\otimes$, qu'on suppose \emph{exacte \`a droite}.\\
a) Un objet $A\in \sA$ est \emph{plat} si le foncteur $A\otimes-$ est exact.\\
b) Un \emph{Hom interne} est un adjoint \`a droite $\uHom$ de $\otimes$ (s'il existe). Un objet  $A\in \sA$ est \emph{coplat} si $\uHom(-,A)$ est exact.
\end{defn}

\begin{prop}\phantomsection Soit $(\sA,\otimes)$ comme ci-dessus, et supposons que tout objet de $\sA$ soit quotient d'un objet plat (on dit que $\sA$ a suffisamment d'objets plats). Alors le foncteur
\[\otimes:\sA\times \sA\to \sA\]
se d\'erive en
\[\oo^L:D^-(\sA)\times D^-(\sA)\to D^-(\sA).\]
Supposons que $\sA$ soit  ferm\'ee et admette suffisamment d'objets coplats. Alors $\uHom$ se d\'erive en
\[\RHom:D^-(\sA)\times D^+(\sA)\to D^+(\sA).\]
De plus, les restrictions de $\oo\limits^L$ et $\RHom$ \`a $D^b(\sA)\times D^b(\sA)$ sont adjointes l'une de l'autre.
\end{prop}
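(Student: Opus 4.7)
La strat\'egie est parall\`ele \`a la construction classique des foncteurs d\'eriv\'es (comme \`a la proposition \ref{pA.1}), en rempla\c cant projectifs/injectifs par plats/coplats. Elle repose sur deux lemmes techniques duaux d'acyclicit\'e, et sur le transport de l'adjonction existant d\'ej\`a au niveau des complexes.

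Pour $\oo^L$, la suffisance d'objets plats permet, par r\'ecurrence degr\'e par degr\'e, de construire pour tout $A\in D^-(\sA)$ une \emph{r\'esolution plate} $P\to A$, c'est-\`a-dire un quasi-isomorphisme o\`u $P$ est un complexe born\'e sup\'erieurement \`a termes plats. Le lemme technique cl\'e est le suivant: \emph{si $P$ est un tel complexe et $C\in K^-(\sA)$ est acyclique, alors le complexe total $P\otimes C$ est acyclique}. Il se d\'emontre par r\'ecurrence sur l'amplitude cohomologique de $P$, le cas d'un seul terme r\'esultant directement de la d\'efinition de la platitude; le cas g\'en\'eral s'en d\'eduit en filtrant $P$ par ses b\^etes troncations et en exploitant la suite exacte longue associ\'ee. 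Ce lemme entra\^\i ne que $P\otimes-$ pr\'eserve les quasi-isomorphismes sur $K^-(\sA)$, ce qui permet de poser $A\oo^L B:=P\otimes B$, fonctoriellement et ind\'ependamment du choix de $P$ \`a quasi-isomorphisme canonique pr\`es. La sym\'etrie s'obtient par la comparaison $P\otimes B \osi P\otimes Q \iso A\otimes Q$ pour une seconde r\'esolution plate $Q\to B$, et la propri\'et\'e universelle du foncteur d\'eriv\'e se v\'erifie formellement.

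La construction de $\RHom$ est duale: on r\'esout $B\in D^+(\sA)$ par une r\'esolution coplate $B\to J$ (born\'ee inf\'erieurement \`a termes coplats), et l'on pose $\RHom(A,B):=\uHom(A,J)$ pour $A\in D^-(\sA)$. Un calcul d'amplitude \'el\'ementaire (utilisant que $A^p=0$ pour $p\gg 0$ et $J^q=0$ pour $q\ll 0$) montre que chaque degr\'e total $\uHom(A,J)^n$ est un produit fini, et que $\uHom(A,J)\in K^+(\sA)$. Le lemme dual assurant la bonne d\'efinition est: \emph{si $J$ est un complexe coplat born\'e inf\'erieurement et $C\in K^-(\sA)$ est acyclique, alors $\uHom(C,J)$ est acyclique}, avec une preuve parall\`ele.

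Reste l'adjonction restreinte \`a $D^b(\sA)\times D^b(\sA)$. L'adjonction $\otimes\dashv\uHom$ au niveau de $\sA$ se rel\`eve imm\'ediatement en une adjonction de bifoncteurs sur $C(\sA)$, compatible aux homotopies, donc descend au niveau de $K(\sA)$. Pour $A,B,C\in D^b(\sA)$, en prenant des r\'esolutions $P\to A$ plate et $C\to J$ coplate, la cha\^\i ne
\begin{align*}
\Hom_{D(\sA)}(A\oo^L B,C) &\iso \Hom_{K(\sA)}(P\otimes B,J)\\
&\iso \Hom_{K(\sA)}(P,\uHom(B,J))\\
&\iso \Hom_{D(\sA)}(A,\RHom(B,C))
\end{align*}
fournit l'adjonction cherch\'ee. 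L'obstacle principal sera la justification des isomorphismes extr\^emes, qui revient au fait que $J$ et $\uHom(B,J)$ ``se comportent comme des complexes d'injectifs'' au sens o\`u les morphismes d\'eriv\'es \`a valeurs dans ces complexes co\"\i ncident avec les morphismes \`a homotopie pr\`es. Le premier cas r\'esulte du lemme dual d'acyclicit\'e appliqu\'e aux c\^ones des quasi-isomorphismes; le second, plus subtil, utilise que $\uHom(-,J)$ transforme les acycliques born\'es sup\'erieurement en acycliques, d'o\`u une propri\'et\'e de K-injectivit\'e pour $\uHom(B,J)$ (par l'adjonction avec $\otimes B$) suffisante pour garantir $\Hom_K(-,\uHom(B,J))\iso \Hom_D(-,\uHom(B,J))$ sur $K^-(\sA)$.
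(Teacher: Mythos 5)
La moiti\'e $\oo\limits^L$ de votre argument est correcte et suit la voie standard (le texte ne donne d'ailleurs aucune d\'emonstration de cette proposition). Le probl\`eme est la \emph{seconde} variable de $\RHom$. Votre lemme dual --- $C$ acyclique born\'e sup\'erieurement et $J$ born\'e inf\'erieurement \`a termes coplats entra\^\i nent $\uHom(C,J)$ acyclique --- ne contr\^ole que la premi\`ere variable: la coplatitude de $Z$ est l'exactitude de $\uHom(-,Z)$, pas celle de $\uHom(Y,-)$. Pour que $\uHom(A,J)$ ne d\'epende pas du choix de la r\'esolution coplate $J$ de $B$, et pour que $\RHom(A,-)$ envoie les quasi-isomorphismes en $B$ sur des quasi-isomorphismes, il vous faut l'\'enonc\'e sym\'etrique: si $E$ est un complexe \emph{acyclique} born\'e inf\'erieurement \`a termes coplats, alors $\uHom(A,E)$ est acyclique. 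Cet \'enonc\'e ne d\'ecoule pas de la coplatitude, car $\uHom(A,-)$ n'est qu'exact \`a gauche et la coplatitude des termes de $E$ n'y change rien. Dans l'application vis\'ee (faisceaux de $\Lambda$-Modules), on prend pour $J$ un complexe d'\emph{injectifs} --- qui se trouvent \^etre coplats --- et l'\'enonc\'e manquant devient trivial, un complexe acyclique born\'e inf\'erieurement d'injectifs \'etant contractile. Il faut donc soit supposer explicitement l'existence d'assez d'objets \`a la fois injectifs et coplats, soit produire un argument suppl\'ementaire: tel quel, votre $\RHom$ n'est pas bien d\'efini comme foncteur sur $D^+(\sA)$.

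Le m\^eme glissement entre Hom interne et Hom externe fragilise l'adjonction. Les identifications $\Hom_{D(\sA)}(X,J)\simeq \Hom_{K(\sA)}(X,J)$ et $\Hom_{D(\sA)}(P,\uHom(B,J))\simeq \Hom_{K(\sA)}(P,\uHom(B,J))$ exigent la nullit\'e du Hom \emph{externe} \`a homotopie pr\`es sur les acycliques de la gamme consid\'er\'ee (une K-injectivit\'e), et non l'acyclicit\'e du Hom \emph{interne} $\uHom(C,J)$: comme le complexe de Hom externe est $\sA(\un,\uHom^\bullet(C,J))$ et que $\sA(\un,-)$ n'est qu'exact \`a gauche, le second n'entra\^\i ne pas le premier. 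De plus, votre d\'eduction de la K-injectivit\'e de $\uHom(B,J)$ \emph{par l'adjonction avec} $\otimes B$ suppose tacitement que $X\otimes B$ est acyclique lorsque $X$ l'est, ce qui est faux sans hypoth\`ese de platitude. L\`a encore, des r\'esolutions injectives r\`eglent les deux points; sinon, la voie la plus s\^ure pour l'adjonction est purement formelle (les d\'eriv\'es de foncteurs adjoints sont adjoints, cf. la proposition qui pr\'ec\`ede et \cite{km}, appliqu\'es \`a $-\otimes B$ et $\uHom(B,-)$ pour $B$ fix\'e), mais elle pr\'esuppose l'existence de $R\uHom(B,-)$, ce qui ram\`ene \`a la lacune pr\'ec\'edente.
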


En pratique, on prend pour $\sA$ la cat\'egorie des faisceaux de $\Lambda$-Modules sur un site $X$, o\`u $\Lambda$ est un faisceau d'anneaux sur $X$: alors $\sA$ a suffisamment d'objets plats, et de plus tout objet injectif est coplat. La proposition ci-dessus s'applique donc.

\subsection{Complexes parfaits}

\begin{defn}\phantomsection Soit $\Lambda$ un anneau unitaire, et soit $\sA$ la cat\'egorie ab\'elienne des $\Lambda$-modules \`a gauche. Un complexe $C\in C(\sA)$ est \emph{parfait} s'il est isomorphe dans $D(\sA)$ \`a un complexe born\'e de $\Lambda$-modules projectifs de type fini.
\end{defn}

\begin{defn}\phantomsection\label{dparf} Soit $(S,\Lambda)$ un site localement annel\'e, et soit $\sA$ la cat\'egorie ab\'elienne des $\Lambda$-Modules \`a gauche.\\
a) Un complexe $C\in C(\sA)$ est \emph{strictement parfait} si c'est un complexe born\'e de $\Lambda$-Modules localement libres de type fini.\\
b) Un objet $C\in D(\sA)$ est \emph{parfait} s'il existe un recouvrement $(U_i)$ de $S$ tel que, pour tout $i$, $C_{|U_i}$ soit isomorphe \`a un complexe strictement parfait.
\end{defn}

Le théorème suivant est initialement dû à Thomason-Trobaugh dans un cas particulier \cite[th. 2.4.3]{tt}; l'énoncé général suivant a été dégagé par Amnon Neeman \cite{neeman-dual}.

\begin{thm}\phantomsection Supposons que $(S,\Lambda)=(X,\sO_X)$ pour un schéma $X$ quasi-compact et séparé. Alors un objet de $D(\sA)$ est un complexe parfait si et seulement s'il est compact.
\end{thm}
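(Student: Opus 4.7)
The plan is to handle the two implications separately, with the converse being by far the more delicate one. Throughout, the relevant derived category must be interpreted as $D_{qcoh}(X)$ (or, more generally, the full subcategory of $D(\sA)$ on which the formalism of compact generation works); the quasi-compactness and separatedness of $X$ are used precisely to make this a compactly generated triangulated category with small coproducts.

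First I would establish the easy direction: every perfect complex is compact. By the local nature of the definition and a standard Mayer--Vietoris/\v Cech argument on a finite affine cover of $X$ (using separatedness to ensure intersections remain affine), it suffices to check compactness for a strictly perfect complex $C$, i.e.\ a bounded complex of locally free $\sO_X$-Modules of finite rank. By an induction on the length of $C$, using the stupid truncation triangles and the $5$-lemma in the triangulated category, I reduce to the case where $C$ is a single locally free $\sO_X$-Module $\sF$ of finite rank placed in one degree. For such an $\sF$, the functor $\RHom(\sF,-)$ is computed by a bounded flat resolution, and commutes with arbitrary small coproducts because on a quasi-compact quasi-separated scheme the hypercohomology $\bH^*(X,-)$ commutes with filtered colimits (Grothendieck). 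This gives compactness.

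For the hard direction, I would appeal to the compact generation result of Thomason--Trobaugh: there is a \emph{set} $\sG$ of perfect complexes on $X$ which generates the triangulated category, in the sense that an object $E$ is zero iff $\Hom(G[n],E)=0$ for all $G\in\sG$ and all $n\in\Z$. This is the substantive input, proved by reduction to the affine case via a finite affine cover, where on an affine $\Spec A$ the Koszul complexes associated to finite tuples of generators of finitely generated ideals provide a generating set of perfect complexes, and then these are patched across intersections using a Mayer--Vietoris triangle. Granted this, let $\sT_{\text{perf}}\subset D$ denote the full subcategory of perfect complexes: it is a thick triangulated subcategory, closed under retracts, and contains the generating set $\sG$ of compact objects. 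A formal theorem of Neeman (the ``small object argument for triangulated categories'') then asserts that in a compactly generated triangulated category, every compact object is a retract of an object in the thick subcategory generated by any chosen set of compact generators; hence every compact object lies in $\sT_{\text{perf}}$.

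The main obstacle is the compact generation statement: constructing, for a general quasi-compact separated $X$, enough perfect complexes to detect vanishing of arbitrary objects of $D_{qcoh}(X)$. The standard argument is by Noetherian induction on the number of affine opens in a chosen cover, using at each step the Mayer--Vietoris/Bousfield localization triangle associated to an open immersion $U\hookrightarrow X$ with quasi-compact complement, and the fact that the subcategory of objects supported on $X\smallsetminus U$ is itself compactly generated (by Koszul-type perfect complexes). Separatedness enters to guarantee that intersections of affines are affine, and quasi-compactness ensures the induction terminates; these are exactly the hypotheses of the theorem, which makes the argument not merely formal.
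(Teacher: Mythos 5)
Le paper gives no proof of this theorem: it only cites Thomason--Trobaugh \cite{tt} and Neeman \cite{neeman-dual}, and your sketch is exactly the standard argument from those sources --- perfect $\Rightarrow$ compact by reduction to strictly perfect complexes on a finite affine cover and commutation of the finite \v Cech model of $R\Gamma$ with coproducts, and compact $\Rightarrow$ perfect by compact generation of the category by perfect complexes together with Neeman's theorem that the compact objects form the thick closure of any set of compact generators. Your observation that the statement must be read in $D_{\mathrm{qcoh}}(X)$ rather than in the derived category of all $\sO_X$-Modules (where compacts can be scarce) is a genuine and necessary precision that the paper's phrasing glosses over.
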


J'ignore si cet énoncé reste vrai dans le cas des complexes parfaits de \cite[Exp. XV]{SGA5} intervenant au \S \ref{s5.2.3}; il existe en tout cas des exemples de champs algébriques quasi-compacts et quasi-séparés sur lesquels certains complexes parfaits ne sont pas compacts, cf. \cite[\S 4]{hall-rydh} (je remercie Neeman pour cette référence). 

\newpage

\printindex

\newpage

\end{document}